\documentclass[a4paper,10pt]{article}

\usepackage{amsthm}
\usepackage{thmtools}
\usepackage{hyperref}
\usepackage{mathrsfs}
\usepackage{changepage}

\declaretheorem[name=Definition,style=definition,qed=$\dashv$,
numberwithin=section]{dfn}

\declaretheorem[name=Fact,style=plain,sibling=dfn]{fact}
\declaretheorem[name=Theorem,style=plain,sibling=dfn]{tm}
\declaretheorem[name=Expected Theorem,style=plain,sibling=dfn]{etm}
\declaretheorem[name=Lemma,style=plain,sibling=dfn]{lem}
\declaretheorem[name=Proposition,style=plain,sibling=dfn]{prop}
\declaretheorem[name=Corollary,style=plain,sibling=dfn]{cor}
\declaretheorem[name=Remark,style=definition,sibling=dfn]{rem}
\declaretheorem[name=Claim,style=plain]{clm}
\declaretheorem[name=Claim,style=plain,numbered=no]{clm*}
\declaretheorem[name=Sublaim,style=plain,numbered=no]{sclm*}
\declaretheorem[name=Sublaim,style=plain,sibling=dfn]{sclm}
\declaretheorem[name=Case,style=definition]{case}

\declaretheorem[name=Stage,style=definition,numbered=no]{stage*}
\declaretheorem[name=Assumption,style=definition,sibling=dfn]{ass}
\declaretheorem[name=Convention,style=definition,sibling=dfn]{conv}
\declaretheorem[name=Conjecture,style=definition,sibling=dfn]{conj}

\usepackage{amsmath}
\usepackage{amsxtra}
\usepackage{amssymb}
\usepackage{turnstile}
\usepackage{enumitem}
\usepackage{comment}
\usepackage{accents}
\usepackage{xcolor}

\usepackage[retainorgcmds]{IEEEtrantools}


\newcommand{\iso}{\cong}

\newcommand{\CC}{\mathbb C}
\newcommand{\QQ}{\mathbb Q}
\newcommand{\RR}{\mathbb R}

\newcommand{\PP}{\mathbb P}
\newcommand{\BB}{\mathbb B}
\newcommand{\sub}{\subseteq}
\newcommand{\cross}{\times}
\newcommand{\all}{\forall}
\newcommand{\ex}{\exists}

\newcommand{\inter}{\cap}
\renewcommand{\int}{\inter}

\newcommand{\om}{\omega}
\newcommand{\pow}{\mathcal{P}}
\newcommand{\OR}{\mathrm{OR}}

\newcommand{\Hull}{\mathrm{Hull}}

\newcommand{\cut}{\backslash}
\newcommand{\N}{N}

\newcommand{\Tt}{\mathcal{T}}
\newcommand{\Ss}{\mathcal{S}}
\newcommand{\Uu}{\mathcal{U}}
\newcommand{\Vv}{\mathcal{V}}
\newcommand{\Ww}{\mathcal{W}}
\newcommand{\Ll}{\mathcal{L}}

\newcommand{\Nn}{\mathcal{N}}

\newcommand{\rg}{\mathrm{rg}}
\newcommand{\dom}{\mathrm{dom}}

\newcommand{\ins}{\trianglelefteq}
\newcommand{\nins}{\ntrianglelefteq}
\newcommand{\pins}{\triangleleft}
\newcommand{\npins}{\ntriangleleft}
\newcommand{\crit}{\mathrm{cr}}

\newcommand{\union}{\cup}
\newcommand{\rest}{\!\upharpoonright\!}
\newcommand{\com}{\circ}

\newcommand{\lh}{\mathrm{lh}}
\newcommand{\Ult}{\mathrm{Ult}}

\newcommand{\sats}{\models}
\newcommand{\elem}{\preccurlyeq}
\newcommand{\J}{\SS}

\newcommand{\AD}{\mathsf{AD}}
\newcommand{\AC}{\mathsf{AC}}
\newcommand{\DC}{\mathsf{DC}}

\newcommand{\HOD}{\mathrm{HOD}}
\newcommand{\HC}{\mathrm{HC}}
\newcommand{\ZFC}{\mathsf{ZFC}}
\newcommand{\ZF}{\mathsf{ZF}}
\newcommand{\KP}{\mathsf{KP}}

\newcommand{\Coll}{\mathrm{Col}}
\newcommand{\es}{\mathbb{E}}

\newcommand{\pbar}{{\bar{p}}}

\newcommand{\eps}{\varepsilon}

\newcommand{\sigmabar}{{\bar{\sigma}}}

\newcommand{\core}{\mathfrak{C}}

\newcommand{\pred}{\mathrm{pred}}
\newcommand{\tc}{\mathrm{tc}}

\newcommand{\un}{\union}

\newcommand{\id}{\mathrm{id}}

\newcommand{\conc}{\ \widehat{\ }\ }

\newcommand{\forces}{\dststile{}{}}
\newcommand{\bfPi}{\undertilde{\Pi}}
\newcommand{\bfSigma}{\undertilde{\Sigma}}

\newcommand{\rSigma}{\mathrm{r}\Sigma}

\newcommand{\rPi}{\mathrm{r}\Pi}

\newcommand{\rDelta}{\mathrm{r}\Delta}
\newcommand{\mSigma}{\mathrm{m}\Sigma}
\newcommand{\mPi}{\mathrm{m}\Pi}
\newcommand{\mDelta}{\mathrm{m}\Delta}
\newcommand{\bfmSigma}{\mathrm{m}\bfSigma}
\newcommand{\bfmPi}{\mathrm{m}\bfPi}

\newcommand{\muSigma}{\mu\Sigma}
\newcommand{\muPi}{\mu\Pi}
\newcommand{\muDelta}{\mu\Delta}
\newcommand{\bfmuSigma}{\mu\bfSigma}
\newcommand{\bfmuPi}{\mu\bfPi}

\newcommand{\supp}{\mathbb{S}}

\DeclareMathOperator{\Th}{Th}

\DeclareMathOperator{\cof}{cof}

\DeclareMathOperator{\rank}{rank}

\newcommand{\st}{\bigm|}

\newcommand{\xvec}{\vec{x}}
\newcommand{\Dd}{\mathcal{D}}
\newcommand{\orr}{\vee}
\newcommand{\OD}{\mathrm{OD}}
\newcommand{\gammavec}{\vec{\gamma}}
\newcommand{\stem}{\mathrm{stem}}
\newcommand{\bfrSigma}{\undertilde{\rSigma}}

\newcommand{\psub}{\subsetneq}

\newcommand{\Yvec}{\vec{Y}}

\newcommand{\Xx}{\mathcal{X}}

\newcommand{\ydot}{\dot{y}}

\newcommand{\surj}{\twoheadrightarrow}
\newcommand{\alphavec}{\vec{\alpha}}
\newcommand{\cHull}{\mathrm{cHull}}

\newcommand{\Avec}{\vec{A}}

\newcommand{\M}{\mathcal{M}}

\newcommand{\lpole}{\left\lfloor}
\newcommand{\rpole}{\right\rfloor}

\newcommand{\Hh}{\mathcal{H}}
\newcommand{\univ}[1]{\lpole #1\rpole}

\newcommand{\tu}{\textup}

\newcommand{\base}{\mathrm{base}}
\newcommand{\tail}{\mathrm{tail}}
\newcommand{\concatB}{%
  \mathbin{\rotatebox[origin=c]{90}{\scalebox{.7}{(\kern1ex)}}}
}

\renewcommand{\supp}{\mathrm{supp}}
\newcommand{\Nm}{\mathrm{Nm}}

\newcommand{\pvec}{\vec{p}}
\newcommand{\betavec}{\vec{\beta}}
\newcommand{\xdot}{\dot{x}}

\newcommand{\tauvec}{\vec{\tau}}

\newcommand{\Lim}{\mathrm{Lim}}
\newcommand{\lex}{\mathrm{lex}}

\newcommand{\Pp}{\mathcal{P}}

\newcommand{\Q}{\mathcal{Q}}

\newcommand{\sigmavec}{\vec{\sigma}}
\renewcommand{\M}{\mathscr{M}}
\newcommand{\Xvec}{\vec{X}}
\newcommand{\Lp}{\mathrm{Lp}}
\newcommand{\tree}{\mathfrak{T}}
\newcommand{\yvec}{\vec{y}}
\newcommand{\deltavec}{\vec{\delta}}

\newcommand{\Nhat}{\widehat{N}}
\newcommand{\Mhat}{\widehat{\M}}
\newcommand{\Ntilde}{\widetilde{N}}
\newcommand{\dfnemph}{\textbf}
\newcommand{\Hdot}{\dot{H}}

\newcommand{\Tdot}{\dot{T}}
\newcommand{\rdot}{\dot{r}}

\newcommand{\Gtilde}{\widetilde{G}}
\newcommand{\Mtilde}{\widetilde{\M}}
\newcommand{\Ttilde}{\widetilde{T}}

\newcommand{\alphagap}{{\alpha_g}}
\newcommand{\betagap}{{\beta_g}}
\newcommand{\Gammagap}{{\Gamma_g}}

\newcommand{\str}{\oplus}

\newcommand{\sdot}{\dot{s}}
\newcommand{\trcl}{\mathrm{trcl}}

\newcommand{\WO}{\mathrm{WO}}
\newcommand{\LO}{\mathrm{LO}}
\newcommand{\betag}{\beta_{\mathrm{g}}}

\newcommand{\alphag}{\alpha_{\mathrm{g}}}
\newcommand{\Gammag}{\Gamma_{\mathrm{g}}}

\newcommand{\xg}{x_{\mathrm{g}}}
\renewcommand{\bfrSigma}{\undertilde{\rSigma}}
\newcommand{\subforces}[1]{\dststile{#1}{}}
\newcommand{\sforces}[2]{\dststile{#1}{#2}}
\newcommand{\stk}{\mathrm{stk}}

\newcommand{\vareps}{\varepsilon}
\newcommand{\gen}{\mathrm{gen}}
\newcommand{\desc}{\mathrm{desc}}
\newcommand{\yg}{{y_{\mathrm{g}}}}
\newcommand{\Pg}{{P_{\mathrm{g}}}}
 \newcommand{\rename}{\mathrm{rename}}

 \newcommand{\rnm}{\mathrm{rnm}}
 \newcommand{\Jj}{\mathcal{J}}

\newcommand{\loc}{\mathrm{loc}}
\newcommand{\pad}{\mathrm{pad}}
\newcommand{\Type}{\mathrm{Type}}
\renewcommand{\SS}{\mathbb{S}}
\begin{document}
\title{$\Sigma_1$ gaps as derived models and  correctness of mice}
\author{Farmer Schlutzenberg and John Steel}

\maketitle

\begin{abstract}
 Assume ZF + AD + $V=L(\RR)$. Let $[\alpha,\beta]$ be a $\Sigma_1$ gap with $\Jj_\alpha(\RR)$ admissible. We analyze $\Jj_\beta(\RR)$ as a natural form of ``derived model''  of a premouse $P$, where $P$ is found in a generic extension of $V$. In particular,  we will have $\pow(\RR)\cap \Jj_\beta(\RR)=\pow(\RR)\cap D$, and if  $\Jj_\beta(\RR)\models$``$\Theta$ exists'', then  $\Jj_\beta(\RR)$ and $D$ in fact have the same universe. This analysis will be employed in further work, yet to appear,
 toward a resolution of a conjecture of Rudominer and Steel on the nature of $(L(\RR))^M$, for $\om$-small mice $M$.
 We also establish some preliminary work toward this conjecture in the present paper.
\end{abstract}

\tableofcontents

\section{Introduction}

\subsection{Gaps of $L(\RR)$ and derived models}\label{subsec:gaps}

\begin{dfn}
 The \emph{$L(\RR)$ language}
 is the language of set theory
 augmented with a constant symbol $\dot{\RR}$. We will always interpret this language in wellfounded models $M$ with $\omega\in M$  and with
 $M\sats$``the set of all reals exists''. The symbol $\dot{\RR}$
 is interpreted as $\RR\cap M$.
\end{dfn}

\begin{conv}
 When we consider definability over segments $\Jj_\beta(\RR)$ of $L(\RR)$,
 we use by default the $L(\RR)$ language.
  Likewise over segments $\Jj_{\beta}(\RR^M)$ of $L(\RR^M)$ for  models $M$ as above.
\end{conv}

\begin{rem}
Recall from \cite{scales_in_LR} the notion of a \emph{$\Sigma_1$ gap $[\alpha,\beta]$ of $L(\RR)$} (we also say just a \emph{gap} for short). 
Let $[\alpha,\beta]$ be a gap of $L(\RR)$.
We say the gap is \emph{admissible}
iff $\Jj_\alpha(\RR)$ is admissible. Recall that non-admissible gaps are called \emph{projective-like}.
Recall that if $[\alpha,\beta]$ is a projective-like gap then $\alpha=\beta$; some admissible gaps (for example, the first) also have $\alpha=\beta$.

Recall that
\emph{$\bfSigma_n^{\Jj_\beta(\RR)}$ types reflect} iff for each $x\in\Jj_\beta(\RR)$ there is $\beta'<\beta$
and $x'\in\Jj_{\beta'}(\RR)$ such that
$t(x/x')=t'$,
where $t=\Th_{\Sigma_n}^{\Jj_\beta(\RR)}(\{x\})$ and $t'=\Th_{\Sigma_n}^{\Jj_{\beta'}(\RR)}(\{x'\})$, and $t(x/x')$ denotes
the theory which results from $t$ by substituting $x'$ for $x$.
We use the analogous terminology
for $\rSigma_n$ replacing $\Sigma_n$.

Let $[\alpha,\beta]$ be an admissible gap. Recall that $[\alpha,\beta]$ is \emph{strong} iff $\bfSigma_{n+1}^{\Jj_\beta(\RR)}$ types reflect, where $n$ is least such that $\rho_{n+1}^{\Jj_\beta(\RR)}=\RR$; otherwise
$[\alpha,\beta]$ is \emph{weak}.
\end{rem}

In this paper we will make progress toward the Rudominer-Steel Conjecture, described in \S\ref{subsec:rudo-steel_conj}.
A step toward this,
approximately stated, is the realization of $\Jj_\beta(\RR)$,
where, for example, $\beta$ ends a weak gap,
as the derived model of a (generic) premouse.  It is analogous
to the realization of $L(\RR)$
as the derived model of a (generic) iterate of $M_\om$. A realization of this kind,
except that it is executed ``in the codes'',
via term relations for sets of reals comprising a self-justifying system corresponding to $\Jj_\beta(\RR)$, is commonly
considered in the core model induction.
Here we will get a lot closer
to an actual realization of the model $\Jj_\beta(\RR)$, and this realization might be of independent interest. However,  
the kind of derived model we define can still have a universe distinct from $\Jj_\beta(\RR)$, and it is stratified in a somewhat different hierarchy. We will also get a similar kind of realization
of (a version of) $\Jj_{\beta+1}(\RR)$
when $\beta$ ends a strong gap.

Suppose again that $\beta$ ends a weak gap $[\alpha,\beta]$.
So $\Jj_\alpha(\RR)$ is
admissible and $\alpha<\beta$.
We now describe roughly the ``derived model'' construction we use,
and the basic components of its construction and analysis,
and also sketch why a more naive attempt to realize $\Jj_\beta(\RR)$ as a derived model runs into problems.

We will first find an $\om$-small projecting $x$-mouse
$P$, for some $x\in\RR$, with $\om$ Woodin cardinals and which is \emph{not} iterable in $\Jj_\alpha(\RR)$,
but which is in a reasonable sense ``stably minimal'' with respect to this failure of iterability. In case it helps,
for each strong cutpoint $\delta$ of $P$ strictly below the sup $\lambda^P$ of Woodins of $P$, we will have $P|\delta^{+P}=\Lp_{\Gamma}(P|\delta)$ where $\Gamma=\Sigma_1^{\Jj_\alpha(\RR)}$
(equivalently, $P|\delta^{+P}$ will be the stack of all sound premice $Q$
such that $\rho_\om^P\leq\delta$,
 $P\pins Q$, and there is an above-$\delta$, $(\om,\om_1)$-iteration strategy $\Sigma$ for $Q$ with $\Sigma\in\Jj_\alpha(\RR)$).
Moreover, $P$ itself will project to $\om$.
We would then like to realize $\Jj_\beta(\RR)$,
or some related model,
as the ``derived model'' of an $\RR$-genericity iterate of $P$.

Now our assumptions
do not guarantee much about the theory modelled by $\Jj_\beta(\RR)$, so $\Jj_\beta(\RR)$ can be very non-closed --
for example, we might have $\beta=\gamma+1$ for some $\gamma$,
and/or it might be that $\Jj_\beta(\RR)\sats$``$\Theta$ does not exist'', etc. Correspondingly,
(it will follow that)
$P$ might not be very closed above $\lambda^P$: it might
be, for instance, that $\OR^P=\lambda^P+\om$
(and this can indeed occur).
An $\RR$-genericity iterate $P'$ of such a $P$
would have $\OR^{P'}=\om_1+\om$ (as $\lambda^P$ gets sent to $\om_1$),
but since $\Jj_\alpha(\RR)$ is admissible,
 $\omega\beta=\OR^{\Jj_\beta(\RR)}>\OR^{\Jj_\alpha(\RR)}=\alpha>\om_1+\om$.
This gives $\OR^{P'}<\beta\omega$, and so $\Jj_\beta(\RR)$ cannot be the naive ``derived model'' $D(\RR)$ of $P'$ (i.e.~of form $L(\RR^*)^{P'[G]}$, for $G$ being $(P',\Coll(\om,{<\lambda^{P'}})$-generic and $\RR=\RR^*=\bigcup_{\alpha<\lambda^{P'}}\RR\cap P'[G\rest\alpha]$; this model has ordinal height $\omega_1+\omega$).
Worse, note that $D(\RR)$ cannot even have a version of $\Jj_\beta(\RR)$ which is definable in the codes; in particular, for no $n<\om$
is the theory $\Th_{\Sigma_n}^{\Jj_\beta(\RR)}(\RR)$ definable from parameters over $D(\RR)$.

This kind of mismatch is fairly easily dealt with,
by replacing $\Jj_\alpha(\RR)$ with a structure of form $(\HC,T)$, where $T$ is the set of pairs $(x,t)$ such that $x\in\HC$
and $t=\Th_{\Sigma_1}^{\Jj_\alpha(\RR)}(\{x\})$. 

Suppose we do this and construct above $(\HC,T)$ through to some $\Jj_{\beta'}(\HC,T)$ corresponding to $\Jj_\beta(\RR)$; note $\OR(\Jj_{\beta'}(\HC,T))=\omega_1+\omega\beta'$.
In order to see that $\OR^{P'}\leq\omega_1+\omega\beta'$, one would like to obtain that $\Jj_{\eta}(\HC,T)$ is a ``derived model''
of $P'$, where $\lambda^P+\omega\eta=\OR^{P'}$.
Toward this, one would like to have that $(\HC,T)$ is a ``derived model'' of $P'|\lambda^{P'}$;
for this, we will use that $T$ will be encoded into $P'|\lambda^{P'}$ via mice witnessing~ $\Sigma_1$ truths in $L(\RR)$.
Now conversely, to see that
$\omega_1+\omega\beta'\leq\OR^{P'}$,
one would like to have
that for each $\eta\leq\beta'$, at least for appropriate $P'$,
$P'|(\omega_1+\omega\eta)$
is generic over $\Jj_\eta(\HC,T)$ for 
the local variant
of the Martin measure Prikry forcing $\PP$ for forcing a premouse with $\om$ Woodin cardinals over $L(\RR)$.
However, because $\Jj_{\beta'}(\HC,T)$
need not be particularly closed, and in particular,
might satisfy ``$\Theta$ does not exist'',
this forcing can be a proper class of $\Jj_{\beta'}(\HC,T)$.
This will mean that 
we need to analyse the forcing relation of $\PP$
level-by-level over $\Jj_{\beta'}(\HC,T)$,
to a natural extent,
and in particular, 
one would like to have
a reasonable version
of the forcing relation
for $\Sigma_0^{P'|(\omega_1+\omega\eta)}$ truth, which is
definable over $\Jj_{\eta}(\HC,T)$. When one attempts this analysis,
one runs into problems,
because $\Jj_\eta(\HC,T)$
is not in general closed under the iterated Martin measure
$\mu^{<\om}$ on the finite tuples of (Turing) degrees.
That is, let $\mu$ be the Martin measure on degrees ($X\in\mu$ iff there is a degree $x$ such that $y\in X$ for all degrees $y\geq_Tx$) and for $n<\om$,
let $\mu^n$ be the $n$th iterate.
Let $\mu^{<\om}=\bigcup_{n<\om}\mu^n$.
Then
there can be $X\in\Jj_\eta(\HC,T)$
such that $X\cap\mu^{<\om}\notin\Jj_\eta(\HC,T)$
(even though for each $n<\omega$, we do have $X\cap\mu^n\in\Jj_\eta(\HC,T)$ for each $X\in\Jj_\eta(\HC,T)$).
The result of this is that
the analysis of the Prikry forcing relation proceeds too slowly in the usual $\Jj_\eta(\HC,T)$ hierarchy to yield 
the desired level-by-level analysis.

In order to solve this problem,
we speed up the hierarchy,  constructing from $\mu^{<\om}$ (above $(\HC,T)$). This produces
what we call the 
\emph{$\M$-hierarchy} associated to $[\alpha,\beta]$; we define $\M_{\omega_1}=(\HC,T)$,
and $\M_{\omega_1+\xi}=(\Ss')^{\mu^{<\om}}_{\xi}(\HC,T)$, where $(\Ss')^{\mu^{<\om}}$ is a slight variant of the transitive version of Jensen's $\Ss$-operator for $\mu^{<\om}$-rud functions (cf.~\S\ref{subsec:notation}
and \cite[p.~610]{schindler2010fine}).
With this second modification, there is
a limit $\beta^*$
such that $\M_{\beta^*}$  encodes  $\Jj_\beta(\RR)$ (in a manner to be described),
and for appropriate
iterates 
$P'$ of $P$,
we will indeed get that $\beta^*=\OR^{P'}$,
$\M_{\beta^*}$ is a ``derived model''
of $P'$, and $P'$
is an $(\M_{\beta^*},\PP)$-generic premouse.
However, the fact that
we construct
from $\mu^{<\om}$
in the $\M$-hierarchy
needs to be incorporated into the definition of ``derived model'' (along with having $(\HC,T)$ as the ``derived model'' of $P'|\lambda^{P'}$). In order to achieve this,  information encoded into $\es^{P'}$ (and in fact into $P'|\eta$
in general, where $\eta\in[\lambda^{P'},\OR^{P'}]$) needs to
be exploited in order
to define $\M_\eta$
(as a ``derived model''
of $P'|\eta$). Much of the paper is devoted to laying these things out clearly.

In the case that $\Jj_\beta(\RR)$ satisfies ``$\Theta$
exists'', we will have that $\M_{\beta^*}$ and $\Jj_\beta(\RR)$ have the
same
universe,  so in this case, $\Jj_\beta(\RR)$ itself is the derived model of a generic premouse. (Actually in this case one could avoid introducing the $\M$-hierarchy at all.)

\subsection{Notation and terminology}\label{subsec:notation}

Our primary background theory is $\ZF+\AD^{L(\RR)}$ (hence $\DC$ holds by \cite{kechrisADDCLR}).
However, mostly we argue only using  determinacy close to $\Jj_\beta(\RR)$, for some $\beta$ ending a gap ($\Jj_{\beta+\om}(\RR)\sats\AD$ should be more than enough). At some points we make remarks which are trivial under the global determinacy assumption, but intended to be of relevance under lesser determinacy assumptions.

Whenever we refer to an ordering on $\OR\cross\OR$, it is the lexicographic
order.

Given $x\in\RR$, we write $[x]$ for the Turing degree of $x$. For definitions pertaining to the Martin measure $\mu$,
see \S\ref{subsec:gaps}.

We write $\trcl(X)$ for the transitive closure of $X$.

Given a first-order structure $M=(N,R_1,\ldots,R_n)$ with universe $N$ and relations, etc, $R_1,\ldots,R_n$, we write $\univ{M}=N$.
Let $\Ll$ be the corresponding language
(with symbols for the $R_1,\ldots,R_n$).
When there is no confusion, we blur between $M$ and $N$, writing for example
 $x\in M$ for $x\in N$,
and $X\sub M$ for $X\sub N$.
Given $X\sub M$, $\Sigma_n^M(X)$ denotes
the class of relations (of finite arity)
over $M$ definable with a $\Sigma_n$ formula
of $\Ll$, and likewise for other formula classes.
And $\Delta^M_n(X)=\Sigma^M_n(X)\cap\Pi^M_n(X)$.
For lightface definability (that is, without parameters) we write $\Sigma^M_n=\Sigma^M_n(\emptyset)$. For boldface,
$\bfSigma^M_n=\Sigma^M_n(N)$.
We say $M$ is a  \emph{transitive structure}
if $\univ{M}$ is transitive.
So we can essentially
consider $\SS_\beta$
as a denoting a transitive structure
of the form $M=(N,V_{\omega+1})$
where $V_{\omega+1}\sub N$ and $V_{\omega+1}$ is the interpretation of a constant symbol $\dot{\RR}$.

Write $\Lim$ for the class of limit ordinals and $\Lim_0=\Lim\cup\{0\}$.
We define a slight variant of the 
transitive version of Jensen's $\Ss$-hierarchy, introduced in \cite[p.~610]{schindler2010fine}.
Given a set $U$, define $\Ss(U)=\bigcup_{i\leq 14}F_i``U^2$
where $F_0,\ldots,F_{14}$ are as in \cite{schindler2010fine};
note that in \cite{schindler2010fine},
$F_i``(U\cup\{U\})^2$ is used in their definition of $\mathbb{S}^A$, not just $F_i``U^2$. Here we only use $U\cup\{U\}$ to proceed at stage $0$ and limit stages.
That is, for a transitive set or structure $X$, let $\Ss_0(X)=X$, and given $\lambda\in\Lim_0$, let \[\Ss_{\lambda+1}(X)=\Ss(\Ss_\lambda(X)\cup\{\Ss_\lambda(X)\}),\] and given a successor ordinal $\alpha+1$, let \[\Ss_{\alpha+2}(X)=\Ss(\Ss_{\alpha+1}(X)).\]
Note then that for $\om\lambda\in\Lim_0$,
$\Ss_{\om\lambda+\om}(X)=\Jj_{\lambda+1}(X)$ is the rud closure of $\Jj_\lambda(X)\cup\{\Jj_\lambda(X)\}=\Ss_{\om\lambda}(X)\cup\{\Ss_{\om\lambda}(X)\}$.
For a class $A$, define $\Ss^A(U)=\bigcup_{i\leq 15}F_i``U^2$
(so now $F_{15}$ is included),
and then define $\Ss_\alpha^A(X)$
from $\Ss^A$ just like $\Ss_\alpha(X)$ is defined from $\Ss$.
We will generally talk about the $\Ss$- and $\Ss^A$-hierarchies, not $\Jj$- and $\Jj^A$-. Noting that $\RR$ is not transitive (so $\Ss_0(\RR)$ was not defined above), define $\Jj_0(\RR)=\Ss_0(\RR)=V_{\omega+1}$,
and likewise if $M$ is some model with wellfounded $\om$, then $\Jj_0(\RR^M)=\Ss_0(\RR^M)=V_{\omega+1}^M$.
Above this base, we define $\Ss_{\alpha}(\RR)$ and $\Ss_{\alpha}(\RR^M)$ like for transitive sets. So $\Jj_\gamma(\RR)=\Ss_{\om\gamma}(\RR)$.

  For $\gamma\in\Lim_0$,
  working in the $L(\RR)$ language, we write \begin{equation}\label{eqn:SS_gamma}\SS_\gamma=\Ss_{\gamma}(\dot{\RR}),\end{equation} and so when we talk about    ``$\SS_\gamma$'' in the context of some model $M$, it denotes $\Ss_\gamma(\RR^M)$ (and if $M$ is not clear from context, then $\SS_\gamma$ should be $\Ss_\gamma(\RR)$).

An \emph{S-gap} of $L(\RR)$ is either the interval $[0,0]$,
or an interval 
$[\omega\alpha,\omega\beta]$
 such that $[\alpha,\beta]$ is  a gap.

We write 
$\LO$ for the set of reals coding linear orders of 
some $n\leq\om$ and $\WO$ for the set of reals coding wellorders of some 
$n\leq\om$. For $\gamma<\om_1$ we write $\WO_\gamma$ for the
set of reals coding wellorders of length $\gamma$.

  Let $P=(N,\es,F)$ be a premouse;
  here $\es$ denotes the internal extender sequence of $P$, and $F$ its active extender. We write $\es^P=\es$, $F^P=F$,
  $\es_+^P=\es\conc\left<F\right>$.
 We write $P^{\mathrm{pv}}=(N,\es,\emptyset)$, write $P|\alpha$ for the initial segment of $P$ of ordinal height $\alpha$, whose active extender $E$
  is the extender $E\in\es_+^P$ indexed at $\alpha$, and we write $P||\alpha=(P|\alpha)^{\mathrm{pv}}$. For further notation related to premice, see \cite[\S1.1]{iter_for_stacks}

For a transitive set $X$
or a real $X$,
an \emph{$X$-premouse} $P$ is just a premouse
over $X$, i.e. $P|0=\trcl(X\cup\{X\})$,
and all elements $P|0$ get put into all fine structural hulls formed.
For $X$ countable and a sound $(\om,\omega_1+1)$-iterable $X$-premouse $M$ such that $\rho_\om^M=X$, $\Sigma_M$ denotes the unique $(\om,\om_1+1)$-strategy for $M$.
By \cite{fsfni_v4}, for all $n<\om$,
every $(n,\om_1+1)$-iterable $n$-sound premouse $N$ is $(n+1)$-solid and $(n+1)$-universal,
and satisfies $(n+1)$-condensation.

Let $\alpha$ be a limit ordinal and $\Gamma=\Sigma_1^{\SS_\alpha}$.
Given a
transitive set $X\in\HC$, $\Lp_\Gamma(X)$ is the stack of all sound $X$-premice
$P$ such that $P$ projects to $X$ and there is an $(\om,\om_1+1)$-iteration strategy for
$P$ in $\SS_\alpha$.

For notation associated to iteration trees,
see \cite[\S1.1]{iter_for_stacks}.
If $\Tt$ is a normal tree,
then $\nu(\Tt)$ denotes $\sup_{\alpha+1<\lh(\Tt)}\nu(E^\Tt_\alpha)$.

 Work in a premouse $P$ with $\delta\in P\sats$``$\delta$ is Woodin''.
 For $\xi<\delta$,
  $\BB_{\delta,\geq\xi}$ denotes the $\delta$-generator extender
 algebra at $\delta$ determined by extenders $E\in\es$ with 
$\crit(E)\geq\xi$ and $\nu(E)$
 a cardinal. From now on,
 whenever
 we say \emph{extender algebra (at $\delta$)},
 we mean the $\delta$-generator version.
Note however that the $\om$-generator version is isomorphic to the $\delta$-generator version below a certain condition $p$.
For forcing below $p$,
we write $x_\delta$ for the canonical name for
the $\BB_\delta$-generic real.

Let $N$ be an $\om$-small
$X$-premouse.
   We write 
$\delta_{-1}^N=\rank(X)$.
For $k<\om$, if $N$ has $k+1$ Woodins $>\delta_{-1}^N$, we write these as $\delta_0^N<\ldots<\delta_k^N$. If $N$
has $\omega$-many such Woodins, 
 $\Delta^N$ denotes $\{\delta_n^N\bigm|n<\om\}$,  $\lambda^N$ denotes $\sup\Delta^N$,
   and for $j<\om$, $\Delta^N_{>j}$
   denotes $\{\delta_n^N\bigm|j<n<\om\}$, etc.

   Any other unexplained notation
  is likely explained in
  \cite[\S1.1]{iter_for_stacks} or \cite{premouse_inheriting}. But one notational device we want to make clear:
  
  \begin{conv}
   The label ``($M$)'' at the start of a lemma (or corollary, etc) indicates that that lemma (corollary, etc) presumes the context of the Rudominer-Steel conjectures (in which $M$ is the mouse in question), as opposed to the more general context of analyzing admissible gaps of $L(\RR)$.
  \end{conv}

\subsection{Ordinal definability in $L(\RR)$}
In this section we discuss some basics regarding ordinal definability in $L(\RR)$.

\begin{dfn}\label{dfn:OD^beta}
For $x,y\in\RR$, $\beta\in\Lim_0$ and $n\in[1,\om)$,
we say that
\[ y\text{ is }\OD^{\beta n}(x) \]
(or $y$ is $\OD^{\beta n}_x$, or $y\in\OD^{\beta n}(x)=\OD^{\beta n}_x$)
iff there is $\gamma<\om_1$ and a $\Sigma_n$ 
 formula\footnote{Of the 
$L(\RR)$ language.} $\varphi$
such that
for all $w\in\WO_\gamma$  and all $z\in\RR$, we have\footnote{See \S\ref{subsec:notation} line (\ref{eqn:SS_gamma}) for the definition of $\SS_\beta$.}
\begin{equation}\label{eqn:charac_OD^beta} 
z=y\iff\SS_\beta\sats\varphi(z,x,w). \end{equation}
If $\beta\geq\om_1$, this is equivalent to requiring that
\[ \{y\}\text{ is 
}\Sigma_n^{\SS_\beta}(\{x,\gamma\}). \]

Let ${<^{\beta n}}(x)={<^{\beta n}_x}$  be the canonical wellorder of 
$\OD^{\beta n}_x$ 
(cf.~\ref{lem:<^alpha_increasing} below). For $y\in\OD^{\beta n}_x$
let
\[ |y|^{\beta n}_x= \text{ rank of }y\text{ in }<^{\beta n}_x.\]

If $x=\emptyset$, we may drop the subscript $x$ from the above notation.

We also define $\OD^{\beta}(x)=\OD^\beta_x=\bigcup_{n<\om}\OD^{\beta n}_x$,
and if $\beta\in\Lim$ define $\OD^{<\beta}(x)=\OD^{<\beta}_x=\bigcup_{\alpha\in\Lim_{0}\cap\beta}\OD^{\alpha}_x$.
\end{dfn}

\begin{lem}\label{lem:<^alpha_increasing}
For $x\in\RR$ and $(\alpha,m),(\beta,n)\in\Lim_{0}\cross[1,\om)$ 
and
$(\alpha,m)\leq_\lex(\beta,n)$, we have:
 \begin{enumerate}
  \item\label{item:OD^alpha_sub_OD^beta} $\OD^{\alpha m}_x\sub\OD^{\beta 
n}_x$.
 \item\label{item:<^alpha_is_inseg_<^beta} $<^{\alpha m}_x$ is an initial 
segment of $<^{\beta n}_x$.
\item\label{item:OD^alpha_in_J_beta} If $\alpha<\beta$ then 
$\OD^{\alpha m}_x,<^{\alpha m}_x\in\J_{\beta}$.
\item\label{item:OD^alpha_function_Sigma_1_def} The function
 \[ (x_0,\alpha_0,m_0)\mapsto(\OD^{\alpha_0 m_0}_{x_0},<^{\alpha_0 m_0}_{x_0}), 
\]
 with domain the set of all $(x_0,\alpha_0,m_0)$ with $x_0\in\RR$, 
$\alpha_0\in\beta\inter\Lim_{0}$ and $m_0\in[1,\om)$,
 is $\Sigma_1^{\SS_\beta}$, uniformly in $\beta$.
 \end{enumerate}
\end{lem}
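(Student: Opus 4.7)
The plan is to prove items (\ref{item:OD^alpha_sub_OD^beta})--(\ref{item:OD^alpha_function_Sigma_1_def}) simultaneously by induction on $(\beta,n)\in\Lim_0\times[1,\om)$ in the lex order. The first step is to commit to the right definition of the rank: for $y\in\OD^{\beta n}_x$, I would let $|y|^{\beta n}_x$ be the lex-least quadruple $(\alpha',m',\gamma,\varphi)$ with $(\alpha',m')\le_\lex(\beta,n)$, $\gamma<\om_1$, $\varphi$ a $\Sigma_{m'}$ formula of the $L(\RR)$ language, and $(\gamma,\varphi)$ a witness to $y\in\OD^{\alpha'm'}_x$ in the sense of Definition~\ref{dfn:OD^beta}; then $y_1<^{\beta n}_x y_2$ iff $|y_1|^{\beta n}_x<|y_2|^{\beta n}_x$ in the lex order on quadruples.

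With the rank so defined, items (\ref{item:OD^alpha_sub_OD^beta}) and (\ref{item:<^alpha_is_inseg_<^beta}) are formal. Any witnessing quadruple for $y\in\OD^{\alpha m}_x$ is automatically a witnessing quadruple for $y\in\OD^{\beta n}_x$, since $(\alpha',m')\le_\lex(\alpha,m)$ implies $(\alpha',m')\le_\lex(\beta,n)$, giving (\ref{item:OD^alpha_sub_OD^beta}). Moreover, the lex-least such quadruple for a fixed $y\in\OD^{\alpha m}_x$ is the same whether the minimum is taken over the $(\alpha,m)$-range or the $(\beta,n)$-range, because the first two coordinates dominate the ordering: no quadruple with $\alpha'>\alpha$, or with $\alpha'=\alpha$ and $m'>m$, can beat an existing candidate in the $\OD^{\alpha m}_x$ range. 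Hence $|y|^{\beta n}_x=|y|^{\alpha m}_x$ on $\OD^{\alpha m}_x$, and every $y'\in\OD^{\beta n}_x\setminus\OD^{\alpha m}_x$ has strictly larger rank, giving (\ref{item:<^alpha_is_inseg_<^beta}).

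The core of the argument is item (\ref{item:OD^alpha_function_Sigma_1_def}). I would show by induction on $\beta$ that ``$z\in\OD^{\alpha_0 m_0}_{x_0}$'' admits a $\Sigma_1^{\SS_\beta}$ definition in the variables $(z,x_0,\alpha_0,m_0)$, uniformly for $\alpha_0<\beta$, using three ingredients: (a) the uniform $\Sigma_1$-definability of $\alpha\mapsto\SS_\alpha$ over $\SS_\beta$ for $\alpha<\beta$, a standard property of the $\Ss$-hierarchy; (b) the $\Delta_1^{\SS_\beta}$-definability of $\Sigma_k$-satisfaction for structures $M\in\SS_\beta$, uniformly in $(k,M)$; and (c) a normalization step that eliminates the prima facie $\Pi_1$ invariance clause ``for all $w\in\WO_\gamma$'' in Definition~\ref{dfn:OD^beta}. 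The normalization restricts attention to canonical formulas $\varphi(z,x,w)$ of the shape ``$w\in\WO$ and $\psi(z,x,\ot(w))$'', for which the unique $z$ returned from $w$ depends only on $\ot(w)$, so a single existential witness $w\in\WO_\gamma$ replaces the universal. One then checks that every $y\in\OD^{\alpha_0 m_0}_{x_0}$ admits such a canonical witness without climbing the lex order of rank quadruples. Collecting all such $z$ yields $\OD^{\alpha_0 m_0}_{x_0}$ as a $\Sigma_1$-definable set, and the rank function, and hence $<^{\alpha_0 m_0}_{x_0}$, inherits $\Sigma_1$-definability by bounded quantification over the lex order on quadruples.

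Item (\ref{item:OD^alpha_in_J_beta}) is then a corollary: applying item (\ref{item:OD^alpha_function_Sigma_1_def}) with $\beta$ replaced by $\alpha+\om$ (or just at $\SS_\alpha$ itself) shows that $(\OD^{\alpha m}_x,<^{\alpha m}_x)$ is $\Sigma_1^{\SS_\alpha}(x)$, so it belongs to $\SS_{\alpha+\om}$ by closure of the $\Ss$-hierarchy; and $\SS_{\alpha+\om}\sub\Jj_\beta$ because $\alpha<\beta\in\Lim_0$ forces $\alpha+\om\le\beta$. The hard part will be step (c): verifying that the canonical normalization loses no elements of $\OD^{\alpha_0 m_0}_{x_0}$ and does not inflate the $\Sigma_{m_0}$ complexity beyond what the rank bookkeeping tolerates, and that the resulting $\Sigma_1$ formula is genuinely uniform across the regimes $\beta<\om_1$ (where ``countable ordinal'' must be handled through real codes in $\WO$) and $\beta\ge\om_1$ (where, per the remark after Definition~\ref{dfn:OD^beta}, $\gamma$ is directly an internal ordinal parameter of $\SS_\beta$).
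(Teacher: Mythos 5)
Your overall architecture — (4) is the workhorse, (3) is a corollary of it — matches the paper's, but your treatment of item (\ref{item:OD^alpha_sub_OD^beta}) is circular and this is a real gap, not a detail. You declare (\ref{item:OD^alpha_sub_OD^beta}) and (\ref{item:<^alpha_is_inseg_<^beta}) ``formal'' on the grounds that any quadruple $(\alpha',m',\gamma,\varphi)$ witnessing $y\in\OD^{\alpha'm'}_x$ with $(\alpha',m')\le_\lex(\alpha,m)$ is \emph{a fortiori} a quadruple in the $(\beta,n)$-range. That is true, but it does not give $y\in\OD^{\beta n}_x$. Under Definition~\ref{dfn:OD^beta}, membership in $\OD^{\beta n}_x$ means there exist $\gamma<\om_1$ and a $\Sigma_n$ formula $\varphi$ with $z=y\iff\SS_{\beta}\models\varphi(z,x,w)$ for all $w\in\WO_\gamma$; it is a definition over the \emph{single} level $\SS_\beta$ with a $\Sigma_n$ formula. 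A witness at level $(\alpha',m')$ refers to satisfaction over $\SS_{\alpha'}$, not $\SS_\beta$. Concluding that a witness at a lower level yields membership at level $(\beta,n)$ is precisely what item (\ref{item:OD^alpha_sub_OD^beta}) asserts, so your argument is using the conclusion as a step. Equivalently, your quadruple-rank is a wellorder on $\bigcup_{(\alpha',m')\le_\lex(\beta,n)}\OD^{\alpha'm'}_x$, and you have silently identified this union with $\OD^{\beta n}_x$; that identification is (\ref{item:OD^alpha_sub_OD^beta}).

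The paper's proof treats (\ref{item:OD^alpha_sub_OD^beta}) as the substantive part and (\ref{item:OD^alpha_in_J_beta}), (\ref{item:OD^alpha_function_Sigma_1_def}) as the easy inductive input: to show $y\in\OD^{\beta1}$ from $y\in\OD^{\alpha m}$ with $\alpha<\beta$, it observes that by determinacy in $\J_\beta$ the wellorder $<^{\alpha m}$ is countable, fixes $\gamma=|y|^{\alpha m}<\om_1$ as the countable-ordinal parameter, and then exhibits the $\Sigma_1$ formula over $\SS_\beta$ asserting ``$\exists\alpha_0,m_0\ [z\in\OD^{\alpha_0m_0}\wedge|z|^{\alpha_0m_0}=|w|]$''; this formula is $\Sigma_1$ because the function of (\ref{item:OD^alpha_function_Sigma_1_def}) is $\Sigma_1^{\SS_\beta}$ by induction, and it is invariant in $w\in\WO_\gamma$ by the initial-segment coherence (\ref{item:<^alpha_is_inseg_<^beta}) at the lower levels. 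You need some version of this conversion step. Your quadruple definition of the rank could be used here, but you cannot skip the step of manufacturing a genuine $\Sigma_1^{\SS_\beta}$ witness for $y$ from its rank. I would also flag your normalization step (c) in (\ref{item:OD^alpha_function_Sigma_1_def}): rather than restricting to canonical formulas of the shape $w\in\WO\wedge\psi(z,x,\ot(w))$ — which requires you to verify you lose nothing — it is cleaner, once you have the $\Sigma_1$ graph from the induction hypothesis, to express ``$z\in\OD^{\alpha_0m_0}_{x_0}$'' as ``$\exists S,R\ [(S,R)=(\OD^{\alpha_0m_0}_{x_0},<^{\alpha_0m_0}_{x_0})\wedge z\in S]$'', sidestepping the $\Pi_1$ invariance clause entirely.
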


\begin{proof}
By induction on $(\beta,n)$.
 For notational simplicity assume $x=\emptyset$.
 The case $n>1$ is immediate by induction, so assume $n=1$ and $\alpha<\beta$.
Parts \ref{item:OD^alpha_in_J_beta} and \ref{item:OD^alpha_function_Sigma_1_def}
are then easy by induction, and part \ref{item:<^alpha_is_inseg_<^beta}
will follow from part \ref{item:OD^alpha_sub_OD^beta}
and the definition of $<^{\beta 1}$ (the precise details of which
were left to the reader), so we just 
need to verify part \ref{item:OD^alpha_sub_OD^beta}.

Let $y\in\OD^{\alpha m}$;
 we want to see that $y\in\OD^{\beta 1}$.
 We have $\OD^{\alpha m}\in\SS_\beta$, etc.
By determinacy in $\J_{\beta}$, $<^{\alpha m}$ has countable 
length. 
Let
\[ \gamma=|y|^{\alpha m}<\om_1. \]

By induction and parts \ref{item:OD^alpha_in_J_beta},
\ref{item:OD^alpha_function_Sigma_1_def}
it is easy to see that for $w\in\WO_{\gamma}$
and $z\in\RR$, we have $z=y$ iff
\[ \SS_\beta\sats\exists\alpha_0,m_0\ [
 z\in\OD^{\alpha_0 m_0}\text{ and }|z|^{\alpha_0 m_0}=|w|].\]
So $y\in\OD^{\beta 1}$, as desired.
\end{proof}

\begin{lem} For $\beta\in\Lim$, we have
\[ \OD^{\beta1}_x=\bigcup_{\delta\in\beta\inter\Lim_0\textup{ and 
}1\leq n<\om}\OD^{\delta 
n}_x.\]\end{lem}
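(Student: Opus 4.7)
The $\supseteq$ direction is immediate from Lemma~\ref{lem:<^alpha_increasing}(\ref{item:OD^alpha_sub_OD^beta}): for $\delta\in\beta\cap\Lim_0$ and $1\leq n<\om$ one has $(\delta,n)<_\lex(\beta,1)$, hence $\OD^{\delta n}_x\sub\OD^{\beta 1}_x$.

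For $\subseteq$, I plan to fix $y\in\OD^{\beta 1}_x$ together with witnessing $\gamma<\om_1$, $\Sigma_1$ formula $\varphi$, and some $w_0\in\WO_\gamma$, so that $\SS_\beta\sats\varphi(y,x,w_0)$. Step (i) is to produce $\delta\in\beta\cap\Lim_0$, $1\leq n<\om$, and a $\Sigma_n$ formula $\psi$ such that $\SS_\delta\sats\psi(y,x,w_0)$ and moreover, for every $z,w'\in\RR$, $\SS_\delta\sats\psi(z,x,w')\iff\SS_\beta\sats\varphi(z,x,w')$. I would split on the shape of $\beta$. If $\beta$ is a limit of limits, then $\SS_\beta=\bigcup_{\delta\in\beta\cap\Lim_0}\SS_\delta$, so $\Sigma_1$ downward reflection delivers some $\delta\in\beta\cap\Lim_0$ with $\SS_\delta\sats\varphi(y,x,w_0)$; take $\psi=\varphi$ and $n=1$, and the required equivalence is just $\Sigma_1$ upward absoluteness. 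If instead $\beta=\om(\mu+1)$ for some $\mu$, so that $\SS_\beta$ is the rud closure of $\SS_{\om\mu}\cup\{\SS_{\om\mu}\}$, then every element of $\SS_\beta$ has the form $t(\vec u,\SS_{\om\mu})$ for some rud term $t$ and parameters $\vec u\in\SS_{\om\mu}$, so the standard fine-structural unfolding produces a $\Sigma_n^{\SS_{\om\mu}}$ formula $\psi$ whose interpretation in $\SS_{\om\mu}$ agrees with the interpretation of $\varphi$ in $\SS_\beta$; take $\delta=\om\mu$.

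Step (ii) is to repackage the $\Sigma_n^{\SS_\delta}$ definition of $\{y\}$ so as to meet the uniformity condition in Definition~\ref{dfn:OD^beta}: the defining formula must work for every $w\in\WO_\gamma$, not only for the particular $w_0$. My plan is to take the existential closure
\[ \varphi^*(z,x,w)\;:=\;\exists w'\in\dot{\RR}\bigl(w\cong w'\;\wedge\;\psi(z,x,w')\bigr), \]
which remains $\Sigma_n$ in the $L(\RR)$ language since ``$w\cong w'$'' is $\Sigma^1_1$ and quantifiers bounded by $\dot{\RR}$ are counted as bounded. For $w\in\WO_\gamma$, existence of a $\varphi^*$-witness at $z=y$ uses $w'=w_0$ (both in $\WO_\gamma$, hence $w\cong w_0$); uniqueness uses that any witness $w'$ must itself lie in $\WO_\gamma$, so by the equivalence established in step (i), $\SS_\beta\sats\varphi(z,x,w')$, whence the original defining property of $y$ forces $z=y$. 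The main (minor) obstacle is the case split and fine-structural unfolding in step (i) (specifically the successor-limit case, which covers the base situation $\beta=\om$); step (ii) is routine.
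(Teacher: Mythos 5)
Your proof is essentially correct and follows the same underlying approach as the paper's: reflect or translate the $\Sigma_1$ fact down to a level $\SS_\delta$ with $\delta<\beta$, then use the isomorphism-closure trick (existentially closing over $w'\cong w$) to make the resulting definition work uniformly over all $w\in\WO_\gamma$ rather than just the particular witness $w_0$. The paper's version of the trick is the clause ``$\exists w''\,[w''\in\LO\wedge w''\iso w'\wedge\varphi(z,w'')]$,'' which is the same device.

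Two small remarks. First, the biconditional you announce in step (i) --- that $\SS_\delta\models\psi(z,x,w')\iff\SS_\beta\models\varphi(z,x,w')$ for \emph{all} $z,w'$ --- is overstated in the limit-of-limits case: with $\psi=\varphi$ and $\delta$ obtained by reflection, $\Sigma_1$ upward absoluteness gives only $\SS_\delta\models\varphi\Rightarrow\SS_\beta\models\varphi$; the converse fails for $(z,w')$ whose witness appears only above $\SS_\delta$. Fortunately your step (ii) uses only the two facts that actually are available: $\SS_\delta\models\psi(y,x,w_0)$ for the specific $w_0$, and the one-directional implication $\SS_\delta\models\psi\Rightarrow\SS_\beta\models\varphi$. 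So the argument is sound once step (i) is weakened to exactly those two statements. Second, where you case-split on the shape of $\beta$, the paper instead takes the least level $\beta'\leq\beta$ at which the witness appears and observes this must be a successor-limit $\delta+\om$ (since a limit of limits would reflect), then uses a compactness/symmetry argument to drop to a finite stage $\delta+k$; the paper also handles $\beta\geq\om_1$ separately via the alternative characterization of $\OD^{\beta n}$. These are organizational variants of the same reflection/translation step, and your treatment is if anything slightly more uniform.
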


\begin{proof}
Assume  $x=\emptyset$.
We have $\supseteq$ by
\ref{lem:<^alpha_increasing}. We verify $\sub$.
If $\beta\geq\om_1$ this follows 
from the characterization of $\OD^{\beta1}$ given by 
\ref{dfn:OD^beta}(\ref{eqn:charac_OD^beta}) and standard calculations. Suppose 
$\beta<\om_1$. Let $y\in\OD^{\beta 1}$, as witnessed by $\varphi,\gamma$.
Let 
$\beta'\in(\beta+1)\inter\Lim$
be least such that for some $w\in\WO_\gamma$, we have
\[ \J_{\beta'}\sats\varphi(y,w).\]
Note that
for all $w'\in\WO_\gamma$ and all $z\in\RR$, we have
\[ z=y\iff\J_{\beta'}\sats\exists w''\ [w''\in\LO\text{ and }w''\iso 
w'\text{ and }\varphi(z,w'')].\]
By minimality, $\beta'=\delta+\om$,
where $\delta\in\Lim_{0}$.
The symmetry in the real code $w'$ above ensures that there is actually some 
$k<\om$ such that for all $w'\in\WO_\gamma$ and $z\in\RR$, we have
\[ z=y\iff\J_{\delta+k}\sats\exists w''\ [w''\in\LO\text{ and 
}w''\iso w'\text{ and }\varphi(z,w'')].\]
Standard calculations
now show that $y$ is $\OD^{\delta n}$ for some $n<\om$.
\end{proof}

\subsection{Conjectures of Rudominer and Steel}\label{subsec:rudo-steel_conj}

In \cite{twms}, Steel made the following two conjectures:

\begin{conj}[Steel]\label{conj:steel1} Assume $\AD^{L(\RR)}$, and let $\mathcal{M}$ be an inner model operator in $L(\RR)$;
 then for a cone of reals $x$,
 there is a wellorder of $\mathcal{M}(x)$ in $L(\mathcal{M}(x))$.
\end{conj}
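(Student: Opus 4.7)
The plan is to combine Martin's cone theorem with the fact that ordinally definable reals are scarce under $\AD^{L(\RR)}$, and then exhibit a wellorder of $\mathcal{M}(x)$ as a byproduct of coding $\mathcal{M}(x)$ by a real together with a set of ordinals. I would first apply Martin's cone theorem to the statement ``$L(\mathcal{M}(x))$ contains a wellorder of $\mathcal{M}(x)$'' --- this property is Turing invariant because $\mathcal{M}$ is definable in $L(\RR)$ --- so it is either true on a cone or false on a cone, and it suffices to rule out the latter.

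On this cone I would next argue that $\mathcal{M}(x)$ has small real part: every element of $\mathcal{M}(x)$ is $\OD^{L(\RR)}$ from $x$, and under $\AD^{L(\RR)}$ the set $\RR \cap \OD^{L(\RR)}_x$ is countable (for instance, it is contained in the transitive collapse of a countable $\Sigma_1$-hull of $L(\RR)$ containing $x$; equivalently, in $L[T,x]$ for $T$ the tree of a universal $\bfPi^2_1$ scale). So $\RR \cap \mathcal{M}(x)$ is countable, coded by some real $r$. Now I would invoke the canonical stratification $<^{\beta n}_x$ of the $\OD^{\beta n}_x$ hierarchy from Lemma~\ref{lem:<^alpha_increasing}, to read off a set of ordinals $A$ (encoding, for each element of $\mathcal{M}(x)$, its rank in the appropriate $<^{\beta n}_x$ and the set of ranks of its $\in$-members) such that $\mathcal{M}(x)$ is bi-interpretable with $(r,A)$. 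In $L(\RR)$ we then have $\mathcal{M}(x) \subseteq L[A,r,x]$.

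The third step is to internalize this: one wants $A \in L(\mathcal{M}(x))$, for then $L[A,r,x] \subseteq L(\mathcal{M}(x))$, $L[A,r,x]$ satisfies $\AC$, and the restriction of its canonical wellorder to $\mathcal{M}(x)$ is the desired wellorder in $L(\mathcal{M}(x))$. The obstacle here is the main one: a priori, $A$ is defined using the $\OD_x^{L(\RR)}$ hierarchy, which $L(\mathcal{M}(x))$ need not see. The approach I would take is to show that $\mathcal{M}(x)$ internally computes its own enumeration as an $\OD_x$ object. For mouse-like operators ($\mathcal{M}_n$, derived-model operators, and variants), this is done via fine structure and comparison: the mouse-theoretic wellorder of $\mathcal{M}(x)$ already lives in $\mathcal{M}(x)$, trivially giving the wellorder in $L(\mathcal{M}(x))$. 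For a general inner model operator in $L(\RR)$, one must reconstruct the relevant fragment of the $\OD_x$ hierarchy from the internal structure of $\mathcal{M}(x)$ alone, and this is precisely the kind of realization of $L(\RR)$-segments inside a premouse that the body of the paper is designed to furnish; reducing the conjecture to such a realization, rather than establishing it outright, is presumably the best one can hope to do at this level of generality.
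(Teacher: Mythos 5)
Notice first that the statement you are addressing is labeled a \emph{Conjecture} in the paper (attributed to Steel, from \cite{twms}); it is not proved there, and it remains open in full generality. What the paper establishes (modulo details postponed to a later installment) are certain instances of the weaker Conjecture~\ref{conj:weak}, recorded in Expected Theorem~\ref{tm:conjectures_hold}, together with the citation to \cite{twms} for the observation that these yield the corresponding instances of Conjecture~\ref{conj:steel1}. Your proposal is likewise not a proof: your closing paragraph concedes that the decisive ``internalization'' step --- showing $L(\mathcal{M}(x))$ recovers the set $A$ of $\OD_x$-ranks --- is exactly the hard part and is not carried out. So the status of the proposal matches the status of the statement: it is a sketch of a reduction, not a proof, and should not be presented as closing the question.

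As for the strategy, it genuinely diverges from the paper's. You propose to code $\mathcal{M}(x)$ by a real $r$ and a set of ordinals $A$ obtained from the $<^{\beta n}_x$ wellorders, and then to argue that $L(\mathcal{M}(x))$ can reconstruct $A$. The paper never touches Conjecture~\ref{conj:steel1} directly. It passes through the reduction of \cite{twms} (inner model operators captured by mouse operators on a cone) to Conjecture~\ref{conj:steel2}, and thence to the Rudominer--Steel Conjectures~\ref{conj:RS} and~\ref{conj:weak}, whose content is: for a suitable $\om$-small iterable mouse $M$, find $\gamma$, a $\Sigma_1$-elementary $\pi : \Ss_\gamma(\RR^M) \to \SS_\beta$, and a wellorder of $\RR^M$ definable over $\Ss_\gamma(\RR^M)$. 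All of the paper's technology --- the S-gap $[\alphag,\betag]$ attached to $M$, the $\mathscr{M}$-hierarchy, the mtr-suitable premouse $\Pg$, the $\RR$-genericity iterates, the Prikry forcing $\PP$, and the fine forcing correspondence of Lemma~\ref{lem:fs_correspondence} --- is aimed at that reformulated target. One further caution on your sketch: the remark that for mouse-like operators ``the mouse-theoretic wellorder of $\mathcal{M}(x)$ already lives in $\mathcal{M}(x)$, trivially giving the wellorder in $L(\mathcal{M}(x))$'' is harmless when $\mathcal{M}(x)$ is literally a premouse, but it elides the genuinely hard case that Conjecture~\ref{conj:steel2} isolates: there the wellorder must lie in $L(\RR \cap M)$, not in $L(M)$, and the extender sequence $\es^M$ (hence the mouse wellorder) is not a priori a member of $L(\RR \cap M)$. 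That gap is exactly what the Rudominer--Steel conjectures, and the bulk of this paper, are about.
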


\begin{conj}[Steel]\label{conj:steel2} Let $M$
be a countable, $\om$-small,
$(0,\om_1+1)$-iterable premouse;
then there is a wellorder of $\RR\cap M$ in $L(\RR\cap M)$.
\end{conj}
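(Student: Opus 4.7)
The plan is to reduce the conjecture to showing that every real of $M$ is ordinal-definable in $L(\RR \cap M)$. By Lemma \ref{lem:<^alpha_increasing}, the canonical wellorders $<^{\beta n}$ of $\OD^{\beta n}$ are uniformly $\Sigma_1^{\SS_\beta}$-definable, so once $\RR \cap M \sub \OD^{L(\RR \cap M)}$ is known, a wellorder of $\RR \cap M$ is immediately definable in $L(\RR \cap M)$ from the collection of pairs $(\beta,n)$ witnessing membership.

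First, I would analyze $L(\RR^M) = (L(\RR))^M$ from the point of view of its internal $\Sigma_1$-gap structure. Using $\om$-smallness of $M$, I would locate the $\Sigma_1$-gap $[\alpha, \beta]$ of $L(\RR^M)$ in which $\OR^M$ sits (or, if $L(\RR^M)$ is shorter than $\OR^M$, the top gap). I would then pick a cutpoint initial segment $P \pins M$ that is, in the sense described in \S\ref{subsec:gaps}, stably minimal with respect to failure of iterability in $\Jj_\alpha(\RR^M)$ and projects to $\om$. Next, I would apply the main derived-model realization of this paper: take an $\RR^M$-genericity iterate $P'$ of $P$, so that $\RR^M \sub P'[G]$ for an appropriate $\BB^{P'}$-generic $G$, and realize $\M_{\beta^*}$ --- coding $\Jj_\beta(\RR^M)$ in the weak-gap case and a version of $\Jj_{\beta+1}(\RR^M)$ in the strong-gap case --- as the derived model of $P'$. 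Each $x \in \RR \cap M$ is then named by an $\OD^{\M_{\beta^*}}$ $\BB^{P'}$-term, because the iterate $P'$ is itself canonical (guided by Q-structures lying in $\Jj_\alpha(\RR^M)$) and the level-by-level Prikry forcing relation over the $\M$-hierarchy analyzed in this paper is $\M_{\beta^*}$-definable in the parameters needed. Translating this back through the coding, every real of $M$ is $\OD$ in $L(\RR^M)$, which closes the reduction.

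The main obstacle is the collection of edge cases where the derived-model machinery of this paper does not directly apply: projective-like gaps (where $\alpha = \beta$ and there is no admissibility to exploit); the very beginning of an admissible gap, where $\OR^M$ can land before the level at which the $\M_{\beta^*}$-realization ``kicks in''; and cases where $M$ has fewer than $\om$ Woodin cardinals, so that there is no appropriate $P$ with $\lambda^P$ of the required form and the derived-model tool must be replaced by direct fine-structural recovery of reals via the $(\om,\om_1+1)$-strategy of $M$. Integrating these cases into a single uniform $L(\RR^M)$-definition of the wellorder, and in particular reconciling the weak-gap and strong-gap variants of the realization at the top of $L(\RR^M)$, appears to be the technically hardest step, and is presumably part of what the ``further work, yet to appear'' mentioned in the abstract is meant to carry out.
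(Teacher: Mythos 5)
The statement you are addressing is a \emph{conjecture}, not a theorem of this paper, and the paper does not prove it. The only progress recorded here is Expected Theorem~\ref{tm:conjectures_hold}, which claims Conjecture~\ref{conj:steel2} under two additional hypotheses that you do not acknowledge: $M\sats$``$\om_1$ exists'', and $M$ is of weak, strong, successor-weak or successor-strong type. Even that claim is flagged as ``expected'' with the proof not fully written down. So a blind proof ``proposal'' cannot be correct, and the real question is how accurately you describe the partial-result method and whether you correctly locate what remains open.

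On the method, your sketch tracks the paper's outline at a high level, but there are concrete inaccuracies. You say you would ``pick a cutpoint initial segment $P\pins M$''; in the paper, $P$ is \emph{not} an initial segment of $M$. It is an mtr-suitable, $\Gammag$-stable premouse over some sufficient $X$, produced in $\HC^M$ by Lemma~3.3.7, and its ordinal height is dictated by the gap structure, not by $M$'s extender sequence. You also say the realization codes ``$\Jj_{\beta+1}(\RR^M)$ in the strong-gap case''; the paper in fact shows (Theorem~\ref{tm:Martin}, and again by inner-model-theoretic means later) that $M$ cannot be of strong type, so there is no separate strong-gap realization to carry out — the four named types collapse to three in effect. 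Finally, your intended conclusion ``$\RR\cap M\sub\OD^{L(\RR\cap M)}$'' would indeed yield \emph{some} wellorder in $L(\RR^M)$, but the derived-model analysis of the paper aims at, and needs, the sharper Conjectures~\ref{conj:RS}/\ref{conj:weak}, which pin the wellorder at a fixed finite level of definability over a specific $\Ss_\gamma(\RR^M)$; you would not get the embedding $\pi$ or the exact placement out of a crude $\OD$ argument.

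The more serious gap is your treatment of the cases you call ``edge cases.'' These are not technicalities to be reconciled in a ``single uniform $L(\RR^M)$-definition'': they are the genuinely open part of the conjecture. The paper explicitly excludes the Limit-uncountable, Limit-countable, and Successor-inadmissible types from its Expected Theorem, and the hypothesis $M\sats$``$\om_1$ exists'' is also essential to the arguments in \S\ref{sec:start_of_gap} and is stated as a hypothesis of Conjecture~\ref{conj:weak}. The situation where $M$ has fewer than $\om$ Woodin cardinals is not the problem you suggest — one passes to a mouse $P$ in $\HC^M$ with $\om$ Woodins over a base, so $M$ itself need not have them — but the gap types and the $\om_1^M$ hypothesis are genuine restrictions. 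Presenting the remainder as if it were a routine integration step misrepresents the state of the problem: after this paper and its announced sequel, Conjecture~\ref{conj:steel2} is still open.
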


And Rudominer and Steel made the following conjecture  in  \cite{rudo_steel}:

\begin{conj}[Rudominer, Steel, 1999]\label{conj:RS} Assume $\AD^{L(\RR)}$.\footnote{Presumably this is with the background assumption of ZFC in $V$.} Let $M$ be an
iterable
countable  $\om$-small premouse. Then there are $\gamma\in\Lim_0$ and $\beta\in\Lim_0\cup\{\OR\}$, $n<\om$ and $\pi$ such that:
\begin{enumerate}
 \item $\RR^M=\Ss_{\gamma}(\RR^M)\cap\RR$,
 \item $\pi:\Ss_{\gamma}(\RR^M)\to\SS_\beta$ is $\Sigma_n$-elementary, and
 \item there is a wellorder of $\RR^M$ which is $\Delta_{n+1}^{\J_{\gamma}(\RR^M)}(\{x\})$ for some $x\in\RR^M$.
\end{enumerate}
\end{conj}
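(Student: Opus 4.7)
The plan is to locate $M$ inside the mouse order corresponding to the $\Sigma_1$-gap structure of $L(\RR)$, use the derived-model realization sketched in \S\ref{subsec:gaps} to produce the embedding $\pi$, and extract a wellorder of $\RR^M$ from the fine structure of $M$ via condensation and absoluteness across $\pi$.

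First I would associate to $M$ a $\Sigma_1$-gap $[\alpha,\beta]$, roughly by taking $\beta$ least such that $\Sigma_M\notin\Jj_\beta(\RR)$, so that $M$ plays the role of the ``stably minimal'' mouse $P$ of \S\ref{subsec:gaps}. Simultaneously, set $\gamma$ so that $\Ss_\gamma(\RR^M)$ corresponds inside $M$ to the $\M$-hierarchy built on top of $M'|\lambda^{M'}$ up to the ordinal $\beta^*$ of the introduction, where $M'$ is an $\RR$-genericity iterate of $M$ via $\Sigma_M$. The equality $\RR^M=\Ss_\gamma(\RR^M)\cap\RR$ is then forced: any new real appearing in $\Ss_\gamma(\RR^M)$ would arise from a $\Sigma_k$ Skolem hull at some level below $\gamma$ and would trigger a fine-structural drop in $M$ contradicting either $\omega$-smallness or the minimality of $\beta$. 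The subcase where $M$ has only finitely many Woodins is handled separately at the appropriate projective-like gap, using the projective analogues of $\Lp$-operator reflection.

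Next, construct $\pi$. Taking $M'$ as above, apply the main derived-model analysis of the paper to identify $\M_{\beta^*}$ sitting above $M'|\lambda^{M'}$ with $\Jj_\beta(\RR)$ in the codes (and on the nose when $\Theta$ exists in $\Jj_\beta(\RR)$). Composing the iteration map $M\to M'$ with the decoding of $\M_{\beta^*}$ yields an embedding $\pi:\Ss_\gamma(\RR^M)\to\SS_\beta$; the elementarity level $n$ is forced to be the $n$ with $\rho_{n+1}^{\Jj_\beta(\RR)}=\RR$, which matches the fine-structural collapse level in $M$. Finally, because $M$ is $\omega$-small and projects at the matching level, the construction-order wellorder of $M$ restricted to $\RR^M$ has a natural $\Sigma_{n+1}^{\Jj_\gamma(\RR^M)}(\{x\})$ definition for an appropriate $x$ coding $\Sigma_M\rest\HC^M$; the complementary $\Pi_{n+1}$ definition is supplied by $\Sigma_n$-elementarity of $\pi$ combined with absoluteness of iteration-tree wellfoundedness across $\pi$, yielding $\Delta_{n+1}$-definability as required.

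The main obstacle is the construction of $\pi$ in the weak-gap case. There $\Jj_\beta(\RR)$ need not satisfy ``$\Theta$ exists'', the local Martin-measure Prikry forcing $\PP$ becomes a proper class of $\Jj_\beta(\RR)$, and the failure of $\mu^{<\omega}$-closure of $\Jj_\eta(\HC,T)$ forces the passage to the $\M$-hierarchy and a level-by-level analysis of the $\PP$-forcing relation, exactly as sketched in \S\ref{subsec:gaps}. Equally delicate is the prerequisite correctness-of-mice input: verifying that at each strong cutpoint $\delta<\lambda^M$ one has $M|\delta^{+M}=\Lp_\Gamma(M|\delta)$ for $\Gamma=\Sigma_1^{\Jj_\alpha(\RR)}$, so that $M$ computes its own $\Lp$-closure below $\lambda^M$ correctly. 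This is the mouse-correctness statement flagged in the paper's title, and without it the match between the internal $\Ss_\gamma(\RR^M)$-hierarchy and the external $\M$-hierarchy cannot be set up.
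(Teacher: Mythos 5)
This is a conjecture, not a theorem proven in the paper. Conjecture~\ref{conj:RS} is the 1999 Rudominer--Steel conjecture, stated in the paper as an open problem; the paper does \emph{not} prove it, and indeed does not even claim to. What the paper aims at is Expected Theorem~\ref{tm:conjectures_hold}, which asserts only the \emph{weaker} Conjecture~\ref{conj:weak} (demanding merely $\Sigma_1$-elementarity of $\pi$ and definability of the wellorder over $\Ss_\gamma(\RR^M)$, not the $\Sigma_n$-elementarity and $\Delta_{n+1}$-definability demanded by~\ref{conj:RS}), only in the cases where $M$ is of weak, strong, successor-weak, or successor-strong type, with the added hypothesis $M\models$``$\omega_1$ exists''; and moreover the paper explicitly says the proof of even that Expected Theorem is not fully written down. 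The paper further notes its proof ``will in fact yield something intermediate between Conjectures~\ref{conj:weak} and~\ref{conj:RS}'', which still falls short of the full conjecture, and the limit-uncountable, limit-countable and successor-inadmissible cases are not addressed at all. So your proposal cannot be compared against a proof of this statement, because none exists in the paper.

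Beyond that structural point, the proposal itself has gaps that the paper's actual partial analysis is careful to avoid. You say to take $M$ itself as the ``stably minimal'' mouse $P$ of \S\ref{subsec:gaps}; but the paper's $P$ (called $\Pg$ in \S\ref{sec:through_gap}) is \emph{not} $M$ -- it is a separate, fully projecting, mtr-suitable $\xg$-premouse located inside $\HC^M$ (see the paragraph after Remark~\ref{rem:M,g} and Lemma~\ref{lem:mtr-suitable_mouse}), and the whole point is to run the derived-model analysis simultaneously over $\RR$ (using an $\RR$-genericity iterate of $P$) and over $\RR^M$ (using an $\RR^M$-genericity iterate of $P$), then compare the two. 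Conflating $M$ with $P$ makes the comparison step disappear, which is where the embedding $\pi$ actually comes from. Second, you assert that $n$ can be taken to be the $n$ with $\rho_{n+1}^{\SS_\beta}=\RR$ and that $\pi$ can be upgraded to $\Sigma_n$-elementarity ``matching the fine-structural collapse level''; this is exactly the content of~\ref{conj:RS} that goes beyond~\ref{conj:weak}, and nothing in the derived-model machinery presented produces more than $\Sigma_1$-elementarity of $\pi$ plus the fine-structural correspondence of Lemma~\ref{lem:fs_correspondence}, which operates in the $\mSigma_n$/$\rSigma_n$ hierarchies of the $\M$-hierarchy and the generic premouse, not directly in the $\Sigma_n$ hierarchy of $\SS_\beta$. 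Third, you appeal to ``absoluteness of iteration-tree wellfoundedness across $\pi$'' to get the $\Pi_{n+1}$ side of the wellorder definition, but you have not specified what $x$ codes, and when $M$ fails to satisfy ``$\omega_1$ exists'' the internal definability of $\Sigma_M\rest\HC^M$ over $\Ss_\gamma(\RR^M)$ is precisely the delicate point addressed by Assumption~\ref{ass:Sigma_M_not_Gamma-guided} and the surrounding lemmas, not something that follows by a general absoluteness principle.
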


In this paper we make some progress
toward this conjecture, focusing on a case
of the following weaker variant,
which is just the weaker conjecture stated in \cite{rudo_steel}, except that we add the assumption that $M\sats$``$\om_1$ exists''.
\begin{conj}\label{conj:weak}
Assume ZF + $\AD^{L(\RR)}$.
Let $M$ be a $(0,\om_1)$-iterable
countable $\om$-small premouse satisfying ``$\om_1$ exists''. Then there are $\gamma,\beta,\pi$ as in Conjecture \ref{conj:RS}, except that we only demand
that $\pi$ be $\Sigma_1$-elementary and
the wellorder of $\RR^M$ be definable over $\Ss_\gamma(\RR^M)$ from some $x\in\RR^M$.
\end{conj}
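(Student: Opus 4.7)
The plan is to use the derived-model realization developed in this paper as the bridge between $M$ and $L(\RR)$. First I would identify $\gamma\in\Lim_0$ as the end of the appropriate (admissible, and in the main case weak) gap in $L(\RR^M)$: concretely, a $\gamma$ with $\Ss_\gamma(\RR^M)\cap\RR=\RR^M$ chosen so that $M$'s fine structure above $\RR^M$ mirrors the gap structure of $L(\RR^M)$ up to $\gamma$. That some such $\gamma$ exists is supported by the hypothesis $M\sats$``$\om_1$ exists'', which ensures $M$ is large enough to see the relevant $L(\RR^M)$-construction internally and to carry out the analogue of the Rudominer--Steel analysis inside $M$.

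The core step is producing the $\Sigma_1$-elementary embedding $\pi\colon\Ss_\gamma(\RR^M)\to\SS_\beta$. For this I would invoke the paper's derived-model theorem: in a generic extension of $V$ locate a premouse $P$ with $\om$ Woodins that is ``stably minimal'' with respect to non-iterability in $\Jj_\alpha(\RR)$, such that the $\M$-hierarchy associated to the weak gap $[\alpha,\beta]$ realizes $\SS_\beta$ as a derived model of an $\RR$-genericity iterate $P'$ of $P$. Then coiterate the relevant initial segment of $M$ against $P'$; by $\om$-smallness and $(0,\om_1)$-iterability of $M$, this should terminate and embed $M$'s segment into $P'$ (or an iterate of it). Pulling this embedding through the derived-model correspondences on both sides gives $\pi$, and the $\Sigma_1$-elementarity is exactly what that correspondence is guaranteed to preserve. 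For the wellorder of $\RR^M$: by Definition \ref{dfn:OD^beta} and Lemma \ref{lem:<^alpha_increasing}, once we know that each real of $\RR^M$ is $\OD^{\delta n}_{x}$ inside $\Ss_\gamma(\RR^M)$ for some $\delta<\gamma$, $n<\om$ and parameter $x\in\RR^M$ (which should follow from $\om$-smallness together with the derived-model realization, since each real of $M$ traces to a countable stage of $M$'s construction), the restriction of $<^{\gamma 1}_x$ to $\RR^M$ wellorders $\RR^M$ and is definable over $\Ss_\gamma(\RR^M)$ from $x$.

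The main obstacle is the $\Sigma_1$-elementarity of $\pi$ in the case that $\gamma$ ends a weak gap with $\Jj_\beta(\RR)\sats$``$\Theta$ does not exist''. There the derived model of $P'$ is not literally $\SS_\beta$ but only an $\M$-hierarchy substitute, encoded using the Martin-measure speedup $\mu^{<\om}$ discussed in \S\ref{subsec:gaps}. Converting that $\M$-hierarchy correspondence into honest $\Sigma_1$-elementarity at the level of $\SS_\beta$ itself requires the level-by-level analysis of the Prikry forcing relation over $\Jj_\eta(\HC,T)$ that motivates the $\M$-hierarchy in the first place, and this is where the bulk of the technical work would lie; the fact that the conjecture only demands $\Sigma_1$-elementarity (rather than full $\Sigma_n$ as in Conjecture \ref{conj:RS}) is what makes this reduction plausibly achievable from the tools assembled in this paper.
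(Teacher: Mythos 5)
Your proposal is aimed at a statement that is, in the paper, a \emph{conjecture} rather than a theorem, and the paper only sketches (after Expected Theorem~\ref{tm:conjectures_hold}) the strategy for proving the cases it claims. Comparing against that sketch, your proposal does pick up the right ambient machinery — the stably minimal premouse $P$ with $\om$ Woodins, the $\M$-hierarchy, the derived-model realization of $\SS_\beta$ from an $\RR$-genericity iterate $P'$ — but the central mechanism you propose for producing $\pi$ is not the one the paper uses, and I do not think it works.

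You say: ``coiterate the relevant initial segment of $M$ against $P'$'' and then ``pull this embedding through the derived-model correspondences on both sides.'' This is where the argument breaks. First, $P'$ lives in a generic extension and has $\om_1^V$ as the supremum of its Woodins, so it is uncountable; a coiteration of a countable segment of $M$ against it does not produce a useful embedding of $M$-segments into $P'$. More importantly, $M$ is an arbitrary $\om$-small mouse with no reason to have $\om$ Woodins or any structural alignment with $P'$, so the comparison would not land in a way that one could then ``pull through'' the derived-model correspondence — the derived-model correspondence lives between iterates of $P$ and levels of the $\M$-hierarchy, not between $M$-segments and $P'$. What the paper actually does is different in a structurally essential way: $P$ is found \emph{inside} $\HC^M$ (via \S\ref{subsec:mtr}, so that $\HC^M$ is closed under $\Sigma_{P}$), and one iterates $P$ itself in two directions — to an $\RR^M$-genericity iterate $\bar{P}'$ and to an $\RR$-genericity iterate $P'$. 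The embedding $\sigma:\bar{\M}_{\bar\beta^*}\to\M_{\beta^*}$ is constructed from iteration maps on trees $\Tt_n$ above $\delta_n^{\bar P'}$ taking $\bar P'$ to variants $P_n'$, together with the symmetry of the derived model; $\pi$ is then induced from $\sigma$ via the correspondence between the $\M$-hierarchy and the $\SS$-hierarchy (Proposition~\ref{prop:JMhierarchy}, Remark~\ref{rem:between_hierarchies}). At no point is $M$ coiterated with $P'$; instead $M$ enters via the genericity of $\RR^M$ over $\bar P'$ and the definability of $M|\om_1^M$ over $\Ss_{\bar\beta}(\RR^M)$ (which also gives the wellorder of $\RR^M$, and simultaneously pins down $\gamma=\bar\beta$ as the degree of strong closure $\beta^M$; your proposal asserts the existence of the correct $\gamma$ but does not actually tie it to $\beta^M$, which is what makes the $\OD^{\gamma 1}_x$ restriction a wellorder of all of $\RR^M$ at that level and not earlier).

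Your concluding paragraph correctly identifies the ``$\Theta$ does not exist'' case as the technical crux, and correctly sees that the $\M$-hierarchy speedup is there to handle it. But you treat converting $\M$-hierarchy $\Sigma_1$-elementarity into $\SS$-hierarchy $\Sigma_1$-elementarity as an extra step the paper needs; in fact the paper has this built in from the start (the bookkeeping function $f$ of Definition~\ref{dfn:between_hierarchies} and the recursive inter-translatability of theories in Remark~\ref{rem:between_hierarchies}), so the reduction you flag as ``the bulk of the technical work'' is already absorbed into the definition of the $\M$-hierarchy and the level-by-level forcing analysis of \S\ref{sec:through_gap}, not something to be done afterward.
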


\begin{rem}\label{rem:M,g}
Note that if $\gamma$ witnesses Conjecture \ref{conj:weak}, then $\gamma$ is the least $\gamma'\in\Lim_{0}$ such that
$\RR^M$ is wellordered in $\Ss_{\gamma'+\om}(\RR^M)$,
the largest  $\gamma'\in\Lim_{0}$ such that
$\Ss_{\gamma'}(\RR^M)\sats\AD$,
and also the largest $\gamma'\in\Lim_0$ such that $\Ss_{\gamma'}(\RR^M)$ can be $\Sigma_1$-elementarily embedded
into $\J_{\beta}$ for some $\beta\in\Lim_0$.

Note also that if $\beta$ is taken as large as possible witnessing  Conjecture \ref{conj:weak}, and $\beta\neq\OR$,
then $\beta$ ends an S-gap of $L(\RR)$.
\end{rem}
Rudominer and Steel proved certain instances
of the conjectures above in \cite{rudo_steel}. We will verify some further instances of Conjecture \ref{conj:weak} in this paper. We explain this next.

\begin{dfn}
 Let $M$ be a premouse, $\alpha\in\Lim_{0}$ and $n<\om$.
For $x\in\RR^M$, we say that $M$ is 
\emph{$(\alpha,n+1)$-closed} iff
\[ \OD^{\alpha,n+1}_x\sub M\]
for each $x\in\RR^M$.
We say that $M$ is \emph{strongly $(\alpha,n+1)$-closed} iff
for each $x\in\RR^M$ there is $\xi<\om_1^M$ such that
\[ \OD^{\alpha,n+1}_x\sub M|\xi.\]

The \emph{degree of strong closure} of $M$, if it exists,
is the lexicographically least $(\alpha,n)\in\Lim_0\cross\om$
such that $M$ is not strongly 
$(\alpha,n+1)$-closed.
\end{dfn}

Note that the degree of strong closure of $M$ 
might be $(\alpha,0)$.

\begin{lem}\label{lem:beta_ends_a_gap}
Let $M$ be an $\om$-small mouse.
Then the degree of strong closure $(\beta,n)$ of $M$ exists. Moreover,
$\beta$ ends an S-gap of $L(\RR)$.
\end{lem}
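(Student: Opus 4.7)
I plan to argue by contradiction. Suppose $M$ is strongly $(\alpha, n+1)$-closed for all $(\alpha, n) \in \Lim_0 \cross \om$. Fix any $x \in \RR^M$. Taking the union over all $(\alpha, n)$,
\[ \OD^{L(\RR)}_x \cap \RR = \bigcup_{(\alpha, n)} \OD^{\alpha, n+1}_x \sub \RR^M. \]
But $M$ is a countable $\om$-small mouse, so $|\RR^M| = \aleph_0$, whereas under $\AD^{L(\RR)}$ one has $\aleph_1 \leq |\OD^{L(\RR)}_x \cap \RR|$: by $\Sigma_1$-uniformization in $L(\RR)$, the assignment $\gamma \mapsto y_\gamma \in \WO_\gamma$ is definable and yields $\om_1$-many distinct OD-in-$x$ reals. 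This contradiction gives existence.

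\textbf{$\beta$ ends an S-gap, setup.} Write $\beta = \om\beta_0$. If $\beta_0 = 0$ then $[0,0]$ is an S-gap and we are done, so assume $\beta_0 > 0$. Let $[\alpha', \beta']$ be the unique gap of $L(\RR)$ with $\alpha' \leq \beta_0 \leq \beta'$ and suppose for contradiction $\beta_0 < \beta'$. Since $\alpha' \leq \beta_0 < \beta'$ the gap is non-trivial, hence admissible, so $\SS_{\om\alpha'}$ is admissible.

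\textbf{Main argument.} The plan is to contradict the minimality of $(\beta, n)$ by showing $M$ is in fact strongly $(\beta, n+1)$-closed. The key tool is the fine-structural analysis of an admissible gap from \cite{scales_in_LR}: above the admissible base $\SS_{\om\alpha'}$, $\Sigma_k$-definability along the levels $\SS_\gamma$, $\gamma \in [\om\alpha', \om\beta']$, stratifies in a projective-like fashion, and $\Sigma_1$-types in real parameters reflect throughout the gap. Exploiting this, I expect to produce $\gamma < \beta$ and $m < \om$, depending only on $(\beta_0, n)$ and the gap, such that
\[ \OD^{\beta, n+1}_x \sub \OD^{\gamma, m+1}_x \]
for all $x \in \RR^M$. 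In the interior case $\alpha' < \beta_0 < \beta'$, take $\gamma = \om\alpha'$, using that $\SS_\beta$ is parameter-definable over $\SS_{\om\alpha'}$ from a countable ordinal; in the boundary case $\beta_0 = \alpha'$, use $\Sigma_1$-reflection at the start of the gap to push $\gamma$ strictly below $\om\alpha'$. By minimality of $(\beta, n)$, $M$ is strongly $(\gamma, m+1)$-closed, hence $\OD^{\beta, n+1}_x \sub M|\xi$ for some $\xi < \om_1^M$, contradicting the failure of strong $(\beta, n+1)$-closure.

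\textbf{Main obstacle.} The hard part will be rigorously justifying the inclusion $\OD^{\beta, n+1}_x \sub \OD^{\gamma, m+1}_x$ inside the gap, especially the boundary case $\beta_0 = \alpha'$ where one must locate $\gamma < \om\alpha'$ via reflection at the very start of the admissible gap. This requires the detailed description of the internal fine structure of admissible gaps (the behaviour of $\rho_k^{\SS_\gamma}$ within the gap, reflection of $\bfSigma_k$-types, and projective-like stratification above the admissible base) from \cite{scales_in_LR}, combined with the characterization of $\OD^{\beta, n+1}_x$ as reals whose singletons are $\Sigma_{n+1}^{\SS_\beta}$-definable from real plus countable-ordinal parameters.
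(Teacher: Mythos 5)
Your existence argument is correct and is a genuine alternative to the paper's: you argue by contradiction from the uncountability of $\OD^{L(\RR)}_x\cap\RR$, whereas the paper argues forward that $\RR^M\sub\OD^{L(\RR)}$ (by $\om$-smallness and iterability), hence $\RR^M\sub\OD^{\beta 1}$ for a single $\beta$ by countability of $\RR^M$, which directly defeats strong $(\beta,1)$-closure.

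For the S-gap claim there is a genuine gap, concentrated exactly where the paper's proof splits cases: $n=0$. Your plan is to contradict minimality by producing $(\gamma,m)<_{\lex}(\beta,n)$ with $\OD^{\beta,n+1}_x\sub\OD^{\gamma,m+1}_x$. When $n=0$ no such $(\gamma,m)$ can exist: by the lemma following \ref{lem:<^alpha_increasing}, $\OD^{\beta,1}_x=\bigcup_{\gamma<\beta,\,m}\OD^{\gamma,m}_x$, and in the situation at hand each piece is bounded in $M$ while the union is not, so the union increases strictly cofinally in $\beta$ and no single term absorbs it. This is precisely why the paper treats $n=0$ separately, observing instead that $\beta$ is \emph{least} with $\RR^M\sub\OD^{\beta,1}_x$; for $n>0$ it exhibits a real in $\OD^{\beta,n+1}_x\setminus\OD^{\beta,n}_x$. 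Even for $n>0$ your sketched justifications are shaky: the claim that $\SS_\beta$ is ``parameter-definable over $\SS_{\om\alpha'}$ from a countable ordinal'' cannot hold throughout the interior once the admissible gap $[\alpha',\beta']$ has length $>\om_1$ (there are not enough countable-parameter descriptions to cover it; the real mechanism is condensation of the relevant Skolem hulls, not definability of the level $\SS_\beta$ itself), and in the boundary case $\beta_0=\alpha'$ there is no ``$\Sigma_1$-reflection to $\gamma<\om\alpha'$'' to invoke --- $\SS_{\om\alpha'}$ projects to $\RR$, and admissibility yields $\Sigma_1$-collection inside the model, not reflection into proper initial segments. That boundary case is exactly where the hypothesis that $\RR^M$ is the set of reals of an iterable mouse must do work, and your argument never uses it after the existence step.
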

\begin{proof}
Because $M$ is $\om$-small, we have $\RR\inter M\sub\OD^{L(\RR)}$.
So there is $\beta\in\Lim_0$ such that $\RR\inter M\sub\OD^{\beta 1}$,
and so $M$ is not strongly $(\beta,1)$-closed.

Let $(\beta,n)$ be the degree of strong closure of $M$. Let $x\in\RR^M$
be such that for no $\xi<\omega_1^M$
is $\OD^{\beta,n+1}_x\sub M|\xi$.
If $n>0$ then $\OD^{\beta,n}_x\psub\OD^{\beta,n+1}_x$, which yields
that $\beta$ ends an S-gap.
Suppose $n=0$. Let $\left<x_m\right>_{m<\om}$ enumerate the reals of $M$. Then $\beta$ is least
such that $\{x_m\bigm|m<\om\}\sub\OD^{\beta,1}_x$, which also yields that $\beta$ ends an S-gap.
\end{proof}

\begin{dfn}
 Let $M$ be an $\om$-small mouse.
 Then $(\beta^M,n^M)$ denotes
 the degree of strong closure of $M$,
 and $\alpha^M$ is the start of the S-gap which ends at $\beta$.
\end{dfn}

\begin{dfn}
Let $[\alpha,\beta]$ be an S-gap of $L(\RR)$. If $\alpha$ is admissible, we say that $\beta$ is of type
\begin{enumerate}[label=--]
 \item \emph{Weak} iff $[\alpha,\beta]$ is a weak S-gap,
 \item \emph{Strong} iff $[\alpha,\beta]$ is a strong S-gap.
\end{enumerate}

If $\alpha$ is 
projective-like (so $\alpha=\beta$), we say that 
$\beta$ is of type
\begin{enumerate}[label=--]
 \item \emph{Limit-uncountable} iff $\cof^{L(\RR)}(\beta)>\om$,\footnote{
If $V\sats\AC_{\om}(\RR)$ then this is absolute between $V$ and $L(\RR)$.
For $\beta$ ending an S-gap implies that $\rho_\om^{\SS_\beta}=\RR$, and hence there is a 
 surjection $f:\RR\to\SS_\beta$ which is definable from parameters over $\SS_\beta$, so $f\in L(\RR)$. But then if $\cof^V(\beta)=\om$, then we can find a sequence $X=\left<x_n\right>_{n<\omega}$ of reals such that $f``X$ is cofinal in $\beta$, but $X\in L(\RR)$.}
 \item \emph{Limit-countable} iff $\beta$ is a limit of limits and 
$\cof^{L(\RR)}(\beta)=\om$,
 \item \emph{Successor-inadmissible} iff $\beta=\gamma+\om$ and $\gamma$ starts (and ends) an 
inadmissible S-gap,
 \item \emph{Successor-weak} iff $\beta=\gamma+\om$ and $\gamma$ 
ends a weak 
S-gap,
 \item \emph{Successor-strong}  iff $\beta=\gamma+\om$ and $\gamma$ 
ends a strong 
S-gap.
\end{enumerate}

Type \emph{Limit} means either Limit-uncountable or Limit-countable,
and likewise for \emph{Successor}.

If $M$ is an $\om$-small mouse,
then we say that $M$ is type \emph{Limit-uncountable} iff $\beta^M$ is type Limit-uncountable, etc.
\end{dfn}

In this paper, we prove some key lemmas working toward the proof of the following
instances of the weak conjecture;
some remaining details of the proof of \ref{tm:conjectures_hold} are yet to be written down, but we believe that what remains is straightforward:\footnote{We may later add the (expected) proof of \ref{tm:conjectures_hold} to this paper, or will otherwise put it in a separate one.}
\begin{etm}\label{tm:conjectures_hold}
Conjecture \ref{conj:weak} holds under
the assumption that $M$ is of weak, 
strong, successor-weak or successor-strong type;
moreover, given $M$ as there, then $M$ is not of strong type.

Conjecture \ref{conj:steel2},
and its relativization above an arbitrary real, holds
assuming that  $M\sats$``$\omega_1$ exists''
and $M$ is of weak, strong, successor-weak or successor-strong type.
\end{etm}

This implies the corresponding
instances of  Conjecture \ref{conj:steel1}, as explained in \cite{twms}. 

The proof of Expected Theorem \ref{tm:conjectures_hold} will in fact yield something
intermediate between Conjectures \ref{conj:weak} and \ref{conj:RS}
(in the cases mentioned in \ref{tm:conjectures_hold}).

The fact that $M$ is not of strong type
follows from an old argument of Martin.
We will show this in Theorem \ref{tm:Martin}, but will first need to develop some fine structure. However,
we will not actually use Theorem \ref{tm:Martin} in the paper, as in the end we will also give an alternate, inner-model-theoretic proof that $M$ is not of strong type, using methods like those for the other three cases. Thus, in the end we will handle all four cases quite uniformly.

The  approach to the proof of Conjecture \ref{conj:weak} in the case that $M$ is weak, is as follows.
Suppose that $[\alpha,\beta]=[\alpha^M,\beta^M]$ is weak. We first find a mouse $P\in\HC^M$
with the properties sketched in \S\ref{subsec:gaps} with respect to the S-gap $[\alpha,\beta]$. We find  $\M_{\beta^*}$, 
as sketched in that section,
which encodes $\SS_\beta$,
and show that $\M_{\beta^*}$ is a kind of derived model of an $\RR$-genericity iterate $P'$ of $P$.
We also show that we get a similar picture
with $\RR^M$ replacing $\RR$;
we get an $\RR^M$-genericity
iterate $\bar{P}'$ of $P$,
and a model $\bar{\M}_{\bar{\beta}^*}$,
a natural analogue of $\M_{\beta^*}$,
but whose reals are just $\RR^M$,
such that $\bar{\M}_{\bar{\beta}^*}$
is a corresponding derived model of $\bar{P}'$. Given $n<\om$,
we can moreover find a variant $P_n'$ of $P'$, as above,
and such that $P_n'$ is an iterate of $\bar{P}'$,
via a tree $\Tt_n$, which is above $\delta_n^{\bar{P}'}$. Using the resulting iteration maps and the symmetry of the derived model,
we obtain an induced embedding $\sigma:\bar{\M}_{\bar{\beta}^*}\to \M_{\beta^*}$. Also,
$\bar{\M}_{\bar{\beta}^*}$ encodes
a level $\Ss_{\bar{\beta}}(\RR^M)$
just as $\M_{\beta^*}$ encodes $\SS_\beta$, and it will follow
that $\sigma$ induces a $\Sigma_1$-elementary $\pi:\Ss_{\bar{\beta}}(\RR^M)\to\SS_\beta$. We will also show that $M|\om_1^M$ is definable from parameters over $\Ss_{\bar{\beta}}(\RR^M)$, completing the proof in the weak S-gap case.
The other cases inolve these kinds of methods, but also some other things come into play.

We finish this section with a couple of simple observations,
already noted in some form in \cite{rudo_steel}.

\begin{lem}\label{lem:n=0_case} Let $M$ be an $\om$-small mouse. Suppose $n^M=0$ and $\beta=\beta^M>0$. Then 
$\beta$ starts and ends a projective-like S-gap,
$\cof^{L(\RR)}(\beta)=\om$ and $\beta$
is of type Limit-countable or of type Successor.
\end{lem}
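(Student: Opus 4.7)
The plan is to locate $\beta$ at the end of an S-gap $[\om a,\om b]$ (so $\om b=\beta$) via Lemma \ref{lem:beta_ends_a_gap}, and then combine the previous lemma's decomposition $\OD^{\beta 1}_x=\bigcup_{\delta\in\Lim_0\cap\beta,\,1\le n<\om}\OD^{\delta n}_x$ with the $\Sigma_1$-gap property and $\Sigma_1$-admissibility considerations to derive the three conclusions. Fix $x\in\RR^M$ witnessing that $M$ is not strongly $(\beta,1)$-closed; for each $y\in\OD^{\beta 1}_x$ let $\delta_y\in\Lim_0\cap\beta$ be least with $y\in\OD^{\delta_y n}_x$ for some $n$. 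By determinacy $<^{\beta 1}_x$ has countable length, so $\{\delta_y:y\in\OD^{\beta 1}_x\}$ is countable in $L(\RR)$. The strategy is to prove (i) $\cof^{L(\RR)}(\beta)=\om$ and (ii) the S-gap $[\om a,\om b]$ is projective-like (i.e.~$a=b$ and $\SS_\beta$ is not admissible); the type classification then follows immediately.

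For (i), suppose $\cof^{L(\RR)}(\beta)>\om$. Then $\{\delta_y\}$ is bounded below some $\delta^*<\beta$, and since $\beta=\delta^*+\om$ would force $\cof^{L(\RR)}(\beta)=\om$, one has $\delta^*+\om<\beta$. The previous lemma (applied at $\delta^*+\om$) gives $\OD^{\beta 1}_x\sub\OD^{\delta^*+\om,1}_x$, and strong $(\delta^*+\om,1)$-closure of $M$ yields $\OD^{\beta 1}_x\sub M|\xi$ for some $\xi<\om_1^M$, contradicting the choice of $x$.

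For (ii), suppose for a contradiction that $[\om a,\om b]$ is admissible. If $a<b$, the $\Sigma_1$-gap property gives $\Sigma_1^{\SS_\beta}(\RR)=\Sigma_1^{\SS_{\om a}}(\RR)$, so $\OD^{\beta 1}_x=\OD^{\om a,1}_x$, and strong $(\om a,1)$-closure (applicable since $\om a<\beta$) yields the bound. If $a=b$ with $\SS_\beta$ admissible, first rule out $\beta=\gamma+\om$: the map $n\mapsto\gamma+n$ would be a $\Sigma_1$-definable cofinal $\om$-sequence in $\beta$ over $\SS_\beta$, contradicting $\Sigma_1$-collection. So $\beta$ is a limit of limits. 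Using admissibility of $\SS_\beta$ --- via a $\Sigma_1$-Skolem-hull argument (take $H=\Hull_{\Sigma_1}^{\SS_\beta}(\om_1\cup\{x\})$ and its transitive collapse, which has ordinal height $\bar\beta<\beta$), or equivalently reflecting the $\Sigma_1$-existentials $\exists z\,\varphi_e(z,x,\gamma)$ that witness membership in $\OD^{\beta 1}_x$ --- one obtains $\alpha<\beta$ such that for every $y\in\OD^{\beta 1}_x$ witnessed by $(\varphi_e,\gamma)$, already $\SS_\alpha\sats\varphi_e(y,x,\gamma)$, while uniqueness in $\SS_\alpha$ follows because any other solution $z'$ in $\SS_\alpha$ would lift to $\SS_\beta$ by upward $\Sigma_1$-absoluteness, contradicting uniqueness there. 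Hence $\OD^{\beta 1}_x\sub\OD^{\alpha,1}_x$, and strong $(\alpha',1)$-closure for any limit $\alpha'\in[\alpha,\beta)\cap\Lim_0$ gives the contradiction.

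For (iii): (ii) places $\beta$ among the projective-like types (Limit-uncountable, Limit-countable, Successor-inadmissible, Successor-weak, Successor-strong); (i) then rules out Limit-uncountable, leaving Limit-countable or Successor. The hard part is the admissible singleton subcase of (ii): the uniqueness clause makes $\OD^{\beta 1}_x$ only $\Sigma_2$-definable over $\SS_\beta$, so $\Sigma_1$-replacement cannot be applied directly to it as a set; one must instead reflect/hull down on the $\Sigma_1$-existentials and recover uniqueness from upward $\Sigma_1$-absoluteness, while separately disposing of the awkward case $\beta=\gamma+\om$ via the cofinal $\Sigma_1$-map.
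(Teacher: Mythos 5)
Your proof is essentially correct, and it reaches the lemma's conclusion by a genuinely different organization than the paper's. The paper compresses the key step into a single observation: since $M$ is strongly $(\delta,m+1)$-closed for all $(\delta,m)<_{\lex}(\beta,0)$ but not strongly $(\beta,1)$-closed, the $\OD$-levels must increase strictly at $(\delta,n)$ cofinally below $(\beta,0)$, and the paper simply asserts that this implies $\beta$ starts an S-gap (relying on the fact that inside a gap the boldface pointclasses, hence the $\OD^{\delta n}_x$'s, stabilize); the admissible-singleton possibility is then disposed of implicitly. You instead fix the S-gap $[\om a,\om b]$ with $\om b=\beta$ and treat $a<b$ and $a=b$-with-admissibility as explicit cases. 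The $a<b$ case is handled cleanly by the constancy of the $\Sigma_1$-theory with real parameters across the gap; the admissible-$a=b$ case is handled by $\Sigma_1$-reflection plus collection. Spelling this out is worthwhile --- the paper does not explicitly address why an admissible singleton gap is ruled out --- and your cofinality argument via the countable set $\{\delta_y : y\in\OD^{\beta1}_x\}$ is also correct and a bit more transparent than the paper's appeal to ``$\beta$ is least with $\RR^M\sub\bigcup\OD^{\delta n}_{x_0}$''.

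One point is stated too quickly, however. You assert that the $\Sigma_1$-Skolem hull $H=\Hull_{\Sigma_1}^{\SS_\beta}(\om_1\cup\{x\})$ has bounded ordinals, i.e.\ collapses to some $\SS_{\bar\beta}$ with $\bar\beta<\beta$. Admissibility alone does not give this: an admissible set can be $\Sigma_1$-projectible and have unbounded $\Sigma_1$-hull of a set parameter (e.g.\ $L_{\om_1^{\mathrm{CK}}}$ and the parameter set $\om$), because the Skolem function is only partial $\Sigma_1$ and one cannot run $\Sigma_1$-collection against a partial function. Your alternative phrasing --- reflect each existential $\exists z\,\varphi_e(z,x,w)$ to a level $<\beta$ and collect over $\om\times\om_1$ --- is the right one, but it is not literally equivalent to the hull version and still needs a small massaging: one should replace the partial ``least witnessing level'' map by a total $\Delta_1$ function, e.g.\ send $(e,\gamma)$ to the least limit $\delta\geq\gamma+\om$ such that either $\SS_\delta\sats\exists z\,\varphi_e(z,x,w)$ or no limit $\delta'<\delta$ does, so that $\Sigma_1$-replacement applies. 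A slightly cleaner version of the same argument: using the decomposition $\OD^{\beta1}_x=\bigcup_{\delta<\beta,\,n}\OD^{\delta n}_x$, the set $\OD^{\beta1}_x$ is $\Delta_1^{\SS_\beta}(\{x\})$ (the $\Sigma_1$ direction is the decomposition; the $\Pi_1$ direction uses that truth in a fixed $\SS_\delta$ is $\Delta_1$), so by $\Delta_1$-separation it is a set, and then $\Sigma_1$-replacement applied to the $\Delta_1$-definable map $y\mapsto\delta_y$ bounds the witnessing levels --- from which the contradiction with strong closure below $\beta$ follows exactly as you say. Your concern at the end about the ``uniqueness clause'' is then moot: once you use the decomposition, $\OD^{\beta1}_x$ is already $\Delta_1$, not merely $\Sigma_2$; uniqueness only needs upward $\Sigma_1$-absoluteness, which you correctly invoke.
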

\begin{proof}
 $M$ is not strongly $(\beta,1)$-closed,
but is strongly $(\delta,m+1)$-closed for all $\delta<\beta$
and $m<\om$. For each $x\in\RR$, we have
\[ \OD^{\beta 1}_x=\bigcup_{\delta<\beta\text{ and 
}n<\om}\OD^{\delta n}_x.\]
It follows that $M$ is $(\beta,1)$-closed.
Fix $x_0\in\RR^M$ witnessing that $M$ is not strongly
$(\beta,1)$-closed.

Note that for cofinally many $(\delta,n)<_\lex(\beta,0)$,
we have $\OD^{\delta,n+2}_{x_0}\neq\OD^{\delta,n+1}_{x_0}$.
This implies that $\beta$ starts an S-gap.
By Lemma \ref{lem:beta_ends_a_gap}, $\beta$ also ends an S-gap.
And since $\RR^M$ is countable
and $\beta$ is least such that \[\RR^M\sub\bigcup_{\delta<\beta\text{ and }n<\om}\OD^{\delta n}_{x_0},\]
$\cof^{L(\RR)}(\beta)=\om$ and $\beta$ is of the claimed type.
\end{proof}

\begin{lem}\label{lem:betag_ends_a_gap}
Let $M$ be an $\om$-small mouse.
Let $(\beta,n)=(\beta^M,n^M)$.
Then there is $x\in\RR$ such that 
$\OD^{<\beta}_{x}\psub\OD^{\beta}_{x}$,
and if $n>0$ we can take $x\in M$.
\end{lem}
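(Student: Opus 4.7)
If $\beta=0$ the conclusion is immediate: $\OD^{<0}_x=\emptyset$ for every $x\in\RR$, while any $x\in\RR$ satisfies $x\in\OD^{0,1}_x\sub\OD^0_x$ (and we may pick $x\in\RR^M$ when $n>0$). So assume $\beta>0$; then by the previous lemma $\OD^{<\beta}_x=\OD^{\beta,1}_x$ for every $x\in\RR$, so it suffices to find $x$ (in $M$ when $n>0$) with $\OD^{\beta,1}_x\psub\OD^{\beta,n'+1}_x$ for some $n'<\om$.

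\textbf{Case $n>0$.} Let $x\in\RR^M$ witness the failure of strong $(\beta,n+1)$-closure of $M$. Since $(\beta,0)<_\lex(\beta,n)$, $M$ is strongly $(\beta,1)$-closed at $x$, so there is $\xi<\om_1^M$ with $\OD^{\beta,1}_x\sub M|\xi$; but by choice of $x$, $\OD^{\beta,n+1}_x\nsubseteq M|\xi$. Hence $\OD^{\beta,1}_x\psub\OD^{\beta,n+1}_x\sub\OD^\beta_x$, giving $\OD^{<\beta}_x\psub\OD^\beta_x$ with $x\in\RR^M$, as required.

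\textbf{Case $n=0$.} Here we do not require $x\in M$. By Lemma~\ref{lem:n=0_case}, $\beta$ ends a projective-like S-gap and is of type Limit-countable, Successor-inadmissible, Successor-weak, or Successor-strong. Standard fine structure at such a gap end yields $\rho_\om^{\SS_\beta}=\RR$. Let $k<\om$ be least with $\rho_{k+1}^{\SS_\beta}=\RR$, let $p=p_{k+1}^{\SS_\beta}$ be the associated standard parameter, and code $p$ as a pair $(x_*,\gamma_*)\in\RR\cross\om_1$. Let $y_*\sub\om$ encode $\Th_{\Sigma_{k+1}^{\SS_\beta}}(\{x_*,\gamma_*\})$ as a real. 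Then $y_*\in\OD^{\beta,k+2}_{x_*}\sub\OD^\beta_{x_*}$, since the singleton $\{y_*\}$ is defined in $\SS_\beta$ by a $\Pi_{k+2}$ condition in $(x_*,\gamma_*)$. If one had $y_*\in\OD^{<\beta}_{x_*}$, then $y_*\in\SS_\alpha$ for some $\alpha<\beta$, so $\SS_\alpha$ internally decides $\Sigma_{k+1}^{\SS_\beta}$-satisfaction at $(x_*,\gamma_*)$; combined with the hull representation $\SS_\beta=h^{\SS_\beta}_{k+1}(\om\cross(\RR\cup\{p\}))$, this produces inside $\SS_\alpha$ a surjection from $\om\cross\RR^{<\om}$ onto $\OR^{\SS_\beta}$, forcing $\OR^{\SS_\beta}\leq\OR^{\SS_\alpha}$, a contradiction. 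So $y_*\notin\OD^{<\beta}_{x_*}$, giving $\OD^{<\beta}_{x_*}\psub\OD^\beta_{x_*}$.

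The main obstacle lies in the fine structure required for Case $n=0$: one must establish both $\rho_\om^{\SS_\beta}=\RR$ and a real-ordinal-coded hull representation $\SS_\beta=h^{\SS_\beta}_{k+1}(\om\cross(\RR\cup\{p\}))$ uniformly across the four gap types of Lemma~\ref{lem:n=0_case} (the case $k=0$, where $\Sigma_1^{\SS_\beta}$ already projects to $\RR$, being the most delicate), together with the interpretability argument translating $y_*\in\SS_\alpha$ into a surjection from reals onto $\OR^{\SS_\beta}$ inside $\SS_\alpha$.
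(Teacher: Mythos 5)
Your $n>0$ case is correct and essentially the same as the paper's, differing only in that you compare $\OD^{\beta,1}_x$ with $\OD^{\beta,n+1}_x$ where the paper compares $\OD^{\beta,n}_{x_0}$ with $\OD^{\beta,n+1}_{x_0}$; both exploit the same witness $x$ to the failure of strong $(\beta,n+1)$-closure and the fact that $\OD^{<\beta}_x=\OD^{\beta,1}_x$. The explicit handling of $\beta=0$ is a small bonus that the paper elides.

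The $n=0$ case, however, has a genuine gap, and it is not the one you flag. You write ``code $p$ as a pair $(x_*,\gamma_*)\in\RR\cross\om_1$.'' But $p=p_{k+1}^{\SS_\beta}$ is a finite tuple of ordinals below $\OR^{\SS_\beta}=\om\beta$, and for $\beta$ large these ordinals are far above $\om_1$; there is no reason they can be encoded by a single real together with a \emph{countable} ordinal, yet the whole definition of $\OD^{\beta,m}_{x_*}$ (Definition~\ref{dfn:OD^beta}) only permits $x_*\in\RR$ and $\gamma_*<\om_1$ as parameters. So the claim that $y_*\in\OD^{\beta,k+2}_{x_*}$ does not follow: you have only shown that $y_*$ is $\OD$ in $\SS_\beta$ from $p$, not from a real and a countable ordinal. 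The paper's one-line sketch for this case (``$\Sigma_1^{\SS_\beta}(\{x\})$ has the scale property, apply the Periodicity Theorems with an $x$ which codes enough information'') is pointing to exactly the mechanism that bridges this gap: by choosing a sufficiently informative real $x$ at the base of a cone and using uniformization (from the scale property at the end of a projective-like gap), one can produce a new real definable from $x$ and a countable ordinal alone, without ever needing to name the standard parameter. Your fine-structural route would need a comparable device — e.g., showing that the relevant ordinal parameters are themselves $\Sigma_k$-definable from reals via the hull, which is not automatic — before the step $y_*\in\OD^{\beta,k+2}_{x_*}$ is licensed.
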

\begin{proof}
If $n=0$ then  use \ref{lem:n=0_case} and standard calculations
(that is, for each real $x$, $\Sigma_1^{\SS_\beta}(\{x\})$ has the scale 
property, and apply the Periodicity Theorems with an $x$ which codes enough 
information).

Suppose $n>0$. So $M$ is strongly $(\beta,n)$-closed
but not strongly $(\beta,n+1)$-closed. Fix $x_0\in \RR^M$
witnessing the latter.
Then
\[  \OD^{<\beta}_{x_0}\sub\OD^{\beta 
n}_{x_0}\psub\OD^{\beta,n+1}_{x_0}\sub\OD^\beta_{x_0}.\qedhere\]
\end{proof}

\subsection{Structure of paper}

The topic and arguments in the paper
are related to the Woodin's Mouse Set Theorem (see \cite{twms}),  Rudominer's work in \cite{rudo_mouse_sets} and \cite{rudo_inner_model_operators_in_LR}, and also particularly 
to the methods in the Steel-Rudominer paper \cite{rudo_steel}. 

In \S\ref{sec:background} we cover some background inner model theory; the main content here is the discussion of mouse witnesses in \S\ref{subsec:mouse_witnesses}, which is mostly standard material, but essential for later arguments. By including it here,  the reader can avoid digging through sources which contain a lot of other material not relevant to this paper. In \S\ref{sec:start_of_gap},
given an admissible S-gap $[\alpha,\beta]$ of $L(\RR)$,
we identify a real $x$
and an $x$-mouse $P$
corresponding either to the end of the S-gap or just beyond it.
In the context of proving Theorem \ref{tm:conjectures_hold},
will be interested in particular values of $\beta$, and $P$ will also relate tightly to $M$.
The arguments in \S\ref{sec:start_of_gap}
are mostly due to the second author,
from work in 2005, written at the time in an email correspondence between him and Ralf Schindler. Some further observations in this section were added by the first author later.
In \S\ref{sec:M-hierarchy},\ref{sec:through_gap} 
we introduce
the $\M$-hierarchy corresponding to $[\alpha,\beta]$, define  the relevant derived model construction and Prikry forcing, and analyze the associated forcing relations.
The arguments in \S\ref{sec:through_gap}
are adaptations of some standard ones,
such as those used in the analysis of $\HOD^{L(\RR)}$ in \cite{HOD_as_core_model}. 
The adaptation of those methods to the present context, done in  \S\S\ref{sec:M-hierarchy},\ref{sec:through_gap}, are due to the first author, with the main idea having been found in 2013, and refined later in  2019--2022.

\subsection{Acknowledgements}
The first author thanks the organizers of the \emph{Workshop in Set Theory},  Oberwolfach, 2022, the \emph{Muenster conference in inner model theory} 2022, and the \emph{Advances in set theory} conference in Jerusalem, 2022, for the opportunity to present some of the ideas from the paper.

The first author was partially
supported by the Deutsche Forschungsgemeinschaft (DFG, German Research Foundation) under Germany's Excellence Strategy EXC 2044--390685587, Mathematics M\"unster: Dynamics--Geometry--Structure.

\section{Inner model theoretic background}\label{sec:background}

\subsection{Tame projecting mice}

\begin{dfn}
 For $\eta\in\OR$,
 an \emph{$\eta$-projecting premouse (of degree $m<\om$)}
 is a premouse $M$ such that
 $\eta$ is a strong cutpoint of $M$,
 $M$ is $m$-sound, and
 $\rho_{m+1}^M\leq\eta<\rho_m^M$. 
 Note $(M,\eta)$ determines $m$.
 If $\eta$ is known from context, we may
 just say \emph{projecting premouse} instead of \emph{$\eta$-projecting}.
\end{dfn}

\begin{rem}
 We will often deal with projecting premice.
 The following lemma tells us that in this context
 and 
 assuming tameness, (i) normal iterability above $\eta$ (more precisely, above-$\eta$, $(m,\om_1+1)$-iterability) automatically yields (ii)
 stacks iterability above $\eta$ (more precisely, above-$\eta$, $(m,\om_1,\om_1+1)^*$-iterability).
 So  iterability hypotheses will often be stated in form (i) as  opposed to (ii). However,
 this fact depends on significant background material (from \cite{fsfni_v4}, \cite{fullnorm} and \cite{iter_for_stacks}), which isn't particularly relevant
 to this paper. One could just strengthen 
 the iterability hypotheses throughout,
 changing form (i) to (ii),
 and avoid the appeal to that background material.
\end{rem}

\begin{lem}\label{lem:tame_projecting_stacks}
Let $M$ be a tame $\eta$-projecting premouse
of degree $m$. Suppose $M$ is above-$\eta$, $(m,\om_1+1)$-iterable. Then:
\begin{enumerate}
 \item\label{item:normal_strat_unique}
  $M$ has a unique above-$\eta$, $(m,\om_1+1)$-iteration strategy $\Sigma$,
\item\label{item:Sigma_extends_to_stacks} $\Sigma$ extends to an above-$\eta$ $(m,\om_1,\om_1+1)^*$-strategy $\Sigma^{\stk}_{\min}$ for $M$
with full normalization, as in \cite{fullnorm}
\item\label{item:fs_above_eta} $M$ is $(m+1)$-solid above $\eta$
and $(m+1)$-universal above $\eta$.
\end{enumerate}
\end{lem}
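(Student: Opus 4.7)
\medskip

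\emph{Plan.} The three conclusions are essentially incarnations of known theorems from \cite{fsfni}, \cite{fullnorm}, and \cite{iter_for_stacks}, but stated in the above-$\eta$ setting. The approach is to reduce each part to its absolute (non-relativized) analogue by exploiting that $\eta$ is a strong cutpoint and that $M$ is $m$-sound with $\rho_{m+1}^M\le\eta<\rho_m^M$. Since iteration trees are above $\eta$, only extenders with critical point $>\eta$ are ever used, so all drop structure and all fine-structural collapsing happening along the tree takes place strictly above $\eta$. This reduces the situation to the standard one in which the relevant ``projectum'' is $\eta$, with the $m$-th reduct playing the role of the top of the fine-structural stack.

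\smallskip

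For part \ref{item:normal_strat_unique}, the plan is to appeal to the standard branch-uniqueness argument for tame mice. Suppose $\Sigma,\Sigma'$ are two above-$\eta$, $(m,\omega_1+1)$-strategies and let $\Tt$ be a $\lesslambda$-length normal tree on which they first disagree, giving distinct cofinal branches $b\ne c$. Form the common simultaneous comparison of $M_b^\Tt$ and $M_c^\Tt$ above $\eta$ using the iterability provided by $\Sigma$ and $\Sigma'$. Tameness prevents any extender used on the $b$- or $c$-side from overlapping a Woodin cardinal of the common part, which is the hypothesis under which the Zipper Lemma applies to yield $b=c$, a contradiction. (The standard reference here is Steel's proof of branch uniqueness for tame mice; the above-$\eta$ version is essentially the same because $\eta$ is a strong cutpoint.)

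\smallskip

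For part \ref{item:Sigma_extends_to_stacks}, the plan is to invoke the full normalization machinery of \cite{fullnorm} (as carried out in the above-$\eta$ setting in \cite{iter_for_stacks}). Given a putative stack $\vec{\Tt}=\langle\Tt_i\rangle_{i<\theta}$ above $\eta$, one defines its full normalization $\Tt^\infty$, a single normal tree above $\eta$, and sets $\Sigma^{\stk}_{\min}(\vec{\Tt})$ so that it agrees with the branch $\Sigma(\Tt^\infty)$ under the normalization isomorphism. The key facts to cite, all from \cite{fullnorm,iter_for_stacks}, are that (a) the normalization of a stack produced by following $\Sigma$ in this way is itself a tree by $\Sigma$, and (b) this procedure actually yields a well-defined $(m,\omega_1,\omega_1+1)^*$-strategy. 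Uniqueness in part \ref{item:normal_strat_unique} is used to see that the definition does not depend on intermediate choices.

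\smallskip

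For part \ref{item:fs_above_eta}, the plan is to apply the proof of $(m{+}1)$-solidity and $(m{+}1)$-universality from \cite{fsfni} with the comparison argument run above $\eta$ using $\Sigma$. Explicitly, one considers the phalanx arising from $M$ and the hypothesized would-be-collapse of a standard parameter or a core; one coiterates it against $M$ above $\eta$, and the usual Schindler--Steel argument for solidity/universality derives a contradiction from any failure, using that both sides are $\Sigma$-iterable above $\eta$. Tameness and strong-cutpoint-ness of $\eta$ ensure that the coiteration stays above $\eta$ and that the relevant fine-structural comparison behaves as in the non-relativized proof. The main obstacle here is the bookkeeping to see that the standard argument in \cite{fsfni} really does relativize cleanly to ``above $\eta$''; this is where the uniqueness of $\Sigma$ from part \ref{item:normal_strat_unique} and the stacks-extension from part \ref{item:Sigma_extends_to_stacks} are needed, since the solidity argument naturally proceeds by stacks and requires coherent branch choices.
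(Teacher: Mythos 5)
Your treatment of parts \ref{item:Sigma_extends_to_stacks} and \ref{item:fs_above_eta} inverts the paper's logical structure, and your proof of part \ref{item:normal_strat_unique} has a genuine gap.

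On the structure: the paper cites \cite{fsfni} directly for part \ref{item:fs_above_eta} (it already covers the above-$\eta$ case once $\eta$ is a strong cutpoint), then \emph{uses} part \ref{item:fs_above_eta} in the proof of part \ref{item:normal_strat_unique}, and finally gets part \ref{item:Sigma_extends_to_stacks} from part \ref{item:normal_strat_unique} by \cite{fullnorm}. Your plan proves part \ref{item:fs_above_eta} \emph{from} parts \ref{item:normal_strat_unique} and \ref{item:Sigma_extends_to_stacks}. This is more than a matter of taste: part \ref{item:fs_above_eta} is what lets one conclude the existence of Q-structures along a non-dropping branch (via solidity of the standard parameter witnessing a definable failure of Woodinness at $\delta(\Tt)$ inside $M^\Tt_b$), and without this the uniqueness argument can't even get started in the non-dropping case. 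As written, your proof of part \ref{item:normal_strat_unique} silently needs the ingredient you propose to derive only afterwards.

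On part \ref{item:normal_strat_unique} itself: the Zipper Lemma does not "yield $b=c$." It yields that no $A\subseteq\delta(\Tt)$ witnessing non-Woodinness of $\delta(\Tt)$ can lie in $M^\Tt_b\cap M^\Tt_c$; equivalently, $Q_b\ne Q_c$. In the textbook setting both Q-structures are $\delta(\Tt)$-sound, so a comparison above $\delta(\Tt)$ gives $Q_b=Q_c$ and contradicts Zipper, which is presumably the argument you have in mind. But here $M$ is $\eta$-projecting with $\eta<\delta(\Tt)$, so a branch need not drop, and then $Q_b=M^\Tt_b$ is \emph{not} $\delta(\Tt)$-sound. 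Comparison then only produces a common non-dropping iterate $P$, not equality, and the contradiction requires an additional hull/condensation argument: using tameness one finds $\gamma<\delta(\Tt)$ bounding the Woodins of $P$ below $\delta(\Tt)$, sets $\theta=\sup(\delta(\Tt)\cap\Hull^P((\gamma+1)\cup\pvec_{m+1}^P))$, collapses to get $C$ with $\crit(\pi)=\theta$ and $\pi(\theta)=\delta(\Tt)$, uses condensation to identify $C$ with a proper segment of $P$, and then recovers $\core_{m+1}(P)$ from $C\in P$, a contradiction. Your sketch contains none of this; it would fail exactly in the case that matters here. You should also note that the tree on which $Q_b$ and $Q_c$ are compared is above $\delta(\Tt)$, not merely above $\eta$, since the two phalanx models agree up to and including $M(\Tt)$.
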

Note that we do not assume that $M$ is countable here, although in our application in this paper, $M$ will always be countable.
\begin{proof}
Part \ref{item:fs_above_eta} holds by \cite{fsfni_v4}.
And part \ref{item:Sigma_extends_to_stacks}
follows from part \ref{item:normal_strat_unique}
by \cite{fullnorm}.

So it suffices to prove part \ref{item:normal_strat_unique}. So suppose $\Sigma,\Gamma$ are two distinct such strategies,
and let $\Tt$ be a countable limit length
tree via $\Sigma\cap\Gamma$
such that $b=\Sigma(\Tt)\neq c=\Gamma(\Tt)$.
Note that by taking a countable hull containing
these objects,
we may assume that $M$ is countable
(use the pullbacks
of $\Sigma,\Gamma$ to iterate the countable version). So we can successfully compare
the phalanxes $\Phi(\Tt,b)$ and $\Phi(\Tt,c)$.
Because $\rho_{m+1}^M\leq\eta$ and $\eta$ is a strong cutpoint, and by standard
fine structure (using part \ref{item:fs_above_eta}) and  tameness,
there is a Q-structure $Q_b=Q(\Tt,b)\ins M^\Tt_b$
and a Q-structure $Q_c=Q(\Tt,c)\ins M^\Tt_c$ for $M(\Tt)$, $\delta(\Tt)$ is Woodin and a strong
cutpoint of $Q_b,Q_c$, and the comparison just mentioned is equivalent to a comparison of $Q_b$ with $Q_c$, and this is above $\delta(\Tt)$.
By the Zipper Lemma,  $Q_b\neq Q_c$.
Therefore at least one of $Q_b,Q_c$
is non-$\delta(\Tt)$-sound; say it is $b$.
Then $b$ does not drop in model or degree.
So $\rho_{m+1}^{Q_b}=\rho_{m+1}^M\leq\eta<\delta(\Tt)$ and $\deg^\Tt(b)=m$.
The comparison ends with a common iterate $P$
of $Q_b,Q_c$, with no dropping in model or degree above $Q_b,Q_c$, and note then that $Q_c=M^\Tt_c$
and $\rho_{m+1}^{Q_c}=\rho_{m+1}^{Q_b}<\delta(\Tt)<\rho_m^{Q_c}$. (If $\delta(\Tt)=\rho_m^{Q_c}$ then we easily get $Q_b=Q_c$.) So now
\[\Hull^M_{m+1}(\eta\cup\pvec_{m+1}^{M})\iso\Hull^{Q_b}_{m+1}(\eta\cup\pvec_{m+1}^{Q_b})\iso\Hull^P_{m+1}(\eta\cup\pvec_{m+1}^P)
 \iso\Hull^{Q_c}_{m+1}(\eta\cup\pvec_{m+1}^{Q_c}),
\]
which implies that $c$ also does not drop in model or degree. By tameness,
$\delta(\Tt)$ is not a limit of Woodins
of $P$, so let $\gamma<\delta(\Tt)$ bound the Woodins of $P$ which are ${<\delta(\Tt)}$.
Let
\[ \theta=\sup\delta(\Tt)\cap\Hull^P((\gamma+1)\cup\pvec_{m+1}^P).\]
By Zipper Lemma, $\theta<\delta(\Tt)$.
Let
\[ C=\cHull^P(\theta\cup\pvec_{m+1}^P)\]
and $\pi:C\to P$ be the uncollapse.
Since $\delta(\Tt)$ is the least Woodin of $P$
which is $>\gamma$, we get $\delta(\Tt)\in\rg(\pi)$ (if $m>0$ this is clear or $P$ is active this is clear; if $m=0$ and $P$ is passive, then $P$ has a largest cardinal $\kappa$, $\kappa\geq\delta(\Tt)$,
and $p_1^P\not\sub\kappa$, which is easily enough). We have $\crit(\pi)=\theta<\delta(\Tt)$
and $\pi(\theta)=\delta(\Tt)$, $\theta$ is a limit
cardinal of $P$ and of $C$, and $C|\theta=P|\theta$. By tameness,
$\delta(\Tt)$ is a strong cutpoint of $P$,
so $\theta$ is a strong cutpoint of $C$.
Condensation gives that $C||\theta^{+C}=P||\theta^{+C}$. And $\theta$ is Woodin in $C$,
but not in $P$, by choice of $\gamma$.
So $\theta^{+C}<\theta^{+P}$.
Note that $C$ is above-$\theta$ iterable,
via lifting to a continuation of $\Tt\conc b$
with $i_{Q_bP}^{-1}\com\pi$ (this only
uses the normal strategies we have). Letting
$J\pins P$ be such that $\rho_\om^J=\theta$
and $J||\theta^{+J}=C||\theta^{+C}$,
note that also $J$ is above-$\theta$ iterable.
But as both are $\theta$-sound and project
to $\theta$, which is a common strong cutpoint,
it follows that $C=J$. So $C\in P$,
but from $C$ we can obtain $\core_{m+1}(P)$,
a contradiction.\end{proof}

\subsection{P-construction}

\begin{dfn}
 Let $\Tt\in\HC^M$ be an iteration tree on $M|\om_1^M$.
 We say that $\Tt$ is \emph{P-standard} iff:
 \begin{enumerate}
  \item $\Tt$ is according to $\Sigma_{M|\om_1^M}$;
  let $N\pins M|\om_1^M$ with $\Tt$ on $N$ and $\rho_\om^N=\om$.
  \item $\Tt$ has limit length; let $\delta=\delta(\Tt)$.
  \item $M(\Tt)$ is not a Q-structure for itself (that is,  $M(\Tt)\sats\ZFC$ 
and $\J(M(\Tt))\sats$``$\delta$ is Woodin'').
\item $M|\delta$ has largest cardinal $\om$.
\item $N\in M|\delta$ and $\Tt,M(\Tt)$ are definable from parameters
over $M|\delta$.
\item $M|\delta$ is generic for $\BB_{\delta\xi}^{\J(M(\Tt))}$, for some 
$\xi<\delta$.\qedhere
 \end{enumerate}
\end{dfn}

\begin{dfn}
 Let $\Tt\in\HC^M$ be P-standard and $\delta=\delta(\Tt)$.
 The \emph{P-construction} $\mathscr{P}^M(M(\Tt))$ of $M$ over $M(\Tt)$
 is the structure $P$ defined as follows.
 We will have $\OR^P\leq\omega_1^M$.
 Set $M(\Tt)\ins P$. Given $\nu\in(\delta(\Tt),\OR^P]$,
 $P|\nu$ is active iff $M|\nu$ is active.
 And if $P|\nu$ is active then $F^{P|\nu}=F^{M|\nu}\rest(P||\nu)$. 
We define $P$ as the least such stage such that $P$ fails to be a premouse, or $P$ is 
a Q-structure for $M(\Tt)$.
\end{dfn}

Then we have (see for example \cite{sile}):

\begin{lem}
 Let $\Tt,\delta$ be as above. Then $Q=\mathscr{P}^M(M(\Tt))$
 is well-defined, and $Q=Q(\Tt,b)$ where $b=\Sigma_{M|\om_1^M}(\Tt)$
 is the correct branch.
\end{lem}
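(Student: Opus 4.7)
The plan is to split the proof into three parts: (a) verify that $P := \mathscr{P}^M(M(\Tt))$ is a well-defined premouse over $M(\Tt)$ that halts at a Q-structure; (b) show $P$ is above-$\delta$ iterable inside $M$; (c) identify $P$ with $Q(\Tt,b)$ via standard Q-structure uniqueness. For (a), I would proceed by induction on $\nu \in (\delta,\om_1^M]$, verifying that $P|\nu$ is a premouse. The crucial point is that every active extender $E = F^{M|\nu}$ indexed above $\delta$ has $\crit(E) > \delta$. This holds because by hypothesis $\J(M(\Tt)) \sats$``$\delta$ is Woodin'', $M|\delta$ is $\BB^{\J(M(\Tt))}_{\delta\xi}$-generic, and $M$ is iterable; hence $\delta$ is a strong cutpoint for the $M$-sequence above $\delta$. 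So the restriction $F^{M|\nu} \rest (P||\nu)$ is a genuine $(\crit(E),\nu)$-extender, and the coherence and ISC conditions for $P|\nu$ reduce to the corresponding conditions for $M|\nu$ modulo the restriction. The definition terminates at the least $\nu$ making $P|\nu$ a Q-structure for $M(\Tt)$ — namely when either $\rho_\om(P|\nu) \leq \delta$ or a new $P$-extender witnesses non-Woodinness of $\delta$ over $M(\Tt)$.

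For (b), I would lift any normal above-$\delta$ putative tree $\Uu$ on $P$ to an above-$\delta$ tree $\Uu^*$ on $M$ by replacing each $P$-extender used in $\Uu$ by the $M$-extender it was P-constructed from. Because critical points exceed $\delta$ and the $P$- and $M$-models along the lift agree above $\delta$ (modulo the P-construction recipe), the lift is immediate and any failure of iterability of $P$ would yield a failure for $M$; thus above-$\delta$ iterability of $M$ passes down to $P$. For (c), let $b = \Sigma_{M|\om_1^M}(\Tt)$; then $Q(\Tt,b) \ins M^\Tt_b$ is a sound, above-$\delta$-iterable Q-structure for $M(\Tt)$, as is $P$ by (a) and (b). The usual above-$\delta$ coiteration of $P$ with $Q(\Tt,b)$ must be trivial, since both premice are sound and project to (or below) $\delta$, which is a common strong cutpoint; hence $P = Q(\Tt,b)$.

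The main obstacle is the bookkeeping in step (a): showing that the initial segment condition transfers under the substitution $F^{P|\nu} = F^{M|\nu} \rest (P||\nu)$. Concretely, one must check that whenever a proper initial segment $G$ of $F^{M|\nu}$ appears on the $M$-sequence at some index $\mu < \nu$, the restriction $G \rest (P||\mu)$ appears on the $P$-sequence at index $\mu$, and conversely. This depends on $M$ and $P$ having identical indexing above $\delta$ and agreeing on the underlying measure data, which in turn uses the crucial critical-point bound $\crit(E) > \delta$ established above. Once this finestructural verification is in place — this is exactly the content deferred to \cite{sile} — the lifting argument of (b) and the comparison of (c) are routine applications of the iterability and fine-structure machinery from Section~\ref{sec:background}.
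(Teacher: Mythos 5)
The paper does not prove this lemma; it is stated with a pointer to \cite{sile}, which develops exactly the P-construction/self-iterability machinery you outline. So there is no in-paper proof to compare against. Your three-step decomposition (well-definedness of the P-construction, transfer of iterability from $M$ to $P$ by lifting trees, identification of $P$ with $Q(\Tt,b)$ by a trivial coiteration of $\delta$-sound, above-$\delta$-iterable Q-structures) is precisely the standard argument, and I have no quarrel with the overall structure.

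One local correction is worth making, because your stated justification for the key critical-point bound is not quite the right reason, even though the conclusion is correct. You write that $\crit(E)>\delta$ for $M$-extenders $E$ indexed above $\delta$ because ``$\J(M(\Tt))\sats$ `$\delta$ is Woodin', $M|\delta$ is $\BB$-generic, and $M$ is iterable.'' The Woodinness of $\delta$ in $\J(M(\Tt))$ is a statement about a model to be built, not about $M$, and the genericity/iterability clauses do not directly say anything about $M$'s extender sequence either. The bound actually falls out of hypothesis (4) of the definition of \emph{P-standard}, namely that $M|\delta$ has largest cardinal $\om$, together with the fact that $\Tt\in\HC^M$ (so $\delta<\om_1^M$). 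Indeed, if $E\in\es_+^M$ had $\lh(E)>\delta$ and $\om<\crit(E)\leq\delta$, then $\crit(E)$ is countable in $M|\delta\sub M||\lh(E)$ (since $M|\delta$ has largest cardinal $\om$), so $\crit(E)$ is not a cardinal of $M||\lh(E)$; but $\crit(E)$ must be inaccessible there, a contradiction. (The case $\crit(E)=\delta$ is ruled out the same way, as $\delta$ is countable in $M$ and hence in $M||\lh(E)$.) This purely arithmetical observation, rather than any appeal to iterability, is what licenses the restriction step of the P-construction; iterability is what you invoke at stage (b), not (a).

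With that replacement, your sketch matches the intended argument behind the citation. One more minor point in (c): both $P$ and $Q(\Tt,b)$ are $\delta$-sound premice over $M(\Tt)$ that are above-$\delta$ iterable and are Q-structures for $M(\Tt)$; the above-$\delta$ coiteration then gives one an initial segment of the other, and minimality of Q-structures gives equality. Stating it this way avoids any ambiguity about whether they literally project below $\delta$ (they may project exactly to $\delta$).
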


\begin{rem}\label{rem:OR^Q_leq_OR^R}
 Note that $\OR^Q\leq\OR^R$, where $R\pins M$ is least such that
 $\delta\leq\OR^R$ and $\rho_\om^R=\om$. (Otherwise letting $Q=Q(\Tt,b)$,
 which is the output of the P-construction, we 
get $R\in Q[M|\delta]$, so $\delta$ is countable in $Q[M|\delta]$,
but $\delta$ is regular there, by the $\delta$-cc.)
\end{rem}

We will be modifying the proof of the following lemma, due to Steel. We will
also need to apply the lemma itself. 

 \begin{lem}[Steel]\label{lem:forcing}Let $N$ be an $n$-sound premouse. Let
$\eta$ be a cardinal strong cutpoint of $N$. Let $\QQ\in N|\eta$ be a forcing
and let $G\sub\QQ$ be $N$-generic. Then
 \begin{enumerate}
 \item $N[G]$ can be reorganized as an $n$-sound $(N|\eta,G)$-premouse, whose
extender sequence (which is above $\eta$) is given by the standard method of
extending extenders (on $\es_+^N$) to small forcing extensions.
 \item If $\eta<\rho_n^N$ then $\rho_n^{N[G]} = \rho_n^N$ and $p_n^{N[G]} =
p_n^N$.
 \item If $\rho_n^N\leq\eta$ then $\rho_n^{N[G]}=(N|\eta,G)$ and
$p_n^{N[G]}=p_n^N\cut(\eta+1)$.
 \item If $\eta<\rho_n^N$ then the $\rSigma_{n+1}$ strong forcing
relation\footnote{Let $\varphi$ be $\rSigma_{n+1}$ and let $x\in(V^\QQ)^N$.
Suppose $n>0$ and $\varphi(v)$ has the form
\[ \ex y,t\left[T_n(y,t)\ \&\ \ex w\psi(v,y,t,w)\right]\]
 where $\psi$ is $\rSigma_0$. Then $q\sforces{\QQ}{n+1,\str}\varphi(x)$ iff
there are $\QQ$-names $y,t,w\in N$ such that
\[ q\subforces{\QQ}T_n(y,t)\ \&\ \psi(x,y,t,w); \]
 here $T_n$ is the $\rSigma_n$ theory predicate of the extension $N[G]$. If
$n=0$ and $\varphi(v)$ has the form $\ex w\psi(v,w)$
 where $\psi$ is $\rSigma_0$, then $q\sforces{\QQ}{1,\str}\varphi(x)$ iff there
is a $\QQ$-name $w$ such that
$q\subforces{\QQ}\psi(x,w)$.}
$\sforces{\QQ}{n+1,\str}$
 is $\rSigma_{n+1}^N(\{\delta\})$, and the strong $\rSigma_{n+1}$ forcing
theorem holds. (That is, for $\rSigma_{n+1}$ formulas $\varphi$ and
$x\in(V^\QQ)^N$, we have
$N[G]\sats\varphi(x^G)$
iff $q\sforces{\QQ}{n+1,\str}\varphi(x)$ for some $q\in G$.)
 \item If $\eta\leq\rho_\om^N$ then the forcing theorem holds and for each $k$,
the $\rSigma_k$ forcing relation is definable over $N$.
\end{enumerate}
 \end{lem}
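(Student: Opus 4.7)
My plan is a simultaneous induction on $n$, following the classical Levy--Solovay small-forcing argument but with careful tracking of the $\rSigma_k$ hierarchy.

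For (1), I would first handle the extension of extenders. Since $\eta$ is a strong cutpoint of $N$ and $\QQ\in N|\eta$, every extender $E\in\es_+^N$ indexed above $\eta$ has $\crit(E)>\eta\geq\card(\QQ)^N$. So $E$ extends canonically to an $N[G]$-extender $E^*$ on the same generators, via the standard recipe: if $\dot f\in N$ is a $\QQ$-name for a function on $\crit(E)$, then $E^*$ sends $\dot f^G$ to $i_E(\dot f)^G$. Placing each $E^*$ at the same index as $E$ reorganizes $N[G]$ as an $(N|\eta,G)$-premouse whose $\SS$-hierarchy above $\eta$ is in canonical bijection with that of $N$ via $\dot x\mapsto\dot x^G$. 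The standard premouse axioms (coherence, initial segment condition, etc.) lift from $N$ through this bijection.

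For the forcing relation (items 4 and 5), I would argue by induction on $k$ that $\sforces{\QQ}{k,\str}$ is $\rSigma_k^N(\{\QQ\})$-definable, uniformly. The base case $k=1$ is the classical forcing theorem. For the inductive step, the definition in the footnote reduces $\sforces{\QQ}{k+1,\str}\varphi(x)$ to forcing a $\rSigma_0$ statement built from the $\rSigma_k$ theory predicate $T_k$ of $N[G]$; by induction this predicate is $\rSigma_k^N$, so an outer $\rSigma_1$-quantification over $\QQ$-names in $N$ yields a $\rSigma_{k+1}^N$ definition. The strong forcing theorem then follows by a standard density argument, using that each $\rSigma_{k+1}^{N[G]}$ statement is decided on a dense set of conditions. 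For (2) and (3), the definability of the forcing relation implies that any $\rSigma_{n+1}^{N[G]}$ subset of $\eta$ is computable from $G$ together with a $\rSigma_{n+1}^N$-definable class. If $\eta<\rho_{n+1}^N$ then this class is bounded and hence lies in $N$, so the subset lies in $N[G]$; the reverse inclusion is trivial, giving $\rho_{n+1}^{N[G]}=\rho_{n+1}^N$. Moreover $p_n^N$ generates the same $\rSigma_{n+1}$ hulls above $\eta$ in $N[G]$ as in $N$, by interdefinability of the two theories, so $p_n^{N[G]}=p_n^N$. When $\rho_n^N\leq\eta$, the predicate $G$ together with $N|\eta$ already decodes everything below $\eta$ via the forcing relation, so $\rho_n^{N[G]}=(N|\eta,G)$; the components of $p_n^N$ at or below $\eta$ are absorbed into this predicate, leaving $p_n^{N[G]}=p_n^N\cut(\eta+1)$.

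The main obstacle will be the fine-structural bookkeeping needed to verify that the reorganized $N[G]$ is genuinely $n$-sound, and that the $\rSigma_k$ theory predicates of $N$ and $N[G]$ are interdefinable uniformly enough to carry the induction through (particularly, that the strong forcing clauses interact correctly with the $T_k$-predicate in the definition of $\rSigma_{k+1}$). Solidity and universality of the new standard parameter must be derived from the corresponding properties of $p_n^N$, which hold by the iterability/condensation results cited from \cite{fsfni}, and the transition at $\rho_n^N=\eta$ requires extra care, since the pieces of $p_n^N$ at or below $\eta$ merge into the new predicate $G$ in that regime.
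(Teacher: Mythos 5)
The paper does not actually prove Lemma~\ref{lem:forcing}; it states the result with attribution to Steel and defers to external sources (``Versions of this lemma (and its proof) have appeared elsewhere, such as in \cite{sile} and \cite{scalesK(R)}''). So there is no internal proof to compare against, but your outline does track the standard Levy--Solovay fine-structural argument used in those references: lift each extender of $\es_+^N$ indexed above $\eta$ (all of which have critical point $>\eta>\card(\QQ)^N$ because $\eta$ is a strong cutpoint), relabel $N[G]$ as an $(N|\eta,G)$-premouse via the $\dot x\mapsto\dot x^G$ correspondence, and prove definability of $\sforces{\QQ}{k,\str}$ by induction on $k$ through the $T_k$-predicates, deducing the projectum and parameter computations and the forcing theorem from that.

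One small index slip worth correcting: in your argument for item (2) you write ``$\eta<\rho_{n+1}^N$ ... giving $\rho_{n+1}^{N[G]}=\rho_{n+1}^N$,'' but the clause in the lemma is about $\rho_n$, so the hypothesis should be $\eta<\rho_n^N$ and the conclusion $\rho_n^{N[G]}=\rho_n^N$ (and the strong forcing relation used is the $\rSigma_n$ one supplied by the induction at stage $n-1$). The substance of the argument — a new $\rSigma_n^{N[G]}$ subset of $\eta$ would be codable via the $\rSigma_n^N$-definable forcing relation, hence already in $N$, contradicting $\eta<\rho_n^N$ — is right, and the regime change at $\rho_n^N\leq\eta$, where the new base $(N|\eta,G)$ absorbs the old parameter components below $\eta+1$, is exactly where the careful bookkeeping you flag is needed (soundness of the reorganized structure, and solidity/universality of $p_n^{N[G]}$ via \cite{fsfni}-style condensation). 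That matches the content of the cited proofs.
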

 
 Versions of the this lemma (and its proof) have appeared elsewhere, such as in
 \cite{sile} and
\cite{scalesK(R)}.

\begin{dfn}\label{dfn:P-con}
 Let $N,\eta,R$ be such that:
 \begin{enumerate}[label=--]
  \item $N,R$ are premice,
  \item  $\eta<\OR^N$ is a strong cutpoint of $N$ and $N|\eta$ is passive,
  \item  $R\sub N|\eta$
and $R$ is definable from 
parameters over $N|\eta$,
\item $R\sats\ZFC$ and $\J(R)\sats$``$\eta$ is Woodin''
and $N|\eta$ is generic over $\J(R)$ for the $\eta$-generator extender algebra of $\J(R)$ at $\eta$.
\end{enumerate}
Let $\gamma\in[\eta,\OR^N]$
and $N'=N||\gamma$ or $N'=N|\gamma$.
The \emph{P-construction}
$\mathscr{P}^{N'}(R)$
of $N'$ over $R$,
if it is well-defined, is the premouse $P$ such that:
\begin{enumerate}[label=--]
 \item $\OR^P=\gamma$,
 \item $R\ins P$,
 \item $\es^P_\xi=\es^{N'}_\xi\rest(P||\xi)$ for all $\xi\in[\eta,\gamma]$, and
 \item $P\sats$``$\eta$ is Woodin''.\qedhere
\end{enumerate}
\end{dfn}

\begin{lem}
 Let $N,\eta,R$ be as in Definition \ref{dfn:P-con}.
 There is a largest $\gamma$
 such that  $\mathscr{P}^{N|\gamma}(R)$
 is well-defined.
\end{lem}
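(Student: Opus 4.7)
Let $\Gamma \sub [\eta, \OR^N]$ denote the set of $\gamma$ for which $\mathscr{P}^{N|\gamma}(R)$ is well-defined. The plan is to show that $\Gamma$ is downward closed and closed under limits, which will yield that $\Gamma = [\eta, \gamma^*]$ for some maximum $\gamma^* \in [\eta, \OR^N]$. Note $\eta \in \Gamma$: the base of the construction takes $\mathscr{P}^{N|\eta}(R)$ to be $R$ itself equipped with its inherited extender sequence, and the hypothesis on $\J(R)$ gives ``$\eta$ is Woodin''.

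For downward closure, suppose $\gamma \in \Gamma$ and $P = \mathscr{P}^{N|\gamma}(R)$. For any $\gamma' \in [\eta, \gamma]$, both $P||\gamma'$ and $P|\gamma'$ are initial segments of a premouse, hence premice, and inspection of the defining clauses shows that their extender sequences match those prescribed against $N||\gamma'$ and $N|\gamma'$ (using that $N||\gamma$ and $N||\gamma'$ agree on extender indices below $\gamma'$). Since $\eta$ is a strong cutpoint of $N$, every extender witnessing ``$\eta$ is Woodin'' in $P$ has index strictly below $\eta$, hence already lies in $P|\eta = R$, hence in these initial segments; so they too satisfy ``$\eta$ is Woodin''. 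Thus $\gamma' \in \Gamma$, with $\mathscr{P}^{N||\gamma'}(R) = P||\gamma'$ and $\mathscr{P}^{N|\gamma'}(R) = P|\gamma'$.

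For limit closure, suppose $\gamma \in (\eta, \OR^N]$ is a limit with $[\eta, \gamma) \sub \Gamma$. Set $P_\gamma = \bigcup_{\gamma' < \gamma} \mathscr{P}^{N||\gamma'}(R)$. By the compatibility just established, this is a well-formed passive structure of height $\gamma$ over $R$ whose extender sequence at each $\xi \in [\eta, \gamma)$ equals $E^{N||\gamma}_\xi \rest (P_\gamma || \xi)$, and the premouse axioms at $P_\gamma$ follow from their holding at every proper initial segment. The property ``$\eta$ is Woodin'' passes to $P_\gamma$ because each $A \in P_\gamma \cap \pow(\eta)$ appears in some $\mathscr{P}^{N||\gamma'}(R)$ with $\gamma' < \gamma$, where a reflecting extender exists; the strong cutpoint assumption places that extender in $R \sub P_\gamma$. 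Hence $\mathscr{P}^{N||\gamma}(R) = P_\gamma$ is well-defined. If $N|\gamma$ is passive this already gives $\mathscr{P}^{N|\gamma}(R) = P_\gamma$. If $N|\gamma$ is active with top extender $F$, then $\crit(F) > \eta$ (strong cutpoint again), and standard P-construction arguments (cf.~\cite{sile}) show that $F \rest (P_\gamma || \gamma)$ is a coherent top extender producing $\mathscr{P}^{N|\gamma}(R)$ as a premouse. In either case $\gamma \in \Gamma$.

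Combining the two closures, let $\gamma^* = \sup \Gamma$. Downward closure gives $[\eta, \gamma^*) \sub \Gamma$; if $\gamma^* \notin \Gamma$, then $\gamma^*$ would have to be a limit, contradicting limit closure. Thus $\gamma^* \in \Gamma$ is the largest element. The main obstacle in this plan is verifying that ``$\eta$ is Woodin'' survives the passage to unions at limit stages --- a non-local property --- and the argument depends crucially on the strong-cutpoint hypothesis, which confines the relevant reflecting extenders to $R$ itself. The coherence of the restricted top extender at an active level is routine in P-construction theory.
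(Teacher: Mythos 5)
The paper states this lemma without proof, evidently regarding it as routine, so there is no proof of record to compare yours against. Your argument is the natural one and is correct: well-definedness is downward closed (an initial segment of the good $P$ at $\gamma$ \emph{is} the good $P$ at any $\gamma'\in[\eta,\gamma]$, since the extender prescription only consults $\es^N$ at indices $\leq\gamma'$) and closed under unions at limits, and the strong-cutpoint hypothesis on $\eta$ is exactly what confines the Woodinness-witnessing extenders to $\es^R$, so that the non-local clause ``$P\sats\eta$ is Woodin'' survives both the restriction to initial segments and the passage to unions. Your deferral to \cite{sile} for the coherence of a restricted top extender at an active limit level is an appropriate level of detail for a lemma the paper itself declines to argue.
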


\subsection{Mouse set theorem and mouse witnesses}\label{subsec:mouse_witnesses}
In this section we review some mostly standard material on the mouse set theorem and on mice witnessing 
$\Sigma_1^{\SS_\alpha}(\RR)$ facts.

\begin{rem}
 We will usually talk about $(m,\om_1+1)$-iterability
 and \[ (m,\om_1,\om_1+1)^*\]
 -iterability
 in this paper, whereas in the $\AD$ context,
 it is common to talk about $(m,\om_1)$-iterability
 and $(m,\om_1,\om_1)^*$-iterability
 instead. Of course under $\ZF+\AD$, these are equivalent.
 We will also be interested in strategies in arbitrary $\SS_\alpha$,
 however, which of course can model much less than $\ZF$ (but still $\AD$).  But also in these models, the ``$\om_1$''
 is equivalent to the ``$\om_1+1$''. This is because
 every $X\in\pow(\om_1)\cap L(\RR)$
is constructible from a real, and in fact,
there is a sharp $x^\#$ and an iteration $j:x^\#\to N$
and $\bar{\Tt}\in x^\#$ such that $j(\bar{\Tt})=\Tt$,
and hence a further iterate $\Ult(N,F^N)$
containing a $\Tt$-cofinal branch. These things are all low-level projectively definable, so if $\Sigma\in\SS_\alpha$
is an $(m,\om_1)$-strategy, then the extension $\Sigma'$
to an $(m,\om_1+1)$-strategy is also in $\SS_\alpha$,
and likewise if $\Sigma$ is definable from parameters
over $\SS_\alpha$, then $\Sigma'$ is definable
at essentially the same level of complexity as is $\Sigma$.
Likewise for $(m,\om_1,\om_1+1)^*$. Only
if we need to be precise about this level of complexity
might it be relevant to consider $(m,\om_1)$-iterability.
\end{rem}

\begin{dfn}
 Let $\alpha\in\Lim$. Let $\Gamma_\alpha=\Sigma_1^{\SS_\alpha}$.
 Write $\Gammag=\Gamma_{\alphag}$.
 
 Given $x\in\RR$ or a transitive $x\in\HC$, let $\Lp_\alpha(x)=\Lp_{\Gamma_\alpha}(x)$ be the stack
 of all sound $x$-mice $N$ which project to $x$ and are $(\om,\om_1+1)$-iterable 
via a strategy in $\SS_\alpha$.
Such sound projecting mice $N$ have a unique such strategy $\Sigma_N$, and $\Sigma_N$ extends naturally to an 
$(\om,\om_1,\om_1+1)^*$-strategy $\Sigma^{\stk}_N$, and $\Sigma^{\stk}_N\rest\HC$ is projective in $\Sigma\rest\HC$, by \cite{iter_for_stacks}.
(Also under $\AD$, any $(\om,\om_1)$-strategy
extends uniquely to an $(\om,\om_1+1)$-strategy.)
Given $x\in\RR$, let $C_\alpha(x)=C_{\Gamma_\alpha}(x)=\OD^{<\alpha}_x$.
Likewise for transitive $x\in\HC$.
\end{dfn}

\begin{fact}[Mouse set theorem, Woodin]
 Let $\alpha$ be a limit of limits. Then for each $x\in\HC$,
 we have $C_{\Gamma_\alpha}(x)=\pow(x)\inter\Lp_{\Gamma_\alpha}(x)$.
\end{fact}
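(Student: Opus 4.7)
We prove the two inclusions separately.

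\emph{For $\supseteq$:} Let $y\in\pow(x)\cap\Lp_{\Gamma_\alpha}(x)$, so $y$ lies in a sound $x$-mouse $N$ projecting to $x$ with $\Sigma_N\in\SS_\alpha$. Since $\alpha$ is a limit, $\Sigma_N\in\SS_\beta$ for some $\beta\in\Lim_0\cap\alpha$. By comparison and the uniqueness of $\Sigma_N$, the sound $x$-mice projecting to $x$ whose strategy lies in $\SS_\beta$ are prewellordered by the mouse order, and $N$ is determined by $x$ together with its rank in that prewellorder. Hence $N\in\OD^{\beta,m}_x$ for some $m<\om$, and since $y\in N$ is definable over the countable structure $N$ from an ordinal, also $y\in\OD^{\beta,m'}_x$ for some $m'<\om$. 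Therefore $y\in C_{\Gamma_\alpha}(x)$.

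\emph{For $\subseteq$:} This is the substantive direction---essentially Woodin's mouse set theorem carried out at the admissible level. Let $y\in C_{\Gamma_\alpha}(x)=\OD^{<\alpha}_x$, and fix $(\beta,n)\in(\Lim_0\cap\alpha)\times[1,\om)$ with $y\in\OD^{\beta n}_x$. Since $\alpha$ is a limit of limits, choose $\beta'\in\Lim_0\cap\alpha$ strictly above $\beta+\om n$ and with $\beta'+\om<\alpha$. The plan has two steps. First, reduce the $\Sigma_n^{\SS_\beta}$-definition of $y$ (with its $<\om_1$ ordinal parameter, coded by elements of $\WO$) to a $\Sigma_1^{\SS_{\beta'}}(\{x\})$-definition of the singleton $\{y\}$, absorbing the quantifier alternations by passing up finitely many $\Ss$-levels, which is possible because $\beta'-\beta$ is a large limit ordinal. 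Second, invoke the mouse witness construction from \S\ref{subsec:mouse_witnesses}: every true $\Sigma_1^{\SS_{\beta'}}(\{x\})$-fact is witnessed by an $\om$-sound $x$-mouse $N$ projecting to $x$, containing $y$, and whose iteration strategy $\Sigma_N$ lies in $\SS_{\beta'+\om}\subseteq\SS_\alpha$. Any such $N$ belongs to $\Lp_{\Gamma_\alpha}(x)$ and contains $y$, as required.

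\emph{Main obstacle.} The heavy lifting is the mouse witness theorem for $\Sigma_1^{\SS_{\beta'}}$-facts, which is the classical core model induction argument imported from \S\ref{subsec:mouse_witnesses}; the present proof essentially cites that result. The hypothesis that $\alpha$ is a \emph{limit of limits} is used precisely to secure a $\beta'$ strictly between $\beta$ and $\alpha$ leaving enough room both above $\beta$ (to perform the $\Sigma_n\to\Sigma_1$ reduction by climbing the $\Ss$-hierarchy) and below $\alpha$ (to keep $\Sigma_N\in\SS_\alpha$). Were $\alpha$ only a limit, one could still obtain $\beta'<\alpha$ above $\beta$, but the requirement $\beta'+\om<\alpha$ would fail for cofinal $\beta$, making the witness's strategy potentially escape $\SS_\alpha$.
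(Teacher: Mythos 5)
The paper gives no proof of this Fact: it is stated as a known result, attributed to Woodin, and used as a black box; the paper later proves a closely related statement (the final Fact of \S\ref{subsec:mouse_witnesses}, under the hypothesis that $\alpha$ is not the successor of a strong gap, which subsumes "limit of limits") via Corollary~\ref{cor:local_MST}. So there is no paper proof for you to match, but your sketch can still be checked against the argument the paper actually gives for the analogous result, and there is a genuine gap in the $\subseteq$ direction.

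Your reduction step does not work as described. Given $y\in\OD^{\beta n}_x$, the defining formula carries an ordinal parameter $\gamma<\om_1$, and $y$ is determined by $x$ \emph{together with} $\gamma$ (or a wellorder code $w\in\WO_\gamma$); there is no way to eliminate $\gamma$ and obtain a $\Sigma_1^{\SS_{\beta'}}(\{x\})$-definition of the singleton $\{y\}$ from $x$ alone, since distinct $y$'s in $\OD^{\beta n}_x$ correspond to distinct $\gamma$'s. More importantly, even after packaging the relevant $\Sigma_1$ fact with the real $y$ (or a code for $(y,\gamma)$) as a parameter, what Fact~\ref{fact:mouse_witness_existence} delivers is a $\varphi(x,y)$-witness: an $\om$-sound \emph{$X$-mouse} where $X$ is some transitive set containing $y$, not an $x$-mouse containing $y$. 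Your claim that "every true $\Sigma_1^{\SS_{\beta'}}(\{x\})$-fact is witnessed by an $\om$-sound $x$-mouse $N$ projecting to $x$, containing $y$" conflates these two things. The substantive step the paper performs in Corollary~\ref{cor:local_MST} — and which your sketch omits — is the passage from the $X$-mouse witness $N$ to an $x$-mouse $P$ via the Q-local $L[\es,x]$-construction of $N$, followed by the argument that $y\in P$ (otherwise one produces a perfect set of extender-algebra generics yielding uncountably many reals in $\OD^{<\alpha}(x)$, a contradiction). Without that step, the $\subseteq$ inclusion is not established. There is also a circularity to flag if one tries to use the paper's \S\ref{subsec:mouse_witnesses} as stated: the proof of Fact~\ref{fact:mouse_witness_existence} itself invokes (via \cite{twms}) that $z\mapsto C_{\Gamma_0}(z)$ is a mouse set for good pointclasses $\Gamma_0$ below $\Gamma_\alpha$, i.e.\ the mouse set theorem at lower levels. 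Your $\supseteq$ direction is fine.
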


We now proceed to mouse witnesses,
which is the main content of this section.
Recall that $\Sigma_1^{\J_{\beta+\om}}$
is uniformly equivalent to $\oplus_{n<\om}\Sigma_n^{\J_{\beta}}$, in a natural sense. In connection with this we make the following definition: 

\begin{dfn}
Let $(\varphi,n)\mapsto\gamma_{\varphi,n}$
be a recursive function with domain $\om\cross\om$,
such that for each $(\varphi,n)\in\om\cross\om$,
if $\varphi=\varphi(\vec{x})$ is a $\Sigma_1$ formula of $\Ll_{L(\RR)}$ in free variables $\vec{x}$,
then $\gamma_{\varphi,n}$ is the natural formula
of $\Ll_{L(\RR)}$ in the same free variables,
such that whenever $(M,\RR^M)$ is transitive and $\vec{a}\in M^{<\om}$, then $(\Ss_n(M),\RR^M)\sats\varphi(\vec{a})$
iff $(M,\RR^M)\sats\gamma_{\varphi,n}(\vec{a})$.
Let $k_n\in(0,\om)$ be least such that $\gamma_{\varphi,n}$ is $\Sigma_{k_n}$.
\end{dfn}

\begin{dfn}
Let $\varphi\in\Ll_{L(\RR)}$ be $\Sigma_1$ and $n<\om$.
Let $\psi_{\varphi,n}(\dot{x},\dot{m},\dot{t})$ be the natural
$\Pi^1_{4}[\dot{m},\dot{t}]$ formula (in free variable $\dot{x}$,
representing an element of $\RR$,
and predicates $\dot{m},\dot{t}$, representing  subsets of $\RR$) asserting\footnote{It is appropriate to have $\SS_\gamma$ coded by a set of reals, because by the minimality of $\gamma$, $\SS_\gamma$ must project to $\RR$, and it is sound.}
\[ \text{``}\dot{m}\text{ is a model }\iso\J_\gamma\text{ where }\gamma\in\OR\text{ 
is least such that }\J_\gamma\sats\gamma_{\varphi,n}(\dot{x}), \]
\[ \text{ and }\dot{t}=\Th_{\Sigma_{k_n}}^{\SS_\gamma}\text{''}.\]
That is, $\psi_{\varphi,n}(\dot{x},\dot{m},\dot{t})$
makes the following assertions:
\begin{enumerate}
\item $\dot{m}$ codes a model
in the language of $L(\RR)$
(with binary relations $=^{\dot{m}}$ (an equivalence relation) and $\in^{\dot{m}}$,
and interpretation  $\RR^{\dot{m}}$ of constant $\dot{\RR}$, where $\in^{\dot{m}}$
and $\RR^{\dot{m}}$ both respect ${=^{\dot{m}}}$),\footnote{It is better not to demand
that ${=^{\dot{m}}}$ be actual equality,
because when $\dot{m}$ is defined in the natural manner, it will not be actual equality.}

\item $(V_{\om+1}^{\dot{m}},{\in^{\dot{m}}},{=^{\dot{m}}})$ is isomorphic to $(V_{\omega+1},{\in},{=})$,
\item $\dot{m}\sats$``Extensionality + Pairing + $V=L(\RR^{\dot{m}})$'',
\item for each $\Sigma_{k_n}$ formula $\varphi\in\Ll_{L(\RR)}$
and $x\in\RR$, letting $x'\in\dot{m}$
be  isomorphic to $x$,
we have $\dot{m}\sats\varphi(x')$ iff $(\varphi,x)\in\dot{t}$\footnote{
Note that this item 
does not push the complexity of
$\psi_{\varphi,n}$ up substantially
(in particular, it does not particularly
depend on $n$), because we can use the usual recursive trick to express that $\dot{t}$ is a satisfaction relation; that is, we express that it is correct about atomic formulas, and then simply express
that it satisfies the right recursive properties
up to $\Sigma_{k_n}$ formulas. Of course
we could similarly express that $\dot{t}$
is the entire $\Sigma_\om$ theory, but such  theories
will not be available to us as sets in the proof later.}
\item $(\gamma_{\varphi,n},\dot{x})\in \dot{t}$
and $(\gamma'_{\varphi,n},\dot{x})\notin\dot{t}$,
where $\gamma'_{\varphi,n}$ says ``there is a proper segment of me (in the $L(\RR)$ hierarchy)
which satisfies $\gamma_{\varphi,n}(\dot{x})$'',
\item $\dot{m}$ is wellfounded.\footnote{That this
assertion is
 projective in $\dot{m}$ (or in $(\dot{m},\dot{t})$) seems to make use of $\DC_{\RR}$.}
\end{enumerate}

Write
\[ \psi_{\varphi,n}(\dot{x},\dot{m},\dot{t})\iff\all^\RR x_1\ \exists^\RR x_2\ \all^\RR 
x_{3}\ \exists^\RR x_{4}\ [\varrho_{\varphi,n}(\vec{x},\dot{x},\dot{m},\dot{t})] \]
with $\varrho_{\varphi,n}$ arithmetic,
where $\vec{x}=(x_1,\ldots,x_4)$,
and $(\varphi,n)\mapsto\varrho_{\varphi,n}$  recursive.
\end{dfn}

\begin{dfn}
 Write $\CC_\delta=\Coll(\om,\delta)$ (the forcing).
\end{dfn}

\begin{dfn}
 Let $X\in\HC$ be transitive
 and  $N$ be an $\om$-small $X$-premouse.
 Let $x\in\RR\cap\Ss_\om(X)$. Let $\vec{\delta}=(\delta_4,\ldots,\delta_0),\dot{S},\dot{T}\in N$. Let $\varphi(\dot{x})\in\Ll_{L(\RR)}$ be $\Sigma_1$,
 with free variable $\dot{x}$. Let $n<\om$.
 We say that $(N,\vec{\delta},\dot{S},\dot{T})$ is a \emph{pre-$(\varphi(x),n)$-witness}
 iff  
 \[ \delta_0<\ldots<\delta_4\in\OR^N,\]
 each $\delta_i$ is Woodin in $N$,
 $N\sats\ZF^-$+``$\delta_4^{+}$ exists,
 $\dot{S},\dot{T}$ are $\CC_{\delta_0}$-names,
 and for some $\lambda\in\OR^N$,
\[ \forces_{\CC_{\delta_0}}\ \dot{S},\dot{T}\text{ are }\CC_{\delta_4}\text{-absolutely 
complementing trees on }\om\cross\lambda\]
and writing $\vec{x}=(x_1,\ldots,x_4)$,
\[ \begin{array}{ll}\forces_{\CC_{\delta_0}}\ \forces_{\CC_{\delta_1}}\all^\RR x_1\
\forces_{\CC_{\delta_2}}\exists^\RR x_2\  \forces_{\CC_{\delta_3}}\all^\RR x_{3}\  
\forces_{\CC_{\delta_{4}}}\exists^\RR x_{4}\
\varrho_{\varphi,n}(\vec{x},x,p[\dot{S}]_0,p[\dot{S}]_1)\text{''}.\end{array}\]

A \emph{$(\varphi(x),n)$-witness} is a $(0,\om_1+1)$-iterable 
pre-$(\varphi(x),n)$-witness.

A \emph{minimal $(\varphi(x),n)$-witness} is a $(\varphi(x),n)$-witness
$(N,\vec{\delta},\dot{S},\dot{T})$ such that if $(N',
\vec{\delta}',\dot{S}',\dot{T}')$ is a pre-$(\varphi(x),n)$-witness
and $N'\ins N$ then $N'=N$
and $(\vec{\delta},\dot{S},\dot{T})\leq_N(\vec{\delta}',\dot{S}',\dot{T}')$.

A pre-$(\varphi(x),n)$-witness $(N,\vec{\delta},\dot{S},\dot{T})$ is \emph{above-$\delta$}
iff $\delta$ a strong cutpoint of $N$ and $\delta<\delta_0$ where $\vec{\delta}=(\delta_4,\ldots,\delta_0)$.

We will also just say that $N$ is a \emph{(minimal) (pre-)$(\varphi(x),n)$-witness}, if there is $(\vec{\delta},\dot{S},\dot{T})$ witnessing
that $(N,\vec{\delta},\dot{S},\dot{T})$ is such.

A \emph{(minimal) (pre-)$\varphi(x)$-witness}
is a (minimal) (pre-)$(\varphi(x),n)$-witness for some $n<\om$. (So we don't minimize on $n$,
as it's not necessary, though it would be more natural to do so.)
\end{dfn}
\begin{rem}\label{rem:iterate_pre_varphi-witness}
Let $(N,\dot{S},\dot{T})$ be a pre-$(\varphi(x),n)$-witness.
 Note that non-dropping degree $0$ iteration maps on $N$
are fully elementary, as $N\sats\ZF^-$.
Thus, minimality is preserved by such maps,
as are the witnessing objects and their minimality.
\end{rem}

The first lemma below is proved by comparison, and using that $\delta_0,\delta_0'$ are strong cutpoints of $N,N'$ respectively, by tameness: 

\begin{lem}\label{lem:compare_min_varphi_witnesses}
 Let $(N,\vec{\delta},\dot{S},\dot{T}),(N',\vec{\delta}',\dot{S}',\dot{T}')$ be countable minimal $(\varphi(x),n)$-witnesses
 over the same $X$,
 and $\Sigma,\Sigma'$ be $(0,\om_1+1)$-strategies for $N,N'$.
 Then there is a common non-dropping iterate $P$ of $N,N'$,
 via $\Sigma,\Sigma'$ respectively,
 and letting $j:N\to P$ and $j':N'\to P$ be the iteration maps,
 then $j(\vec{\delta},\dot{S},\dot{T})=j'(\vec{\delta}',\dot{S}',\dot{T}')$.
 If, moreover, $N|\delta_0=N'|\delta_0'$
 where $\vec{\delta}=(\delta_4,\ldots,\delta_0)$ and likewise $\delta_0'$, then  $\delta_0=\delta_0'<\crit(j),\crit(j')$.
\end{lem}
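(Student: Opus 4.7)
The plan is to apply standard successful coiteration of $(N,\Sigma)$ against $(N',\Sigma')$. Since $N,N'$ are countable tame $\omega$-small $(0,\omega_1+1)$-iterable premice with $\delta_0,\delta_0'$ Woodin (hence strong cutpoints by tameness), this produces normal iteration trees $\Tt$ via $\Sigma$ on $N$ and $\Tt'$ via $\Sigma'$ on $N'$, of countable length, terminating successfully with last models $P$ and $P'$ that are $\ins$-comparable (WLOG $P\ins P'$) and with at least one of the two main branches non-dropping. Because $N,N'\sats\ZFmin$, any non-dropping degree-$0$ iteration map is fully elementary, so by Remark~\ref{rem:iterate_pre_varphi-witness} the non-dropping side's last model is itself a minimal $(\varphi(x),n)$-pre-witness, with witness tuple the appropriate $j$-image.

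The core claim is that both main branches are non-dropping and $P=P'$. If so, then $P,P'$ are both minimal pre-witnesses, and minimality applied to $P\ins P'$ forces $P=P'$; both tuples $j(\vec\delta,\dot S,\dot T)$ and $j'(\vec\delta',\dot S',\dot T')$ are then the unique $\leq_P$-minimum witness tuple in $P$ (forced by the minimality definition) and hence coincide. To rule out a drop on one side: suppose $\Tt$ is non-dropping but $\Tt'$ drops on its main branch, and let $\xi$ be the least drop stage on $[0,\infty]_{\Tt'}$, so that $k\colon N'\to M^{\Tt'}_\xi$ is fully elementary and $M^{\Tt'}_\xi$ is a minimal pre-witness. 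Using tameness together with $j(\delta_0)$ being a strong cutpoint of $P$, and analyzing where the drop must sit relative to the comparison, I would show that $P$ (which is $\ins P'$) in fact sits as a proper initial segment of $M^{\Tt'}_\xi$; elementarity of $k$ then pulls back to produce a proper initial segment of $N'$ which is a pre-$(\varphi(x),n)$-witness, contradicting minimality of $N'$. This drop-avoidance step is the principal obstacle.

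For the third claim, the hypothesis $N|\delta_0=N'|\delta_0'$ forces $\delta_0=\delta_0'=:\delta$ by equality of ordinal heights. Since the two premice agree up to $\delta$, the coiteration produces no disagreement below $\delta$, so no extender of index $\leq\delta$ is applied on either side. By the strong-cutpoint property of $\delta_0$ in $N$ (resp.\ $\delta_0'$ in $N'$), no extender on $\es^N$ (resp.\ $\es^{N'}$) has critical point $<\delta$ and index $>\delta$, so every applied extender has critical point $\geq\delta$. To exclude critical point exactly $\delta$: applying such an extender on one side strictly shifts the first Woodin above the base in the iterate upward to some $j_E(\delta)>\delta$, which would have to be matched on the other side in the common iterate $P=P'$; a standard least-disagreement analysis, combined with minimality of $\vec\delta,\vec\delta'$ and tameness, rules this out. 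Thus $\crit(j),\crit(j')>\delta$.
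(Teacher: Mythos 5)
Your overall plan --- coiterate via $\Sigma,\Sigma'$, argue that neither main branch drops (so $P=P'$ and both tuples are the unique $\leq_P$-least witness tuple by Remark~\ref{rem:iterate_pre_varphi-witness}), and for the last claim use agreement below $\delta_0$ plus the cutpoint property --- is the right shape and matches what the paper's one-line proof indicates. But two steps as written have concrete problems.

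First, the drop-avoidance step (which you yourself flag as the principal obstacle) is not actually carried out, and the sketch you give misfires. You write ``let $\xi$ be the least drop stage on $[0,\infty]_{\Tt'}$, so that $k\colon N'\to M^{\Tt'}_\xi$ is fully elementary,'' but if $\xi$ is the \emph{first drop}, there is no iteration map from $N'$ to $M^{\Tt'}_\xi$; you want the \emph{last non-dropping} node $\alpha\in b^{\Tt'}$, giving $i^{\Tt'}_{0\alpha}\colon N'\to M^{\Tt'}_\alpha$. Even after that fix, the claim that $P$ sits as a proper initial segment of $M^{\Tt'}_\alpha$ and can be ``pulled back'' through $i^{\Tt'}_{0\alpha}$ is not justified: $P$ is an iterate of $N$, not of $N'$, so there is no reason for $P$ to lie in $\rg(i^{\Tt'}_{0\alpha})$, and passing to a hull to force it into the range is exactly the missing content. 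This step needs a genuine argument (e.g.\ along the lines used in the proof of Lemma~\ref{lem:min_varphi(x)-witness_stacks_iterability}, or a fine-structural argument from the drop to a proper pre-witness segment of $N'$); as written it is a gap, not a proof.

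Second, for the third claim you misstate the strong-cutpoint property: you assert ``no extender $\ldots$ has critical point $<\delta$ and index $>\delta$'' (that is the \emph{ordinary} cutpoint notion) and then concoct a separate ``least-disagreement'' argument to exclude extenders with $\crit(E)=\delta_0$. But \emph{strong} cutpoint means no $E\in\es_+^N$ with $\crit(E)\leq\delta_0<\lh(E)$, which immediately rules out $\crit(E)=\delta_0$ as well. Once the comparison is above $\delta_0$ (which follows from $N|\delta_0=N'|\delta_0'$), every applied extender has index $>\delta_0$, hence by the strong cutpoint property has critical point $>\delta_0$, and you are done. The extra argument you sketch (``shifts the first Woodin upward $\ldots$ a standard least-disagreement analysis rules this out'') is both unnecessary and, as stated, not clearly valid.
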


\begin{lem}\label{lem:min_varphi(x)-witness_stacks_iterability}
 Let $N$ be a countable minimal $(\varphi(x),n)$-witness over $X$.
 Then $N$ has a unique $(0,\om_1+1)$-strategy, and its
 unique such strategy extends to a
$(0,\om_1,\om_1+1)^*$-strategy.
\end{lem}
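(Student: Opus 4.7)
My approach is to apply Lemma \ref{lem:tame_projecting_stacks} to $N$ at an appropriate strong cutpoint $\eta$. First, tameness of $N$ is immediate from $\omega$-smallness (the minimal-witness context ensures $N$ is $\omega$-small, having only the five Woodins $\delta_0<\cdots<\delta_4$ and no limit of Woodins in $N$). Second, I would use the minimality of $(N,\vec{\delta},\dot{S},\dot{T})$ to conclude that $N$ is $m$-sound with $\rho_{m+1}^N\leq\eta<\rho_m^N$ for some strong cutpoint $\eta$ of $N$: a Skolem-hull collapse, forming $\bar{N}=\cHull^N(\eta\cup\{(\vec{\delta},\dot{S},\dot{T},p_m^N)\})$ for a suitable choice of $\eta$ (below $\delta_0$ and below $\rho_m^N$), produces via condensation a pre-$(\varphi(x),n)$-witness $(\bar{N},\ldots)$ with $\bar{N}\inseg N$; if $\bar{N}$ were a proper initial segment of $N$, minimality would be violated, so $\bar{N}=N$, and this gives the desired projection and soundness.

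Once $N$ is identified as a tame $\eta$-projecting premouse of degree $m$, above-$\eta$ $(m,\omega_1+1)$-iterable via the restriction of the given $(0,\omega_1+1)$-strategy, Lemma \ref{lem:tame_projecting_stacks} yields a unique above-$\eta$ $(m,\omega_1+1)$-strategy and its extension to an above-$\eta$ $(m,\omega_1,\omega_1+1)^*$-strategy with full normalization. To upgrade this to uniqueness and stacks iterability of the full $(0,\omega_1+1)$-strategy, I would argue that any normal tree on $N$ decomposes into a (possibly trivial) initial part whose branches act below $\eta$, followed by trees that stay above $\eta$: the below-$\eta$ part has its branches uniquely determined by Q-structures available as proper initial segments of $N$ (using $\omega$-smallness together with $N\sats\ZFmin$ and ``$\delta_4^+$ exists''), while the above-$\eta$ part is governed by the strategy from Lemma \ref{lem:tame_projecting_stacks}. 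Stacks iterability then pieces together analogously.

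The main obstacle is the minimality-to-projection step: one must verify carefully that the Skolem-hull collapse $\bar{N}$ remains a pre-$(\varphi(x),n)$-witness. This requires that the complementing-trees-and-forcing assertion ``$\forces_{\CC_{\delta_0}}\cdots$'' be internally definable over $N$ from $(\vec{\delta},\dot{S},\dot{T})$ at a level ($\rSigma_{m+1}$ or lower) that transfers to $\bar{N}$ under condensation, and that the chosen $\eta$ really is a strong cutpoint of $N$ --- which combines tameness, $\omega$-smallness, and inaccessibility of $\delta_0$ in $N$ to rule out problematic overlapping extenders near $\eta$. Once these points are settled the remainder is routine, along the same lines as the proof of Lemma \ref{lem:tame_projecting_stacks} itself.
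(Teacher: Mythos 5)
The proposal rests on a claim that cannot hold: that $N$ is an $\eta$-projecting premouse in the sense required by Lemma \ref{lem:tame_projecting_stacks}, i.e.\ that $\rho_{m+1}^N\leq\eta<\rho_m^N$ for some $m$ and some strong cutpoint $\eta$. But a pre-$(\varphi(x),n)$-witness satisfies $N\sats\ZF^-$ (plus ``$\delta_4^+$ exists''), which forces $\rho_k^N=\OR^N$ for every $k$, so $N$ simply does not project below its own height. Lemma \ref{lem:tame_projecting_stacks} therefore cannot be applied to $N$, no matter how $\eta$ is chosen.

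The minimality-to-projection step you sketch is also incorrect on its own terms. You form $\bar{N}=\cHull^N(\eta\cup\{\dots\})$ and assert that condensation delivers $\bar{N}\ins N$, then invoke minimality to conclude $\bar{N}=N$ and thereby get ``the desired projection and soundness.'' But if $\bar{N}=N$ then $N=\Hull^N(\eta\cup\{\dots\})$, which means $\rho_1^N\leq\eta<\OR^N$ --- contradicting $N\sats\ZF^-$ --- so the very conclusion you want is ruled out by the model's own theory. And condensation does not in general give $\bar{N}\ins N$ here: the Skolem hull of a model of $\ZF^-$ in a bounded parameter set is cofinal in its ordinals, and its transitive collapse is a new model of the same height, not an initial segment of $N$.

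The paper's proof does use a hull collapse, but in a quite different way, and without ever putting $N$ in the hypotheses of Lemma \ref{lem:tame_projecting_stacks}. It assumes two distinct $(0,\om_1+1)$-strategies, finds a disagreement tree $\Tt$, compares the two phalanxes $\Phi(\Tt\conc b)$ and $\Phi(\Tt\conc c)$, and uses minimality plus the Zipper Lemma to see that $\Hull^N(X)$ is bounded below some Woodin $\varepsilon$ of $N$. Setting $\eta=\sup(\Hull^N(X)\cap\varepsilon)$ and $C=\cHull^N(\eta\cup X)$, one gets $\crit(\pi)=\eta$, $\pi(\eta)=\varepsilon$; crucially $C$ is \emph{not} an initial segment of $N$, but it is (by elementarity of $\pi$) another countable minimal $(\varphi(x),n)$-witness over $X$. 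Comparing $C$ with $N$ via Lemma \ref{lem:compare_min_varphi_witnesses} then forces the $i$th Woodin of $C$ (which is $\eta<\varepsilon$) to coincide with that of the common iterate, while the $i$th Woodin of $N$ is $\varepsilon$ and can only move upward --- a contradiction. Uniqueness established, the stacks extension is immediate from \cite{iter_for_stacks}. Your decomposition idea at the end is not itself wrong, but since the projecting-premouse hypothesis is unavailable, it does not connect to Lemma \ref{lem:tame_projecting_stacks} and does not repair the argument.
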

\begin{proof}
Supposing $\Sigma,\Gamma$ are two distinct $(0,\om_1+1)$-strategies
for $N$, we can find a countable limit length tree $\Tt$
via $\Sigma\cap\Gamma$, such that $b=\Sigma(\Tt)\neq\Gamma(\Tt)=c$. Now compare the phalanxes $\Phi(\Tt\conc b)$
and $\Phi(\Tt\conc c)$, producing trees $\Uu$ and $\Vv$ respectively. The minimality and standard
fine structural arguments show that we get a common final model $P$, $P$ is above $M^\Tt_b$ in $\Uu$
and above $M^\Tt_c$ in $\Vv$, and there are no drops along $b\conc b^\Uu$ or along $c\conc b^\Vv$. Therefore \[ \Hull^P(X)=\Hull^{M^\Tt_b}(X)=\Hull^{M^\Tt_c}(X) \]
and this hull is bounded in $\delta(\Tt)$, by the Zipper Lemma.
Also $\delta(\Tt)$ is Woodin in $P$, so $\delta(\Tt)=i^\Tt_b(\varepsilon)=i^\Tt_c(\varepsilon)$
for some Woodin $\varepsilon$ of $N$ (since $N$ is $\om$-small). Since $i^\Tt_b,i^\Tt_c$ are continuous at  $\varepsilon$, it follows that $\Hull^N(X)$ is bounded in $\varepsilon$. Let $\eta=\sup(\Hull^N(X)\cap\varepsilon)$ and let
\[ H=\Hull^N(\eta\cup X).\]
Then a standard argument shows that $\eta=H\cap\varepsilon$,
so letting $C$ be the transitive collapse of $H$
and $\pi:C\to H$ the uncollapse map, we get $\crit(\pi)=\eta$
and $\pi(\eta)=\varepsilon$. By elementarity,
$C$ is also a minimal $(\varphi(x),n)$-witness.
Comparing $C$ with $N$, minimality ensures that
they iterate to a common iterate $D$, and note that the tree on $C$ is above $\eta$. But letting $i<\omega$ be such that $\varepsilon=\delta_i^N$
(the $i$th Woodin of $N$),
we have $\eta=\delta_i^C$, so we get $\delta_i^C=\delta_i^D$, but also $\delta_i^C<\varepsilon=\delta_i^N\leq\delta_i^D$, contradiction.

So $N$ has a unique $(0,\om_1+1)$-strategy.
Therefore by \cite{iter_for_stacks}, it extends
to a $(0,\om_1,\om_1+1)^*$-strategy.
\end{proof}

\begin{lem}
Let $(N,\vec{\delta},\dot{S},\dot{T})$ be a countable minimal $(\varphi(x),n)$-witness, 
and $\Sigma$ the unique $(0,\om_1+1)$-strategy for $N$.
Let $\vec{\delta}=(\delta_4,\ldots,\delta_0)$.
Let $G_0$ be $(N,\CC_{\delta_0})$-generic.
Let $(S,T)=(\dot{S}_{G_0},\dot{T}_{G_0})$.
Let $\Tt,\Tt'$ be successor length normal trees on $N$ via $\Sigma$,
above $\delta_0$,
with $b^\Tt,b^{\Tt'}$ non-dropping. Let $j,j'$ be the iteration maps, and $j^+,(j')^+$ their extensions to $N[G]$.
Then
$p[j^+(S)]\inter p[(j')^+(T)]=\emptyset$.
\end{lem}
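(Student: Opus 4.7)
The plan is to suppose for contradiction that some $x\in\RR$ lies in both $p[j^+(S)]$ and $p[(j')^+(T)]$ (projections computed in $V$) and derive an absurdity by pushing both iterates into a common iterate where the absolute complementing property can be applied to $x$.

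First, by Remark \ref{rem:iterate_pre_varphi-witness}, both $M=M^\Tt_\infty$ and $M'=M^{\Tt'}_\infty$ are countable minimal $(\varphi(x),n)$-witnesses, with respective witness data $j(\vec\delta,\dot S,\dot T)$ and $j'(\vec\delta,\dot S,\dot T)$; and since $\Tt,\Tt'$ are above $\delta_0$, we have $M|\delta_0=N|\delta_0=M'|\delta_0$ and $j(\delta_0)=j'(\delta_0)=\delta_0$. Apply Lemma \ref{lem:compare_min_varphi_witnesses} to $M,M'$ under the unique strategy $\Sigma$: it yields a common non-dropping iterate $P$ with $\Sigma$-iteration maps $k\colon M\to P$ and $k'\colon M'\to P$ satisfying
\[(k\circ j)(\vec\delta,\dot S,\dot T)=(k'\circ j')(\vec\delta,\dot S,\dot T),\]
and by the second clause of that lemma the comparison is above $\delta_0$, so $G_0$ remains generic over $P$. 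Invoke Lemma \ref{lem:forcing} to extend $k,k'$ to $k^+,(k')^+$ on the forcing extensions, and set $\widetilde S:=k^+(j^+(S))=(k')^+((j')^+(S))$ and $\widetilde T:=k^+(j^+(T))=(k')^+((j')^+(T))$. By elementarity, $\widetilde S$ and $\widetilde T$ are $\CC_{(kj)(\delta_4)}$-absolutely complementing in $P[G_0]$; and branch-lifting (if $(x,f)\in[j^+(S)]$ then $(x,k\circ f)\in[\widetilde S]$, and analogously on the primed side) yields $x\in p[\widetilde S]\cap p[\widetilde T]$ in $V$.

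It remains to contradict $x\in p[\widetilde S]\cap p[\widetilde T]$. By Remark \ref{rem:iterate_pre_varphi-witness} again, $P$ is itself a countable minimal $(\varphi(x),n)$-witness, so by Lemma \ref{lem:min_varphi(x)-witness_stacks_iterability} it carries a unique iteration strategy and supports Woodin-based genericity iterations above $(kj)(\delta_0)$. Iterate $P$ above $(kj)(\delta_0)$ via $l\colon P\to P^*$ so that $x$ becomes generic over $P^*$ for the extender algebra at $l((kj)(\delta_1))$; absorbing this small forcing into a $\CC_{l((kj)(\delta_4))}$-generic extension of $P^*[G_0]$, obtain $H$ with $x\in P^*[G_0][H]$. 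By elementarity $l^+(\widetilde S),l^+(\widetilde T)$ are absolutely complementing in $P^*[G_0]$, so in $P^*[G_0][H]$ we have $p[l^+(\widetilde S)]\cap p[l^+(\widetilde T)]=\emptyset$. On the other hand, one more application of branch-lifting together with absoluteness of illfoundedness of the subtrees $(l^+(\widetilde S))_x$ and $(l^+(\widetilde T))_x$ between $V$ and $P^*[G_0][H]$ places $x$ into both $p[l^+(\widetilde S)]$ and $p[l^+(\widetilde T)]$ as computed in $P^*[G_0][H]$, a contradiction.

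The heart of the argument, and the main obstacle, is securing the equality $(kj)(\dot S,\dot T)=(k'j')(\dot S,\dot T)$ in the common iterate; this is exactly what minimality of the witness buys us via Lemma \ref{lem:compare_min_varphi_witnesses}. Without it one would end up with two \emph{different} pairs of absolutely complementing trees in $P[G_0]$ and no way to collate $p[j^+(S)]$ against $p[(j')^+(T)]$; the subsequent genericity iteration and absoluteness step are then routine applications of the universally-Baire-style machinery already implicit in the definition of a pre-$(\varphi(x),n)$-witness.
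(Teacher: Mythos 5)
Your argument is correct, and its first half — comparing the iterates $M=M^{\Tt}_\infty$ and $M'=M^{\Tt'}_\infty$ to a common iterate $P$ above $\delta_0$ via Lemma \ref{lem:compare_min_varphi_witnesses}, extracting the identity $(kj)(\dot S,\dot T)=(k'j')(\dot S,\dot T)$, extending to $P[G_0]$, and branch-lifting to place $x\in p[\widetilde S]\cap p[\widetilde T]$ — coincides with the paper's proof. You diverge at the close: the paper finishes in one step by noting that ``$p[\widetilde S]\cap p[\widetilde T]\neq\emptyset$'' is the illfoundedness of the tree of common branches, a set tree lying in the transitive model $P[G_0]$, so Mostowski absoluteness transfers this from $V$ to $P[G_0]$ directly; pulling back along the elementary $k^+\circ j^+$ then gives $N[G_0]\sats p[S]\cap p[T]\neq\emptyset$, contradicting the complementing property. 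Your further genericity iteration $l\colon P\to P^*$ followed by absorbing $x$ into $P^*[G_0][H]$ is sound (disjointness of projections is $\Pi^1_1$, hence persists downward to $P^*[G_0][H]$ from a $\CC_{l((kj)(\delta_4))}$-generic extension, while branch-lifting plus Mostowski gives $x$ in both projections there), but it does unnecessary work: one does not need the putative witness $x$ to live in the model, only the $\Sigma^1_1$ fact about the trees to transfer, which is automatic for any transitive model containing $\widetilde S,\widetilde T$.
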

\begin{proof}
Fix a $(0,2,\om_1+1)^*$-strategy $\Sigma'$ for $N$,
which must extend $\Sigma$,
(provided) by Lemma \ref{lem:min_varphi(x)-witness_stacks_iterability}.
Suppose $y\in p[j^+(S)]\inter p[(j')^+(T)]$.
Compare $N_0=M^\Tt_\infty$ with 
$N_1=M^{\Tt'}_\infty$,
 using the second round of $\Sigma'$.
By \ref{lem:compare_min_varphi_witnesses}, we get a common iterate $P$, and no drops on main branches.
Let $j_0:N_0\to P$ and $j_1:N_1\to P$ be the iteration maps. Let $j_i^+:N_i[G_0]\to P[G_0]$ be the extension of $j_i$.
By \ref{rem:iterate_pre_varphi-witness} and \ref{lem:compare_min_varphi_witnesses}, we have
\[ j_0^+(j^+(S,T))=(S^*,T^*)= j_1^+((j')^+(S,T)). \]
Shifting elements of the trees pointwise under the various maps, we get
\[ y\in p[j_0^+(j^+(S))]\inter p[j_1^+((j')^+(T))]=p[S^*]\inter p[T^*].\]
By absoluteness,
$P[G_0]\sats\text{``}p[S^*]\inter p[T^*]\neq\emptyset\text{''}$,
so $N[G_0]\sats\text{``}p[S]\inter p[T]\neq\emptyset\text{''}$,
a contradiction.
\end{proof}

\begin{lem}\label{lem:varphi(x)-witness_implies_truth}
Suppose there is a  countable $(\varphi(x),n)$-witness
(over some $X$).
Then $L(\RR)\sats\varphi(x)$.
\end{lem}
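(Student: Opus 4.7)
The plan is a standard $\RR$-genericity-iteration transfer. I iterate $N$ via its $(0,\om_1+1)$-strategy $\Sigma$ to an iterate $N^*$ absorbing $\RR^V$ at the least Woodin $\delta_0^{N^*}$; force with $\Coll(\om,\delta_0^{N^*})$ to interpret the tree names $\dot S,\dot T$; and transfer the $\Pi^1_4[\dot m,\dot t]$ assertion $\psi_{\varphi,n}(x,\cdot,\cdot)$ from the successive $\Coll(\om,\delta_i^{N^*})$-generic extensions of $N^*$ down to $V$ via the $\Coll(\om,\delta_4^{N^*})$-absolute complementation of the image trees. Once $\psi_{\varphi,n}(x,A_0,A_1)$ holds in $V$ for the projections $A_0,A_1$ of the relevant trees, its clauses yield a wellfounded $m\iso\J_\gamma$ with $\gamma$ least such that $\J_\gamma\sats\gamma_{\varphi,n}(x)$; by the defining property of $\gamma_{\varphi,n}$, $\SS_n(\J_\gamma)\sats\varphi(x)$; and since $\SS_n(\J_\gamma)$ is contained in $L(\RR)$ and $\varphi$ is $\Sigma_1$, $\Sigma_1$-upward absoluteness gives $L(\RR)\sats\varphi(x)$.

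More concretely, let $\vec\delta=(\delta_4,\ldots,\delta_0)$. Applying Woodin's genericity iteration at $\delta_0^N$, produce a non-dropping iterate $j\colon N\to N^*$ via $\Sigma$ such that every real of $V$ is $(N^*,\BB_{\delta_0^{N^*}})$-generic; by non-dropping elementarity (and Remark \ref{rem:iterate_pre_varphi-witness}), $(N^*,j(\vec\delta),j^+(\dot S),j^+(\dot T))$ is still a pre-$(\varphi(x),n)$-witness. Fix an $(N^*,\Coll(\om,\delta_0^{N^*}))$-generic $g_0$ absorbing $\RR^V$, and set $S^*=j^+(\dot S)_{g_0}$, $T^*=j^+(\dot T)_{g_0}$; these are $\Coll(\om,\delta_4^{N^*})$-absolutely complementing in $N^*[g_0]$, and the remaining $\all^\RR\ex^\RR\all^\RR\ex^\RR$ part of the witness condition holds. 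I then unroll these four quantifiers by alternating further genericity iterations with applications of the strong $\rSigma_{n+1}$ forcing theorem (Lemma 2.4): given $x_1\in\RR^V$, iterate $N^*$ above $\delta_1^{N^*}$ via $\Sigma$ to make $x_1$ generic at the image of $\delta_2^{N^*}$, extract an existential witness $x_2$ from the resulting forcing relation, and repeat for any $x_3\in\RR^V$ to obtain $x_4$.

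The principal technical point is that the extracted witnesses $x_2,x_4$ must lie in $\RR^V$, and the various tree images must define a common set $A$ across all intermediate iterates and forcings; both are handled by the $\Coll(\om,\delta_4)$-absolute complementation, which renders $A$ universally Baire and makes the relevant $\Pi^1_4$ assertion absolute between $V$ and the auxiliary extensions. This yields $\psi_{\varphi,n}(x,A_0,A_1)$ in $V$. The wellfoundedness clause of $\psi_{\varphi,n}$ then gives a genuine $\J_\gamma$ (with $\RR^{\J_\gamma}=\RR^V$) satisfying $\gamma_{\varphi,n}(x)$, and the conclusion $L(\RR)\sats\varphi(x)$ follows as in the first paragraph.
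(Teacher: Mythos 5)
Your proposal takes a different route from the paper, and it has a genuine gap at its first step. The paper does \emph{not} iterate $N$ to absorb $\RR^V$ at $\delta_0$. Instead it passes to a \emph{minimal} witness, fixes a countable $(N,\CC_{\delta_0})$-generic $G_0\in V$ (so $N[G_0]$ is still a countable object of $V$), forms $t=\bigcup_{\Tt\in\mathscr{T}} p[(i^\Tt)^+(S)]$ as a union over all non-dropping countable above-$\delta_0$ iterates, and then uses genericity iterations \emph{above $\delta_0$} to verify the $\all^\RR\ex^\RR\all^\RR\ex^\RR$ block in $V$ quantifier by quantifier, with all auxiliary collapse generics countable and in $V$. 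Minimality is not cosmetic: it feeds into Lemmas~\ref{lem:compare_min_varphi_witnesses} and~\ref{lem:min_varphi(x)-witness_stacks_iterability} and the disjointness lemma $p[j^+(S)]\cap p[(j')^+(T)]=\emptyset$, which together are what make $t$ well-defined (coherent across the different images of $S$). Your proposal never invokes minimality and never establishes coherence of the various $p[j^+(S)]$.

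The more serious problem is the opening move. Once you iterate $N$ so that every real of $V$ is $\BB_{\delta_0^{N^*}}$-generic, you have $\delta_0^{N^*}=\om_1^V$, and hence $\delta_1^{N^*},\ldots,\delta_4^{N^*}>\om_1^V$. Then $\Coll(\om,\delta_0^{N^*})$ collapses $\om_1^V$, so $g_0$ lives only in a generic extension of $V$, and $\RR^{N^*[g_0]}\supsetneq\RR^V$. The ``further genericity iterations above $\delta_1^{N^*}$'' are then either trivial (since $x_1\in\RR^V\sub N^*[g_0]$ is already generic) or of length $>\om_1^V$, past the scope of the $(0,\om_1+1)$-strategy. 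Worse, the existential witnesses $x_2,x_4$ extracted from $\Coll(\om,\delta_2^{N^*})$- and $\Coll(\om,\delta_4^{N^*})$-extensions of $N^*[g_0]$ need not lie in $\RR^V$ at all. You flag this as ``the principal technical point'' and assert it is handled by $\Coll(\om,\delta_4)$-absolute complementation, but that is not what absolute complementation gives: it gives persistence of the tree projections under small forcing over $N^*[g_0]$, not downward transfer of a $\Pi^1_4$-over-trees statement from $N^*[g_0][\cdots]$ to the submodel $V$ which has strictly fewer reals (and is not a forcing extension of $N^*[g_0]$). Since $\Sigma_1$-truth in $L(\RR)$ is only upward absolute as $\RR$ grows, verifying $\psi_{\varphi,n}$ relative to $\RR^{N^*[g_0]}\supsetneq\RR^V$ does not yield $L(\RR^V)\sats\varphi(x)$. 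The paper's organization --- countable $G_0$, minimal witness, union $t$ over iterates, and piecewise unrolling inside $V$ --- is precisely designed to avoid this.
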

\begin{proof}
Since there is a countable $(\varphi(x),n)$-witness over $X$,
we can find a minimal one $(N,\dot{S},\dot{T})$,
as witnessed by Woodin cardinals $\left<\delta_i\right>_{i\leq 4}$ and
 strategy $\Sigma$,
which by Lemma \ref{lem:compare_min_varphi_witnesses} extends to a $(0,\om_1,\om_1+1)^*$-strategy $\Sigma'$.
Let $G_0$ be $(N,\CC_{\delta_0})$-generic, and $S=\dot{S}_{G_0}$. Let $\mathscr{T}$ be the set of all countable successor length
above-$\delta_0$ trees $\Tt$ on $N$ via $\Sigma$, such that $b^\Tt$ does not drop. For $\Tt\in\mathscr{T}$
write $(i^\Tt)^+:N[G_0]\to M^{\Tt}_\infty[G_0]$
for the extension of $i^\Tt$.
Let
\[ t=\bigcup_{\Tt\in\mathscr{T}} p[(i^\Tt)^+(S)].\]

We claim that there is  $\gamma\in\OR$
such that $t=\Th_1^{\SS_\gamma}$,
and hence $\J_\gamma\sats\varphi(x)$,
so $L(\RR)\sats\varphi(x)$.
For given $\Tt\in\mathscr{T}$ and $P=M^\Tt_\infty$,
and given $G$ which is 
$(P[G_0],\CC_{i^\Tt(\delta_4)})$-generic, earlier lemmas give that
\[ p[(i^\Tt)^+(S)]\inter P[G_0,G]=t\inter P[G_0,G].\]
So using genericity iterations, it follows that
\[ \all^\RR x_1\ \exists^\RR x_2\ \all^\RR x_3\ \exists^\RR x_4\ 
[\varrho_\varphi(\vec{x},x,t_0,t_1)]\]
(with $\vec{x}$ as usual), which establishes the claim.\end{proof}

\begin{dfn}\label{dfn:section}
 For a binary relation $R\sub X\cross Y$,
 and for $x\in X$, let $R_x=\{y\in Y\bigm|R(x,y)\}$.
\end{dfn}

 \begin{dfn}\label{dfn:Gamma_strategy}
 Let $M$ be a transitive structure.
 Let $\Gamma$ be a pointclass.
A \emph{$\Gamma$-$(\om_1+1)$-iteration strategy} for $M$
is an iteration strategy $\Sigma$ for $M$
 such that there is a binary $\Gamma$-relation 
 $R$ such that whenever $x\in\RR$ codes $M$,
 then $R_x$ codes $\Sigma\rest\HC$ with respect to $x$. We similarly define
 a \emph{$\Gamma$-$(k,\om_1+1)$-iteration strategy}
 for $k\leq\om$ and $M$ a $k$-sound premouse.
\end{dfn}

\begin{fact}[Mouse witness existence]\label{fact:mouse_witness_existence}
 Let $\alpha$ be a limit ordinal.\footnote{Recall
 here that our indexing of the $\J$-hierarchy
 is not the conventional one; we only index at limit ordinals, whereas usually the indexing uses all ordinals.} Let $x\in\RR$
 and $\varphi$ be $\Sigma_1$, and suppose $\alpha$
 is least such that $\SS_\alpha\sats\varphi(x)$.
 So $\alpha=\gamma+\om$ for some limit $\gamma$.
 Suppose
$\gamma$ does not end a strong S-gap.
 Let $n<\omega$ be such that $\J_{\gamma}\sats\gamma_{\varphi,n}(x)$.
 Let $X\in\HC$ with
 $x\in\J(X)$. 
 Then $\SS_\alpha\sats$``there is a $(\varphi(x),n)$-witness
 over $X$''.
 \end{fact}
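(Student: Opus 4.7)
The plan is to construct the witness via a core-model-style $L[\ExtS]$-construction carried out inside $\SS_\alpha$, exploiting Suslin representations of the theory of $\SS_\gamma$ that are available precisely because $\gamma$ does not end a strong gap. This is a variant of Woodin's basic mouse-witness existence argument, tailored to this complexity level.

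First I would extract, inside $\SS_\alpha$, trees $S_0,T_0$ on $\om\times\lambda$ (some $\lambda<\alpha$) whose projections respectively code and co-code the predicate ``$(m,t)$ codes $\Jj_\gamma$ together with its $\Sigma_{k_n}$-theory''. Since $\gamma$ does not end a strong S-gap, the scales-in-$L(\RR)$ analysis of \cite{scales_in_LR}, together with a bounded number of applications of Second Periodicity, yields a scale on (the reals coding) this predicate and on its complement at a pointclass sitting inside $\SS_\alpha$; extracting the scale trees gives $S_0,T_0$, and these are $\Coll(\om,\kappa)$-absolutely complementing for every $\kappa<\lambda$. The leastness of $\alpha$ keeps their complexity bounded by $\SS_\alpha$.

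Next I would run a fully backgrounded $L[\ExtS]$-construction inside $\SS_\alpha$ over $X$, using the homogeneity provided by $S_0,T_0$ to certify background extenders. The upper bound imposed by leastness of $\alpha$ (the construction cannot produce a mouse not iterable in $\SS_\alpha$) keeps the output $\om$-small and iterable, with strategy in $\SS_\alpha$, while the universally Baire representations $S_0,T_0$ supply enough absoluteness to climb through five Woodin cardinals $\delta_0<\cdots<\delta_4$ — one for the base collapse absorbing $x$, and one per real-quantifier alternation in $\varrho_{\varphi,n}$. Let $N$ be the resulting $\om$-small premouse, and let $\dot S,\dot T$ be the $\CC_{\delta_0}$-names for the $N$-copies of (sections of) $S_0,T_0$ inside $N[G_0]$.

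Finally I would verify the pre-witness forcing assertion. Absolute complementation of $\dot S_{G_0},\dot T_{G_0}$ across $\CC_{\delta_4}$-extensions follows by iterated Woodin generic absoluteness for Suslin representations, one Woodin absorbing each real quantifier; the $\forall^\RR\exists^\RR\forall^\RR\exists^\RR\varrho_{\varphi,n}$ statement is then the translation of the true assertion $\SS_\gamma\sats\gamma_{\varphi,n}(x)$ through the defining projection property of $S_0,T_0$. The main obstacle is arranging the $L[\ExtS]$-construction so that it reaches exactly five Woodins while remaining $\om$-small and iterable inside $\SS_\alpha$: this is the standard ``one step up'' of a core model induction, with the non-strong-gap hypothesis providing precisely the mouse-operator-type input needed to back the construction past each new Woodin, and leastness of $\alpha$ preventing any overshoot.
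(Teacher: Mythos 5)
Your high-level picture (scales available because $\gamma$ does not end a strong gap, trees $S_0,T_0$ coding the relevant theory, a backgrounded $L[\ExtS]$-construction producing five Woodins, generic absoluteness translating the true $\gamma_{\varphi,n}(x)$ into the forcing assertion) matches the paper's strategy. But there is a genuine gap: you never say where the five coarse Woodin cardinals that background your construction actually come from. You assert that ``the universally Baire representations $S_0,T_0$ supply enough absoluteness to climb through five Woodin cardinals,'' but UB representations by themselves do not produce Woodin cardinals. The paper's proof gets the Woodins from Woodin's theorem that $\theta=\omega_2^{L(T_5,y)}$ is Woodin in $H=\HOD^{L(T_5,y)}_{T_5,y_0}$ for appropriate $y$ (cite: \cite{LCFD}), and then uses the analysis of $\Gamma_i$-Woodin cardinals of $H$ (as in \cite[Lemma~3.11]{twms}) to find a tower $\delta_0<\dots<\delta_4$ with $V_{\delta_i}^H$ being $\Gamma_{4-i}$-Woodin. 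The fully backgrounded construction is then run \emph{inside $H$}, not naively ``inside $\SS_\alpha$,'' with nontrivial iterability claims about the coarse structures $V_{\delta_i+2}^H$ (Claim~\ref{clm:V_lambda^H_iterable}) and about the preservation of Woodinness of the $\delta_i$ through the construction. Without this $\HOD^{L(T,y)}$ step there is simply nothing to feed into the construction as backgrounds, so your outline cannot be completed as written.

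Two smaller points: (i) the claim that ``leastness of $\alpha$ keeps [$S_0,T_0$'s] complexity bounded by $\SS_\alpha$'' is not the right reason---having the scale trees inside $\SS_\alpha$ comes from the non-strong-gap hypothesis (plus \cite{scales_in_LR}); leastness of $\alpha$ is used elsewhere, to prevent the backgrounded construction from reaching a mouse non-iterable in $\SS_\alpha$. (ii) Your construction is proposed directly over $X$, but the paper's construction is an $L[\ExtS,y_0]$-construction over a real $y_0$ coding $X$, producing a $y$-premouse $P$; to get the advertised $X$-premouse the paper then runs a Q-local $L[\ExtS,X]$-$\mathscr{P}$-construction of $P$, and checking that this preserves the witnessing structure (Woodins, names for trees) is a genuine step you would need to add. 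The paper also obtains the trees $S,T$ directly in $P$ (not only as $\CC_{\delta_0}$-names) by a cardinality argument in $L(T_0,P|\eta)$, which streamlines the verification.
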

 \begin{rem}
Once we have proved Theorem \ref{tm:Martin},
 we will actually be able to improve this result,
 showing the necessity of the assumption
 that $\gamma$ not end a strong S-gap.
 \end{rem}
\begin{proof}
 The proof will follow very much the methods and notions of \cite{twms}, to which the reader should refer
 as needed. In particular
 Definitions 1.2 ($C_\Gamma$), 3.1 (\emph{good pointclass}), and 3.10 (\emph{(coarse) $\Gamma$-Woodin}) are important.

Let $k=k_n$.
 Since $\J_\gamma=\Hull_{\Sigma_1}^{\SS_\gamma}(\RR\cup\gamma)$,
 and by the minimality of $\gamma$,
 it is easy to see that $\J_\gamma=\Hull_{\Sigma_{k+3}}^{\SS_\gamma}(\RR)$.
(In fact one can state a much more optimal result,
using the fine structure of \cite{scales_in_LR},
but we don't need to be that careful here.)
But certainly making use of \cite{scales_in_LR},  since $\gamma$ does not end a strong S-gap,
we can find good pointclasses
 $\left<\Gamma_i,\Gamma_i'\right>_{i\leq 5}$ such that
 each $\Gamma_i,\Gamma_i'\in\SS_\alpha$,
 $\Th_{\Sigma_{k+3}}^{\SS_\gamma}(\RR)\in\Gamma_0$,
and $\Gamma_i\sub\Delta_{\Gamma_i'}$ and $\Gamma_{i'}\sub\Delta_{\Gamma_{i+1}}$ for each $i<5$.
 Let $T_i$ be the tree of a  $\Gamma_i$-scale on a a universal $\Gamma_i$-set.
 
By \cite[Lemma 4.1]{twms}, the operator $z\mapsto C_{\Gamma_0}(z)$ is fine structural;
let $y_0$ be at the base of a cone witnessing this
(cf.~\cite[Definition 2.1]{twms}),
with $x\leq_T y_0$ and $X$ coded into $y_0$.
By  Woodin \cite{LCFD}, we can fix $y\geq_T y_0$ such that
 $\theta=\omega_2^{L(T_5,y)}$ is Woodin in $H=\HOD^{L(T_5,y)}_{T_5,y_0}$.

By the proof of \cite[Lemma 3.11]{twms}, for each $i<5$ there is a club of $\delta<\theta$ such that
  $V_\delta^H$ is   $\Gamma_i$-Woodin. Moreover, for each $i<4$ and each $\delta<\theta$
 such that $V_\delta^H$ is $\Gamma_{i+1}$-Woodin, there is a club of $\xi<\delta$ such that
  $V_\xi^H$ is $\Gamma_i$-Woodin.
   Let $\delta_0<\delta_1<\delta_2<\delta_3<\delta_4$ be determined by: $\delta_0$ is the least $\delta$
   such that $V_\delta^H$ is $\Gamma_4$-Woodin,
   and given $\delta_i$, where $i<4$,
   $\delta_{i+1}$ is the least $\delta>\delta_i$
   such that $V_\delta^H$ is $\Gamma_{3-i}$-Woodin.
   In particular, $V_{\delta_4}^H$ is $\Gamma_0$-Woodin.

\begin{clm}\label{clm:V_lambda^H_iterable}
There is a $\Gamma_4'$-$(\om_1+1)$-strategy
$\Sigma$ for $V_{\delta_4+2}^H$ (a coarse structure);
so $\Sigma\in\SS_\alpha$.
\end{clm}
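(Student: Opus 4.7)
The plan is to build $\Sigma$ by the standard ``Woodin-from-pointclass'' recipe: use that $V_{\delta_4}^H$ is $\Gamma_4$-Woodin together with the scale tree $T_4$ to identify, at every countable limit stage of an iteration tree $\Tt$ on $V_{\delta_4+2}^H$, a unique correct cofinal branch, and show that the resulting branch-selection procedure is $\Gamma_4'$ definable from (a code of) $V_{\delta_4+2}^H$.

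First, I would set things up. Since we are dealing with a coarse structure, normal iteration trees on $V_{\delta_4+2}^H$ are built using extenders on $V_{\delta_4}^H$. At a limit stage $\Tt$ of length $<\omega_1+1$, we want a cofinal wellfounded branch $b$ such that $i^\Tt_b(\delta_4)=\delta(\Tt)$, so that $M^\Tt_b$ inherits the $\Gamma_4$-Woodin property at $\delta(\Tt)$ in the appropriate sense. The key point is that, because $V_{\delta_4}^H$ is $\Gamma_4$-Woodin, the tree $T_4$ of a $\Gamma_4$-scale lives above $V_{\delta_4}^H$ in a sufficiently absolute manner: in particular, the ``$T_4$-realization'' of $M(\Tt)$ picks out the correct branch.

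Next, I would define $\Sigma(\Tt)$ to be the unique branch $b$ such that there exists a realization map $\sigma:M^\Tt_b\to$ some transitive model into which $T_4$ lifts, with $\sigma\rest\delta(\Tt)=\id$; equivalently, the unique $b$ whose direct limit pointwise-image of $T_4$ agrees with the natural reconstruction of $T_4$ from $M(\Tt)$. Standard arguments (``branches are uniquely identified by their Q-structures in a good pointclass'', as in \cite{twms}) then show: (i)~a branch satisfying this exists, by $\Gamma_4$-Woodinness of $V_{\delta_4}^H$ and a genericity iteration argument; (ii)~it is unique, by the Zipper-Lemma type comparison together with the fact that $\Gamma_4$-Woodinness propagates to images. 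The statement ``$b$ is this correct branch'' is then $\Gamma_4$-in-the-relevant-parameters, hence $\Delta_{\Gamma_4'}$, so the graph of $\Sigma\rest\HC$ is $\Gamma_4'$.

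Finally, since $\Gamma_4'\in\SS_\alpha$ and the whole definition of $\Sigma$ goes through inside $\SS_\alpha$ (using only $T_4, T_4'$, $V_{\delta_4+2}^H$, and $y_0$, all of which are in $\SS_\alpha$), we conclude $\Sigma\in\SS_\alpha$. The main obstacle I expect is verifying (i) and (ii) cleanly for the coarse structure $V_{\delta_4+2}^H$ rather than for mice: one must carefully see that $T_4$ really does guide branches, which uses both the $\Gamma_4$-Woodinness at $\delta_4$ in $H$ and the fact that $T_4$ is the tree of a scale on a universal $\Gamma_4$-set, so truths about ``$p[T_4]$'' are preserved under the $\CC_{<\delta_4}$-generic extensions encountered during the iteration; the extension from $(\omega_1)$- to $(\omega_1+1)$-iterability is handled by the remark on sharps immediately preceding Definition~\ref{dfn:Gamma_strategy} and costs nothing in complexity.
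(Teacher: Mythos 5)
Your high-level plan — define $\Sigma$ as the strategy guided by $C_{\Gamma_4'}(M(\Tt))$ (Q-structures in the good pointclass $\Gamma_4'$), prove uniqueness by the Zipper Lemma, and read off the complexity from the fact that $C_{\Gamma_4'}$ is $\Delta_{\Gamma_4'}$-definable — is the same as the paper's. However, your point~(i), the totality of $\Sigma$ (that a branch with the required property actually exists at every limit stage below $\omega_1$), is where the real content lies, and your proposal glosses over it with ``by $\Gamma_4$-Woodinness of $V_{\delta_4}^H$ and a genericity iteration argument.'' That is not the right tool and not an argument. Nothing about iterating to make some real generic produces a branch of a given tree; the relevant theorem is Martin--Steel's result on the existence of $\pi$-realizable branches of coarse iteration trees.

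The paper's actual argument is a reflection: assume for contradiction that $\Psi$ fails, so (after passing to a generic collapse $H[G]$ in which everything is countable) there is a tree $\Tt'$ witnessing the failure. Take a countable elementary $\pi:\bar{H}\to V_\gamma^H$ with $T_5,\delta_4$, etc.\ in range, and a small $\bar{H}$-generic $g$. Reflect the counterexample to $\bar{H}[g]$, obtaining a putative bad tree living on $V_{\bar\delta_4+2}^{\bar H}$; view it as a tree $\Tt''$ on $\bar H$. Apply the Martin--Steel theorems on the existence of a $\Tt''$-maximal $\pi$-realizable branch $b\in H$. The realization map $\sigma:M^{\Tt''}_b\to V_\gamma^H$ then forces $M^{\Tt''}_b$ to be \emph{correct} about the statement $\varrho(T_5,\delta_4)$ (``for every $\xi\le\delta_4$, $C_{\Gamma_4'}(V_\xi)\subseteq V_{\xi+1}$ and $\xi$ is not Woodin there''), which is the key point: it gives that $C_{\Gamma_4'}(M(\Tt'))\subseteq M^{\Tt''}_b$ and that $b$ has the defining property of $\Psi$. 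From there, the two alleged failure modes of $\Psi$ (two good branches, or no good branch, or illfoundedness) are each killed — the ``two branches'' case by the Zipper Lemma exactly because $C_{\Gamma_4'}(M(\Tt'))$ is contained in \emph{both} branch models. So the Zipper Lemma is used in tandem with the reflection argument, not as a free-standing uniqueness step.

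Your proposal also formulates the branch condition via ``$T_4$-realization'' of $M(\Tt)$, and then says ``equivalently'' in terms of Q-structures. Be careful: the two are related but not obviously interchangeable at the level of a direct definition. The paper defines $\Psi(\Tt)$ as the unique $b$ such that some $A\in C_{\Gamma_4'}(M(\Tt))\cap M^\Tt_b$ witnesses that $\delta(\Tt)$ is not Woodin relative to $A$. This is what makes the complexity computation via \cite[Lemma~3.5]{twms} go through cleanly; phrasing things in terms of realizations of $T_4$ would require an additional argument that this is an equivalent, $\Gamma_4'$-definable condition. Fill in the reflection/Martin--Steel step and switch to the $C_{\Gamma_4'}$-Q-structure formulation, and the proposal matches the paper's proof.
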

\begin{proof}
 $\Sigma\rest\HC$ is the strategy $\Psi$ determined
as follows: given a countable limit length
tree $\Tt$ on $V_{\lambda+2}^H$,
$\Psi(\Tt)$ is the unique $\Tt$-cofinal
branch $b$ such that there is $A\in
C_{\Gamma_4'}(M(\Tt))\cap M^\Tt_b$
such that $A\sub\delta(\Tt)$ and
$M(\Tt)$ is not Woodin with respect to $A$.
This will be appropriately
definable, by \cite[Lemma 3.5]{twms}.

If $\Psi$ is indeed an $\om_1$-strategy,
then by its definability, $\Psi\in\SS_\alpha\sats\AD$, so $\Psi$ extends to an $(\om_1+1)$-strategy.
So suppose
$\Psi$ is not an $\om_1$-strategy.
Note that $\theta=\om_2^{L(T_5,y)}$ is countable (in $V$).
Let $G\sub\Coll(\om,V_{\delta_4+2}^H)$ be $H$-generic.
Because we have $T_5\in H$, $H[G]\sats\psi(T_5,V_{\delta_4+2}^H)$,
where $\psi$ asserts ``there is a countable length putative tree $\Tt$ on $V_{\delta_4+2}^H$,
according to $\Psi$, and either (i) $\Tt$ has illfounded last model, or (ii) $\Tt$ has limit length
and there are two distinct $\Tt$-cofinal branches
$b_0,b_1$ and sets $A_i\sub\delta(\Tt)$
such that $A_i\in C_{\Gamma_4'}(M(\Tt))\cap M^{\Tt}_{b_i}$
and $M(\Tt)$ is not Woodin with respect to $A_i$
(for $i=0,1$),
or (iii) there is no $\Tt$-cofinal branch
$b$ and set $A\sub\delta(\Tt)$ as required'',
with ``$\Psi$'' and ``$A\in C_{\Gamma_4'}(B)$'' expressed via $T_5$ (and cf.~
\cite[***categoryquant]{twms}).
By homogeneity of the collapse, this is forced by the empty condition. Working in $H$, let $\bar{H}$
be countable transitive and $\pi:\bar{H}\to V_\gamma^H$
be elementary, with $\gamma$ sufficiently large
and everything relevant in $\rg(\pi)$.
Write $\pi(\bar{\delta_4})=\delta_4$ etc.
Let $g\in H$
be $(\bar{H},\Coll(\om,V_{\bar{\delta_4}+2}^{\bar{H}}))$-generic. Let $\Tt'\in\bar{H}[g]$ witness that
$\bar{H}[g]\sats\psi(\bar{T}_9,V_{\bar{\lambda}+1}^{\bar{H}})$ in $\bar{H}[g]$. Since $\pi(\bar{T}_9)=T_9$,
$\bar{H}[g]$ is correct about this.

Let $\Tt''$ be the tree on $\bar{H}$
which is equivalent to $\Tt'$ (so $M^{\Tt''}_0=\bar{H}$,
whereas $M^{\Tt'}_0=V_{\bar{\delta_4}+2}^{\bar{H}}$,
but the trees use the same extenders and have the same structure). 
By Martin-Steel \cite[Theorems 3.12, 4.3]{itertrees}
applied in $H$, 
we can fix a  $\Tt''$-maximal $\pi$-realizable branch $b\in H$. Let $\sigma:M^{\Tt''}_b\to V_\gamma^H$
be a $\pi$-realization, so $\sigma\com i^{\Tt''}_b=\pi$.
Note that $H\sats\varrho(T_5,\delta_4)$,
where $\varrho(T_5,\delta_4)$ asserts ``For every $\xi\leq\delta_4$,
we have $C_{\Gamma_4'}(V_{\xi})\sub V_{\xi+1}$,
and $\xi$ is not Woodin in 
 $C_{\Gamma_4'}(V_\xi)$''. So $\bar{H}\sats\varrho(\bar{T}_5,\bar{\delta}_4)$, and $M^{\Tt''}_b\sats\psi(i^{\Tt''}_b(\bar{T}_5),i^{\Tt''}_b(\bar{\delta_4}))$.
 Since $\sigma(i^{\Tt''}_b(\bar{T}_5))=T_5$,
 $M^{\Tt''}_b$ is correct about this,
 and it applies in particular
 to $V_{\xi}^{M^{\Tt''}_b}=M(\Tt'')$.
 Now since $\Tt'$ is via $\Psi$
 (which by definition is only defined
 when there is a unique cofinal branch with the right property), $\Tt'$ must have limit length 
 and $b$ must be $\Tt'$-cofinal,
and there is an appropriate witness $A\in C_{\Gamma_4'}(M(\Tt'))$. It follows
 that there is also another $\Tt'$-cofinal
 branch $b_1$ and a set $A_1\in C_{\Gamma_4'}(M(\Tt'))$
 such that $M(\Tt')$ is not Woodin with respect to $A_1$.
 But then $A_1\in M^{\Tt'}_b\cap M^{\Tt'}_{b_1}$
 (since we in fact had $C_{\Gamma_4'}(M(\Tt'))\sub M^{\Tt'}_b$), but since $b\neq b_1$, this contradicts the  Zipper Lemma \cite[Theorem 6.10]{outline}.
\end{proof}

 Let $\delta_{-1}=0$.  Essentially the same proof as for the previous claim gives the following, which we leave to the reader:
\begin{clm}\label{clm:simple_strategy_between_Woodins}
Let $i\leq 4$
 and $\delta_{i-1}<\eta<\delta_i$. Then 
 there is a $\Gamma_{4-i}$-$(\om_1+1)$-strategy
 for above-$\delta_{i-1}$ trees
 on $V_{\eta}^H$.
\end{clm}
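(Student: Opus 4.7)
The plan is to imitate the proof of Claim \ref{clm:V_lambda^H_iterable} mutatis mutandis, replacing the parameters $(\delta_4,\Gamma_4',T_5)$ by $(\delta_i,\Gamma_{4-i}',T_{5-i})$ throughout and restricting to above-$\delta_{i-1}$ trees. Concretely, I would define $\Psi$ on countable limit-length, above-$\delta_{i-1}$ trees $\Tt$ on $V_\eta^H$ by letting $\Psi(\Tt)$ be the unique $\Tt$-cofinal branch $b$ for which there exists $A\in C_{\Gamma_{4-i}'}(M(\Tt))\cap M^\Tt_b$ with $A\sub\delta(\Tt)$ and $M(\Tt)$ not Woodin with respect to $A$. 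By \cite[Lemma 3.5]{twms}, this $\Psi$ is appropriately definable, lies in $\SS_\alpha$, and extends to an $(\om_1+1)$-strategy via the $\AD$-in-$\SS_\alpha$ argument used before.

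To verify totality, I would run the same bad-tree-by-reflection argument as in Claim \ref{clm:V_lambda^H_iterable}. Let $\varrho$ now assert that for every $\xi\in(\delta_{i-1},\delta_i]$, $C_{\Gamma_{4-i}'}(V_\xi)\sub V_{\xi+1}$ and $\xi$ is not Woodin in $C_{\Gamma_{4-i}'}(V_\xi)$; this holds in $H$ because $V_{\delta_i}^H$ is $\Gamma_{4-i}$-Woodin and by the cofinality in $(\delta_{i-1},\delta_i)$ of ordinals $\delta$ for which $V_\delta^H$ is $\Gamma_j$-Woodin for $j>4-i$. If $\Psi$ failed at some tree, then by homogeneity of the collapse $\Coll(\om,V_\eta^H)$ over $H$, the empty condition would force the analogue of $\psi$. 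Taking a countable elementary $\pi:\bar{H}\to V_\gamma^H$ and a generic $g$, one extracts a witness tree $\Tt'$ in $\bar{H}[g]$, lifts it to the equivalent tree $\Tt''$ on $\bar{H}$, and applies Martin--Steel \cite[Theorems 3.12, 4.3]{itertrees} inside $H$ to obtain a $\Tt''$-maximal, $\pi$-realizable cofinal branch $b$. The realization transports $\varrho$ down to $M^{\Tt''}_b$, producing a $C_{\Gamma_{4-i}'}$-Q-structure along $b$; together with the second cofinal branch furnished by the failure of $\Psi$, this gives two cofinal branches sharing a Q-structure for $M(\Tt'')$, contradicting the Zipper Lemma \cite[Theorem 6.10]{outline}.

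The only point demanding attention beyond the proof of Claim \ref{clm:V_lambda^H_iterable} is the above-$\delta_{i-1}$ restriction on $\Tt$: it guarantees $\delta(\Tt)>\delta_{i-1}$ for every limit-length tree via $\Psi$, so that the Q-structure story only concerns ordinals in $(\delta_{i-1},\delta_i]$, which is exactly the interval where $\Gamma_{4-i}$-Woodin witnesses are available. Without this restriction one would be forced to descend to finer pointclasses to identify Q-structures at $\delta(\Tt)\leq\delta_{i-1}$, which is not possible with the given definability budget. I do not expect any genuine obstacle here: every ingredient---the Q-structure definability, the reflection-to-countable-hull step, and the Martin--Steel realization---carries over verbatim with the shifted indices, and this is presumably why the authors are content to leave the claim to the reader.
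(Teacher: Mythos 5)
Your overall plan—run the proof of Claim \ref{clm:V_lambda^H_iterable} with shifted indices and exploit the above-$\delta_{i-1}$ restriction—is exactly what the paper intends by "essentially the same proof". But there is an index slip worth fixing, since it means your version proves something weaker than what the claim asserts.

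The claim asks for a $\Gamma_{4-i}$-$(\om_1+1)$-strategy, and this is in fact what is used afterwards: the paper later fixes $k$ with $(p[T_3])_k$ coding a $\Gamma_3$-strategy for $V_{\alpha'}^H$, i.e.\ a section of the $\Gamma_3$-universal set, which requires the strategy to be $\Gamma_3$-coded, not merely $\Gamma_3'$-coded. Your substitution $(\delta_4,\Gamma_4',T_5)\rightsquigarrow(\delta_i,\Gamma_{4-i}',T_{5-i})$ is the naive one-for-one translation, and if you define $\Psi$ via $C_{\Gamma_{4-i}'}$ you will obtain a $\Gamma_{4-i}'$-strategy, one notch worse than claimed. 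The fix is that here, unlike in Claim \ref{clm:V_lambda^H_iterable}, you can drop the prime: since $\eta<\delta_i$ strictly and the trees are above $\delta_{i-1}$, every limit-length tree via $\Psi$ has $\delta_{i-1}<\delta(\Tt)<\delta_i$, and by the minimality of $\delta_i$ as the least $\Gamma_{4-i}$-Woodin above $\delta_{i-1}$, none of these ordinals is $\Gamma_{4-i}$-Woodin. Hence a Q-structure witness already appears in $C_{\Gamma_{4-i}}(M(\Tt))$ and you should define $\Psi$ via $C_{\Gamma_{4-i}}$, which by \cite[Lemma 3.5]{twms} gives a $\Gamma_{4-i}$-strategy. (In Claim \ref{clm:V_lambda^H_iterable} the prime was needed precisely because $\delta(\Tt)$ could reach $\delta_4$ itself, which \emph{is} $\Gamma_0$-Woodin.)

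Relatedly, your $\varrho$ over-reaches: you assert "$\xi$ is not Woodin in $C_{\Gamma_{4-i}'}(V_\xi)$" for $\xi$ ranging over $(\delta_{i-1},\delta_i]$, but the endpoint $\xi=\delta_i$ is $\Gamma_{4-i}$-Woodin and the non-Woodinness there is not justified by what you cite. You only need $\varrho$ on $(\delta_{i-1},\eta]$ anyway, and on that range (strictly below $\delta_i$) the minimality of $\delta_i$ provides the $C_{\Gamma_{4-i}}$-witnesses directly; the appeal to "cofinality of $\Gamma_j$-Woodins for $j>4-i$" is not what is doing the work here and can be dropped. Your closing remark that without the above-$\delta_{i-1}$ restriction one would need a "finer" pointclass also has the sign wrong: identifying Q-structures at $\delta(\Tt)\leq\delta_{i-1}$ would require a \emph{larger} pointclass (e.g.\ $\Gamma_{5-i}$ for $\delta(\Tt)\in(\delta_{i-2},\delta_{i-1})$), which would push the strategy's complexity above $\Gamma_{4-i}$; the restriction is what keeps it within budget. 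With these index corrections the proof goes through exactly as you describe.
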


Now let $\left<N_\alpha\right>_{\alpha\leq\delta_4}$ be the
models of the fully backgrounded $L[\es,y_0]$-construction $\CC$
of $V_{\delta_4}^H$, where for all $i\leq 4$
and all $\alpha\in(\delta_{i-1},\delta_i)$,
we impose the restriction that
if $N_\alpha$ is active then $\crit(F^{N_\alpha})>\delta_{i-1}$.
By Claim \ref{clm:V_lambda^H_iterable}, this
construction does not break down, and so reaches a model $N_{\delta_4}$ of height $\delta_4$;
and moreover for each $i\leq 4$, $N_{\delta_i}$
has height $\delta_i$ and is definable over $V_{\delta_i}^H$. Since the iteration strategies
are in $L(\RR)$, $\CC$ only reaches tame (in fact $\om$-small) models.

\begin{clm} For each $i<4$, we have:
\begin{enumerate}
 \item\label{item:no_project_across_delta_0}  There is no $\alpha\in[\delta_i,\delta_{i+1}]$
 such that $N_\alpha$ projects ${<\delta_i}$.
 \item $\delta_i$ is Woodin in $N_{\delta_{i+1}}$
 \tu{(}and hence Woodin in $N_{\delta_4}$\tu{)}.
\end{enumerate}
\end{clm}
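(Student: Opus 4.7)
The plan is to prove (1) and (2) simultaneously by induction on $i$, following the standard template for fully backgrounded $L[\vec E]$-constructions that produce Woodin cardinals in the output model, as in the analysis of $\HOD^{L(\RR)}$ (\cite{HOD_as_core_model}) and the Mitchell--Steel framework, adapted to the $\Gamma$-Woodin setting of \cite{twms}. The $\Gamma_{4-i}$-Woodinness of $V_{\delta_i}^H$ plus the restriction imposed on background extenders in each interval $(\delta_{i-1},\delta_i]$ should supply exactly what is needed. Throughout, iterability of segments $N_\alpha$ with $\alpha\in(\delta_{i-1},\delta_{i+1}]$ above $\delta_{i-1}$ is obtained by lifting to the coarse strategy given by Claims \ref{clm:V_lambda^H_iterable} and \ref{clm:simple_strategy_between_Woodins}; in particular, any such $N_\alpha$ inherits an above-$\delta_{i-1}$, $\Gamma_{4-i}$-iteration strategy.

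For part (1), suppose toward a contradiction that $\alpha\in[\delta_i,\delta_{i+1}]$ is least with $\rho=\rho_\om^{N_\alpha}<\delta_i$, and let $Q\ins N_\alpha$ be a sound $\rho$-projecting segment. Since background extenders used in $(\delta_{i-1},\delta_i]$ have critical point $>\delta_{i-1}$, and since $\rho<\delta_i$, the new information responsible for the projection must be fine-structural over a proper initial segment; so $Q$ is above $\rho$ in the sense of Lemma \ref{lem:tame_projecting_stacks}. By the lifting just described, $Q$ has an above-$\rho$, $(m,\om_1+1)$-iteration strategy in $\Gamma_{4-i}$, hence by the very definition of $\Gamma_{4-i}$-Woodinness of $V_{\delta_i}^H$ (and the fact that $y_0$ is at the base of the relevant $C_\Gamma$-cone) we get $Q\in C_{\Gamma_{4-i}}(V_\rho^H)\sub V_{\rho+1}^H\sub V_{\delta_i}^H$. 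A standard above-$\rho$ comparison of $Q$ with the corresponding segment of $N_{\delta_i}$, using Lemma \ref{lem:tame_projecting_stacks} to guarantee uniqueness of strategies and $(m+1)$-universality above $\rho$, then forces $Q$ to already be an initial segment of $N_{\delta_i}$; but this contradicts the choice of $\alpha>\delta_i$ as the first stage at which this subset of $\rho$ appears.

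For part (2), fix $A\sub\delta_i$ with $A\in N_{\delta_{i+1}}$. By the construction's restriction on critical points, no extender indexed in $(\delta_i,\delta_{i+1}]$ of $N_{\delta_{i+1}}$ has critical point $\leq\delta_i$, and by part (1) no segment in that interval projects across $\delta_i$; so $A\in N_{\delta_i}$. Because $N_{\delta_i}$ is definable over $V_{\delta_i}^H$, we have $A\in V_{\delta_i+1}^H$. Since $V_{\delta_i}^H$ is $\Gamma_{4-i}$-Woodin inside $H$, and $A$ is in the relevant pointclass (via its $C_\Gamma$-coding), we may fix $\kappa<\delta_i$ and a background extender $F\in H$ with $\crit(F)=\kappa$ which is $A$-reflecting in $V_{\delta_i}^H$. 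The standard resurrection/certification argument of the fully backgrounded construction then produces an extender $F^*$ on $\vec \ExtS^{N_{\delta_i}}$ derived from $F$ via the $N_{\delta_i}$-to-$V_{\delta_i}^H$ correspondence, and $F^*$ witnesses $A$-reflection for $\delta_i$ in $N_{\delta_{i+1}}$. Since $A$ was arbitrary, $\delta_i$ is Woodin in $N_{\delta_{i+1}}$, and hence in $N_{\delta_4}$.

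The main obstacle is part (1): one must glue together three ingredients, namely the iterability transfer from Claim \ref{clm:simple_strategy_between_Woodins}, the mouse-set consequence $C_{\Gamma_{4-i}}(V_\rho^H)\sub V_{\rho+1}^H$ of $\Gamma_{4-i}$-Woodinness, and the above-$\rho$ universality/comparison for tame projecting mice from Lemma \ref{lem:tame_projecting_stacks}. The delicate point is ensuring that the background-extender restriction $\crit(F)>\delta_{i-1}$ is both strong enough to rule out projections across $\delta_i$ in the interval $(\delta_i,\delta_{i+1}]$ and weak enough that enough $A$-reflecting extenders survive to witness Woodinness of $\delta_i$ in the output; this is the standard calibration in this kind of backgrounded construction.
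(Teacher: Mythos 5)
Your decomposition into (1) and (2) is reasonable, but both halves of your argument have genuine gaps, and they trace to the same misreading of the hypotheses: you are implicitly treating $\delta_i$ as if it were Woodin in $H$, when in fact $V_{\delta_i}^H$ is only \emph{$\Gamma_{4-i}$-Woodin}, i.e.\ $\delta_i$ is Woodin merely in $C_{\Gamma_{4-i}}(V_{\delta_i}^H)$.

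For part (1), the step ``by the lifting just described, $Q$ has an above-$\rho$ iteration strategy'' does not go through. The iterability we have (from Claim \ref{clm:simple_strategy_between_Woodins}, lifting to coarse structures $V_{\alpha'}^H$ with $\alpha'\in(\delta_i,\delta_{i+1})$) is an \emph{above-$\delta_i$} strategy. Since $\rho<\delta_i$, that is useless for the above-$\rho$ comparison you propose. One genuinely needs to get below $\delta_i$, and the paper does this by a hull/condensation argument: it passes to a countable hull $\bar V$ with $\crit(\pi)=\bar\delta_0<\delta_0$, takes the $\bar\delta_0$-core $\widetilde{\bar N}$ of the pullback, and then --- working inside $L(T_4,V_{\delta_0}^H)$, where $\delta_0$ \emph{is} Woodin --- shows that $\widetilde{\bar N}$ iterates to the background construction $\CC\rest[\bar\delta_0,\delta_0]$ (this is where the coherence of the background strategy with the background extenders must be checked via a countable-completeness argument). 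Since $\widetilde{\bar N}|\bar\delta_0$ is a cardinal segment of $N_{\delta_0}$ the iteration-to-background cannot reach a non-dropping iterate, producing $N_{\delta_0}\pins M^\Tt_{\delta_0}$ and the usual Woodinness contradiction. A bare ``comparison with a segment of $N_{\delta_i}$'' does not replace this; without the hull and without the Woodinness of $\delta_0$ in the auxiliary model, you have no above-$\rho$ strategy for either side of the comparison.

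For part (2), the sentence ``we may fix $\kappa<\delta_i$ and a background extender $F\in H$ with $\crit(F)=\kappa$ which is $A$-reflecting in $V_{\delta_i}^H$'' is not justified. That would follow if $\delta_i$ were Woodin in $H$, but it is not. What $\Gamma_{4-i}$-Woodinness gives you is $A$-reflection via extenders of $C_{\Gamma_{4-i}}(V_{\delta_i}^H)$, a model that need not be contained in $H$ nor have its extenders on $\es^H$; so there is no reason the required background extender exists for $\CC$. This is exactly why the paper does not try to separate parts (1) and (2): instead, it treats both failures at once by saying that some $N_\alpha$ would be a Q-structure for $\delta_0$ (either by projecting across $\delta_0$ or by witnessing a failure of Woodinness), localizes this Q-structure $M$ into $C_{\Gamma_4}(V_{\delta_0}^H)$ using that $\Gamma_4$ is a good pointclass, and then works inside $L(T_4,V_{\delta_0}^H)$ --- where $\delta_0$ is genuinely Woodin --- to run the subclaim (ruling out the projection case) and then the standard Woodinness-absorption argument. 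The subtle calibration you alluded to is handled precisely by this change of venue, and it is not optional.

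There is also a minor pointclass bookkeeping slip ($Q$'s strategy is $\Gamma_3$, not $\Gamma_4$, for $i=0$; the $\Gamma_4$ is used for the $C_\Gamma$ capture of the Q-structure, not for the strategy), but the substantive issues are the two above.
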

\begin{proof}
We assume $i=0$, but otherwise
it is likewise.
 Suppose otherwise and let $\alpha<\delta_{i+1}$ be least such that $N_\alpha$ is a Q-structure for $\delta_i$ (this includes the possibility that $N_\alpha$ projects ${<\delta_i}$, by definition).
 Let $\alpha'<\delta_{i+1}$ be such that
 $\CC\rest(\alpha+1)\in V_{\alpha'}^H$.
Fix some $k<\omega$ such that  $(p[T_{3}])_k$
codes a 
 $\Gamma_{3}$-$(\om_1+1)$-strategy
for $V_{\alpha'}^H$
(that is, letting $X'=\{(x,y)\in\RR^2\bigm|x\oplus y\in X\}$, then $(X')_z$ codes the strategy whenever $z$ codes $V_{\alpha'}^H$, as in Definition \ref{dfn:Gamma_strategy};
recall $R_a$ denotes the section of $R$ at $a$ (Definition \ref{dfn:section})).  Note that (in $V$) 
it is a $\Gamma_4$ assertion about reals $z$ coding the parameter $V_{\delta_0}^H$
that ``there is a countable transitive
set $M$ such $V_{\delta_0}^H=V_{\delta_0}^M$
and the fiber of $T_{3}$ at $(k,z)$
codes an above-$\delta_0$, $\om_1$-strategy for $M$, and there is a fully backgrounded construction $\left<N'_\beta\right>_{\beta\leq\alpha''}$
of $M$, extending $\CC\rest\delta_0$, with $\crit(F^{N'}_\beta)>\delta_0$ for all $\beta\in[\delta_0,\alpha'']$, such that  $N'_{\alpha''}$ is a Q-structure  for ${\delta_0}$''. Since $\Gamma_4$ is good,
it follows that we can fix such an $M\in C_{\Gamma_4}(V_{\delta_0}^H)$. Let $\Sigma$ be the witnessing $\Gamma_3$-$(\om_1+1)$-strategy for $M$. Let $N$ be the witness $N_{\alpha''}'\in M$,
with $\alpha''$ minimal,
so $N|\delta_0=N_{\delta_0}$,
and let $n<\omega$ be least such that 
either $\rho_{n+1}^{N}<\delta_0$
or there is an $\bfrSigma_{n+1}^N$ failure of Woodinness of $\delta_0$.

Work in $L(T_4,V_{\delta_0}^H)$,
where $\delta_0$ is Woodin
(and recall that $C_{\Gamma_4}(V_{\delta_0}^H)=\pow(V_{\delta_0}^H))\cap L(V_{\delta_0}^H,T_4)$, and more generally  $C_{\Gamma_4}^m(V_{\delta_0}^H)=\pow^m(V_{\delta_0}^H)\cap L(V_{\delta_0}^H,T_4)$ for all $m\in[1,\om)$).
Note that $M$ is above-$\delta_0$, $\delta_0^+$-iterable (in $L(T_4,V_{\delta_0}^H)$ now), via the restriction $\Sigma'$ of $\Sigma$.

\begin{sclm}$\rho_{n+1}^N=\delta_0$.\end{sclm}
\begin{proof}
Suppose  $\rho_{n+1}^N<\delta_0$.

Continue to work in $L(V_{\delta_0}^H,T_4)$.
Fix a transitive $\bar{V}$, an ordinal $\gamma\gg\delta_0$,
an elementary $\pi:\bar{V}\to V_\gamma$,
and $\bar{\delta}_0<\delta_0$
such that $\crit(\pi)=\bar{\delta}_0$ and $\pi(\bar{\delta}_0)=\delta_0$,
 with $T_4,M,N\in\rg(\pi)$.
 We may also assume we have a surjection $\sigma:V_{\bar{\delta}_0}\to\bar{V}$.
 Let $\pi(\bar{T}_4,\bar{M},\bar{N})=(T_4,M,N)$.
 
 Back in $V$, 
  $\bar{M}$ is above-$\bar{\delta}_0$, $\Gamma_3$-$(\om_1+1)$-iterable, via strategy $\bar{\Sigma}$, given by lifting to $\Sigma$ under $\pi\rest\bar{M}$.  Therefore $L(T_4,V_{\delta_0}^H)\sats$``$\bar{M}$ is $\delta_0^+$-iterable'',
  witnessed by the restriction $\bar{\Sigma}'$ of $\bar{\Sigma}$.
  
Work again in $L(T_4,V_{\delta_0}^H)$.
Let $\widetilde{\bar{N}}$ be the $\bar{\delta}_0$-core
of $\bar{N}$. 
We have $\rho_{n+1}^{\widetilde{\bar{N}}}=\rho_{n+1}^{N}<\bar{\delta}_0$. 
Note that $\bar{\delta}_0$ is a cutpoint of $\widetilde{\bar{N}}$,
and $\widetilde{\bar{N}}$ is above-$\bar{\delta}_0$,
$(n,\delta_0^+)$-iterable, via the strategy $\Psi$ given by uncoring and lifting to the background universe $\bar{M}$, using $\bar{\Sigma}'$ for $\bar{M}$. 
Also $N|\bar{\delta}_0=N_{\bar{\delta}_0}=N'_{\bar{\delta_0}}$. 

Now $\widetilde{\bar{N}}\in V_{\delta_0}^H$.
Working still in $L(T_4,V_{\delta_0}^H)$, we get that $\widetilde{\bar{N}}$ iterates to the background construction $\CC\rest[\bar{\delta}_0,\delta_0]$, for example much as in
\cite[Theorem 6.26]{premouse_inheriting}
or \cite{a_comparison_process_for_mouse_pairs}. To verify  the hypotheses required for this: the fact that $\bar{\delta}_0$ is Woodin
in $\widetilde{\bar{N}}$
inductively prevents $\CC$ from using background
extenders $E^*$ with $\crit(E^*)\leq\bar{\delta}_0$ after stage $\bar{\delta_0}$
(by tameness, until reaching an non-dropping
iterate of $\widetilde{\bar{N}}$, if it ever does),
and the background extenders $E^*$
used for $\CC\rest\delta_0$ (which are also extenders of $H$) cohere
$\Psi$ appropriately, because of the reduction
of $\Psi$
to $\bar{\Sigma}'$, and that $E^*$ coheres $\bar{\Sigma}'$.
(The latter coherence is easy enough to verify using the countable completeness of $E^*$ in $H$,
that $\bar{M}$ is small relative to $\crit(E^*)$ in $H$,
and that $\bar{\Sigma'}$ is $\Delta_{\Gamma_5}$.
If $E^*$ does not cohere $\bar{\Sigma}'$,
then working in $H[g]$ where $g\sub\Coll(\om,\bar{M})$ is $H$-generic,
take a countable hull $\bar{H}[g]$ of $V_\eta^H[g]$, where $\eta$ is sufficiently large and with
 $E^*,\bar{M},\bar{\Sigma}',T_5$
 in the range of the uncollapse map,
and realize the ultrapower $\Ult(\bar{H}[g],\bar{E^*})$
back into $H[g]$ for a contradiction.)
So iteration to background applies.
But $\CC\rest[\bar{\delta}_0,\delta_0]$ does not reach a non-dropping iterate of $\widetilde{\bar{N}}$, since $\widetilde{\bar{N}}|\bar{\delta_0}$
is a cardinal segment of $N_{\delta_0}$.
So we get a tree $\Tt$ on $\widetilde{\bar{N}}$
via $\Psi$, such that $N_{\delta_0}\pins M^\Tt_{\delta_0}$. But now using the Woodinness of $\delta_0$ and since $\CC$ does not reach a  superstrong mouse, we can run the usual proof for a contradiction.\end{proof}

So  $\rho_{n+1}^{N}=\delta_0$,
but here is an $\bfrSigma_{n+1}^N$ failure of Woodinness of $\delta_0$. But  $\delta_0$ is Woodin in $L(V_{\delta_0}^H,T_4)$, so we can now run the usual
proof that Woodinness is absorbed by $L[\es,y_0]$-constructions there,
contradicting the fact that $N\in L(V_{\delta_0}^H,T_4)$. This completes the proof of the claim.
\end{proof}

Let $\lambda$ be least such that 
$V_{\lambda}^H$ is $\Gamma_1$-Woodin
and $\lambda>\delta_4$.
Let $\CC^+=\left<N_\alpha\right>_{\alpha\leq\lambda}$ be the $L[\es,y_0]$-construction
of $V_\lambda^H$
extending $\CC$, with $\crit(F^{N_\alpha})>\delta_4$ for all $\alpha>\delta_4$.
As in \cite{twms}, using tameness,
there is some $\eta\in[\delta_4,\lambda)$
such that $V_\eta^H$ is $\Gamma_0$-Woodin
and there is some $\alpha>\eta$ such that
either $N_\alpha$ projects $<\eta$
or $\core_\om(N_\alpha)$ is a Q-structure
for $\eta$. Let $(\eta,\alpha)$ be lexicographically least such.
Then $\delta_0,\delta_1,\delta_2,\delta_3,\delta_4,\eta$ are each Woodin in $N_\alpha$ (and $\delta_4\leq\eta$).

\begin{clm} $C_{\Gamma_0}^2(N_\eta)\sub N_\alpha||\eta^{++N_\alpha}$, and in fact
there is $P\pins N_\alpha$
such that the universe of $C_{\Gamma_0}^2(N_\eta)=\pow^2(N_\eta)\cap P$.\end{clm}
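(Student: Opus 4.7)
The plan is to take $P=N_\alpha||\eta^{++N_\alpha}$ (once it is verified that $\eta^{++N_\alpha}<\alpha$) and to reduce the content of the claim to two capture statements: every sound $\Gamma_0$-iterable $N_\eta$-mouse $Q$ projecting to $N_\eta$, and every sound $\Gamma_0$-iterable mouse projecting to $\Lp_{\Gamma_0}(N_\eta)$, appears as a proper initial segment of $N_\alpha$ of ordinal height strictly below $\eta^{++N_\alpha}$.

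First I would use the Mouse Set Theorem together with the good-pointclass hypothesis on $\Gamma_0$ to rewrite $C_{\Gamma_0}^2(N_\eta)$ in terms of one- and two-step $\Lp_{\Gamma_0}$-closures: each $A\in C_{\Gamma_0}^2(N_\eta)$ is coded by a sound mouse $Q$ built over $N_\eta$ or over $\Lp_{\Gamma_0}(N_\eta)$ that projects to its base and carries a $\Gamma_0$-iteration strategy. Reducing to the absorption of such $Q$ into the construction $\CC^+$ is then precisely the iteration-to-background argument used just above in the previous subclaim, which I would repeat: iterate $Q$ above $\eta$ by its $\Gamma_0$-strategy and simultaneously iterate the initial segments of $\CC^+$ via their background extenders lifted from $H$; the countable-completeness-of-background-extenders hull argument (pass to a $\Coll(\om,\bar{M})$-generic hull of $V_\gamma^H$ and realize the ultrapower back into $H[g]$, exactly as in the previous subclaim) shows that each used background extender coheres the lifted $\Gamma_0$-strategy. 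By minimality of $(\eta,\alpha)$, no $N_\beta$ with $\beta<\alpha$ is a Q-structure for $\eta$ and none projects below $\eta$, so no background extender used on the construction side before stage $\alpha$ has critical point $\leq\eta$. Consequently the comparison of $Q$ with $\CC^+$ terminates with $Q\pins N_\beta$ for some $\beta<\alpha$, and soundness together with the projection of $Q$ bounds $\OR^Q$ strictly below $\eta^{++N_\alpha}$. This gives $C_{\Gamma_0}^2(N_\eta)\sub P$. For the reverse inclusion $\pow^2(N_\eta)\inter P\sub C_{\Gamma_0}^2(N_\eta)$, note that every proper initial segment of $N_\alpha$ below $\eta^{++N_\alpha}$ is itself $\Gamma_0$-iterable by lifting to $H$, which places its subsets of $N_\eta$ into $\Lp_{\Gamma_0}(N_\eta)$ and hence into $C_{\Gamma_0}(N_\eta)\sub C_{\Gamma_0}^2(N_\eta)$; repeating the argument one step higher handles the second application of $\pow$.

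The hardest part will be the upgrade from $\pow$ to $\pow^2$. Capturing mice over $\Lp_{\Gamma_0}(N_\eta)$ rather than over $N_\eta$ requires that no background extender with critical point $\leq\eta^{+N_\alpha}$ is used on the construction side between heights $\eta^{+N_\alpha}$ and $\eta^{++N_\alpha}$; this calls for an auxiliary minimality argument analogous to the definition of $\alpha$ but localized at $\eta^{+N_\alpha}$, together with a verification that the $\Gamma_0$-strategies of the one-step $\Lp_{\Gamma_0}(N_\eta)$-mice lift correctly through the background extenders, paralleling the coherence argument at level $\eta$. A secondary point to record is that $\eta^{++N_\alpha}<\alpha$, which follows once one observes that the first candidate $\beta$ producing a Q-structure for $\eta$ must occur strictly above $\eta^{++N_\alpha}$, since otherwise absorbing all $\Lp_{\Gamma_0}(N_\eta)$-mice into $N_\alpha$ would already produce a smaller $\alpha$, contradicting minimality.
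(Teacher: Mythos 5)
Your approach diverges from the paper's, and the divergence is where the gap lies. The paper's entire proof is essentially two observations and a one-line comparison: (1) $C_{\Gamma_0}(N_\eta)$ is a mouse set, and this is \emph{by the choice of $y_0$}, which was earlier chosen (via \cite[Lemma 4.1]{twms}) at the base of a cone on which $z\mapsto C_{\Gamma_0}(z)$ is a fine structural operator — since the $N_\xi$ are $y_0$-premice, $N_\eta$ lies in that cone; (2) tameness makes $\eta$ a strong cutpoint of $N_\alpha$. Given these, both halves of the claim — including the $\pow^2$ version — follow by comparing $N_\alpha$'s levels above $\eta$ against the relevant $\Lp_{\Gamma_0}$-stacks, using the iterability above $\eta$ that lifting to $H$ supplies. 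Your proposal never invokes the choice of $y_0$ or the fine-structural nature of the operator. Without it, one cannot identify $C^{2}_{\Gamma_0}(N_\eta)=\pow^2(N_\eta)\cap L(N_\eta,T_0)$ with a mouse stack in the first place, so there is nothing to compare $N_\alpha$ against. Invoking ``the Mouse Set Theorem together with the good-pointclass hypothesis'' does not substitute, since Fact \ref{fact:mouse_witness_existence} and its corollaries are stated for pointclasses of the form $\Gamma_\alpha=\Sigma_1^{\SS_\alpha}$, while $\Gamma_0$ is just some good pointclass sitting inside $\SS_{\alpha}$; the bridge to the mouse-set description for an arbitrary good pointclass is precisely the fine-structural-operator lemma behind the choice of $y_0$.

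Secondly, because you route the argument through the background construction $\CC^+$ rather than through $\Lp_{\Gamma_0}$, you end up manufacturing an obstacle that the paper's route avoids. You correctly flag as the hard part an ``auxiliary minimality argument'' to preclude background extenders with critical point $\leq\eta^{+N_\alpha}$ acting between heights $\eta^{+N_\alpha}$ and $\eta^{++N_\alpha}$. But this is a red herring: in the paper's argument the comparison is between initial segments of $N_\alpha$ (iterated above $\eta$, with Q-structures supplied as in the preceding subclaim) and the $\Lp_{\Gamma_0}$-stack over $N_\eta$, not between $Q$ and the construction. Once $\eta$ is a strong cutpoint (tameness) and $\eta^{+N_\alpha}$ is the height of $\Lp_{\Gamma_0}(N_\eta)$, the second-step comparison above $\eta^{+N_\alpha}$ proceeds identically, with no need to analyze which background extenders were used at intermediate stages of $\CC^+$. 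The ``localized minimality'' you call for is neither needed nor (as far as I can see) readily available from the definitions in the proof.

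In short: the core idea you are missing is that the operator $C_{\Gamma_0}$ was arranged (through the choice of the base mouse $y_0$ of the construction) to be fine structural, and hence mouse-set-capturable, at every relevant $y_0$-premouse; once that is in hand, strong-cutpointness and comparison do all the work, and the $\pow^2$ step comes for free by iterating the mouse-set identification one level higher. Your iterate-to-background scheme is not wrong in spirit for the $\pow^1$ level, but it does not close the $\pow^2$ gap, and it does not establish the set-theoretic identity $C_{\Gamma_0}^2(N_\eta)=\pow^2(N_\eta)\cap P$ (as opposed to one inclusion) without the fine-structural operator input.
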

\begin{proof}
 This is like in \cite{twms}: By choice of $y_0$,
 $C_{\Gamma_0}(N_\eta)$ is a mouse set,
 and tameness ensures that $\eta$ is a strong cutpoint of $N_\alpha$,
 so the claim follows from comparison.
\end{proof}

Let $P$ be as in the claim. Then $P\sats\ZF^-+$``$\delta_0<\delta_1<\delta_2<\delta_3<\eta$ are Woodin cardinals and $\eta^+$ exists''.
Moreover, by the claim, $\pow(\eta)\cap P=\pow(\eta)\cap L(T_0,P|\eta)$ ($\es^{P|\eta}$ is itself
definable from parameters over $V_\eta^P$,
by \cite{V=HODX_pub}), and so it is straightforward
to see that $P$ is a $(\varphi(x),n)$-witness
(recall $y\geq_T x$, so $x\in P$),
as witnessed by trees $S,T\in P$
which embed into fibers of $T_0$
(we find $S,T\in L(T_0,P|\eta)$,
of cardinality $\eta^{+L(T_0,P|\eta)}$ there,
and it follows that they are in $P$;
here we have the trees themselves directly
in $P$, as opposed to forcing names for trees).

Now $P$ is a $y$-premouse, so we are done in the case that $X=y$. More generally, we still have $X\in\HC^P$. Consider the Q-local $L[\es,X]$-construction of $P$ (see \cite{scales_from_mice}).
This produces final model $R$, an $X$-mouse,
and $P$ has universe that of $R[P|\delta_0^P]$,
where $\delta_0^P$ is the least Woodin of $P$
(which is $\leq\delta_0$), $\delta_0^P$ is the least Woodin of $R$, and $P|\delta_0^P$
is generic over $R$ for the extender algebra at $\delta_0^R$, and $R$ is the output of the P-construction of $P$ above $R|\delta_0^R$.
So we can find a $\Coll(\om,\delta_0^R)$-generic
$G$ such that $P\sub R[G]$, and it follows
that $R$ is a $(\varphi(x),n)$-witness.
The strategy for $R$ is derived from that for $P$, 
which is in turn in $\SS_\alpha$ (like in Claim \ref{clm:simple_strategy_between_Woodins}), completing the proof.
\end{proof}

\begin{fact}[Mouse witnesses]\label{fact:mouse_witnesses}
 Let $\alpha$ be a limit ordinal
 and suppose $\alpha$ is not of the form $\gamma+\om$ where $\gamma$ ends a strong S-gap. Let $x\in\RR$ and $X\in\HC$ with $x\in\Ss_\om(X)$,
 and $\varphi$
 be $\Sigma_1$. Then the following are equivalent:
 \begin{enumerate}
  \item\label{item:J_alpha_sats_varphi} $\SS_\alpha\sats\varphi(x)$
 \item\label{item:J_alpha_sats_witness_exists} $\SS_\alpha\sats$``there is a 
$\varphi(x)$-witness over $X$''
 \item\label{item:J_alpha_sats_min_witness_exists} $\SS_\alpha\sats$``there is a 
minimal $\varphi(x)$-witness $N$ over $X$
such that $N=\Hull^N(X)$ and $\J(N)$ is a sound $(0,\om_1+1)$-iterable
$X$-premouse with $\rho_1^{\J(N)}=X$'',
so 
$\Th^N(X)\in\OD^{<\alpha}(X)$ if $N$ witnesses this.
 \end{enumerate}
\end{fact}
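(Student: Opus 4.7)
The implication (3)$\Rightarrow$(2) is immediate. My plan for (2)$\Rightarrow$(1) is to relativize the proof of Lemma~\ref{lem:varphi(x)-witness_implies_truth} inside $\SS_\alpha$: given a $\varphi(x)$-witness $(N,\vec\delta,\dot S,\dot T)$ over $X$ with $N\in\SS_\alpha$, its unique $(0,\om_1{+}1)$-strategy will be definable over $\SS_\alpha$ by Lemma~\ref{lem:min_varphi(x)-witness_stacks_iterability}, and the set $t=\bigcup_{\Tt\in\mathscr{T}}p[(i^\Tt)^+(\dot S_{G_0})]$ from that proof can be computed inside $\SS_\alpha$. One then shows $t=\Th_1^{\SS_\gamma}$ for some $\gamma<\alpha$, so $\SS_\gamma\sats\varphi(x)$, and hence $\SS_\alpha\sats\varphi(x)$ by upward $\Sigma_1$-absoluteness.

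For (1)$\Rightarrow$(2), I would let $\alpha_0\leq\alpha$ be least with $\SS_{\alpha_0}\sats\varphi(x)$ and write $\alpha_0=\gamma_0+\om$. If $\gamma_0$ does not end a strong S-gap, Fact~\ref{fact:mouse_witness_existence} applied at $\alpha_0$ will yield the desired witness inside $\SS_{\alpha_0}\sub\SS_\alpha$. If $\gamma_0$ ends a strong S-gap, then $\alpha_0$ itself is of the form excluded by the hypothesis on $\alpha$, forcing $\alpha>\alpha_0$; in this case I would reproduce the proof of Fact~\ref{fact:mouse_witness_existence} but choose the good pointclasses $\Gamma_i,\Gamma_i'$ from inside $\SS_\alpha$ (which has strictly more pointclasses than $\SS_{\alpha_0}$) and execute the $L[\es,y_0]$-construction inside $\HOD^{L(T_5,y)}_{T_5,y_0}$ to produce a witness in $\SS_\alpha$.

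For (2)$\Rightarrow$(3), I would apply comparison (Lemma~\ref{lem:compare_min_varphi_witnesses}) to descend from any given $\varphi(x)$-witness in $\SS_\alpha$ to a minimal $N_0\in\SS_\alpha$, then let $\bar N$ be the transitive collapse of $\Hull^{N_0}(X)$, with uncollapse $\pi:\bar N\to N_0$. Since $N_0\sats\ZF^-$, $\pi$ is fully elementary, so $\bar N$ is again a minimal $\varphi(x)$-witness with $\bar N=\Hull^{\bar N}(X)$, and the $\pi$-pullback of the strategy for $N_0$ shows that $\bar N$ remains iterable inside $\SS_\alpha$. The equation $\bar N=\Hull^{\bar N}(X)$ then gives, by a standard fine-structural calculation, that $\J(\bar N)$ is $1$-sound with $\rho_1^{\J(\bar N)}=X$ and $p_1^{\J(\bar N)}=\{\bar N\}$, while above-$X$ iterability of $\J(\bar N)$ descends from that of $\bar N$. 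Finally, since by Lemma~\ref{lem:compare_min_varphi_witnesses} there is up to isomorphism a unique minimal $\varphi(x)$-witness over $X$ with the hull property, $\bar N$ (and hence $\Th^{\bar N}(X)$) is definable over $\SS_{\alpha'}$ for some $\alpha'<\alpha$, giving $\Th^{\bar N}(X)\in\OD^{<\alpha}(X)$.

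The hard part will be the strong-S-gap case in (1)$\Rightarrow$(2): the reflection of $\bfSigma_{n+1}^{\SS_{\gamma_0}}$-types defining the ``strong'' condition obstructs the direct construction of the $\Gamma_i$ inside $\SS_{\alpha_0}$, and I expect one must carefully verify that working inside $\SS_\alpha$ (strictly above $\alpha_0$) provides enough additional good pointclasses for essentially the same argument to succeed.
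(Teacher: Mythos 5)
Your overall plan matches the paper's line of argument (which in the paper runs as the cycle $(1)\Rightarrow(2)\Rightarrow(3)\Rightarrow(1)$, with $(1)\Rightarrow(2)$ being a citation of Fact~\ref{fact:mouse_witness_existence}, $(2)\Rightarrow(3)$ being the minimize-and-collapse argument, and $(3)\Rightarrow(1)$ going through Lemma~\ref{lem:varphi(x)-witness_implies_truth}). Your $(2)\Rightarrow(3)$ step is essentially the paper's, including the fine-structural assertion about $\J(\bar N)$ and the invocation of compatibility of sound projecting $X$-mice to land $\Th^{\bar N}(X)$ in $\OD^{<\alpha}(X)$; the appeal to uniqueness is not quite the right mechanism (what gives ordinal-definability is that the strategy for $\J(\bar N)$ lives in some $\SS_{\alpha'}$ with $\alpha'<\alpha$, so $\J(\bar N)$ is a segment of $\Lp_{\Gamma_{\alpha'}}(X)$ and hence $\OD^{<\alpha}(X)$), but this is a presentation issue rather than a mathematical error.

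There is one step you genuinely elide. In $(2)\Rightarrow(1)$ (or the paper's $(3)\Rightarrow(1)$), after producing $t$ with $t=\Th_1^{\SS_\gamma}$ for $\gamma$ least with $\J_{\gamma+\om}\sats\varphi(x)$, you simply assert $\gamma<\alpha$. The paper's argument for this is the crux: the model $t_0\iso\SS_\gamma$ is coded by a set of reals lying in $\SS_\alpha$, and since $\SS_\gamma$ projects to $\RR$, $t_0$ yields a surjection $\RR\twoheadrightarrow\pow(\RR)\cap\SS_\gamma$ definable in $\SS_\alpha$; if $\gamma\geq\alpha$ this would give a surjection of $\RR$ onto $\pow(\RR)\cap\SS_\alpha$ inside $\SS_\alpha$, which a diagonalization refutes. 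Without this argument, upward $\Sigma_1$-absoluteness has nothing to bite on, and the implication is unproved.

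Your worry about $(1)\Rightarrow(2)$ in the strong-gap case is well-placed: the paper's own text simply cites Fact~\ref{fact:mouse_witness_existence}, whose hypotheses (minimality of $\alpha$ together with $\gamma$ not ending a strong S-gap) do not automatically transfer to the minimal $\alpha_0$ below the given $\alpha$, and the remark following Fact~\ref{fact:mouse_witness_existence} itself emphasizes that the strong-gap restriction is necessary there. The paper's proof is therefore itself compressed at this point. Your proposed remedy --- running the construction of Fact~\ref{fact:mouse_witness_existence} but choosing the good pointclasses inside $\SS_\alpha$ rather than $\SS_{\alpha_0}$ --- is the natural thing to try and is in the spirit of what the paper seems to intend, but as you flag, it is not verified that $\SS_\alpha$ actually supplies the required chain of good pointclasses when $\alpha$ is only slightly above $\alpha_0$ (e.g.\ $\alpha=\alpha_0+\om$); making that precise would require inspecting the scales analysis across the end of the strong S-gap. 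So on this point you and the paper are in the same boat: the idea is right, but a full argument for the strong-gap subcase is not written down.
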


\begin{proof}[Proof Sketch]
 \ref{item:J_alpha_sats_varphi} $\Rightarrow$ 
\ref{item:J_alpha_sats_witness_exists} is by
Fact \ref{fact:mouse_witness_existence}.

\ref{item:J_alpha_sats_witness_exists} $\Rightarrow$ 
\ref{item:J_alpha_sats_min_witness_exists}:
Let $(N,\dot{S},\dot{T})$ be such that
$\SS_\alpha\sats$``$(N,\dot{S},\dot{T})$
is a  $\varphi(x)$-witness over $X$''. Then by minimizing,
we can find a minimal one $(N',\dot{S}',\dot{T}')$ with $N'\ins N$. Note then that $\dot{S}',\dot{T}'$
are definable over $N'$ (in the language of $X$-premice, which has a constant referring to $X$).
Let $\bar{N}=\cHull_{\om}^{N'}(X)$, let 
$\pi:\bar{N}\to N'$ be the uncollapse map,
and $\pi(\bar{\dot{S}},\bar{\dot{T}})=(\dot{S}',\dot{T}')$. Then $\bar{N}=\Hull^{\bar{N}}(X)$
and $\SS_\alpha\sats$``$(\bar{N},\bar{\dot{S}},\bar{\dot{T}})$ is a minimal $\varphi(x)$-witness over $X$''.
Moreover, just like  in \cite[Lemma 2.4]{hsstm},
$\J(\bar{N})$ is sound with $\rho_1^{\bar{N}}=X$
and $p_1^{\bar{N}}=\OR^{\bar{N}}$,
and is iterable in $\SS_\alpha$ (and $0$-maximal trees on $\J(\bar{N})$ correspond very simply to $0$-maximal (equivalently, $\om$-maximal) trees on $\bar{N}$). The fact that $\Th^N(X)\in\OD^{<\alpha}(X)$ now follows from the compatibility
of sound projecting $X$-mice.

\ref{item:J_alpha_sats_min_witness_exists} $\Rightarrow$ 
\ref{item:J_alpha_sats_varphi}:  By Lemma \ref{lem:varphi(x)-witness_implies_truth}
in $L(\RR)$ (where the $\varphi(x)$-witness $N$ is  $(0,\om_1+1)$-iterable (recall all subsets of $\om_1$ in $L(\RR)$
are projectively definable)),
we have $L(\RR)\sats\varphi(x)$.
So fix $n<\om$ and $N\in\SS_\alpha$
such that $\SS_\alpha\sats$``$N$ is a  minimal
$(\varphi(x),n)$-witness''.
Let $\Sigma$ be a $(0,\om_1+1)$-strategy
 for $N$ in $\SS_\alpha$. Then $t_0,t_1\in\SS_\alpha$,
where $t_0,t_1$ are the model and theory defined as in the proof of Lemma \ref{lem:varphi(x)-witness_implies_truth}.
Since $t_0\iso\SS_\gamma$ where $\gamma$ is least
such that
$\J_{\gamma+\om}\sats\varphi(x)$, $t_0$ yields a surjection $\RR\to\pow(\RR)\cap\SS_\gamma$, so
a diagonalization gives $\gamma<\alpha$.
\end{proof}

\begin{cor}\label{cor:local_MST}
 Let $\alpha$ be a limit ordinal
 and suppose $\alpha$ is not of the form $\gamma+\om$ where $\gamma$ ends a strong S-gap.
 For every $y\in\OD^{<\alpha}(x)$
 there is a sound $x$-premouse
 $M$ such that $y\in M$ and $M$
 has an $(\om,\om_1+1)$-strategy
 in $\SS_\alpha$.
\end{cor}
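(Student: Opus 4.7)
The plan is to reduce the corollary to Fact~\ref{fact:mouse_witnesses}(3). Fix $y\in\OD^{<\alpha}_x$; first unfold the definition: pick $\delta\in\Lim_0\cap\alpha$ and $n<\om$ with $y\in\OD^{\delta,n+1}_x$, and use Definition~\ref{dfn:OD^beta} to obtain $\gamma<\om_1$ and a $\Sigma_{n+1}$ formula $\psi$ of $\Ll_{L(\RR)}$ such that for all $w\in\WO_\gamma$ and all $z\in\RR$,
\[ z=y\iff\SS_\delta\sats\psi(z,x,w). \]
Fix one such $w$ and set $X=\trcl(\{\langle x,w\rangle\})\in\HC$.

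Next, via the translation $\psi\mapsto\gamma_{\psi,n+1}$, reformulate the defining condition as a $\Sigma_1$ formula $\varphi(u)$ of $\Ll_{L(\RR)}$ (asserting the existence of a real satisfying the relevant $\gamma_{\psi,n+1}$) such that $\SS_{\delta+\om}\sats\varphi(\langle x,w\rangle)$. Since $\delta+\om\leq\alpha$, also $\SS_\alpha\sats\varphi(\langle x,w\rangle)$. Now invoke Fact~\ref{fact:mouse_witnesses}(3) with formula $\varphi$, parameter $\langle x,w\rangle\in\Ss_\om(X)$, base $X$, and our $\alpha$ (which meets the required gap hypothesis by assumption). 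This produces, inside $\SS_\alpha$, a minimal $\varphi(\langle x,w\rangle)$-witness $N$ with $N=\Hull^N(X)$, such that $\J(N)$ is a sound $X$-premouse with $\rho_1^{\J(N)}=X$ and an $(\om,\om_1+1)$-iteration strategy $\Sigma_0\in\SS_\alpha$.

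The heart of the argument is extracting $y$ from $\J(N)$. By the definition of witness, $N$ carries $\CC_{\delta_0}$-names $(\dot S,\dot T)$ for absolutely complementing trees, and the proof of Lemma~\ref{lem:varphi(x)-witness_implies_truth} shows that their projections, collected across countable non-dropping iterates above $\delta_0$ and corresponding generic collapses, recover the $\Sigma_{k_{n'}}$-theory of the minimal $\SS_{\gamma^*}$ satisfying $\gamma_{\varphi,n'}(\langle x,w\rangle)$. Since $\gamma^*\geq\delta+\om$, this theory contains the $\Sigma_{n+1}^{\SS_\delta}$-defining condition that pins $y$ down uniquely; hence $\{y\}$ is a $\Sigma_1^{\J(N)}(\{X\})$-singleton. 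Soundness with $\rho_1^{\J(N)}=X$ and $\J(N)=\Hull^{\J(N)}(X)$ then forces $y\in\J(N)$.

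To finish, re-base $\J(N)$ as an $x$-premouse by running the Q-local $L[\es,x]$-construction inside it, exactly as in the last paragraph of the proof of Fact~\ref{fact:mouse_witness_existence}. This yields a sound $x$-premouse $M$ of which $\J(N)$ is a small generic extension above the first Woodin; $M$ inherits an $(\om,\om_1+1)$-strategy in $\SS_\alpha$ from $\Sigma_0$ in the standard way. Because $M$ internally sees the countable ordinal $\gamma$ and contains some $w'\in\WO_\gamma$, and because $y$ is uniformly definable from $x$ and any such $w'$ via $\psi$ (read off from the same internal witness data transferred down through the $L[\es,x]$-construction), we get $y\in M$, completing the proof. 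The main obstacle is the extraction step; the rebasing is routine bookkeeping.
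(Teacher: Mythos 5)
Your plan diverges from the paper's proof at the crucial point, and the divergence introduces a genuine gap. The paper does \emph{not} try to reconstruct $y$ from a witness to a statement that only mentions $x$ and a code $w$. Instead, it takes the $\Sigma_1$ formula $\varphi(u,v)\equiv$ ``$v\in\OD^{<\OR}(u)$'', observes $\SS_\alpha\sats\varphi(x,y)$, and applies Fact~\ref{fact:mouse_witnesses} to get a minimal $\varphi(x,y)$-witness $N$ --- so $y$ is \emph{already a parameter sitting in the base of the witness}. Then $P$ is the output of the Q-local $L[\es,x]$-construction of $N$, and the content of the proof is showing $y\in P$: if not, $y$ corresponds to a non-trivial extender algebra name $y'$ over $P$, one can produce a perfect set of generics $G$ with pairwise distinct $y'_G$, and the preservation of the pre-witness property together with (the proof of) Lemma~\ref{lem:varphi(x)-witness_implies_truth} forces each $y'_G$ into $\OD^{<\alpha}(x)$, giving uncountably many $\OD^{<\alpha}(x)$ reals --- contradiction.

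The gap in your argument is the ``extraction step'' you yourself flagged, and it is fatal as stated. You form a witness $N$ (and then $\J(N)$) to the $\Sigma_1$ fact $\varphi(\langle x,w\rangle)$ over the base $X=\trcl(\{\langle x,w\rangle\})$, which contains $x$ and $w$ but not $y$. The trees $(S,T)$ in a generic extension $N[G]$ do encode the $\Sigma_{k}$-theory of the relevant minimal $\SS_{\gamma^*}$, but only \emph{restricted to reals actually present in} $N[G]$. A witness decides ``is $z=y$?'' for $z\in\RR^{N[G]}$; if $y\notin\RR^{N[G]}$ the trees are simply silent about $y$, and no absoluteness argument produces $y$ from data internal to $N$. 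Your claim that ``$\{y\}$ is a $\Sigma_1^{\J(N)}(\{X\})$-singleton, hence soundness forces $y\in\J(N)$'' presupposes $y\in\J(N)$ in order to even be a nonvacuous statement about a definable singleton of $\J(N)$. There is no reason for a minimal witness to a $\Sigma_1$ fact about $\langle x,w\rangle$ to contain $y$ at all; the whole point of the corollary is to manufacture a mouse containing $y$. To recover $y$ from the witness one would have to iterate $N$ to make $y$ generic, but then one lands in an iterate, not in $\J(N)$, and the argument has to be rethought --- which is exactly what the paper's perfect-set-of-generics argument over $P$ is doing, starting from a witness that carries $y$ in its base.

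A secondary (smaller) issue: by unfolding $y\in\OD^{\delta,n+1}_x$ to a $\Sigma_{n+1}$ formula over $\SS_\delta$ and re-packaging via $\gamma_{\psi,n+1}$, the ``least level'' at which your $\Sigma_1$ fact first becomes true is essentially $\delta+\om$, and Fact~\ref{fact:mouse_witness_existence} (the engine behind Fact~\ref{fact:mouse_witnesses}) requires the predecessor limit $\gamma^*$ of that least level not to end a strong S-gap. That is not controlled by choosing $\delta<\alpha$ arbitrarily. The paper's choice of $\varphi(u,v)\equiv$``$v\in\OD^{<\OR}(u)$'' avoids pinning the defining level to a specific $\delta$ of your choosing and is the uniform formulation the machinery is designed to handle. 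I would rework the proof to match the paper's structure: use $\varphi(x,y)$, get the witness over a base containing $y$, re-base to $x$ via the Q-local construction, and argue $y\in P$ by the perfect-set/counting argument.
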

\begin{proof}
 Let $\varphi(u,v)$ be the $\Sigma_1$ assertion
 that $v\in\OD^{<\OR}(u)$.
 So $\SS_\alpha\sats\varphi(x,y)$,
 so by Fact \ref{fact:mouse_witnesses},
 $\SS_\alpha\sats$``there is a minimal $\varphi(x,y)$-witness'', so take $N$ such, and an iteration strategy $\Sigma$ for $N$ in $\SS_\alpha$.

 Let $P$ be the output of the Q-local $L[\es,x]$-construction of $N$. So $P$ is an $x$-mouse,
 and $P$ is also iterable in $\SS_\alpha$,
 so it suffices to see that $y\in P$.
 Suppose not. Let $\delta_0<\ldots<\delta_4$
 be Woodins of $N$ and $\dot{S},\dot{T}\in N$ be
 $\Coll(\om,\delta_0)$-names witnessing that $N$ is a $\varphi(x,y)$-witness. Then $\delta_0<\ldots<\delta_4$ are also Woodin in $P$.
 Fix some extender algebra names 
 $y',\dot{S}',\dot{T}'\in P$ evaluating to $y,\dot{S},\dot{T}$. Fix a condition $p\in\BB^P_{\delta_0}$ forcing
 that the extension is a pre-$\varphi(x,y')$-witness.
 Then since $P$ is countable and $y\notin P$,
 we can construct a perfect set $\mathscr{P}$ of $P$-generics
 for the extender algebra below $p$,
 arranging that $y'_{G_1}\neq y'_{G_2}$
 whenever $G_1,G_2\in\mathscr{P}$ are distinct.
 But then by the proof of Lemma \ref{lem:varphi(x)-witness_implies_truth},
 we get $y'_G\in\OD^{<\alpha}(x)$ for each $G\in\mathscr{P}$, so this set is uncountable, a contradiction. (It's not quite directly
 by Lemma \ref{lem:varphi(x)-witness_implies_truth} itself,
 because $P[G]$ need not be iterable below $\delta_0$ for arbitrary $G\in\mathscr{P}$,
 but note that we only need the iterability
 above $\delta_0$,
 which just comes from $\Sigma$.)
\end{proof}

\begin{dfn}
 Let $\alpha$ be a limit ordinal which is not of the form $\gamma+\om$ where $\gamma$ ends a strong S-gap.
Write $\Gamma_\alpha=\Sigma_1^{\SS_\alpha}$. For transitive $X\in\HC$,
 $\Lp_{\Gamma_\alpha}(X)$ denotes
 the stack of all $\om$-sound
 $X$-premice $N$ such that $\rho_\om^N=X$
 and $\SS_\alpha\sats$``$N$ is $(\om,\om_1+1)$-iterable''.
\end{dfn}

\begin{fact}
 Let $\alpha$ be a limit ordinal
 such that $\alpha$ is not of the form $\gamma+\om$ where $\gamma$ ends a strong S-gap.
 Let $X\in\HC$ be transitive.
 Then
 \[ \OD^{<\alpha}(X)=\Lp_{\Gamma_\alpha}(X)\cap\pow(X).\]
\end{fact}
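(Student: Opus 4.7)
The plan is to prove the two inclusions separately, using comparison and condensation on one side, and the mouse witness technology plus a hull argument on the other.

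For $\Lp_{\Gamma_\alpha}(X) \cap \pow(X) \subseteq \OD^{<\alpha}(X)$, take $Y \in \Lp_{\Gamma_\alpha}(X) \cap \pow(X)$, so $Y \in N$ for some $\om$-sound $X$-premouse $N$ with $\rho_\om^N = X$ and an $(\om,\om_1+1)$-iteration strategy $\Sigma_N$ with $\Sigma_N\rest\HC \in \SS_\alpha$. By Lemma~\ref{lem:tame_projecting_stacks} this strategy is unique, and by standard comparison any two such sound projecting $X$-mice whose strategies live in $\SS_\alpha$ are compatible (initial segments of one another). Hence $N$ is characterised inside $\SS_\alpha$ as the unique sound $X$-premouse of ordinal height $\OR^N$ which projects to $X$ and has an $(\om,\om_1+1)$-strategy. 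Fixing $\gamma<\alpha$ with $N,\Sigma_N\rest\HC\in\SS_\gamma$, this characterisation is $\Sigma_1^{\SS_{\gamma+\om}}$ in parameters $(X,\OR^N)$, so $N\in\OD^{\gamma+\om,1}(X)\subseteq\OD^{<\alpha}(X)$. By soundness, each element of $N$ is first-order definable over $N$ from $X\cup p_\om^N$ with $p_\om^N\in[\OR^N]^{<\om}$; in particular $Y\in\OD^{<\alpha}(X)$.

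For $\OD^{<\alpha}(X) \cap \pow(X) \subseteq \Lp_{\Gamma_\alpha}(X)$, fix such a $Y$. I would first extend Corollary~\ref{cor:local_MST} from $x\in\RR$ to transitive $X\in\HC$: apply Fact~\ref{fact:mouse_witnesses} to the $\Sigma_1$ formula $\varphi(u,v)$ asserting $v\in\OD^{<\OR}(u)$ to obtain, inside $\SS_\alpha$, a minimal $\varphi(X,Y)$-witness $N^*$ over $X$ with strategy in $\SS_\alpha$; then run the Q-local $L[\es,X]$-construction of $N^*$ in place of $L[\es,x]$. The perfect-set argument in the original proof of Corollary~\ref{cor:local_MST} carries over verbatim (only the iterability above the bottom Woodin is needed, and it is inherited from $\Sigma_{N^*}$), producing an $X$-premouse $M$ with $Y\in M$ and with an $(\om,\om_1+1)$-strategy $\Sigma_M\in\SS_\alpha$. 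Now set $N=\cHull^M_\om(X\cup\{Y\})$; since $X$ is countable, $\rho_\om^N=X$. Condensation applied to the uncollapse $\pi:N\to M$, together with the uniqueness of strategies from Lemma~\ref{lem:tame_projecting_stacks}, places $N$ as an initial segment of $M$ (or of a sound cutpoint segment thereof), and the $\pi$-pullback of $\Sigma_M$ is an $(\om,\om_1+1)$-strategy for $N$ lying in $\SS_\alpha$. Hence $N\in\Lp_{\Gamma_\alpha}(X)$, and $Y\in N$ by construction.

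The step I expect to be the main obstacle is the clean execution of the extension of Corollary~\ref{cor:local_MST} to transitive bases $X$: this is largely bookkeeping, but one must verify that Fact~\ref{fact:mouse_witnesses} is correctly applied with a transitive-set parameter (coding $X$ by a real if needed to match the stated form), that the $X$-based Q-local construction respects the background, and that the perfect-set/genericity iteration argument goes through with $X$ in place of $x$. The hypothesis that $\alpha$ is not of the form $\gamma+\om$ with $\gamma$ ending a strong S-gap enters only via the appeal to Fact~\ref{fact:mouse_witnesses}, and propagates through to the transitive setting without change.
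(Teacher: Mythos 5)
Your proof follows the same decomposition as the paper's and identifies the same key inputs: compatibility of sound projecting $X$-mice with strategies in $\SS_\alpha$, together with the $\Sigma_1^{\SS_\alpha}$-expressibility of iterability, for the easy inclusion; and Corollary~\ref{cor:local_MST} for the hard one. Your observation that Corollary~\ref{cor:local_MST} is stated for $x\in\RR$ and needs to carry over to transitive $X$ is a legitimate point that the paper's two-line proof leaves implicit.

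Two repairs are needed. In the first inclusion you write $N\in\OD^{\gamma+\om,1}(X)\subseteq\OD^{<\alpha}(X)$; the containment is not automatic when $\gamma+\om=\alpha$ (which can happen for limit $\alpha$ that are not limits of limits). It is still fine because the lemma immediately following Lemma~\ref{lem:<^alpha_increasing} gives $\OD^{\alpha 1}(X)=\bigcup_{\delta<\alpha,\,n}\OD^{\delta n}(X)=\OD^{<\alpha}(X)$ for limit $\alpha$, but that step should be invoked. In the second inclusion the move $N=\cHull^M_\om(X\cup\{Y\})$ followed by condensation is both unnecessary and underjustified: that hull is taken without the standard parameters of $M$, so it is not a priori a well-behaved premouse, and the condensation hypotheses (a $\Sigma_\om$-elementary map into a segment of $M$, the appropriate projectum/critical-point arrangement) are not checked; Lemma~\ref{lem:tame_projecting_stacks} does not do this work for you. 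The direct route, which is what ``direct consequence'' means here, is: given $Y\in M$ with $M$ a sound $X$-premouse iterable via a strategy in $\SS_\alpha$, take the least $\zeta$ such that $Y\in M|\zeta$ and $\rho_\om^{M|\zeta}\leq X$ (such $\zeta$ exists since $Y\subseteq X$ is new at some level). Then $M|\zeta$ is sound, projects to $X$, and is iterable via the restriction of $\Sigma_M$, so $M|\zeta\ins\Lp_{\Gamma_\alpha}(X)$ and $Y\in M|\zeta$; no hull or condensation is required.
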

\begin{proof}
 The fact that $\Lp_{\Gamma_\alpha}(X)\cap\pow(X)\sub\OD^{<\alpha}(X)$ follows
 directly from the definitions, using the compatibility of lower part mice in $\SS_\alpha$,
 and that we can refer directly to the existence of an iteration strategy with $\Sigma_1^{\SS_\alpha}$.
 The fact that $\OD^{<\alpha}(X)\sub\Lp_{\Gamma_\alpha}(X)$
 is a direct consequence of Corollary \ref{cor:local_MST}.
\end{proof}

\section{The start of a limit gap}\label{sec:start_of_gap}

\subsection{Embedding into the start of a limit gap}

\begin{lem}\label{lem:Sigma_1_Hull_of_J_alpha}
Let $\alpha$ be a limit of limits which starts an S-gap of $L(\RR)$.
Let 
$N$ be a countable $\om$-small premouse such that $N|\om_1^N$ is $(0,\om_1+1)$-iterable
and $N$ is
$(\alpha,0)$-closed.
Let
\[ H=\Hull_{1}^{\Ss_\alpha}(\RR^N)
\text{ and }t=\Th_1^{\Ss_\alpha}(\RR^N).\]
Let $\Ss_{\bar{\alpha}}(\RR^N)$ be the transitive collapse of $H$ \tu{(}see below\tu{)}.
Then:
\begin{enumerate}
\item \label{item:RR^N_Sigma_1-closed} 
$\RR^N=H\inter\RR$ and $H\elem_1\SS_\alpha$ \tu{(}hence $H$ is extensional\tu{)}.

\item\label{item:alpha-bar<OR^N}
$t$ is $\bfSigma^{N|\om_1^N}_1$-definable.
Hence if $\om_1^N<\OR^N$
then $t\in N$,
and so if $N$ is also admissible then
 $\bar{\alpha}<\OR^N$.
\end{enumerate}
\end{lem}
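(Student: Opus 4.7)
\noindent\emph{Proof proposal.} For part \ref{item:RR^N_Sigma_1-closed}, the inclusion $\RR^N\sub H\inter\RR$ is immediate. For the reverse, if $r\in H\inter\RR$ then $r$ is $\Sigma_1^{\SS_\alpha}$-definable from some tuple $\vec{x}\in(\RR^N)^{<\om}$; coding $\vec{x}$ as a single $x\in\RR^N$ gives $\{r\}\in\Sigma_1^{\SS_\alpha}(\{x\})$, so $r\in\OD^{\alpha,1}_x$. As $\alpha\in\Lim$, the lemma immediately after \ref{lem:<^alpha_increasing} yields
\[ \OD^{\alpha,1}_x=\bigcup_{\delta\in\alpha\inter\Lim_0,\ 1\leq n<\om}\OD^{\delta n}_x\sub\OD^{<\alpha}_x, \]
and the $(\alpha,0)$-closure of $N$ then places $r\in N$, whence $r\in\RR^N$. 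The elementarity $H\elem_1\SS_\alpha$ is a standard Tarski--Vaught argument: witnesses to $\Sigma_1$-statements with parameters in $H$ can be produced by $\Sigma_1$-Skolem terms applied to elements of $\RR^N$, hence lie in $H$. Extensionality of $H$ is a consequence.

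For part \ref{item:alpha-bar<OR^N}, the plan is to express $t$ over $N|\om_1^N$ via mouse witnesses. By Fact \ref{fact:mouse_witnesses},
\[ \SS_\alpha\sats\varphi(\vec{x}) \iff \text{there is a minimal }\varphi(\vec{x})\text{-witness }Q\in\SS_\alpha. \]
Any such $Q$ is the transitive collapse of $\Hull^Q(\vec{x})$, hence $Q\in\OD^{<\alpha}_{\vec{x}}$, and by $(\alpha,0)$-closure $Q\in N$; countability of $Q$ puts $Q\in N|\om_1^N$. Conversely, one introduces a $\Sigma_1$-formula $\psi(\varphi,\vec{x})$ over $N|\om_1^N$ asserting the existence of a countable sound $\vec{x}$-premouse $Q$ which is a pre-$\varphi(\vec{x})$-witness and is an initial segment of the internal lower-part $\Lp^N(\vec{x})$ (the stack of sound $\vec{x}$-mice projecting to $\vec{x}$ and $(0,\om_1^N+1)$-iterable inside $N|\om_1^N$); both conjuncts are $\Sigma_1^{N|\om_1^N}$ in $\vec{x}$. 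If $\psi(\varphi,\vec{x})$ holds, the internal iterability of $Q$ lifts to a genuine $V$-strategy via the $(0,\om_1+1)$-strategy for $N|\om_1^N$, so Lemma \ref{lem:varphi(x)-witness_implies_truth} gives $L(\RR)\sats\varphi(\vec{x})$; using that $Q\in\Lp_{\Gamma_\alpha}(\vec{x})$ (a consequence of $(\alpha,0)$-closure together with the fact that $\alpha$ starts an S-gap), the least level of the $\SS$-hierarchy satisfying $\varphi(\vec{x})$ lies below $\alpha$, so by upward $\Sigma_1$-absoluteness $\SS_\alpha\sats\varphi(\vec{x})$. This gives the $\bfSigma_1^{N|\om_1^N}$-definability of $t$.

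Finally, if $\om_1^N<\OR^N$ then $N|\om_1^N\in N$, so $t\in\Jj(N|\om_1^N)\sub N$. If in addition $N$ is admissible, then using $t$ as parameter one constructs the transitive collapse of $H$ inside $N$ by a $\Sigma_1$-recursion on ordinals from $\RR^N\cup\{t\}$; this collapse is precisely $\Ss_{\bar{\alpha}}(\RR^N)$, and admissibility prohibits the recursion from exhausting $\OR^N$, yielding $\bar{\alpha}<\OR^N$. The principal technical obstacle is to verify that the internal operator $\Lp^N(\vec{x})$ agrees with $\Lp_{\Gamma_\alpha}(\vec{x})$ on the relevant initial segments, so that internal pre-witnesses are genuinely $V$-iterable and witness $\Sigma_1$-truth in $\SS_\alpha$ rather than merely in $L(\RR)$; this boils down to standard comparison of minimal witnesses, using the $(0,\om_1+1)$-iterability of $N|\om_1^N$ and the hypothesis that $\alpha$ starts an S-gap.
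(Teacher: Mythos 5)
Two steps in your proposal are asserted rather than proved, and neither goes through as stated.

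First, in part~\ref{item:RR^N_Sigma_1-closed}, the elementarity $H\preccurlyeq_1\SS_\alpha$ is \emph{not} a routine Tarski--Vaught consequence of taking a $\Sigma_1$ hull. The $\Sigma_1$ Skolem function for a level of $L(\RR)$ is definable from the standard parameter $p_1$ (together with ordinals), not from reals alone, and $\SS_\alpha$ has no definable wellorder from which to extract a canonical $\Sigma_1$ witness with only real parameters. Your appeal to ``$\Sigma_1$-Skolem terms applied to elements of $\RR^N$'' is precisely the thing that must be constructed, and that construction is where the work lies. The paper proves elementarity by locating the least limit $\gamma<\alpha$ with $\J_{\gamma+\om}\sats\exists z\,\psi(x_0,z)$ (this $\gamma$ is $\Sigma_1$-definable from $x_0$, so lies in $H$), observing that $[\gamma+\om,\gamma+\om]$ is a projective-like S-gap so that $\forall^\RR\Sigma_1^{\J_{\gamma+\om}}(\{x_0\})$ has the uniformization property, and using uniformization to extract a real $y$ definable from $x_0$ over $\J_{\gamma+\om}$ from which (together with $\gamma$ and ordinals below $\gamma$) a witness $z\in H$ is definable. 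Without some argument of this kind the $\Sigma_1$-elementarity of the hull is simply unjustified.

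Second, in part~\ref{item:alpha-bar<OR^N}, you assert that an internal pre-$\varphi(\vec{x})$-witness $Q\pins N|\om_1^N$ belongs to $\Lp_{\Gamma_\alpha}(\vec{x})$, calling this ``a consequence of $(\alpha,0)$-closure together with the fact that $\alpha$ starts an S-gap.'' That is not a consequence: $(\alpha,0)$-closure is the inclusion $\OD^{<\alpha}_x\sub N$ for $x\in\RR^N$, and gives no bound in the other direction on which mice can occur as segments of $N|\om_1^N$. In general $N$ may contain sound projecting mice whose iteration strategies live strictly beyond $\SS_\alpha$, so the $\bfSigma_1^{N|\om_1^N}$-definable set of internally witnessed pairs (the paper's $t^+$) can be a strict superset of $t=\Th_1^{\SS_\alpha}(\RR^N)$. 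The paper confronts exactly this possibility: if $t\psub t^+$, one fixes a least counterexample $(\varphi_0,x_0)\in t^+\setminus t$ and uses it as a ``stopping parameter'' to recover $t$ as $\bfSigma_1^{N|\om_1^N}(\{x_0\})$. Your proposal has no mechanism to handle $t\neq t^+$ and therefore does not establish the $\bfSigma_1^{N|\om_1^N}$-definability of $t$. (A smaller issue: the conjunct ``$Q$ is $(0,\om_1^N+1)$-iterable inside $N|\om_1^N$'' is $\Pi_1$, not $\Sigma_1$; the paper sidesteps this by defining $t^+$ directly in terms of segments $N|\eta$ being pre-witnesses, which is a local first-order condition.)
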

\begin{proof}
Part \ref{item:RR^N_Sigma_1-closed}:
If $x\in H\inter\RR$ then $x\in\OD^{<\alpha}_y$
for some
$y\in\RR^N$,
hence $x\in\RR^N$.

Let $\varphi$ be $\Sigma_1$
and $x_0\in\RR^N$ and suppose that $\SS_\alpha\sats\varphi(x_0)$.
Let $\varphi(v)$
assert $\exists z\psi(v,z)$, where $\psi$ is $\Sigma_0$.
We want to see that there is $z\in H$ such that $\SS_\alpha\sats\varphi(x_0,z)$.

Let $\gamma<\alpha$
be least such that $\J_{\gamma+\om}\sats\varphi(x_0)$. Then $\gamma\in H$, $[\gamma+\om,\gamma+\om]$
is a projective-like S-gap,
 and so by \cite{scales_in_LR}
and the first periodicity theorem,
the pointclasses
$\Sigma_1^{\J_{\gamma+\om}}(\{x_0\})$
and
$\all^\RR\Sigma_1^{\J_{\gamma+\om}}(\{x_0\})$
have the scale property,
and the latter also has the uniformization property.

Let $\varrho(\dot{x},\dot{y})$ assert ``$\dot{x},\dot{y}\in\RR$ 
and there are 
$\gamma'\in\Lim$
and $n'<\om$
and $z\in\J_{\gamma'+n'}$ such that
$\J_{\gamma'+n'}\sats\psi(\dot{x},z)$ and
$z$ is definable over $\J_{\gamma'+n'}$
from ordinals and $\dot{y}$''.
So $\varrho(\dot{x},\dot{y})$ is $\Sigma_1$ and $\J_{\gamma+\om}\sats\exists 
y\in\RR[\varrho(x_0,y)]$. By
the uniformization property
for $\all^\RR\Sigma_1^{\J_{\alpha+\om}}(\{x_0\})$, note that there is $y$ such that
$\J_{\gamma+\om}\sats\varrho(x_0,y)$ and $y$ is definable from $x_0$ over $\J_{\gamma+\om}$, and (as $\gamma+\om<\alpha$) therefore $y\in H$.
It follows
that there is $z\in H$ such that
$\J_{\gamma+\om}\sats\varphi(x_0,z)$,
as desired.

Part \ref{item:alpha-bar<OR^N}: 
Let $t^+$ be the set of pairs
$(\varphi,x)$ such that $\varphi\in\Ll_{L(\RR)}$ is $\Sigma_1$,
$x\in\RR^N$, and there are $\eta_0<\eta<\om_1^N$
such that $x\in N|\eta_0$, $\rho_\om^{N|\eta_0}=\om$,  
 $N|\eta$ is (equivalent to) a minimal pre-$\varphi'_x(N|\eta_0)$-witness,\footnote{That is, recall that for a transitive set $P$,
a pre-$\varphi'_x(P)$-witness
is a premouse over $P$. But $N$ is a premouse (over $\emptyset$),
so $N|\eta$ cannot literally be a pre-$\varphi'_x(N|\eta_0)$-witness.
But since $\rho_\om^{N|\eta_0}=\om$, $N|\eta$ is equivalent
to a premouse over $N|\eta_0$, and we want that to be the pre-$\varphi'_x(N|\eta_0)$-witness.}
where $\varphi'_x\in\Ll_{L(\RR)}$ is a $\Sigma_1$ formula
in one free variable, such that $\varphi'_x(N|\eta_0)$
asserts $\varphi(x)$ in a natural manner
(referring to $x$ via the parameter $N|\eta_0$).
So $t^+$ is $\bfSigma_1^{N|\om_1^N}$-definable.

By iterability and Lemma \ref{lem:varphi(x)-witness_implies_truth},
$L(\RR)\sats t^+$.
And $t\sub t^+$ because $N$ is $(\alpha,0)$-closed and by Fact \ref{fact:mouse_witnesses}
(apply its part \ref{item:J_alpha_sats_min_witness_exists}).
Let $\alpha'\in\Lim$ be least such that $\J_{\alpha'}\sats t^+$, so
$t\sub t^+\sub\Th_1^{\SS_{\alpha'}}(\RR^N)$.

If $t=t^+$  we are done,
so suppose $t\psub t^+$.
Given $(\varphi,x)\in t^+$, let $\beta_{\varphi,x}$ be the least
$\beta\in\Lim$ such that $\SS_\beta\sats\varphi(x)$.
Let $(\varphi_0,x_0)\in t^+$ be such that $\beta_0=\beta_{\varphi_0,x_0}>\alpha$,
taking $\beta_0$ minimal possible.
So $\beta_0=\gamma_0+\om$ for some limit $\gamma_0\geq\alpha$.
Let $n_0<\om$ be such that $\J_{\gamma_0}\sats\gamma_{\varphi_0,n_0}(x_0)$.
Then for $(\varphi,x)\in t^+$,
we have
\[ (\varphi,x)\in t\iff(\varphi',(x,x_0))\in t^+,\]
where $\varphi'(x,x_0)$ asserts 
\[\text{``}\exists\gamma\in\Lim\ 
\Big[\J_\gamma\sats\varphi(x)\ \&\ \all\xi\in\Lim\cap(\gamma+1)\Big(\J_\xi\sats\neg\gamma_{\varphi_0,n_0}(x_0)\Big)\Big]\text{''}.\]
(If $\SS_\alpha\sats\varphi(x)$
then since $\alpha$ is a limit of limits, there is a limit $\gamma<\alpha$
such that $\J_\gamma\sats\varphi(x)$,
and clearly this witnesses $\varphi'(x,x_0)$
in $\SS_\alpha$, so $(\varphi',(x,x_0))\in t\sub t^+$.
Conversely, if $(\varphi',(x,x_0))\in t^+$,
then $L(\RR)\sats\varphi'(x,x_0)$,
and if $\gamma$ witnesses this,
note that $\gamma<\gamma_0$, so $\gamma+\om<\gamma_0+\om=\beta_0$, so by the minimality of $\beta_0$,
actually $\SS_\alpha\sats\varphi'(x,x_0)$,
so $\SS_\alpha\sats\varphi(x)$.)
So $t$ is $\bfSigma_1^{N|\om_1^N}(\{x_0\})$, as desired.
\end{proof}

\begin{rem}
Even if $\SS_\alpha$ is admissible,
this needn't transfer to
$\Ss_{\bar{\alpha}}(\RR^N)$:
we can have $\Ss_{\bar{\alpha}}(\RR^N)\sats$
``For all reals $x$ there is $y$ such that $\varphi(x,y)$'' where $\varphi$ is $\Sigma_1$ in $\Ll_{L(\RR)}$, while $\SS_\alpha\sats$``There is a real $x$ such that for all $y$, $\neg\varphi(x,y)$''. In fact, if $N|\om_1^N=\Lp_{\Gamma_\alpha}(\emptyset)$ (see below) then $N$ is $(\alpha,0)$-closed, and
all reals of $\RR^N$  belong to iterable mice in $\SS_\alpha$, hence also in $\Ss_{\bar{\alpha}}(\RR^N)$, by $\Sigma_1$-elementarity, but of course not all reals in $V$ belong to iterable mice in $\SS_\alpha$, and note that therefore in this case,
$\Ss_{\bar{\alpha}}(\RR^N)$ is not admissible.
\end{rem}

\begin{dfn}
 Let $\alpha$ be a limit of limits which starts an S-gap. Let $\Gamma=\Gamma_\alpha$.
 Let $N$ be a premouse and $\Tt$ a normal tree on $N$.
We say that $\Tt$ is \emph{$\Gamma$-guided} iff for every
limit $\eta<\lh(\Tt)$, we have
\[ Q=\Q(\Tt\rest\eta,[0,\eta)_\Tt)\text{
exists and }Q\pins \Lp_{\Gamma_\alpha}(M(\Tt)).\]

Suppose that $N|\om_1^N$ is $\om_1$-iterable and let
$\Psi=\Sigma_N$ (its unique $(\om,\om_1+1)$-strategy).
Given $\xi<\om_1^N$,
let $\Psi_{\geq\xi}$ be its restriction to
above-$\xi$ trees (which is the unique above-$\xi$, $(\om,\om_1+1)$-strategy for 
$N|\om_1^N$). We say that $\Psi$ (or $\Psi_{\geq\xi}$) is 
\emph{$\Gamma$-guided}
iff every tree $\Tt$ of countable  length via $\Psi$ (or 
$\Psi_{\geq\xi}$) is $\Gamma$-guided. We say that $\Psi$ (or $\Psi_{\geq\xi}$)
is \emph{$N$-$\Gamma$-guided} iff for every (above-$\xi$) limit length
$\Tt\in\HC^N$ via $\Psi$, $\Tt\conc\Psi(\Tt)$ is $\Gamma$-guided.\footnote{Note 
that this definition is ostensibly stronger than just requiring
that every $\Tt\in\HC^N$ via $\Psi$ is $\Gamma$-guided.}
\end{dfn}

The ultimate lemma in this section verifies 
 Conjecture \ref{conj:RS} (the strong conjecture)
in some cases:

\begin{lem}\label{lem:when_Psi>xi_0_Gamma_guided}
Let $\alpha,N,H,\bar{\alpha}$ be as in \ref{lem:Sigma_1_Hull_of_J_alpha}.
 For $\xi<\om_1^N$  let
\begin{equation}\label{eqn:S} S_{\xi}=\Big\{Q\Bigm|N|\xi\pins Q\pins N|\om_1^N\text{ and }\rho_\om^{Q}=\om\Big\}. \end{equation}
Then:
\begin{enumerate}[label=\tu{(}\alph*\tu{)}]
 \item\label{item:firsta} if there is $\xi<\om_1^N$
 such that $\rho_\om^{N|\xi}=\om$ and 
$N|\om_1^N=\Lp_{\Gamma_\alpha}(N|\xi)$,  then $S_\xi$
 is $\Sigma_1^{\J_{\bar{\alpha}}(\RR^N)}(\{N|\xi\})$-definable,
 and hence $<^N\rest\RR^N$ is $\Delta_1^{\J_{\bar{\alpha}}(\RR^N)}(\{N|\xi\})$-definable;
 \item\label{item:firstb} if there is no $\xi<\om_1^N$ as in part \ref{item:firsta},
 but there is $\xi<\om_1^N$
 such that $\rho_\om^{N|\xi}=\om$ and for each $\xi'\in(\xi,\om_1^N)$, we have $\xi'\neq\om_1^{\Lp_{\Gamma_\alpha}(N|\xi')}$,
 then $S_\xi$
 is $\all^\om\Sigma_1^{\Ss_{\bar{\alpha}}(\RR^N)}(\{N|\xi\})$-definable,
 and hence $<^N\rest\RR^N$ is $\Delta_2^{\Ss_{\bar{\alpha}}(\RR^N)}(\{N|\xi\})$-definable;
  \item\label{item:second} if there is no $\xi$
 as in part \ref{item:firsta}
 or part \ref{item:firstb},
but $\xi<\om_1^N$ is such 
that $\rho_{\om}^{N|\xi}=\om$ and
 $\Psi_{\geq\xi}$ is 
$N$-$\Gamma_\alpha$-guided,
 then $S_{\xi}$ is $\Pi_1^{\Ss_{\bar{\alpha}}(\RR^N)}(\{N|\xi\})$-definable,
 and hence $<^N\rest\RR^N$
 is $\Delta_2^{\Ss_{\bar{\alpha}}(\RR^N)}(\{N|\xi\})$-definable.
\end{enumerate}
\end{lem}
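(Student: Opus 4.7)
My plan is to exploit the $\Sigma_1$-elementary uncollapse $\pi\colon\Ss_{\bar\alpha}(\RR^N)\to H\elem_1\SS_\alpha$ given by Lemma \ref{lem:Sigma_1_Hull_of_J_alpha}, noting that $N|\xi$ is coded by a real and so lies in $H$ (and in the collapse), and that every $Q\in S_\xi$ is a countable sound $\om$-projecting $N|\xi$-premouse with a unique $(\om,\om_1+1)$-iteration strategy by Lemma \ref{lem:tame_projecting_stacks}; in particular $Q\in\OD^{<\alpha}(N|\xi)\sub H$. Once $S_\xi$ is defined in $\Ss_{\bar\alpha}(\RR^N)$ with complexity $\Sigma_k$, I recover the wellorder $<^N\rest\RR^N$ by comparing, for $x,y\in\RR^N$, the $\pins$-least $Q_x,Q_y\in S_\xi$ containing $x,y$ respectively; this adds a single existential quantifier over $S_\xi$ plus bounded quantifiers over proper initial segments, and trichotomy then promotes the resulting $\Sigma_{k+1}$ bound to $\Delta_{k+1}$. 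In case (a) the extra existential is swallowed by the already-$\Sigma_1$ definition, giving $\Delta_1$.

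For case (a), I read the hypothesis $N|\om_1^N=\Lp_{\Gamma_\alpha}(N|\xi)$ as saying exactly that an $\om$-sound $\om$-projecting $N|\xi$-premouse $Q$ is in $S_\xi$ iff $\SS_\alpha\sats$``$Q$ has an $(\om,\om_1+1)$-strategy''. The right side is $\Sigma_1^{\SS_\alpha}(\{N|\xi\})$, so $\Sigma_1$-elementarity of $\pi$ immediately delivers the equivalent $\Sigma_1^{\Ss_{\bar\alpha}(\RR^N)}(\{N|\xi\})$ formulation. The content I still need to check is that a strategy $\Sigma$ produced inside $\Ss_{\bar\alpha}(\RR^N)$ really is an $(\om,\om_1+1)$-strategy for $Q$ in $V$; this follows from uniqueness of such strategies for tame projecting mice (Lemma \ref{lem:tame_projecting_stacks}) together with the absoluteness of ``$\Sigma$ is a strategy'' for $\Sigma$ coded by a real.

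For case (c), under $N$-$\Gamma_\alpha$-guidedness of $\Psi_{\geq\xi}$, I plan to pin down the unique strategy for each $Q\in S_\xi$ via the Q-structures $\Lp_{\Gamma_\alpha}(M(\Tt))$, all of which lie in $\Ss_{\bar\alpha}(\RR^N)$ by the Mouse Set Theorem. I would characterize $Q\in S_\xi$ as: \emph{$Q$ is a sound $\om$-projecting $N|\xi$-premouse, and no countable putative iteration tree $\Tt$ on $Q$ can be built which is $\Gamma_\alpha$-guided at every proper initial limit but admits no $\Gamma_\alpha$-guided cofinal branch}. Uniqueness of $\Gamma$-guided branches (Zipper Lemma) turns the inner ``admits a guided branch'' into a bounded search for a Q-structure, so the overall statement is $\Pi_1^{\Ss_{\bar\alpha}(\RR^N)}(\{N|\xi\})$. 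Case (b) I handle analogously, but since (a) fails only mildly, $\Pi_1$ is not attainable outright: the hypothesis on $\xi'\in(\xi,\om_1^N)$ with $\xi'\neq\om_1^{\Lp_{\Gamma_\alpha}(N|\xi')}$ provides, for each candidate $Q$, an $\Lp_{\Gamma_\alpha}$-overflow witness whose location requires an extra $\all^\om$ quantifier over natural-number codes, yielding the $\all^\om\Sigma_1$ bound.

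The hard part will be verifying, for (b) and (c), that $\Gamma_\alpha$-guidedness is recognized faithfully inside $\Ss_{\bar\alpha}(\RR^N)$. This reduces to Mouse Set Theorem identifications ($\OD^{<\alpha}(X)=\Lp_{\Gamma_\alpha}(X)\cap\pow(X)$ uniformly in transitive $X\in\HC$) plus the uniqueness of $\Gamma$-guided branches. Once that is established, the complexity bounds for $<^N\rest\RR^N$ in (b) and (c) follow from the first paragraph, giving $\Delta_2$ as claimed.
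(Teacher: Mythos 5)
Your outline for part (a) and for recovering $<^N\rest\RR^N$ from $S_\xi$ is sound, and your complexity bookkeeping for (b) and (c) is at least of the right shape, but you have misidentified the genuinely hard part of the argument and left it unaddressed. You say the difficulty lies in ``verifying that $\Gamma_\alpha$-guidedness is recognized faithfully inside $\Ss_{\bar\alpha}(\RR^N)$,'' and that this reduces to the Mouse Set Theorem plus uniqueness of $\Gamma$-guided branches. That recognition step is in fact the routine part: it follows almost immediately from the $\Sigma_1$-elementarity of $\pi$ and the $\Lp_{\Gamma_\alpha}$-identifications. What you have not addressed at all is the \emph{correctness} of the proposed characterization of $S_\xi$ in cases (b) and (c): namely, why a candidate $R$ over $N|\xi$ that passes your $\Gamma_\alpha$-guided iterability test must actually be an initial segment of $N$. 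This is far from automatic. Under the hypotheses of (c), $N|\om_1^N$ strictly exceeds $\Lp_{\Gamma_\alpha}(N|\xi)$, so being ``$\Gamma_\alpha$-guided iterable'' does not identify a unique stack; there could in principle be another $N|\xi$-premouse $R$, not a segment of $N$, passing the same $\Pi_1$ test, and ruling this out requires real work.

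The paper closes this gap by a comparison/genericity-iteration argument that uses the failure of hypotheses (a) and (b) in an essential way. Supposing $R$ passes the test but $R\npins N$, one picks $R'\pins N$ of the same $\om_1$ but $R'\neq R$, then locates a $\xi'$ with $R,R'\in N|\xi'$ and $\xi'=\om_1^{\Lp_{\Gamma_\alpha}(N|\xi')}$ (possible precisely because (a) and (b) fail), and runs a simultaneous comparison with $S|\xi'$-genericity iteration of $(R,R')$ \emph{inside} $S=\Lp_{\Gamma_\alpha}(N|\xi')$, building Q-structures by P-construction. This comparison cannot terminate below $\xi'$; but the $N$-$\Gamma_\alpha$-guidedness of $\Psi_{\geq\xi}$ forces a Q-structure at stage $\xi'$, which by the assumptions on $R$ must produce a branch \emph{inside} $S$, contradicting $S\sats$``$\xi'=\om_1$.'' Nothing in your proposal gestures at this mechanism. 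Similarly for (b): the paper's characterization is that for each successor-of-$\om$ level $\gamma$ of $R$ there must exist an $R'\ins R$ above $\gamma$, projecting to $\om$, with an above-$\gamma$ strategy in $\Ss_{\bar\alpha}(\RR^N)$; the role of the hypothesis ``$\xi'\neq\om_1^{\Lp_{\Gamma_\alpha}(N|\xi')}$'' is to make this local test conclusive. Your ``$\Lp_{\Gamma_\alpha}$-overflow witness'' phrasing does not match this, and without spelling out the local characterization and why it determines $R$, the (b) case is also incomplete.
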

\begin{proof}
Part \ref{item:firsta}
 is an easy consequence of Lemma \ref{lem:Sigma_1_Hull_of_J_alpha}.
 
 Part \ref{item:firstb}:
$S_\xi$ is $\all^\om\Sigma_1^{\Ss_{\bar{\alpha}}(\RR^N)}(\{N|\xi\})$-definable,
as given a sound premouse $R\in\HC^M$
such that $N|\xi\pins R$
and $\rho_\om^R=\om$,
we have that $R\pins N$
iff for every $\gamma\in(\xi,\OR^R)$,
if $R|\gamma\sats$``$\om$ is the largest cardinal''
then there is $R'\ins R$
such that $\gamma\leq\OR^{R'}$ and $\rho_\om^{R'}=\om$
and there is an an above-$\gamma$, $(\om,\om_1)$-iteration strategy for $R'$ in $\Ss_{\bar{\alpha}}(\RR^N)$. 

Part \ref{item:second}:
Fix $\xi$ as  hypothesized.
 Let $R\in\HC^N$
 be a sound premouse such that $\rho_\om^R=\om$ and $N|\xi\pins R$.
 We claim that $R\pins N$ iff
 for every $\alpha'<\bar{\alpha}$,
  $\Ss_{\alpha'}(\RR^N)\sats$``For   every countable length, above-$\xi$, $\om$-maximal putative tree $\Tt$ on $R$,
  and every $Q\in\HC$,
if 
for every limit $\zeta<\lh(\Tt)$, we have that $Q(\Tt\rest\zeta,[0,\zeta)_\Tt)$ is above-$\delta(\Tt\rest\zeta)$, $(\om,\om_1)$-iterable,
  then
  \begin{enumerate}[label=--]
   \item  $\Tt$ is an iteration tree (has wellfounded models), and
  \item if $\Tt$ has limit length
  and $Q$ is a $\delta(\Tt)$-sound Q-structure for $M(\Tt)$
  and $Q$ is above-$\delta(\Tt)$, $(\om,\om_1)$-iterable,
  then there is a $\Tt$-cofinal branch $b$ with
 $Q\ins M^\Tt_b$.''
  \end{enumerate}

  For clearly our assumptions yield the forward implication. So suppose the above statement holds of $R$,
  but $R\npins N$. Then by minimizing the height of such an $R$, we may assume that we can find $R'\pins N$
  such that $\rho_\om^{R'}=\om$
  and $R|\om_1^R=R'|\om_1^{R'}$ but  $R\neq R'$. Let $\xi<\om_1^M$ be such that
  $R,R'\in N|\xi$
  and $\xi=\om_1^S$
  where $S={\Lp_{\Gamma_\alpha}(N|\xi)}$;
  such a $\xi$ exists since the 
 hypotheses of \ref{item:firsta} and 
 \ref{item:firstb} fail. 
 Working inside $S$, we can form a simultaneous comparison
  with $S|\xi$-genericity iteration of $(R,R')$, through length $\leq(\xi+1)$, using $S$ to build Q-structures via P-construction at limit stages. (See \cite{odle_v2} for more details on such comparisons.)
  This comparison cannot terminate,
  so it runs to length $\xi+1$.
  But the process just described
  also yields a Q-structure $Q$ at stage $\xi$ (with $Q\in S$), since $\Psi_{\geq\xi}$ is 
$N$-$\Gamma_\alpha$-guided.
By the assumptions on $R$,
$Q$ must yield a wellfounded branches at stage $\xi$, but these are inside $S$, which satisfies ``$\xi=\om_1$''.
  This contradicts the termination of simultaneous comparison with genericity iteration in $S$.\qedhere
\end{proof}

\subsection{Minimally transcendent mice}\label{subsec:mtr}

Fix an S-gap $[\alphag,\betag]$ of $L(\RR)$ such that $\alphag$ is a limit of limits,
and in the context of the conjectures, such that
$[\alphag,\betag]$
is the unique S-gap
such that $\alphag$ is a limit
of limits and
$\betag\leq\beta^M<\betag+\om^2$.\footnote{***Note to self:
This ignores the case that $\beta^M<\om^2$.}
Let $\Gammag=\Gamma_{\alphag}$.

Let $\xg,\yg\in\RR$
be such that
\[ \yg\in\OD^{<\betag+\om^2}_{\xg}\cut
\OD^{<\alphag}_{\xg},
\]
and in the context of the conjectures, 
such that if $M$ is strongly $(\alphag,1)$-closed then $\xg,\yg\in\RR^M$.

By Lemma \ref{lem:when_Psi>xi_0_Gamma_guided}, in the context of the conjectures,
we may and do make the following assumption for the remainder of the paper:

\begin{ass} \label{ass:Sigma_M_not_Gamma-guided}
($M$) There is no $\xi<\om_1^M$
as in hypotheses \ref{item:firsta}--\ref{item:second} of Lemma \ref{lem:when_Psi>xi_0_Gamma_guided}. That is:
\begin{enumerate}[label=(\alph*)]
\item $M$ is strongly $(\alphag,1)$-closed,
\item for every $\xi<\om_1^M$
there is $\xi'\in(\xi,\om_1^M)$
such that  $\xi'=\om_1^{\Lp_{\Gammag}(M|\xi')}$, and
\item there is no 
$\xi<\om_1^M$ such that
$\Sigma_{M,\geq\xi}$ is $M$-$\Gammag$-guided.
\end{enumerate}
\end{ass}

\begin{dfn}\label{dfn:mtr}Let $X\in\HC$ be transitive.

We say that $X$ is \emph{high} iff
 $\xg,\yg\in X$.

 Let $N$ be an $X$-premouse and $\delta<\OR^N$ with $\rank(X)\leq\delta$.
 
 We say that $N$ is \emph{$\delta$-bounded} or \emph{bounded at $\delta$}
 iff for all $\xi<\OR^N$ such that $\delta$ is a strong cutpoint of $N|\xi$,
 we have
 \[ N|\xi\ins\Lp_{\Gammag}(N|\delta).\]
 Let $\zeta<\eta\leq\OR^N$.
We say that $N$ is \emph{$[\zeta,\eta)$-bounded}
iff $N$ is $\delta$-bounded for all $\delta\in[\zeta,\eta)$, and \emph{${<\eta}$-bounded} iff $[0,\eta)$-bounded.

 Suppose $\delta$ is a strong cutpoint of $N$.
 We say that $N$ is \emph{$\delta$-full} or \emph{full at $\delta$}
 iff
 \[ \Lp_{\Gammag}(N|\delta)\ins N.\]
We say that $N$ is \emph{$\delta$-exact} or \emph{exact at $\delta$}
iff
\[ \Lp_{\Gammag}(N|\delta)=N|(\delta^+)^N.\]
We say that $N$ is a \emph{$\delta$-mtr} (for \emph{minimally transcendent})
or \emph{mtr at $\delta$}
iff $N$ is $\delta$-exact and for some $n<\om$,
$N$ is $(n,\om_1,\om_1+1)^*$-iterable and $\rho_{n+1}^N\leq\delta<\rho_n^N$.
Note that $n$ is determined by $(N,\delta)$; we say that $n$ is the 
\emph{degree} of $(N,\delta)$.
\end{dfn}

Note that a $\delta$-mtr has $\delta$ as a strong cutpoint, by assumption.
Note that a $\delta$-mtr isn't required to be $\delta$-sound.

\begin{lem}\label{lem:overspill}
 Let $X$ be high and $N$ be an $X$-premouse.
 Let $\delta<\OR^N$ and suppose that
 $\delta$ is a strong cutpoint of $N$ and $N=\Lp_{\Gammag}(N|\delta)$.
 Let $R$ be an $X$-premouse and $j:N\to R$ be such that
\begin{enumerate}[label=--]
 \item  $j$ is either $\Sigma_2$-elementary or cofinal $\Sigma_1$-elementary, and
 \item $R$
 is above-$j(\delta)$, $(0,\om_1+1)$-iterable.
 \end{enumerate}
 Then:
 \begin{enumerate}
  \item\label{item:R_is_j(delta)-bounded} $R$ is $j(\delta)$-bounded,
 and
 \item\label{item:R_is_<j(delta)-bounded} if $N$ is $[\zeta,\delta)$-bounded then $R$ is $[j(\zeta),j(\delta))$-bounded.
 \end{enumerate}
\end{lem}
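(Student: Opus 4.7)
The plan is to combine the elementarity of $j$ with the canonicity of iteration strategies under the background $\AD^{L(\RR)}$ to transfer the hypothesis $N=\Lp_{\Gammag}(N|\delta)$ into boundedness for $R$. First I note that ``$\delta$ is a strong cutpoint of $N$'' is a $\Pi_1$ assertion (no extender on the sequence overlaps $\delta$), so $\Sigma_1$-elementarity of $j$ (implied by either hypothesis) gives that $j(\delta)$ is a strong cutpoint of $R$; likewise for $j(\zeta)$.

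For part \ref{item:R_is_j(delta)-bounded} in the cofinal $\Sigma_1$ case, fix $\xi<\OR^R$ with $j(\delta)$ a strong cutpoint of $R|\xi$; by cofinality, pick $\eta<\OR^N$ with $\xi\leq j(\eta)$, so $R|\xi\pins R|j(\eta)=j(N|\eta)$. Since $N=\Lp_{\Gammag}(N|\delta)$, every sound $Q\pins N|\eta$ with $\rho_\om^Q\leq\delta$ is above-$\delta$, $\SS_{\alphag}$-iterable, and by Lemma \ref{lem:tame_projecting_stacks} such $Q$ has a unique above-$\delta$ strategy, which by the standard Q-structure analysis is uniformly $\Sigma_1^{\SS_{\alphag}}(\{Q\})$-definable. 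The same definition applies to any sound initial segment $Q^*$ of $R|j(\eta)$ projecting to $\leq j(\delta)$: by $\Sigma_1$-elementarity of $j$ the structural properties of $Q^*$ are matched by some $Q\pins N|\eta$, so the canonical definition produces a strategy in $\SS_{\alphag}$ for $Q^*$. Hence $R|j(\eta)\ins\Lp_{\Gammag}(R|j(\delta))$, and thus $R|\xi\ins\Lp_{\Gammag}(R|j(\delta))$.

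For the non-cofinal $\Sigma_2$ case, the cofinal argument already handles $\xi\leq\sup j``\OR^N$; for $\xi$ beyond this supremum, I would use $\Sigma_2$-elementarity to reflect the existence of a ``bad'' $\xi$ back to $N$. Using Fact \ref{fact:mouse_witnesses}, $\SS_{\alphag}$-iterability of a sound projecting segment is witnessed by a minimal mouse witness which, under the hypothesis $N=\Lp_{\Gammag}(N|\delta)$, sits as an initial segment of $N$ itself; so the universal statement ``every sound projecting segment has such a witness as an initial segment of me'' is $\Pi_2$ in $N$ and transfers to $R$ by $\Sigma_2$-elementarity.

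Part \ref{item:R_is_<j(delta)-bounded} follows by running the argument at each relevant strong cutpoint in $[j(\zeta),j(\delta))$: for $\delta''=j(\delta')$ with $\delta'\in[\zeta,\delta)$, apply part \ref{item:R_is_j(delta)-bounded} with $\delta'$ in place of $\delta$, using $\delta'$-boundedness of $N$; for $\delta''\in[j(\zeta),j(\delta))$ not in $\rg(j)$, locate the largest $j(\delta')\leq\delta''$ and use the strong-cutpoint condition to push the conclusion from $j(\delta')$ up to $\delta''$. The main obstacle is the canonicity-transfer step of the cofinal case: verifying that the $\Sigma_1^{\SS_{\alphag}}$-definition of the above-cutpoint strategy applies uniformly across $N$ and $R$, which rests on $j$ preserving $\om$-smallness, tameness, and the strong-cutpoint condition of the relevant mice. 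A secondary difficulty is the non-cofinal $\Sigma_2$ case, which requires articulating $\SS_{\alphag}$-iterability via the mouse witnesses of Fact \ref{fact:mouse_witnesses} as an internally checkable $\Pi_2$-condition.
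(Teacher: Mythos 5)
Your argument has a genuine gap, and the clearest signal is that it never uses the hypothesis that $X$ is high. That hypothesis is not decoration: it is precisely what powers the lemma.

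The problematic step is the transfer in part \ref{item:R_is_j(delta)-bounded}. You argue that since each sound projecting $Q\pins N|\eta$ has a "uniformly $\Sigma_1^{\SS_{\alphag}}(\{Q\})$-definable" above-$\delta$ strategy, "the same definition applies" to segments $Q^*\pins R|j(\eta)$, deducing $\SS_{\alphag}$-iterability of $Q^*$. But $j$ is elementary between $N$ and $R$; it tells you nothing about how $\SS_{\alphag}$ sees $Q^*$. The existence of a $\Sigma_1^{\SS_{\alphag}}$ formula that defines $\Sigma_Q$ for $Q\pins N$ does not mean the same formula, evaluated at $Q^*$, produces a total strategy for $Q^*$, nor that $Q^*$ is iterable in $\SS_{\alphag}$ at all. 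Indeed, for generic $Q^*\pins R|j(\eta)$ there is no corresponding preimage $Q\pins N|\eta$. Iterability in $\SS_{\alphag}$ is a fact about the ambient universe, not a first-order property internal to $N$ or $R$ that $j$ can transport. This is the entire difficulty the lemma must solve, and your argument quietly assumes it away.

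Your $\Sigma_2$-case idea (use Fact \ref{fact:mouse_witnesses} to get pre-witnesses $W\pins N$, express a $\Pi_2$ statement about them, and push it up to $R$) is much closer to the actual proof, and in fact the paper uses this strategy in both cases, not only the $\Sigma_2$ one. But the missing piece is the overspill: once you have an iterable pre-$\psi(\ldots)$-witness $W\pins R$, Lemma \ref{lem:varphi(x)-witness_implies_truth} only gives $L(\RR)\sats\psi(\ldots)$, i.e.\ there is \emph{some} $\gamma$ with $\J_\gamma\sats$ ``$R|\eta$ is above-$j(\delta)$ iterable''. Nothing yet forces $\gamma<\alphag$, which is what $\SS_{\alphag}$-iterability requires, and $\alphag$ is not definable in $N$ so you cannot simply build it into a $\Pi_2$ statement over $N$. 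The paper's fix is to choose $\psi$ so that it simultaneously asserts $\yg\notin\OD^{\gamma+n\om}_{\xg}$, where $\xg,\yg\in X$ are the witnesses of highness with $\yg\in\OD^{\alphag+n\om}_{\xg}$. Then the witnessed $\gamma$ automatically falls below $\alphag$: that is the overspill, and it is the step your proof lacks. Without the highness parameters in the witness formula, the reflection back to $\J_{\alphag}$ simply does not happen.

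Part \ref{item:R_is_<j(delta)-bounded} has the same issue plus a secondary one: you want to conclude boundedness at cutpoints $\delta''\in[j(\zeta),j(\delta))$ that are not in $\rg(j)$, but "push the conclusion from $j(\delta')$ up to $\delta''$" is not automatic. The paper instead expresses $[\zeta,\delta)$-boundedness of $N$ as a single statement over $N$ (a universally quantified assertion about pre-$\psi(N|\xi,\eta)$-witnesses below $\delta^+$), which lifts by the given elementarity directly, rather than arguing cutpoint-by-cutpoint.
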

\begin{proof}
Part \ref{item:R_is_j(delta)-bounded}:
We use an overspill style argument.
Let $\psi(\dot{n},\dot{S},\dot{\delta},\dot{x},\dot{y})$ be the $\Sigma_1$ formula of $\Ll_{L(\RR)}$
asserting ``$\dot{S}$
is a sound premouse, $\dot{\delta}<\OR^{\dot{S}}$, and there is 
$\gamma\in\Lim$
such that 
$\big[\J_\gamma\sats$``there is an above-$\dot{\delta}$,
$(\om,\om_1)$-strategy for $\dot{S}$'', $\dot{n}<\om$,
$(\gamma+(\dot{n}+1)\om)\in\OR$ and
$\dot{y}\notin\OD^{\gamma+n\om}_{\dot{x}}\big]$''.

Note that since $\alphag$ is a limit of limits,
\begin{equation}\label{eqn:N_thinks_segs_bounded_it} 
\begin{split}
N\sats&\all\eta\in(\delta,\OR)\
 \all n<\om\\
 &\ \ \ \ \exists W\pins 
L[\es]\ \Big[W\text{ is a 
pre-}\psi(n,L[\es]|\eta,\delta,\xg,\yg)\text{-witness}\Big].\end{split}\end{equation}

Now suppose that $R$ is not $j(\delta)$-bounded. Let 
$\eta\in(j(\delta),\OR^{R})$
be such that $\rho_\om^{R|\eta}=j(\delta)$
and $R|\eta\npins\Lp_{\Gammag}(R|j(\delta))$.
So $\SS_{\alphag}\sats$``there is no above-$j(\delta)$,
$(\om,\om_1)$-strategy for $R|\eta$''. 
The statement in (\ref{eqn:N_thinks_segs_bounded_it})
lifts to $R,j(\delta)$ (including the case
that $j$ is not $\Sigma_2$-elementary, but is cofinal $\Sigma_1$-elementary). Let
$n<\om$ be such that $\yg\in\OD^{\alphag+n\om}_{\xg}$.
Let $W\pins R$ be a minimal 
pre-$\psi(n,R|\eta,j(\delta),\xg,\yg)$-witness. Then $W$ is iterable.
It follows that there is some $\gamma\in\Lim$ such that
$\J_\gamma\sats$``there is an above-$j(\delta)$, $(\om,\om_1)$-strategy for $R|\eta$''
and  $\yg\notin\OD^{\gamma+n\om}_{\xg}$.
Since $\yg\in\OD^{\alphag+n\om}_{\xg}$,
we have $\gamma<\alphag$,
so in fact $\SS_{\alphag}\sats$``there is an above-$j(\delta)$, $(\om,\om_1)$-strategy for $R|\eta$'', a contradiction.

Part \ref{item:R_is_<j(delta)-bounded}
Suppose also that $N$ is $[\zeta,\delta)$-bounded;
we deduce that $R$ is $[j(\zeta),j(\delta))$-bounded. Let $\psi(\dot{S},\dot{\eta})$ be the $\Sigma_1$  formula of $\Ll_{L(\RR)}$
asserting
``$\dot{S}$ is a sound premouse, $\dot{\eta}<\OR^{\dot{S}}$ 
and there is an above-$\dot{\eta}$, $(\om,\om_1)$-strategy
for $\dot{S}$''. Then
\[ \begin{split}
    N\sats&\all\eta\in[\zeta,\delta)\ \all\xi\in(\eta,\OR)\\
   &\ \  [\text{if }\eta\text{ is a strong 
cutpoint of }N|\xi\text{ and }\rho_\om^{N|\xi}\leq\eta\text{ then  }\\
&\ \  \text{ there is 
an above-}\delta\text{ pre-}\psi(N|\xi,\eta)\text{-witness 
}W\pins L[\es]|\delta^+].
\end{split}\]

This lifts to $R,j(\delta)$,
which by part \ref{item:R_is_j(delta)-bounded} implies $R$ is $[j(\zeta),j(\delta))$-bounded.
\end{proof}

\begin{lem}[MTR preservation]\label{lem:MTR_preservation}
 Let $X$ be high and $N$ be an $X$-premouse
 which is a $\delta$-mtr of degree $n$.
 Let $\Sigma$ be an $(n,\om_1,\om_1+1)^*$-strategy for $N$.
 Let  $\Tt$ be a successor length $n$-maximal tree on $N$ via $\Sigma$
 such that $b^\Tt$ does not drop in model or degree.
 Let $R=M^\Tt_\infty$. Then:
 \begin{enumerate}
  \item  
$R$ is an $i^\Tt(\delta)$-mtr of degree $n$,
and
\item\label{item:again_<j(delta)-bounded} if $N$ is ${<\delta}$-bounded
then $R$ is ${<j(\delta)}$-bounded.
\end{enumerate}
\end{lem}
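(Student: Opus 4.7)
The plan is to verify each component in the definition of ``$j(\delta)$-mtr of degree $n$'' for $R$ (writing $j=i^\Tt$), and then deduce part 2 by reusing the overspill argument of Lemma~\ref{lem:overspill}. The easy components come directly from elementarity: non-dropping of $b^\Tt$ at degree $n$ makes $j:N\to R$ a degree-$n$ iteration map, so $\rho_{n+1}^R\leq j(\delta)<\rho_n^R$ holds by elementary transfer, the tail of $\Sigma$ after $\Tt$ provides $R$ with an $(n,\om_1,\om_1+1)^*$-strategy, and the first-order statement ``$\delta$ is a strong cutpoint of $N$'' transfers to $(R,j(\delta))$.

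The main work is $j(\delta)$-exactness. Set $\tilde N=N|(\delta^+)^N$ and $\tilde R=R|(j(\delta)^+)^R$. Since $\rho_n^N>\delta$ forces $(\delta^+)^N<\OR^N$, we have $\tilde N\in N$, $j(\tilde N)=\tilde R$, and $j\rest\tilde N$ is fully elementary in the premouse language. By $\delta$-exactness of $N$, $\tilde N=\Lp_{\Gammag}(\tilde N|\delta)$, and the tail of $\Sigma$ witnesses above-$j(\delta)$ iterability of $\tilde R$; thus Lemma~\ref{lem:overspill}(\ref{item:R_is_j(delta)-bounded}) applies to $(\tilde N,\delta,j\rest\tilde N,\tilde R)$ and delivers $\tilde R\ins\Lp_{\Gammag}(R|j(\delta))$. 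For the reverse inclusion, I take any sound $(R|j(\delta))$-premouse $Q$ projecting to $R|j(\delta)$ with an above-$j(\delta)$, $(\om,\om_1+1)$-strategy in $\SS_{\alphag}$, and compare $Q$ with $R$ above $j(\delta)$ using $Q$'s $\SS_{\alphag}$-strategy and the tail of $\Sigma$; tameness, together with soundness and projection of $Q$ and $\rho_{n+1}^R\leq j(\delta)$, forces $Q\ins R$ via the standard mouse-comparison argument, giving $\Lp_{\Gammag}(R|j(\delta))\ins R$.

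For part 2, rather than applying Lemma~\ref{lem:overspill}(\ref{item:R_is_<j(delta)-bounded}) only to $j\rest\tilde N$ (which leaves the range $\xi\in[(j(\delta)^+)^R,\OR^R)$ unaddressed), I rerun the overspill argument of that lemma directly on the map $j:N\to R$. The key point is that the displayed formula (\ref{eqn:N_thinks_segs_bounded_it}) in the proof of Lemma~\ref{lem:overspill} continues to hold over $N$ despite $N$ properly extending $\Lp_{\Gammag}(N|\delta)$: the pre-$\psi$-witnesses demanded by the $\Sigma_1$ formula are initial segments of $L[\es]|(\delta^+)^N$, so they all live inside $\tilde N=\Lp_{\Gammag}(N|\delta)\pins N$ and hence are visible from $N$. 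Transferring the resulting $\Sigma_1$ statement under $j$ and concluding exactly as in the proof of Lemma~\ref{lem:overspill}(\ref{item:R_is_<j(delta)-bounded}) yields that $R$ is $[0,j(\delta))$-bounded. The main obstacle is precisely this adaptation: one must verify that the overspill template tolerates the weaker hypothesis that $N$ is merely a $\delta$-mtr (rather than equal to $\Lp_{\Gammag}(N|\delta)$), and this reduces to the observation that every pre-witness produced by the argument sits in the $\Lp$-core $\tilde N\pins N$.
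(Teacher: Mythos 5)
Your handling of boundedness (part 2 and the first inclusion for exactness) matches the paper's: both rely on the overspill template, and you correctly flag that the hypothesis $N=\Lp_{\Gammag}(N|\delta)$ in Lemma~\ref{lem:overspill} needs to be relaxed to the mtr case, which is fine because the demanded pre-witnesses live inside $N|(\delta^+)^N=\Lp_{\Gammag}(N|\delta)\pins N$. Your use of $j\rest\tilde N$ is a legitimate variant of the paper's direct appeal to $j$ itself.

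The gap is in the fullness direction. You assert that comparing $Q$ with $R$ above $j(\delta)$ ``forces $Q\ins R$ via the standard mouse-comparison argument.'' But $R$ need not be $j(\delta)$-sound (nothing in the hypotheses says $\Tt$ is based on $N|\delta$), so the comparison of $j(\delta)$-sound $Q$ with possibly-unsound $R$ is not the two-sound-mice situation and does not directly give $Q\ins R$; there can be a genuine coiteration with a nontrivial common last model $R'$. The paper's actual argument is a reductio: supposing fullness fails, fix $P\pins\Lp_{\Gammag}(R|i(\delta))$ with $P|i(\delta)^{+P}=R|i(\delta)^{+R}$ and $P\npins R$, compare, get a common iterate $R'$, and then use Lemma~\ref{lem:tame_projecting_stacks} to transfer the $\J_{\alphag}$-iterability of $P$ to $R'$, hence to $R$, hence to $N$ above $\delta$, hence to the $\delta$-core $C$ of $N$. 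That iterability plus $\rho_{n+1}^N\leq\delta$ and $\Lp_{\Gammag}(N|\delta)\ins N$ give $C\in N$, contradicting soundness. Your proposal is missing this iterability-transfer reductio, and also omits the corner case the paper treats separately: when $\delta$ is not a strong cutpoint of $C$ (so $C$ is active type 1 with $\crit(F^C)=\delta$ and $\rho_1^C=\delta$), $C$ cannot literally lie in $\Lp_{\Gammag}(N|\delta)$, and one must instead pass to $C'=\Ult_0(C,F^C)$ to obtain the contradiction.
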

\begin{proof}
Let $i=i^\Tt$. By Lemma \ref{lem:overspill},
$R$ is $i(\delta)$-bounded
and part \ref{item:again_<j(delta)-bounded} holds.
And the fact that $\rho_{n+1}^R\leq i(\delta)$ is pretty standard fine 
structure; see e.g.~\cite[\S3]{fsfni_v4}.
So we just need to 
 see that $R$ is $i(\delta)$-full; that is, that
\[ \Lp_{\Gammag}(R|i(\delta))\ins R|                                                                                                                                                                                                                                                                                                                                                                                                                                                                                                                                                                                            i(\delta)^{+R}.\]
Suppose not and let $P\pins\Lp_{\Gammag}(R|i(\delta))$
be such that $\rho_\om^P\leq i(\delta)$ and
\[ P\npins 
R|i(\delta)^{+R}\text{ and }P|i(\delta)^{+P}=R|i(\delta)^{+R}.\]
We can successfully compare $P$ with $R$, with trees $\Uu,\Vv$ respectively.
Note here that $i(\delta)$ is a strong cutpoint of $P$ and $R$.
We form $\Uu$ with the unique above-$i(\delta)$,
$(\om,\omega_1+1)$ strategy $\Psi$ for $P$,
and form $\Vv$ with the tail of $\Sigma$.
We get common last model $R'=M^\Uu_\infty=M^\Vv_\infty$, $b^\Uu,b^\Vv$ do not 
drop in model,
and $\deg^\Uu(\infty)=\deg^\Vv(\infty)=n$.
Now $\Psi\in\J_{\alphag}$.
By
Lemma \ref{lem:tame_projecting_stacks},
it follows that $R'$ is above-$i^\Vv(i(\delta))$, $(n,\om_1,\om_1+1)^*$-iterable
in $\SS_{\alphag}$, so $R$ is above-$i(\delta)$, 
$(n,\om_1,\om_1+1)^*$-iterable there,
so $N$ is above-$\delta$, $(n,\om_1,\om_1+1)^*$-iterable there,
and
therefore so is the $\delta$-core of $N$. But $\rho_{n+1}^N\leq\delta$,
and $\Lp_{\Gammag}(N|\delta)\ins N$. This easily gives a contradiction, possibly excluding the case that
$\crit(\pi)=\delta$
where $C$ is the $\delta$-core of $N$ and $\pi:C\to N$ the $\delta$-core map, and $\delta$ is not a strong cutpoint of $C$
(because in this case, $C\not\ins\Lp_{\Gammag}(N|\delta)$ by definition).
But in this situation, note that $C$ is active type 1 and $\delta$ is a strong cutpoint of $C^{\mathrm{pv}}$ and $\rho_1^C=\delta$ (otherwise
condensation gives
partial measures in $\es^C$
with critical point $\delta$). 
Now let $C'=\Ult_0(C,F^C)$,
and note that $C'\ins\Lp_{\Gammag}(N|\delta)$,
a contradiction.
\end{proof}

Note that in the foregoing proof it is important that $b^\Tt$ is non-dropping.

We
now use what we have done so far
to establish something toward the conjectures.

\begin{lem}\label{lem:bounded_Lp_ZF^-}
 Let $\alpha$ be a limit of limits,
 $X\in\HC$ be transitive and
 $R=\Lp_{\Gamma_\alpha}(X)$. Then:
 \begin{enumerate}
  \item\label{item:admissibility_gives_Hull_1_bounded} If $\SS_\alpha$ is admissible then
 $\Hull_1^{\SS_\alpha}(\{X\})$
  is bounded in $\alpha$.
\item\label{item:Hull_1_bounded_gives_ZF^-} If $\Hull^{\SS_\alpha}_1(\{X\})$
 is bounded in $\alpha$
 then $R\sats\ZF^-$,
 and in fact, $\OR^R$ is a cardinal in $L(R)$.
 \end{enumerate}
\end{lem}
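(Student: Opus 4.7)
The plan is to handle the two parts by distinct arguments: $\Sigma_1$-replacement from admissibility for part \ref{item:admissibility_gives_Hull_1_bounded}, and a transitive collapse together with $\Sigma_1$-elementarity for part \ref{item:Hull_1_bounded_gives_ZF^-}.

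For part \ref{item:admissibility_gives_Hull_1_bounded}, admissibility of $\SS_\alpha$ yields $\Sigma_1$-replacement. I would apply this to the $\Sigma_1^{\SS_\alpha}(\{X\})$ partial function $F:\om\to\SS_\alpha$ sending each code $n$ of a $\Sigma_1$-formula $\varphi_n(v)$ of $\Ll_{L(\RR)}$ with parameter $X$ to an $\in$-least witness in $\SS_\alpha$ to $\exists v\,\varphi_n(v,X)$, when one exists. Then $\rg(F)\in\SS_\alpha$, and since $\alpha$ is a limit, $\rg(F)\in\SS_\gamma$ for some $\gamma<\alpha$. Every element of $\Hull^{\SS_\alpha}_1(\{X\})$ is the value of a $\Sigma_1$-Skolem term in $X$, hence $\Sigma_0$-definable from values of $F$, so $\Hull^{\SS_\alpha}_1(\{X\})\subseteq\SS_\gamma$ and is bounded in $\alpha$.

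For part \ref{item:Hull_1_bounded_gives_ZF^-}, let $H=\Hull^{\SS_\alpha}_1(\{X\})$, assumed bounded in $\alpha$, and let $\pi:\bar N\to H\elem_1\SS_\alpha$ be the inverse transitive collapse, so $\pi\rest X=\id$ and $\pi(X)=X$. By the standard Jensen-hierarchy analysis, $\bar N=\Ss_{\bar\alpha}(\RR^{\bar N})$ for some limit ordinal $\bar\alpha<\alpha$, with $\RR^{\bar N}=\RR\cap H$; moreover $\bar N=\Hull^{\bar N}_1(\{X\})$, so $\rho_1^{\bar N}=X$. I would next show $R\subseteq\bar N$ with $\OR^R\leq\bar\alpha$. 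By $\Sigma_1$-elementarity of $\pi$ and uniqueness of $(\om,\om_1+1)$-strategies for sound projecting $X$-mice (Lemma \ref{lem:tame_projecting_stacks}), the statement ``$N$ is a sound $X$-mouse projecting to $X$ with an $(\om,\om_1+1)$-strategy'' is $\Sigma_1$ and transfers between $\bar N$ and $\SS_\alpha$, giving $R\cap\bar N=\Lp_{\Sigma_1^{\bar N}}(X)$. Any $N\pins R$ with $\OR^N\geq\bar\alpha$ would make the least such $\OR^N$ $\Sigma_1^{\SS_\alpha}(\{X\})$-definable (using the uniqueness of strategies), hence placed into $H$, contradicting $\bar\alpha=\ot(H\cap\OR)$.

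Given $R\subseteq\bar N$ with $\OR^R\leq\bar\alpha$, the two conclusions follow. For $R\sats\ZF^-$, any definable $f:A\to R$ with $A\in R$ translates into $\Sigma_1$-definable-from-$X$ data in $\bar N$ (using $\bar N=\Hull^{\bar N}_1(\{X\})$); the range $f[A]$ is then captured inside some $N'\pins R$, giving collection. For $\OR^R$ being a cardinal in $L(R)$, any $L(R)$-definable surjection $\xi\to\OR^R$ with $\xi<\OR^R$ would be constructible from $R\pins\bar N$, hence $\Sigma_1$-definable over $\bar N$ from $X$; lifting by $\pi$ yields a $\Sigma_1^{\SS_\alpha}(\{X\})$-definable surjection, which again forces $\OR^R$ into $H$, contradicting $\bar\alpha=\ot(H\cap\OR)$. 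The main obstacle is the step $R\subseteq\bar N$: one must extract a $\Sigma_1^{\SS_\alpha}(\{X\})$-definable description of the height of any hypothetical iterable sound projecting $X$-mouse of height $\geq\bar\alpha$, using Lemma \ref{lem:tame_projecting_stacks} to express ``iterability'' $\Sigma_1$-ly and then invoking boundedness of $H$ for a contradiction.
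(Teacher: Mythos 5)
Your Part \ref{item:admissibility_gives_Hull_1_bounded} argument is essentially the paper's, and it works once two small glitches are fixed: you should not take an ``$\in$-least'' witness (there is no canonical wellordering available and the predicate would not be $\Sigma_1$); instead apply $\Sigma_1$-collection directly to get a bounded set containing \emph{some} witness for each $n$, which suffices since each $z$ in the hull is the unique witness to its defining formula. The paper phrases this by first noting that $t=\Th_1^{\SS_\alpha}(\{X\})$ is a real and hence an element of $\SS_\alpha$, so it can serve as the parameter/domain needed for the collection step; your $\dom(F)$ is this same real, and making that explicit closes the loop.

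Part \ref{item:Hull_1_bounded_gives_ZF^-} is where you diverge from the paper, and where your argument has a real gap. You want to show $R\subseteq\bar N$ by arguing that a segment $N\pins R$ with $\OR^N\geq\bar\alpha$ would have $\Sigma_1^{\SS_\alpha}(\{X\})$-definable height and hence land in $H$. But the ordinal $\bar\alpha=\ot(H\cap\OR)$ is a feature of the hull, not an element of it, and it is not $\Sigma_1^{\SS_\alpha}(\{X\})$-definable in general; so ``the least $\OR^N\geq\bar\alpha$'' is not a $\Sigma_1(\{X\})$ description, and the contradiction with $\bar\alpha=\ot(H\cap\OR)$ does not follow. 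You correctly flag this as ``the main obstacle'' at the end, but the obstacle is precisely what needs to be overcome, and the route you sketch (expressing iterability $\Sigma_1$-ly and invoking boundedness) does not by itself produce a parameter-free $\Sigma_1$ description of any particular segment height. The paper sidesteps this entirely: from boundedness of $H$ it extracts some $\alpha'<\alpha$ with $R=\Lp_{\Gamma_{\alpha'}}(X)$, so that the strategies for all proper segments of $R$ can be unioned inside $\SS_\alpha$ to give a strategy for $R$ itself, and then for each $\J_\beta(R)$. If some $\J_\beta(R)$ projected below $\OR^R$, its core would be a sound projecting $X$-mouse iterable in $\SS_\alpha$, hence an initial segment of $R$, a contradiction; this shows $\OR^R$ is a cardinal of $L(R)$, which in particular yields $R\models\ZF^-$. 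Note this argument never needs to $\Sigma_1$-define $\OR^R$ or $\bar\alpha$ from $X$ — it only uses that \emph{some} strategy for $R$ lives in $\SS_\alpha$ — which is exactly what your approach is missing.
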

\begin{proof}
Part \ref{item:admissibility_gives_Hull_1_bounded}: Let $t=\Th_1^{\SS_\alpha}(\{X\})$. Since $t\in\SS_\alpha$,
an application of admissibility
using the parameter $t$
gives the desired conclusion.

Part \ref{item:Hull_1_bounded_gives_ZF^-}:
Because $\Hull^{\SS_\alpha}(\{X\})$
is bounded in $\alpha$,
there is a limit $\alpha'<\alpha$
such that $R=\Lp_{\Gamma_{\alpha'}}(\{X\})$.
So we get an iteration strategy for $R$ in $\SS_\alpha$, by simply unioning the witnessing strategies in $\J_{\alpha'}$ for the projecting proper segments of $R$.
Therefore $\beta<\om_1$
then 
there is an iteration strategy for $\J_\beta(R)$
in $\SS_\alpha$.
It follows that $\J_\beta(R)$
cannot project ${<\OR^R}$, which suffices.
\end{proof}

\begin{lem}\label{lem:J_alpha-bar_admissible} ($M$)
Suppose  there is no $x\in\RR^M$
such that $\Hull_1^{\SS_\alpha}(\{x\})$ is cofinal in $\alpha$.
Let $\pi:\Ss_{\bar{\alpha}}(\RR^M)\to\SS_\alpha$ be the uncollapse map of Lemma \ref{lem:Sigma_1_Hull_of_J_alpha}, applied with $\alpha=\alphag$.
Then:
\begin{enumerate}
 \item\label{item:all^R_Sigma_1-elem} $\pi$ is $\all^\RR\Sigma_1$-elementary, and
 \item\label{item:admissibility_goes_down} if $\SS_\alpha$ is admissible
 then $\Ss_{\bar{\alpha}}(\RR^M)$
 is admissible.
\end{enumerate}
\end{lem}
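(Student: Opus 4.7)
The plan is to establish Part~\ref{item:all^R_Sigma_1-elem} first and deduce Part~\ref{item:admissibility_goes_down} from it as a routine consequence. For Part~\ref{item:all^R_Sigma_1-elem}, the direction $\Leftarrow$ is immediate from $\Sigma_1$-elementarity of $\pi$ together with $\pi\rest\RR^M=\id$: if $\SS_\alpha\sats\forall y\in\dot{\RR}\,\varphi(y,\pi(\vec a))$ with $\varphi$ a $\Sigma_1$ formula of $\Ll_{L(\RR)}$, then specializing to $y\in\RR^M\sub\RR$ and using $\Sigma_1$-elementarity gives $\Ss_{\bar\alpha}(\RR^M)\sats\varphi(y,\vec a)$, and the universal statement there follows since $\RR^{\Ss_{\bar\alpha}(\RR^M)}=\RR^M$ by Lemma~\ref{lem:Sigma_1_Hull_of_J_alpha}\ref{item:RR^N_Sigma_1-closed}. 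For the direction $\Rightarrow$, I will argue contrapositively: assume $\Ss_{\bar\alpha}(\RR^M)\sats\forall y\in\RR\,\varphi(y,\vec a)$ and $\SS_\alpha\sats\neg\varphi(y_0,\pi(\vec a))$ for some $y_0\in\RR$. It suffices to exhibit $y^*\in\RR^M$ with $\SS_\alpha\sats\neg\varphi(y^*,\pi(\vec a))$, since then $\Sigma_1$-elementarity applied to $\varphi$ yields $\Ss_{\bar\alpha}(\RR^M)\sats\neg\varphi(y^*,\vec a)$, a contradiction.

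To produce $y^*$, I plan to write $\pi(\vec a)$ as $\Sigma_1^{\SS_\alpha}$-definable from some $\vec x\in\RR^M$ (possible by the definition of $H=\Hull_1^{\SS_\alpha}(\RR^M)$), so that $B=\{y\in\RR:\SS_\alpha\sats\neg\varphi(y,\pi(\vec a))\}$ is a non-empty $\bfPi_1^{\SS_\alpha}(\vec x)$-definable set of reals. The scale analysis at the start of the admissible S-gap $[\alphag,\betag]$ from~\cite{scales_in_LR}, combined with a standard basis/uniformization argument, yields a member of $B$ which is definable over $\SS_\alpha$ from $\vec x$ and ordinals, i.e., lies in $\OD^\alpha_{\vec x}$. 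The lemma's hypothesis -- that $\Hull_1^{\SS_\alpha}(\{x\})$ is bounded below $\alpha$ for each $x\in\RR^M$ -- together with the stratification $\OD^{\alpha}_{\vec x}=\bigcup_n\OD^{\alpha,n}_{\vec x}$ and analogous boundedness for the higher $\Sigma_n$-hulls derivable from this (via admissibility-like consequences such as those in Lemma~\ref{lem:bounded_Lp_ZF^-}), is then used to conclude that this member lies in $\OD^{<\alpha}_{\vec x}$. Finally, Assumption~\ref{ass:Sigma_M_not_Gamma-guided}, which says $M$ is strongly $(\alphag,1)$-closed and in particular gives $\OD^{<\alphag}_{\vec x}\inter\RR\sub\RR^M$, places $y^*\in\RR^M$.

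For Part~\ref{item:admissibility_goes_down}, the plan is a standard reduction to Part~\ref{item:all^R_Sigma_1-elem}. Given $A\in\Ss_{\bar\alpha}(\RR^M)$, pick a $\Ss_{\bar\alpha}(\RR^M)$-definable surjection $f:\RR\to A$, available since $A$ lives in some level $\Ss_\gamma(\RR^M)$ with $\gamma<\bar\alpha$ and each such level admits a uniform $\Sigma_1$-definable surjection from $\RR$. A $\Sigma_0$-collection premise $\forall x\in A\,\exists y\,\varphi(x,y)$ then rewrites as $\forall r\in\RR\,\exists y\,\psi(r,y)$ for a suitable $\Sigma_1$ formula $\psi$ folding in $f$, which is of form $\all^\RR\Sigma_1$. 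Part~\ref{item:all^R_Sigma_1-elem} pushes this up to $\SS_\alpha$, admissibility of $\SS_\alpha$ yields a collecting set $B\in\SS_\alpha$, and since the existence of such a $B$ is a $\Sigma_1^{\SS_\alpha}(\pi(\vec a))$-statement, $\Sigma_1$-elementarity of $\pi$ produces a corresponding collecting set back in $\Ss_{\bar\alpha}(\RR^M)$. The main obstacle throughout will be the extraction step in Part~\ref{item:all^R_Sigma_1-elem}, in particular verifying that the hypothesis on $\Sigma_1$-hulls indeed pins the scale-theoretic definable member of $B$ down to $\OD^{<\alpha}_{\vec x}$ rather than merely to $\OD^{\alpha}_{\vec x}$.
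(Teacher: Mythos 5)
Your $\Leftarrow$ direction and your reduction of Part~\ref{item:admissibility_goes_down} to Part~\ref{item:all^R_Sigma_1-elem} are fine and routine. The gap is exactly where you flag it, and it is fatal to this line of attack rather than a technicality to be patched.

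The trouble is that the extraction step cannot work as stated. You argue from ``$B$ is a non-empty $\bfPi_1^{\SS_\alpha}(\vec x)$ set with $\vec x\in\RR^M$'' alone, but this is strictly weaker information than the actual hypothesis $\Ss_{\bar\alpha}(\RR^M)\sats\forall y\,\varphi(y,\vec a)$, and it does not suffice. Consider $B_{\vec x}=\RR\setminus\OD^{<\alpha}_{\vec x}$: this is $\Pi_1^{\SS_\alpha}(\{\vec x\})$ (since ``$y\in\OD^{<\alpha}_{\vec x}$'' is $\Sigma_1^{\SS_\alpha}(\{\vec x\})$ by Lemma~\ref{lem:<^alpha_increasing}\ref{item:OD^alpha_function_Sigma_1_def}), it is non-empty (determinacy makes $\OD^{<\alpha}_{\vec x}$ countable), and by construction it has \emph{no} member in $\OD^{<\alpha}_{\vec x}$. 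So there is no hope that every non-empty $\bfPi_1^{\SS_\alpha}(\vec x)$ set meets $\OD^{<\alpha}_{\vec x}$; any successful argument must use the fact that $B$ arises from a statement true throughout $\Ss_{\bar\alpha}(\RR^M)$, and your sketch never does. Relatedly, the ``analogous boundedness for higher $\Sigma_n$-hulls'' you invoke simply does not follow from boundedness of the $\Sigma_1$-hull. When $\SS_\alpha$ is admissible, $\Hull_1^{\SS_\alpha}(\{x\})$ can be bounded while $\Hull_2^{\SS_\alpha}(\{x\})$ is cofinal in $\alpha$ (the sup of the $\Sigma_1$-definable ordinals from $x$ is already $\Pi_2$-definable), so the basis element you produce -- even granted it lands in $\OD^{\alpha,n}_{\vec x}$ for some $n\ge2$ -- need not reflect down to any $\OD^{\delta,m}_{\vec x}$ with $\delta<\alpha$. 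Lemma~\ref{lem:bounded_Lp_ZF^-} gives a statement about $\Lp_{\Gamma_\alpha}(X)$ modelling $\ZF^-$, not a statement about $\Sigma_n$-hulls for $n\ge2$.

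The paper's proof sidesteps all of this by never trying to locate a definable member of the $\Pi_1$ set $B$. Instead it uses the hypothesis $\Ss_{\bar\alpha}(\RR^M)\sats\forall y\,\varphi(y,x_1)$ directly: it produces a mouse $R=\Lp_{\Gammag}(M(\Tt))$ inside $\HC^M$ (where $\Tt$ witnesses the failure of $\Gammag$-guidedness, using Assumption~\ref{ass:Sigma_M_not_Gamma-guided}), uses Lemma~\ref{lem:bounded_Lp_ZF^-} to get $R\sats\ZF^-$, and then uses the mouse-witness characterization of $\Sigma_1^{\SS_{\alphag}}$-truth (Fact~\ref{fact:mouse_witnesses}) to recast the hypothesis as a forcing statement over $R$: for every $\Coll(\om,\delta)$-generic real $y$ over $R$, there is a pre-$\varphi(y,x_1)$-witness below a level of $R$. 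This forcing statement is preserved by non-dropping iteration maps, and Lemma~\ref{lem:overspill} ensures the iterates stay of the form $\Lp_{\Gammag}(\cdot)$. By genericity iteration one can then capture an arbitrary real $y\in\RR$ over an iterate $R'$, whereupon the iterable pre-witness found inside $R'$ becomes a genuine witness, giving $\SS_{\alphag}\sats\varphi(y,x_1)$. That route uses the universal-over-$\RR^M$ hypothesis, the fine-structural input of Assumption~\ref{ass:Sigma_M_not_Gamma-guided}, and Lemma~\ref{lem:overspill} in essential ways, none of which appear in your sketch. If you want to salvage a descriptive-set-theoretic argument, you would at minimum need a replacement for the descent from $\OD^\alpha$ to $\OD^{<\alpha}$ that genuinely exploits the universal hypothesis, and it is not clear that any such replacement exists at this level of generality.
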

\begin{proof}
Part \ref{item:admissibility_goes_down}
is an easy consequence of part \ref{item:all^R_Sigma_1-elem}.
For
part \ref{item:all^R_Sigma_1-elem},
let $x_1\in\RR^M$ and $\varphi$ be a $\Sigma_1$ formula of the $\Ll_{L(\RR)}$ language
such that
\begin{equation}\label{eqn:adm_test} \Ss_{\bar{\alpha}}(\RR^M)\sats\all 
x\in\RR^M\ \varphi(x,x_1). \end{equation}
We will show that
$\SS_\alpha\sats\all x\in\RR\ \varphi(x,x_1)$.

Recall here 
(see Assumption \ref{ass:Sigma_M_not_Gamma-guided})
that
$M$ is strongly $(\alphag,1)$-closed, $\betag\leq\beta^M<\betag+\om^2$, and there is no $\xi<\om_1^M$
 such that
$\Sigma_{M,\geq\xi}$ is $M$-$\Gammag$-guided.
Let $\xi_0<\om_1^M$ be such that $\xg,\yg,x_1\in M|\xi_0$
and $\rho_\om^{M|\xi_0}=\om$,
so $M|\xi_0$ is high.
Fix $\Tt\in\HC^M$ which is
an above-$\xi_0$, $\Gammag$-guided normal tree on 
$M|\om_1^M$
of limit length such that
\[ \Q(\Tt,b)\npins R=\Lp_{\Gammag}(M(\Tt)),\]
where $b=\Sigma_{M|\om_1^M}(\Tt)$. Let $\delta=\delta(\Tt)$. So
by Lemma \ref{lem:bounded_Lp_ZF^-},
$R\sats\ZF^-$ and
$R\pins\Q(\Tt,b)$,
and in particular, $R\sats$``$\delta$ is Woodin''.
We have
$M|\xi_0\pins R\in\HC^M$.

Given an $(R,\Coll(\om,\delta))$-generic $G$ and
$\eta<\OR^R$ such that $\rho_\om^{R|\eta}=\delta$,
let $y_{\eta,G}$ be the canonical real coding $(R|\eta,G)$.
Then $R$, considered as a mouse
over $R|\eta$,
translates into a
 $y_{\eta,G}$-mouse
 $R_{\eta,G}$, and
 $R_{\eta,G}=\Lp_{\Gammag}(y_{\eta,G})$.
So by \ref{fact:mouse_witnesses}, for any $\Sigma_1$ formula $\psi$ and real $y\in (R|\eta)[G]$, 
\[ \SS_{\alphag}\sats\psi(y,x_1)\iff\text{ there is a 
pre-}\psi(y,x_1)\text{-witness }N\pins R_{\eta,G}.\]

Now we may take $G\in M$,
so $R[G]\in\HC^M$ and in particular each real $y\in (R|\eta)[G]$ is in $M$, and hence
by (\ref{eqn:adm_test}) and $\Sigma_1$-elementarity, $\SS_{\alphag}\sats\varphi(y,x_1)$.
So writing $\dot{G}$
for the canonical name for $G$,
we have
$R\sats\ \forces_{\CC_{\delta}}$``For all $\eta\in\OR$
with $\rho_\om^{R|\eta}=\delta$, 
for all reals $y\in(R|\eta)[\dot{G}]$,
there is a  
pre-$\varphi(y,x_1)$-witness $N\pins 
R_{\eta,\dot{G}}$''.

Now this statement is preserved by non-dropping iteration maps
on $R$, and working in $V$, we can make any real generic
over an image of $R$. But by Lemma \ref{lem:overspill},
if $i:R\to R'$ is a correct iteration map, then
\[ R'\ins \Lp_{\Gammag}(R'|i(\delta))
,\]which suffices.
\end{proof}

\begin{dfn}
Let $X$ be high and $N$ be an $\om$-small $X$-premouse.
Let $\delta\in\OR^N$. We say that $N$ is a \emph{$\delta$-mGW} (for 
\emph{minimal Gamma-Woodin}) or \emph{mGW at $\delta$}
iff $N$ is a $<\delta$-bounded $\delta$-mtr of degree $n$, 
$N\sats$``$\delta$ is 
Woodin'' and for some $\chi<\delta$ we have
\[ \Hull_{n+1}^N(\chi\cup\pvec_{n+1}^N)\text{ is unbounded in }\delta.\qedhere\]
\end{dfn}
\begin{rem}
 Let $N$ be a $\delta$-mGW of degree $n$.
So (by the definition of \emph{$\delta$-mtr}) $N$ is $(n,\om_1,\om_1+1)^*$-iterable.
 Let $\eta<\delta$ be such that $N\sats$``$\eta$ is not Woodin''.
 Then the Q-structure $Q\pins N$ for $\eta$
 is such that 
$Q\pins\Lp_{\Gammag}(N|\eta)$. (This is an immediate consequence of 
${<\delta}$-boundedness.)

The last condition in the definition of $\delta$-mGW
(the unboundedness of the hull) already follows
from the rest if also $\rho_{n+1}^N<\delta$, and then in fact 
$\chi=\rho_{n+1}^N$ works (still assuming iterability).\footnote{This argument seems
to use $\om$-smallness,
to get that $\delta\in\rg(\pi)$;
but only assuming
tameness there's still some $\chi<\delta$;
 just take $\chi$ to strictly bound the Woodins
${<\delta}$.}
For suppose otherwise and let
\[ \theta=\sup(\delta\inter\Hull_{n+1}^N(\rho_{n+1}^N\cup\pvec_{n+1}^N)).\]
Let
$C=\cHull_{n+1}^N(\theta\cup\pvec_{n+1}^N)$
and $\pi:C\to N$ be the uncollapse. Then $C$ is $\theta$-sound. By $\om$-smallness, $\delta\in\rg(\pi)$, and as usual we therefore have
$\pi(\theta)=\delta$, $\theta$ is Woodin in $C$
and a strong cutpoint of $C$, and is a limit cardinal of $N$. Now $\theta$ is not Woodin in $N$, because otherwise it becomes a Woodin limit of Woodins in $N$, contradicting tameness.
But by condensation, $C||\theta^{+C}=N||\theta^{+C}$. Since $\theta$ is Woodin in $C$,
therefore $\theta^{+C}<\theta^{+N}$,
and therefore by $(n+1)$-universality,
$\rho_{n+1}^N<\theta$.
So $\theta$ is definably singularized over $C$, so $C$ is the (iterable, $\theta$-sound) Q-structure
for $\theta$. Therefore
 $C\pins N$.
But $\core_{n+1}(N)$ is definable over $C$,
so $\core_{n+1}(N)\in N$, a contradiction.

\end{rem}

\begin{cor}\label{cor:it_pres_mGW}
 Let $X,N,\delta,n,\Sigma,\Tt,R$ be as in \ref{lem:MTR_preservation},
 with $N$ a $\delta$-mGW.
 Then $R$ is an $i^\Tt(\delta)$-mGW. 
\end{cor}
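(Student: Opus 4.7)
The plan is to verify the three clauses of the definition of $i^\Tt(\delta)$-mGW for $R$: (i) $R$ is a ${<i^\Tt(\delta)}$-bounded $i^\Tt(\delta)$-mtr of degree $n$; (ii) $R\sats$``$i^\Tt(\delta)$ is Woodin''; and (iii) some $\chi'<i^\Tt(\delta)$ witnesses that $\Hull_{n+1}^R(\chi'\cup\pvec_{n+1}^R)$ is unbounded in $i^\Tt(\delta)$. Clause (i) is immediate from Lemma \ref{lem:MTR_preservation}. Clause (ii) follows from elementarity: since $b^\Tt$ does not drop in model or degree, $i^\Tt$ is sufficiently elementary to preserve the first-order Woodinness assertion from $N$ to $R$.

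For (iii), I would let $\chi<\delta$ witness mGW for $N$ and set $\chi':=i^\Tt(\chi)$. Writing $H=\Hull_{n+1}^N(\chi\cup\pvec_{n+1}^N)$ and $H'=\Hull_{n+1}^R(\chi'\cup\pvec_{n+1}^R)$, standard fine structure for non-dropping degree-$n$ iterations yields $i^\Tt(\pvec_{n+1}^N)=\pvec_{n+1}^R$ together with the elementarity required to conclude $i^\Tt``H\sub H'$, so in particular $i^\Tt``(H\cap\delta)\sub H'\cap i^\Tt(\delta)$.

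The remaining key point is to show $i^\Tt$ is continuous at $\delta$, i.e.~$\sup i^\Tt``\delta=i^\Tt(\delta)$. Granting this, the unboundedness of $H\cap\delta$ in $\delta$ transfers immediately to unboundedness of $H'\cap i^\Tt(\delta)$ in $i^\Tt(\delta)$, settling (iii). To prove continuity, I would use that $\delta$ is Woodin in $N$, hence inaccessible, with $\cof^N(\delta)=\delta$, and also a strong cutpoint of $N$, so no extender in $\es^N$ has critical point at $\delta$; both properties propagate along $b^\Tt$ by elementarity. At each successor stage on $b^\Tt$ with extender $E$, if $\crit(E)$ is strictly below the current image $\delta^*$ of $\delta$, then $\crit(E)<\cof^{M^\Tt_\alpha}(\delta^*)=\delta^*$, so $i_E(\delta^*)=\sup i_E``\delta^*$; extenders with $\crit(E)>\delta^*$ fix $\delta^*$; and $\crit(E)=\delta^*$ is ruled out by the cutpoint property. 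At limit stages $\lambda$ on $b^\Tt$, one checks from the direct-limit construction that any ordinal below $i^\Tt_\lambda(\delta)$ is realized in some earlier stage $M^\Tt_\alpha$, and hence is dominated by $i^\Tt_\lambda(\xi)$ for some $\xi<\delta$ via inductive continuity at stage $\alpha$.

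The main obstacle is this continuity argument; although each ingredient is standard, care is required to confirm that the ``Woodin strong cutpoint'' property propagates uninterruptedly along $b^\Tt$ and that cofinality is not inadvertently collapsed at limit stages. With continuity in hand, all three clauses of mGW are verified for $R$ at $i^\Tt(\delta)$, completing the proof.
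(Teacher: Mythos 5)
Your proposal is correct and matches the approach the paper implicitly intends: clause (i) is exactly \ref{lem:MTR_preservation}, clause (ii) is elementarity, and clause (iii) reduces to continuity of $i^\Tt$ at $\delta$, which is precisely the fact the authors invoke verbatim in the proof of Lemma \ref{lem:coring_preserves_mtr} (``Now $i^\Tt_b$ is continuous at $\zeta$, so \dots'').

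A few small points worth noting, none of which are gaps. For the hull inclusion $i^\Tt``H\sub H'$, one should appeal to the uniform $\rSigma_{n+1}(\{p_n\})$-definable Skolem function $h_{n+1}$ rather than to uniqueness clauses (which would be $\Pi_{n+1}$ and not obviously preserved by an $n$-embedding); since $z\in H$ iff $z=h_{n+1}^N(i,\xvec)$ for some $i<\om$ and $\xvec\in(\chi\cup\{\pvec_{n+1}^N\})^{<\om}$, and $h_{n+1}$ has a preserved graph, the inclusion follows once $i^\Tt(p_{n+1}^N)=p_{n+1}^R$, which needs $(n+1)$-solidity (granted by \cite{fsfni} and Lemma \ref{lem:tame_projecting_stacks}). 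The continuity argument is also what rules out the residual worry that $\rho_{n+1}^R$ might equal $i^\Tt(\delta)$: when $\rho_{n+1}^N=\delta$, one genuinely needs the explicit $\chi$ witness, and continuity is the engine. Finally, the proposal is slightly informal about why the strong-cutpoint property propagates along $b^\Tt$ in a way that forbids any $E^\Tt_\gamma$ applied on $b$ from overlapping the current image of $\delta$; the cleanest justification is that, by normality and the initial-segment agreement of tree models, $b^\Tt$ factors as a ``below $\delta$'' segment (all $\lh(E^\Tt_\gamma)$ at most the running image of $\delta$, hence continuity by $\cof=\delta_\alpha>\crit$) followed by an ``above $\delta$'' segment (all critical points strictly above the fixed $\delta_{\alpha_0}$). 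But the conclusion is as you state, and the remaining argument goes through.
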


Standard fine structure gives:

\begin{lem}\label{lem:core_of_mGW}
 Let $D$ be a $\zeta$-mGW of degree $d$. Then the $\zeta$-core
 of $D$ is a $\zeta$-sound $\zeta$-mGW of degree $d$.
\end{lem}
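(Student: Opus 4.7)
The plan is to follow a standard fine-structural recipe: set $C = \cHull_{d+1}^D(\zeta \cup \pvec_{d+1}^D)$ to be the $\zeta$-core of $D$, and let $\pi \colon C \to D$ be the uncollapse. By construction $C$ is $\zeta$-sound, and $\pi$ is a weak $(d+1)$-embedding with $\pi(\pvec_{d+1}^C) = \pvec_{d+1}^D$; using $(d+1)$-solidity and $(d+1)$-universality of $D$ (available from \cite{fsfni}, since $D$ is $(d,\om_1+1)$-iterable as part of being a $\zeta$-mtr), one reads off $\rho_{d+1}^C \leq \zeta < \rho_d^C$, so $(C,\zeta)$ has degree $d$.

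The crucial intermediate step is to show $\pi(\zeta) = \zeta$ and $\crit(\pi) > \zeta$. First I would pick $\chi < \zeta$ strictly bounding all Woodins of $D$ below $\zeta$ (which exists since $D$ is $\om$-small). By $\om$-smallness again, $\zeta$ is then $\Sigma_{d+1}^D$-definable from $\chi$ and $\pvec_{d+1}^D$ as ``the least Woodin above $\chi$'', essentially the overspill-style argument the authors sketch in the Remark following the definition of $\zeta$-mGW. Consequently $\zeta \in \rg(\pi)$, forcing $\pi(\zeta) = \zeta$ and $\crit(\pi) > \zeta$. Combined with $\zeta$-soundness of $C$, every element of $C|\zeta^{+C}$ lies in $\Hull_{d+1}^C(\zeta \cup \pvec_{d+1}^C)$ and so is fixed by $\pi$; hence $\zeta^{+C} = \zeta^{+D}$, $C|\zeta^{+C} = D|\zeta^{+D}$, and in particular $C|\zeta = D|\zeta$.

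From there the remaining mGW conditions are routine. Strong-cutpointness of $\zeta$ in $C$ is immediate from $C|\zeta = D|\zeta$; Woodinness of $\zeta$ in $C$ follows because the witnessing extenders for each $A \in \pow(\zeta) \cap C$ (and $\pow(\zeta)\cap C = \pow(\zeta) \cap D$ by $(d+1)$-universality together with $C|\zeta^{+C} = D|\zeta^{+D}$) live on $\es^D$ below $\zeta$ by strong-cutpointness in $D$, and so on $\es^C$. The $\zeta$-exactness condition reads off as $C|\zeta^{+C} = D|\zeta^{+D} = \Lp_{\Gammag}(D|\zeta) = \Lp_{\Gammag}(C|\zeta)$. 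An above-$\zeta$, normal iteration strategy for $C$ is pulled back from $D$ via $\pi$, and then upgraded to an above-$\zeta$, $(d,\om_1,\om_1+1)^*$-strategy via Lemma \ref{lem:tame_projecting_stacks} (tameness of $C$ is immediate from $\om$-smallness of $D$ and elementarity of $\pi$). Boundedness of $C$ below $\zeta$ is immediate from $C|\zeta = D|\zeta$, and the unbounded-hull hypothesis transfers verbatim along $\pi$ with the same witness $\chi$, since $\pi$ restricts to the identity on $\chi$ and sends $\pvec_{d+1}^C$ to $\pvec_{d+1}^D$.

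The hard part will be the $\crit$-estimate in the second paragraph: the $\om$-small reduction of $\zeta$'s definability has to be carried out carefully, since the standard parameters $\pvec_{d+1}^D$ do not obviously pin down $\zeta$, and one must proceed via the same overspill reasoning the authors indicate in the Remark. Everything after that amounts to unwinding the definitions through the $\zeta$-core map.
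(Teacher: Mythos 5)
The overall strategy is the right one, and most of the unwinding (preservation of the projectum and degree, of boundedness, Woodinness, exactness, strong-cutpointness, iterability, and the unbounded-hull condition, once $C$ and $D$ are known to agree through $\zeta^{+D}$) is the standard fine structure the paper gestures at. But two adjacent steps in your second paragraph have real gaps.

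First, the claim that $\zeta$ is $\Sigma_{d+1}^D$-definable from $\chi$ and $\pvec_{d+1}^D$ as ``the least Woodin above $\chi$'' is not right as stated: ``$\alpha$ is Woodin'' quantifies over $\pow(\alpha)\cap D$, which is not an $\rSigma_{d+1}$-over-$D$ matter in general, so this characterization does not by itself put $\zeta$ into $\Hull_{d+1}^D(\zeta\cup\pvec_{d+1}^D)$. The paper's own remark credits $\zeta\in\rg(\pi)$ (for the analogous bounded-hull situation) to ``$\om$-smallness'' without elaboration; you flag this as the hard part, but the justification you supply doesn't close it. Second, and more seriously, the inference ``every element of $C|\zeta^{+C}$ lies in $\Hull_{d+1}^C(\zeta\cup\pvec_{d+1}^C)$ and so is fixed by $\pi$'' is a non sequitur: the hull you name \emph{is} $C$ (that is what $\zeta$-soundness means), and $\pi$ certainly moves elements of $C$. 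What actually fixes elements of $C|\zeta^{+C}$ is the rank bound $\mathrm{rank}(x)<\zeta^{+C}\leq\crit(\pi)$, which you \emph{can} get once $\crit(\pi)>\zeta$. But that only yields $C||\zeta^{+C}=D||\zeta^{+C}$ and $\zeta^{+C}\leq\zeta^{+D}$; the equality $\zeta^{+C}=\zeta^{+D}$ still needs its own argument (e.g.\ showing $\zeta^{+D}\in\rg(\pi)$, or a comparison of $C$ with $D$ above $\zeta$ showing $\pow(\zeta)\cap C=\pow(\zeta)\cap D$), and without it $\zeta$-exactness of $C$ is unproved, since $\Lp_{\Gammag}(C|\zeta)=\Lp_{\Gammag}(D|\zeta)=D|\zeta^{+D}$ would then properly extend $C|\zeta^{+C}$. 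Everything downstream of this equality in your third paragraph is correct once it is in hand.
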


\begin{dfn}
 Let $X\in\HC$. Say that $X$ is \emph{sufficient}
 iff $X$ is transitive and there are $D\in X$ and $\zeta\in D$
 such that $D$ is a $\zeta$-sound $\zeta$-mGW.
\end{dfn}

Note that if $X$ is sufficient,
as witnessed by $D$,
then $D$ is a $Y$-premouse
for some high $Y$,
so $X$ is also high.

The following argument, due to the second author,
comes from the Steel-Schindler email exchange \cite{emails_2005} of 2005:

\begin{lem}\label{lem:coring_preserves_mtr}
 Let $X$ be sufficient, as witnessed by $D,\zeta$,
 of degree $d$.
 Let $H,N$ be $n$-sound $\om$-small $X$-premice
 and $\pi:H\to N$ be an $n$-embedding such that $\pi(p_{n+1}^H)=p_{n+1}^N$.
Let $\delta\in H$ be such that $\rho_{n+1}^H\leq\delta$
and suppose $N$ is a 
$\pi(\delta)$-mtr of degree $n$. Then $H$ is a $\delta$-mtr of degree $n$, 
and moreover, $n=d$.
\end{lem}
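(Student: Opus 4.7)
The plan is to verify, in turn, the defining clauses of $\delta$-mtr of degree $n$ for $H$, then argue $n=d$. Throughout, $\pi$ being an $n$-embedding with $\pi(p_{n+1}^H)=p_{n+1}^N$ is the essential tool, since it transfers the relevant fine-structural data back from $N$. Standard $n$-embedding calculations preserve standard parameters and projecta through level $n$, giving $\sup\pi``\rho_n^H=\rho_n^N$; combining this with $\pi(\delta)<\rho_n^N$ (from $N$ being a $\pi(\delta)$-mtr of degree $n$) and the hypothesis $\rho_{n+1}^H\leq\delta$ yields $\rho_{n+1}^H\leq\delta<\rho_n^H$. The $r\Sigma_{n+1}$-elementarity of $\pi$ also pulls back strong-cutpoint-hood of $\pi(\delta)$ in $N$ to that of $\delta$ in $H$, and copying trees on $H$ along $\pi$ reduces $(n,\om_1,\om_1+1)^*$-iterability of $H$ to that of $N$.

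For $\delta$-exactness, fullness follows the comparison argument used in the proof of Lemma \ref{lem:MTR_preservation}: given a sound $\delta$-projecting premouse $P\pins\Lp_{\Gammag}(H|\delta)$, compare $P$ with $H$ above $\delta$ using the unique above-$\delta$ tame-projecting strategy of $H$ supplied by Lemma \ref{lem:tame_projecting_stacks}; soundness and projection to $\delta$ then force $P\pins H$. For boundedness, let $\xi<(\delta^+)^H$ with $\rho_\om^{H|\xi}\leq\delta$ and $\delta$ a strong cutpoint of $H|\xi$. Since $(\delta^+)^H\leq\rho_n^H$, $\pi$ acts on $H|\xi$ as an $n$-embedding into $N|\pi(\xi)$, which is an initial segment of $N$ beneath $(\pi(\delta)^+)^N$. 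By the $\pi(\delta)$-boundedness of $N$ built into its mtr hypothesis, $N|\pi(\xi)\pins\Lp_{\Gammag}(N|\pi(\delta))$, hence has an above-$\pi(\delta)$, $(\om,\om_1+1)$-strategy in $\SS_{\alphag}$; pulling this back via copying along $\pi\rest H|\xi$ yields an above-$\delta$ strategy for $H|\xi$ in $\SS_{\alphag}$, so $H|\xi\pins\Lp_{\Gammag}(H|\delta)$, which closes the $\delta$-exactness verification.

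The identity $n=d$ is the main obstacle. The plan is to use that $D\in X\ins H|0$ fixes a canonical degree attached to $X$: since $D$ is a $\zeta$-sound $\zeta$-mGW of degree $d$, the minimal Woodin-type fine structure over $X$ occurs precisely at degree $d$, and the condition $\pi(p_{n+1}^H)=p_{n+1}^N$ rigidly couples the $(n{+}1)$-standard parameter of $H$ to that of $N$. The intended execution is to apply Lemma \ref{lem:core_of_mGW} in conjunction with $D$ to extract a $\delta$-sound $\delta$-mGW from the data attached to $H$, and then compare this against $D$ using their unique above-$\delta$/above-$\zeta$ stacks strategies coming from Lemma \ref{lem:tame_projecting_stacks}; any mismatch $n\neq d$ would, via the hull-unboundedness clause of the mGW definition together with the Woodinness of $\zeta$ in $D$, produce an iterable segment projecting below the $(d{+}1)$-st projectum and contradict either $\delta$-exactness of $H$ or the $\zeta$-exactness of $D$. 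The delicate point is that this comparison must be carried out with only $n$-elementarity of $\pi$ in hand, and the structural role of $D$ as an inner witness of the correct degree over $X$ is what closes the gap.
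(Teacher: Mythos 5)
Your argument diverges substantially from the paper's, and it has genuine gaps in precisely the two places you yourself flag as "the delicate point": the fullness direction of $\delta$-exactness and the identity $n=d$. The boundedness half of exactness (your pullback of an above-$\pi(\delta)$ strategy along $\pi\rest H|\xi$) is fine, as are the routine checks on projecta, strong-cutpoint-hood, and iterability. But the rest does not close.

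\textbf{Fullness.} You propose to compare a sound $P\pins\Lp_{\Gammag}(H|\delta)$ against $H$ above $\delta$ and deduce $P\pins H$. Even granting the reduction to $\delta^{+H}$-soundness of $H$ (which the paper does up front and you do not), comparison only gives $P\ins H$ or $H\ins P$, and nothing in your argument rules out $H\ins P$, i.e.\ $H$ itself being above-$\delta$ iterable inside $\SS_{\alphag}$. That case is exactly the failure of $\delta$-fullness you need to exclude. The analogous step in Lemma~\ref{lem:MTR_preservation} closes because there one already knows the original model is an mtr and so is \emph{not} iterable in $\SS_{\alphag}$ above the relevant cutpoint; here you are trying to \emph{prove} $H$ has that property, so appealing to the comparison from \ref{lem:MTR_preservation} is circular. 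The paper avoids this by a completely different mechanism: it runs the $N|\pi(\delta)^{+N}$-pseudo-genericity iteration $\Tt$ of the fixed witness $D$ for the extender algebra at $\zeta$, shows the iteration has length exactly $\eta=\pi(\delta)^{+N}$ with $i^\Tt_b(\zeta)=\eta$ and $M^\Tt_b$ equal to the $\mathscr{P}$-construction output, and then — this is the crux — observes that because $\Hull_{d+1}^D(\chi\cup\pvec_{d+1}^D)$ is cofinal in $\zeta$, the entire tree and branch data admit a $\Sigma_1^N(\{p_1^N,\vec{x},\pi(\delta)\})$ description and so pull back under $\pi$ to a genericity iteration $\bar\Tt$ of $D$ with branch $\bar b$ satisfying $i^{\bar\Tt}_{\bar b}(\zeta)=\bar\eta=\delta^{+H}$. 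Fullness of $H$ at $\delta$ then comes from looking at the forcing extension $M^{\bar\Tt}_{\bar b}[H|\bar\eta]$, which by \ref{lem:MTR_preservation} applied to the mGW iterate $M^{\bar\Tt}_{\bar b}$ contains $\Lp_{\Gammag}(H|\delta)$. None of this structure appears in your proposal.

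\textbf{$n=d$.} Your description here is a plan, not an argument: "apply Lemma~\ref{lem:core_of_mGW}\ldots, compare this against $D$\ldots, any mismatch $n\ne d$ would \ldots produce an iterable segment projecting below the $(d+1)$-st projectum." There is no concrete construction of the comparison, no identification of the object whose projectum drops, and no explanation of how to get around the fact that $\pi$ is only an $n$-embedding. The paper derives $n=d$ cleanly as a by-product of the genericity iteration: once $M^\Tt_b$ is identified with the $\mathscr{P}$-construction output $P$ of $N$ over $M(\Tt)$, the fine structure of P-constructions gives $\rho_i^{M^\Tt_b}=\rho_i^N$ for $i\leq\max(d,n)$ and $d\leq n$, and if $d<n$ there would be an $\bfrSigma_n^N$ singularization of $\eta=\pi(\delta)^{+N}$, contradicting $\pi(\delta)<\rho_n^N$. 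You would need to supply something of comparable precision; the mGW-versus-$D$ comparison you gesture at does not obviously produce it, precisely because $D$ lives in $X$ far below $\delta$ and there is no canonical way to lift its fine structure up to $\delta$ without some analogue of the genericity-iteration pullback.

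In short: the boundedness half and the routine transfer properties are sound, but the heart of the lemma — why $H$ is full at $\delta$ and why $n=d$ — requires the genericity iteration of $D$ over $N$ together with the $\Sigma_1$-definable pullback to $H$, which is absent from your proposal and which your direct coiteration idea does not replace.
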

\begin{rem}
 The ``moreover'' clause, that $n=d$,
 does not convey the extent of agreement
 between $D$ and $H$,
 nor between $D$ and $N$, which becomes clear
 in the proof: assuming $\delta^{+H}$-soundness and $\pi(\delta)^{+N}$-soundness
 of $H$ and $N$ respectively, they are, modulo genericity iterations and generic extensions,
 essentially equivalent above the iteration images
 of $\zeta$, which are $\delta^{+H}$ and $\pi(\delta)^{+N}$.
\end{rem}

\begin{proof}
By replacing $H$ with the $\delta^{+H}$-core of $H$,
we may assume that $H$ is $\delta^{+H}$-sound.
Similarly, we may assume that $N$ is
$\pi(\delta)^{+N}$-sound.

Let $\chi<\zeta$ with
$p_{d+1}^D\inter\zeta\sub\chi$ and
$\Hull_{d+1}^D(\chi\cup\pvec_{d+1}^D)\text{ unbounded in }\zeta$,
and such that $D$ 
has no Woodins in $[\chi,\zeta)$.

Consider the $N|\pi(\delta)^{+N}$-pseudo-genericity-iteration $\Tt$ of
$D$ for the extender algebra $\BB_{\zeta,\geq\chi}^D$, of length $\leq\pi(\delta)^{+N}$, after 
first linearly iterating the least measurable of $D$ which is $\geq\chi$
out to $\delta$. Let $b=\Sigma_D(\Tt)$.
Then $\Tt$ is 
definable from parameters over $N|\pi(\delta)^{+N}$,
with Q-structures at limit stages determined by P-construction.
Since $\Tt$ uses only total extenders,
\ref{cor:it_pres_mGW} applies to it. Let $\eta=\lh(\Tt)$.

Suppose the process terminates in $\eta<\pi(\delta)^{+N}$ stages.
That is,
\[ i^\Tt_b(\zeta)=\delta(\Tt)=\eta<\pi(\delta)^{+N}.\] Then by 
\ref{cor:it_pres_mGW},
$M^\Tt_b$ is a mGW at $\eta$,
of degree $d$, so is not above-$\eta$,
$(d,\om_1+1)$-iterable in $\SS_{\alphag}$.
But because $\zeta<\pi(\delta)^{+N}$, we get a Q-structure
$Q$ for $M(\Tt)$, given by the P-construction of some $R\pins 
N|\pi(\delta)^{+N}$ above $M(\Tt)$,
and $R$ is above-$\eta$, $(\om,\om_1+1)^*$-iterable
in $\SS_{\alphag}$. But then a comparison leads to contradiction.

So $\eta=\pi(\delta)^{+N}$. Now $i^\Tt_b(\zeta)=\eta$.
For suppose $i^\Tt_b(\zeta)>\eta$. Then $M^\Tt_b\sats$``$\eta$ is not 
Woodin'', so letting $Q\pins M^\Tt_b$ be the Q-structure,
by \ref{cor:it_pres_mGW}, we have
\[ Q\pins \Lp_{\Gammag}(M(\Tt)).\]
But because $N$ is a $\pi(\delta)$-mtr, it is easy to see that
$\Lp_{\Gammag}(N|\eta)\ins N$.
But then $Q$ is reached by the P-construction of $N$ over $M(\Tt)$,
so $b\in N$, which contradicts termination of genericity iteration as usual.

Now $M^\Tt_b$ is the output of the P-construction $P$ of $N$ over $M(\Tt)$.
For $M^\Tt_b\sats$``$\eta$ is Woodin'',
so $\eta$ is a strong cutpoint of $M^\Tt_b$,
and $M^\Tt_b$ is $\eta$-sound; and likewise for $N$.
Therefore $M^\Tt_b\ins P$ or $P\ins M^\Tt_b$.
But also, $M^\Tt_b$ is an $\eta$-mGW, hence an $\eta$-mtr,
and $N$ is an $\eta$-mtr, and it follows
that $M^\Tt_b=P$.

Now $i^\Tt_b$ is continuous at $\zeta$, so
\[ H'=\Hull_{d+1}^{M^\Tt_b}(\chi\cup\pvec_{d+1}^{M^\Tt_b})\text{ is unbounded 
in }\eta\]
and $H'\sub\rg(i^\Tt_b)$.

Now $d\leq n$. For
$\rho_{n+1}^N\leq\pi(\delta)<\rho_n^N$
and
$\rho_{d+1}^{M^\Tt_b}\leq\eta<\rho_d^{M^\Tt_b}$,
and since $P=M^\Tt_b$, the fine structure of P-constructions
gives immediately that $M^\Tt_b$ is 
$\rDelta_1^N(\{M(\Tt)\})$, and $\rho_i^{M^\Tt_b}=\rho_i^N$, etc, for 
$i\leq\max(d,n)$, and that $d\leq n$.
But if $d<n$ then there 
is an $\bfrSigma_n^N$ singularization of $\eta=\pi(\delta)^{+N}$,
which contradicts the fact that $\pi(\delta)<\rho_n^N$.
So $d=n$.\footnote{Note that therefore 
$\pi(\delta)^{+N}<\rho_n^N$.}

Now we assume at this point, for notational simplicity,
that $n=0$, $D$ is passive and $\OR^D$ is a limit of limits; the general 
case is then a straightforward
adaptation using standard fine structural techniques.
By these assumptions, $i^\Tt_b$ is cofinal.

Given $\alpha<\OR^D$ with $\max(p_1^D)<\alpha$ let
\[ D_\alpha=\Hull^{D|\alpha}_1(\chi\cup\{p_1^D\}), \]
\[ M_\alpha=\Hull^{M^\Tt_b|i^\Tt_b(\alpha)}_1(\chi\cup\{p_1^{M^\Tt_b}\}),\]
so $D_\alpha\iso M_\alpha$. Let $\zeta_\alpha=\sup(\zeta\inter 
D_\alpha)$. So $\zeta_\alpha<\zeta$ and 
$\left<\zeta_\alpha\right>_{\alpha<\OR^D}$
is cofinal in $\zeta$.
Let
\[ j_\alpha:\zeta\inter D_\alpha\to\eta\inter M_\alpha \]
be the isomorphism. So $j_\alpha\sub i^\Tt_b\rest\zeta_\alpha$ and note $j_\alpha\in N$.

Let $\gamma_\alpha$ be the least 
$\gamma\in b$ such that
$\crit(i^\Tt_{\gamma b})\geq\sup i^\Tt_{0\gamma}``\zeta_\alpha$.
Then\footnote{For clearly $\gamma_\alpha$
has this property. Suppose $\gamma<\gamma_\alpha$ and $\gamma$
has the stated property. Let $\xi=\max(b\inter(\gamma+1))$
and 
$\beta+1\in b$ with $\pred^\Tt(\beta+1)=\xi$.
Because $\xi<\gamma_\alpha$,
we have $\crit(E^\Tt_\beta)\in\sup i^\Tt_{0\xi}``\zeta_\alpha$.
And 
$\rg(j_\alpha)\sub\nu(E^\Tt_\gamma)\leq\lambda(E^\Tt_\beta)$, 
but then
\[ \sup\rg(j_\alpha)\sub\lambda(E^\Tt_\beta)<\sup 
i^\Tt_{0,\beta+1}``\zeta_\alpha\leq\sup 
i^\Tt_b``\zeta_\alpha=\sup\rg(j_\alpha),\]
contradiction.}
\begin{equation}\label{eqn:gamma_alpha_def}\gamma_\alpha=
\text{ least }\gamma<\lh(\Tt)\text{ such that }
j_\alpha\sub i^\Tt_{0\gamma}\text{ and } 
\rg(j_\alpha)\sub\nu(E^\Tt_\gamma).\end{equation}

Now the map
\[ \sigma:\alpha\mapsto(D_\alpha,\zeta_\alpha,M_\alpha,j_\alpha,\gamma_\alpha), 
\]
where
\[ 
\dom(\sigma)=\{\beta\in\Hull_1^D(\chi\cup\{p_1^D\})\mid 
\max(p_1^D)<\beta<\OR^D\},\]
is $\Sigma_1^N(\{p_1^N,\vec{x},\pi(\delta)\})$ for some $\vec{x}\in X^{<\om}$.

(We can easily pass from $\alpha\in\dom(\sigma)$ to $D_\alpha,\zeta_\alpha$.
To pass to $M_\alpha$, use that $\Tt$ is definable over 
$N|\pi(\delta)^{+N}$
from parameters in $X$, so $M^\Tt_b$
is $\Delta_1^N(\{\vec{x},\pi(\delta)^{+N}\})$
for some $\vec{x}\in X^{<\om}$,
and $\pi(\delta)^{+N}\leq\max(p_1^N)$ since $N$ is passive.
This easily gives $j_\alpha$,
and from here we get $\gamma_\alpha$ via (\ref{eqn:gamma_alpha_def}).)

Since $\Hull_1^D(\chi\cup\{p_1^D\})$ is cofinal $\zeta$,
it is also cofinal in $\OR^D$, so $\dom(\sigma)$ is cofinal in $\OR^D$.

So $N\sats\all\alpha,\beta\in\dom(\sigma)$,
we have $\gamma_\alpha<\lh(\Tt)$, and if $\alpha<\beta$ then 
\[ \gamma_\alpha\leq_\Tt\gamma_\beta\text{ and } 
\crit(i^\Tt_{\gamma_\alpha\gamma_\beta})\geq\sup 
i^\Tt_{0\gamma_\alpha}``\zeta_\alpha.\]

Now all parameters used to define these things are in $\rg(\pi)$,
so they have preimages in $H$. Write $\pi(\bar{\Tt})=\Tt$ etc.
So $\bar{\Tt}$ is a genericity iteration of $D$,
via correct strategy, and note that the  $\bar{\gamma}_\alpha$
yield a $\bar{\Tt}$-cofinal branch $\bar{b}$ such that
 $i^{\bar{\Tt}}_{\bar{b}}(\zeta)=\bar{\eta}$.
Moreover, the direct limit
of the maps $\pi\rest M^{\bar{\Tt}}_{\bar{\gamma}_\alpha}:M^{\bar{\Tt}}_{\bar{\gamma}_\alpha}\to M^\Tt_{\gamma_\alpha}$ (under the iteration maps of $\bar{\Tt}$ along $\bar{b}$;
note that each $M^{\bar{\Tt}}_{\bar{\gamma}_\alpha}\in H$) is 
a $0$-embedding $\sigma:M^{\bar{\Tt}}_{\bar{b}}\to M^\Tt_b$, and it follows that $\bar{b}=\Sigma_D(\bar{\Tt})$.
But then since $i^{\bar{\Tt}}_{\bar{b}}(\zeta)=\bar{\eta}$,
it follows
that $H|\bar{\eta}=\Lp_{\Gammag}(H|\delta)$.
(We have $H|\beta{\eta}\ins\Lp_{\Gammag}(H|\delta)$ because $N|\eta=\Lp_{\Gammag}(N|\pi(\delta))$. For the other direction, consider the forcing extension $M^{\bar{\Tt}}_{\bar{b}}[H|\bar{\eta}]$, where $\bar{\eta}$
is regular, and which, by Lemma \ref{lem:MTR_preservation}, contains $\Lp_{\Gammag}(H|\bar{\eta})$, hence contains  $\Lp_{\Gammag}(H|\delta)$.) This completes the proof.
\end{proof}

\begin{dfn}\label{dfn:descent}
Let $N\in\HC$ be a premouse.
Say that $(N,n,\eta)$ is \emph{pre-appropriate}
iff $\eta<\OR^N$ is a strong cutpoint of $N$,
$N$ is $n$-sound and $\rho_{n+1}^N\leq\eta<\rho_n^N$ (we do not assume $\eta$-soundness).
This $n$ is the \emph{degree} of $(N,\eta)$.
Say that $(N,n,\eta)$ is \emph{appropriate}
iff it is pre-appropriate and  
$N$ is above-$\eta$, $(n,\om_1+1)$-iterable. 
Say that $(N,n,\eta)$ is \emph{$\Gammag$-\tu{(}pre-\tu{)}appropriate} iff $N$ is (pre-)appropriate and $N|\eta^{+N}=\Lp_{\Gammag}(N|\eta)$.

Fix an appropriate $(N,n,\eta)$.
Then $\Sigma_{N,\eta}$ denotes the unique
above-$\eta$, $(n,\om_1+1)$-strategy for $N$ (uniqueness is by Lemma \ref{lem:tame_projecting_stacks}). Let $\Sigma=\Sigma_{N,\eta}$.
An $(N,\eta)$-\emph{descent} is a pair
\[ (\left<N_i,n_i,\eta_i\right>_{i\leq m},\left<\Tt_i\right>_{i<m})\in\HC,\]
where $m\leq\om$, such that there are $\left<b_i\right>_{i<m}$ such that:
\begin{enumerate}
 \item $N_0=N$ and $n_0=n$ and $\eta_0=\eta$.
 \item $(N_i,n_i,\eta_i)$ is pre-appropriate for each $i\leq m$.
\item $\Tt_i$ is a non-trivial, above-$(\eta_i^+)^{N_i}$, $n_i$-maximal tree on 
$N_i$, for each $i<m$.
\item For $i<m$, if $\Tt_i$ has limit length, let $\Uu_i=\Tt_i\conc b_i$,
and otherwise let $\Uu_i=\Tt_i$. Then $\Uu_i$ is an $n_i$-maximal tree.
 Moreover,
 \[ \Uu_0\conc\Uu_1\conc\ldots\conc\Uu_{m-1} \]
 is essentially\footnote{For example,
it could be that $\Uu_1$ is literally a tree on $N_1\pins M^{\Uu_0}_\infty$,
but with some trivial changes, we literally obtain a tree via $\Sigma$.} a tree via $\Sigma$ (hence $n$-maximal).
 \item\label{item:soundness_i>0} $\nu(\Tt_i)\leq\eta_{i+1}$ for each $i<m$.
 \item For each $i<m$, we have
$(N_{i+1},n_{i+1})\ins(M^{\Uu_i}_\infty,\deg^{\Uu_i}(\infty))$
and either
\[ (N_{i+1},n_{i+1})\pins(M^{\Uu_i}_\infty,\deg^{\Uu_i}(\infty))
\text{ or }b^{\Uu_i}\text{ drops in model or degree;}\]
moreover,
if $\Tt_i$ has limit length then $N_{i+1}\pins M^{\Uu_i}_\infty$.
\end{enumerate}

\begin{rem}\label{rem:soundness_for_later_ones}
Note that it follows that $N_{i+1}$ is $\eta_{i+1}$-sound for each $i<m$ (although we have not assumed
that $N_0$ is $\eta_0$-sound).
\end{rem}

The above is a \emph{$\Gammag$-descent}
iff $(N_i,n_i,\eta_i)$ is $\Gammag$-pre-appropriate (hence $\Gammag$-appropriate)
for each $i\leq m$.

A $\Gammag$-descent as above is \emph{$\Gammag$-maximal} iff it has finite length and
there is no proper extension which is also a $\Gammag$-descent.
(That is, there is no $\Gammag$-descent
\[ (\left<N'_i,n'_i,\eta_i\right>_{i\leq 
m+1},\left<\Tt'_i\right>_{i<m+1})\in\HC\]
with $N'_i=N_i$ and $n'_i=n_i$ for $i\leq m$
and $\Tt'_i=\Tt_i$ for $i<m$.)

Say that $(N,n,\eta)$ is \emph{$\Gammag$-stable}
iff it is the end node of a $\Gammag$-maximal
$\Gammag$-descent. 

In the context of the conjectures, 
a $\Gammag$-descent which is in  $\HC^M$ is called
 \emph{$M$-$\Gammag$-maximal} iff
it has finite length and there is no proper extension
in $\HC^M$ which is also a $\Gammag$-descent.
Say that $(N,n,\eta)$ is \emph{$M$-$\Gammag$-stable}
iff it is the end node of an $M$-$\Gammag$-maximal
$\Gammag$-descent.
\end{dfn}

An $\om$-descent would easily yield a normal tree with a unique
branch which drops infinitely often, so:

\begin{lem}\label{lem:no_om-descent}
If $(N,n,\eta)$ is appropriate,
then there is no $(N,\eta)$-descent of length $\om$.
\end{lem}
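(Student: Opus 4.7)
Suppose for contradiction that $(\langle N_i,n_i,\eta_i\rangle_{i<\om},\langle \Tt_i\rangle_{i<\om})$, witnessed by branches $\langle b_i\rangle_{i<\om}$, is an $(N,\eta)$-descent of length $\om$, and write $\Sigma=\Sigma_{N,\eta}$. The plan is to assemble the $\Uu_i$ into a single countable-length normal iteration tree $\Uu$ via $\Sigma$, to identify a unique cofinal branch $b$ of $\Uu$ which by construction drops in model or degree infinitely often, and thereby to contradict the existence of a cofinal branch of $\Uu$ with well-founded direct limit that $\Sigma$ must supply at the limit stage.

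First I would check that the infinite concatenation $\Uu=\Uu_0\conc\Uu_1\conc\cdots$ is a well-defined normal iteration tree each of whose proper initial segments is via $\Sigma$. The descent conditions handle this smoothly: since $\Uu_{i+1}$ is above $\eta_{i+1}^{+N_{i+1}}$ while $\nu(\Uu_i)=\nu(\Tt_i)\leq\eta_{i+1}<\eta_{i+1}^{+N_{i+1}}$, the first extender used in $\Uu_{i+1}$ has critical point strictly above $\nu(\Uu_i)$, so its $<_\Uu$-predecessor is the last model $M^{\Uu_i}_\infty$ of $\Uu_i$; iterating gives $M^{\Uu_0}_\infty<_\Uu M^{\Uu_1}_\infty<_\Uu\cdots$. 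Since the descent lies in $\HC$, the length $\lambda$ of $\Uu$ is a countable limit ordinal.

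Next I would define $b$ to be the downward closure in $<_\Uu$ of $\{M^{\Uu_i}_\infty\mid i<\om\}$, plainly a cofinal branch of $\Uu$, and show that it is the only cofinal branch. If $c$ is any cofinal branch of $\Uu$, then for each $\alpha\in c\cap\Uu_i$ linearity gives $\alpha\leq_\Uu M^{\Uu_i}_\infty$ (since $M^{\Uu_i}_\infty$ is $<_{\Uu_i}$-maximal inside $\Uu_i$), so $c\sub b$; conversely, cofinality forces $c$ to meet $\Uu_j$ for arbitrarily large $j$, and the predecessor analysis above shows each $M^{\Uu_i}_\infty$ is a $<_\Uu$-ancestor of every node from any later $\Uu_j$, so by downward closure $b\sub c$. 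Clause (6) of Definition~\ref{dfn:descent} then guarantees that at each passage from $M^{\Uu_i}_\infty$ into $\Uu_{i+1}$ there is at least one drop along $b$---either at the transition itself, when $(N_{i+1},n_{i+1})\pins(M^{\Uu_i}_\infty,\deg^{\Uu_i}(\infty))$, or along $b^{\Uu_i}$ inside $\Uu_i$---so $b$ drops $\om$-many times in model or degree; as degrees are bounded below by $0$, cofinally many of these drops are in model.

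The main obstacle is converting ``$b$ has $\om$-many model drops'' into the desired contradiction. For this I would invoke the standard fine-structural fact that a cofinal branch of a normal iteration tree with infinitely many model drops has ill-founded direct limit: after the finitely many degree drops along $b$ stabilize at some eventual degree $k$, each further model drop strictly truncates the top of the standard parameter $p_{k+1}$, and the images of $\max(p_{k+1}^{M^\Uu_\alpha})$ under the partial iteration maps along $b$ form an infinite descending sequence of ordinals in what would be $M^\Uu_b$. But $\Sigma$, being an above-$\eta$, $(n,\om_1+1)$-strategy for $N$, must assign to the countable-length tree $\Uu$ a cofinal branch with well-founded direct limit; by the uniqueness of cofinal branch just established, that branch must be $b$, contradicting the ill-foundedness of $M^\Uu_b$.
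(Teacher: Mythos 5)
Your proof is correct and fleshes out exactly the paper's one-line remark preceding the lemma: an $\om$-descent assembles into a countable limit-length normal tree via $\Sigma$ with a unique cofinal branch that drops infinitely often, contradicting that $\Sigma$, as an above-$\eta$, $(n,\om_1+1)$-strategy, must supply a cofinal branch with well-founded direct limit. The only small nit is that the two inclusions in your uniqueness argument are presented in the wrong logical order---you should establish $b\sub c$ first (via cofinality and the ancestor relation), since the stated reason for $c\sub b$ really uses the linearity of $c$ applied to $\alpha$ and $M^{\Uu_i}_\infty$, which requires first knowing $M^{\Uu_i}_\infty\in c$.
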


\begin{lem}\label{lem:get_stable} Let
$\mathscr{D}=(\left<(N_i,n_i,\eta_i)\right>_{i\leq 
m},\left<\Tt_i\right>_{i<m})$
be a $\Gammag$-descent.
Then:
\begin{enumerate}
 \item\label{item:max_descent_ex} There is a $\Gammag$-maximal $\Gammag$-descent $\mathscr{D}'$
extending $\mathscr{D}$.
\item\label{item:sound_stable} For each $x\in\HC$ there is a transitive $X\in\HC$ and a $\Gammag$-stable $\Gammag$-appropriate
tuple $(N,n,\eta)$, such that $x\in X$ and $N$ an
$\eta$-sound $X$-premouse.
\end{enumerate}
\end{lem}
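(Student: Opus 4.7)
The plan is to derive part \ref{item:max_descent_ex} directly from Lemma \ref{lem:no_om-descent} by iterating the ``proper extension'' operation, and then derive part \ref{item:sound_stable} from part \ref{item:max_descent_ex} by first manufacturing an initial $\eta_0$-sound $\Gammag$-appropriate tuple containing~$x$.

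For part \ref{item:max_descent_ex}, set $\mathscr{D}_0 = \mathscr{D}$. Recursively, if $\mathscr{D}_j$ is not $\Gammag$-maximal, choose a proper $\Gammag$-descent extension $\mathscr{D}_{j+1}$ (such exists by the definition of non-maximality). The length of $\mathscr{D}_j$ is finite and strictly increases in $j$. If the recursion never terminates, then the common refinement $\mathscr{D}_\omega = \bigcup_{j<\omega} \mathscr{D}_j$, obtained by taking termwise unions of the sequences of triples and trees, is a $\Gammag$-descent of length $\omega$: each clause of Definition \ref{dfn:descent} is of finite character and is witnessed at some stage $\mathscr{D}_j$. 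This contradicts Lemma \ref{lem:no_om-descent}, so the recursion terminates at some finite $k$ with $\mathscr{D}_k$ being $\Gammag$-maximal.

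For part \ref{item:sound_stable}, I first construct an initial $(N_0, n_0, \eta_0)$ as follows. Pick any countable transitive high $X$ with $x \in X$ and set $\eta_0 = \rank(X)$. Let $N_0$ be the first level above $\Lp_{\Gammag}(X)$ in the premouse hierarchy, i.e.\ the rud closure of $\Lp_{\Gammag}(X) \cup \{\Lp_{\Gammag}(X)\}$, viewed as a passive $X$-premouse. Then $\eta_0$ is a strong cutpoint of $N_0$, $N_0|\eta_0^{+N_0} = \Lp_{\Gammag}(X) = \Lp_{\Gammag}(N_0|\eta_0)$, $N_0 = \Hull_1^{N_0}(\eta_0 \cup \{\Lp_{\Gammag}(X)\})$ so $\rho_1^{N_0} = \eta_0$ and $N_0$ is $\eta_0$-sound of degree $n_0 = 0$, and above-$\eta_0$ trees on $N_0$ reduce to above-$\eta_0$ trees on proper segments of $\Lp_{\Gammag}(X)$, each of which is iterable in $\SS_{\alphag}$, so $N_0$ is above-$\eta_0$, $(0,\omega_1+1)$-iterable. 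Thus $(N_0, n_0, \eta_0)$ is a $\Gammag$-appropriate, $\eta_0$-sound triple. Now apply part \ref{item:max_descent_ex} to the trivial descent $(\langle (N_0, n_0, \eta_0) \rangle, \emptyset)$ to obtain a $\Gammag$-maximal $\Gammag$-descent $\mathscr{D}'$ of some length $m$. Its endpoint $(N, n, \eta) = (N_m, n_m, \eta_m)$ is $\Gammag$-stable by definition. If $m \geq 1$, then $N$ is $\eta$-sound by Remark \ref{rem:soundness_for_later_ones}; if $m = 0$, then $(N, n, \eta) = (N_0, n_0, \eta_0)$ is $\eta$-sound by construction. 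In either case $N$ is over a transitive base $X' \in \HC$ with $x \in X'$, namely the base of $N_0$ mapped forward under the concatenated iteration.

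The main technical points to check are: (i) in part \ref{item:max_descent_ex}, that the direct limit $\mathscr{D}_\omega$ really satisfies every clause of Definition \ref{dfn:descent}, in particular that the concatenated tree $\Uu_0 \conc \Uu_1 \conc \ldots$ remains essentially $n$-maximal via $\Sigma$ at the limit (this is routine, since each finite initial segment is already such by induction and the clauses are local); and (ii) in the construction of $N_0$, verifying cleanly that $\Lp_{\Gammag}(X)$ is not collapsed in its rud closure so that $N_0|\eta_0^{+N_0} = \Lp_{\Gammag}(X)$, and that every element of $N_0$ lies in $\Hull_1^{N_0}(\eta_0 \cup \{\Lp_{\Gammag}(X)\})$, yielding the projection $\rho_1^{N_0} = \eta_0$. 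Of the two, (ii) is the main obstacle, although it follows from standard fine structure of lower-part models $\Lp_\Gamma(X)$ with $X$ having ordinal height equal to the largest cardinal of the stack.
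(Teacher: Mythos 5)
Your argument for part \ref{item:max_descent_ex} is essentially the paper's: iterate proper extensions, and if the process never halts, take the union to get a length-$\omega$ descent contradicting Lemma \ref{lem:no_om-descent}. That part is fine (modulo some $\DC_{\RR}$ to make the choices).

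Your construction of the initial triple $(N_0,n_0,\eta_0)$ in part \ref{item:sound_stable} is wrong. You set $N_0 = \J(\Lp_{\Gammag}(X))$ and simultaneously claim (a) $N_0|\eta_0^{+N_0} = \Lp_{\Gammag}(X)$ (so $\OR(\Lp_{\Gammag}(X))$ is the cardinal successor of $\eta_0$ in $N_0$, in particular $\OR(\Lp_{\Gammag}(X))$ is not collapsed to $\eta_0$ over $N_0$) and (b) $\rho_1^{N_0}=\eta_0$. These two claims are in direct conflict: $\rho_1^{N_0}\leq\eta_0$ requires a $\bfSigma_1^{N_0}$ map collapsing $\OR^{N_0}$ (hence $\OR(\Lp_{\Gammag}(X))$) onto $\eta_0$, contradicting (a). In fact the paper proves, in Lemma \ref{lem:bounded_Lp_ZF^-}, that when $\Hull_1^{\SS_{\alphag}}(\{X\})$ is bounded in $\alphag$ (which holds whenever $\SS_{\alphag}$ is admissible, the case of interest throughout), $\J_\beta(\Lp_{\Gammag}(X))$ does not project below $\OR(\Lp_{\Gammag}(X))$ for any $\beta<\om_1$. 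So $(N_0,0,\eta_0)$ is not $\Gammag$-pre-appropriate: it fails $\rho_1^{N_0}\leq\eta_0$.

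Your own iterability argument shows the problem proves too much even without invoking Lemma \ref{lem:bounded_Lp_ZF^-}: you observe that above-$\eta_0$ trees on $N_0$ reduce to trees on proper segments of $\Lp_{\Gammag}(X)$, each iterable via a strategy in $\SS_{\alphag}$, so (collecting these) $N_0$ itself would have an above-$\eta_0$ strategy in $\SS_{\alphag}$. But if in addition $N_0$ were $\om$-sound and projected to $X$, then by definition $N_0\pins\Lp_{\Gammag}(X)$, absurd since $\Lp_{\Gammag}(X)\in N_0$. The entire point of the $\Gammag$-appropriate/$\Gammag$-stable apparatus is to locate projecting mice that are iterable in $V$ but go \emph{beyond} what $\Gammag$ certifies; $\J(\Lp_{\Gammag}(X))$ sits on the wrong side of that divide. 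The source of the initial tuple must instead be the hypothesis $\mathscr{D}$ (or the surrounding context supplying some $\Gammag$-appropriate tuple), which you would then push onto an $X$ containing $x$ (e.g.\ by genericity iteration); after that, replacing $N_0$ by its $\eta_0$-core (as the paper indicates, using Lemma \ref{lem:coring_preserves_mtr}-style fine structure and Remark \ref{rem:soundness_for_later_ones}) gives $\eta_0$-soundness.
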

\begin{proof}
Part \ref{item:max_descent_ex}
is an immediate consequence of the definitions and 
Lemma \ref{lem:no_om-descent}, using some $\DC_{\RR}$.
Note here that $\Gammag$-stability
for the tuple $(N,n,\eta)$
just refers to iterability above $\eta$.

Part \ref{item:sound_stable}
is an easy consequence
of the previous part and earlier lemmas.
(We get $\eta$-soundness because
we can start with $(N_0,n_0,\eta_0)$ such that $N_0$ is $\eta_0$-sound,
by replacing the given $N_0$
by its $\eta_0$-core if needed;
and cf.~Remark \ref{rem:soundness_for_later_ones}.)
\end{proof}

\begin{lem}\label{lem:get_M-stable}($M$)
Let $\mathscr{D}$ be as in Lemma \ref{lem:get_stable}.
\begin{enumerate} 
 \item\label{item:M-max_dexcent_ex} If  $\mathscr{D}\in \HC^M$
then there is an $M$-$\Gammag$-maximal $\Gammag$-descent $\mathscr{D}'\in\HC^M$ extending $\mathscr{D}$.
\item\label{item:sound_M-stable} For each $x\in\HC^M$ there is a transitive $X\in\HC^M$
and  an $M$-$\Gammag$-stable $\Gammag$-appropriate tuple $(N,n,\eta)\in\HC^M$, such that $x\in X$ and $N$ is an $\eta$-sound $X$-premouse.
\end{enumerate}
\end{lem}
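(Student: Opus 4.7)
The plan for part \ref{item:M-max_dexcent_ex} is to mirror the argument of Lemma \ref{lem:get_stable}(\ref{item:max_descent_ex}) but relativized to $\HC^M$. Iteratively extend $\mathscr{D}$ by one step within $\HC^M$ as long as possible. If this procedure failed to terminate, then by $\DC_{\RR}$ in $V$ we would obtain a strictly increasing $\omega$-sequence $\mathscr{D}=\mathscr{D}_0\subsetneq\mathscr{D}_1\subsetneq\cdots$ of $\Gammag$-descents, all lying in $\HC^M$, whose union would be a $\Gammag$-descent of length $\omega$, contradicting Lemma \ref{lem:no_om-descent}. Hence the iteration terminates in finitely many steps at the desired $M$-$\Gammag$-maximal $\mathscr{D}'\in\HC^M$.

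For part \ref{item:sound_M-stable}, the plan is first to exhibit some starting $\Gammag$-appropriate tuple $(N_0,n_0,\eta_0)\in\HC^M$ with $N_0$ an $\eta_0$-sound $X$-premouse, for some transitive $X\in\HC^M$ with $x\in X$, and then to apply part \ref{item:M-max_dexcent_ex} to the trivial descent $(\langle (N_0,n_0,\eta_0)\rangle,\emptyset)$. Soundness at subsequent nodes of the resulting $M$-$\Gammag$-maximal extension is automatic by Remark \ref{rem:soundness_for_later_ones}, so the last node of the extension is $M$-$\Gammag$-stable of the required form.

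To produce the starting tuple in $\HC^M$, I appeal to Assumption \ref{ass:Sigma_M_not_Gamma-guided}. Part (a) (strong $(\alphag,1)$-closedness) ensures that for any high transitive $X\in\HC^M$ with $x\in X$, the structure $\Lp_{\Gammag}(X)$ lies in $\HC^M$: each iterable sound projecting segment is an element of $\OD^{<\alphag}_X\subseteq M$ by the mouse set theorem. Part (b) of the assumption, which provides cofinally many $\xi'<\omega_1^M$ with $\xi'=\omega_1^{\Lp_{\Gammag}(M|\xi')}$, controls the height of these structures below $\omega_1^M$ and supplies natural candidates for $X$, namely the universe of $M|\xi'$ for any such $\xi'$ chosen above an index where $x$ enters the $M$-hierarchy. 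Setting $\eta_0=\xi'$, one builds $N_0$ by adjoining one $\SS$-step above $\Lp_{\Gammag}(X)$ so that the resulting structure projects to $\eta_0$; then $N_0$ is $\eta_0$-sound, $N_0|\eta_0^{+N_0}=\Lp_{\Gammag}(N_0|\eta_0)$ by construction, and above-$\eta_0$ iterability is inherited from the iterability of the segments of $\Lp_{\Gammag}(X)$. The main obstacle is verifying in detail that this single extra $\SS$-level sits inside $\HC^M$ and yields the required exactness together with the non-soundness pattern $\rho_{n_0+1}^{N_0}\leq\eta_0<\rho_{n_0}^{N_0}$; this will follow by combining Lemma \ref{lem:bounded_Lp_ZF^-}(\ref{item:Hull_1_bounded_gives_ZF^-}) with the closure-point form of Assumption \ref{ass:Sigma_M_not_Gamma-guided}(b).
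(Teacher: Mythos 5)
Your argument for part \ref{item:M-max_dexcent_ex} is correct and is the paper's: iteratively extend inside $\HC^M$, and use $\DC_{\RR}$ together with Lemma \ref{lem:no_om-descent} to see the process halts after finitely many steps. (As the paper notes, it does not matter that the iteration strategy itself cannot be followed inside $M$; only the objects need lie in $\HC^M$.)

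The gap is in your construction of the starting tuple for part \ref{item:sound_M-stable}. Your overall plan --- produce a $\Gammag$-appropriate tuple in $\HC^M$, feed it to part \ref{item:M-max_dexcent_ex}, and use Remark \ref{rem:soundness_for_later_ones} plus coring for soundness --- agrees with the paper. But the claim that "adjoining one $\SS$-step above $\Lp_{\Gammag}(X)$" yields a structure projecting to $\eta_0=\rank(X)$ is false, and the lemma you cite as support is in fact the obstruction. Lemma \ref{lem:bounded_Lp_ZF^-}(\ref{item:Hull_1_bounded_gives_ZF^-}) says that when $\Hull_1^{\SS_{\alphag}}(\{X\})$ is bounded in $\alphag$ (which holds whenever $\SS_{\alphag}$ is admissible, the case of primary interest), the stack $\Lp_{\Gammag}(X)$ satisfies $\ZF^-$ and its ordinal height is a cardinal of $L(\Lp_{\Gammag}(X))$. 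Consequently no finite number of $\Ss$- or $\J$-steps above $\Lp_{\Gammag}(X)$ can project below $\OR^{\Lp_{\Gammag}(X)}$, much less down to $\eta_0<\OR^{\Lp_{\Gammag}(X)}$; so the tuple $(N_0,n_0,\eta_0)$ you describe is never $\Gammag$-pre-appropriate. (Your choice $\eta_0=\xi'=\om_1^{\Lp_{\Gammag}(M|\xi')}$ is also in tension with $\Gammag$-exactness: for $N_0|\eta_0^{+N_0}=\Lp_{\Gammag}(N_0|\eta_0)$ you would need $\eta_0$ to be the \emph{largest} cardinal of the stack, which the hypothesis $\xi'=\om_1^{\Lp}$ alone does not give.)

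The paper's recipe, which you should substitute, is to take $N_0\pins M$, a proper initial segment of $M$ itself. Then $N_0\in\HC^M$ is automatic, above-$\eta_0$ iterability comes from $\Sigma_M$, and by Assumption \ref{ass:Sigma_M_not_Gamma-guided}(a) the stack $\Lp_{\Gammag}(M|\eta_0)$ is a bounded initial segment of $M|\om_1^M$, so one may take $N_0$ to be the least segment of $M$ extending that stack with $\rho_\om^{N_0}\le\eta_0$, replaced by its $\eta_0$-core if necessary. This replaces the third paragraph of your proposal entirely.
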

\begin{proof}
This is like for Lemma \ref{lem:get_stable}, but restricting
attention to elements of $\HC^M$.
(Note that most of the argument,
including following the relevant iteration strategy, need not take place in $M$.)
In Part \ref{item:sound_M-stable}
we can start with $N_0\pins M$.
\end{proof}
\subsection{Mtr-suitable mice
and admissible gaps}
\begin{dfn}\label{dfn:mtr-suitable}
 Let $X\in\HC$ be transitive,
 and $N$ be an $X$-premouse.
 We say that $N$ is \emph{almost mtr-suitable}
 iff for some $n<\om$:
 \begin{enumerate}
 \item $X$ is high.
\item $N$ is $\om$-small.
 \item $N$ is $n$-sound, with
$\rho_{n+1}^N=X$.
  \item $N$ has $\om$ Woodin cardinals with supremum $\lambda$, where 
$\rank(X)<\lambda\leq\rho_n^N$.
  \item\label{item:power_set_C_Gammag} For each strong cutpoint 
$\eta<\lambda$ of $N$, $N$ is an $\eta$-mtr (of degree $n$).
   \item\label{item:Gammag-boundedness} ($\Gammag$-boundedness) Let 
$\theta,\xi$ be such that $\rank(X)\leq\theta<\xi<\lambda$,
  $\theta$ is a strong cutpoint of $N|\xi$ and $\rho_\om^{N|\xi}\leq\theta$.
  Then $N|\xi\pins \Lp_{\Gammag}(N|\theta)$.
  \item $N$ is $(n,\om_1+1)$-iterable.
 \end{enumerate}
 Clearly $n$ is determined by $N$; say $n$ is the \emph{degree} of $N$.
 
An almost mtr-suitable premouse $N$  is \emph{mtr-suitable} iff, with notation as above,
\begin{enumerate}[resume*]
 \item $X$ is sufficient, and
 \item  $p^N_{n+1}\cap\lambda=\emptyset$.\qedhere
\end{enumerate}
\end{dfn}

Note that each almost mtr-suitable premouse $N$
has a unique $(n,\om_1+1)$-strategy $\Sigma_N$,
and so is in fact $(n,\om_1,\om_1+1)^*$-iterable;
see the proof of Lemma \ref{lem:tame_projecting_stacks}.

The next lemma  is  easy
to see:
\begin{lem}\label{lem:mtr-suitable_o_prop_seg_mtr}
 Let  $X\in\HC$ be transitive,
 and $N$ be an almost mtr-suitable $X$-premouse. Let $X'=X$ or $X'=N|\eta$
 where $\eta$ is a strong cutpoint of $N$. Let $N_0\pins N$ with $X'\in N_0$, and let $N'$ be the reorganization of $N_0$ as an $X'$-premouse.
 Then $N'$ is not an $\eta'$-mtr at any $\eta'$. Therefore
 $N'$ is not almost mtr-suitable.
\end{lem}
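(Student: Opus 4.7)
The plan is to assume for contradiction that $N'$ is an $\eta'$-mtr of some degree $m$, so $\eta'$ is a strong cutpoint of $N'$ with $\rho_{m+1}^{N'}\leq\eta'<\rho_m^{N'}$ and $N'|(\eta'^+)^{N'}=\Lp_{\Gammag}(N'|\eta')$, and derive a contradiction from the interaction of $N$'s $\Gammag$-boundedness with its being an $\eta$-mtr at every strong cutpoint $\eta<\lambda$. First I reduce to the case $X'=X$: when $X'=N|\eta$, the reorganization of $N_0$ preserves the extender sequence above $\eta$, so $\Gammag$-boundedness and $\delta$-mtr-ness at strong cutpoints $\delta\in[\eta,\lambda)$ transfer to the reorganized picture; so take $N'=N_0=N|\xi$ with $\xi<\OR^N$.

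Next I argue $\eta'<\lambda$. The value $\eta'\geq\lambda$ is ruled out because $N'|\eta'$ would then contain $N|\lambda$ together with its $\omega$ Woodin cardinals; by the specific calibration of the S-gap $[\alphag,\betag]$ (with $\alphag$ sitting below the level at which $L(\RR)$ knows iteration strategies for such $\omega$-Woodin bases), no sound mouse projecting to $N'|\eta'$ has a strategy in $\SS_{\alphag}$, forcing $\Lp_{\Gammag}(N'|\eta')$ trivial over $N'|\eta'$ and contradicting $(\eta'^+)^{N'}>\eta'$. With $\eta'<\lambda$, $\omega$-smallness of $N$ and the strong-cutpoint property of each Woodin of $N$ force $\eta'$ to also be a strong cutpoint of $N$ (any extender of $N$ with critical point $\leq\eta'$ must be indexed below the next Woodin above $\eta'$, hence below $\xi$, hence in $N'$). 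By $N$ being $\eta'$-mtr, $N|(\eta'^+)^N=\Lp_{\Gammag}(N|\eta')=\Lp_{\Gammag}(N'|\eta')$, so by exactness $(\eta'^+)^{N'}=(\eta'^+)^N<\lambda$.

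The remaining step is to show $\xi<\lambda$, after which the $\Gammag$-boundedness of $N$ applies at $\theta=\eta'$, $\xi$: the hypotheses $\rank(X)\leq\eta'<\xi<\lambda$, $\eta'$ a strong cutpoint of $N|\xi=N'$, and $\rho_\omega^{N|\xi}\leq\rho_{m+1}^{N'}\leq\eta'$ all hold, yielding $N'=N|\xi\pins\Lp_{\Gammag}(N|\eta')=N'|(\eta'^+)^{N'}$---but $N'|(\eta'^+)^{N'}\ins N'$, so $N'\pins N'$, contradiction. To rule out $\xi\geq\lambda$: the $\Sigma_\omega^{N|\xi}$-definable surjection from $\eta'$ onto $N|\xi$ witnessing the projectum condition would, at sufficiently many next levels of the $N$-hierarchy, appear as an element of $N$ and collapse some Woodin $\delta_i\leq\xi$ (a cardinal of $N$), contradicting $N$'s structure; a direct fine-structural check handles any top-edge residual. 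The main obstacle is the exclusion of $\eta'\geq\lambda$, which appeals to the calibration of $\alphag$ and the nonexistence of iterable mice over $\omega$-Woodin bases within $\SS_{\alphag}$; the rest is essentially ordinal-counting against $\Gammag$-boundedness.
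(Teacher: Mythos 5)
Your overall plan---reduce to $X'=X$, pin $\eta'$ and $\xi$ below $\lambda$, and then invoke $\Gammag$-boundedness to land $N|\xi$ strictly inside $\Lp_{\Gammag}(N|\eta')=N'|(\eta'^+)^{N'}\ins N'$ for a contradiction---is the right idea, and your handling of $\eta'<\lambda\leq\xi$ (coding $N|\xi$ by a subset of $\eta'$ inside $N$ and collapsing a Woodin $\delta_i\in(\eta',\lambda)$) is correct. But the step on which you yourself flag the weight of the argument, the exclusion of $\eta'\geq\lambda$, is wrong as given. You argue that $\Lp_{\Gammag}(N'|\eta')$ would be ``trivial over $N'|\eta'$'' because no sound mouse projecting to $N'|\eta'$ has a strategy in $\SS_{\alphag}$, ``contradicting $(\eta'^+)^{N'}>\eta'$''. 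This doesn't work on two counts. First, $\Lp_{\Gammag}(Y)$ is never degenerate in the sense you need: the lowest $\J$-levels over any transitive $Y$, which use no extenders, are trivially $(\om,\om_1+1)$-iterable (with a strategy in every admissible level), so $\Lp_{\Gammag}(N'|\eta')$ always has ordinal height strictly above $\eta'$. Second, even granting your characterization of the stack, there is no tension with $(\eta'^+)^{N'}>\eta'$: $\eta'$-exactness only asserts $\OR^{\Lp_{\Gammag}(N'|\eta')}=(\eta'^+)^{N'}$, and a ``short'' $\Lp$-stack is perfectly compatible with that. The relevant iterability for membership in $\Lp_{\Gammag}(N'|\eta')$ is iterability \emph{above} $\eta'$ (hence above all of $N$'s Woodins when $\eta'\geq\lambda$), so appealing to $\SS_{\alphag}$'s ignorance of strategies for $\om$-Woodin bases is beside the point---those strategies act below $\eta'$, which is frozen into the base of the stack.

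What is actually needed for $\eta'\geq\lambda$ is a positive claim: that the proper segments $N|\gamma$ for $\gamma\in(\eta',\xi]$ are themselves captured by $\Lp_{\Gammag}(N|\eta')$, because their (above-$\eta'$) iteration strategies lie in $\SS_{\alphag}$; once one has this, $\xi<\OR^{\Lp_{\Gammag}(N|\eta')}=(\eta'^+)^{N'}\leq\xi$ gives the contradiction exactly as in your small-$\eta'$ case. This is a statement about $Q$-structure-guided iterability of $N$'s segments above $\lambda$, not about the absence of mice, and it is not what your proof supplies. Finally, a minor point: your step showing $\eta'$ is a strong cutpoint of $N$ itself (not just of $N|\xi$) is both unnecessary (the $\Gammag$-boundedness clause only asks $\theta$ to be a strong cutpoint of $N|\xi$) and incompletely argued---your ``indexed below the next Woodin above $\eta'$, hence below $\xi$'' tacitly assumes $\xi$ is at least that next Woodin, which you never established, so an extender indexed in the window $(\xi,\delta_i]$ with small critical point is not excluded by what you wrote.
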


\begin{lem}\label{lem:cofinal_mtr}
Suppose $N$ satisfies all requirements
of \tu{(}almost\tu{)} mtr-suitability, except that we only know
condition \ref{item:power_set_C_Gammag} holds
for cofinally many strong cutpoints $\eta<\lambda$. Then $N$ is \tu{(}almost\tu{)} mtr-suitable.
\end{lem}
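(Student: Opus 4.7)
The plan is to fix an arbitrary strong cutpoint $\eta$ of $N$ with $\rank(X)\leq\eta<\lambda$ and verify condition \ref{item:power_set_C_Gammag} of almost mtr-suitability at $\eta$, namely that $N$ is an $\eta$-mtr of degree $n$. The iterability clause of ``$\eta$-mtr'' upgrades automatically from the given $(n,\omega_1+1)$-iterability to $(n,\omega_1,\omega_1+1)^*$-iterability by the remark following Definition \ref{dfn:mtr-suitable}, and the projectum requirement $\rho_{n+1}^N\leq\eta<\rho_n^N$ is immediate from almost mtr-suitability. So the content is $\eta$-exactness: $N|(\eta^+)^N=\Lp_{\Gammag}(N|\eta)$.

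The boundedness direction $N|(\eta^+)^N\ins\Lp_{\Gammag}(N|\eta)$ follows immediately from condition \ref{item:Gammag-boundedness} applied with $\theta=\eta$ and $\xi$ ranging over the (cofinal in $(\eta^+)^N$) set of ordinals with $\rho_\om^{N|\xi}\leq\eta$. For the fullness direction $\Lp_{\Gammag}(N|\eta)\ins N$, I would invoke the cofinality hypothesis to pick a strong cutpoint $\eta^*\in(\eta,\lambda)$ of $N$ at which $N$ is known to be an $\eta^*$-mtr, so that $N|(\eta^{*+})^N=\Lp_{\Gammag}(N|\eta^*)$. Given a sound $P\pins\Lp_{\Gammag}(N|\eta)$ with $\rho_\om^P=\eta$, I would translate $P$ into the $X$-premouse $P^\#$ with $P^\#|\eta=N|\eta$ and extender sequence above $\eta$ equal to that of $P$, and show $P^\#\pins N|(\eta^{*+})^N$; this gives $P^\#\pins N$, hence $P\pins N$. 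Both $P^\#$ and $N|(\eta^{*+})^N$ are $\om$-small and tame $X$-premice with $\eta$ as a common strong cutpoint, and both carry unique above-$\eta$ iteration strategies (Lemma \ref{lem:tame_projecting_stacks} for $P^\#$, the restriction of $N$'s unique strategy for $N|(\eta^{*+})^N$), so I would compare them above $\eta$. The $P^\#$-side does not drop in model or degree, since $P^\#$ is $\eta$-sound and projects to $\eta$, and comparison terminates with $P^\#$ (or an iterate) appearing as an initial segment of the $N$-side's final model. The conclusion $P^\#\pins N|(\eta^{*+})^N$ then amounts to showing that both sides of the comparison are trivial; any nontrivial iteration on the $N|(\eta^{*+})^N$-side would, by MTR preservation (Lemma \ref{lem:MTR_preservation}), yield an $i(\eta^*)$-mtr iterate that remains $\Gammag$-full at $i(\eta^*)$, and the corresponding ``overhang'' of the $P^\#$-iterate would translate into a sound $\Gammag$-mouse over the iterate of $N|\eta^*$ that escapes $\Lp_{\Gammag}$ of that base, contradicting $i(\eta^*)$-fullness.

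The mtr-suitable version of the lemma follows immediately from the almost mtr-suitable version, since sufficiency of $X$ and the parameter condition $p_{n+1}^N\cap\lambda=\emptyset$ do not interact with condition \ref{item:power_set_C_Gammag}. The main obstacle will be executing the comparison argument in the previous paragraph fully rigorously, namely extracting from a hypothetically nontrivial $N|(\eta^{*+})^N$-side a concrete $\Gammag$-mouse-segment over the iterate of $N|\eta^*$ contradicting $i(\eta^*)$-fullness. I expect this to be handled by a Q-structure analysis at limit stages of both trees (legitimate, since $\Gammag$-lower-part closure is in hand at $\eta^*$) together with a P-construction style translation between sound $\Gammag$-mice over $N|\eta$ and sound $\Gammag$-mice over $N|\eta^*$, essentially following the patterns of the proofs of Lemmas \ref{lem:MTR_preservation} and \ref{lem:coring_preserves_mtr}.
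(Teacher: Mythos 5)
Your handling of the boundedness direction, the iterability upgrade, and the projectum condition is fine, and the observation that $\eta$-exactness is the real content is correct. But your proposed argument for fullness takes a different and, I think, more delicate route than the paper, and as written it has genuine gaps.

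The paper's argument does \emph{not} compare a $\Gammag$-mouse over $N|\eta$ with the lower-part model $N|(\eta^{*+})^N$. Instead, it compares a suitable $R\pins\Lp_{\Gammag}(N|\eta)$ (projecting to $\eta$, with $\eta^{+R}=\eta^{+N}$, so not an initial segment of $N$ if exactness fails) with $N$ \emph{itself}, above $\eta$, and argues that the resulting coiteration has no drops on either side. The punchline is then an iterability transfer rather than a direct segment relation: since $R$ is above-$\eta$ iterable via a strategy in $\SS_{\alphag}$ (and that normal strategy extends to stacks by Lemma~\ref{lem:tame_projecting_stacks}), the coiteration yields that $N$ is above-$\eta$, hence above-$\eta'$, iterable in $\SS_{\alphag}$ for a strong cutpoint $\eta'>\eta$ where condition~\ref{item:power_set_C_Gammag} holds. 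That immediately contradicts $\eta'$-exactness: the $\eta'$-core of $N$ would be a $\Gammag$-mouse projecting $\leq\eta'$, hence a segment of $\Lp_{\Gammag}(N|\eta')=N|(\eta'^+)^N\pins N$, from which one reconstructs $\Core_{n+1}(N)$ inside $N$, a contradiction. So the paper never needs to force a trivial comparison or to relate $\Lp_{\Gammag}(N|\eta)$-mice to $\Lp_{\Gammag}(N|\eta^*)$-mice by translation.

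In your version I see three concrete problems. First, the appeal to MTR preservation (Lemma~\ref{lem:MTR_preservation}) is misplaced: that lemma is about non-dropping $n$-maximal iterations of the full premouse $N$, the object that is a $\delta$-mtr; it says nothing about iterations of the proper initial segment $N|(\eta^{*+})^N$, which is not itself an mtr and is merely a cardinal segment of one. A comparison tree $\Uu$ on $N|(\eta^{*+})^N$ corresponds to a tree on $N$ with an initial drop, so you are already in dropping territory and the lemma does not apply. Second, to conclude $P^\#\pins N|(\eta^{*+})^N$ you need the comparison to be trivial on both sides, but the two structures project to different points ($\eta$ versus $\eta^*$), so the classical ``two sound mice over a common base with the same projectum coiterate with one side fixed'' argument does not apply; you would have to produce the triviality by other means, and your sketch (the ``overhang'' translating to a $\Gammag$-mouse over the iterate of $N|\eta^*$) is too vague — in particular it is unclear that $\eta^*$, or its iteration image, remains a strong cutpoint on the $P^\#$-side of the comparison. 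Third, even the ambition of $P^\#\pins N|(\eta^{*+})^N$ tacitly assumes $\OR^{P^\#}<\eta^{*+N}$, which is not obvious a priori when exactness at $\eta$ is precisely the thing in doubt.

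In short: your strategy of translating $\Lp_{\Gammag}(N|\eta)$-mice to the $\eta^*$-level and arguing by comparison there might possibly be made to work, but it is substantially harder than the paper's route, and the specific mechanisms you invoke (MTR preservation for a proper segment, and a P-construction-style translation across the interval $(\eta,\eta^*)$, which may contain Woodins and hence is not a simple forcing translation) are not set up correctly. The paper's approach — compare with $N$ itself and then contradict by relocating iterability up to a known exact $\eta'$ — bypasses all of this.
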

\begin{proof}
If $\eta$ is a strong cutpoint of $N$,
compare $N$ with $R$ where $R\pins\Lp_{\Gammag}(N|\eta)$ projects to $\eta$ and $\eta^{+R}=\eta^{+N}$.
Then $R$ and $N$ coiterate to a common model,
with no drops in model or degree on either side.
But then by the above-$\eta$ iterability
of $R$ in $\SS_{\alphag}$,
and since the normal strategy for $R$ extends to one for stacks,
$N$ is above-$\eta'$ iterable  in
$\SS_{\alphag}$,
for some $\eta'>\eta$ where condition \ref{item:power_set_C_Gammag} holds, and this is a contradiction.
\end{proof}

\begin{lem}\label{lem:mtr-suitable_mouse}Suppose $\SS_{\alphag}$ is admissible. Let $X$ be high
 and $(N,n,\eta)$ be  $\Gammag$-stable
 $\Gammag$-appropriate,
 with $N$ over $X$.
 Then there is a countable successor length tree
 $\Tt$ on $N$, via $\Sigma_{N,\eta}$ \tu{(}Definition \ref{dfn:descent}\tu{)},
 such that $b^\Tt$ does not drop
 in model or degree, and there is $\delta<\OR^{M^\Tt_\infty}$ such that $\delta$ is Woodin in
 $M^\Tt_\infty$ \tu{(}hence a strong cutpoint of $M^\Tt_\infty$\tu{)},
 and an $(M^\Tt_\infty,\Coll(\om,\delta))$-generic
 $G$ 
 such that $M'$ is  mtr-suitable 
 and $\Gammag$-stable,
 where $M'$ is the reorganization
 of $M^\Tt_\infty[G]$ as a premouse over $(M^\Tt_\infty|\delta,G)$. Moreover, if $N$ is $\eta$-sound then we can
 arrange that $M'$ is sound.
\end{lem}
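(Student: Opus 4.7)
My plan is to iterate $N$ above $\eta^{+N}$ via $\Sigma_{N,\eta}$ until the iterate has $\omega$ Woodin cardinals above $i^\Tt(\eta)$, with matching $\Lp_{\Gammag}$-fullness at each intervening strong cutpoint, and then collapse below the least such Woodin to obtain $M'$.

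The core step, and the main obstacle, is a ``one-Woodin'' lemma: given a $\Gammag$-stable, $\Gammag$-appropriate $(P,m,\zeta)$ with $P$ over a high base, there is a countable $m$-maximal tree $\Uu$ on $P$ via $\Sigma_{P,\zeta}$ above $\zeta^{+P}$, non-dropping on $b^\Uu$, such that $M^\Uu_\infty$ has a Woodin $\delta^\ast>i^\Uu(\zeta)^{+M^\Uu_\infty}$ with $M^\Uu_\infty|(\delta^\ast)^{+M^\Uu_\infty}=\Lp_{\Gammag}(M^\Uu_\infty|\delta^\ast)$. For this I would combine admissibility of $\SS_{\alphag}$ (to reflect a $\Sigma_1$ statement asserting that a suitable iterate exists, in the spirit of Lemma \ref{lem:J_alpha-bar_admissible}) with $\Gammag$-stability: stability rules out the alternative that the iterate exposes a proper $\Gammag$-appropriate initial segment, which would provide a non-trivial extension of the descent ending at $(P,m,\zeta)$. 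The $\Lp_{\Gammag}$-matching at $\delta^\ast$ is then forced, since any segment of $M^\Uu_\infty$ beyond $(\delta^\ast)^{+M^\Uu_\infty}$ projecting to $\delta^\ast$ would likewise extend the descent.

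Iterating the one-Woodin step: the tuple $(M^\Uu_\infty,m,\delta^\ast)$ is again $\Gammag$-appropriate by Lemma \ref{lem:MTR_preservation}, and $\Gammag$-stable because descents from it pull back via $\Uu$ and the tail strategy to descents from $(P,m,\zeta)$. Repeating $\omega$ times and concatenating (each round is countable and non-dropping) gives a countable $n$-maximal tree $\Tt$ on $N$ via $\Sigma_{N,\eta}$, non-dropping on $b^\Tt$, whose last model $M^\Tt_\infty$ has Woodin cardinals $\delta_0<\delta_1<\cdots$ above $i^\Tt(\eta)$, with $M^\Tt_\infty|\theta^{+M^\Tt_\infty}=\Lp_{\Gammag}(M^\Tt_\infty|\theta)$ at every strong cutpoint $\theta\in(i^\Tt(\eta),\lambda)$, where $\lambda:=\sup_m\delta_m\leq\rho_n^{M^\Tt_\infty}$.

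Setting $\delta:=\delta_0$ and letting $G$ be $\Coll(\om,\delta)$-generic over $M^\Tt_\infty$, let $M'$ be the reorganization of $M^\Tt_\infty[G]$ as a premouse over $X':=(M^\Tt_\infty|\delta,G)$. I would then verify the clauses of Definition \ref{dfn:mtr-suitable}: $X'$ is high (coding $X$) and sufficient (the $\zeta$-sound $\zeta$-mGW witnessing sufficiency of $X$ lies in $X'$); $M'$ is $\omega$-small, is $n$-sound with $\rho_{n+1}^{M'}=X'$, and has $\rho_n^{M'}\geq\lambda$, all by standard small-forcing fine structure of Lemma-\ref{lem:forcing} type (since $\delta<\rho_n^{M^\Tt_\infty}$); its $\om$ Woodin cardinals above $X'$ are $\delta_1<\delta_2<\cdots$ with supremum $\lambda$; the mtr and $\Gammag$-boundedness clauses at each strong cutpoint below $\lambda$ follow from the construction together with preservation of $\Lp_{\Gammag}$-fullness under small forcing above $\delta$; above-$\delta$ iterability of $M'$ is lifted from $\Sigma_{N,\eta}$; and $p^{M'}_{n+1}\cap\lambda=\emptyset$ follows from the non-dropping character of $b^\Tt$. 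Finally, $\Gammag$-stability of $M'$ transfers from $(N,n,\eta)$ by pulling back descents via $\Tt$ and bridging across the generic collapse using the P-construction (Definition \ref{dfn:P-con}); and if $N$ is $\eta$-sound then $M^\Tt_\infty$ is $i^\Tt(\eta)$-sound (as $b^\Tt$ is non-dropping), so $M'$ is sound over $X'$.
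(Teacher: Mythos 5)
Your overall architecture — iterate via $\Sigma_{N,\eta}$, using admissibility of $\SS_{\alphag}$ to reflect the existence of the next Woodin and $\Gammag$-stability to pin down the $\Lp_{\Gammag}$-matching, repeat $\om$ times, then collapse to reorganize — is close to the paper's. The paper implements your ``one-Woodin step'' by working with the sound-Q-structure short-tree strategy $\Psi_{\Gammag}$ directly: admissibility shows $\Sigma_N$ cannot agree with $\Psi_{\Gammag}$ on all trees (else $N$ would be $(\om,\om_1)$-iterable in $\SS_{\alphag}$, contradicting $\Gammag$-stability), and one takes the first divergence $\Tt_0$, where stability forces $\delta(\Tt_0)$ Woodin in $M^{\Tt_0}_{b_0}$ and $\Lp_{\Gammag}(M(\Tt_0))=M^{\Tt_0}_{b_0}|\delta(\Tt_0)^+$. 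That matches the spirit of your sketch. But there are three genuine gaps in your final verification.

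First, the claim that ``$p^{M'}_{n+1}\cap\lambda=\emptyset$ follows from the non-dropping character of $b^\Tt$'' is false. Non-dropping only gives $p^{M^\Tt_\infty}_{n+1}=i^\Tt(p^N_{n+1})$, with no reason those ordinals avoid the interval $(\delta_0,\lambda)$; after Lemma \ref{lem:forcing} one gets $p^{M'}_{n+1}=p^{M^\Tt_\infty}_{n+1}\setminus(\delta_0+1)$, which can still meet $\lambda$. This is exactly why the paper does not fix $\delta$ to be the least Woodin: it takes $\delta'=j(\delta(\Tt_m))$ with $m$ chosen so that the finite set $p^P_{n+1}\cap j(\lambda)$ is contained in $\delta'$, and reorganizes over $(P|\delta',G)$.

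Second, the ``moreover'' soundness argument does not work. Even if $N$ is $\eta$-sound and $b^\Tt$ is non-dropping, $M^\Tt_\infty$ need not be $\delta$-sound for any $\delta>i^\Tt(\eta)$: iteration destroys $(n+1)$-soundness above the critical points, so $M'$ as you define it need not be sound over $X'$. The paper passes to the $\delta'$-core $M''$ of $M'$, appeals to Lemma \ref{lem:coring_preserves_mtr} to retain mtr-suitability, and then uses full normalization of $\Tt_0\conc\ldots\conc\Tt_m\conc\Uu$ to exhibit $M''$ as an iterate as required by the lemma statement.

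Third, you assert $X'$ is sufficient because ``the $\zeta$-sound $\zeta$-mGW witnessing sufficiency of $X$ lies in $X'$'', but the hypothesis is only that $X$ is high, not sufficient, so there is no such witness to import. Without sufficiency of $X'$, you only get almost-mtr-suitability. The paper manufactures sufficiency by the extra genericity iteration you omit: iterate $M^\Tt_\infty$ to $P$ so that $M^\Tt_\infty$ becomes extender-algebra generic at $j(\delta(\Tt_0))$, then absorb that generic into $G$. Since $M^\Tt_\infty$ is shown to be almost mtr-suitable, hence a $\delta(\Tt_m)$-mGW, whose $\delta(\Tt_m)$-core is a sound mGW by Lemma \ref{lem:core_of_mGW}, this puts a $\zeta$-sound $\zeta$-mGW into the new base $(P|\delta',G)$. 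Your base $(M^\Tt_\infty|\delta_0,G)$ contains no such witness on its face.
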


\begin{proof}
 We may assume $X=N|\eta$. We will first find an almost mtr-suitable iterate of $N$. Let $\Psi_{\Gammag}$ be the ``sound Q-structure $\Gammag$-short tree strategy''; that is,
 given a limit length tree $\Uu$ on some premouse, $\Psi_{\Gammag}$ is the unique
 $\Uu$-cofinal branch $b$ such that
 $Q\ins M^{\Uu}_b$, where $Q\pins\Lp_{\Gammag}(M(\Uu))$ is the Q-structure for $M(\Uu)$,
 if such $(Q,b)$ exists, and otherwise $b$ is undefined. Note that any tree on $N$
 which is via $\Psi_{\Gammag}$
 is also via $\Sigma_N$.
 
Now there is a limit length
 tree $\Tt$ on $N$, via $\Psi_{\Gammag}$,
 such that letting $b=\Sigma_N(\Tt)$,
 then $\Tt\conc b$ is not via $\Psi_{\Gammag}$.
 For otherwise using the admissibility
 of $\SS_{\alphag}$, $N$ would be $(\om,\om_1)$-iterable, hence $(\om,\om_1+1)$-iterable,
 in $\SS_{\alphag}$. (Consider
 the statement that ``for every putative $n$-maximal
 tree $\Tt$ on $N$, either $\Tt$ has wellfounded models or there is a $\Tt$-maximal branch $b$ such that,
 letting $\eta=\sup(b)$, then $(\Tt\rest\eta)\conc b$ is via $\Psi_{\Gammag}$'',
 noting that this can written
 in the form ``$\all x\in\RR\ \varphi(x,N)$'',
 with some $\Sigma_1$ formula $\varphi$.)

 Let $\Tt_0$ witness this, and $b_0=\Sigma_N(\Tt_0)$. Note then that,
 using $\Gammag$-stability, we have
 \[ \Gammag(M(\Tt_0))= M^{\Tt_0}_{b_0}|\delta(\Tt_0)^{+M^{\Tt_0}_{b_0}} \]
and  $M^{\Tt_0}_{b_0}\sats$``$\delta(\Tt_0)$ is Woodin'' and $b_0$ does not drop in model or degree, and further, $\delta(\Tt_0)<\rho_n^{M^{\Tt_0}_{b_0}}$.
It follows that $(M^{\Tt_0}_{b_0},n,\delta(\Tt_0))$ is also $\Gammag$-stable
$\Gammag$-appropriate.

Repeating the proceeding process
$\om$ many times,
we get an $n$-maximal tree of the form $\Tt=\Tt_0\conc\Tt_1\conc\Tt_2\conc\ldots$,
and letting $b=\Sigma_N(\Tt)$
(note $b$ is actually trivial
as $\Tt$ is equivalent to a stack of length $\om$), then $b$ does not drop in model or degree,
and $\lambda=\delta(\Tt)$
is a limit of Woodins,
and in fact each $\delta(\Tt_i)$ is Woodin in $M^\Tt_b$.

 Since $N$ is $\om$-small, we have $M^\Tt_\infty=\Ss_\xi(M(\Tt))$ for some $\xi\in\Lim_0$. Then $M^\Tt_\infty$ is almost mtr-suitable
 (recall again that we assumed
 that $X=N|\eta$;
 the original $N$ need not be iterable below $\eta$); the fact that $(M^\Tt_\infty,n,\eta)$
 is $\Gammag$-stable follows
 from the $\Gammag$-stability
 of $(N,n,\eta)$ together with (full) normalization.
 
In particular, $M^\Tt_\infty$ is mGW,
as witnessed by each of the $\delta(\Tt_n)$.

Now form a correct tree $\Uu$ on $M^\Tt_\infty$,
with last model $P$,
iterating to make $M^\Tt_\infty$
generic for the extender algebra of $P$
at $\delta=j(\delta(\Tt_0))$, where $j:M^\Tt_\infty\to P$ is the iteration map.
Let $G$ be $(P,\Coll(\om,\delta))$-generic.
Let $n<\om$ be such that $(p_{n+1}^P\cap j(\lambda))\sub \delta'= j(\delta(\Tt_n))$. Let $M'$ be the reorganization
of $P[G]$ as a premouse over $(P|\delta',G)$. 

It is now straightforward to see that $M'$ 
is mtr-suitable, and clearly (using normalization,
see Remark \cite{fullnorm}) it is $\Gammag$-stable.

Finally suppose that $N$ is $\eta$-sound,
and let $M''$
be the $\delta'$-core of $M'$.
Then $M''$ is $\delta'$-sound,
and note that by Lemma \ref{lem:coring_preserves_mtr}
it is also mtr-suitable.
To see that this can be achieved
with an iteration tree, just use the normalization of $\Tt_0\conc\ldots\conc\Tt_n\conc\Uu$,
and note that this works.
\end{proof}

\begin{lem}\label{item:iterate_pres_mtr-suitability_stability}
 Let $N$ be  mtr-suitable of degree $n$. Let $\Tt$ be a successor length tree on $N$ via $\Sigma_N$. Then:
 \begin{enumerate}
\item\label{item:stability_non-dropping} If $b^\Tt$ does not drop then 
$M^\Tt_\infty$ is mtr-suitable; hence no $P\pins M^\Tt_\infty$ is mtr-suitable.
\item\label{item:stability_dropping} If $N$ is $\Gammag$-stable and $b^\Tt$ drops then no $P\ins 
M^\Tt_\infty$ is  mtr-suitable.
\end{enumerate}
\end{lem}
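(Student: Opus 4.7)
The proof splits naturally along the two parts. For (1) the strategy is to verify each clause of mtr-suitability for $M := M^\Tt_\infty$ individually, drawing on the iteration-preservation apparatus already established (Lemmas \ref{lem:MTR_preservation}, \ref{lem:overspill}, \ref{lem:cofinal_mtr}). For (2) the plan is to argue by contradiction: a mtr-suitable $P \ins M$ together with the drop on $b^\Tt$ will yield a $\Gammag$-descent properly extending the one witnessing the stability of $N$.

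Details of (1): let $i = i^\Tt : N \to M$ be the iteration map. Since $X \in N|0$, the base $X$ is fixed by $i$, so $M$ is a premouse over the same sufficient $X$, and $\om$-smallness transfers. Standard non-dropping fine structure yields $M$ is $n$-sound with $\rho_{n+1}^M = X$, and $p_{n+1}^M = i(p_{n+1}^N)$; since $i$ is cofinal on $\lambda^N$ to $\lambda^M$, this forces $p_{n+1}^M \cap \lambda^M = \emptyset$. The $\om$ Woodins of $N$ transfer to $\om$ Woodins of $M$ with supremum $\lambda^M \leq \rho_n^M$. Lemma \ref{lem:MTR_preservation} gives $M$ is an $i(\delta)$-mtr for each strong cutpoint $\delta < \lambda^N$ of $N$; since the images of the $\delta_k^N$ are cofinal in $\lambda^M$, Lemma \ref{lem:cofinal_mtr} upgrades this to every strong cutpoint of $M$ below $\lambda^M$. $\Gammag$-boundedness for $M$ comes from Lemma \ref{lem:overspill}(\ref{item:R_is_<j(delta)-bounded}), applied with $N$, $i$ in place of $N$, $j$ and $\delta$ a Woodin of $N$. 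Above-$X$ iterability is inherited via the tail of $\Sigma_N$. Hence $M$ is mtr-suitable, and the ``hence'' clause is immediate from Lemma \ref{lem:mtr-suitable_o_prop_seg_mtr}.

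Details of (2): suppose toward contradiction that $N$ is $\Gammag$-stable, $b^\Tt$ drops in model or degree, and $P \ins M$ is mtr-suitable, of degree $m'$. Fix a $\Gammag$-maximal $\Gammag$-descent $\mathscr{D} = (\langle (N_i, n_i, \eta_i) \rangle_{i \leq m_0}, \langle \Tt_i \rangle_{i < m_0})$ whose terminal node is $(N, n, \eta_{m_0})$. By mtr-suitability of $P$ and $\om$-smallness of $M$, the $\om$ Woodins of $P$ are cofinal in $\lambda^P$, and $\nu(\Tt) < \lambda^M$. Choose a Woodin $\eta' = \delta_k^P$ of $P$ with $\eta' \geq \nu(\Tt)$, and replace $P$ by its $\eta'$-core (still mtr by Lemma \ref{lem:coring_preserves_mtr}), so $P$ is $\eta'$-sound. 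Now append $\Tt_{m_0} := \Tt$ and $(N_{m_0+1}, n_{m_0+1}, \eta_{m_0+1}) := (P, m', \eta')$ to $\mathscr{D}$. We then verify this is a $\Gammag$-descent: $\Gammag$-pre-appropriateness of $(P, m', \eta')$ follows from mtr-suitability (in particular $\eta'$-exactness at the Woodin $\eta'$ of $P$); above-$\eta'$, $(m',\om_1+1)$-iterability of $P$ is inherited from $M$'s iterability (using Lemma \ref{lem:tame_projecting_stacks}); the drop on $b^\Tt$ accommodates the clause permitting $N_{m_0+1} \ins M$ without strict inequality; and $\nu(\Tt) \leq \eta_{m_0+1}$ holds by choice. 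This contradicts the $\Gammag$-maximality of $\mathscr{D}$.

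The main obstacles are technical bookkeeping. For (1), verifying that $\om$-smallness and the precise form of $n$-soundness/projectum transfer via $i$ requires care but is routine. For (2), the delicate points are: ensuring $\Tt$ fits the descent framework's requirement of being above-$(\eta_{m_0}^+)^N$, $n$-maximal (which may force us to pass to an appropriate tail of $\Tt$ above $(\eta_{m_0}^+)^N$, or observe that any extenders used have critical points above this because of how $\Sigma_N$-trees on mtr-suitable mice behave); verifying $\nu(\Tt) < \lambda^P$, which rests on $\om$-smallness of $M$ together with $\lambda^P = \lambda^M$; and correctly handling the case where $P \pins M$ is strict versus $P = M$, in each case matching the appropriate clause of Definition \ref{dfn:descent}.
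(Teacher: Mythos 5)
Part (1) of your proof is correct and takes the same route as the paper: quote Lemma \ref{lem:MTR_preservation} to get the $i(\delta)$-mtr property at images of cutpoints, Lemma \ref{lem:cofinal_mtr} to upgrade this to all strong cutpoints below $\lambda^{M^\Tt_\infty}$, Lemma \ref{lem:overspill} for $\Gammag$-boundedness, check the remaining clauses (base, degree, projectum, parameter) by elementarity of $i^\Tt$, and invoke Lemma \ref{lem:mtr-suitable_o_prop_seg_mtr} for the ``hence'' clause. Your filling in of these details is fine.

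Part (2) has a genuine gap. Your plan is to append $\Tt_{m_0}:=\Tt$ and $(N_{m_0+1},n_{m_0+1},\eta_{m_0+1}):=(P,m',\eta')$ to a $\Gammag$-maximal descent $\mathscr{D}$ ending at $N$, and to satisfy condition \ref{item:soundness_i>0} of Definition \ref{dfn:descent} (namely $\nu(\Tt)\leq\eta_{m_0+1}$) you ``choose a Woodin $\eta'=\delta^P_k$ with $\eta'\geq\nu(\Tt)$,'' justified by the assertion that ``$\nu(\Tt)<\lambda^M$'' together with $\lambda^P=\lambda^M$. Neither assertion is justified, and the first is false in general: for a successor-length tree whose main branch drops, the lengths $\lh(E^\Tt_\alpha)$ (and hence $\nu(\Tt)$) can be arbitrarily large in $M^\Tt_\infty$, in particular can exceed $\lambda^{M^\Tt_\infty}$, let alone $\lambda^P$ for a proper segment $P$. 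You do need $\eta'<\lambda^P$ for a different reason as well: $\Gammag$-pre-appropriateness requires $P|\eta'^{+P}=\Lp_{\Gammag}(P|\eta')$, and mtr-suitability of $P$ only guarantees this exactness for strong cutpoints $\eta'<\lambda^P$; above $\lambda^P$ you have no such control. So you cannot simply take $\eta'$ large enough. The second obstacle you raise --- that $\Tt$ must be above $(\eta_{m_0}^+)^N$ and $n$-maximal --- is also not actually resolved in your writeup; you offer two possible routes (``pass to a tail'' or ``observe the extenders are automatically high enough'') without carrying either out. The paper's one-line ``follows easily from $\Gammag$-stability'' is glossing over the same technicalities, and I suspect the intended resolution goes through normalization of $\Tt$ (the descent conditions are tailored to trees cut into bounded-interval segments, and the $\Gammag$-stability verification in Lemma \ref{lem:mtr-suitable_mouse} is explicitly done ``using normalization''), so that one appends not $\Tt$ itself but an appropriate initial/terminal segment of its normalized form with $\nu$ controlled; but as written, your argument asserts the key bound rather than proving it, and that bound is where the real work is.
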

\begin{proof}
Part \ref{item:stability_non-dropping}: The mtr-suitability of $M^\Tt_\infty$
follows easily from 
MTR preservation (\ref{lem:MTR_preservation})
and Lemmas
\ref{lem:mtr-suitable_o_prop_seg_mtr} and \ref{lem:cofinal_mtr}.

Part \ref{item:stability_dropping}: This
follows easily from $\Gammag$-stability.\qedhere
\end{proof}

By  \cite{sile}\footnote{Or  its proof,
if the authors of \cite{sile}
implicitly assumed more than $N\sats$``$\om_1$ exists''.}
we have:
\begin{fact}\label{fact:self-it}
If $N$ is a tame $(0,\om_1+1)$-iterable premouse
satisfying ``$\om_1$ exists''
then
 there is $\xi<\om_1^M$ such that
 $\Sigma_{N|\om_1^N,\geq\xi}\rest\HC^N$
 is definable from parameters over $N|\om_1^N$.
\end{fact}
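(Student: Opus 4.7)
The plan is to adapt Steel's self-iterability argument from \cite{sile}, exploiting tameness together with ``$\om_1$ exists'' in $N$ to compute the relevant fragment of $\Sigma_{N|\om_1^N}$ via P-construction internally to $N|\om_1^N$. Write $\Psi = \Sigma_{N|\om_1^N}$. The basic point is that for a countable limit-length normal tree $\Tt$ on $N|\om_1^N$ via $\Psi$ with $\delta = \delta(\Tt)$, tameness gives that either $M(\Tt)\sats$``$\delta$ is not Woodin'' (so $M(\Tt)$ is its own Q-structure and $b = \Psi(\Tt)$ is determined by $M(\Tt)$) or else the correct $b$ produces a Q-structure $Q(\Tt,b)\pins M^\Tt_b$ in which $\delta$ is a strong cutpoint and a Woodin cardinal but not a limit of Woodins. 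So the task reduces to identifying $Q(\Tt,b)$ from data available over $N|\om_1^N$.

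First I would choose $\xi<\om_1^N$ large enough that $N|\om_1^N$ sees enough structure to run the relevant P-constructions above $\xi$; concretely, $\xi$ should be chosen so that cofinally many $\eta\in[\xi,\om_1^N)$ have $N|\eta$ with largest cardinal $\om$ and such that the internal construction sketched below succeeds for all countable $\Tt$ on $N|\om_1^N$ above $\xi$. Then, given such a $\Tt\in\HC^N$ above $\xi$ with $\delta = \delta(\Tt)$ (countable in $N$), I would use an internal genericity iteration of $M(\Tt)$ (or rather, observe that within $N$ one may arrange an equivalent P-standard situation) to set up $\mathscr{P}^{N|\nu}(M(\Tt))$ for suitable $\nu$. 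By the lemma giving $Q = \mathscr{P}^{M}(M(\Tt)) = Q(\Tt,b)$ for P-standard $\Tt$, this computes the correct Q-structure, and hence the correct branch, all from parameters over $N|\om_1^N$.

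Thus the defining clause for $\Psi\rest\HC^N$ above $\xi$ reads: $\Psi(\Tt)$ is the unique $\Tt$-cofinal branch $b$ such that $Q(\Tt,b)$ (interpreted as $M(\Tt)$ itself when $M(\Tt)$ is not a Q-structure for itself) equals $\mathscr{P}^{N|\nu}(M(\Tt))$ for the appropriate $\nu<\om_1^N$ internally identified by searching for the least level of $N$ above $M(\Tt)$ producing a sound Q-structure. Uniqueness of cofinal branches with a fixed Q-structure (Zipper Lemma, cf.\ Lemma~\ref{lem:tame_projecting_stacks}) and tameness together ensure well-definedness and correctness, and the whole definition is plainly a parameter-definable predicate over $N|\om_1^N$.

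The main obstacle is calibrating $\xi$ and verifying the P-construction's correctness on arbitrary countable trees above $\xi$: one must rule out pathologies where P-construction fails to converge or returns a structure that is not the intended Q-structure (e.g.\ issues arising when $M|\delta$ fails to be a generic extension of $\J(M(\Tt))$ via the $\xi$-generator extender algebra for any $\xi<\delta$, as demanded by the definition of P-standard). This is exactly the technical heart of \cite{sile}; provided $N\sats$``$\om_1$ exists'' and $N$ is tame (so Q-structures are short and $\delta(\Tt)$ is a strong cutpoint), one massages the setup so that these requirements are met and the internal computation goes through uniformly, as noted in the footnote to the statement.
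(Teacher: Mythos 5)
The paper does not prove this Fact; it is cited verbatim to \cite{sile} (with a hedging footnote about the hypothesis ``$\om_1$ exists''). There is thus no in-paper proof to compare against. However, the proof of the very next lemma (part~(ii), following Remark~\ref{rem:if_om_1_exists_easier}) explicitly describes a ``local version of the proof of Fact~\ref{fact:self-it}'', and that argument exposes a gap in your proposal.

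Your proposal reduces the problem to applying the P-construction $\mathscr{P}^{N|\nu}(M(\Tt))$ to compute $Q(\Tt,b)$, and you acknowledge that this requires $\Tt$ to be (close to) P-standard --- that is, $N|\delta(\Tt)$ must be generic over $\J(M(\Tt))$ for an extender algebra of $M(\Tt)$. You then say ``observe that within $N$ one may arrange an equivalent P-standard situation'' and ``one massages the setup so that these requirements are met.'' This is where the argument is missing its key idea. For an arbitrary countable limit-length tree $\Tt$ on $N|\om_1^N$ above $\xi$, there is no reason whatsoever for $N|\delta(\Tt)$ to be generic over $\J(M(\Tt))$, nor for $M(\Tt)$ to sit inside $N$ in any way the P-construction can latch onto. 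One cannot ``arrange'' this by reinterpreting $\Tt$; the lined structure $M(\Tt)$ is what it is. The actual mechanism (visible in the paper's adjacent proof) is to build, definably over an appropriate $R\pins N$ with $\rho_\om^R=\om$, the minimal above-$\varepsilon$ $R|\om_1^R$-genericity \emph{inflation} $\Xx$ of $\Tt$ in the sense of \cite{fullnorm}. The inflation $\Xx$ is a \emph{different} tree, built so that at its limit stages Q-structures are computable by P-construction over $R$, and the branch $\Psi_{\geq\xi}(\Tt)$ is then \emph{recovered from the pair $(\Tt,\Xx)$} via the inflation machinery, not read off $\Tt$ directly. Without invoking genericity inflations (or an equivalent device such as full normalization of a stack that converts $\Tt$ into a genericity iteration), the defining clause you write down is not a correct definition of $\Psi_{\geq\xi}(\Tt)$: for a generic $\Tt$ the P-construction you cite simply does not apply. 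So the proposal as written would fail at precisely the step you flag as ``the technical heart''; that step is not a routine massage but the substance of the proof, and it needs the inflation apparatus to close.
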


\begin{rem}\label{rem:if_om_1_exists_easier}
Part \ref{item:get_mtr-suitable_in_M_with_strategy_in_M}
of the following lemma
can be arranged just by starting
above some $\xi$ witnessing
Fact \ref{fact:self-it},
assuming that $M\sats$``$\om_1$ exists''. However,
we are not assuming this,
so we give a direct proof in our present context (which is very related
to the original proof of Fact \ref{fact:self-it}).\end{rem}

\begin{lem} ($M$)
Suppose $\SS_{\alphag}$
is admissible. Then for
 each $x\in \HC^M$, there are $X,N\in\HC^M$ such that
 \begin{enumerate}[label=\tu{(}\roman*\tu{)}]
 \item\label{item:get_mtr-suitable_in_M}  
$X$ is sufficient,
 $x\in X$, 
$N$ is  a sound, mtr-suitable, $\Gammag$-stable
 premouse  over $X$, and 
 \item\label{item:get_mtr-suitable_in_M_with_strategy_in_M}  $\HC^M$ is closed under $\Sigma_N$
 and $\Sigma_N\rest\HC^M$ is definable from parameters over $M|\om_1^M$.
\end{enumerate}
 \end{lem}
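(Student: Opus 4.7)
The plan is to run the construction of Lemma \ref{lem:mtr-suitable_mouse} entirely inside $\HC^M$, using Lemma \ref{lem:get_M-stable} in place of Lemma \ref{lem:get_stable}, and using Assumption \ref{ass:Sigma_M_not_Gamma-guided} to guarantee that the non-$\Gammag$-guided trees driving the construction are found in $\HC^M$. First I would apply Lemma \ref{lem:get_M-stable}(2) to $x$, padded with enough parameters to later encode a mGW witness, obtaining an $M$-$\Gammag$-stable $\Gammag$-appropriate tuple $(N_0,n_0,\eta_0)\in\HC^M$ with $N_0$ an $\eta_0$-sound $X_0$-premouse and $x\in X_0$.

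Next I would carry out the $\om$-stack construction from the proof of Lemma \ref{lem:mtr-suitable_mouse} while keeping every component in $\HC^M$. Inductively, given the current $M$-$\Gammag$-stable tuple $(N_k,n_k,\eta_k)\in\HC^M$, the goal is a limit length $n_k$-maximal tree $\Tt_k\in\HC^M$ on $N_k$ via $\Psi_{\Gammag}$ such that $b_k=\Sigma_{N_k,\eta_k}(\Tt_k)$ is not $\Gammag$-guided. Existence of such a $\Tt_k$ in $\HC^M$ uses Assumption \ref{ass:Sigma_M_not_Gamma-guided}(c): the failure of $M$-$\Gammag$-guidedness of $\Sigma_M$ above every $\xi<\om_1^M$ translates, via comparison between $N_k|\eta_k^{+N_k}=\Lp_{\Gammag}(N_k|\eta_k)$ and the $\Gammag$-lower-part structure of $M$ at a suitable height, into a witnessing tree on $N_k$ inside $\HC^M$. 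By $M$-$\Gammag$-stability, the resulting $b_k$ must be non-dropping and must make $\delta(\Tt_k)$ Woodin in $M^{\Tt_k}_{b_k}$ with $M^{\Tt_k}_{b_k}|\delta(\Tt_k)^{+M^{\Tt_k}_{b_k}}=\Lp_{\Gammag}(M(\Tt_k))$; otherwise the extension by $\Tt_k$ and $(M^{\Tt_k}_{b_k},n_k,\delta(\Tt_k))$ would be a proper $\Gammag$-descent inside $\HC^M$. Stacking $\om$ such stages yields a tail model $M^\Tt_\infty$ with $\om$ Woodins, almost mtr-suitable by MTR preservation (Lemma \ref{lem:MTR_preservation}). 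An $\RR^M$-genericity iteration, a collapse-generic $G\in\HC^M$ for $\Coll(\om,\delta)$ above the first Woodin $\delta$, and a final coring step via Lemma \ref{lem:coring_preserves_mtr}, exactly as in the proof of Lemma \ref{lem:mtr-suitable_mouse}, then produce the sound, mtr-suitable, $\Gammag$-stable premouse $N$ over a transitive $X\in\HC^M$ containing $x$ and a mGW witness (so $X$ is sufficient). This establishes clause (i).

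Clause (ii) is the hard part. As Remark \ref{rem:if_om_1_exists_easier} explicitly flags, this amounts to adapting the proof of Fact \ref{fact:self-it} to our setting, where $M$ is not assumed to satisfy ``$\om_1$ exists''. My plan is to show that for every tree $\Uu\in\HC^M$ on $N$ above its base ordinal, the correct branch $\Sigma_N(\Uu)$ is identified by the unique Q-structure $Q\pins\Lp_{\Gammag}(M(\Uu))$: existence of $Q$ and definability of the $\Gammag$-lower-part operator inside $M|\om_1^M$ follow from Assumption \ref{ass:Sigma_M_not_Gamma-guided}(b), which supplies cofinally many $\xi'<\om_1^M$ with $\xi'=\om_1^{\Lp_{\Gammag}(M|\xi')}$, making the $\Gammag$-lower-part mice a definable-from-parameters part of $M|\om_1^M$. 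Meanwhile, the $M$-$\Gammag$-stability of $N$, inherited from $(N_0,n_0,\eta_0)$ through the iteration via Lemma \ref{item:iterate_pres_mtr-suitability_stability}, rules out any ``long'' (non-$\Gammag$-short) tree on $N$ inside $\HC^M$, since such a tree would yield another proper $\Gammag$-descent extension in $\HC^M$. Hence every tree in $\HC^M$ on $N$ is $\Gammag$-short, and its branch is read off uniformly from $M|\om_1^M$.

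The main obstacle will be combining these two ingredients into a single uniform definition of $\Sigma_N\rest\HC^M$ over $M|\om_1^M$ without appealing to the regularity of $\om_1^M$ inside $M$; this is where the proof must imitate the Schindler--Steel self-iterability argument of \cite{sile} rather than cite it as a black box, and where most of the technical work will lie.
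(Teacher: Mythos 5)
Your proposal contains two genuine gaps, one in each clause.

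\textbf{Clause (i).} You construct $N$ inside $\HC^M$ starting from an $M$-$\Gammag$-stable tuple, but the lemma claims that $N$ is $\Gammag$-stable (stability in $V$, as the paper emphasizes in the note following the lemma). Running the construction of Lemma \ref{lem:mtr-suitable_mouse} with an $M$-$\Gammag$-stable seed gives you only $M$-$\Gammag$-stability of the output, which is strictly weaker: a priori there could be a tree in $\HC^V\setminus\HC^M$ witnessing a further $\Gammag$-descent. The paper bridges this gap with a non-trivial argument: fix a sound mtr-suitable $\Gammag$-stable mouse $Q$ in $V$, iterate $Q$ to $Q'$ making $P$ extender-algebra generic, then iterate $P$ to $P'$ making a segment of $Q'[P]$ generic, and observe the resulting models differ only by small forcing; a counterexample to $\Gammag$-stability of $P$ then lifts to a counterexample for $Q$, a contradiction. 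Your sketch never performs this upgrade and does not acknowledge that one is needed.

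\textbf{Clause (ii).} Your key claim is that $M$-$\Gammag$-stability of $N$ ``rules out any `long' (non-$\Gammag$-short) tree on $N$ inside $\HC^M$,'' so that every tree in $\HC^M$ on $N$ is $\Gammag$-guided and its branch is read off from $\Lp_{\Gammag}$-data definable over $M|\om_1^M$. This is false, and not merely a gap: the mtr-suitable mouse $N$ is chosen precisely so that the $\Gammag$-short-tree strategy on it eventually runs out, and a non-short tree does not yield a further $\Gammag$-descent (the branch produced is non-dropping with no Q-structure in $\Lp_{\Gammag}(M(\Tt))$, hence the continuing tuple would fail to be $\Gammag$-pre-appropriate, so no new $\Gammag$-descent arises). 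The paper explicitly enters the case ``there is no Q-structure $Q\pins\Lp_{\Gammag}(M(\Tt))$'' and then handles it by building, inside a projecting segment $R\pins M$ with $\xi=\om_1^{\Lp_{\Gammag}(M|\xi)}$ (which exists by Assumption \ref{ass:Sigma_M_not_Gamma-guided}(b)), the minimal $R|\om_1^R$-genericity inflation of $\Tt$, and recovering $b$ either from early termination or from the P-construction at length $\xi+1$. That genericity-inflation step is the heart of the self-iterability argument; your proposal identifies this as ``where most of the technical work will lie'' but then offers a sketch (Q-structure recovery from $\Lp_{\Gammag}$) that would only work in the short-tree regime which your own construction has already escaped.
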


Note that the lemma gives a $\Gammag$-stable $N$,
not just $M$-$\Gammag$-stable.
Note that since $\Sigma_N$ simply determines
$(\Sigma_N)^{\stk}$, it also follows
that $\HC^M$ is closed under $(\Sigma_N)^{\stk}$
and $(\Sigma_N)^{\stk}$ is definable from parameters over $M|\om_1^M$. (Note here also that even if $M=M|\om_1^M\not\sats\ZFC^-$,
we do have that $M|\om_1^M$ is a limit of proper segments which model $\ZFC^-$, because of Assumption \ref{ass:Sigma_M_not_Gamma-guided}.)

\begin{proof}
Part \ref{item:get_mtr-suitable_in_M}:
By Lemmas \ref{lem:bounded_Lp_ZF^-}and \ref{lem:J_alpha-bar_admissible},  $\Ss_{\bar{\alpha}}(\RR^M)$
is admissible in $M$, so the first
part of the proof 
of Lemma \ref{lem:mtr-suitable_mouse} can also be executed in $M$, but starting with
an  $M$-$\Gammag$-stable tuple
$(N,n,\eta)$,
instead of $\Gammag$-stable,
and with $N$ being $\eta$-sound.
This produces a tree $\Tt=\Tt_0\conc\Tt_1\conc\ldots\sub\HC^M$ (so $\Tt_0\conc\ldots\conc\Tt_n\in\HC^M$
 for each $n<\om$; it doesn't matter whether $\Tt\in M$)
such that $M^\Tt_\infty$
has $\om$ Woodins.

Let $P$ be the $\delta=\delta(\Tt_i)$-core of $M^\Tt_\infty$, where $i<\om$ is sufficiently large,
like in the earlier proof; so actually $P=M^{\Tt_i}_\infty$
is a $\delta$-sound $\delta$-mGW, and
 $P\in\HC^M$.

We claim that $P$ is mtr-suitable and $\Gammag$-stable (in $V$)
as a mouse over $P|\delta_i^P$.
To see this, fix a sound mtr-suitable $\Gammag$-stable mouse
$Q$ in $V$. Iterate $Q$ to $Q'$, making $P$ extender algebra generic
at some Woodin cardinal $\varepsilon$ of $Q'$,
with $Q'$ being $\varepsilon$-sound.
Then working in $Q'[P]$, iterate $P$,
to make $Q'[P]|\varepsilon^{+Q'[P]}$
generic at the image of  
$\delta_i^P$
 (iterating below $\delta_i^P$ as usual), producing iterate $P'$,
sound above $\delta_i^{P'}$. By Lemma \ref{item:iterate_pres_mtr-suitability_stability} part \ref{item:stability_non-dropping} (***and another lemma to add?), we have
 $\delta_0^{P'}=\varepsilon^{+Q'[P]}$, and 
note then that \[ P'[Q',P]=^*_{\delta_0^{P'}}Q'[P]=^*_{\delta}Q'.\]

Now suppose that $\Vv$ is a correct above-$\delta_i^P$
tree on $P$ witnessing that $P$ is not
$\Gammag$-stable,
as further witnessed by some $(R,r,\xi)$
where $R\ins M^\Vv_\infty$. Then $\Vv$ lifts to a tree
$\Vv'=i_{PP'}\Vv$ on $P'$, which is equivalent
to an above-$\varepsilon^{+Q'}$
tree $\Ww$ on $Q'$. Let $(R',r,\xi')$ be the resulting
translated image of $(R,r,\xi)$,
with $R'\ins M^\Ww_\infty$.
(The copying works and respects the iteration strategies,  by uniqueness of the normal strategies.)
By $\Gammag$-stability for $Q$, $R'$ is above-$\xi'$, $(r,\om_1+1)$-iterable in $\SS_{\alphag}$.
But then by lifting to this strategy,
it follows that $R$ is above-$\xi$,
$(r,\om_1+1)$-iterable in $\SS_{\alphag}$,
a contradiction.

Part \ref{item:get_mtr-suitable_in_M_with_strategy_in_M}:
Cf.~Remark \ref{rem:if_om_1_exists_easier}. Using Assumption \ref{ass:Sigma_M_not_Gamma-guided},
we give a local version
of the proof of Fact \ref{fact:self-it}, without
having to assume that $M\sats$``$\om_1$ exists''.
(If $M=M|\om_1^M\sats\ZF^-$,
then we can just replace $M$ with $\J(M)$, which then satisfies ``$\om_1$ exists'' and has $\HC^{\J(M)}=\HC^M$,
and then run the preceding construction but starting above
some $\xi$ witnessing
Fact \ref{fact:self-it}.
So the interesting case here
is when $M=M|\om_1^M\not\sats\ZF^-$, which for example if $\rho_1^{M}=\om$, can cause difficulties with the reflection arguments that usually work at $\om_1^M$.)

Now let $P,\delta$ be as constructed
above; we claim that $M$ is closed under $\Sigma_{P,\geq\delta}$,
and $\Sigma_{P,\geq\delta}$ is definable from parameters over $M|\om_1^M$, which easily suffices.
For let $\Tt\in\HC^M$ be a limit length tree on $P$ which is via $\Sigma_{P,\geq\delta}$;
we compute $b=\Sigma_{P,\geq\delta}(\Tt)$ working in $M$.
By Lemma \ref{lem:Sigma_1_Hull_of_J_alpha},
$t=\Th_{\Sigma_1}^{\Ss_{\bar{\alpha}}(\RR^M)}(\RR^M)$
is definable from parameters
over $M|\om_1^M$, so we can
refer to $t$ in computing $b$.
Using $t$,
we can compute $\Lp_{\Gammag}(M(\Tt))$.
So we may assume there is no Q-structure
$Q$ for $M(\Tt)$
such that $Q\pins\Lp_{\Gammag}(M(\Tt))$,
and in particular,
$\Lp_{\Gammag}(M(\Tt))\sats$``$\delta(\Tt)$ is Woodin''.
Using Asssumption \ref{ass:Sigma_M_not_Gamma-guided},
let $\xi<\om_1^M$ be such that $P,\Tt\in M|\xi$
and $\xi=\om_1^{\Lp_{\Gammag}(M|\xi)}$. Let $R\pins M$
be such that $\xi=\om_1^R$
and $\rho_\om^R=\om$.
Let $\varepsilon$ be the supremum of the Woodin cardinals of $R$
which are ${<\delta(\Tt)}$;
so $\delta\leq\varepsilon<\delta(\Tt)$
and $\varepsilon$ is a strong cutpoint of $M(\Tt)$
and of $M^\Tt_b$.
Definably over $R$ from the parameter $\Tt$,
we can build the above-$\varepsilon$, minimal
$R|\om_1^R$-genericity inflation $\Xx$ of $\Tt$ (see \cite{fullnorm}).
If this terminates
with a tree $\Xx$ of length $\xi'+1<\xi$,
then $b$ is recovered from the pair $(\Tt,\Xx)$.
Otherwise 
it reaches a tree $\Xx$ of length $\xi+1$,
with the Q-structure $Q=Q(\Xx\rest\xi,[0,\xi)_\Xx)$ equal
to the P-construction of $R$
computed
over $M(\Xx\rest\xi)$. But then since $\Lp_{\Gammag}(R|\om_1^R)\ins R$,
we have $\Lp_{\Gammag}(M(\Xx))\ins Q$. But all of these objects are in $\HC^M$, and since $P$ is $M$-$\Gammag$-stable (as a mouse over $P|\delta$), it follows that 
$[0,\xi)_\Xx$ does not drop in model or degree,
and letting $b'$ be the (possibly $\Tt$-cofinal) branch of $\Tt$
determined by $\Xx$
and $\sigma:M^{\Tt}_{b'}\to M^\Xx_\xi$
the minimal inflation map,
then $b'=b$ and $\sigma(\delta(\Tt))=\xi$.
In particular, $\Xx$ determines $b$, which suffices.
\end{proof}

\begin{tm}
 $\OD^{<\alpha}$ is a mouse set, as witnessed by a premouse $N$ such that for some $n<\om$,
$N$ is $(n,\om_1+1)$-iterable, $N$ has $\om$ Woodins, $\rho_{n+1}^N\leq\delta_0^N<\lambda^N\leq\rho_n^N$ and
 $N$ is $(n+1)$-sound.
\end{tm}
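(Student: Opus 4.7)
The plan is to extract $N$ directly from the mtr-suitable machinery developed above. By the lemma preceding this theorem, I obtain a sound mtr-suitable $\Gammag$-stable premouse $N$ over some sufficient transitive $X$, of some degree $n$. The listed fine-structural conclusions then read off the definition of mtr-suitability almost directly: $N$ is $(n,\om_1+1)$-iterable and has $\om$ Woodins of supremum $\lambda^N \leq \rho_n^N$; since $\rho_{n+1}^N = X$ and $\rank(X) = \delta_{-1}^N < \delta_0^N$ by the convention fixing $\delta_0^N$ to be the least Woodin strictly above the rank of the base, we obtain $\rho_{n+1}^N \leq \delta_0^N < \lambda^N \leq \rho_n^N$; and $(n+1)$-soundness is packaged into the word \emph{sound} from the lemma, using in addition the clause $p_{n+1}^N \cap \lambda^N = \emptyset$ that is built into the definition of mtr-suitability.

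For the mouse-set content I would argue in two directions. For the inclusion $\RR \cap N \subseteq \OD^{<\alpha}$: the $\Gammag$-stability of $N$ combined with Lemma \ref{lem:tame_projecting_stacks} yields an $(n,\om_1+1)$-iteration strategy for $N$ definable in $\SS_{\alphag}$, so every real of $N$ is ordinal-definable in $\SS_{\alphag}$ from reals coding $X$, hence lies in $\OD^{<\alpha}$ relativized to an appropriate parameter from $X$. For the reverse inclusion: given $y \in \OD^{<\alpha}(z)$ with $z$ a real in $X$, apply the Mouse Set Theorem to obtain $y \in \Lp_{\Gammag}(z) \cap \pow(\om)$; iterate $N$ below $\delta_0^N$ to make $z$ generic for $\BB_{\delta_0^N}$, and invoke the $\delta_0^N$-fullness clause $\Lp_{\Gammag}(N|\delta_0^N) \ins N$ of mtr-suitability (applied to the iterate, which inherits mtr-suitability by Lemma \ref{lem:MTR_preservation}) to place the witnessing mouse for $y$ inside the iterate; finally, translate back via the $\Coll(\om,\delta_0^N)$-forcing / P-construction correspondence to pull $y$ into the non-extended premouse.

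The step I expect to be the main obstacle is reconciling the two directions so that the reals of $N$ are exactly those in the asserted mouse set, rather than merely a superset or a subset, given that the base $X$ (being sufficient, hence high) contains reals such as $\yg$ which strictly sit above $\alphag$ in the ordinal-definability hierarchy. I would handle this either by coring $N$ over a judiciously chosen subset of $X$ that trims $\yg$, or by reinterpreting ``$\OD^{<\alpha}$ is a mouse set'' relative to the distinguished parameter realizing the sufficient base, so that the statement becomes mouse-set-ness relative to a specific real. The $(n+1)$-soundness bookkeeping is a secondary concern, controlled throughout by the $p_{n+1}^N \cap \lambda^N = \emptyset$ clause, which ensures that the $\rho_{n+1}^N$-core step neither disturbs the $\om$ Woodins nor violates the projectum inequalities.
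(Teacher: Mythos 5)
Your choice of witness cannot work, and the obstacle you flag at the end is not a bookkeeping issue but a fatal one. The mtr-suitable premouse $N$ supplied by Lemma \ref{lem:mtr-suitable_mouse} is a premouse over a \emph{sufficient} (hence high) transitive $X$, and by Definition \ref{dfn:mtr} every high $X$ contains $\yg$. But $\yg$ was chosen in \S\ref{subsec:mtr} precisely so that $\yg\notin\OD^{<\alphag}_{\xg}$. So $\yg\in\RR^N\setminus\OD^{<\alpha}$, and your candidate $N$ cannot witness $\OD^{<\alpha}=\RR^N$. Neither of your proposed patches closes this: taking a core of $N$ does not remove elements of the base (the base set $X$ sits inside $N|0$ and enters every fine-structural hull), and reinterpreting ``mouse set'' relative to a distinguished parameter changes the theorem from the lightface $\OD^{<\alpha}$ into a relativized statement, which is not what is being proved. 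What the paper actually does is start from the cone result (an $x$-mouse $N_x$ witnessing $\OD^{<\alpha}(x)=\RR^{N_x}$ for a cone of $x$), and then run the \emph{lightface} Q-local $L[\es]$-construction inside $N_x$ to produce a mouse $N$ over $\emptyset$, finally taking $N'=\core_{n+1}(N)$. The passage from an $x$-mouse to a lightface mouse via this internal construction is exactly the ingredient you are missing, and no amount of coring an mtr-suitable $X$-mouse produces it.

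Your sketch of the inclusion $\OD^{<\alpha}\subseteq\RR^N$ is also short of an argument. Making $z$ generic over an iterate and appealing to $\delta_0$-fullness places $\Lp_{\Gammag}(\cdot)$ segments inside the iterate, but it does not locate the real $y$ inside the un-iterated (and un-extended) $N$. The paper's argument for this direction is a perfect-set counting argument: assuming $y\in\OD^{<\alpha}\setminus N$, it exploits that $N$ is the P-construction of $N_x$ over $N|\delta_0^N$ (so $N_x|\delta_0^{N_x}$ is extender-algebra generic over $N$), takes names $\sigma,\tau$ for $x,y$ together with a minimal $\varphi(y)$-witness $P\pins N_x$, forces $\tau\notin V$ below a suitable condition, and then varies the generic over a perfect set to manufacture a perfect set of reals in $\OD^{<\alpha}$ --- which is impossible. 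That counting argument is the real content of the reverse inclusion, and it is absent from your proposal.
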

\begin{rem}
 Note that regarding $\rho_{n+1}^N$, we only know that $\om\leq\rho_{n+1}^N\leq\lambda^N$.
 There must be instances where $N$ does not project to $\om$.
\end{rem}

\begin{proof}
 We have already shown that for a cone of reals $x$, the corresponding statement holds for $\OD^{<\alpha}(x)$ and $x$-mice $N_x$ (and in fact we can take $N_x$ to be fully sound and with $\rho_{n+1}^{N_x}=\om<\lambda^N\leq\rho_n^N$, and hence $N_x$ is uniquely determined by $x$). Fix such an $x,N_x$.
Let $N$ be the output of the full Q-local $L[\es]$-construction $\left<N_\alpha\right>_{\alpha\leq\OR^{N_x}}$ of $N_x$ (the lightface version).
Let $N'=\core_{n+1}(N)$. We claim that $N'$ satisfies all the requirements. The main thing here is to verify that  $\OD^{<\alpha}=\RR^{N'}$, which we now do.

Since  $\RR^{N'}=\RR^N$,
 we just consider $N$.
We have $\RR^N\sub\OD^{<\alpha}$
because for each $y\in\RR^N$,
there is $\eta<\delta_0^{N_x}$ such that $y\in(N_\eta)^{N_x}$. Then by condensation,
there is some $\bar{\eta}<\om_1^{N_x}$ such that
$y\in(N_{\bar{\eta}})^{H}$ for some $H\pins N_x|\om_1^{N_x}$. But $H$ is iterable in $\SS_\alpha$,
so $N_{\bar{\eta}}^{H}$ is also, which shows that $y\in\OD^{<\alpha}$.
Now let $y\in\OD^{<\alpha}$ and suppose $y\notin N$. We have $y\in N_x$, since $y\in\OD^{<\alpha}(x)$.
And $N_x|\delta_0^{N_x}$ is extender algebra generic over $N$ at $\delta_0^N=\delta_0^{N_x}$, and $N$ is just the P-construction of $N_x$ above $N|\delta_0^N$. Let $\sigma,\tau\in N|\delta_0^{+N}$
be extender algebra names such that $\sigma_G=x$ and $\tau_G=y$, where $G$ is the extender algebra generic determined by $N_x|\delta_0^{N_x}$. Let $\xi<\delta_0^{+N}$ be such that $\tau\in N|\xi$ and $\rho_\om^{N|\xi}=\delta_0^N$.
There is $P\pins N_x|\delta_0^{+N_x}$ with $\xi<\OR^P$
and $P$ is a minimal $\varphi(y)$-witness above $\xi$,
where $\varphi(y)$ asserts ``there is an ordinal $\nu$ such that $y\in\OD_\nu$''. Let $\gamma=\OR^P$.
Then we can fix $p\in\BB^N_{\delta_0^N}$
forcing the above facts about $\sigma,\tau,\gamma,\dot{\es}$, where $\dot{\es}$ is the name for the extender algebra generic (so $\dot{\es}_G=N_x|\delta_0^{N_x}$), and forcing ``$\tau\notin V$''.
We can now build a perfect set of $N$-generics $H$
with $p\in H$,
and a perfect set of pairwise distinct reals $z_H$, such that $z_H=\tau_H$.
Letting $P_H=N[H]|\gamma$,
then note that $P_H$ is iterable above $\xi$
(since iterating it is equivalent to iterating $N|\gamma$ above $\xi$), and therefore
$\varphi(z_H)$ is true. So we get a perfect set of reals in $\OD^{<\alpha}$, a contradiction.
\end{proof}

\section{The $\M$-hierarchy of an admissible gap}\label{sec:M-hierarchy}

\begin{conv}
In  \S\S\ref{subsec:the_hierarchy},\ref{subsec:mu-fs},
the variable ``$\gamma$'' is only ever interpreted as a limit ordinal
$\geq\om_1$,
and
the symbol
$\cdot$, appearing as $\dot{x}$, denotes either a constant symbol or predicate
symbol or free variable.
\end{conv}

\subsection{The hierarchy}\label{subsec:the_hierarchy}

\begin{dfn}[$\mu_n,\mu$]\label{dfn:mu}
Let $\Dd$ denote the set of Turing degrees. For $n<\om$ let $\mu_n$ denote the iterated Martin measure;
that is, for $\mu_n\sub\pow(\Dd^n)$
and for $A\sub\Dd^N$, we have
\[ A\in\mu_n\iff\exists^{\Dd}s_0\all^{\Dd}t_0\ldots\exists^{\Dd}s_{n-1}\all^{\Dd}t_{n-1}\ \Big[\Big(\bigwedge_{i<n}s_i\leq_Tt_i\Big)\wedge(t_0,\ldots,t_{n-1})\in A\Big].\]
Let $\mu$
denote $\bigcup_{n<\om}\mu_n$.
\end{dfn}

\begin{dfn}[$\M^\alpha_\delta(\RR)$]\label{dfn:MgammaRR}
Fix $\alpha\in\OR$ starting an S-gap of $L(\RR)$ such that $\SS_\alpha$ is
admissible. We define transitive structures $\M^\alpha_\delta=\M^\alpha_\delta(\RR)\in L(\RR)$,
for ordinals
$\delta\geq\om_1$. Usually
$\alpha$ will be fixed,
and we will drop the 
superscript ``$\alpha$'', as we do now.

Let $T$ be
the set of pairs $(x,t)$ such that $x\in\HC$ and
$t=\Th_{\Sigma_1}^{\SS_\alpha}(\{x\})$. Then we define
\[ \M_{\om_1}=(\HC,T).\]
Clearly this structure is amenable. By \cite[Lemmas 1.3, 1.8]{jensen_fs} or \cite[p.~610, 611]{schindler2010fine}, there is a finite basis for the
$\mu$-rud functions. We define the $\Ss^\mu$-hierarchy (the analogue of Jensen's $\Ss$-hierarchy) as in \cite{schindler2010fine} (in particular, using the finite basis there);  the $\Ss^\mu$-hierarchy then consists of transitive models.
For ordinals $\delta=\om_1+\beta$, define
\[ \M_\delta = ((\Ss^\mu)_\beta(\M_{\om_1}),\M_{\om_1}). \]
In other words,
$\M_{\delta+1}=(S^\mu(\lpole\M_\delta\rpole),\M_{\om_1})$, and the sequence of
universes $\lpole\M_\delta\rpole$ is continuous at
limits.
\end{dfn}

We will apply the preceding definition sometimes
working with the true $\HC$
(as the universe of $\M_{\omega_1}$) and the true $\mu$,
but at other times with other sets $\HC'$ replacing $\HC$
(and $\RR'=\RR^{\HC'}$ replacing $\RR$
and $\Dd'=\Dd^{\HC'}$ replacing $\Dd$),
and $\mu'$ defined relative to $\HC'$ just as $\mu$ is defined relative to $\HC$
(that is, $\mu_n'\sub\pow((\Dd')^n)$,
and
is defined like $\mu_n$,
but with the degree quantifiers
restricted to $\Dd'$).
In the context of the conjectures,
we will be particularly
interested in the case that
$\HC'=\HC^M$. When starting with $\HC'$, we will assume $\alpha'$
starts an admissible S-gap relative to $\HC'$,
and $\HC'=\HC^{\Ss_{\alpha'}(\HC')}$,
and we will only be interested
in a resulting $\M_\delta'$ assuming that $\HC'=\HC^{\M_{\delta'}}$
and $\M_{\delta'}\sats\AD+$``Turing determinacy''.
For the general development of fine structure
and so forth in this section,
we will just explicitly
write $\M_\delta$ and assume $\HC\sub\M_\delta$, but the reader will happily see that everything
works in the same manner for the more general case just mentioned.

Now fix  $\alpha$ as in Definition \ref{dfn:MgammaRR}, and supress
it from the notation. From now on we blur the distinction between
$\M_\delta$ and its universe. So for
each limit $\gamma\geq\om_1$, $\M_\gamma$ is amenable to $\mu$
and has ordinal height $\gamma$; and,
$\M_{\delta+\om}$ is the closure of
$\M_\delta\un\{\M_\delta\}$ under $\mu$-rud functions. Whenever
we talk about
 $\M_\delta$, we will assume that
 $\M_\delta\sats\AD+$``Turing determinacy''.

\begin{dfn}[$\Hdot$, $\Tdot$, $\Ll$, $\all^*_k$, $\all^*$, $\Ll^\mu$,
$\sats$, $\Sigma_n^{\M_\gamma}$]\label{dfn:language}
Let $\Ll$ be the language of set theory augmented with constant symbols
$\Hdot$, $\Tdot$, and unary predicates $\Hdot'$, $\Tdot'$. Let $\Ll^\mu$ be $\Ll$ augmented with 
the quantifier $\all^*$,
and for each $k<\om$, the quantifier $\all^*_k$.

Formulas of $\Ll$ and $\Ll^\mu$ are interpreted over $\M=\M_\gamma$, for limits $\gamma\geq\omega_1$, as
follows.

If $\gamma=\om_1$, we set $(\Hdot',\Tdot')^\M=\M_{\om_1}$ and $(\Hdot,\Tdot)^\M=\emptyset$.
If $\gamma>\om_1$, we set $(\Hdot,\Tdot)^\M=\M_{\om_1}$ and $(\Hdot',\Tdot')^\M=\emptyset$.

We have introduced the 4 symbols $\dot{H},\dot{T},\dot{H}',\dot{T}'$ instead of just $\dot{H},\dot{T}$, since the correct interpretations are 
predicates for $\M_{\omega_1}$ but constants for $\M_\delta$ when $\delta>\om_1$.
From now on we will actually  ignore the symbolic distinction and write only $\dot{H},\dot{T}$.

For $k<\om$ the quantifier ``$\all^*_k s$'' means ``For $\mu$-cofinally
many tuples $s$ in $\Dd^k$''. That is, given a formula $\varphi$ and
$k<\om$, the formula $\text{``}\all^*_k s\ \varphi(s,\vec{u})\text{''}$
means the following (where the quantifier $Q^{\Dd} x$ means $Q x\in\Dd$):\footnote{Given sufficient Turing
determinacy, $\all^*_k s\varphi(s)$ is
equivalent to ``There is $A\in\mu_k$ such that for all $s\in A$'', but we will
also need to use this quantifier without assuming such determinacy.}
\[ \all^{\Dd} r_0\ \exists^{\Dd} s_0\ \ldots\ \all^{\Dd} r_{k-1}\ \exists^{\Dd} s_{k-1}\ \Big[
\all i\ [r_i\leq_T s_i]\text{ and }
\varphi(\left<s_0,\ldots,s_{k-1}\right>,\vec{u})\Big].\]
 The dual quantifier ``$\exists^*_ks$'' is the ``$\mu$-measure one many'' variant; that is,
\[ \exists^\Dd r_0\ \all^\Dd s_0\ \ldots\ 
 \exists^\Dd r_{k-1}\ \all^\Dd s_{k-1}\ \Big[\all i\ [r_i\leq_T s_i]\implies\varphi(\left<s_0,\ldots,s_{k-1}\right>)]\Big].
\]
In general $\exists^*_k$ can be stronger than $\all^*_k$, but of course within sets for which Turing determinacy holds, $\all^*_k$ is  equivalent to $\exists^*_k$.
The quantifier $\all^*$ means ``$\ex k<\om\ \all^*_k$'', and $\exists^*$ means ``$\ex k<\om\ \exists^*_k$''.

Let $\Sigma_n^{\M}$, etc, be the usual
first order classes, defined using $\Ll$.

We say a set $X\sub\M$ is \dfnemph{$\mu$-definable over $\M$}
iff
\[ X=\{x\in\M\st\M\sats\varphi(x)\} \]
 for some $\varphi\in\Ll^\mu$. Likewise
\dfnemph{$\mu$-definable over $\M$ from parameters}.
\end{dfn}

Each $\all^*_k$ is definable with ordinary (i.e. first-order) quantifiers, but
in general,
$\all^*$ need not be (when $\M_\gamma$ projects to
$\om_1$).

\begin{dfn}
Let $\Ll^\mu_0$ be the set of formulas
in $\Ll^\mu$ in which all ordinary quantifiers are bounded. Say a class function
$f$ is \dfnemph{$\mu$-simple} iff for every formula
$\varphi(\dot{x},\dot{\yvec})\in\Ll^\mu_0$, there is an $\Ll^\mu_0$ formula
$\varphi_f$ such that for all $\xvec,\yvec$,
we have $\varphi(f(\xvec),\yvec)$ iff $\varphi_f(\xvec,\yvec)$. A \dfnemph{relatively-rud function scheme}
is a finite description of an ``$A$''-rud function,
in the usual sense, either
in terms of a recursion via the original
definition in \cite{jensen_fs}
or as a composition of functions from a finite basis; in particular, the scheme is formally an element of $V_\om$. Here the ``$A$'' is just a symbol and can be interpreted via any set/class. Given a scheme $g$, and a class $C$,
$f^C_g$ is the $C$-rud function defined by $g$.
Write $f_g=f^\mu_g$.
\end{dfn}

\begin{lem}\label{lem:mu-rud_implies_mu-simple}
 Every $\mu$-rud function is $\mu$-simple, and in fact, there is a recursive function
$(\varphi,g)\mapsto\psi_{\varphi,g}$ sending pairs $(\varphi,g)$ such that $\varphi$
is an $\Ll^\mu_0$ formula
and $g$ is relatively-rud function schemes $g$ to $\Ll^\mu_0$ formulas 
such that $\psi_{\varphi,g}$ suffices as a formula as desired for
$\varphi_{f}$, where $f=f^\mu_g$.
\end{lem}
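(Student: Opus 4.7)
The plan is to prove both assertions simultaneously by induction on the relatively-rud function scheme $g$, producing the recursive map $(\varphi,g)\mapsto\psi_{\varphi,g}$ explicitly at each step. It suffices to handle the finite basis for $\mu$-rud functions that was fixed in Definition \ref{dfn:MgammaRR} (following Jensen \cite{jensen_fs} and \cite[p.~610--611]{schindler2010fine}): that is, the usual basis $F_0,\ldots,F_{14}$ of set-theoretic rud functions, together with one additional function $F_{\mu}$ encoding the predicate $\mu$, for example $F_\mu(u,v)=u\cap\mu_{|v|}$ restricted to the relevant arity (the precise choice of $F_\mu$ doesn't matter; any finite basis will do). Compositions are handled by straightforward substitution, so the entire inductive argument reduces to checking the basis clauses one at a time.

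First I would dispatch the classical rud basis. For each $F_i$ with $i\leq 14$ this is exactly the standard argument in Jensen \cite{jensen_fs}: given $\varphi(\dot{x},\dot{\vec{y}})\in\Ll^\mu_0$, the formula $\varphi(F_i(\vec{u}),\vec{y})$ can be rewritten recursively as a formula whose quantifiers remain bounded in $\vec{u},\vec{y}$, because each $F_i$ is (a component of) a rudimentary function and the bounded quantifiers in $\varphi$ over $F_i(\vec{u})$ translate to bounded quantifiers over $\vec{u}$. The presence of the new quantifiers $\forall^*_k$ and $\forall^*$ causes no problem because these quantifiers range over $\Dd$, not over the argument of $F_i$; hence $\forall^*_k s\,\chi(s,F_i(\vec{u}),\vec{y})$ is already $\Ll^\mu_0$ once $\chi(s,F_i(\vec{u}),\vec{y})$ has been reduced. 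In each clause the translation is primitive recursive in the syntax of $\varphi$, and the recursion terminates because bounded quantifier depth strictly decreases under the reductions inherited from the rud calculus.

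Next I would treat the $\mu$-predicate basis function $F_\mu$. This is the sole new case and is the heart of the argument. The relevant reduction has the form: for a formula $\chi(\dot{z},\dot{\vec{y}})\in\Ll^\mu_0$, one needs to express
\[ \forall z\in F_\mu(\vec{u})\ \chi(z,\vec{y}) \quad\text{and}\quad \exists z\in F_\mu(\vec{u})\ \chi(z,\vec{y}) \]
by $\Ll^\mu_0$ formulas in $\vec{u},\vec{y}$. The point is precisely that the quantifier $\forall^*$ (or $\forall^*_k$ for the correct arity $k$ dictated by $\vec{u}$) is built into $\Ll^\mu$ for exactly this purpose: ``$A\in\mu_k$'' unfolds, via Definition \ref{dfn:mu}, to a prefix of alternating $\exists^{\Dd},\forall^{\Dd}$ quantifiers which is precisely captured by $\forall^*_k$. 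So for instance $\exists z\in F_\mu(\vec{u})\ \chi(z,\vec{y})$ becomes $\exists z\in\vec{u}\,[\,\forall^*_k s\,(s\in z)\wedge\chi(z,\vec{y})\,]$, which is $\Ll^\mu_0$, and dually for the universal case. The main obstacle I expect is keeping the arity $k$ and the ``target set'' on which the measure is being read correctly tracked through compositions, and in particular making sure that the translation remains syntactic and uniform in the scheme $g$; but once $F_\mu$ is dispatched, nothing more is needed since the quantifier $\forall^*$ does not itself require any further translation.

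Finally, composition: if $g=h(g_1,\ldots,g_m)$ where the $g_j$ and $h$ are schemes for which the inductive hypothesis gives translations, then for $\varphi\in\Ll^\mu_0$ set
\[ \psi_{\varphi,g}(\vec{u},\vec{y})\;=\;\psi_{\psi_{\varphi,h}(\dot{v}_1,\ldots,\dot{v}_m,\vec{y}),g_1,\ldots,g_m}(\vec{u},\vec{y}), \]
defined by the obvious simultaneous substitution of the $\psi_{\varphi,g_j}$ for the argument places $\dot{v}_j$ of $\psi_{\varphi,h}$; since each $\psi_{\varphi,g_j}$ is $\Ll^\mu_0$ and $\Ll^\mu_0$ is closed under substitution into bounded-quantifier positions (again using that $\forall^*_k$ is a quantifier over $\Dd$, not over any set being ranged through bounded quantifiers), the result is $\Ll^\mu_0$. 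The map $(\varphi,g)\mapsto\psi_{\varphi,g}$ is manifestly recursive because at each stage we only apply finitely many syntactic rewriting rules, indexed by the finitely many basis symbols, and the scheme $g$ is a finite syntactic object with well-founded subscheme relation.
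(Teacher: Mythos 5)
Your proposal takes essentially the same route as the paper's proof: defer the ordinary rud basis to Jensen (noting the $\forall^*$-quantifiers cause no interference since they range over $\Dd^{<\om}$ rather than over argument sets), and isolate the one new basis function, reducing membership in it to a $\forall^*$-quantified formula. One small point to tighten: in your displayed translation you write $\forall^*_k s(s\in z)$ with a fixed $k$, but the $z$'s ranging over a given set will in general be subsets of $\Dd^k$ for varying $k$, so the arity cannot be fixed in advance by the scheme; the clean choice (and what the paper does) is to take the basis function $f_\mu(x)=x\cap\mu$ with no arity parameter and use $\forall^*$, whose built-in existential over $k$ makes the ``tracking the arity through compositions'' worry you flag disappear entirely.
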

\begin{proof}
 The proof is essentially that in \cite{jensen_fs}. The one main difference is as
follows. Let $f_\mu:V\to V$ be defined $f_\mu(x)=\mu\inter x$. We have to check
that for every formula
$\varphi(\dot{z},\dot{x},\dot{\yvec})\in\Ll^\mu_0$, the relation ``$\ex z\in
f_\mu(x)\ [\varphi(z,x,\yvec)]$'', is given by an $\Ll^\mu_0$ formula. But this relation is equivalent to ``$\exists z\in x\ [(\all^*s[s\in z])\wedge\varphi(z,x,\vec{y})]$''. (Because the quantifier $\all^*s$ is always interpreted over
$\Dd^{<\om}$, there is no analogous computation needed for it.)\end{proof}

\begin{lem}For limits $\gamma\geq\omega_1$ and  $X\sub\M_\gamma$, we have
$X\in\M_{\gamma+\om}$ iff $X$ is $\mu$-definable over $\M_\gamma$ from
parameters.\end{lem}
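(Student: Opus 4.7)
The plan is to prove both directions in parallel with the classical Jensen argument for the $\Ss$-hierarchy, adapted for the extra quantifiers $\all^*_k,\all^*$ of $\Ll^\mu$.

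For $(\Rightarrow)$, I would argue by induction on $n<\om$ that every $X\in\M_{\gamma+n}$ is $\mu$-definable over $\M_\gamma$ from parameters in $\M_\gamma$. The base step is trivial. At successor stages, by the finite basis for $\mu$-rud functions (cf.\ \cite{schindler2010fine}), each element of $\M_{\gamma+n+1}$ has the form $f(\vec{x})$ for some $\vec{x}\in\M_{\gamma+n}\cup\{\M_{\gamma+n}\}$ and some basic $f$, hence $f$ is $\mu$-rud. By Lemma \ref{lem:mu-rud_implies_mu-simple}, $f$ is $\mu$-simple, so from the inductively given $\Ll^\mu_0$ definitions of the $x_i$ we obtain an $\Ll^\mu_0$ definition of $f(\vec{x})$ from parameters in $\M_\gamma$, where the parameter $\M_\gamma$ itself is accessed as the quantification domain (when $\gamma=\om_1$ we instead use the predicates $\dot{H}',\dot{T}'$).

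For $(\Leftarrow)$, I would induct on the complexity of $\varphi\in\Ll^\mu$, showing $X=\{x\in\M_\gamma\st\M_\gamma\sats\varphi(x,\vec{p})\}\in\M_{\gamma+\om}$. Atomic formulas give sets obtained from $\vec{p},\M_{\om_1},T$ by basic operations, all of which are $\mu$-rud. Boolean combinations preserve membership in $\M_{\gamma+\om}$ since $\M_{\gamma+\om}$ is closed under finite intersection, union and complement relative to $\M_\gamma$. A bounded quantifier $\exists y\in z\ \psi$ is handled using the $\mu$-rud ``bounded collection'' operation of the basis. The special quantifier $\all^*_ks\ \psi$ is handled using that $f_\mu(x)=\mu\cap x$ is a primitive of the basis (as verified in Lemma \ref{lem:mu-rud_implies_mu-simple}), so the set $\{x\st\all^*_k s\ \psi(x,s,\vec{p})\}$ is $\mu$-rud in the defining set inductively given for $\psi$; the outer $\all^*=\exists k<\om\ \all^*_k$ is then a countable union, still in $\M_{\gamma+\om}$. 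It remains to treat an unbounded ordinary quantifier $\exists y\ \psi(x,y,\vec{p})$, and for this I invoke the standard Jensen trick: the truncated $\Ll^\mu_0$-satisfaction predicate $\Sat^{\M_\gamma}_{\Ll^\mu_0\rest N}$ (on formulas of length ${\leq}N$) is itself a $\mu$-rud class in $\M_\gamma$, because each successor stage of the $\Ss^\mu$-hierarchy contains truth values for formulas definable at the previous stage. Hence $\exists y\ \psi(x,y,\vec{p})$ is equivalent, as a formula about $\M_\gamma$, to an $\Ll^\mu_0$ assertion referring to this satisfaction predicate, and so falls under the already-handled cases.

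The main obstacle is the final point: verifying that the truncated $\Ll^\mu_0$ satisfaction predicate can be produced as a $\mu$-rud class and threaded through the induction uniformly. This requires checking that the finite basis of \cite{schindler2010fine}, augmented with $f_\mu$, contains all primitives needed to evaluate $\all^*_k$ against parameters drawn from previous levels, and that the recursive scheme $(\varphi,g)\mapsto\psi_{\varphi,g}$ of Lemma \ref{lem:mu-rud_implies_mu-simple} can be internalized within the $\Ss^\mu$-hierarchy to yield genuine $\mu$-rudimentary satisfaction. Once this is in place the two directions match up and the lemma follows as in the classical rudimentary case.
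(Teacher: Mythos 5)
Your proposal is correct and takes essentially the same route as the paper, which simply cites Jensen's argument for the $L$/$J$-hierarchy and notes that the direction $X\in\M_{\gamma+\om}\Rightarrow X$ is $\mu$-definable comes from Lemma \ref{lem:mu-rud_implies_mu-simple} (``$\mu$-rud implies $\mu$-simple''), leaving the rest to the reader; you are filling in those remaining details in the expected Jensen-style way. The one place where your sketch is looser than it could be is the handling of $\all^*$: rather than appealing to a ``countable union'' (which needs the map $k\mapsto A_k$ to exist as a set), it is cleaner to use Turing determinacy together with $f_\mu(\M_\gamma)=\mu\cap\M_\gamma\in\M_{\gamma+\om}$ to rewrite $\all^*s\,\psi$ as a single bounded-quantifier statement with the extra parameters $\M_\gamma$, $f_\mu(\M_\gamma)$, $\Dd$, all of which are available in $\M_{\gamma+\om}$ — this is exactly where the finite basis element $F_{15}$ does its work and keeps everything uniformly $\mu$-rud.
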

\begin{proof}This is a straightforward generalization of the proof 
for $L$ in Jensen \cite{jensen_fs};
in particular, the fact that every element of $\pow(\M_\gamma)\inter\M_{\gamma+\om}$
is $\mu$-definable over $\M_\gamma$ from parameters follows
from Lemma \ref{lem:mu-rud_implies_mu-simple} as in \cite{jensen_fs}. We leave the remaining details to the reader.\end{proof}

\begin{dfn} Let $X\sub\Dd^{<\om}$. Then $X$ is a \dfnemph{tree} iff $X$ is closed under initial segment. For $n<\om$, we say $X$ is a \dfnemph{measure one tree of height $n$} iff $X$ is a tree, $\emptyset\neq X\sub\Dd^{\leq n}$ and for every $s\in X$,
there is $A\in\mu_{n-\lh(s)}$ such that $s\conc A\sub X$, where $s\conc A=\{s\conc t\bigm|t\in A\}$.  We say $X$ is a \dfnemph{measure one tree of height $\om$}
if $X$ is a tree and
 $X\cap\Dd^{\leq n}$ is a measure one tree of height $n$, for each $n<\om$.
A \dfnemph{measure one tree}
is one of height $n$ for some $n\leq\omega$.
\end{dfn}

For $X\sub\Dd^{\leq n}$, $X$ is a measure one tree
of height $n$ iff $X\neq\emptyset$ and for each
$s\in X$ with $\lh(s)<n$, there are $\mu_1$-measure one many $x\in\Dd$ such that
 $s\conc\left<x\right>\in X$. So
there is a fixed sentence $\varphi$
such that for each $X\sub\HC$,
$X$ is a measure one
tree iff $(\HC,X)\sats\varphi$. Given a set $Y\in\mu_n$ where  $n<\om$, we
can easily pass to a measure one tree denoted $\tree(Y)$, with $\tree(Y)\sub Y$:
Let $s\in\tree(Y)$ iff
\[ \all m\leq\lh(s)\ \all^*_{n-m}t\ [s\conc t\in Y]
\]
 (note the complexity of the definition depends on $n$). Also for
$Y\sub\Dd^n$ such that $\Dd^n\cut Y\in\mu$, let $\tree(Y)=\tree(\Dd^n\cut Y)$.
Note that $\tree$ is $\mu$-rud.

\begin{prop}\label{prop:musigma1}Let $\mathscr{M}$ be transitive and $\mu$-rud closed,
with $\RR\un\{\RR\}\sub\mathscr{M}$. Then the function $f:\mathscr{M}\to \mathscr{M}$, defined
$f(x)=x\inter\mu$, is $\Sigma_1^{\mathscr{M}}(\{\RR\})$ in the usual language of set theory,
uniformly.\end{prop}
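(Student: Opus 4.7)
The approach is to give $X \in \mu$ and $X \notin \mu$ uniform $\Sigma_1^{\mathscr{M}}(\{\RR\})$ characterizations via ``measure-one tree'' witnesses, and then define the graph of $f$ by prefixing $\exists W$ to a bounded statement that $W$ assigns consistent witnesses to each element of the input set. Turing determinacy in $\mathscr{M}$ is essential to make the negative side $\Sigma_1$; $\mu$-rud closure is used only to guarantee existence of the witness $W$ in $\mathscr{M}$.

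First I would verify that ``$T$ is a measure-one tree of height $n$'' is $\Delta_0(\{\RR\})$ uniformly in $n$, using the per-node formulation: $T \neq \emptyset$, $T \subseteq \Dd^{\leq n}$, $T$ is initial-segment closed, and
\[ \forall s \in T\ \Big[\lh(s) < n \rightarrow \exists r \in \Dd\ \forall d \in \Dd\ \big(r \leq_T d \rightarrow s\conc\langle d\rangle \in T\big)\Big]; \]
all quantifiers are bounded by $T$ or $\Dd$, and $\Dd$ is definable from $\RR$. Hence ``$X \in \mu$'' is
\[ \exists n < \omega\ \exists T\ \big[T\text{ is a measure-one tree of height }n\ \wedge\ \{s \in T : \lh(s) = n\} \subseteq X\big], \]
which is $\Sigma_1^{\mathscr{M}}(\{\RR\})$.

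Second, I would handle ``$X \notin \mu$'' using Turing determinacy in $\mathscr{M}$: either $X$ has an element outside $\Dd^{<\omega}$ (a $\Sigma_0$ witness), or $X \subseteq \Dd^n$ for a unique $n$, in which case $X \notin \mu_n$ yields $\Dd^n \setminus X \in \mu_n$, witnessed by a measure-one tree inside $\Dd^n \setminus X$. Both cases give $\Sigma_1^{\mathscr{M}}(\{\RR\})$ form. So membership in $\mu$ is $\Delta_1^{\mathscr{M}}(\{\RR\})$, with both a positive and a negative combinatorial witness available on the nose.

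Third, I would express the graph of $f$ as: $y = f(x)$ iff $y \subseteq x$ and there exists a function $W$ with domain $x$ such that for every $X \in x$, $W(X)$ is either a positive witness (forcing $X \in y$) or a negative witness (forcing $X \notin y$). The inner matrix is a bounded $\forall X \in x$ of a disjunction of $\Delta_0(\{\RR\})$ clauses, so prefixing $\exists W$ gives $\Sigma_1^{\mathscr{M}}(\{\RR\})$. To see that such $W$ exists in $\mathscr{M}$, note that $y = x \cap \mu \in \mathscr{M}$ by $\mu$-rud closure, and from $(x,y)$ one builds $W$ uniformly: apply the $\mu$-rud function $\tree$ to $X$ when $X \in y$, to $\Dd^n \setminus X$ when $X \in x \setminus y$ with $X \subseteq \Dd^n$, and pick any element of $X \setminus \Dd^{<\omega}$ otherwise. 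Then $W$ is $\mu$-rud in $(x,y,\RR)$ and so lies in $\mathscr{M}$. The main piece of care is to keep the uniformity in $n$ intact, relying on the per-node ($\mu_1$-level) reformulation of ``measure-one tree'' and ensuring the case split in the negative witness does not push the matrix above $\Delta_0(\{\RR\})$; once that bookkeeping is done, the proof is just a compilation of the formula.
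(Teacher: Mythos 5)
Your proof is correct and takes essentially the same route as the paper's: both use the $\mu$-rud operation $\tree$ to produce measure-one-tree witnesses for membership or non-membership in $\mu$, and both express the graph of $f$ as a $\Sigma_1^{\mathscr{M}}(\{\RR\})$ formula whose existential witness (your $W$, the paper's $t=\tree``x$) lies in $\mathscr{M}$ by $\mu$-rud closure. The only cosmetic difference is that the paper packages everything through the single image $\tree``x$ rather than a function $W$ with domain $x$, and it leaves the appeal to Turing determinacy implicit (available here since the relevant sets belong to $L(\RR)$).
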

\begin{proof}
 Let $x\in\mathscr{M}$, with $x\sub\bigcup_{n<\om}\pow(\Dd^n)$. Then $t=\tree``x\in \mathscr{M}$.
Moreover, $t$ is a set of measure one trees, and for each
$X\in x$, there is $T\in t$ such that either $T\sub X$ or $T\inter X=\emptyset$.
In light of the remarks above, 
this easily gives a $\Sigma_1(\{\RR\})$ definition of $f$.\end{proof}

\begin{rem}
In the following definition, recall that $\Ll$ is the standard
first order language with symbols $\in,\dot{H},\dot{T},\dot{H}',\dot{T}'$  (with no second order quantifiers).
\end{rem}

\begin{dfn}
 A formula $\varphi(\vec{z})$ of $\Ll$ is a \dfnemph{Q-formula}
 if it has the form
 \[ \all x\ \exists y\ [x\sub y\wedge\psi(y,\vec{z})],\]
 for some $\Sigma_1$ formula $\psi$,
 whose only free variables are $(y,\vec{z})$
 (and this does not include $x$).
 
 An embedding $\pi:\M\to\N$ between $\Ll$-structures is a \dfnemph{weak $0$-embedding}
 iff $\pi$ is $\Sigma_0$-elementary
 and there is a $\sub$-cofinal set $X\sub\M$
 such that $\pi$ is $\Sigma_1$-elementary
 on all statements using only parameters in $X$.
\end{dfn}

\begin{lem}[$\Sigma_1$ condensation]\label{lem:Pi2charac} Let
$\M=\M_{\om_1}$. There is a Q-formula $\varphi$ in $\Ll$ such that for all $\Ll$-structures
$\Nn=(\lpole\Nn\rpole,\M)$ with
$\Nn$ transitive and $\M\in\Nn$, we
have: $\Nn\sats\varphi$ iff there is
$\gamma\in\Lim\cut(\om_1+1)$ such that $\Nn=\M_\gamma$.

Therefore, if $\Nn=(\lpole\Nn\rpole,\M)$ with $\Nn$ transitive and
$\M\in\Nn$ and
$\pi:\Nn\to\M_\gamma$ is a weak $0$-embedding, then $\Nn=\M_\beta$ for some
$\beta\in\Lim\cap(\gamma+1)$.
\end{lem}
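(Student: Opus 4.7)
The plan is to exhibit an explicit Q-formula $\varphi\in\Ll$ witnessing condensation, and then to deduce the embedding clause by the standard downward preservation of Q-formulas under weak $0$-embeddings.

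First I let $\varphi$ be
\[
\all x\ \ex y\ \ex\alpha\ \big[x\sub y\ \wedge\ \psi(y,\alpha)\big],
\]
where $\psi(y,\alpha)$ is a $\Sigma_1$ formula of $\Ll$ (with the constant $\dot{H}$) expressing ``$y=\Ss^\mu_\alpha(\dot{H})$''. The construction of $\psi$ mimics Jensen's $\Sigma_1$ definition of the $\Ss$-hierarchy, existentially quantifying a sequence $\left<y_\beta:\beta\leq\alpha\right>$ satisfying the recursion of Definition \ref{dfn:MgammaRR} starting from $y_0=\dot{H}$. Here I use Proposition \ref{prop:musigma1}, which makes $v\mapsto v\cap\mu$ $\Sigma_1(\{\RR\})$, together with Lemma \ref{lem:mu-rud_implies_mu-simple}, which confines the $\mu$-rud steps to a finite basis. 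To make the step-by-step verification $\Delta_0$ in the sequence parameter I fold the auxiliary data $\mu\cap y_\beta^2$ and the finitely many $F_i$-images into the sequence itself; once the enriched sequence is existentially quantified, the internal check at each stage becomes a bounded formula, and the whole of $\psi$ is $\Sigma_1$, making $\varphi$ a genuine Q-formula.

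Next I verify $\M_\gamma\sats\varphi$ for each $\gamma\in\Lim\cut(\om_1+1)$: given $x\in\M_\gamma$, pick $\alpha<\gamma$ with $x\in\Ss^\mu_\alpha(\M_{\om_1})$; the initial segment of the hierarchy up to level $\alpha$ lies in $\M_\gamma$ and witnesses $\psi(\Ss^\mu_\alpha(\M_{\om_1}),\alpha)$, while $y=\Ss^\mu_\alpha(\M_{\om_1})\supseteq x$. Conversely, suppose $\Nn=(\lpole\Nn\rpole,\M)$ is transitive with $\M\in\Nn$ and $\Nn\sats\varphi$. Since $\psi$ is $\Sigma_1$ with parameter $\M_{\om_1}\in\Nn$, and $\Nn$ is transitive end-extending $\M_{\om_1}$, upward absoluteness of $\Sigma_1$ forces each $y$ internally claimed to be ``$\Ss^\mu_\alpha(\M_{\om_1})$'' to equal the true such set. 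Hence every $x\in\Nn$ lies in some true $\Ss^\mu_\alpha(\M_{\om_1})$ with $\alpha<\OR^\Nn$, and by transitivity all these true levels belong to $\Nn$, giving $\Nn=\M_{\OR^\Nn}$. Finally $\OR^\Nn$ is a limit ordinal $>\om_1$: if $\delta=\max(\OR^\Nn)$ existed, applying $\varphi$ to $\delta$ would produce $y\supseteq\delta$ of the form $\Ss^\mu_\alpha(\M_{\om_1})$ with $\alpha\geq\delta+1$, contradicting the maximality; and $\om_1<\OR^\Nn$ since $\M_{\om_1}\in\Nn$.

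For the embedding clause, given $\pi:\Nn\to\M_\gamma$ a weak $0$-embedding with $\sub$-cofinal $\Sigma_1$-elementarity set $X\sub\Nn$, and $x\in\Nn$, I enlarge $x$ to some $x_0\in X$, apply $\M_\gamma\sats\varphi$ to $\pi(x_0)$ to obtain witnesses $y,\alpha\in\M_\gamma$, enlarge $y$ to some $\pi(y_0)$ with $y_0\in X$ using $\sub$-cofinality of $X$, and then pull back via $\Sigma_1$-elementarity at the parameters $x_0,y_0\in X$ to recover $(y_0,\alpha_0)\in\Nn$ with $\Nn\sats[x_0\sub y_0\wedge\psi(y_0,\alpha_0)]$. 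Hence $\Nn\sats\varphi$, so $\Nn=\M_\beta$ by the first part, with $\beta\leq\gamma$ by $\Sigma_0$-elementarity of $\pi$ on ordinals. The main obstacle is the bookkeeping that makes $\psi$ honestly $\Sigma_1$ despite $v\mapsto v\cap\mu$ being only $\Sigma_1$ (not $\Delta_0$) over $\M_{\om_1}$; this is handled by the enrichment of the witness sequence described above, which is Jensen's standard device for amenable predicates, adapted here to $\mu$.
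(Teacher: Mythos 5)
Your construction of an explicit $\Sigma_1$ formula $\psi(y,\alpha)$ for ``$y=\Ss^\mu_\alpha(\dot{H})$'' is a more concrete route than the paper takes. The paper uses a Q-formula that only asserts the existence of deciding measure-one trees for each set of subsets of $\Dd^{\leq n}$, then notes that any transitive $\Nn$ satisfying it computes $\mu$ correctly, and finally cites the ``usual proof'' of Jensen-style condensation for $\Ss^A$-hierarchies with an amenable predicate. You instead build the entire hierarchy recursion (including the deciding trees witnessing the $F_{15}$-computations) into a single existentially quantified sequence, which makes the role of Proposition~\ref{prop:musigma1} and Lemma~\ref{lem:mu-rud_implies_mu-simple} explicit and avoids citing an external condensation theorem. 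Both approaches pivot on the same fact — that ``measure one tree'' is $\Delta_0$ over the parameter $\dot{H}$, so the $\mu$-predicate becomes $\Delta_0$-checkable once you package the deciding trees into the witness — and your first two paragraphs are sound.

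The embedding clause has a genuine error. You write that, having obtained $y,\alpha\in\M_\gamma$ with $\pi(x_0)\sub y\wedge\psi(y,\alpha)$, you ``enlarge $y$ to some $\pi(y_0)$ with $y_0\in X$ using $\sub$-cofinality of $X$.'' But $X$ is only $\sub$-cofinal \emph{in $\Nn$}; this does not make $\pi``X$ $\sub$-cofinal in $\M_\gamma$ (a weak $0$-embedding need not be cofinal). And even granting the enlargement, you would then need $\M_\gamma\sats\psi(\pi(y_0),\alpha')$ for some $\alpha'$, which you do not have — you only have $\psi(y,\alpha)$ for the particular $y$ found. The correct argument is simpler and requires none of this: $\M_\gamma\sats\exists y\,\exists\alpha\,[\pi(x_0)\sub y\wedge\psi(y,\alpha)]$ is a $\Sigma_1$ statement whose only parameter is $\pi(x_0)$ with $x_0\in X$; by $\Sigma_1$-elementarity on $X$-parameters it reflects to $\Nn\sats\exists y\,\exists\alpha\,[x_0\sub y\wedge\psi(y,\alpha)]$, and since $x\sub x_0$ was arbitrary this gives $\Nn\sats\varphi$. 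Then the first part yields $\Nn=\M_\beta$, and $\beta\leq\gamma$ since $\pi$ restricts to an order-embedding of $\OR^{\Nn}$ into $\gamma$. The point worth internalizing is that Q-formulas reflect downward under weak $0$-embeddings precisely because the $\exists y$ is part of the $\Sigma_1$ matrix: you pull back the \emph{whole} $\Sigma_1$ statement, you do not try to locate a preimage of the witness $y$ in $\rg(\pi)$.
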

\begin{proof}
Each $\M_\gamma$ satisfies the following Q-formula $\psi$: ``$\all
x\  \ex y\ \Big[x\sub y\wedge\exists z\ \all X\in y\ \all n<\om$, if $X\sub\Dd^{\leq n}$,
then there is a measure one tree $T\in z$ such that either $T\sub X$ or $T\inter
X=\emptyset\Big]$''. Moreover, if $\Nn$ is as hypothesized and $\Nn\sats\psi$, then
$\Nn$
is correct about ``measure one tree''. Using these observations, the usual
proof goes through.
\end{proof}

\begin{dfn}[$h^\M_0$]\label{dfn:h0} Let $\gamma\geq\omega_1$ be a limit and $\M=\M_\gamma$. We define the
surjection
\[ h^\M_0:\om\cross(\Lim\inter\gamma)^{<\om}\cross\HC\surj\M. \]
If $\alphavec\sub\om_1$ then
$h^\M_0(n,\alphavec,z)=y$, where if $n>0$ and $z$
is a function and $n-1\in\dom(z)$ then $y=z(n-1)$, and otherwise $y=z$. If
$\max(\alphavec)\geq\om_1$ then $h^\M_0(n,\alphavec,z)$ is defined in the usual
manner by iteratively evaluating $\mu$-rud functions coded by $n$, at levels
$\M_\alpha$ for various $\alpha\in\alphavec$,
feeding in parameters evaluated at lower levels.

We also write $h^\gamma_0=h^\M_0$.\end{dfn}

\begin{lem}\label{lem:h_0^M_is_Delta_1}
The graph of $h_0^\gamma$ is
$\Delta_1^{\M_\gamma}$, uniformly in $\gamma$.\end{lem}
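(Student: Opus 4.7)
The plan is to unwind the definition of $h_0^\gamma$ as a finite recursive evaluation scheme, exhibit a $\Sigma_1^{\M_\gamma}$ definition of its graph by existence of a witnessing ``computation trace,'' and then invoke functionality to obtain the matching $\Pi_1^{\M_\gamma}$ definition, all uniformly in $\gamma$. First I would fix a recursive encoding $n \mapsto g_n$ of $\mu$-rud function schemes (using the finite basis from \cite{jensen_fs,schindler2010fine} as in Definition \ref{dfn:MgammaRR}) together with an accompanying tree structure on $\om$ describing how a value at node $i$ is built from values at its children at level $\M_{\alpha_i}$. With this coding in hand, the clause for $\max\vec{\alpha}<\om_1$ is immediate, while the interesting case $\max\vec{\alpha}\geq\om_1$ unfolds as an $N$-step recursion in which, at each internal node, a $\mu$-rud scheme is applied at level $\M_{\alpha_i}$ (for some $\alpha_i \in \vec{\alpha}$) to the values already assigned to its children, with $z$ feeding in at the leaves.

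For the $\Sigma_1$ direction, I would express $y = h_0^\gamma(n,\vec{\alpha},z)$ by asserting the existence of a finite function $f \colon \{0,\ldots,N\}\to\M_\gamma$, indexed by the nodes of the tree coded by $n$, such that $f$ obeys the scheme-structure, the leaf values come from $z$, each internal node's value is the result of applying its prescribed $\mu$-rud scheme at the prescribed level to the values of its children, and $f$ at the root equals $y$. By Lemma \ref{lem:mu-rud_implies_mu-simple}, each individual one-step scheme-application is uniformly $\Delta_0$ in the parameters, the level $\M_{\alpha_i}$, and the relevant section $\mu \cap \M_{\alpha_i}$, which is in turn $\Sigma_1^{\M_\gamma}(\{\M_{\alpha_i}\})$ by Proposition \ref{prop:musigma1}. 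Since $\M_\gamma$ uniformly computes each $\M_{\alpha_i}$ (using the amenable predicate/constant $\dot H,\dot T$ at the base and the $\Ss^\mu$-hierarchy above), the conjunction of finitely many single-step conditions together with the existence of $f$ is $\Sigma_1^{\M_\gamma}$, with a formula not mentioning $\gamma$.

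For the $\Pi_1$ direction, the function $h_0^\gamma$ is total and single-valued on its specified domain by construction, so $y = h_0^\gamma(n,\vec{\alpha},z)$ is equivalent to $\neg\exists y'\neq y\,[y' = h_0^\gamma(n,\vec{\alpha},z)]$, which is $\Pi_1^{\M_\gamma}$ by the $\Sigma_1$ clause just established, and uniformly so in $\gamma$. Combining the two directions yields a uniform $\Delta_1^{\M_\gamma}$ definition. The main obstacle I anticipate is checking the uniform $\Delta_1$-ness of scheme-application across the regime switch at $\alpha_i = \om_1$, where $\dot H,\dot T$ are predicates rather than constants; however, the standard bookkeeping from \cite{jensen_fs,schindler2010fine} adapts without essential change once the finite basis of $\mu$-rud schemes is fixed, so this is purely a matter of care rather than a genuine difficulty.
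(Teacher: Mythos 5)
Your proposal is correct and takes essentially the same route the paper intends: the paper's proof is the one-liner ``this is just as for $\SS_\gamma$, using \ref{prop:musigma1},'' and your argument is precisely the spelled-out version of that standard $\Sigma_1$-Skolem-function trace argument, with Proposition \ref{prop:musigma1} (and Lemma \ref{lem:mu-rud_implies_mu-simple}) handling the $\mu$-specific step.

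One small point worth being careful about in the $\Pi_1$ direction: writing $y=h_0^\gamma(n,\vec\alpha,z)$ as $\neg\exists y'\neq y\,[y'=h_0^\gamma(n,\vec\alpha,z)]$ alone is vacuously true off the domain, so the $\Pi_1$ formula should be the conjunction of this with the (bounded, hence $\Delta_0$) condition that $(n,\vec\alpha,z)$ lies in $\om\times(\Lim\cap\gamma)^{<\om}\times\HC$; since you already note totality on the specified domain, this is a matter of phrasing rather than a gap.
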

\begin{proof}This is just as for $\SS_\gamma$, using \ref{prop:musigma1}.\end{proof}

\subsection{Fine structure}\label{subsec:mu-fs}
We now develop the fine structure for the $\M$-hierarchy. Toward this, we define a hierarchy through the $\Ll^\mu$ relations, defined over 
$\M=\M_\gamma$, for some limit $\gamma\geq\omega_1$, along with projecta and standard parameters.  Although this  is quite
routine, we will need to carefully
compare the fine structure of the $\M$-hierarchy
with that of corresponding generic premice, for which a precise development of the fine structure is in order. For premice,
we use Mitchell-Steel fine structure,
but omitting the parameters $u_n$ of \cite{fsit},
as described in detail in 
\cite[\S5]{V=HODX_pub}.
As explained there, this has no impact on soundness, fine structural parameters and or projecta.
The fine structure of $\M$ will be analogous. In particular,
we define the $\mSigma_n$ definability
hierarchy in Definition \ref{dfn:mSigma_n} below, by analogy with the
 $\rSigma_n$ hierarchy for premice.

\begin{rem}\label{rem:rSigma_n^J_beta}
The fine structure developed in \cite{scales_in_LR} for the
$\J$-hierarchy of $L(\RR)$
uses the $\Sigma_n^{\SS_\beta}$ hierarchy (where $\beta\in\Lim_0$),
as opposed to the $\rSigma_n^{\SS_\beta}$ hierarchy
(defined analogously to $\rSigma_n^M$ for premice $M$). One has $\Sigma_n^{\SS_\beta}\sub\rSigma_n^{\SS_\beta}$,
but the inclusion can be strict. However,
the $\bfrSigma_n^{\SS_\beta}$ sets
are exactly the $\bfSigma_n^{\SS_\beta}$ sets.
Moreover, there is an $\rSigma_{n+1}^{\SS_\beta}(\{\pvec_n^{\SS_\beta}\})$-definable Skolem function for $\rSigma_{n+1}^{\SS_\beta}$, uniformly in $\beta,n$, but for $\Sigma_{n+1}^{\SS_\beta}$, more parameters are required to define the relevant Skolem functions. This complicates the fine structure somewhat; the $\rSigma_n$ hierarchy is smoother. The fine structure
for the $\rSigma_n$ hierarchy of $\SS_\beta$ is developed
almost exactly like that for the $\mSigma_n$ hierarchy for $\M_\gamma$ (for limits $\gamma\geq\omega_1$)
which we are about to develop; in fact it is a simplification thereof. So without further mention, we implicitly automatically adapt all the  fine structural definitions introduced below for the $\mSigma_n$ hierarchy for $\M_\gamma$, to the $\rSigma_n$  hierarchy for $\SS_\gamma$; the straightforward details will be left to the reader.
\end{rem}

\begin{dfn}[$\mSigma_n^\M,\muSigma_n^\M,\rho_n^\M,p_n^\M$]\label{dfn:mSigma_n}
Let $\gamma\geq\omega_1$ be a limit and $\mathscr{M}=\mathscr{M}_{\gamma}$.
Let $\mSigma_1=\Sigma_1$ (using $\Ll$, which had no $\all^*$-quantifiers), $\rho_0^\M=\OR^\M$ and $p^\M_0=\emptyset$.
Let $n\geq 1$ and suppose we have
defined $\mSigma_n$ and $p^\M_{n-1}$ . Let
$\muSigma_n$ be the class of relations of the form
``$\all^*s\ \varphi$'', where $\varphi$ is $\mSigma_n$. Let
$\mPi_n,\muPi_n,\mDelta_n$ and $\muDelta_n$ be the corresponding dual and
ambiguous classes.

For $X\sub\M$, let
$\Th_{\mSigma_n}^{\M}(X)$
denote the $\mSigma_n$-theory of $\M$ in parameters in
$X$. Let
$\Hull_{\mSigma_n}^{\M}(X)$
be the set of all $z\in\M$ such that for some
$\mSigma_n$ formula $\varphi$ and some
$\xvec\in X^{<\om}$, $z$
is the unique $z'\in\M$ such that
$\M\sats\varphi(z,\xvec)$. Also let
$\cHull_{\mSigma_{n}}^{\M}(X)$
be the transitive collapse of that hull, if it  is extensional. (Normally we will have $\HC\sub X$, in which case it is extensional, by Lemma \ref{lem:h_0^M_is_Delta_1}.)
We define $\Th_{\muSigma_n}$, $\Hull_{\muSigma_n}$ and $\cHull_{\muSigma_n}$
analogously.

Let $\rho_n=\rho_n^{\M}$ be the least
$\rho\geq\om_1$ such that for some $p\in\gamma^{<\om}$,
\[ \Th_{\mSigma_n}^{\M}(\M_\rho\un\{p\})\notin\M.\]
Let $p^\M_n$ be the least
$p\in\OR^{<\om}$ such that
\[ \Th_{\mSigma_n}^\M(\M_{\rho_n}\un\{p,p^\M_{n-1}\}
)\notin\M.\]
Also write  $\pvec^\M_n=(p_1^\M,\ldots,p_n^\M)$.

Let $T_n\sub\M\cross\M$
be defined as follows. If $\rho_n>\om_1$ then
\[
T_n(r,t)\iff\ex
q\ \ex\alpha<\rho_n\left[r=(q,\alpha)\ \&\ t=\Th_{\muSigma_n}^{\M}
(\M_\alpha\un\{q\})\right]. \]
If $\rho_n=\om_1$ then
\[ T_n(r,t)\iff t=\Th_{\muSigma_n}^{\M}(\{r\}). \]
For each $r$ of the appropriate
form, there is a unique $t\in\M$ such
that $T_n(r,t)$; this follows \ref{prop:muSigma_n_theories} below in the case that
$\rho_n>\om_1$,\footnote{Note that $\rho_n$ is defined using $\mSigma_n$
but $T_n$ uses $\muSigma_n$.}
and if $\rho_n=\om_1$, it is just because $\HC\sub\M$.

Let $\mSigma_{n+1}$ be the class of relations of the form
\[\varphi(\xvec)\iff\ex r, t\ \Big[T_n(r,t)\wedge \psi(r,t,\xvec)\Big], \]
where $\psi$ is $\Sigma_1$.

This completes the recursion. We also let $\mSigma_0=\Sigma_0$ (in $\Ll$) and
let
$\muSigma_0$ denote the class of
relations of the form ``$\all^*s\ \varphi$'', where
$\varphi$ is $\Sigma_0$.
\end{dfn}

\begin{lem}\label{prop:muSigma_n_theories}
 Let $\gamma>\om_1$ be a limit, $\M=\M_\gamma$, $n\in[1,\om)$, and suppose
$\rho_n^{\M}>\om_1$. Then for all
$\alpha<\rho_n^\M$ and $q\in\M$, we have
$
\Th_{\muSigma_n}^{\M}(\M_\alpha\un\{q\}
)\in\M$.
\end{lem}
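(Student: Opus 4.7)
My plan is to prove the result by a direct computation rather than an induction on $n$. The point is that for $\alpha<\rho_n^\M$, the ``inner'' $\mSigma_n$-theory $t:=\Th_{\mSigma_n}^\M(\M_\alpha\cup\{q\})$ already lies in $\M$ by the very definition of $\rho_n^\M$, and the $\muSigma_n$-theory $u:=\Th_{\muSigma_n}^\M(\M_\alpha\cup\{q\})$ is obtainable from $t$ by one additional $\mu$-rud step, using Proposition~\ref{prop:musigma1} (which says $x\mapsto x\cap\mu$ is uniformly $\Sigma_1^\M(\{\RR\})$) together with the Turing determinacy assumed in $\M$.

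In more detail, fix any $\mSigma_n$ formula $\varphi(s,\vec{x})$ where $s$ is intended to range over $\Dd^k$, together with $k<\om$ and $\vec{a}\in(\M_\alpha\cup\{q\})^{<\om}$. The set
\[
Y_{\varphi,k,\vec{a}}\ :=\ \{\,s\in\Dd^k\mid(\varphi,\,s\conc\vec{a})\in t\,\}
\]
is $\Delta_0$-definable from parameters $t,\varphi,k,\vec{a},\Dd^k$ all lying in $\M$, and hence is itself in $\M$. Since $\M$ satisfies Turing determinacy, the truth of ``$\all^*_k s\,\varphi(s,\vec{a})$'' in $\M$ is equivalent to $Y_{\varphi,k,\vec{a}}\in\mu_k$, and by Proposition~\ref{prop:musigma1} the set $\{Y_{\varphi,k,\vec{a}}\}\cap\mu$ lies in $\M$; the decision is therefore read off as nonemptiness of this $\mu$-rud object. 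Because ``$\all^* s\,\varphi(s,\vec{a})$'' unpacks as $\exists k<\om\,\all^*_k s\,\varphi(s,\vec{a})$ with the outer quantifier bounded by $\om\in\M$, the whole question of the truth of a given $\muSigma_n$ instance is decided by a $\mu$-rud procedure in $\M$ from $t$ together with the parameter set $\M_\alpha\cup\{q\}$.

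Collecting these answers over all $\mSigma_n$ formulas $\varphi$ and all tuples $\vec{a}\in(\M_\alpha\cup\{q\})^{<\om}$, the theory $u$ is exhibited as the value of a $\mu$-rud function on parameters $(t,\M_\alpha,q,\RR)$, which all lie in some stage $\M_{\beta_0}$ with $\beta_0<\gamma$. Hence $u\in\M_{\beta_0+\om}\sub\M_\gamma=\M$, as required.

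The main point to be careful about is that the extraction of $u$ from $t$ really is a single $\mu$-rud step, rather than requiring a $\Sigma_1$-collection over $\M$ we have no right to invoke; this is exactly what Proposition~\ref{prop:musigma1} buys us, since without it the predicate ``$Y\in\mu$'' would not be uniformly definable at the right complexity inside $\M$. In the cases $n>1$, no additional complication arises: although $\mSigma_n$ formulas mention the predicate $T_{n-1}$, all the relevant values of $T_{n-1}$ at parameters from $\M_\alpha\cup\{q\}$ are already absorbed into $t$ (they appear only in $\Sigma_1$ position), so the same $\mu$-rud computation works uniformly.
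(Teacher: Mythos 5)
Your proof is correct and follows essentially the same route as the paper: the $\muSigma_n$-theory is extracted from the $\mSigma_n$-theory $t=\Th_{\mSigma_n}^{\M}(\M_\alpha\cup\{q\})\in\M$ by a fixed closure operation and therefore stays inside $\M$ (the paper compresses this to ``the theory in question is in $\Ss_{\om^2}(t)$'' and then notes $t\in\M_{\rho_n^\M}$). Your version is somewhat more explicit than the paper's, spelling out the reduction of $\all^*_k s\,\varphi$ to membership of $Y_{\varphi,k,\vec{a}}$ in $\mu_k$ via Turing determinacy and then invoking Proposition~\ref{prop:musigma1} to decide that $\mu$-rudimentarily inside $\M$, which is a perfectly valid way to fill in the one step the paper leaves to the reader.
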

\begin{proof}
 The theory in question is in
$\Ss_{\om^2}(t)$ where $t=\Th_{\mSigma_n}^{\M}(\M_\alpha\un\{q\})$,\footnote{In case the reader doesn't notice, the first theory
consists of $\muSigma_n$ formulas,
whereas the second $\mSigma_n$.} and since $t\in\M_{\rho_n^{\M}}$, also
$\Ss_{\om^2}(t)\in\M_{\rho_n^\M}$.
\end{proof}

\begin{lem} Let $\M=\M_\gamma$ where $\gamma\geq\omega_1$ is a limit. Then:
\begin{enumerate}
\item\label{item:mu-def_equiv_bfmSigma_om}  A subset of $\M$ is
$\mu$-definable from parameters iff it is $\bfmSigma_n^\M$ for some $n<\om$.
\item\label{item:mSigma_n_sub_muSigma_n} $\mSigma_n^\M\sub\muSigma^\M_n$
for each $n<\om$.
\item \label{item:Bool_comb_musigma_n}
 Boolean combinations
of $\muSigma_n^\M$ relations are $\mDelta^\M_{n+1}$, for each $n<\om$, and uniformly so.
\end{enumerate}\end{lem}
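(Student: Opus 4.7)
The plan is to prove parts \ref{item:mSigma_n_sub_muSigma_n} and \ref{item:Bool_comb_musigma_n} first, then deduce part \ref{item:mu-def_equiv_bfmSigma_om} by induction on formula complexity in $\Ll^\mu$.

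Part \ref{item:mSigma_n_sub_muSigma_n} is immediate: if $\varphi(\vec{x})$ is $\mSigma_n$ and $s$ is a fresh variable, then $\all^* s\ \varphi(\vec{x})$ is $\muSigma_n$ by definition, and is equivalent to $\varphi(\vec{x})$. For part \ref{item:Bool_comb_musigma_n}, the idea is that the predicate $T_n$ packages all $\muSigma_n$ truths of $\M$ into a $\Sigma_1$-accessible object. Concretely, given a $\muSigma_n$ relation $R(\vec{x})$ defined by a $\muSigma_n$ formula $\varphi_R$, in the case $\rho_n^\M > \om_1$ we can write
\[ R(\vec{x}) \iff \exists\alpha<\rho_n^\M\ \exists t\ \bigl[\vec{x}\in\M_\alpha\ \wedge\ T_n((\vec{x},\alpha),t)\ \wedge\ (\varphi_R,\vec{x})\in t\bigr], \]
with the existence of a suitable $t\in\M$ guaranteed by Lemma \ref{prop:muSigma_n_theories}. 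This definition is $\mSigma_{n+1}$ by Definition \ref{dfn:mSigma_n}. Replacing ``$(\varphi_R,\vec{x})\in t$'' by ``$(\varphi_R,\vec{x})\notin t$'' gives an $\mSigma_{n+1}$ definition of $\neg R$, so $R\in\mDelta_{n+1}$. The case $\rho_n^\M=\om_1$ is handled by the same recipe using the flat version of $T_n$; here every parameter $\vec{x}\in\M$ is coded by a real because $\HC\sub\M$, so the ``flat'' $T_n(r,t)$ still captures the needed $\muSigma_n$ theories with the parameter baked into $r$. Boolean closure of the $\muSigma_n$ relations is then immediate from the Boolean closure of $\mDelta_{n+1}$, and uniformity is inherited from the uniformity of $T_n$.

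For part \ref{item:mu-def_equiv_bfmSigma_om}, the right-to-left direction is by induction on $n$: the base case $n=1$ is trivial since $\mSigma_1=\Sigma_1$ of $\Ll\sub\Ll^\mu$. For the induction step, $\muSigma_n$ is $\mu$-definable (add one $\all^*$ to an $\mSigma_n$ formula given by the inductive hypothesis), so the ``$t=\Th^{\M}_{\muSigma_n}(\ldots)$'' clause in the definition of $T_n$ is expressible in $\Ll^\mu$, so $T_n$ itself is $\mu$-definable, and hence the $\mSigma_{n+1}$ relations (which are $\Sigma_1$ in $T_n$) are $\mu$-definable. The left-to-right direction goes by induction on the construction of $\Ll^\mu$-formulas: atomic formulas are $\mSigma_0\sub\mSigma_1$; negation and ordinary quantifiers climb the $\mSigma_n$ hierarchy in the usual way; and the crucial new case is the $\all^*$ quantifier applied to a $\bfmSigma_n^\M$ formula, which yields a $\bfmuSigma_n^\M$ formula, hence lies in $\bfmDelta_{n+1}^\M\sub\bfmSigma_{n+1}^\M$ by part \ref{item:Bool_comb_musigma_n}. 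To apply $\all^*$ to a formula which is not already $\bfmSigma_n$ (say it is $\bfmPi_n^\M$), we first absorb it into $\bfmSigma_{n+1}^\M$ using parts \ref{item:mSigma_n_sub_muSigma_n} and \ref{item:Bool_comb_musigma_n}, then apply $\all^*$ at the next level.

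The main technical obstacle I expect is bookkeeping around the case $\rho_n^\M=\om_1$ in part \ref{item:Bool_comb_musigma_n}, since then the full packaged theory $\Th_{\muSigma_n}^\M(\M_\alpha\cup\{q\})$ need not live in $\M$, so one must work with the flat version of $T_n$ and verify that it still suffices to read off the truth of a single $\muSigma_n$ formula at given parameters; this hinges on the fact that the flat $T_n$ is designed precisely so that $T_n(r,t)$ makes $t$ the $\muSigma_n$ theory in the single parameter $r$, which is all we need to decide one $\muSigma_n$ relation. With this point settled, the formula-complexity induction in part \ref{item:mu-def_equiv_bfmSigma_om} becomes routine.
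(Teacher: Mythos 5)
Your $T_n$-based recipe for part \ref{item:Bool_comb_musigma_n} is essentially why the paper also regards that part as clear when $n\geq 1$, but it breaks down at $n=0$, and that is precisely where the lemma has content. In Definition \ref{dfn:mSigma_n} the predicate $T_n$ is introduced only for $n\geq 1$, inside the recursion that produces $\mSigma_{n+1}$; there is no $T_0$, and $\mSigma_1$ is just $\Sigma_1$ of $\Ll$. So the base case $\muSigma_0\subseteq\mDelta_1^\M$ has to be argued directly, and that is where the paper's proof does its actual work: for $\gamma>\om_1$ it appeals to the $\mu$-closedness of $\M$ together with the proof of Proposition \ref{prop:musigma1} (measure-one trees give a $\Sigma_1$, and by Turing determinacy a $\Pi_1$, test for a set being of $\mu$-measure one); and for $\gamma=\om_1$, where $\M_{\om_1}=(\HC,T)$ is not $\mu$-closed and $T$ is a proper-class predicate, there is a separate hand calculation, first when the bounded formula does not mention $T$ (so that $\all^*_k s\ \varphi(x,s)$ can be computed over $\SS_\delta$ for a $\delta$ large enough to contain the parameter $x$), and then reducing the general case to that one by observing that any pair $(r,t)$ with $T(r,t)$ occurring in $\tc(\{s,x\})$ in fact lies in $\tc(\{x\})$, since $s$ is a tuple of sets of reals while $t$ is a theory. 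None of this is captured by your packaging argument, and your part \ref{item:mu-def_equiv_bfmSigma_om} induction leans on part \ref{item:Bool_comb_musigma_n} at the bottom level (though that use can be salvaged by absorbing $\muSigma_0$ into $\muSigma_1$).

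Separately, the obstacle you flagged at the end (the case $\rho_n^\M=\om_1$) is not where the difficulty lives: for $n\geq 1$ the flat version of $T_n$ handles $\rho_n^\M=\om_1$ uniformly, exactly as you describe, by Lemma \ref{prop:muSigma_n_theories} and the hereditary countability of theories. Also a small slip in your displayed formula: the clause ``$\vec{x}\in\M_\alpha$'' should be dropped --- $T_n((\vec{x},\alpha),t)$ only requires $\vec{x}\in\M$ and $\alpha<\rho_n^\M$, not $\vec{x}\in\M_\alpha$, and if $\rho_n^\M<\gamma$ your extra clause would make the right-hand side false for most $\vec{x}$.
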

\begin{proof}
Parts \ref{item:mu-def_equiv_bfmSigma_om} and 
\ref{item:mSigma_n_sub_muSigma_n} are clear, as is \ref{item:Bool_comb_musigma_n} when $n\geq 1$.
Let us observe that
$\muSigma_0\sub\mDelta_1^{\M}$; this fact easily
extends to Boolean combinations. If $\gamma>\om_1$, this is because
$\M$ is $\mu$-closed, and by the proof of
\ref{prop:musigma1}.

Suppose $\gamma=\om_1$. Let $\varphi(\xdot,\sdot)$ be
$\mSigma_0$ and let $x\in\HC$. If $\varphi$ does not use the predicate $T$,
then the result is immediate, since the question of whether
$\all^*_s\varphi(x,s)$ is easily computed over $\SS_\delta$ where
$\delta$ is large enough that $x\in\SS_\delta$. 
But the general case is an easy
generalization, since if $k>0$ then $\all_k^* s$, if
$(r,t)\in\tc(\{s,x\})$ and $T^\M(r,t)$, then $(r,t)\in\tc\{x\}$. For
$r\in\tc\{t\}$ and $t\in\tc\{x\}$ because $s$
is a tuple of sets of reals (not theories).
\end{proof}

\begin{rem}
The foregoing argument is cheap. If we had defined $T^{\M_{\om_1}}$ instead as the function
$f:\HC\to\HC$ where $f(r)=\Th_{\Sigma_1}^{\J_{\alphag}}(\{r\})$, then we would actually
have to do something. Let us consider this variant.
By Turing completeness,
for each $n<\om$ 
there is a fixed theory $t_n$
such that 
\[ \all^*_ns\ \Big[t_n=\Th_{\Sigma_1}^{\J_{\alphag}}(\{(x,s)\})\Big].\]
But from $t_0=\Th_{\Sigma_1}^{\J_{\alphag}}(\{x\})$,
we can recover the $t_n$'s for $n>0$.
For $\psi\in t_1$ iff
\[ \all^\Dd d\ \exists^\Dd e\ \Big[\SS_{\alphag}\sats\psi(x,e)\Big],\] which by
admissibility
 is equivalent to
\[ \exists\alpha'<\alphag\ \Big[\J_{\alpha'}\sats\all^\Dd d\ \exists^\Dd e\ \psi(x,e)\Big],\]
which is determined by $t_0$. And $t_{n+1}$ is recovered similarly from $t_n$ (cf.~the proof of Lemma \ref{lem:T_com_Sigma_1^J_alpha}).

It easily follows that Boolean combinations
of $\muSigma_n^{\M}$ relations
are $\mDelta_1^{\M}$, uniformly in $n$.
\end{rem}

\begin{dfn}
Fix $\gamma\geq\om_1$ and let $\M=\M_\gamma$. Let $k<\om$. We define
$k$-solidity and $k$-soundness by induction on $k$. We say that $\M$ is
\dfnemph{$0$-solid} and \dfnemph{$0$-sound}. Suppose $0<k$ and $\M$ is
$(k-1)$-sound. We say
that $\M$ is
\dfnemph{$k$-solid} iff
\[
\cHull_{\mSigma_k}^\M(\M_\alpha\un\{p^\M_k\cut(\alpha+1),p_{k-1}^\M\}
)\in\M \]
for each $\alpha\in p_k^\M$. We say that $\M$ is \dfnemph{$k$-sound} iff $\M$ is
$k$-solid and
\[
\M=\Hull_{\mSigma_k}^{\M}(\M_{\rho_k}\un\{p^\M_k,p^\M_{k-1}\}
). \]
We also say that $\M$ is \dfnemph{$\om$-sound} iff $\M$ is $k$-sound for all
$k<\om$.
\end{dfn}

In \ref{lem:Msound} we will show that $\M_\gamma$ is $\om$-sound.

We now define $\mSigma_n$ Skolem functions for $\M=\M_\gamma$. This is a
straightforward adaptation of methods used for premice and
for the $\SS_\gamma$ analogue in \cite{scales_in_LR}. We will
closely follow the methods of \cite[\S5]{V=HODX_pub} (which stem from  \cite[\S2]{fsit}),
and the reader might want to review that material first (it is independent of the earlier sections of that paper). Recall that in
\ref{dfn:h0} we
defined the surjection $h_0=h_0^\M$.

\begin{dfn}\label{dfn:nice_skolem_M}
Let $\gamma>\om_1$ be a limit and $n\in\om$. Let $\M=\M_\gamma$, and assume
$\om_1<\rho_n^\M$ and $\M$ is $n$-sound. A \dfnemph{nice $\mSigma_{n+1}$ Skolem
function for
$\M$} is a partial function
\[ h:_{\mathrm{p}}\om\cross\M\cross\HC\to\M \]
such that
\begin{enumerate}[label=\alph*),ref=(\alph*)]
 \item The graph of $h$ is $\mSigma_{n+1}(\{p_n^\M\})$.
 \item For each $\mSigma_{n+1}$ formula $\varphi(\dot{x},\dot{y})$ and
$x\in\M$, if
\[ \M\sats\ex y\varphi(x,y) \]
then for some $i<\om$ and $z\in\HC$, $h(i,x,z)$ is defined and
\[ \M\sats\varphi(x,h(i,x,z)). \]
 \item\label{item:h_Hull} For each $x\in\M$ and $z\in\HC$,
\[ h``(\om\cross\{x\}\cross\{z\})=\Hull_{n+1}^\M(\{x,z,p_n^\M\}).\qedhere \]
\end{enumerate}
\end{dfn}

By \ref{dfn:nice_skolem_M}\ref{item:h_Hull}, if
$\M$ is $(n+1)$-sound and $h$ is a nice $\mSigma_{n+1}$ Skolem function for $\M$
then \[ \M=h``(\om\cross(\rho_{n+1}^\M\cross\{p_{n+1}^\M\})\cross\HC). \]

If $\om_1<\rho_n^\M$ we will define a nice $\mSigma_{n+1}$
Skolem function $h_{n+1}^\M$ for $\M$. For property \ref{dfn:nice_skolem_M}(a)
it suffices to ensure that $h$ is $\mSigma_{n+1}(\{\pvec_n^\M\})$, since
$\pvec_{n-1}^\M$ is $\mSigma_{n+1}^\M(\{p_n^\M\})$, by an easy modification of
the proof of
\cite[Theorem 5.8]{V=HODX_pub}. (However, the resulting definition will be
\emph{uniformly} $\rSigma_{n+1}(\{\pvec_n^\M\})$, but seemingly not uniformly
$\rSigma_{n+1}(\{p_n^\M\})$.)

\begin{dfn}
A formula $\varphi$ of $\Ll^\mu$ is \dfnemph{$\Sigma_1$-over-$\mSigma_{n+1}$}
iff there is an $\mSigma_{n+1}$ formula $\psi$ such that $\varphi=\ex y\psi$.
\end{dfn}

\begin{dfn}[codes a putative witness]\label{dfn:codes_a_witness_M} We describe canoncial forms of witnesses to $\mSigma_{n+1}$ formulas over $\M_\gamma$.

Let
$\gamma\geq\om_1$ be a limit and $\M=\M_\gamma$. Let $n<\om$. Suppose that $\M$ is
$n$-sound and for each $i\in(0,n]$, if $\om_1<\rho_{i-1}^\M$ then we have
defined $h_i=h_i^\M$
and it is a nice $\mSigma_i$ Skolem function for $\M$. Let $\rho_n=\rho_n^\M$
and $p_i=p_i^\M$ and $\pvec=\pvec_n^\M$.\footnote{Here we might have used $(p_{n-1}^\M,p_n^\M)$
in place of $\pvec_n^\M$, but we opted for the latter for notational
simplicity.}
Let $\varphi$ be a
$\Sigma_1$-over-$\mSigma_{n+1}$
formula in free variables $\dot{x}$
such that:
\begin{enumerate}[label=--]
 \item 
If $\gamma>\om_1$ and $n=0$
then
$\varphi(\dot{x})$ has form $\ex y,w\ [\varrho(\dot{x},y,w)]$,
where $\varrho$ is $\mSigma_0$.
\item Otherwise 
$\varphi(\dot{x})$ has form $\ex y,r,t\ \Big[T_n(r,t)\wedge\psi(\dot{x},y,r,t)\Big]$
where:
\begin{enumerate}[label=--]
\item If $\gamma>\om_1$ (so $n>0$)
then  $\psi$ is $\mSigma_1$.
\item If $\gamma=\om_1$ 
then $T_0=T^{\M_{\om_1}}$ and (independent of  $n$) $\psi$ is $\Sigma_1$ in the usual language of
set theory (in particular, $\psi$ does not use $\Tdot$).
\end{enumerate}
\end{enumerate}
\begin{case}\label{case:om_1<rho_n} $\om_1<\rho_n$.

Let $u\in\M$, $z,z'\in\HC$,
$\betavec,\gammavec\in[\gamma]^{<\om}$ and $i<\om$. Working in $\M$, we say that
\dfnemph{$u$ codes a putative witness to
$(\varphi(\dot{x}),(i,\vec{\beta},z))$ at $(\gammavec,z')$} iff either
\begin{enumerate}[label=\arabic*),ref=(\arabic*)]
\item $n=0$ and there are 
$U,x,y,w,\lambda,m,m',\betavec_1,\betavec_2,i_1,i_2$ such that:
\begin{enumerate}[label=\alph*),ref=(\alph*)]
\item $\gammavec=(\lambda,m',\betavec_1,i_1,\betavec_2,i_2)$,
\item $u=(U,\M_{\om_1})$, $U$ is transitive and
$\M_{\om_1},x,y,w\in U$,
\item $\lambda\in\Lim\inter[\om_1,\OR^U)$ 
and
$\betavec,\betavec_1,\betavec_2\in(\Lim\inter(\lambda+1))^{<\om}$
and $m,m',i_1,i_2<\om$
and $m'\leq m$,
\item  setting
$(\Hdot,\Tdot)^u=\M_{\om_1}$, we have
\[ \begin{split}u\sats&\text{``}\M_{\lambda+m}\text{ exists }\wedge x=h_0(i,\betavec,z)\\&\wedge y=h_0(i_1,\betavec_1,z')\wedge w=h_0(i_2,\betavec_2,z')\wedge\varrho(x,y,w)\text{''},\end{split} \]
and
\item  the definition of the outermost relatively-rud schemes coded by $i_1$ and $i_2$ (respectively, $i$) have
rank ${\leq m'}$ (respectively, $m$) in the relatively-rud scheme hierarchy\footnote{That is, say a relatively-rud
scheme $h$ has rank $1$ if is one of the schemes in the standard finite basis for relatively-rud schemes, and rank $k+1$ if it is of the form $h(\vec{x})=g(f_1(\vec{x}),\ldots,f_m(\vec{x}))$,
for some $g$ of rank $1$ and  $f_i$s of rank $k$.
Then $i_1,i_2$ should specify schemes of rank $\leq m$. ***Have to modify the $\Ss$-hierarchy for this***Note that for $m>0$, the elements of $\M_{\lambda+m}$ are precisely those
of the form $f(\M_\lambda,\vec{x})$ for some $f$ of rank $m$ and $\vec{x}\in\M_\lambda$.}
\end{enumerate}
\end{enumerate}
; or
\begin{enumerate}[resume*]
\item\label{item:n>0} $n>0$ and there are $\alpha,\beta,\betavec_1,\betavec_2,\betavec_3,i_1,i_2,i_3$ such that:
\begin{enumerate}[label=\alph*),ref=(\alph*)]
  \item $\gammavec=(\alpha,\beta,\betavec_1,i_1,\betavec_2,i_2,
\betavec_3,i_3)$,
 \item $\alpha\in[\om_1,\gamma)$,
 \item $u$ is a
set of
$\muSigma_n$ formulas in parameters in
$\M_\alpha\un\{\pvec\}$,
\item  $\beta<\alpha$
and $\betavec,\betavec_1,\betavec_2,\betavec_3\in\alpha^{<\om}$ and
$i_1,i_2,i_3<\om$,

 \item $u$ contains the following assertions (expressed with the help of  the parameter $\pvec$):
 \begin{enumerate}[label=\roman*),ref=(\roman*)]
 \item ``$x:=h_n(i,(\betavec,p_n),z)$ is defined, as are
$t:=h_n(i_3,(\betavec_3,p_n),z')$ and
$q:=h_n(i_2,(\betavec_2,p_n),z')$ and $y:=h_n(i_1,(\betavec_1,p_n),z')$'',\footnote{Here the notation ``$:=$''
means that we define the symbol on the left by the expression on the right. Literally,
the symbols $x,y,t,y$ do not themselves
show up in formulas in $u$.}
 \item ``$t$
is a set of $\muSigma_n$ formulas in parameters in
$\M_\beta\un\{q\}$'',
 \item ``$\psi(x,y,(\beta,q),t)$''.
 \end{enumerate}
 \item The assertions made by $u$ about elements of $t$ are precisely
those induced by corresponding elements of $u$,\footnote{This is as in the line
immediately preceding the \emph{Remark} on page 26 of \cite{fsit}.}
\end{enumerate}
\end{enumerate}

We also say $u$ \dfnemph{codes a putative witness to $(\varphi(\dot{x}),(i,\betavec,z))$} iff $u$
codes a
witness
to $(\varphi(\dot{x}),(i,\betavec,z))$ at some $(\gammavec,z')$.

Let $x\in\M$. We also say that $u$ \dfnemph{codes a putative witness to $\varphi(x)$ at $(\vec{\gamma},z')$} iff there are $i,\betavec,z$\footnote{Note that $i,\betavec$ are uniquely determined by $\gammavec$, but $z$ need not be.} such that:
\begin{enumerate}[label=--]
 \item $u$ codes a putative witness to $(\varphi(\dot{x}),(i,\betavec,z))$
 at $(\gammavec,z')$, and
\item $x=h_n^\M(i,(\betavec,p_n^\M),z)$.\end{enumerate}
We say that $u$ \dfnemph{codes a putative witness to $\varphi(x)$} if $u$ codes a putative witness to $\varphi(x)$ at some $(\gammavec,z')$.
\end{case}

\begin{case} $\om_1=\rho_n$.

Let
$m\leq n$ be least such that $\rho_m=\om_1$.

Let $u\in\M$, $z\in\HC$ and $i<\om$.
In
$\M$, we say that
\dfnemph{$u$ codes a putative witness to
$(\varphi(\dot{x}),(i,z))$} iff either
\begin{enumerate}[label=\arabic*),ref=(\arabic*)]
\item $n=0=i$ (so $\gamma=\om_1$ and $u\in\HC$) and $u=(U,r,t)$ for some transitive $U$ such
that  $r,t\in U$ and $t$ is a 
set of $\Sigma_1$ formulas
of the $L(\RR)$
language in parameter $r$, and $U\sats\ex y\ \psi(z,y,r,t)$.
\end{enumerate}
; or
\begin{enumerate}[resume*]
 \item $n>0$ and for some $z'\in\HC$ and $i_1,i_2<\om$,
 we have:
\begin{enumerate}[label=\alph*),ref=(\alph*)]
 \item $u$ is a  set of $\muSigma_n$
formulas in the parameter $(z,z',\pvec)$, 
and $u$ contains the following assertions:
 \begin{enumerate}[label=\roman*),ref=(\roman*)]
 \item[(i)] ``$x:=h_m(i,p_m,z)$,
$t:=h_m(i_1,p_m,z')$, and
$q:=h_m(i_2,p_m,z')$ are defined'',
 \item[(ii)] ``$t$
is a set of $\muSigma_n$ formulas in the parameter $q$'',
 \item[(iii)] ``$\ex y\ \psi(x,y,q,t)$''.
 \end{enumerate}
 \item[(c)] The assertions made by $u$ about elements of $t$ are precisely
those induced by corresponding elements of $u$.
\end{enumerate}
\end{enumerate}

Let $x\in\M$. We also say that $u$ \dfnemph{codes a putative witness to $\varphi(x)$}
iff there are $i,z$ such that:
\begin{enumerate}[label=--]
\item  $u$ codes a putative witness to $(\varphi(\dot{x}),(i,z))$, and
\item either:
\begin{enumerate}[label=\arabic*),ref=(\arabic*)]
 \item $n=0=i$ and $x=z$, or
 \item $n>0$ and $x=h_m(i,p_m,z)$.\qedhere
\end{enumerate}
\end{enumerate}
\end{case}
\end{dfn}

Now the key fact is that, for example, if $\om_1<\rho_n$
and $n>0$ then
\[ \M\sats\varphi(x)\iff\exists
\alpha<\rho_n\ \Big[
\Th_{\muSigma_n}^\M(\M_\alpha\un\{\pvec_n\})\text{ codes a
witness to }\varphi(x)\Big], \]
and likewise in the other cases.

\begin{dfn}[$h^\M_{n+1}$]\label{dfn:h_n+1^M}
Let $\gamma\geq\omega_1$ be a limit. Suppose $\M=\M_\gamma$ is $n$-sound and $\om_1<\rho_n^\M$. We define
$h_{n+1}^\M$.
Let $\tau(\xdot,\ydot)$ be $\mSigma_{n+1}$ and let $x\in\M$.

Suppose for now that $\M\sats\ex y\ \tau(x,y)$.
If $n>0$, let $\gammavec_{\tau,x}$ denote the least $\gammavec$
such that for some $\alpha<\rho_n$ and $z'\in\HC$,
\[ \Th_{\muSigma_n}^\M(\M_\alpha\un\{\pvec_n^\M\}) \]
codes a putative witness to
$\ex y\ \tau(x,y)$ at $(\gammavec,z')$. If $n=0$, define $\gammavec_{\tau,x}$
similarly, but with $u$ witnessed by an $\M_\alpha$ (allowing successor $\alpha$). Let
$Z'_{\tau,x}$ be the set of all $z'\in\HC$ witnessing the choice of
$\gammavec_{\tau,x}$.

Now drop the assumption that $\M\sats\exists y\ \tau(x,y)$. Let $y\in\M$ and $z'\in\HC$.
We define
\[ h_{n+1}^\M(\tau,x,z')=y \]
iff:
\begin{enumerate}[label=--]
 \item 
$\M\sats\exists y'\ \tau((x,p_n^\M,z'),y')$,
 \item if $n=0$ then
 $y=h_0^\M(i_1,\betavec_1,z')$ where
 $\gammavec_{\tau,(x,p_n^\M,z')}=(\lambda,m',\betavec_1,i_1,\betavec_2,i_2)$, and
\item if $n>0$ then
$y=h_n^\M(i_1,(\betavec_1,p_n),z')$
(in particular, we have $(i_1,(\betavec_1,p_n),z')\in\dom(h_n^\M)$),
where
$\gammavec_{\tau,(x,p_n^\M,z')}=(\alpha,\beta,\betavec_1,i_1,\betavec_2,i_2,
\betavec_3,i_3)$.\qedhere
\end{enumerate}
\end{dfn}

\begin{lem}$h^\M_{n+1}$ is a nice $\mSigma_{n+1}$ Skolem function for $\M$.\end{lem}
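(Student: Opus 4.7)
The plan is to verify the three clauses (a)--(c) of Definition \ref{dfn:nice_skolem_M} in turn, using the canonical-witness machinery set up in Definition \ref{dfn:codes_a_witness_M}. The overall shape of the argument follows the template used for premice in \cite[\S2]{fsit} and for the $\rSigma_n$-hierarchy of $\SS_\gamma$ in \cite{scales_in_LR}; what needs checking is that the adaptations to the $\mSigma_n$/$\muSigma_n$-hierarchy go through, in particular that the quantifier $\all^*$ does not inflate the complexity beyond $\mSigma_{n+1}(\{\pvec_n^\M\})$.

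For clause (a), the key observation is that the relation ``$u$ codes a putative witness to $(\varphi(\dot x),(i,\vec\beta,z))$ at $(\vec\gamma,z')$'' is $\Sigma_0$ in $u$ and the displayed parameters, uniformly in $\varphi,i,\vec\beta,z,\vec\gamma,z'$, since the requirements only involve bounded quantification inside $u$ together with agreement between $u$ and the segments named by the parameters $\vec\beta_j,i_j$ via $h_0^\M$ (respectively $h_n^\M$); use Lemma \ref{lem:h_0^M_is_Delta_1} and induction on $n$ for the latter. The predicate ``$T_n(r,t)$'' is $\mSigma_{n+1}(\{\pvec_n^\M\})$ by definition, and for any $\alpha<\rho_n^\M$, asserting ``$\Th_{\muSigma_n}^\M(\M_\alpha\cup\{\pvec_n^\M\})$ codes a putative witness to $\varphi(x)$ at $(\vec\gamma,z')$'' becomes $\mSigma_{n+1}(\{\pvec_n^\M\})$, because we extract $t$ via $T_n$ and then apply the $\Sigma_0$ test. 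Lexicographic minimisation of $\vec\gamma$ and a quantification over $\alpha<\rho_n^\M$ and $z'\in\HC$ are absorbed by the block $\exists r,t[T_n(r,t)\wedge\ldots]$ in the definition of $\mSigma_{n+1}$, using that $\HC\sub\M$ and that bounding a $\mu\Sigma_n$-theory by an initial segment $\M_\alpha$ is part of the shape of $T_n$. Defining $h_{n+1}^\M(\tau,x,z')$ as the value read off from the first coordinate-block of the least such $\vec\gamma$ thus yields an $\mSigma_{n+1}(\{\pvec_n^\M\})$ graph.

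For clause (b), suppose $\M\sats\exists y\,\tau(x,y)$ with $\tau$ an $\mSigma_{n+1}$ formula. Unfolding the definition of $\mSigma_{n+1}$, there is a real witness assembled from a $\muSigma_n$-theory $t=\Th^\M_{\muSigma_n}(\M_\alpha\cup\{\pvec_n^\M\})$ for some $\alpha<\rho_n^\M$ (or, if $\rho_n^\M=\om_1$, from data in $\HC$). By $n$-soundness of $\M$, everything in $\M_\alpha$ is named by $h_n^\M$ from a parameter in $\HC$ and an ordinal below $\rho_n^\M$, and the $y$-witness itself lies in such a hull. Choosing $(\vec\gamma,z')$ to be the lexicographically least tuple witnessing this, and reading off $(i_1,\vec\beta_1,z')$ from the first block of $\vec\gamma$, the value $h_{n+1}^\M(\tau,x,z')$ is the desired witness. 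Clause (c) is then immediate from the construction together with the $n$-sound inductive hypothesis: every element of $\Hull_{n+1}^\M(\{x,z,\pvec_n^\M\})$ is named by some $\mSigma_{n+1}$ Skolem term, and conversely each value of $h_{n+1}^\M(\cdot,x,z)$ lies in that hull, since both the recovery of $t$ via $T_n$ and the final reading via $h_n^\M$ are themselves $\mSigma_{n+1}(\{\pvec_n^\M\})$ with parameters from $\{x,z\}$.

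The main obstacle is not really any single step but rather the bookkeeping needed to handle simultaneously (i) the split between the $\gamma=\om_1$ base case and the $\gamma>\om_1$ successor/limit case, where $T^{\M_{\om_1}}$ is given as a predicate rather than a constant and formulas may not invoke $\all^*$ at the base (so $\mSigma_0$ reduces to $\Sigma_0$), and (ii) the degenerate case $\rho_n^\M=\om_1$, in which there is no room to bound $\muSigma_n$-theories by proper segments $\M_\alpha$ and one must instead use that $T_n$ directly returns the full theory of the specified parameter; in this case the witness-coding is streamlined as in the second case of Definition \ref{dfn:codes_a_witness_M}, and the verification of (a)--(c) is parallel but slightly different. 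In both cases, the non-uniform dependence on $\pvec_n^\M$ (rather than just $p_n^\M$) is legitimate since, as noted after Definition \ref{dfn:nice_skolem_M}, $\pvec_{n-1}^\M$ is $\mSigma_{n+1}(\{p_n^\M\})$ by the analogue of \cite[Theorem 5.7]{V=HODX}, so the resulting definition can be rewritten as $\mSigma_{n+1}(\{p_n^\M\})$, as required.
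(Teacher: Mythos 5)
The high-level structure — verify clauses (a), (b), (c) of Definition \ref{dfn:nice_skolem_M} using the ``codes a putative witness'' machinery — is the same as the paper's, but your treatment of clause (b) misses the key technical point, and this is a genuine gap.

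Look again at how $h_{n+1}^\M(\tau,x,z')$ is defined in \ref{dfn:h_n+1^M}: it is defined only when $\M\sats\exists y'\,\tau((x,p_n^\M,z'),y')$, and when defined it is extracted from $\gammavec_{\tau,(x,p_n^\M,z')}$. That is, the Skolem function internally replaces the argument $x$ with the wrapped triple $(x,p_n^\M,z')$ before interrogating $\tau$. So $h_{n+1}^\M(\tau,x,z')$, when defined, is a witness to $\tau((x,p_n^\M,z'),\cdot)$ — not to $\tau(x,\cdot)$. Your sentence ``the value $h_{n+1}^\M(\tau,x,z')$ is the desired witness'' is therefore wrong as stated: starting from $\M\sats\exists y\,\tau(x,y)$, you have no a priori reason that $\M\sats\exists y'\,\tau((x,p_n^\M,z'),y')$ (so $h_{n+1}^\M(\tau,x,z')$ may be undefined), and even if it is defined it may be a witness to a different statement. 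The paper handles this by passing to the modified formula $\tau'(\dot r,\dot y)$ asserting $\tau((\dot r)_0,\dot y)$: then $\tau'((x,p_n^\M,z'),y)\iff\tau(x,y)$, so $h_{n+1}^\M(\tau',x,z')$ is what you actually want. And this requires the further (nontrivial) check that $\gammavec_{\tau',(x,p_n^\M,z')}=\gammavec_{\tau,x}$ and $z'\in Z'_{\tau',(x,p_n^\M,z')}$, which is why the paper's proof spends a parenthetical paragraph on exactly this (including the observation that $i$ is not recorded in $\gammavec$, and that when $n=0$ only $m'$, not $m$, is recorded). Your proof does not surface the wrapping issue at all, and hence does not supply the $\tau\rightsquigarrow\tau'$ replacement or verify that the lex-least tuple is unchanged under it; that is the substantive content of clause (b) here, and without it the argument does not close.

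Your remarks on clause (a) and clause (c) are in the right direction (and your closing remark about replacing $\pvec_n^\M$ with $p_n^\M$ via the analogue of \cite[Theorem 5.7]{V=HODX} is exactly the paper's observation), but (c) should be tied more directly to the definition: for $y\in\Hull_{n+1}^\M(\{x,z,p_n^\M\})$ pick $\tau$ with $\M\sats\exists!y'\,\tau((x,p_n^\M,z),y')$, and then $h_{n+1}^\M(\tau,x,z)=y$ by the defining clauses — no appeal to $n$-soundness of $\M$ is needed for this direction.
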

\begin{proof} The fact that $h^\M_{n+1}$ satisfies the requirements of \ref{dfn:nice_skolem_M} follows readily from the following observations:
\begin{enumerate}[label=\alph*),ref=(\alph*)]
 \item $h^\M_{n+1}$ is a partial function
 which is $\mSigma_{n+1}^\M(\{\pvec_n^\M\})$, and so
is in fact $\mSigma_{n+1}^\M(\{p_n^\M\})$.
 \item Suppose $\M\sats\ex y\ \tau(x,y)$
 and fix $z'\in Z'_{\tau,x}$.
 Let
$\tau'(\rdot,\ydot)$ be the natural $\mSigma_{n+1}$ formula asserting
``$\tau((\rdot)_0,\ydot)$'', where $(a,b,c)_0=a$.
If $n>0$ then there are $\alpha,\beta,\ldots,\betavec_3,i_3$ such that
\[ \gammavec_{\tau',(x,p_n^\M,z')}=\gammavec_{\tau,x}=(\alpha,\beta,\betavec_1,i_1,\betavec_2,i_2,
\betavec_3,i_3) \text{ and }z'\in Z'_{\tau',(x,p_n^\M,z')}, \]
and if $n=0$ then similarly
there are $\lambda,m',\betavec_1,i_1,\betavec_2,i_2$ such that
\[ \gammavec_{\tau',(x,p_n^\M,z')}=\gammavec_{\tau,x}=(\lambda,m',\betavec_1,i_1,\betavec_2,i_2) \text{ and }z'\in Z'_{\tau',(x,p_n^\M,z')}. \]
(If $x=h^\M_n(i,(\betavec,p_n),z)$, then
$(x,p_n,z)=h^\M_n(i',(\betavec,p_n),z)$ for some $i'$, by
\ref{dfn:nice_skolem_M}\ref{item:h_Hull} or the construction of $h_0^\M$. But
 $i$ is
 not incorporated into $\vec{\gamma}_{\tau,x}$,
 so the change from $i$ to $i'$ does not matter.
If $n=0$,
we might need to increase the ``$m$''
so as to ensure that
$(x,p_n^\M,z)\in\M_{\alpha+m}$. But this does not change $\gammavec$ either,
particularly in light of the fact that just $m'$, not $m$, is recorded in $\gammavec$.)
 
 It follows that $y=h^\M_{n+1}(\tau',x,z)$ is
defined, and $\M\sats\tau(x,y)$.
\item If $z\in\HC$ and $\M\sats\ex !y\ \tau((x,p_n,z),y)$ then by
\ref{dfn:h_n+1^M}, $h^\M_{n+1}(\tau,x,z)$ equals that $y$.\qedhere
\end{enumerate}
\end{proof}

We also use the notion of coding witnesses in the context of premice:

\begin{dfn}\label{dfn:codes_a_witness_premouse_and_skolem_premouse}
Let $N$ be an $n$-sound premouse such that $\om<\rho_n^N$. Let $\varphi$ be
$\Sigma_1$-over-$\rSigma_{n+1}$. Let $x,i,\betavec,u,\gammavec\in N$. Working in $N$, we
define $h_0^N$, the assertion that \dfnemph{$u$ codes a putative witness to
$(\varphi(\dot{x}),(i,\betavec))$} (or to \dfnemph{$\varphi(x)$}) \dfnemph{(at $\gammavec$)}, and
$h_{n+1}^N$, analogously to
\ref{dfn:codes_a_witness_M} and \ref{dfn:h_n+1^M}.
\end{dfn}

\begin{lem}\label{lem:Msound} For each limit $\gamma\geq\om_1$, $\M_\gamma$ is
$\om$-sound.\end{lem}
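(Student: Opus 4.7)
The plan is a double induction: the outer induction is on the limit ordinal $\gamma \geq \om_1$, and within each $\gamma$ an inner induction on $k < \om$ simultaneously establishes $k$-solidity and $k$-soundness of $\M = \M_\gamma$. The overall template is Dodd--Jensen's solidity/soundness proof adapted to the $\mu$-closed hierarchy, making essential use of the condensation lemma \ref{lem:Pi2charac} and the nice $\mSigma_{n+1}$ Skolem functions $h_{n+1}^\M$ constructed in Definition \ref{dfn:h_n+1^M}.

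The base case $\gamma = \om_1$ is immediate: $\M_{\om_1} = (\HC,T)$ has $\rho_k = \om_1$ for all $k$, the standard parameters are trivially solid because every $\mSigma_k$ hull of $\M_\alpha \cup \{q\}$ (with $\alpha < \om_1$, $q \in \HC$) already lies in $\HC$, and $\M_{\om_1} = \HC = \Hull_{\mSigma_k}^{\M_{\om_1}}(\HC)$ is trivially $k$-sound. For the outer induction step: at a limit $\gamma$, continuity of the $\Ss^\mu$-hierarchy and passage of fine-structural data to initial segments gives soundness from the inductive hypothesis. At a successor stage $\gamma = \delta + \om$, the structure $\M_\gamma$ is the $\mu$-rud closure of $\M_\delta \cup \{\M_\delta\}$, so $\M_\delta$ itself is a $\mSigma_0$-definable parameter and every element of $\M_\gamma$ is a $\mu$-rud function value on $\M_\delta \cup \{\M_\delta\}$ together with elements of $\HC$; this immediately gives $1$-soundness via the canonical scheme-indexing, and the inductive soundness at higher levels reduces to the inner induction on $k$.

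For the inner induction, assume $\M$ is $k$-sound and prove $(k+1)$-solidity, then $(k+1)$-soundness. Fix $\alpha \in p_{k+1}^\M$ and set
\[ W_\alpha = \cHull_{\mSigma_{k+1}}^{\M}\!\bigl(\M_\alpha \cup \{p_{k+1}^\M \!\cut(\alpha+1),\, p_k^\M\}\bigr); \]
the $\mSigma_{k+1}$ definition unfolds to $\exists r,t\,[T_k(r,t)\wedge \psi]$, so by Lemma \ref{prop:muSigma_n_theories} every relevant $\muSigma_k$-theory $t$ lies already in $\M_{\rho_k^\M} \in \M$, which lets us form $W_\alpha$ inside a set-sized ambient model. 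Suppose $W_\alpha \notin \M$; let $\pi : W_\alpha \to \M$ be the uncollapse. By construction $\HC \in W_\alpha$ and $\pi$ is a weak $0$-embedding (the witnessing cofinal set $X$ is the image of $\M_\alpha$), so Lemma \ref{lem:Pi2charac} delivers $\beta \leq \gamma$ with $W_\alpha = \M_\beta$. Inductively $W_\alpha$ is $(k+1)$-solid and we pull back the standard parameters: $\pi^{-1}(p_{k+1}^\M\!\cut(\alpha+1)) = p_{k+1}^{W_\alpha}\!\cut\pi^{-1}(\alpha+1)$, and then the usual Dodd--Jensen comparison of $W_\alpha$ with $\M$ (using minimality of $p_{k+1}^\M$) yields a proper initial segment of $p_{k+1}^\M$ definable below $p_{k+1}^\M$ -- contradiction. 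The $(k+1)$-soundness statement is proved the same way applied to the top hull $\Hull_{\mSigma_{k+1}}^{\M}(\M_{\rho_{k+1}^\M}\cup\{p_{k+1}^\M,p_k^\M\})$.

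The main obstacle is ensuring that the condensation lemma \ref{lem:Pi2charac}, stated for weak $0$-embeddings in the language $\Ll$, suffices when the hulls in question are generated by the richer class $\mSigma_{k+1}$ (whose definition involves the $\muSigma_k$-theory predicate $T_k$). The key is that hulls of $\mSigma_{k+1}$ formulas are the same as $\Sigma_1$ hulls once $T_k$ is treated as a predicate, and by the coded-witness analysis of Definition \ref{dfn:codes_a_witness_M} the relevant witnesses to existential $\mSigma_{k+1}$ assertions are already packaged inside sets of the form $\Th_{\muSigma_k}^\M(\M_\alpha\cup\{\pvec_k^\M\})$ which lie in $\M$ by Lemma \ref{prop:muSigma_n_theories}. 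Once this packaging is in place, the collapse-and-apply-condensation step is routine and the Dodd--Jensen argument transfers without modification, completing the induction.
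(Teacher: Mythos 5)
The paper's own proof of this lemma is essentially a one-line pointer: it says to adapt the arguments of \cite{scales_in_LR} and \cite[\S5]{V=HODX}, using the $\Sigma_1$-condensation lemma \ref{lem:Pi2charac} and the nice Skolem functions $h_{n+1}^\M$. Your plan follows exactly this route — double induction, condensation, Skolem functions, minimality of the standard parameter — so the approach is the same as the paper's.

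One caution about your inner-induction step, since the phrasing would mislead someone implementing it. There is no ``Dodd--Jensen comparison'' available in the $\M$-hierarchy: these are not premice, there are no iteration trees, and the solidity argument is the \emph{pure condensation} one from the $\J_\beta(\RR)$ setting, not the comparison-based one from $K$. After condensation gives $W_\alpha=\M_\beta$ for some $\beta\le\gamma$, the argument splits into two cases that your writeup conflates. If $\beta<\gamma$, then $W_\alpha=\M_\beta\in\M_\gamma$, which immediately contradicts the hypothesis $W_\alpha\notin\M$ — no appeal to inductive solidity of $W_\alpha$ (nor to pulling back standard parameters) is needed or used. If $\beta=\gamma$, then $W_\alpha=\M_\gamma$, so $\alpha$ lies in $\Hull_{\mSigma_{k+1}}^\M(\M_\alpha\cup\{p_{k+1}^\M\cut(\alpha+1),p_k^\M\})$; applying the Skolem function $h_{k+1}^\M$ gives $\alpha=h_{k+1}^\M(\tau,x,z)$ for some $\tau<\om$, $z\in\HC$, $x$ built from $p_{k+1}^\M\cut(\alpha+1)$, $p_k^\M$ and finitely many ordinals $\vec\xi<\alpha$, and then the parameter obtained by replacing $\alpha$ in $p_{k+1}^\M$ with $\vec\xi$ is strictly smaller and does the same job, contradicting minimality of $p_{k+1}^\M$. (Also note that in this second case one \emph{cannot} invoke the outer induction to say $W_\alpha$ is $(k+1)$-solid, since $W_\alpha=\M_\gamma$ is exactly the structure under discussion.) Your claim that the ``witnessing cofinal set'' for the weak $0$-embedding is the image of $\M_\alpha$ is also not right — $\M_\alpha$ is typically far from cofinal in the hull — but this is harmless, because the $\mSigma_{k+1}$ hull is already $\Sigma_1$-elementary (as $\Sigma_1\subseteq\mSigma_{k+1}$), which is stronger than a weak $0$-embedding and so suffices for \ref{lem:Pi2charac} directly.

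With those corrections the plan matches what the paper intends and would carry through.
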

\begin{proof} The is much as in \cite{scales_in_LR}, using $\Sigma_1$ condensation for the $\M$-hierarchy, Lemma \ref{lem:Pi2charac}. More literally (because of the small changes in
the fine structural notions) use the existence of nice Skolem
functions, and adapt the arguments of \cite[\S5]{V=HODX_pub}.\end{proof}

\begin{lem}\label{lem:Msubstructure} Let $\gamma\geq\om_1$ be a limit and $n<\om$. Suppose
$\om_1<\rho=\rho_{n+1}(\M_\gamma)$. Let $\alpha\in[\om_1,\rho)$ and let
\[  \Hh = \cHull_{\mSigma_{n+1}}^\M(\M_\alpha\un\{\pvec_{n}^\M\}).\]
Then there is $\beta<\rho$ such that $\Hh=\M_\beta$, and letting
$\pi:\Hh\to\M$ be the uncollapse, then $\pi$ is a near
$n$-embedding\footnote{That is, $\pi$ has the same preservation properties as do
near $n$-embeddings between premice.}, and $\pi$ is $\muSigma_{n+1}$-elementary.
\end{lem}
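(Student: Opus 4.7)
The plan is to adapt the standard condensation argument for fine-structural hierarchies to the $\M$-hierarchy, using Lemma~\ref{lem:Pi2charac} to identify $\Hh$ with some $\M_\beta$, and then to upgrade the built-in $\mSigma_{n+1}$-elementarity of a $\mSigma_{n+1}$-hull first to near $n$-embedding status and then to $\muSigma_{n+1}$-elementarity.

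First I would verify that $\Hh$ is well-defined as a transitive structure and that $\pi$ is a weak $0$-embedding. Since $\HC\sub\M_\alpha$, every element of $\HC$ belongs to the uncollapsed hull $H_0:=\Hull_{\mSigma_{n+1}}^\M(\M_\alpha\cup\{\pvec_n^\M\})$, so $H_0$ is extensional and $\pi$ is the identity on $\HC$. In particular $\pi$ sends the interpretation of $(\dot H,\dot T)$ in $\Hh$ to $(\dot H,\dot T)^\M=\M_{\om_1}$, so those predicates/constants are interpreted correctly in $\Hh$. Since $\M_\alpha\sub H_0$ and every $\mSigma_{n+1}$ statement about parameters from $\M_\alpha\cup\{\pvec_n^\M\}$ is preserved, $\pi$ is $\mSigma_{n+1}$-elementary on such parameters; in particular, by downward inspection, $\pi$ is at least $\Sigma_1$-elementary on parameters from a $\sub$-cofinal subset of $\Hh$, so $\pi$ is a weak $0$-embedding in the sense used in Lemma~\ref{lem:Pi2charac}.

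Next I apply Lemma~\ref{lem:Pi2charac}: the Q-formula $\varphi$ characterizing levels of the $\M$-hierarchy holds in $\M=\M_\gamma$, and since $\varphi$ is a Q-formula and $\pi$ is a weak $0$-embedding into $\M$, $\Hh$ satisfies $\varphi$. Hence $\Hh=\M_\beta$ for some $\beta\in\Lim\cap(\gamma+1)$. To see $\beta<\rho$ (rather than just $\beta\leq\gamma$), I use that, by definition of $\rho=\rho_{n+1}^\M$ and the hypothesis $\alpha<\rho$, we have $t:=\Th_{\mSigma_{n+1}}^\M(\M_\alpha\cup\{\pvec_n^\M\})\in\M$; from $t$, standard arguments build a structure isomorphic to $\Hh$ uniformly in $\alpha$ and $\pvec_n^\M$, so $\Hh\in\M_\rho$ and therefore $\beta<\rho$.

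Third, I would verify that $\pi$ is a near $n$-embedding. By construction $\pi(\pvec_n^\Hh)=\pvec_n^\M$, and from this together with the $\mSigma_{n+1}$-elementarity it follows, in the standard way (cf.\ the arguments of \cite[\S5]{V=HODX} adapted to the present hierarchy), that $\rho_k^\Hh\geq\pi^{-1}(\rho_k^\M)$ and $\pvec_k^\Hh=\pi^{-1}(\pvec_k^\M)$ for each $k\leq n$, and that $\pi$ is $\mSigma_k$-elementary for $k\leq n$ and cofinal $\mSigma_{n+1}$-elementary; these are exactly the requirements for a near $n$-embedding in the present analogue of premouse fine structure.

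Finally I upgrade $\mSigma_{n+1}$-elementarity to $\muSigma_{n+1}$-elementarity by exploiting the coding-of-witnesses apparatus of Definition~\ref{dfn:codes_a_witness_M}. A $\muSigma_{n+1}$ assertion $\psi(x)$ about $x\in\Hh$ is, by that definition together with the form of nice $\mSigma_{n+1}$ Skolem functions (Definition~\ref{dfn:h_n+1^M}), equivalent to the existence of a code $u\in\M$ of a putative witness to $\psi(x)$, a statement which itself is $\mSigma_{n+1}$ in $x$ and $\pvec_n^\M$. Since $\pi$ is $\mSigma_{n+1}$-elementary and preserves the predicate $T_n$ on parameters from $\M_\alpha\cup\{\pvec_n^\M\}$ (using that $T_n$-values are themselves $\mSigma_n$-definable and that $\pi$ preserves $\mSigma_n$-theories on these parameters), the equivalence transfers, giving $\muSigma_{n+1}$-elementarity of $\pi$ on parameters in $\Hh$.

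The main obstacle I expect is the verification that $\beta<\rho$ rather than merely $\beta\leq\gamma$, together with the bookkeeping showing that $\pvec_k^\Hh=\pi^{-1}(\pvec_k^\M)$ for all $k\leq n$ (and hence that $\pi$ preserves the standard parameters correctly for the near $n$-embedding property); in the $\M$-hierarchy the presence of the quantifier $\all^*$ and the theory predicate $T_n$ requires that one be careful that the $\mSigma_{n+1}$-definability of lower standard parameters from $p_n^\M$ (the analogue of the solidity-based argument for premice) goes through in the present setting—this is where the nice Skolem function analysis and the coding of witnesses really pay off.
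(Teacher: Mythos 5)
Your overall plan (use Lemma \ref{lem:Pi2charac} to identify $\Hh$ as some $\M_\beta$, then verify $\beta<\rho$ and near $n$-embedding) is sound and matches the paper's outline: the near $n$-embedding part is, as the paper says, handled by the same methods as the proof of soundness, and your argument for $\beta<\rho$ via the theory $t\in\M$ is a reasonable fill-in.

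Where you go astray is the final step, the upgrade from $\mSigma_{n+1}$- to $\muSigma_{n+1}$-elementarity. The coding-of-witnesses apparatus of Definition \ref{dfn:codes_a_witness_M} is designed for \emph{$\Sigma_1$-over-$\mSigma_{n+1}$} formulas, i.e.\ formulas of the form $\ex y\,\psi$ with $\psi$ being $\mSigma_{n+1}$. A $\muSigma_{n+1}$ formula is by definition of the form $\all^* s\,\varphi$ with $\varphi$ being $\mSigma_{n+1}$ --- a completely different shape --- and there is no ``code of a putative witness'' attached to such a formula. Your claim that a $\muSigma_{n+1}$ assertion is equivalent to an $\mSigma_{n+1}$ statement about existence of a code therefore does not go through as stated; at best you would land in $\mDelta_{n+2}$ (compare the proof that $\muSigma_0\sub\mDelta_1$), which is not what you need.

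The intended argument, which the paper records in a single sentence, is far more elementary: since $\RR\sub\rg(\pi)$ (indeed $\HC\sub\rg(\pi)$ and $\pi\rest\HC=\mathrm{id}$), the structures $\Hh$ and $\M$ have \emph{the same} set of Turing degrees $\Dd$. Hence for every fixed $k$, the block of bounded quantifiers over $\Dd$ in ``$\all^*_k s$'' ranges over exactly the same objects in both models, and for each specific tuple $s\in\Dd^k$ the matrix $\varphi(s,x)$ is preserved by the $\mSigma_{n+1}$-elementarity you already have. Putting an ``$\ex k<\om$'' in front preserves this. So $\muSigma_{n+1}$-elementarity is an immediate corollary of $\mSigma_{n+1}$-elementarity plus the identity $\pi\rest\HC=\mathrm{id}$, and no Skolem-function or coded-witness machinery is needed for this step.
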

\begin{proof}
The proof that $\pi$ is a near $n$-embedding is similar to the proof of the
previous lemma. The $\muSigma_{n+1}$-elementarity (not just $\mSigma_{n+1}$) of
$\pi$ is an immediate consequence, since $\RR\sub\rg(\pi)$.
\end{proof}

\begin{prop}\label{prop:JMhierarchy} Let $\gamma\geq\omega_1$ be a limit. Then:
\begin{enumerate}
 \item \label{item:new_set_first-order_def}
  Suppose there is
$X\sub\RR$ which is first-order definable \tu{(}not just $\mu$-definable\tu{)} from parameters over $\M_\gamma$ but
$X\notin\M_\gamma$. Then
$\pow(\RR)^{\M_{\gamma+\om}}=\pow(\RR)^{\Ss_{\om^2}(\M_\gamma)}$.

\item\label{item:no_new_set_first-order_def} If there is no $X$
as in part \ref{item:new_set_first-order_def} then $\M_\gamma$ and
$\SS_\gamma$ have the same universe, as do $\M_{\gamma+\om}$ and
$\SS_{\gamma+\om}$.
\end{enumerate}
\end{prop}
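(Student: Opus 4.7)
Plan. The key observation is that the map $x\mapsto x\cap\mu$ is $\Sigma_1^{\M_\gamma}(\{\RR\})$-definable (Proposition~\ref{prop:musigma1}), so under the standing Turing determinacy in $\M_\gamma$ every $\mu$-quantifier $\all^*_k$ can be replaced by a first-order existential over measure-one trees in $\HC\sub\M_\gamma$. By Lemma~\ref{lem:mu-rud_implies_mu-simple}, an iterated $\mu$-rud definition can thereby be flattened to an $\Ll$-formula: every subset of $\M_\gamma$ belonging to $\M_{\gamma+\om}$ is first-order definable over $\M_\gamma$ from parameters. Since $\Ss_{\om^2}(\M_\gamma)$ is exactly the closure under first-order definability over $\M_\gamma\cup\{\M_\gamma\}$, this yields $\pow(\M_\gamma)\cap\M_{\gamma+\om}\sub\Ss_{\om^2}(\M_\gamma)$. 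The reverse inclusion is immediate since every rud function is $\mu$-rud.

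Restricting to subsets of $\RR$ gives Part~1 directly: $\pow(\RR)^{\M_{\gamma+\om}}=\pow(\RR)^{\Ss_{\om^2}(\M_\gamma)}$. For Part~2, I would argue by induction on $\gamma$ that $\lpole\M_\gamma\rpole=\lpole\SS_\gamma\rpole$: under the hypothesis that $\M_\gamma$ contains every first-order definable subset of $\RR$ over itself, the predicate $T$ carried by $\M_{\om_1}$ makes no first-order contribution beyond what $\SS_\gamma$ already computes, so the stepwise rud and $\mu$-rud closures agree (the latter by the flattening argument above, the former by standard condensation for the $\Ss$-hierarchy). The same flattening then extends the equality one step to $\lpole\M_{\gamma+\om}\rpole=\lpole\SS_{\gamma+\om}\rpole$.

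The main obstacle I anticipate lies at the base and limit stages of the induction in Part~2: one must carefully verify that the hypothesis (no new first-order definable subset of $\RR$) transfers through the initial range where $T\notin\SS_\gamma$, and check that under the hypothesis $T$ becomes first-order reducible to material already in $\SS_\gamma$. In particular at the smallest $\gamma$ where $T\in\M_\gamma$ but not obviously in $\SS_\gamma$, one needs to show that $T$ is coded by some set of reals that is first-order definable over $\M_\gamma$, hence (by hypothesis) already in $\M_\gamma\cap\SS_\gamma$. The role of Part~1's hypothesis is essentially dichotomic: it rules out the stronger conclusion of Part~2 and ensures that the equality of sets of reals is nontrivial, rather than collapsing to the agreement of full universes.
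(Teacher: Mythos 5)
There are several genuine gaps here, and the most important one goes to the heart of why the $\M$-hierarchy was introduced at all.

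Your central claim — that under Turing determinacy every $\mu$-quantifier can be flattened to a first-order formula, so that ``every subset of $\M_\gamma$ belonging to $\M_{\gamma+\om}$ is first-order definable over $\M_\gamma$ from parameters'' — is false precisely in the situation Part~1 hypothesizes. Each $\all^*_k$ for \emph{fixed} $k$ is first-order (this is elementary, with or without trees), but $\all^* = \exists k\ \all^*_k$ requires quantifier alternations that grow with $k$, and the attempted reduction via measure-one trees requires the relevant $\mu$-measured set to lie in $\M_\gamma$, which fails once $\rho_n^{\M_\gamma}=\om_1$. The paper says this explicitly right after Definition~\ref{dfn:language}: ``in general, $\all^*$ need not be [first-order definable] (when $\M_\gamma$ projects to $\om_1$)''. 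Indeed Lemma~\ref{lem:mu-rud_implies_mu-simple} gives only $\mu$-\emph{simplicity} (preservation of $\Ll^\mu_0$ formulas, which still contain $\all^*$), not flattening to $\Ll$. If your flattening were correct, $\M_{\gamma+\om}$ would add nothing beyond $\Jj_1(\M_\gamma)$, the $\M$-hierarchy would simply coincide with the ordinary $\Ss$-hierarchy over $(\HC,T)$, and the entire point of constructing from $\mu^{<\om}$ (to ``speed up the hierarchy'', as \S\ref{subsec:gaps} explains) would evaporate.

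Compounding this, you identify $\Ss_{\om^2}(\M_\gamma)$ with the first-order definability closure of $\M_\gamma\cup\{\M_\gamma\}$. That closure is $\Ss_\om(\M_\gamma)=\Jj_1(\M_\gamma)$; by contrast $\Ss_{\om^2}(\M_\gamma)=\bigcup_{n<\om}\Ss_{\om n}(\M_\gamma)=\Jj_\om(\M_\gamma)$, which is $\om$ levels of the $\Jj$-hierarchy above $\M_\gamma$. This conflation is what makes your ``reverse inclusion is immediate since every rud function is $\mu$-rud'' appear to work: it does give $\pow(\RR)^{\Ss_\om(\M_\gamma)}\sub\M_{\gamma+\om}$, but says nothing about $\Ss_{\om^2}(\M_\gamma)$. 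The genuine content of Part~1 is exactly the gap you skip: one $\M$-step (from $\gamma$ to $\gamma+\om$) captures $\om$ $\Jj$-steps on sets of reals. The mechanism is the opposite of flattening — a single $\all^*_m$-quantifier over degree tuples \emph{absorbs} an arbitrary finite alternation of real quantifiers. Concretely, taking $n$ with $\rho_n^{\M_\gamma}=\om_1$ and $t$ coding $\Th_{\mSigma_n}^{\M_\gamma}(\RR\cup\pvec_n^{\M_\gamma})$ as a set of reals, every $\Sigma^1_m(t,z)$ set of reals is $\muSigma_{n+1}^{\M_\gamma}(\pvec_n^{\M_\gamma},z)$ (bound each real quantifier by a degree $s_i$ and consult the $\mSigma_n$-theory of $\{\pvec,s,x,z,w\}$), and iterating through $t_1\sub\RR$ coding $\Th_{\Sigma_1}^{\Ss_\om(\M_\gamma)}(\RR\cup\{\M_\gamma\})$, and so on, climbs through all $\Ss_{\om j}(\M_\gamma)$ for $j<\om$.

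Your outline for Part~2 is vague and does not address the central difficulty: you want an induction showing $\lpole\M_{\gamma'}\rpole=\lpole\SS_{\gamma'}\rpole$ for $\gamma'\leq\gamma$, but the Part~2 hypothesis (no new first-order definable set of reals) holds only at $\gamma$; at lower levels $\gamma'$ the $\M$-hierarchy may project to $\om_1$ and run ahead of the $\Jj$-hierarchy by factors of $\om$. The paper's actual argument is different: the hypothesis gives $\rho_\om^{\M_\gamma}>\om_1$, so $\M_{\gamma+\om}\sats$``$\Theta$ exists''; set $\theta=\Theta^{\M_{\gamma+\om}}$; then Part~1 applied below $\theta$ (to track the projecting stretches) shows $\M_\theta$ and $\SS_\theta$ share a universe, and since $\rho_\om(\M_{\gamma'})>\om_1$ for limits $\gamma'\in[\theta,\OR^\M]$, the universes march in lockstep from $\theta$ to $\gamma+\om$.
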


\begin{proof}
Let $\M=\M_\gamma$. Consider part \ref{item:new_set_first-order_def}. It's easy enough to see that
$\pow(\RR)^{\M_{\gamma+\om}}\sub\Ss_{\om^2}(\M)$. For the
converse, let $n\in[1,\om)$ be such that
$\rho_n^{\M}=\om_1$. Let
$\pvec=\pvec_n^\M$ and let $t$ be the natural
coding of $\Th_{\mSigma_n}^{\M}(\RR\un\pvec)$ as a set of reals. Since
$\M$ is sound, every set of reals in
$\Ss_\om(\M)$ is
$\Sigma^1_m(t,z)$ for some $z\in\RR$ and $m<\om$. But
we claim that every such set is $\muSigma^{\M}_{n+1}(\pvec,z)$ (and note $n$ is fixed here). For let
\[ \varphi(x)\iff\ex^\RR x_0\ \all^\RR x_1\ \ldots\ \ex^\RR x_{m-2}\all^\RR
x_{m-1}\ \left[\psi(x_0,\ldots,x_{m-1},x,z)\right] \]
where $\psi$ is arithmetic in $t$. Then for $x\in\RR$, we have $\varphi(x)$
iff $\all^*_ms\ \varphi'(s,x,z)$,
where $\varphi'(s,x,z)$ is the formula:
\[ \ex x_0\in s_0\ \all x_1\in s_1
 \ldots\ \ex x_{m-2}\in
s_{m-2}\ \all x_{m-1}\in s_{m-1}\left[\psi(x_0,\ldots,x_{m-1},x,z)\right]. \]
But the truth of $\varphi'(s,x,z)$ is determined by
$t'_w=\Th_{\mSigma_n}^{\M}(\{\pvec,s,x,z,w\})$, where $w$ is any real coding all
reals $\leq_T s_0\oplus\ldots\oplus s_{m-1}$. This easily gives that
$\varphi(x)$ is $\muSigma_{n+1}^{\M}(\pvec,z)$.

The uniformity of the preceding argument in fact gives that
 $\Th_{\Sigma_\om}^\M(\RR\cup\{\pvec\})$ is $\muSigma_{n+1}^{\M}(\pvec)$. It follows that letting $t_1\sub\RR$ naturally code
$\Th_{\Sigma_1}^{\Ss_\om(\M)}(\RR\un\{\M\})$, then $t_1$ is
$\muSigma_{n+1}^{\M}(\pvec)$.
(That is, letting $\varphi$ be a $\Sigma_1$ formula and $y\in\RR$, then $\Ss_\om(\M)\sats\varphi(y,\M)$
iff there is $m<\om$, $z\in\RR$
and a $\Sigma^1_m(t,z)$ set of reals
which codes some transitive
set $X\in\Ss_\om(\M)$
such that $\M\in X$ and $X\sats\varphi(z,\M)$. Thus, the uniformity in the calculations above yield that $t_1$
is $\muSigma_{n+1}^{\M}(\pvec)$.)

Also, $\Ss_\om(\M)=\Hull_{\Sigma_1}^{\Ss_\om(\M)}(\RR\un\{
\M\})$. Therefore every set of reals in
$\Ss_{\om+\om}(\M)$ is $\Sigma^1_m(t_1,z)$ for some $z\in\RR$ and $m<\om$, and it is
therefore
$\muSigma_{n+2}^{\M}(\pvec,z)$, like before.
And etc.

Now consider \ref{item:no_new_set_first-order_def}; suppose there is no $X$ as in \ref{item:new_set_first-order_def}. Then
$\rho_\om^\M>\om_1$, so $\M_{\gamma+\om}\sats$``$\Theta$
exists''. Let $\theta=\Theta^{\M_{\gamma+\om}}$. Using part \ref{item:new_set_first-order_def} it
follows that $\M_\theta$ and $\SS_\theta$ have the same universe.
But then $\M_{\gamma'}$ and $\SS_{\gamma'}$ have the same universe
for all limits $\gamma'\in[\theta,\OR^\M+\om]$, because
$\rho_\om(\M_{\gamma'})>\om_1$ for all limits $\gamma'\in[\theta,\OR^M]$.
\end{proof}

\begin{dfn} Let $\gamma>\om_1$ be a  limit,  $\M=\M_\gamma$ and $n<\om$.
Suppose
$\rho_{n+1}^{\M}=\om_1<\rho_n^{\M}$. We say that
$\mSigma_{n+1}$ is \dfnemph{$\mu$-reflecting at $\gamma$} iff whenever $\varphi$
is an $\mSigma_{n+1}$ formula, $x\in\M$ and $m<\om$, if
\[\M\sats\all^*_m s\ \varphi(x,s), \]
then there is $\gamma'<\rho_n^\M$ such that:
\begin{enumerate}[label=--]
 \item 
if $n=0$ then
$\M_{\gamma'}\sats\all^*_m s\ \varphi(x,s)$, and
\item if $n>0$ then letting
$t=\Th_{\muSigma_n}^{\M}(\M_{\gamma'}\un
\{\pvec_n^\M\})$, we have
\[ \M\sats\all^*_m s\ \Big[t\mathrm{\ codes\ a\ witness\ to\
}\varphi(x,s)\Big].\qedhere \] 
\end{enumerate}
\end{dfn}

\begin{lem}
 Let $\gamma>\om_1$ be a limit,
  $\M=\M_\gamma$ and $n<\om$. Suppose
$\rho_{n+1}^{\M}=\om_1<\rho_n^{\M}$. Then:

\begin{enumerate}[label=\textup{(\alph*)}]
 \item\label{item:mSigma_n+1=muSigma_n+1} Suppose $\mSigma_{n+1}$ is $\mu$-reflecting at $\gamma$. Then
$\muSigma_{n+1}^{\M}=\mSigma_{n+1}^{\M}$.

 \item\label{item:when_not_mu-reflect} Suppose $\mSigma_{n+1}$ is not $\mu$-reflecting at $\gamma$. Fix
$\varphi,x,m$ witnessing this. Then:
\begin{enumerate}[label=\textup{\roman*)}]
 \item\label{item:cof(rho_n)>om} $\cof(\rho_n^{\M})>\om$.\footnote{Note this seems to assume
 $\AC_{\om,\RR}$.}
 \item\label{item:define_the_theories} There is an $\mSigma_{n+1}$
formula $\psi$ such that for all $y,t\in\M$, we have
$t=\Th_{\mSigma_{n+1}}^{\M}(\{y\})$ iff $\all^*_m
s\ \psi(y,t,s,x,p_n^\M)$.
 \item\label{item:define_the_J_theories}
Let
$k\in[1,\om)$. Then $\Th_{\Sigma_1}^{\Ss_{k\om}(\M)}(\RR\un\{\M\})$, coded naturally as
a set of reals, is $\muSigma_{n+k}^{\M}(\pvec_{n+1}^\M,x)$.\footnote{
Note that this is one step lower in the $\muSigma$-hierarchy than what is 
given by the proof of Proposition \ref{prop:JMhierarchy};
in \ref{prop:JMhierarchy} we had $\rho_n^\M=\om_1$,
whereas here we have $\rho_{n+1}^\M=\om_1<\rho_n^\M$.}
\end{enumerate}
\end{enumerate}
\end{lem}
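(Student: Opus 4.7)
First I would prove part \ref{item:mSigma_n+1=muSigma_n+1} straight from the definition. The inclusion $\mSigma_{n+1}^\M \subseteq \muSigma_{n+1}^\M$ is immediate. For the converse, let $\Phi(x) \equiv \all^*_m s\,\varphi(x,s)$ with $\varphi \in \mSigma_{n+1}$. By $\mu$-reflection, $\M \sats \Phi(x)$ iff there exists $\gamma' < \rho_n^\M$ satisfying the local witness condition: when $n=0$ this is ``$\M_{\gamma'} \sats \all^*_m s\,\varphi(x,s)$'', which is $\Sigma_1$ in $\gamma', x$ (hence $\mSigma_1$); when $n > 0$ it becomes $\exists r,t\,[T_n(r,t) \wedge r = (\pvec_n^\M, \gamma') \wedge \all^*_m s\,(t\text{ codes a witness to }\varphi(x,s))]$, which is $\mSigma_{n+1}$ since ``$t$ codes a witness'' is a first-order property of $t,s$ and $\all^*_m$ is a combination of quantifiers bounded by $\HC$.

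For part \ref{item:when_not_mu-reflect}, fix $\varphi, x, m$ witnessing failure of reflection, write $t_{\gamma'} := \Th_{\muSigma_n}^\M(\M_{\gamma'} \cup \{\pvec_n^\M\})$, and for each $s$ with $\M \sats \varphi(x,s)$ let $\gamma_s < \rho_n^\M$ be the least level at which $\varphi(x,s)$ is witnessed. Turing determinacy in $\M$ turns the failure into: $\{s : t_{\gamma'}\text{ fails to code a witness to }\varphi(x,s)\}$ is $\mu_m$-measure one for each $\gamma' < \rho_n^\M$, which by upward propagation of coded witnesses means $\{s : \gamma_s > \gamma'\}$ is $\mu_m$-measure one for each such $\gamma'$. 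For \ref{item:cof(rho_n)>om}, if $\cof(\rho_n^\M) = \om$ with $\gamma_k \nearrow \rho_n^\M$, countable completeness of $\mu_m$ (from $\M \sats \AD$) would give $\mu_m$-measure one many $s$ with $\gamma_s > \gamma_k$ for every $k$, which is absurd. For \ref{item:define_the_theories}, I would let $\psi(y, t, s, x, p_n^\M)$ be the $\mSigma_{n+1}$ formula $\exists \gamma', t'\,[T_n((p_n^\M, \gamma'), t') \wedge t'\text{ codes a witness to }\varphi(x,s) \wedge t = \{\sigma : t'\text{ codes a witness to }\sigma(y)\}]$. Writing $W_{\gamma'} := \{\sigma : t_{\gamma'}\text{ codes a witness to }\sigma(y)\}$, the $W_{\gamma'}$ are non-decreasing in $\gamma'$ with union $\Th_{\mSigma_{n+1}}^\M(\{y\})$. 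If $t = \Th_{\mSigma_{n+1}}^\M(\{y\})$, then for each $\sigma \in t$ with witness level $\gamma_\sigma < \rho_n^\M$, $\{s : \gamma_s > \gamma_\sigma\}$ is $\mu_m$-measure one, and countable completeness across the countably many $\mSigma_{n+1}$-formulas in $y$ yields $\mu_m$-measure one many $s$ on which $W_{\gamma_s} = t$, so $\psi$ holds with $\gamma' = \gamma_s$. Conversely, if $t \neq \Th_{\mSigma_{n+1}}^\M(\{y\})$: either $t$ is not $W_{\gamma'}$ for any $\gamma' < \rho_n^\M$ (then $\psi$ fails at every $s$), or $t = W_{\gamma'_t}$ for some $\gamma'_t < \rho_n^\M$ and there is least $\gamma'_{t+} < \rho_n^\M$ with $W_{\gamma'_{t+}} \supsetneq t$ (using $t \subsetneq \Th$), and then $\{s : \gamma_s \geq \gamma'_{t+}\}$ has $\mu_m$-measure one, on which every admissible $\gamma' \geq \gamma_s$ yields $W_{\gamma'} \supsetneq t$, so $\psi$ fails.

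For \ref{item:define_the_J_theories}, I would bootstrap from \ref{item:define_the_theories} exactly as in the proof of Proposition \ref{prop:JMhierarchy}\ref{item:new_set_first-order_def}. For $k=1$, the nice $\mSigma_{n+1}$ Skolem function $h_{n+1}^\M$ reduces $\Th_{\Sigma_1}^{\Ss_\om(\M)}(\RR \cup \{\M\})$ (coded as a set of reals) to a uniform $\mSigma_{n+1}$ theory in a varying real parameter, and \ref{item:define_the_theories} shows this theory is $\muSigma_{n+1}^\M(\pvec_{n+1}^\M, x)$; each further $\om$ steps up in the $\Ss$-hierarchy adds exactly one $\muSigma$ level by the same calculation as in \ref{prop:JMhierarchy}. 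The main obstacle is the formulation of $\psi$ in \ref{item:define_the_theories} and the accompanying symmetric-difference analysis: $\psi$ must be uniformly $\mSigma_{n+1}$ in $(y,t)$ while implicitly encoding the fixed failed-reflection datum $(\varphi, m)$, and one must balance upward propagation of coded witnesses (to exclude partial-theory $t$'s on a measure-one set) with countable completeness of $\mu_m$ (to handle the countable family of witness levels simultaneously).
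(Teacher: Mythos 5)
Your proposal is correct and follows essentially the same route as the paper's proof. A few comparative remarks.

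For part (a), the paper dismisses this as ``clear'' without proof, while you spell out the reduction: under $\mu$-reflection, $\all^*_m s\,\varphi(x,s)$ becomes a bounded existential over levels $\gamma'<\rho_n^\M$ with the witness coding expressed via $T_n$, which stays within $\mSigma_{n+1}$ since $\all^*_m$ is a block of $\HC$-bounded quantifiers. This is the right argument.

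For (b)(ii), the paper only writes down $\psi$ in the $n=0$ case as ``$\exists\gamma'\,[\M_{\gamma'}\sats\varphi(x,s) \wedge \Th_1^{\M_{\gamma'}}(\{y\})=t]$'' and says ``the general proof is similar''; you give the general-$n$ formulation directly via $T_n$ and the coded-witness predicate, which is a useful service. Your forward direction is a bit more elaborate than strictly needed: rather than intersecting the countably many measure-one sets $\{s:\gamma_s>\gamma_\sigma\}$ to force $W_{\gamma_s}=t$, one can simply use (b)(i) to bound $\bar\gamma:=\sup_{\sigma\in t}\gamma_\sigma<\rho_n^\M$ and then, for any $s$ with $\varphi(x,s)$ true, take the existential witness $\gamma':=\max(\gamma_s,\bar\gamma)$, at which both $t'_{\gamma'}$ codes a witness to $\varphi(x,s)$ and $W_{\gamma'}=t$ (using upward monotonicity of the $W_{\gamma'}$'s and $W_{\gamma'}\subseteq\Th_{\mSigma_{n+1}}^\M(\{y\})=t$). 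Your countable-completeness route also works, it just spends $\AC_{\om,\RR}$ where only $\cof(\rho_n^\M)>\om$ is needed; the place countable completeness is genuinely required is the reverse direction and in (b)(i), and you handle those correctly. Your case analysis for the reverse direction (partial-theory $t$ vs.\ $t$ not of the form $W_{\gamma'}$) matches the structure the paper implicitly relies on.

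For (b)(iii), you correctly identify both the base-case trick (absorbing the computation of the $\mSigma_{n+1}$ theory into the $\all^*_m s$ quantifier via the failed-reflection datum, so that one $\mu$-level is saved compared to Proposition~\ref{prop:JMhierarchy}) and the inductive continuation ($k>1$ proceeds as in \ref{prop:JMhierarchy}), though your description of the base case is somewhat compressed: what (b)(ii) gives you is that the graph of $y\mapsto\Th_{\mSigma_{n+1}}^\M(\{y\})$ is $\mu_m\Sigma_{n+1}$, and the point (made more explicit in the paper via the formula $\varphi_0'$) is that inside a single $\all^*_m s$-block one can, for each $s$, compute the correct localization of the theory and evaluate the arithmetic-in-$t$ matrix and the degree-bounded analytical quantifiers $\all^{s_i}x_i\,\exists^{s_j}x_j$ all at once, keeping the whole thing $\mSigma_{n+1}$ inside the $\all^*_m s$. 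But the ideas you flag as the ``main obstacle'' are exactly the right ones.
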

\begin{proof}
 Parts \ref{item:mSigma_n+1=muSigma_n+1}
 and \ref{item:when_not_mu-reflect}\ref{item:cof(rho_n)>om} are clear. (Moreover, this
doesn't require Turing determinacy beyond $\M$.) 

Consider \ref{item:when_not_mu-reflect}\ref{item:define_the_theories}. We assume $n=0$ for simplicity, but the general proof is
similar.
Let $\psi(s,x,y,t)$ assert ``There is $\gamma'\in\OR$ such that
$\M_{\gamma'}\sats\varphi(x,s)$ and $\Th_1^{\M_{\gamma'}}(\{y\})=t$''.
This works because by part \ref{item:cof(rho_n)>om},
$\Th_{\mSigma_1}^{\M}(\{y\})=\Th_{\mSigma_1}^{\M_
{\gamma'}}(\{y\})$ for sufficiently large $\gamma'<\gamma$.

Consider \ref{item:when_not_mu-reflect}\ref{item:define_the_J_theories}. 
This can be proven similarly
to Proposition \ref{prop:JMhierarchy}. However, because we now have one less $\mu$-quantifier to work with, we need to adjust the argument
for $k=1$ (and then the rest proceeds inductively as before). For this, note that if $\varphi_0$ is arithmetic and $0<\ell<\om$, then the relation $\varphi'_0(u,v)$ asserting
\begin{adjustwidth}{2em}{2em}
 $\text{``}u\in\Dd^{2\ell}$ and letting $C$ be the
set of reals of degree $\leq\max(u)$
and 
\[ t=\Th_{\mSigma_{n+1}}^{\M}(C\cup\{\pvec_{n+1}^\M,v\}),\]
then 
\[
\all^{u_0} x_0\ \exists^{u_1} x_1\ \ldots\ \all^{u_{2\ell-2}} x_{2\ell-2}\ \exists^{u_{2\ell-1}} x_{2\ell-1}\
 \varphi_0(x_0,\ldots,x_{2\ell-1},t)\text{''}\]
\end{adjustwidth}
is of the form $\all^*_{m}
s\ \varrho(u,v,x,\pvec_{n+1}^\M)$, for some $\mSigma_{n+1}$ formula $\varrho$ (here $m,x$ were fixed in the statement of the lemma; note that we can correctly specify $t$ by using the method used to prove part \ref{item:define_the_theories}). 
\end{proof}

\begin{dfn} Let $\gamma\geq\om_1$ be a limit and let $n\in[1,\om)$.

\dfnemph{$\gamma+n$-Turing determinacy} is
the assertion that for each $k<\om$, the $\bfmSigma_n^{\M_\gamma}$
subsets of $\Dd^k$ satisfy Turing determinacy; that is, if $X\sub\Dd^k$ is $\bfmSigma_n^M$ then
\[ (\all^*_k s\ [s\in X])\implies(\exists^*_k s\ [s\in X]).\]

\dfnemph{$\gamma+n$-Turing completeness} is the assertion that
$\gamma+n$-Turing determinacy holds, and for each $k<\om$, given a sequence
$\Avec=\left<A_n\right>_{n<\om}$ such that each $A_n\in\Dd^k$ and $\{(n,s)\bigm|
s\in A_n\}$ is either $\bfmSigma_n^{\M_\gamma}$ or
$\bfmPi_n^{\M_\gamma}$, then
$\bigcap_{n<\om}\Avec\in\Dd^k$.
\end{dfn}

Of course $\gamma+n$-Turing completeness follows $\gamma+n$-Turing
determinacy if we have $\AC_{\om,\RR}$. For the remainder of
the paper we assume $\AC_{\om,\RR}$.

We write $\muPi_n$ for $\neg\muSigma_n$.
Assuming $\gamma+n$-Turing
completeness, (a) $\muPi_n^{\M_\gamma}$-definable relations are just those
definable in the form ``$\all k<\om\all^*_k
s\neg\varphi$, where $\varphi$ is $\mSigma_n$, and (b) $\muSigma_n$ is
closed under ``$\&$'', ``$\orr$'',
``$\ex i<\om$'', and ``$\all i<m$'' for $m<\om$. However, (it seems) it may not
be closed under ``$\neg$'' or ``$\all i<\om$'' or ``$\ex y$''.

\begin{dfn} For $x\in\RR$, limit ordinals $\gamma\geq\omega_1$, and $1\leq n<\om$, let
$\OD^{\mu,\gamma+n}(x)$ denote the set of all $y\in\RR$ such that for some
$\muSigma_n$ formula $\varphi$ and $\gammavec\in\gamma^{<\om}$, for all
$m,k<\om$, we
have $y(m)=k$ iff $\M_\gamma\sats\varphi(m,k,x,\gammavec)$.\end{dfn}

\begin{dfn}[$\beta^*,n^*$]\label{dfn:beta*} Let $(\beta^*,n^*)\in\Lim\cross\om$ be
least such that either $\beta^*+n^*+1$-Turing completeness
fails, or for some $x\in\RR$,
$\OD^{\mu,\beta^*+n^*+1}(x)\neq\OD^\alpha(x)$.\end{dfn}

We consider $\beta^*$ as the end of the S-gap in the $\mu$
hierarchy. By \ref{lem:Pi2charac} and \ref{lem:Msubstructure},
$\rho_{n^*+1}(\M_{\beta^*})=\om_1$.

Let us now observe the larger scale correspondence
between
the $\mu$ hierarchy and the standard $L(\RR)$ hierarchy. 

\begin{dfn}\label{dfn:between_hierarchies} We define a function
$f:\Lim\inter[\om_1,\beta^*]\to\OR$, such that for all
$\gamma\in\dom(f)$, $\M_\gamma$ corresponds to
$\SS_{f(\gamma)}$. Let
$f(\om_1)=\alpha$. Let $f$ be continuous at limits of limits. Given
$f(\gamma)$, if $\rho_\om^{\M_\gamma}=\om_1$ then let
$f(\gamma+\om)=f(\gamma)+\om^2$. Otherwise let $f(\gamma+\om)=f(\gamma)+\om$.
\end{dfn}
\begin{rem}\label{rem:between_hierarchies}
For all limits
$\gamma>\om_1$, we have $f(\gamma)\geq\gamma$,
and repeated application of Proposition \ref{prop:JMhierarchy}
gives that
\begin{equation}\label{eqn:pow(RR)correspond}
\pow(\RR)\inter\M_\gamma=\pow(\RR)\inter\SS_{f(\gamma)}. \end{equation}
Note further that:
\begin{enumerate}
 \item 
$\M_\gamma\sub\SS_{f(\gamma)}$ and $\M_\gamma$ is $\Sigma_1^{\SS_{f(\gamma)}}(\{\alpha\})$,
uniformly in $\gamma$.
\item
$\M_\gamma\sats$``$\Theta$ exists'' iff $\SS_{f(\gamma)}\sats$``$\Theta$ exists''.
\item $f(\gamma)=\gamma$ iff either (i) $\M_\gamma\sats$``$\Theta$ exists''  or (ii) $\gamma=\om\alpha+\xi\om^\om$ (ordinal
exponentiation) for some ordinal $\xi$.
\item If $f(\gamma)=\gamma$ then
$\M_\gamma$ and $\SS_\gamma$ have the same universe, and 
$\SS_\gamma$ is $\Sigma_1^{\M_\gamma}$.
\item 
By (\ref{eqn:pow(RR)correspond}), and since
$\gamma\leq\beta^*$, we have $f(\gamma)\leq\beta$.
\item
Suppose $\gamma>\om_1$ and $f(\gamma)>\gamma$. 
Then $\rho_1^{\M_\gamma}=\om_1$ and $\rho_1^{\SS_{f(\gamma)}}=\RR$. Let $p^\M=p_1^{\M_\gamma}$
and $p^\J=p_1^{\SS_{f(\gamma)}}$. If $p^\M=\emptyset$ then
$p^\J=\{\alpha\}$ (since $\alpha<f(\gamma)\leq\beta$); otherwise
$p^\M=\{\xi\}=p^\J$ for some $\xi>\alpha$. In either case,
$\Th_1^{\M_\gamma}(\RR\un\{p^\M\})$ and
$\Th_1^{\SS_{f(\gamma)}}(\RR\un\{p^\J\})$ are recursively inter-translatable (after substitution of $p^\M$ for $p^\J$ if they differ,
which is just the case that $p^\M=\emptyset$ and $p^\J=\{\alpha\}$).

\item Finally, $\Th_1^{\M_{\om_1}}(\RR)$ and $\Th_2^{\SS_\alpha}(\RR)$ are
also recursively inter-translatable.
\item Therefore $f(\beta^*)\leq\beta<f(\beta^*)+\om^2$, because for all reals
$x$, $\OD^\alpha(x)=\OD^{<\beta}(x)$.
\end{enumerate}

\end{rem}

\subsection{Ordinal definability at the end of a strong gap}

The material in this subsection won't actually be used elsewhere in the paper. We will end up needing
Theorem \ref{tm:Martin}, but will
actually give a second proof of it later. But the proof here is actually easier and more standard, so one should be aware of it. We make use of the $\rSigma_n$ version of the fine structure of $L(\RR)$ (see Remark \ref{rem:rSigma_n^J_beta}).

\begin{lem}\label{lem:rSigma_types_reflect}
 Let $\beta\in\OR$
 and $k<\om$ with $\rho_{k+1}^{\SS_\beta}=\RR<\rho_k^{\SS_\beta}$. If
  $\bfSigma_{k+1}^{\SS_\beta}$ types reflect, then $\bfrSigma_{k+1}^{\SS_\beta}$ types reflect.
\end{lem}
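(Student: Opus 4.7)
The plan is to reduce $\bfrSigma_{k+1}$-type reflection to the hypothesized $\bfSigma_{k+1}$-type reflection by enriching the reflected parameter with the standard fine-structural parameter $\pvec_k^{\SS_\beta}$. Recall from Remark~\ref{rem:rSigma_n^J_beta} that the inclusion $\Sigma_{k+1}^{\SS_\beta}\subseteq\rSigma_{k+1}^{\SS_\beta}$ can be strict lightface, but the $\bfrSigma_{k+1}^{\SS_\beta}$ sets coincide with the $\bfSigma_{k+1}^{\SS_\beta}$ sets, and the nice $\rSigma_{k+1}$-Skolem function is $\rSigma_{k+1}^{\SS_\beta}(\{\pvec_k^{\SS_\beta}\})$. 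Consequently, there is a uniform recursive translation $F$ sending $\rSigma_{k+1}$-formulas $\varphi(v)$ to $\Sigma_{k+1}$-formulas $F(\varphi)(v,u)$ such that, uniformly in the underlying model,
\[
\SS_\beta\sats\varphi(x)\iff\SS_\beta\sats F(\varphi)(x,\pvec_k^{\SS_\beta})
\]
for every $\rSigma_{k+1}$-formula $\varphi$ in $v$ and every $x\in\SS_\beta$; the same $F$ works for any $\SS_{\beta'}$ upon substituting $\pvec_k^{\SS_{\beta'}}$.

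Given $x\in\SS_\beta$, I would form the packed parameter $y=\langle x,\pvec_k^{\SS_\beta},t\rangle\in\SS_\beta$, where $t=\Th_{\rSigma_k}^{\SS_\beta}(\RR\cup\{\pvec_k^{\SS_\beta}\})$; by $\rho_k^{\SS_\beta}>\RR$, we have $t\in\SS_\beta$. Applying the hypothesis to $y$ yields $\beta'<\beta$ and $y'=\langle x',q,t'\rangle\in\SS_{\beta'}$ with
\[
\Th_{\Sigma_{k+1}}^{\SS_\beta}(\{y\})=\Th_{\Sigma_{k+1}}^{\SS_{\beta'}}(\{y'\})
\]
via substitution $y\mapsto y'$. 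A formula-by-formula transfer (using that, for each fixed $(\psi,r)$, the statements ``$\psi(\pvec_k^{\SS_\beta},r)\in t$'' and ``$\SS_\beta\sats\psi(\pvec_k^{\SS_\beta},r)$'' are $\Sigma_{k+1}$ in $y$) then yields $t'=\Th_{\rSigma_k}^{\SS_{\beta'}}(\RR\cup\{q\})$ and $\rho_k^{\SS_{\beta'}}>\RR$. Assuming we further establish $q=\pvec_k^{\SS_{\beta'}}$, the proof closes by the chain
\[
\SS_\beta\sats\varphi(x)\iff\SS_\beta\sats F(\varphi)(x,\pvec_k^{\SS_\beta})\iff\SS_{\beta'}\sats F(\varphi)(x',q)\iff\SS_{\beta'}\sats\varphi(x')
\]
for each $\rSigma_{k+1}$-formula $\varphi$ in $v$, delivering $\bfrSigma_{k+1}$-type reflection for $x$.

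The hard part will be establishing $q=\pvec_k^{\SS_{\beta'}}$. The defining minimality clause for the standard parameter, namely ``for every $q^*<_{\lex}\pvec_k^{\SS_\beta}$ the corresponding $\rSigma_k$-theory lies in the model'', is a priori $\Pi_{k+1}$ and thus not automatically $\Sigma_{k+1}$-reflection-preserved. My plan is to recast it as $\Sigma_{k+1}$ by exploiting that the quantifier is bounded above by $\max(\pvec_k^{\SS_\beta})$, so that the (in the sense of $\SS_\beta$, set-sized) collection of smaller theories can be uniformly packaged into an expanded auxiliary datum $s$ by attaching witnessed $T_k$-codes, and then applying $(k+1)$-solidity and $(k+1)$-universality of $\SS_\beta$ --- the $\rSigma$-hierarchy analogues of Lemma~\ref{lem:Msound}, cf.~Remark~\ref{rem:rSigma_n^J_beta} --- which transfer to $\SS_{\beta'}$ under reflection and pin $q$ down uniquely as $\pvec_k^{\SS_{\beta'}}$. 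If this direct encoding runs into trouble, a fallback is to choose $\beta'$ minimally witnessing $\bfSigma_{k+1}$-reflection for the enriched $y$ and to use universality of the $\rSigma_{k+1}$-standard-parameter hull to force the match with $\pvec_k^{\SS_{\beta'}}$.
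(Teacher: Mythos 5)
Your plan captures the right first move: pack the fine-structural data (the standard parameter, an associated $\rSigma_k$-theory, solidity witnesses) into the point whose type is to be reflected, apply $\bfSigma_{k+1}$-reflection, and then try to pin down the reflected data. The paper begins the same way (in the $k=1$ case, $z=(x,\xi,\rho,p,w,\gamma,T)$ with $p=p_1^{\SS_\beta}$, $w$ the $1$-solidity witnesses, $T=\Th_{\rSigma_1}^{\SS_\beta}(\RR\cup\gamma\cup\{p\})$). Two technical remarks before the main issue: you should really take $t=\Th_{\rSigma_k}^{\SS_\beta}(\RR\cup\gamma\cup\{\pvec_k^{\SS_\beta}\})$ for a suitable $\gamma<\rho_k^{\SS_\beta}$ chosen so the solidity witnesses lie in $\Hull_{\rSigma_k}^{\SS_\beta}(\RR\cup\gamma\cup\{\pvec_k^{\SS_\beta}\})$ (your $t$ with $\gamma=0$ is too small to do this job); and there is a case split in the paper, according to whether some such hull is cofinal in $\beta$, which affects the exact form of the $\Sigma_{k+1}$ translation --- your argument does not distinguish these cases.

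The genuine gap is the step you flag yourself, and I think you are aiming at the wrong target. You try to prove $q=\pvec_k^{\SS_{\beta'}}$ and then invoke the translation $F$ at $\SS_{\beta'}$ directly. But that identity is exactly the sort of claim that need \emph{not} be recoverable at the reflection point $\beta'$: ``$q$ is lexicographically least with a new theory'' carries $\Pi$-over-$\Sigma_{k+1}$ content, and even with reflected solidity witnesses in hand you only get that $q$ is a solid parameter, not that it is the standard one; indeed the paper never asserts $p'=p_1^{\SS_{\beta'}}$. What the paper does instead is essential and missing from your proposal: after reflecting, for each relevant $\rSigma_{k+1}$-formula $\varphi$ one extracts the least witnessing ordinal $\delta'_\varphi<\rho'$ for a carefully crafted $\Sigma_{k+1}$-surrogate $\varphi^*(z')$, sets $\rho''=\sup_\varphi\delta'_\varphi$, and passes to the further Skolem hull $\J_{\beta''}=\cHull_{\rSigma_k}^{\SS_{\beta'}}(\RR\cup\rho''\cup\{p'\})$. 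It is in \emph{that} structure, not in $\SS_{\beta'}$, that one can verify $\rho_k^{\J_{\beta''}}=\rho''$ and $p_k^{\J_{\beta''}}=\pi^{-1}(p')$ (here the presence of the solidity witnesses in the range of the collapse map is what makes the standard-parameter identification go through), and then check that the $\rSigma_{k+1}$-type of $\pi^{-1}(z')$ over $\J_{\beta''}$ matches that of $z$ over $\SS_\beta$. Your chain $\SS_\beta\sats\varphi(x)\iff\SS_{\beta'}\sats F(\varphi)(x',q)\iff\SS_{\beta'}\sats\varphi(x')$ needs the last biconditional to hold at $\SS_{\beta'}$, and that is precisely what cannot be guaranteed without this extra collapse; the $\Sigma_{k+1}$-surrogate can hold at $\beta'$ because of a ``spurious'' witnessing theory $U$ that is not the genuine $\rSigma_k$-theory of $\SS_{\beta'}$ at the level in question. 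Without the construction of $\J_{\beta''}$ (or an equivalent device), the proof does not close.
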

\begin{proof}
 If $k=0$ this is immediate,
 as $\rSigma_1^{\SS_\beta}=\Sigma_1^{\SS_\beta}$ for all $\beta$, by definition.
 
For $k>0$, we just consider the case that $k=1$.
 
 \begin{case} There is $\gamma<\rho_1^{\SS_\beta}$ such that
  $\Hull_{\rSigma_1}^{\SS_\beta}(\RR\cup\gamma\cup\{p\})$ is cofinal in $\beta$, where $p=p_1^{\SS_\beta}$.
  
  Let $x\in\RR$ and $\xi<\beta$.
  We want to show that $\Th_{\rSigma_2}^{\SS_\beta}(\{(x,\xi)\})$ reflects.
  
  Let $\rho=\rho_1^{\SS_\beta}$. Let $w$ be the set of $1$-solidity
  witnesses for $(\J_\beta,p)$.
  Let $T=\Th_{\rSigma_1}^{\SS_\beta}(\RR\cup\gamma\cup\{p\})$ where $\gamma<\rho$ is as in the case hypothesis and  is large enough that $w\in\Hull_1^{\SS_\beta}(\RR\cup\gamma\cup\{p\})$. Let \[ t=\Th_{\Sigma_2}^{\SS_\beta}(\{z\}) \]
  where $z=(x,\xi,\rho,p,w,\gamma,T)$.
  It is enough to see that $\Th_{\rSigma_2}^{\SS_\beta}(\{z\})$ reflects.
  
  Let $\beta'<\beta$ and $z'\in\J_{\beta'}$ be such that $t(z/z')=t'$ where
  \[ t'=\Th_{\Sigma_2}^{\SS_{\beta'}}(\{z'\}).\]
Let $z'=(x,\xi',\rho',p',w',\gamma',T')$. Then it is easy to see that 
\begin{enumerate}[label=--]
 \item $T'=\Th_{\rSigma_1}^{\SS_{\beta'}}(\RR\cup\gamma'\cup\{p'\})$
and
\item $\Hull_1^{\SS_{\beta'}}(\RR\cup\gamma'\cup\{p'\})$
is cofinal in $\beta'$,
\item $w'$ is the set of $1$-solidity
witnesses for $(\J_{\beta'},p')$,
\item $w'\in\Hull_{\rSigma_1}^{\SS_{\beta'}}(\RR\cup\gamma'\cup\{p'\})$.
\end{enumerate}

Let $\varphi$ be an $\rSigma_2$ formula.
Then the following are equivalent:
\begin{enumerate}
 \item 
$\SS_\beta\sats\varphi(z)$
\item $\SS_\beta\sats$``there is $\delta\in(\gamma,\rho)$
such that  $\Th_{\rSigma_1}^{\SS_\beta}(\RR\cup\delta\cup\{p\})$
codes a putative witness to $\varphi(z)$''
\item\label{item:Sigma_2(T,rho)_expresses_rSigma_2} $\SS_\beta\sats$``there is $\delta\in(\gamma,\rho)$
and a theory $U$ in parameters in $\RR\cup\delta\cup\{p\}$ such that $U$ 
codes a putative witness to $\varphi(z)$ and:
\begin{enumerate}
\item $U$
is closed under logical deduction,
\item $U$ contains no formula of the form ``$\psi\wedge\neg\psi$'', 
\item\label{item:all_Sigma_1_truths_in} $\Th_{\rSigma_1}^{\SS_\beta}(\RR\cup\delta\cup\{p\})\sub U$,
\item $T=U\rest(\RR\cup\gamma\cup\{p\})$, and
\item for every  $\psi\in U$
there is $\psi'\in T$ such that 
\[\text{``}\exists \tau\ [S_\tau(\RR)\sats\psi\wedge\neg\psi']\text{''}\in U. \]
\end{enumerate}
\end{enumerate}
The statement in condition \ref{item:Sigma_2(T,rho)_expresses_rSigma_2}
is moreover $\Sigma_2(\{z\})$
(the main complexity comes from clause
\ref{item:all_Sigma_1_truths_in},
which is $\Pi_1(\{p,\delta\})$). Writing
$\varphi^*$ for the $\Sigma_2$ formula used in condition \ref{item:Sigma_2(T,rho)_expresses_rSigma_2}, it follows that
$\SS_\beta\sats\varphi(z)$ iff
$\J_{\beta'}\sats\varphi^*(z')$.
For each $\rSigma_2$ formula $\varphi$ such that $\SS_\beta\sats\varphi(z)$, let $\delta'_\varphi$
be the least $\delta\in(\gamma',\rho')$
witnessing that $\J_{\beta'}\sats\varphi^*(z')$. Let $\rho''=\sup_\varphi\delta'_\varphi$. So $\rho''\leq\rho'$. It is easy to see that for each $\rSigma_2$ formula $\varphi$ such that $\SS_\beta\sats\varphi(z)$,
the theory $\Th_{\rSigma_1}^{\SS_{\beta'}}(\RR\cup\delta_\varphi\cup\{p'\})$
is in $\J_{\rho''}$.
So letting \[\J_{\beta''}=\cHull_{\rSigma_1}^{\SS_{\beta'}}(\RR\cup\rho''\cup\{p'\}) \]
and $\pi:\J_{\beta''}\to\J_{\beta'}$ the uncollapse map,
it follows that $\rho_1^{\J_{\beta''}}=\rho''$ and $p_1^{\J_{\beta''}}=\pi^{-1}(p')$
(note that we have the $1$-solidity witnesses in $\rg(\pi)$).

It is now straightforward to see that $\Th_{\rSigma_2}^{\J_{\beta''}}(\{\pi^{-1}(z')\})=\Th_{\rSigma_2}^{\SS_\beta}(\{z\})$ (after exchanging $\pi^{-1}(z')$ for $z$), completing
the proof in this case.
 \end{case}

 \begin{case} Otherwise.

 This case is dealt with fairly similarly.
 However, there is no theory $T$.
 We take $\gamma<\rho_1^{\SS_\beta}$ large enough that $w\in\Hull_1^{\SS_\beta}(\RR\cup\gamma\cup\{p\})$.
Condition \ref{item:Sigma_2(T,rho)_expresses_rSigma_2} in the previous case is modified to assert that $\SS_\beta\sats$``there is $\delta\in(\gamma,\rho)$ and $U$ and $\tau\in\OR$ such that $U=\Th_{\rSigma_1}^{S_\tau(\RR)}(\RR\cup\delta\cup\{p\})$
and $\Th_{\rSigma_1}^{\SS_\beta}(\RR\cup\delta\cup\{p\})\sub U$ and $U$ codes a putative witness to $\varphi(z)$''.
Defining things otherwise  as before,
we get again that $\rho_1^{\J_{\beta''}}=\rho''$ and $p_1^{\J_{\beta''}}=\pi^{-1}(p')$ and for every $\alpha<\rho''$, $\Hull_{\rSigma_1}^{\J_{\beta''}}(\RR\cup\alpha\cup\{\pi^{-1}(p')\})$ is bounded in $\beta''$,
which ensures that $\Th_{\rSigma_2}^{\J_{\beta''}}(\{\pi^{-1}(z')\})=\Th_{\rSigma_2}^{\SS_\beta}(\{z\})$ modulo  exchange of parameters.
 \end{case}
\end{proof}

The following result is mentioned in \cite[p.~2]{rudo_steel},
where it is stated that ``The proof of Theorem 3.3 from [\emph{Scales in $L(\RR)$}, Steel]
shows that if $\alpha$ ends a \emph{strong} S-gap, then for a cone of reals $x$, $\OD^\alpha(x)=\OD^{<\alpha}(x)$.'' 
The full result (removing the restriction of the cone) is then credited in \cite{rudo_steel} to Woodin.
We prove the result below using essentially Martin's proof
(Lemma \ref{lem:rSigma_types_reflect} is used, though this is somewhat incidental). \footnote{It seems that if one was using exclusively the fine structure of $L(\RR)$ as presented in \cite{scales_in_LR},
and hence not considering the $\rSigma_n$ hierarchy, one might have been led into wanting access to a  parameter in order to define the necessary $\Sigma_n$ Skolem functions,
and this might have led to the cone version of Theorem \ref{tm:Martin} being mentioned in the manner it was in \cite{rudo_steel}. This issue disappears after passing from $\Sigma_n$ to $\rSigma_n$. It does seem that in
order to justify this passage, one needs Lemma \ref{lem:rSigma_types_reflect}
(or alternatively one could   modify
the definition of weak/strong S-gap
to use $\rSigma_n$ instead of $\Sigma_n$, and redo the analysis of weak and strong S-gaps of \cite{scales_in_LR} under this modification).
However, the methods involved in the $\rSigma_n$ version of the fine structure of $L(\RR)$,
as well as the proof of Lemma \ref{lem:rSigma_types_reflect},
were in fact already well known already in 1999 (when \cite{rudo_steel} was written),
and contained in published form for example in the union of \cite{scales_in_LR} and \cite[\S2]{fsit}.}
For the result we drop our global assumption of $\AD^{L(\RR)}$.

\begin{tm}[essentially Martin]\label{tm:Martin} Assume $\ZF+\DC$.
Let $[\alpha,\beta]$ be a strong S-gap of $L(\RR)$,
and suppose $\AD^{\SS_\alpha}$ holds.
 Then 
$\OD^{<\alpha}_x=\OD^{\beta}_x$
for all reals $x$.
\end{tm}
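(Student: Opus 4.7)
The plan is to assume for contradiction that there is some $y\in\OD^{\beta}_x\setminus\OD^{<\alpha}_x$ and derive a contradiction from $\bfrSigma_{n+1}^{\SS_\beta}$-type reflection, where $n$ is least with $\rho_{n+1}^{\SS_\beta}=\RR$ (note this reflection is available by Lemma \ref{lem:rSigma_types_reflect}, which upgrades the $\Sigma$-types-reflect hypothesis coming from strongness). Choose $y$ of minimal definability complexity. I would first dispose of the ``low'' case: if $y\in\OD^{\beta,1}_x$, then by the earlier lemma $y\in\OD^{\delta,m}_x$ for some $\delta<\beta$ and $m<\om$, and if $\delta<\alpha$ we are done; otherwise $\delta\in[\alpha,\beta)$ lies inside the gap, so $\SS_\alpha\preccurlyeq_1\SS_\delta$ pushes the defining $\rSigma_1$-over-$\rSigma_{m-1}$ formula for $\{y\}$ down to $\SS_\alpha$ (using the recursive characterization of $\rSigma_m$ via $\rSigma_{m-1}$-theories which live in $\SS_\alpha$ by induction on $m$), yielding $y\in\OD^{\alpha,1}_x=\OD^{<\alpha}_x$. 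So we may assume $\{y\}$ is a $\rSigma_{n+1}^{\SS_\beta}(\{x,w\})$-singleton with $w\in\HC$ a real coding a countable ordinal $\gamma$, witnessed by some formula $\varphi(\dot z,x,w)$; the reduction from arbitrary $\rSigma_m$-definability for $m>n+1$ down to $m=n+1$ uses the $\rSigma_n$-theory of $(x,w)$ as an additional hereditarily countable parameter, which lies in $\SS_\beta$ since $\rho_n^{\SS_\beta}>\RR$, and which is itself $\OD^{<\alpha}_x$ by minimality of $y$'s complexity.

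The heart of the proof is then to apply type reflection to the parameter $(x,w)$. Let
\[ t=\Th_{\rSigma_{n+1}}^{\SS_\beta}(\{(x,w)\}). \]
$\rSigma_{n+1}$-type reflection produces $\beta'<\beta$ and $(x',w')\in\SS_{\beta'}$ with $\Th_{\rSigma_{n+1}}^{\SS_{\beta'}}(\{(x',w')\})=t((x,w)/(x',w'))$. Since $(x,w)$ is a pair of reals, the atomic $\Sigma_0$ $\in$-information about $(x,w)$ (namely ``$n\in x$'' and ``$n\in w$'' for each $n<\omega$) is recorded in $t$ and is preserved by the substitution, so $(x',w')$ has the same atomic type as $(x,w)$, forcing $(x',w')=(x,w)$. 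Hence $\Th_{\rSigma_{n+1}}^{\SS_{\beta'}}(\{(x,w)\})=\Th_{\rSigma_{n+1}}^{\SS_\beta}(\{(x,w)\})$, so the same $\varphi$ defines the same unique $y$ over $\SS_{\beta'}$, giving $y\in\OD^{\beta',n+1}_x$.

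The main obstacle is to conclude $\beta'<\alpha$. My approach is to take $\beta^*$ minimal in $[\omega_1,\beta]$ with $\Th_{\rSigma_{n+1}}^{\SS_{\beta^*}}(\{(x,w)\})=t$; by the preceding step $\beta^*<\beta$, and by minimality of $y$'s complexity we need only rule out $\beta^*\in[\alpha,\beta)$. Assume towards a contradiction that $\beta^*\geq\alpha$. Then $\beta^*$ lies inside the gap, so $\SS_{\beta^*}\preccurlyeq_1\SS_\beta$; hence every $\Sigma_1^{\SS_\beta}$ fact about $(x,w,t)$ reflects to $\SS_{\beta^*}$, and in particular the $\Sigma_1^{\SS_\beta}(\{(x,w,t)\})$-assertion ``there is an ordinal $\eta$ such that $\Th_{\rSigma_{n+1}}^{\SS_\eta}(\{(x,w)\})=t$'' reflects down to $\SS_{\beta^*}$, giving some $\eta<\beta^*$ witnessing this, contradicting minimality of $\beta^*$. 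This argument must be combined with admissibility of $\SS_\alpha$ to handle the case $\beta^*=\alpha$: the map $\eta\mapsto \Th_{\rSigma_{n+1}}^{\SS_\eta}(\{(x,w)\})$ is $\bfSigma_1^{\SS_\alpha}$ on $\alpha$ using the $\rSigma_n$-theory of $(x,w)$ over $\SS_\alpha$ (which is computable inside $\SS_\alpha$ since $\rho_{n+1}^{\SS_\beta}=\RR$ forces $\rho_n^{\SS_\beta}>\RR$ and the $\rSigma_n$-theory to lie in $\SS_\beta\cap\SS_\alpha$ by gap invariance), and $\Sigma_1$-admissibility then bounds its stabilization point strictly below $\alpha$. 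The delicate point, and the hardest step to write out carefully, is extracting this last stabilization argument from the conjunction of strongness of $[\alpha,\beta]$ and $\AD^{\SS_\alpha}$: one has to show the reflected theory actually agrees with $t$ at some $\beta'<\alpha$, not merely that the complexity of the defining formula drops.
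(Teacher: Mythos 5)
Your reduction step is where the proof breaks. You claim that an arbitrary $y\in\OD^{\beta}_x$ can be recast as an $\rSigma_{n+1}^{\SS_\beta}(\{x,w\})$-singleton with $w$ a \emph{real}, adding the $\rSigma_n$-theory of $(x,w)$ as an auxiliary real parameter. This does not work. The fine structure of $\SS_\beta$ gives $\SS_\beta=\Hull_{\rSigma_{n+1}}^{\SS_\beta}(\RR\cup\{\pvec_{n+1}^{\SS_\beta}\})$, and the standard parameter $\pvec_{n+1}^{\SS_\beta}$ is an ordinal parameter that is in general uncountable and not coded by a real. To pass from a $\Sigma_m$-definition ($m>n+1$) to level $n+1$, one uses the $\rSigma_{k+1}(\pvec)$-definable surjection $h:\RR\to\SS_\beta$ (where $k$ is least with $\rho_{k+1}^{\SS_\beta}=\RR$), and $\pvec$ enters the resulting formula essentially. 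The $\rSigma_n$-theory of the fixed parameter $(x,w)$ only records facts about $(x,w)$ itself; it tells you nothing about the Skolem function applied to the ranging variables in the quantifier block, so it cannot eliminate $\pvec$. Thus the normalization you want --- $y$ as an $\rSigma_{n+1}$-singleton in purely real parameters --- is unavailable.

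You have actually misplaced the difficulty. \emph{If} the reduction were available, the rest of your argument would be essentially immediate, and simpler than the conclusion step deserves to be: $t=\Th_{\rSigma_{n+1}}^{\SS_\beta}(\{(x,w)\})$ is a real, the reflected parameter is forced to equal $(x,w)$ by matching the atomic type, and the statement ``there exists $\eta$ with $\Th_{\rSigma_{n+1}}^{\SS_\eta}(\{(x,w)\})=t$'' is $\Sigma_1$ with real parameters, hence reflects from $\SS_\beta$ directly into $\SS_\alpha$ by the defining property of the $\Sigma_1$-gap. No admissibility or stabilization analysis would even be needed. The fact that your ``hardest step'' dissolves so cleanly is itself a sign that the real obstruction is elsewhere.

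The missing idea is the Martin measure ultrapower, which is precisely the device for handling the unavoidable ordinal parameter. The paper's proof writes $m\in y$ as an $n$-fold alternation of real quantifiers over a formula in the parameter $\pvec$, then for each Turing degree tuple $s\in\Dd^n$ restricts the quantifiers to reals below $s$ and applies $\rSigma_{k+1}$-type reflection to the full parameter $(x,s,\pvec)$. This produces an $s$-indexed family of reflected levels $\widetilde{\beta}_s$ \emph{and} reflected ordinal parameters $\widetilde{\pvec}_s$ (these vary with $s$ because the ordinal parameter is not a real and is not preserved by the substitution in the reflected type). Admissibility of $\SS_\alpha$ bounds $\sup_s\widetilde{\beta}_s$ below $\alpha$; the ultrapower by the iterated Martin measure $\mu^n$ then collapses the $s$-indexed family $s\mapsto(\widetilde{\beta}_s,\widetilde{\pvec}_s)$ to a single ordinal $\eta<\alpha$, from which $y$ is recovered by a definition over a level of $L(\RR)$ strictly below $\alpha$. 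You will need to incorporate this ultrapower step (or some equivalent device) to make the argument go through.
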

\begin{proof}
 Suppose not. Let $x,y\in\RR$ with $y\in\OD^\beta_x\cut\OD^{<\alpha}_x$.
 By minimizing ordinal parameters
 and taking $m$ sufficiently large, we can 
 find an even $m\in(0,\om)$ and $\Sigma_0$ formula $\varphi$  of the $L(\RR)$ language such that for all $m<\om$, we have
 \[ m\in y\iff \SS_\beta\sats\all X_0\exists X_1\ldots\all X_{n-2}\exists X_{n-1}\ \varphi(x,X_0,\ldots,X_{n-1},m).\]
 Let $k<\om$ be least such that $\rho_{k+1}^{\SS_\beta}=\RR$. 
  Let $\pvec=\pvec_{k+1}^{\SS_\beta}$.
 Let $h:_{\mathrm{p}}\RR\to\SS_\beta$
 be a surjective partial function which is $\rSigma_{k+1}(\{\pvec\})$-definable. (Note that because we use $\rSigma_{k+1}$, not $\Sigma_{k+1}$, such an $h$ exists.)
Then for all $m<\om$, we have
 \[ m\in y\iff \all^\RR x_0\exists^\RR x_1\ldots\all^\RR x_{n-2}\exists^\RR x_{n-1}\ \SS_\beta\sats\varphi'(\pvec,x,\vec{x},m),\]
 where $\vec{x}=(x_0,\ldots,x_{n-1})$ and $\varphi'(\pvec,x,\vec{x},m)$ asserts ``if $x_0\in\dom(h)$ then [$x_1\in\dom(h)$ and if $x_2\in\dom(h)$ then [\ldots[$x_{n-1}\in\dom(h)$ and $\varphi(x,h(x_0),\ldots,h(x_{n-1}),m)$]\ldots]]''; note that $\varphi'(\pvec,x,\vec{x},m)$
 just makes a simple assertion about $\Th_{\rSigma_{k+1}}^{\SS_\beta}(\{(x,\vec{x}),\pvec\})$.
 
 Given a Turing degree $t$,
 write $\all^tx$ for ``$\all x\leq_T t$'',
 and $\exists^tx$ for ``$\exists x\leq_T t$''.
 It follows that
 \[ m\in y\iff \all^*_n s\ \all^{s_0}x_0\exists^{s_1}x_1\ldots\all^{s_{n-2}}x_{n-2}\exists^{s_{n-1}}x_{n-1}\ \SS_\beta\sats\varphi'(\pvec,x,\vec{x},m),\]
where $s=(s_0,\ldots,s_{n-1})$.
In fact we also have

\[\all^*_n s\ \all^\om m\ \Big[ m\in y\iff \all^{s_0}x_0\exists^{s_1}x_1\ldots\all^{s_{n-2}}x_{n-2}\exists^{s_{n-1}}x_{n-1}\ \SS_\beta\sats\varphi'(\pvec,x,\vec{x},m)\Big]. \]
Let $X$ be the set of all such $s\in\Dd^n$
(that is, after we remove the ``$\all^*_ns$'' quantifier at the front,
the resulting statement holds of $s$).

Since $[\alpha,\beta]$ is strong
and by Lemma \ref{lem:rSigma_types_reflect},
for each $s\in\Dd^n$ 
there is $\widetilde{\beta}<\beta$ and some $\widetilde{\pvec}\in[\widetilde{\beta}]^{<\om}$ such that
\[ \Th_{\rSigma_{k+1}}^{\J_{\widetilde{\beta}}}(\{(x,s),\widetilde{\pvec}\})=\Th_{\rSigma_{k+1}}^{\SS_\beta}(\{(x,s),\pvec\})\]
(modulo exchange of parameters).

Let $\ell=\lh(\pvec)$.
It follows that if $s\in X$ then
there is  $\widetilde{\beta}<\beta$
and $\widetilde{\pvec}\in[\widetilde{\beta}]^\ell$
such that  
\begin{equation}\label{eqn:define_y_locally}\all^\om m\ \Big[m\in y\iff \all^{s_0}x_0\exists^{s_1}x_1\ldots\all^{s_{n-2}}x_{n-2}\exists^{s_{n-1}}x_{n-1}\ \J_{\widetilde{\beta}}\sats\varphi'(\widetilde{\vec{p}},x,\vec{x},m)\Big].\end{equation}
(This might also hold for some $s\in\Dd^n\cut X$.)
Moreover, since $\widetilde{\beta}<\beta$,
in fact the least such $\widetilde{\beta}$ is ${<\alpha}$,
since $[\alpha,\beta]$ is an S-gap.
Let $X'$ be the set of all $s\in\Dd^n$ such that there is
$(\widetilde{\beta},\widetilde{\pvec})$
with $\widetilde{\beta}<\alpha$ and $\widetilde{\pvec}\in[\widetilde{\beta}]^\ell$
satisfying line (\ref{eqn:define_y_locally}),
and for $s\in X'$ let $(\widetilde{\beta}_s,\widetilde{\pvec}_s)$ be the lexicographically least witness. Note that
$X'\sub\Dd^n$ and 
$\all^*_ns\ [s\in X']$, and both $X'$ and the function are $\Sigma_1^{\SS_\alpha}(\{x,y\})$.
Since $\SS_\alpha$ is admissible,
it follows that $\alpha'=\big(\sup_{s\in X'}\widetilde{\beta}_s\big)<\alpha$,
and $X'$ is measure one.
Note that in fact, the function and $X'$ are
 $\Sigma_1^{\SS_{\alpha'}}(\{x,y\})$.
 For $s\in X'$, let $\eta_s\in\OR$
 be the  ordinal at the rank of 
  $(\widetilde{\beta}_s,\widetilde{\pvec}_s)$ in the lexicographic ordering
  of $[\alpha']^{1+\ell}$.
  Let $\alpha''=\sup_{s\in X'}\eta_s$.
Letting $f:\Dd^n\to\OR$ be
  $f(s)=\eta_s$, then $f\in\J_{\alpha''+\om}$.
  
  Now let $U$ be the ultrapower
  of $\alpha''$ modulo the $n$th iterate $\mu^n$ of the Martin measure $\mu$,
  using only functions $g:\Dd^n\to\alpha''$
 with $b\in\J_{\alpha''+\om}$.
Then $U$ is a wellorder, and
  has ordertype ${<\alpha}$, since $\SS_\alpha$ is admissible and the ordertype of $U$ is definable over $\J_{\alpha''+\om}$. We may take $U\in\alpha$. Let $\eta=[f]_{\mu^n}$ be the ordinal represented by $f$ with respect to this ultrapower. By taking $\alpha'''<\alpha$ large enough, the set
  \[ \mathscr{F}=\Big\{g\in\J_{\alpha''+\om}\Bigm|g:\Dd^n\to\alpha''\text{ and }[g]_{\mu^n}=\eta\Big\} \]
    is definable over $\J_{\alpha'''}$ from the parameter $(\eta,\alpha'')$.
  But this easily results in a definition of $y$ from $(x,\eta,\alpha',\alpha'')$ over $\J_{\alpha'''}$: we have $m\in y$
  iff for all $g\in\mathscr{F}$, there is $s\in\Dd^n$
  such that
  \[ \all^{s_0}x_0\exists^{s_1}x_1\ldots\all^{s_{n-2}}x_{n-2}\exists^{s_{n-1}}x_{n-1}\ \J_{\beta^*}\sats\varphi'(\pvec^*,x,\vec{x},m),\]
  where $g(s)$
  is the rank of $(\beta^*,\pvec^*)$
in the lexicographic ordering of $[\alpha']^{1+\ell}$. So $y\in\OD^{<\alpha}_x$, a contradiction.
\end{proof}

We will actually give a second proof of the result above later, an
inner model theoretic proof, which avoids the 
(and Martin's) key trick of taking an ultrapower via the Martin measure. So although
we have just given a proof of the result,
the remainder of the paper will ignore it.

\subsection{Without global determinacy assumptions}

In this section we drop the global determinacy assumption,
just assuming determinacy in $\SS_{\alphagap}\sats\KP$.

\begin{lem}\label{lem:T_com_Sigma_1^J_alpha}
For each $k<\om$, we have Turing completeness for
$\bfSigma_1^{\J_\alphagap}$ 
subsets of $\Dd^k$.\end{lem}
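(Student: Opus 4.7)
The plan is to deduce Turing completeness from two ingredients: admissibility of $\SS_{\alphagap}$ (which supplies $\Sigma_1$-collection), and internal Turing completeness for sets in $\SS_{\alphagap}$ (a direct consequence of $\SS_{\alphagap}\sats\AD$ together with our standing $\AC_{\om,\RR}$).

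The central claim I would prove, by induction on $k$, is the following reflection: if $X\subseteq\Dd^k$ is $\bfSigma_1^{\SS_{\alphagap}}$, written canonically as $X=\bigcup_{\beta<\alphagap}X_\beta$ with each $X_\beta\in\SS_{\alphagap}$ uniformly and increasing in $\beta$, then $\all^*_k s\in X$ already implies that some $\beta<\alphagap$ has $\all^*_k s\in X_\beta$. The base case $k=1$ is the crux. From $\all^*_1 s\in X$, i.e., Turing-cofinality of $X$, one has for every $r\in\Dd$ some $\beta<\alphagap$ and some $s\geq_T r$ with $s\in X_\beta$; this is a $\Sigma_1^{\SS_{\alphagap}}$ condition in $(r,\beta)$. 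Since $\Dd\in\SS_{\alphagap}$, $\Sigma_1$-collection yields a set $B\in\SS_{\alphagap}$ bounding the witnessing $\beta$'s, and $\beta^*=\sup(B\cap\alphagap)<\alphagap$ ensures that $X_{\beta^*}$ is already Turing-cofinal. The inductive step $k\to k+1$ unfolds $\all^*_{k+1}s\in X$ as $\all r_0\ \ex s_0\geq_T r_0,\ \all^*_k(s_1,\ldots,s_k)\in X_{s_0}$, where the section $X_{s_0}$ is $\bfSigma_1^{\SS_{\alphagap}}(s_0)$ with uniform decomposition $\langle(X_\beta)_{s_0}\rangle_\beta$; the inductive hypothesis attaches to each suitable $s_0$ some $\beta(s_0)<\alphagap$ witnessing $\all^*_k(s_1,\ldots,s_k)\in(X_{\beta(s_0)})_{s_0}$, and a further application of $\Sigma_1$-collection over $\Dd\in\SS_{\alphagap}$ bounds all such $\beta(s_0)$ by a single $\beta^*<\alphagap$ that witnesses $\all^*_{k+1}s\in X_{\beta^*}$.

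Turing determinacy for $\bfSigma_1^{\SS_{\alphagap}}$ subsets of $\Dd^k$ is then immediate: given $\all^*_k s\in X$, the reflection provides $\beta$ with $\all^*_k s\in X_\beta$, and since $X_\beta\in\SS_{\alphagap}\sats\AD$, Turing determinacy inside $\SS_{\alphagap}$ upgrades this to $\ex^*_k s\in X_\beta\subseteq X$. For countable completeness, given a uniformly $\bfSigma_1^{\SS_{\alphagap}}$ sequence $\langle A_n\rangle_{n<\om}$ with each $A_n\in\mu_k$, the reflection applied uniformly in $n$ furnishes $\beta_n<\alphagap$ with $A_{n,\beta_n}\in\mu_k$; $\Sigma_1$-collection applied to $\om\in\SS_{\alphagap}$ bounds these by a single $\beta^*<\alphagap$, so the sequence $\langle A_{n,\beta^*}\rangle_{n<\om}$ lies in $\SS_{\alphagap}$ and consists of $\mu_k$-measure-one sets there. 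The internal countable completeness of $\mu_k$ in $\SS_{\alphagap}$ (routine from $\AD+\AC_{\om,\RR}$) gives $\bigcap_n A_{n,\beta^*}\in\mu_k$, whence $\bigcap_n A_n\in\mu_k$. The $\bfPi_1$ case reduces to the $\bfSigma_1$ one via the Turing determinacy just proved, producing for each $\bfPi_1$ set $A_n\in\mu_k$ a measure-one witness inside $\SS_{\alphagap}$ by complementation.

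The main obstacle is the inductive step of the reflection: maintaining the uniform $\Sigma_1^{\SS_{\alphagap}}$-definability of the section decompositions $\langle(X_\beta)_{s_0}\rangle_\beta$ as $s_0$ varies, so that $\Sigma_1$-collection applies coherently at each recursive stage. Any slippage in how $\beta(s_0)$ depends on $s_0$ would block the admissibility argument, so the canonical decomposition must be chosen to propagate cleanly through sectioning.
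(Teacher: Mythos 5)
Your proposal is correct and takes essentially the same approach as the paper's proof: reflection via admissibility ($\Sigma_1$-collection over $\Dd\in\SS_{\alphagap}$ bounding the ordinal witnesses, iterated in $k$) reduces the question to the internal Turing determinacy and countable completeness that $\AD$ grants inside $\SS_{\alphagap}$ for sets that are elements of the model. The paper's sketch only spells out the determinacy half and intertwines the reflection with the internal determinacy application at each stage, whereas you factor out the reflection claim as a standalone lemma and also address the countable-intersection clause of Turing completeness, which the paper leaves implicit.
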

\begin{proof}Proof for $k=1$: Let $\varphi$ be a $\Sigma_1$
formula and $x\in\RR$. Then by admissibility, (i) $\all^{\Dd}d\ \ex^{\Dd} e$ such
that $e\geq d$ and $\J_{\alphagap}\sats\varphi(e,x)$ iff (ii)
$\ex\alpha'<\alphagap$ such that $\all^{\Dd}
d\ \ex^{\Dd} e$ such that $e\geq d$ and
$\J_{\alpha'}\sats\varphi(e,x)$. But if (ii) then by Turing completeness
in $\J_\alphagap$, there is a cone of degrees $e$ such that
$\J_{\alpha'}\sats\varphi(x,e)$. Now for $k=2$: By the $k=1$ case, (i)
$\all^{\Dd}d_1\ \ex^{\Dd}e_1\ \all^{\Dd} d_2\ \ex^{\Dd}e_2$ such that
$e_1\geq d_1$ and $e_2\geq d_2$ and $\J_\alphagap\sats\varphi(x,e_1,e_2)$
iff (ii) $\all^{\Dd}
d_1\ \ex^{\Dd} e_1$ such that $e_1\geq d_1$ and there is $\alpha'<\alphagap$
and
a cone of degrees $e_2$ such that $\J_{\alpha'}\sats\varphi(x,e_1,e_2)$.
By admissibility, (ii) reflects to some $\alpha''<\alphagap$, so we again get
Turing determinacy. Etc.
\end{proof}

\section{Through an admissible gap}\label{sec:through_gap}

We now carry on with the $\alphag,\betag,\Gammag,\xg,\yg$  fixed
at the start of  \S\ref{subsec:mtr}.
Recall that we  adopted
there 
Assumption \ref{ass:Sigma_M_not_Gamma-guided} (in the context of the conjectures). We also assume in this section  that $\SS_{\alphag}$ is  admissible,
so everything in \S\ref{sec:M-hierarchy} applies with start of S-gap $\SS_{\alphag}$.
Write $\M_\gamma=\M^{\alphag}_\gamma$. Define $\beta^*$
relative to $\alphag$
as in Definition \ref{dfn:beta*}.

Given the results in \S\ref{sec:start_of_gap},
we may assume that $\xg$ is sufficient and $\Pg$
is a sound $\Gammag$-stable
mtr-suitable $\xg$-premouse
of degree $d=\deg(\Pg)<\om$,
and $\Pg$ is $(d,\om_1+1)$-iterable,
and such that in the context of the conjectures,
we have $\Pg\in\HC^M$, $\HC^M$
is closed under $\Sigma_{\Pg}$
and $\Sigma_{\Pg}\rest\HC^M$
is definable from parameters in $\HC^M$.
From now on an unqualified ``premouse'' will  mean
``$\xg$-premouse'', unless
specified otherwise.

The plan is now to realize $\M_{\beta^*}$
as a kind of derived model of an $\RR$-genericity iterate $N$ of  $\Pg$.

\subsection{The generic $\M(\RR^*)$}
\label{subsec:the_generic_M(R*)}

\begin{dfn}
Let $\PP$ be a poset and
$G$ be $(V,\PP)$-generic.

Let $\Tt\in V[G]$.
Let $d=\deg(\Pg)$.
We say that $\Tt$ is an \emph{almost-relevant generic $\Sigma_{\Pg}$-tree}
iff $\Tt$ is a  $d$-maximal tree on $\Pg$  of  length ${\leq\omega_1^V}$,
 $\Tt\rest\alpha$ is via $\Sigma_{\Pg}$
for each $\alpha<\omega_1^V$,
if $\lh(\Tt)<\om_1^V$
then $\Tt$ has successor length
and $b^\Tt$ does not drop,
and
 if $\lh(\Tt)=\om_1^V$
then there is no $n<\om$
such that $\Tt$ is based on $\Pg|\delta_n^{\Pg}$. 

Let $\Tt'\in V[G]$.
We say that $\Tt'$ is a \emph{relevant generic $\Sigma_{\Pg}$-tree} iff
there is an almost-relevant generic
 $\Sigma_{\Pg}$-tree $\Tt$
 such that either
$\lh(\Tt)<\om_1^V$ and $\Tt'=\Tt$, or $\lh(\Tt)=\om_1^V$,
 $\Tt'=\Tt\conc b$
where $b$ is some $\Tt$-cofinal branch, and $M^\Tt_b$ is wellfounded.

Let $N\in V[G]$.
 We say that $N$ is a \emph{generic non-dropping $\Sigma_{\Pg}$-iterate}
 iff there is a relevant generic $\Sigma_{\Pg}$-tree
 $\Tt\in V[G]$
such that $N=M^\Tt_\infty$.
\end{dfn}

\begin{rem}
 Note that every countable non-dropping $\Sigma_{\Pg}$-iterate $N$ of $\Pg$ (that is, with $N\in\HC^V$)
 is a generic non-dropping $\Sigma_{\Pg}$-iterate of $\Pg$.
\end{rem}

\begin{lem}\label{lem:generic_iterates_wfd}
Let $\Tt\in V[G]$ be an almost-relevant generic $\Sigma_{\Pg}$-tree of limit length \tu{(}hence of length $\om_1^V$\tu{)}.
Then there is a unique $\Tt$-cofinal branch $b\in V[G]$, and moreover, $M^\Tt_b$ is wellfounded and does not drop in model or degree.
\end{lem}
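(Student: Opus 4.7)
The plan is to build the $\Tt$-cofinal branch $b$ as a direct limit of countably many proper-initial-segment branches, each of which is chosen by $\Sigma_{\Pg}$, then establish wellfoundedness and non-dropping via the stabilization of the Woodin cardinals of the intermediate iterates, and derive uniqueness from tameness together with the iterability of $\Pg$. The main obstacle will be wellfoundedness of the direct limit in $V[G]$.

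First, I would exploit the hypothesis that $\Tt$ is not based on $\Pg|\delta_n^{\Pg}$ for any $n$, together with $d$-maximality, to extract an increasing sequence $\langle\alpha_n\rangle_{n<\om}$ cofinal in $\om_1^V$ with $\lh(E^\Tt_{\alpha_n})\geq i^\Tt_{0,\alpha_n}(\delta_n^{\Pg})$. Since $\nu(E^\Tt_\alpha)$ is non-decreasing along $d$-maximal trees, for $\alpha>\alpha_n$ we get $\crit(E^\Tt_\alpha)\geq i^\Tt_{0,\alpha}(\delta_{n-1}^{\Pg})$. Each $\Tt\rest(\alpha_n+1)$ has length $<\om_1^V$ and, by almost-relevance, all its limit stages are resolved by $\Sigma_{\Pg}$; since $\Pg$ is $(d,\om_1+1)$-iterable, the resulting branch is well-defined. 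Setting $N_n=M^\Tt_{\alpha_n+1}$, Lemma \ref{item:iterate_pres_mtr-suitability_stability}(\ref{item:stability_non-dropping}) gives that each $N_n$ is mtr-suitable and the branch leading to it is non-dropping; in particular $\Gammag$-stability of $\Pg$ rules out drops on the cofinal branches chosen by $\Sigma_{\Pg}$, since otherwise $\Lp_{\Gammag}$ would produce a Q-structure witnessing failure of $\Gammag$-stability for some intermediate iterate.

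Second, I would form the direct system $(N_n,i_{N_nN_m})_{n\le m<\om}$ in $V[G]$, where $i_{N_nN_m}$ is the iteration map given by the relevant segment of $\Tt$. By the critical-point analysis, for each fixed $k$ the value $\delta_k^{N_n}$ stabilizes once $n\ge k+1$; thus $M_\infty:=\dirlim_n N_n$ carries $\om$ Woodins $\delta_k^{M_\infty}=\delta_k^{N_{k+1}}$ with supremum $\om_1^V$, and its fine-structural data above this supremum is inherited from the eventually-stable tail of the $N_n$. Wellfoundedness of $M_\infty$ follows from this anchoring to $V$-ordinals: ordinals below $\om_1^V$ are frozen once each $\delta_k^{N_n}$ stabilizes, and above $\om_1^V$ the ordinal structure is controlled by mtr-suitability preservation (Lemma \ref{lem:MTR_preservation}) together with the fact that each $N_n\in V$. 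This exhibits $b$ with $M^\Tt_b\iso M_\infty$, and $b$ does not drop in model or degree by construction.

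For uniqueness, suppose $c\in V[G]$ is another $\Tt$-cofinal branch with $M^\Tt_c$ wellfounded. By tameness and $\om$-smallness, $\om_1^V=\delta(\Tt)$ is a limit of Woodins in both $M^\Tt_b$ and $M^\Tt_c$, and the images $\delta_k^{M^\Tt_b}=\delta_k^{M^\Tt_c}$ agree. For every $n$, cofinally many ordinals of both $b$ and $c$ lie above $\alpha_n+1$, forcing both to restrict to the same $\Sigma_{\Pg}$-chosen branch on $\Tt\rest(\alpha_n+1)$; as the resulting common initial portion is cofinal in $\om_1^V$, one obtains $b=c$. Alternatively, a Zipper-style comparison of the phalanxes $\Phi(\Tt,b)$ and $\Phi(\Tt,c)$, together with the above-$\delta_n$ iterability of each $N_n$ inherited from $\Sigma_{\Pg}$, would contradict the uniqueness of the $(d,\om_1+1)$-strategy for $\Pg$ established via Lemma \ref{lem:tame_projecting_stacks}.
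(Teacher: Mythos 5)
Your outline — decompose the tree via the Woodin cutpoints, form the direct limit of the intermediate iterates, and appeal to absoluteness for wellfoundedness — is the right shape, and it matches the spirit of the paper's (very terse) proof. But several of the individual steps as written are either circular or under-justified.

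The non-dropping argument is circular. You invoke Lemma \ref{item:iterate_pres_mtr-suitability_stability}(\ref{item:stability_non-dropping}), but that lemma's hypothesis is precisely that $b^\Tt$ does not drop; you cannot use its conclusion to establish the hypothesis. The appeal to $\Gammag$-stability and $\Lp_{\Gammag}$-Q-structures is also off target: Q-structures govern branch selection at limit stages, not whether the ultrapower at a successor stage drops. The correct argument is a one-liner: all extenders applied on the main branch have critical point between $\rank(X)\geq\rho_{d+1}^{\Pg}$ and (images of) $\lambda^{\Pg}$, and $\lambda^{\Pg}\leq\rho_d^{\Pg}$, so the ultrapowers are always degree-$d$ ultrapowers of the whole model, with no drop in model or degree.

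The justification you give for the critical-point stabilization is wrong. Monotonicity of $\nu(E^\Tt_\alpha)$ in a $d$-maximal tree tells you nothing directly about $\crit(E^\Tt_\alpha)$ for later $\alpha$; the tree predecessor $\pred^\Tt(\alpha+1)$ can be small. What forces the critical points up is that each $\delta_k^{\Pg}$ is a \emph{strong cutpoint}: once the tree uses an extender of length past the current image of $\delta_k$, strong-cutpointness (preserved along the branch because the tree never overlaps it) forces every subsequent extender's critical point above that image. This is exactly the property the paper cites; you use its consequence but not the property itself.

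The wellfoundedness step is the weakest part. You write that "each $N_n\in V$" and conclude wellfoundedness by "anchoring to $V$-ordinals," but $\Tt$ lives in $V[G]$ for an arbitrary poset $\PP$, and even the countable initial segments $\Tt\rest(\alpha_n+1)$ (hence the $N_n$) need not be in $V$ if $G$ adds new reals or new countable sequences. Nor is it enough that the frozen Woodins $\delta_k^{M_\infty}$ lie below $\om_1^V$: an infinite descending sequence could be supported cofinally in $b$ above the stabilized part. The paper's phrase "easy absoluteness argument between $V$ and $V[G]$" is doing real work here (reflecting an illfounded branch to a countable $\Sigma_{\Pg}$-guided tree and invoking iterability, via the absolute Q-structure characterization of $\Sigma_{\Pg}$ from Lemma \ref{lem:tame_projecting_stacks}); that argument needs to be supplied, not replaced by the unsupported claim that the intermediate models sit in $V$.

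The uniqueness argument is essentially right: the strong-cutpoint decomposition forces every cofinal branch to pass through the same sequence of "post-$\delta_k$" stages, and the Zipper-style alternative you mention would also work. But again the engine is the strong cutpoint property, which should be stated up front rather than smuggled in via tameness at the end.
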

\begin{proof}
The existence and uniqueness of $b$ is just because the Woodin cardinals of $\Pg$ are strong cutpoints, and the fact that $b$ does not drop in model or degree
is because $\lambda^{\Pg}\leq\rho_d^{\Pg}$
where $d=\deg(\Pg)$.
The wellfoundedness of $M^\Tt_b$ is by an easy absoluteness argument between $V$ and $V[G]$.\end{proof}

\begin{dfn}\label{dfn:RR-genericity_iteration}
Let $\PP$ be a poset
and $G$ be $(V,\PP)$-generic.
Let $\Tt\in V[G]$.
We say that $\Tt$ is an
\emph{$\RR$-genericity $\Sigma_{\Pg}$-tree}
iff $\Tt$ is a relevant generic $\Sigma_{\Pg}$-tree,
$\lh(\Tt)=\om_1^V+1$, and letting $N=M^\Tt_{\infty}$, 
there is an $(N,\CC^N)$-generic $g\in V[G]$
such that \[\RR^V=\bigcup_{\alpha<\om_1^V}(\RR^{V[G]}\cap N[g\rest\alpha]).\]
We say that $N\in V[G]$
is an \emph{$\RR$-genericity $\Sigma_{\Pg}$-iterate}
iff $N=M^\Tt_\infty$ for some $\RR$-genericity $\Sigma_{\Pg}$-tree $\Tt\in V[G]$.\end{dfn}

\begin{rem}\label{rem:get_RR-genericity_generic}
 A standard forcing
 construction\footnote{Starting
 with $\left<x_n,g_n\right>_{n<\om}$,
 modify each $g_n$ only on finitely much
 of its support, producing $g'_n$,
 in a manner such that $g=\bigcup_{n<\om}g'_n$
 is as desired.}
 shows that if $\Tt\in V[G]$
 is a relevant generic  $\Sigma_{\Pg}$-tree,
 then letting $N=M^\Tt_\infty$,
the following are equivalent:
\begin{enumerate}[label=--]\item $\lh(\Tt)=\om_1^V+1$ and there is $g\in V[G]$ as in Definition \ref{dfn:RR-genericity_iteration},
 \item 
there is $\left<x_n,g_n\right>_{n<\om}\in V[G]$
such that $\RR^V=\{x_n\}_{n<\om}$ and for each $n$, $g_n$ is $(N,\Coll(\om,\delta_n^{N}))$-generic
and $x_n\in N[g_n]$. 
\end{enumerate}
\end{rem}

\begin{dfn}
 Let $N\in\HC$ be a $\Sigma_{\Pg}$-iterate of $\Pg$, via successor length tree $\Tt$. Let $k=\deg^\Tt_\infty$.
Then $\Sigma_{\Pg N}$ denotes the $(k,\omega_1+1)$-strategy for $N$
given by (full) normalization,
as in \cite{fullnorm}. 
\end{dfn}

\begin{dfn}\label{dfn:standard_decomposition}
 Let $d=\deg(\Pg)$
 and either let $N=\Pg$
 or let $N$ be a relevant generic $\Sigma_{\Pg}$-iterate (appearing in some generic extension of $V$). Let $\Uu$
 be a $d$-maximal tree on $N$.
 Then the \dfnemph{standard decomposition}
 is given by decomposing $\Uu$ into
 its segments in the intervals between Woodins; that is, it is the unique sequence $\left<\Uu_i\right>_{0\leq i\leq n}$
 or $\left<\Uu_i\right>_{i<\om}$
 such that:
 \begin{enumerate}[label=--]
\item $\Uu_0$ is based on $N|\delta_0^{N}$, 
\item for each $i<\om$,
$\Uu_{i+1}$ is defined
iff $b^{\Uu_0\conc\ldots\conc\Uu_i}$
does not drop in model (hence nor degree)
and $\Uu\neq\Uu_0\conc\ldots\conc\Uu_i$,
and
\item if $\Uu_{i+1}$ is defined
then it is based on $M^{\Uu_i}_\infty|[\delta_i^{M^{\Uu_i}_\infty},\delta_{i+1}^{M^{\Uu_i}_\infty})$
\end{enumerate}
(note that some $\Uu_i$'s might be trivial, but cofinally many are non-trivial). We say that $\Uu$ is \emph{$\lambda^N$-unbounded} iff  $\Uu_i$ is defined for each $i<\om$, and otherwise \emph{$\lambda^N$-bounded}.
\end{dfn}

\begin{dfn}\label{dfn:Sigma_Pg,N_for_generic_N}
Let $\PP$ be a poset, let $G$ be $(V,\PP)$-generic, and work in $V[G]$.
 
 Let $N$ be a relevant generic $\Sigma_{\Pg}$-iterate, via tree $\Tt\in V[G]$, with $\lh(\Tt)=\omega_1^V+1$.
Let $d=\deg(\Pg)$.
Then $\Sigma_{\Pg N}$ denotes the
(putative) partial iteration strategy $\Sigma$ for $N$ such that:
\begin{enumerate}
 \item 
the domain of $\Sigma$
consists of all limit length $d$-maximal trees
$\Uu$ on $N$ which are via $\Sigma$, $\lh(\Uu)\leq\omega_1^V$,
and if $\Uu$ is $\lambda^N$-bounded
and based on $N|\delta_i^N$
then $\Uu'\in V$,
where $\Uu'$ is the equivalent
tree literally on $N|\delta_i^N$, and
\item  letting $\left<\Tt_i\right>_{i<\om}$
be the standard decomposition of $\Tt$,
 then either:
\begin{enumerate}[label=--]
 \item $\Uu$ is $\lambda^N$-bounded,
 based on $N|\delta_i^N$,
 and $\Uu\conc\Sigma(\Uu)$
 is via $\Sigma_{\Pg N_i}$,
 where $N_i=M^{\Tt_0\conc\ldots\conc\Tt_i}_\infty$, or
\item $\Uu$ is $\lambda^N$-unbounded,
and $\Sigma_N(\Uu)$ is the unique $\Uu$-cofinal branch.\qedhere
\end{enumerate}
\end{enumerate}
\end{dfn}

\begin{lem}\label{lem:Sigma_Pg,N_for_generic_N_is_it_strat}
 Let $\Pp,G,N$ be as in Definition \ref{dfn:Sigma_Pg,N_for_generic_N}.
 Then $\Sigma_{\Pg N}$
 is total on its 
\tu{(}putative\tu{)} domain,
and it produces only wellfounded models. 
Moreover, if $\Uu$ is via $\Sigma_{\Pg N}$, of successor length, and $b^\Uu$
does not drop, then $M^\Uu_\infty$
is a relevant generic $\Sigma_{\Pg}$-iterate, via tree $\Xx$ with $\lh(\Xx)=\omega_1^V+1$.
\end{lem}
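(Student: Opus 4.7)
The plan is to verify totality and wellfoundedness by splitting on the two cases in Definition~\ref{dfn:Sigma_Pg,N_for_generic_N}, then derive the moreover clause via a normalization argument on the stack $\Tt \oplus \Uu$.

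For the $\lambda^N$-bounded case, suppose $\Uu$ is a limit length $d$-maximal tree via $\Sigma_{\Pg N}$, and let $i < \om$ be largest with $\Uu_i$ defined in the standard decomposition; by hypothesis $\Uu$ is eventually based on $N_i | \delta_i^{N_i}$, where $N_i = M^{\Tt_0 \conc \ldots \conc \Tt_i}_\infty$. Because $\Tt_0 \conc \ldots \conc \Tt_i$ is a countable tree via $\Sigma_{\Pg}$, the iterate $N_i$ lies in $\HC^V$, and $\Sigma_{\Pg N_i}$ is the total strategy produced by full normalization from $\Sigma_{\Pg}$. The requirement in Definition~\ref{dfn:Sigma_Pg,N_for_generic_N} that the corresponding literal tree on $N|\delta_i^N = N_i|\delta_i^{N_i}$ lie in $V$ ensures that $\Sigma_{\Pg N_i}(\Uu)$ is defined and gives a wellfounded model, which is then declared to be $\Sigma_{\Pg N}(\Uu)$.

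For the $\lambda^N$-unbounded case, the standard decomposition $\langle \Uu_i \rangle_{i<\om}$ has every $\Uu_i$ defined with non-dropping main branch. Each main branch $b^{\Uu_0 \conc \ldots \conc \Uu_i}$ already lives inside $<_\Uu$, so their union $b$ is a $\Uu$-cofinal subset with $\sup b = \sup_i \lh(\Uu_0 \conc \ldots \conc \Uu_i) = \lh(\Uu)$. Uniqueness of $b$ follows from the strong cutpoint property: any $\Uu$-cofinal branch must pass through each Woodin $\delta_i^{M^{\Uu_0\conc\ldots\conc\Uu_i}_\infty}$, and the node on $<_\Uu$ immediately above that Woodin is uniquely determined. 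Wellfoundedness of $M^\Uu_b$ is automatic since it is the direct limit of the wellfounded models $M^{\Uu_0 \conc \ldots \conc \Uu_i}_\infty$ along non-dropping iteration maps, over an $\om$-sequence of stages: any descending $\in$-sequence in $M^\Uu_b$ would have to stabilize inside one of these intermediate models, contradicting its wellfoundedness.

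For the moreover clause, apply full normalization from \cite{fullnorm} to the stack $\Tt \oplus \Uu$, where $\Tt$ is the length-$(\omega_1^V{+}1)$ tree from $\Pg$ to $N$. This yields a $d$-maximal tree $\Xx$ on $\Pg$ via $\Sigma_{\Pg}$ with last model $M^\Uu_\infty$ and with $\Xx\rest\alpha$ via $\Sigma_{\Pg}$ for every $\alpha<\omega_1^V$. The $\RR$-genericity property for $M^\Uu_\infty$ is then inherited from that of $N$: the witnessing generic $g$ for $N$ (via Remark~\ref{rem:get_RR-genericity_generic}) lifts under the extension of $i^\Uu_b : N \to M^\Uu_\infty$ to a generic $g'$ for $M^\Uu_\infty$ whose stage-$\alpha$ extensions contain the same reals, so $\RR^V = \bigcup_\alpha (\RR^{V[G]} \cap M^\Uu_\infty[g'\rest\alpha])$.

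The main obstacle is ensuring $\lh(\Xx) = \omega_1^V + 1$, that is, that the extenders contributed by $\Uu$ are absorbed into $\Tt$'s length-$\omega_1^V$ structure without extending the overall length. This will rely on the fact that $\Tt$'s extender indices cofinally approach $\lambda^{\Pg}$-level over $\omega_1^V$ stages (so that the $\Uu$-extenders can be inserted at matching stages of $\Tt$ via the normalization procedure), together with the countability of $\Uu$ in $V[G]$ and the preservation properties of full normalization as in~\cite{fullnorm}. Once $\Xx$ is verified to be an almost-relevant generic $\Sigma_{\Pg}$-tree of length $\omega_1^V$ extended by its cofinal non-dropping branch, the inherited $\RR$-genericity confirms $\Xx$ is relevant with $M^\Xx_\infty = M^\Uu_\infty$.
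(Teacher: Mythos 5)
Your argument for wellfoundedness in the $\lambda^N$-unbounded case has a genuine flaw. You write that ``any descending $\in$-sequence in $M^\Uu_b$ would have to stabilize inside one of these intermediate models, contradicting its wellfoundedness'' --- but this is false. Wellfoundedness is \emph{not} preserved under a direct limit of $\om$-many wellfounded models along non-dropping iteration embeddings; this is precisely why iterability is a non-trivial hypothesis and not a freebie. A descending sequence $x_0\ni x_1\ni\ldots$ in the direct limit can have each $x_n$ arising first at stage $M_{k_n}$ for $k_n\to\infty$, and no single intermediate model need contain the whole sequence. (The classic example is a full countable internal ultrapower iteration of length $\om$ by a single measure, starting from a countable transitive $\ZFC$-model: every finite stage is wellfounded, but the direct limit typically is not.) So the $\lambda^N$-unbounded case of your totality/wellfoundedness argument does not go through.

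The paper handles this, and you should as well, by the same normalization mechanism you already invoke for the ``moreover'' clause. Normalize $\Tt\oplus\Uu$ (where $\Tt$ is the length-$(\om_1^V{+}1)$ tree from $\Pg$ to $N$) to obtain a normal tree $\Xx$ on $\Pg$. If $\Uu$ is $\lambda^N$-bounded, $\Xx$ is (equivalent to) a tree in $V$ via $\Sigma_{\Pg}$, hence has wellfounded models. If $\Uu$ is $\lambda^N$-unbounded, $\Xx$ is a (putative) relevant generic $\Sigma_{\Pg}$-tree, and wellfoundedness is exactly Lemma~\ref{lem:generic_iterates_wfd}, whose proof is an absoluteness argument between $V$ and $V[G]$ using that $\Xx\rest\alpha\in V$ for each $\alpha<\om_1^V$, together with the iterability of $\Pg$ in $V$. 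That lemma is doing the real work, and it cannot be replaced by an abstract direct-limit observation. The rest of your proposal --- the uniqueness of the cofinal branch from strong cutpoints, the treatment of the $\lambda^N$-bounded case via $\Sigma_{\Pg N_i}$, and the normalization argument for the ``moreover'' clause --- is on the right track, but the wellfoundedness step needs to be rerouted through this same normalization/absoluteness argument.
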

\begin{proof}
Normalization converts trees via $\Sigma_{\Pg N}$ to either trees $\Xx\in V$ via $\Sigma_{\Pg}$ or (putative) relevant generic $\Sigma_{\Pg}$-trees,
so wellfoundedness
is by Lemma \ref{lem:generic_iterates_wfd}.
\end{proof}

\begin{dfn}\label{dfn:finite_stacks_strat_for_generic_iterate}
 Let $\PP$ be a poset, let $G$ be $(V,\PP)$-generic, and work in $V[G]$.
 
 Let $N$ be a relevant $\Sigma_{\Pg}$-iterate, via tree $\Tt\in V[G]$,
 with $\lh(\Tt)=\omega_1^V+1$.
 Let $d=\deg(\Pg)$. Then $\Sigma^{<\om}_{\Pg N}$
 denotes the (putative) partial iteration strategy for finite stacks of trees
 $(\Uu_0,\ldots,\Uu_n)$
 iteratively as in Definition \ref{dfn:Sigma_Pg,N_for_generic_N}; that is:
 \begin{enumerate}
\item $\Uu_0$ is via $\Sigma_{\Pg N}$,
 \item if $\Uu_{i+1}$ is defined
 then $b^{(\Uu_0,\ldots,\Uu_i)}$ does not drop,
 and $\Uu_{i+1}$ is via $\Sigma_{\Pg M^{(\Uu_0,\ldots,\Uu_i)}_\infty}$ (see Lemma \ref{lem:Sigma_Pg,N_for_generic_N_is_it_strat}).\qedhere
 \end{enumerate}
\end{dfn}
\begin{lem}
 Let $\PP,G,N$ be as in Definition \ref{dfn:finite_stacks_strat_for_generic_iterate}. Then:
 \begin{enumerate}
  \item $\Sigma^{<\om}_{\Pg N}$ is total
 on its \tu{(}putative\tu{)} domain, and produces only wellfounded models, and
 \item\label{item:stack_wfd_direct_limit} if $\left<\Uu_i\right>_{i<\om}$
 is a stack of length $\om$ all of whose proper segments are via $\Sigma^{<\om}$,
 then the direct limit of the stack is wellfounded.
 \end{enumerate}
\end{lem}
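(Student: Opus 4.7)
For part (1), I would proceed by induction on the length $n$ of the stack. The base case $n=0$ is just Lemma \ref{lem:Sigma_Pg,N_for_generic_N_is_it_strat}. For the inductive step, suppose $(\Uu_0,\ldots,\Uu_n)$ is via $\Sigma^{<\om}_{\Pg N}$, and either $b^{(\Uu_0,\ldots,\Uu_n)}$ drops (in which case $\Sigma^{<\om}_{\Pg N}$ halts, and there is nothing to show for the next round) or it does not drop. In the latter case, by induction and by an iterated application of the ``moreover'' clause of Lemma \ref{lem:Sigma_Pg,N_for_generic_N_is_it_strat}, the intermediate model $M_n = M^{(\Uu_0,\ldots,\Uu_n)}_\infty$ is itself a relevant generic $\Sigma_{\Pg}$-iterate, witnessed by some relevant generic $\Sigma_{\Pg}$-tree of length $\om_1^V+1$. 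Hence $\Sigma_{\Pg M_n}$ is defined, total on its domain, and produces wellfounded models, so $\Sigma^{<\om}_{\Pg N}((\Uu_0,\ldots,\Uu_n)\conc\Uu_{n+1})$ is defined whenever $\Uu_{n+1}$ is a limit length tree via $\Sigma_{\Pg M_n}$, and $M^{(\Uu_0,\ldots,\Uu_{n+1})}_\infty$ is wellfounded.

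For part (2), suppose for contradiction that $\left<\Uu_i\right>_{i<\om}$ is a stack all of whose proper finite segments are via $\Sigma^{<\om}_{\Pg N}$ and the direct limit $M_\infty$ of the resulting system $\left<M_i, j_{i,i+1}\right>_{i<\om}$ is illfounded. Fix in $V[G]$ a sequence $\left<a_i\right>_{i<\om}$ witnessing this, with $a_i \in M_i$ and $j_{i,i+1}(a_{i+1}) \in_{M_{i+1}} a_i$ for each $i$. The plan is to reduce this to the stacks-iterability of $\Pg$ itself. By Lemma \ref{lem:tame_projecting_stacks}, $\Pg$ has an above-$\xg$, $(d,\om_1,\om_1+1)^*$-strategy $\Sigma_{\Pg}^{\stk}$ with full normalization, in $V$. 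Pass to a further generic extension $V[G][H]$ in which $\om_1^V$ is countable. In $V[G][H]$, each $\Uu_i$ and each witnessing tree $\Tt_i$ for $M_i$ becomes countable. Applying full normalization (as in \cite{fullnorm}) iteratively, the stack $\left<\Uu_i\right>_{i<\om}$ corresponds to a single $d$-maximal tree $\Xx$ on $\Pg$, with commuting tree embeddings, such that the direct limit along the main branch of $\Xx$ contains a copy of the descending sequence $\left<a_i\right>_{i<\om}$. Moreover, by the correspondence given by the base-case construction of $\Sigma_{\Pg N_i}$ via normalization, $\Xx$ is a countable $d$-maximal tree via $\Sigma_{\Pg}^{\stk}$ (or equivalently a countable-length stack via $\Sigma_{\Pg}^{\stk}$), once we use that $\Sigma_{\Pg}^{\stk}$ extends canonically to $V[G][H]$; this extension is immediate because $\Sigma_{\Pg}^{\stk}\rest\HC$ is projective in $\Sigma_{\Pg}\rest\HC$ by \cite{iter_for_stacks}, and projective relations on the reals are absolute to generic extensions. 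But $\Sigma_{\Pg}^{\stk}$ produces wellfounded direct limits, contradicting the illfoundedness of the copy of $\left<a_i\right>_{i<\om}$.

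The main obstacle is the bookkeeping needed to verify that the stack $\left<\Uu_i\right>_{i<\om}$ corresponds, via full normalization applied one tree at a time, to a legitimate tree/stack on $\Pg$ itself via $\Sigma_{\Pg}^{\stk}$. Once one observes that $\Sigma_{\Pg N_i}$ is, by its very definition in Definition \ref{dfn:Sigma_Pg,N_for_generic_N}, the pullback of $\Sigma_{\Pg}$ under the normalization map from the witnessing relevant generic $\Sigma_{\Pg}$-tree producing $N_i$, this bookkeeping is straightforward, but it requires care that the normalization of each $\Uu_i$ into a tail on $\Pg$ respects the $\omega_1^V$-length tails of the generic $\Sigma_{\Pg}$-trees (so that the stack does not ``overshoot'' the domain of $\Sigma_{\Pg}^{\stk}$); this is why we first collapse $\om_1^V$ to be countable, so that every tree in sight has countable length and the $(d,\om_1,\om_1+1)^*$-strategy $\Sigma_{\Pg}^{\stk}$ handles it directly.
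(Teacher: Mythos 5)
Part (1) is fine: the induction on the length of the stack, using the ``moreover'' clause of Lemma \ref{lem:Sigma_Pg,N_for_generic_N_is_it_strat} at each step, is exactly the intended argument.

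For part (2), your route differs from the paper's. The paper treats part (2) as a slight embellishment on the proof of Lemma \ref{lem:generic_iterates_wfd}, i.e.\ the same absoluteness-between-$V$-and-$V[G]$ argument used to show that a single $\om_1^V$-length relevant generic tree has a wellfounded last model, run across the countably many rounds of the stack. You instead collapse $\om_1^V$, normalize the infinite stack to a single tree $\Xx$ on $\Pg$, and invoke the stacks strategy $\Sigma^{\stk}_{\Pg}$.

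There is a real concern with your justification of the key step. You have not removed the need for the absoluteness argument; you have relocated it to ``extending $\Sigma^{\stk}_{\Pg}$ to $V[G][H]$'', and you justify that by saying that $\Sigma^{\stk}_{\Pg}\rest\HC$ is projective in $\Sigma_{\Pg}\rest\HC$ and projective relations are absolute. But the bottleneck is extending $\Sigma_{\Pg}\rest\HC$ itself to the new trees appearing in $V[G][H]$, and that is not Shoenfield: for a tame projecting mouse the normal strategy is $\Gammag$-guided, so the needed absoluteness is precisely the determinacy-based fact the paper is invoking directly. As written this step is glossed over, and tightening it would essentially reproduce the paper's argument, making the normalization detour unnecessary.

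Two further glosses. First, \cite{fullnorm} treats finite stacks; normalizing an $\om$-stack requires the coherence of normalization across the rounds (that the finite normalizations cohere with commuting tree embeddings, and that their limit is a normal tree whose branch direct limit receives a copy of the stack's direct limit), which you assert without argument. Second, your description of $\Sigma_{\Pg N_i}$ as ``the pullback of $\Sigma_{\Pg}$ under the normalization map'' does not quite match Definition \ref{dfn:Sigma_Pg,N_for_generic_N}: that definition separates the $\lambda^N$-bounded case (which does run through normalization) from the $\lambda^N$-unbounded case (unique cofinal branch), and the normalization picture accounts only for the former.
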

\begin{proof}
Part \ref{item:stack_wfd_direct_limit}
is a slight embellishment on the proof of Lemma \ref{lem:generic_iterates_wfd}.
\end{proof}

\begin{lem}
Let $N,Q$ be relevant
generic $\Sigma_{\Pg}$-iterates,
with $\lambda^N=\omega_1^V=\lambda^Q$.
Then there is a successful comparison
$(\Tt,\Uu)$ of $(N,Q)$
via $(\Sigma_{\Pg N},\Sigma_{\Pg Q})$
\tu{(}hence $\Tt,\Uu$ are $d$-maximal
where $d=\deg(\Pg)$\tu{)},
$b^\Tt,b^\Uu$ do not drop,
$M^\Tt_\infty=M^\Uu_\infty$,
and $\lambda^{M^\Tt_\infty}=\omega_1$,
so $M^\Tt_\infty$ is also a relevant generic $\Sigma_{\Pg}$-iterate.
\end{lem}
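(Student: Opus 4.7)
The plan is to work in $V[G]$ and carry out a least-disagreement comparison of $N$ and $Q$ guided by the strategies $\Sigma_{\Pg N}$ and $\Sigma_{\Pg Q}$. First I would record that, since $N$ and $Q$ arise from relevant generic $\Sigma_{\Pg}$-trees of length $\omega_1^V+1$ whose main branches do not drop, Lemma \ref{lem:MTR_preservation} and Lemma \ref{item:iterate_pres_mtr-suitability_stability} give that $N$ and $Q$ are both mtr-suitable of degree $d=\deg(\Pg)$ and $\Gammag$-stable. In particular, $\rho_{d+1}\leq\delta_0$ and $p_{d+1}\cap\lambda=\emptyset$ in both, each strong cutpoint below $\lambda$ is a $\delta$-mtr, and $\Gammag$-fullness holds at each Woodin. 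By Lemma \ref{lem:tame_projecting_stacks}, rearranging $N$ and $Q$ as $\delta_n$-projecting premice, for each $n$ the above-$\delta_n^N$ part of $\Sigma_{\Pg N}$ is the unique above-$\delta_n^N$, $(d,\omega_1+1)$-strategy for $N$, and similarly for $Q$.

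I would then build $(\Tt,\Uu)$ as the concatenation $\Tt=\Tt_0\conc\Tt_1\conc\cdots$ and $\Uu=\Uu_0\conc\Uu_1\conc\cdots$, where $(\Tt_n,\Uu_n)$ is the level-$n$ comparison: a simultaneous $d$-maximal comparison based on the interval $[\delta_{n-1},\delta_n)$ (setting $\delta_{-1}=\rank(\xg)$). At level $n$ the two sides are (after the previous levels have aligned them up to $\delta_{n-1}$) a pair of $\delta_n$-mGWs of degree $d$ over a common base, iterated above the shared cutpoint $\delta_{n-1}$. Q-structures at limit stages are available inside $\Lp_{\Gammag}$ by $\Gammag$-fullness and are recovered by P-construction (as in Definition \ref{dfn:P-con}), so the two natural strategies agree at every limit, and the comparison terminates successfully in countably many steps with no drops on either main branch (here the hull witnessing the mGW property prevents the dropping argument in Lemma \ref{item:iterate_pres_mtr-suitability_stability}\ref{item:stability_dropping} from applying to either side). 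Thus each level produces a common iterate $P_n$ whose new Woodin $\delta_n^{P_n}$ is still a countable ordinal of $V$.

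Next I would observe that $\Tt$ and $\Uu$ are $d$-maximal trees of length $\leq\omega_1^V$ via $\Sigma_{\Pg N}$ and $\Sigma_{\Pg Q}$ respectively (the level-wise branch choices agree with those strategies by the uniqueness clause above, which is where the verification must be done carefully); by Lemma \ref{lem:Sigma_Pg,N_for_generic_N_is_it_strat} all models along the way are wellfounded, so the direct limit models exist and agree, yielding $M^\Tt_\infty=M^\Uu_\infty=:P$. Since neither branch drops at any level, $b^\Tt$ and $b^\Uu$ do not drop in model or degree, and by the same lemma $P$ is itself a relevant generic $\Sigma_{\Pg}$-iterate. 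Finally, $\lambda^P=\sup_n\delta_n^P$, which is countable in $V$ stage-by-stage but equals $\omega_1^V$ in the limit, since $i^\Tt$ is monotone and $\lambda^N=\omega_1^V$; hence $\lambda^P=\omega_1^V$.

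The main obstacle is Step two: showing that the canonical branch chosen at each limit of a level-$n$ comparison is the one selected by $\Sigma_{\Pg N}$ (respectively $\Sigma_{\Pg Q}$), so that the pieces truly concatenate into trees via those strategies. This is handled by the observation that, above $\delta_{n-1}$, the mtr-suitable structure of the current iterate pins down $\Gammag$-guided Q-structures via P-construction and hence a unique above-$\delta_{n-1}$, $(d,\omega_1+1)$-strategy — the same one delivered by $\Sigma_{\Pg N}$ through (full) normalization from $\Sigma_{\Pg}$. Once this alignment is checked, the comparison at each level is the standard one used throughout \S\ref{sec:start_of_gap}, and the remaining verifications are routine.
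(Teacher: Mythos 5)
Your overall scheme — compare level-by-level on the Woodin intervals $[\delta_{n-1},\delta_n)$, so that the coiteration never touches the full generic tree at once — is exactly the paper's approach, and much of the surrounding machinery (Q-structures via $\Lp_{\Gammag}$ and P-construction, uniqueness of the above-$\delta_{n-1}$ strategies) is correct even if more elaborate than what is strictly needed.

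The step you assert without argument, though, is the one the paper singles out as the main point. You write that each level comparison "terminates successfully in countably many steps" and then conclude that $\delta_n^{P_n}$ "is still a countable ordinal of $V$." Countable in which model? You are working in $V[G]$, and if $\omega_1^V$ has been collapsed there, "countably many steps" computed in $V[G]$ does not by itself give a bound below $\omega_1^V$. What saves the day is that $N|\delta_n^N$ and $Q|\delta_n^Q$ both lie in $V$ (the defining clauses of a relevant generic $\Sigma_{\Pg}$-tree guarantee that all proper initial segments of the generic tree, and hence all $\lambda^N$-bounded restrictions, are $V$-objects), and the branch choices of $\Sigma_{\Pg N}$, $\Sigma_{\Pg Q}$ on $\lambda^N$-bounded trees based on $N|\delta_i^N$ are by definition taken from $V$ as well. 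So the level-$n$ coiteration is a $V$-comparison of two $V$-mice of $V$-cardinality $<\omega_1^V$, whence by the usual cardinality bound it has length $<\omega_1^V$. This is where the conclusion $\delta_n^{P_n}<\omega_1^V$, and hence $\lambda^{M^\Tt_\infty}=\omega_1^V$ in the limit, actually comes from; it needs to be said. (Alternatively, one can run the counting argument in $V[G]$, observing that $(\kappa^+)^{V[G]}\le(\kappa^+)^V\le\omega_1^V$ for $\kappa=|\delta_n^N|^V<\omega_1^V$, but one way or another the bound must be tied to $V$-cardinality.)

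One minor imprecision: you invoke Lemma \ref{lem:MTR_preservation} and Lemma \ref{item:iterate_pres_mtr-suitability_stability} to say that $N$ and $Q$ are "mtr-suitable" — but those lemmas concern countable iterates via $\Sigma_{\Pg}$ (trees of length $<\omega_1+1$), and $N,Q$ here are uncountable in $V$ and arise from trees of length $\omega_1^V+1$ that are not themselves via $\Sigma_{\Pg}$. What one should say is that the countable restrictions $N|\delta_n^{+N}$ and $Q|\delta_n^{+Q}$ (which are last models of proper initial segments $\Tt\rest\alpha\in V$ of the generic trees, hence genuine $\Sigma_{\Pg}$-iterates) inherit the mtr-suitability and fullness properties; that is all the level-wise comparison needs.
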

\begin{proof}
This is straightforward;
the main point
is that for each $n<\om$
the comparison of $N|\delta_n^N$
and $Q|\delta_n^Q$ can be done in $V$,
and hence has only countable length.
\end{proof}

  \begin{dfn}\label{dfn:basic_CC-forcing}
   Let $N$ be an $\om$-small premouse
   with $\om$ Woodins. 
  We write $\CC^N$ for the finite support product $\prod_{n<\om}\Coll(\om,\delta_n^N)$.
  (This is isomorphic to $\Coll(\om,\lambda)$, but we will often want to consider factoring the forcing with initial segments,
  and this is convenient notation for this.)
   Given $m<\om$, we write
  $\CC_{\delta_m^N}=\prod_{n\leq m}\Coll(\om,\delta_n^N)$
  and $\CC_{<\delta_m^N}=\prod_{n<m}\Coll(\om,\delta_n^N)$, and if $m\leq n<\om$
  then $\CC_{[\delta_m^N,\delta_n^N]}$,
  or $\CC_{[\delta_m^N,\lambda^N)}$, etc,
  are defined in the obvious way.
  We also write $\delta_{-1}^N=0$ and $\CC_0^N$ is the trivial forcing.
  In forcing expressions,
  where we factor $\CC$ in some way,
  for example as $\CC_{\delta_0}\cross\CC_{(\delta_0,\delta_1]}\cross\CC_{(\delta_1,\lambda)}$,
  we write $\CC_{\mathrm{tail}}$
  to refer to the final factor $\CC_{(\delta_1,\lambda)}$.
  Write $\Nm_\lambda^N$ for the set of all $x\in N|\lambda$ such that
$x$ has bounded support;
that is, $x$ is a $\CC_{\delta}$-name for some $\delta\in\Delta^N\cup\{0\}$. (So $x$ is also a $\CC^N$-name.)  Let the \emph{support} $\supp(x)$ of $x$ be the least such $\delta$. (We use the minimal support to define standard names,
such as $\check{y}$ for $y\in N|\lambda$; so $\supp(\check{y})=0$.) Also define the \emph{base} $\base(x)$ of $x$ to be the least $\delta\in\Delta\cup\{0\}$ such that $\supp(x)\leq\delta$ and $x\in N|\delta^{+N}$. For $p\in\CC^N$,
  let the \emph{base} $\base(p)$ of $p$
  be the least $\delta\in\Delta^N\cup\{0\}$
  such that $p\in\CC_\delta^N$ (where $\CC_0^N=\{\emptyset\}$).

Let $\Gtilde$ be the standard name for the generic filter
 $\sub\CC^N$. (Note $\Gtilde\notin\Nm_\lambda^N$;
 it has unbounded support.) Let the following $\CC^N$-names,  either elements of $N$ or classes of $N$,
be the natural choices.  Let $\widetilde{\RR}$ be the name for
\[ \bigcup_{\alpha<\lambda}\RR\inter N[\Gtilde\inter(N|\alpha)]. \]
Let $\widetilde{\HC}$ be the natural variant
with $\HC$ replacing $\RR$, and likewise
for other such notions.

For the names $\tau$ mentioned above, $\tau_G$ is the usual interpretation
of $\tau$ via $G$;
so if $\tau\in\Nm_\lambda$ then $\tau_G\in\widetilde{\HC}_G$.

Let $\delta\in\{0\}\cup\Delta^N$.
Let $G\sub\CC_\delta^N$ be $N$-generic and
$x\in\HC^{N[G]}$. Then $T(x,\delta)^{N[x]}$ denotes the set of all $\Sigma_1$ formulas
of $\Ll_{L(\RR)}$ such that for some
strong cutpoint
$\xi\in[\delta,\delta^{+N})$
of $N$
with $x\in (N|\xi)[G]$,
there is $P\pins N|\delta^{+N}$ such that $P[G]$
is a 
pre-$\varphi(x)$-witness, when considered as a $(P|\xi,G)$-premouse. Note that $T(x,\delta)^{N[x]}$
depends only on $N|\delta^{+N}$
and $x$, and the foregoing
definition is made over $(N|\delta^{+N})[x]$,
uniformly in $N,\delta,x$.

Now let $\Ttilde\sub
\CC^N\cross\Nm_\lambda$ be the following $\CC^N$-name. We put $(p,\tau)\in\Ttilde$ iff, letting $\delta=\max(\base(p),\base(\tau))$, we have
\[ N|\delta^{+N}\sats
p\subforces{\CC_\delta}\text{``}\tau=(\tau_0,\tau_1)\text{ where }\tau_1=T(\tau_0,\delta)^{N[\tau_0]}\text{''}.
\]
Note that $\Ttilde$ is $\rDelta_2^{N|\lambda}$.

Let $\Mtilde_\lambda$ be the natural name
for the structure $(\widetilde{\HC},\widetilde{T})$.
\end{dfn}

The structure $(\widetilde{\mathscr{M}}_\lambda)_G$ has signature
that of $\M_{\omega_1}$,
so the $\Sigma_0^{\widetilde{\mathscr{M}}_\lambda^{{\Pg}}}$-forcing
relation $\forces_{\lambda0}$
of ${\Pg}$
 regards forcing (with $\CC^{{\Pg}}$)
for $\Sigma_0$ formulas
in that language (that is,
with $\in$ and the predicate $\dot{T}$). Likewise for the
higher complexity forcing relations for $\widetilde{\mathscr{M}_\lambda}^{{\Pg}}$.

It will also be convenient to define a local version of $\Mtilde_\lambda$ over premice such as $N'=N|\delta_n^{+N}$ (for $N$ as above):

\begin{dfn}\label{dfn:local_Mtilde_delta}
 Let $N'$ be an $\om$-small premouse
 and $\delta\in\OR^{N'}$ be such that
 $N'\sats\ZF^-+$``$\delta$ is the largest cardinal and is a strong cutpoint, and there are only finitely many Woodin cardinals''.
 Then $\Mtilde_{\delta}^{N'}$
 denotes the natural $N'$-proper class $\Coll(\om,\delta)$-name for the structure $(H',T')$ in the language of $\M_{\om_1}$,
 defined like the $\Mtilde_\lambda$ of Definition \ref{dfn:basic_CC-forcing},
 except that for $(N',\Coll(\om,\delta))$-generics $G$,
 $H'$ is the universe of $N'[G]$,
 and for $x\in N'[G]$,
 $T'(x,t)$ holds iff $t=T(x,\delta)^{N'[x]}$, where $T(x,\delta)^{N'[x]}$ is just as in Definition \ref{dfn:basic_CC-forcing}. 
\end{dfn}

\label{lem:varphi(x)-witness_exists}
 \begin{dfn}\label{dfn:R^N_epsvec}
  Let $N$ be an  $\om$-small premouse
  with $\om$ Woodins. 
  Let 
  $\vec{\vareps}=\{\vareps_0<\ldots<\vareps_{k-1}\}\in[\Delta^N]^{<\om}$.
  Let $\delta_{-1}^N=0$.
  For $i<k$, let $\vareps_i^-=\delta_n^N$ where $\vareps_i=\delta_{n+1}^N$.
  Assuming that $k>0$ and $\vec{\vareps}\neq\{\delta_0^N,\ldots,\delta_{k-1}^N\}$,
  let $i_0$ be the least $i< k$
  such that  $\vareps_i>\delta_i^N$.
  The \emph{$L[\es]$-$\mathscr{P}$-construction
  of $N$ at $\vec{\varepsilon}$}
  is the following construction:
  \begin{enumerate}
   \item\label{item:L_E_con_stage}
   We begin with the $L[\es]$-construction $\left<N_\alpha\right>_{\alpha\leq\vareps_{k-1}}$
   of $N|\vareps_{k-1}$ starting with  first model
   $N_0=N|\vareps_{i_0-1}$ (where $\vareps_{-1}=0$), where background extenders
   are required (amongst the usual requirements) to be $E\in\es^N$ such that
   $\nu(E)$ is an $N$-cardinal
   in $(\vareps_i^-,\vareps_i)$
   for some $i\in[i_0,k)$ (hence $\crit(E),\lh(E)\in(\vareps_i^-,\vareps_i)$ also);
   this produces a final model $N_{\vareps_{k-1}}$ of height $\vareps_{k-1}$.
  \item We then form the P-construction
  $\mathscr{P}^N(N_{\vareps_{k-1}})$
  of $N$ over $N_{\vareps_{k-1}}$,
  producing a final model $R$ of height $\OR^N$.
  \end{enumerate}
  Assuming $R$ above is a well-defined premouse,
   $R$ is the \emph{output}
  or \emph{last model} of the construction.
  Write $R^N_{\vec{\vareps}}=R$.
  
  If instead $k=0$ or $\vec{\vareps}=\{\delta_0^N,\ldots,\delta_{k-1}^N\}$,
  then we define $R_{\vec{\vareps}}^N=N$.
\end{dfn}

\begin{rem}\label{rem:R_construction_basic_props}Continue with the notation introduced
in Definition \ref{dfn:R^N_epsvec}.
Suppose $N={\Pg}$ and let $n=\deg(\Pg)$.
Suppose $k>0$ and $\vec{\vareps}\neq\{\delta_0^N,\ldots,\delta_{k-1}^N\}$.
Then
$R$ is well-defined,
 $\OR^R=\OR^{\Pg}$,
and letting $\ell_0<\om$ be least such that
 $\varepsilon_{k-1}<\delta_{\ell_0}^{\Pg}$, then $R$ is $\om$-small, has $\om$ Woodins and
 \[\Delta^R=\{\varepsilon_i\}_{i<k}\cup\{\delta_\ell^{\Pg}\bigm|\ell\in[\ell_0,\om)\}. \]
 By the fine structure of P-construction, $R$ is $n$-sound
 and $\rho_{n+1}^R<\lambda=\lambda^R\leq\rho_n^R$, $\pvec_{n}^R=\pvec_{n}^{\Pg}$,
 \begin{equation}\label{eqn:R_is_eps_k-1-sound}R\text{ is }\varepsilon_{k-1}\text{-sound}\end{equation}
   and $p_{n+1}^R\cut\varepsilon_{k-1}=p_{n+1}^{\Pg}\cut\varepsilon_{k-1}=p_{n+1}^{\Pg}\cut\lambda$.
 
For each $i<k$,
 $\Pg|\varepsilon_{i}$ is $R$-generic
  for the $\varepsilon_i$-generator extender algebra of $R$ at $\varepsilon_i$,
  and $R|\varepsilon_i$ is definable over $\Pg|\varepsilon_i$.
 Also, $\varepsilon_i^{+R}=\varepsilon_i^{+\Pg}$ and $R|\varepsilon_i^{+R}=\Lp_{\Gammag}(R|\varepsilon_i)$ 
 is the result of the $\mathscr{P}$-construction
 $\mathscr{P}^{\Pg|\varepsilon_i^{+\Pg}}(R|\varepsilon_i)$. (Note these things are trivially so for $i<i_0$.) Therefore
 $(R|\varepsilon_i^{+R})[\Pg|\varepsilon_i]$ has universe that of $\Pg|\varepsilon_i^{+\Pg}=\Lp_{\Gammag}(\Pg|\varepsilon_i)$. It follows that there are $\CC_{\varepsilon_i}$-generics $g_i,h_i$
 over $R|\varepsilon_i^{+R}$, $\Pg|\varepsilon_i^{+\Pg}$ respectively,
 with $g_i=g_{i+1}\rest\CC_{\vareps_i}$ and $h_i=h_{i+1}\rest\CC_{\vareps_i}$ for $i+1<k$, and
 such that the universes of $(R|\varepsilon_i^{+R})[g_i]$
 and $(\Pg|\varepsilon_i^{+\Pg})[h_i]$
are the same
for each $i<k$.

 Also, ($\dagger$)
$R$ is a non-dropping $n$-maximal $\Sigma_{\Pg}$-iterate of $\Pg$, so letting
\[ i_{\Pg R}:\Pg\to R \]
 be the iteration map, we have $i_{\Pg R}(\delta_i)=\varepsilon_i$ for $i<k$,
 and also $i_{\Pg R}(\pvec_{m}^{\Pg})=\pvec_{m}^R$ for $m\leq n+1$, and therefore in fact
 $\pvec_{m}^R=\pvec_{m}^{\Pg}$
 for $m\leq n+1$.
 
Suppose  we replace $\Pg$ throughout with some
relevant generic $\Sigma_{{\Pg}}$-iterate $N$ of $\Pg$ (with $N$ appearing in some set generic extension of $V$). Then the resulting versions of all  these things still hold, excluding line (\ref{eqn:R_is_eps_k-1-sound})
 and ($\dagger$).
Instead of (the modified version of) line (\ref{eqn:R_is_eps_k-1-sound}), we 
have
\[ R\text{ is }\vareps_{k-1}\text{-sound}\iff N\text{ is }\vareps_{k-1}\text{-sound
 }\iff\Tt_{\Pg N}\text{ is based
 on }\Pg|i_{\Pg N}^{-1}(\vareps_{k-1}) \]
 where $\Tt_{\Pg N}$ is the 
 $\Sigma_{{\Pg}}$-tree leading from $\Pg$ to $N$. And instead of the modified version of paragraph $(\dagger)$,
 we have $(\dagger')$:
 $R$ is a  relevant generic  $\Sigma_{\Pg}$-iterate of $\Pg$, so letting
$i_{\Pg R}:\Pg\to R$
 be the iteration map, we have $i_{\Pg R}(\bar{\delta}_i)=\varepsilon_i$ for $i<k$,
 where $i_{\Pg N}(\bar{\delta}_i)=\delta_i$,
 and also $i_{\Pg R}(\pvec_{n+1}^{\Pg})=\pvec_{n+1}^R$, and therefore in fact
 $\pvec_{n+1}^R=\pvec_{n+1}^{N}$.

 We will be more interested in applying these things when $N$ is a generic iterate of $\Pg$, as opposed to $N=\Pg$,
 but we will often consider the case that $N=\Pg$ and transfer facts about that to generic iterates $N$
 using the  elementarity of $i_{\Pg N}$.
\end{rem}
\begin{lem}
 Let $N\in V[G]$ be a generic non-dropping $\Sigma_{\Pg}$-iterate. 
Let $R=R_{\vec{\delta}}^N$ for some $\vec{\delta}\in[\Delta^N]^{<\om}$.
Let $g\in V[G]$ be $(N,\CC^N)$-generic
and $g'\in V[G]$ be $(R,\CC^R)$-generic
with $\HC^{N[g]}=\HC^{R[g']}$.
Then $\M_\lambda^{N[g]}=\M_\lambda^{R[g']}$.
\end{lem}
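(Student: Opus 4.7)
\smallskip
Since $\HC^{N[g]}=\HC^{R[g']}$ by hypothesis, and since the finite-support structure of $\CC^N$ and $\CC^R$ ensures $\widetilde{\HC}^{N[g]}=\HC^{N[g]}$ and similarly for $R[g']$, the universes of $\M^{N[g]}_\lambda$ and $\M^{R[g']}_\lambda$ already agree. Recalling from Definition~\ref{dfn:basic_CC-forcing} that $\M_\lambda$ is the two-sorted structure $(\widetilde{\HC},\widetilde{T})$, what remains is to verify the two predicates $\widetilde{T}^{N[g]}$ and $\widetilde{T}^{R[g']}$ coincide. Unpacking the definition of $\widetilde{T}$, this amounts to showing that for each $x\in\widetilde{\HC}$ and each $\Sigma_1$ formula $\varphi$, a pre-$\varphi(x)$-witness $P\pins N|\delta^{+N}$ exists (for a suitable Woodin $\delta$ past the appearance of $x$) iff a pre-$\varphi(x)$-witness $P'\pins R|\delta^{+R}$ exists.

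First I would observe a stability-in-$\delta$ lemma: if $\delta<\delta'$ are both in $\Delta^N$ and both bound the support of $x$, then $T(x,\delta)^{N[x]}=T(x,\delta')^{N[x]}$, since a pre-witness sitting below $\delta^{+N}$ is \emph{a fortiori} one sitting below $(\delta')^{+N}$, and conversely a pre-witness above $\delta'$ yields one appropriate at level $\delta$ by using a strong cutpoint $\xi\in[\delta',(\delta')^{+N})$. By Remark~\ref{rem:R_construction_basic_props}, $\Delta^R$ agrees with $\Delta^N$ above $\varepsilon_{k-1}$; so I may pick $\ell\geq\ell_0$ large enough that $\delta:=\delta_\ell^N\in\Delta^N\cap\Delta^R$, that $x\in N[g\rest\CC_\delta^N]\cap R[g'\rest\CC_\delta^R]$, and that $\delta>\varepsilon_{k-1}$. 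By stability, the problem reduces to verifying $T(x,\delta)^{N|\delta^{+N}}=T(x,\delta)^{R|\delta^{+R}}$.

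Second I would invoke the P-construction correspondence from Remark~\ref{rem:R_construction_basic_props}: above $\varepsilon_{k-1}$, the extender sequences of $N$ and $R$ coincide up to the natural restriction $\es^R_\xi=\es^N_\xi\rest R||\xi$, and at every Woodin $\delta\geq\varepsilon_{k-1}$ the premice $R|\delta^{+R}$ and $N|\delta^{+N}$ are mutually generic extensions of one another by the small forcing that introduces $N|\varepsilon_{k-1}$ or $R|\varepsilon_{k-1}$ (via the $\varepsilon_{k-1}$-generator extender algebra), with matching universes after adjunction. Strong cutpoints, Woodinness, and forcing relations over segments above $\varepsilon_{k-1}$ are preserved under this correspondence, since the extenders involved are identical on both sides. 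Consequently, given a pre-$\varphi(x)$-witness $P\pins N|\delta^{+N}$ with Woodins $\delta_0<\cdots<\delta_4\in[\xi,\OR^P)$ (so $\delta_0\geq\xi\geq\delta>\varepsilon_{k-1}$), the P-construction translation $P'=\mathscr{P}^{R|\delta^{+R}}(P|\varepsilon_{k-1})$ yields a pre-$\varphi(x)$-witness in $R|\delta^{+R}$, witnessed by the image names $(\vec\delta,\dot S,\dot T)$, because the pre-witness clause — that $\dot S,\dot T$ name $\Coll$-absolutely-complementing trees and that the nested Prikry-style forcing statement holds — is a first-order condition in the premouse language whose truth is preserved by small-forcing equivalence and P-construction. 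The converse direction is symmetric. The main obstacle, which is largely bookkeeping, will be verifying cleanly that this translation of witnessing names respects the forcing assertions at the outer $\Coll(\om,\delta_0)$ level and inner iterated projective game quantifiers; this is standard but requires care that the bounded-support generic at $\varepsilon_{k-1}$ does not interfere. Once this is established, the equality $T(x,\delta)^{N|\delta^{+N}}=T(x,\delta)^{R|\delta^{+R}}$ follows, and hence $\widetilde{T}^{N[g]}=\widetilde{T}^{R[g']}$, completing the proof.
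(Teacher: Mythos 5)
Your reduction to the equality of the $\widetilde{T}$-predicates is correct, but your ``stability-in-$\delta$'' claim is not justified as stated, and the justification you give is in fact wrong. Recall that $\varphi\in T(x,\delta)^{N[x]}$ requires a strong cutpoint $\xi\in[\delta,\delta^{+N})$ of $N$ with $x\in(N|\xi)[G]$ and a $P\pins N|\delta^{+N}$ such that $P[G]$ is a pre-$\varphi(x)$-witness \emph{over $(P|\xi,G)$}. If $\delta<\delta'$ are both in $\Delta^N$, the relevant cutpoint intervals $[\delta,\delta^{+N})$ and $[\delta',(\delta')^{+N})$ are disjoint. A pre-witness $P\pins N|\delta^{+N}$ has all its Woodins below $\delta^{+N}\leq\delta'$, so it is \emph{not} ``a fortiori'' a pre-witness appropriate for level $\delta'$, which would need its Woodins above some $\xi'\geq\delta'$; and your converse direction has the intervals mismatched in the same way. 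The stability you want is true, but the reason is mtr-suitability: $N$ is a relevant generic $\Sigma_{\Pg}$-iterate, so by Lemma~\ref{lem:MTR_preservation} we have $N|\delta^{+N}=\Lp_{\Gammag}(N|\delta)$ for each $\delta\in\Delta^N$, and then by the mouse-witness calculus of \S\ref{subsec:mouse_witnesses} the set $T(x,\delta)^{N[x]}$ simply records $\Sigma_1^{\SS_{\alphag}}$ truth about $x$ and is intrinsically independent of $\delta$ (and of which suitable $N$ one uses).

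Once that invariance is in hand, your second step is unnecessary: by paragraph $(\dagger')$ of Remark~\ref{rem:R_construction_basic_props}, $R=R^N_{\vec\delta}$ is \emph{also} a relevant generic $\Sigma_{\Pg}$-iterate, so $T(x,\cdot)^{R[x]}$ records the same information about $x$, and the $\widetilde{T}$-predicates agree because the reals agree. That observation is the paper's entire proof. Your direct P-construction translation of witnessing names across the $\varepsilon_{k-1}$-generator extender algebra is in principle workable, but the ``bookkeeping'' you flag at the end --- verifying that the translated $(\vec\delta,\dot S,\dot T)$ still certify the nested $\Coll$- and projective-game-quantifier clauses of the pre-witness property --- is precisely the verification that the suitability observation sidesteps, so your route is both incomplete at the stability step and longer than needed overall.
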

\begin{proof}
 We already have that $\M_\lambda^{\Pg[G]}$ and $\M_\lambda^{R[G']}$ have the same universe,
and their $T$-predicates
 agree since $N,R$ are relevant generic $\Sigma_{\Pg}$-iterates.
\end{proof}

\begin{dfn}\label{dfn:mSigma_0_forcing_relation_Mtilde_lambda}
Let $N$ be an $\om$-small premouse
with $\om$ Woodins.
 The \emph{$\mSigma_0^{\Mtilde_{\lambda^N}^N}$ forcing relation $\sforces{\lambda0}{}$} of $N$ is
 the relation
 of pairs $(p,\varphi(\tauvec))$
 with $p\in\CC^N$, $\varphi$
 an $\mSigma_0$ formula of the language for $\M_{\omega_1}$,
 and $\vec{\tau}\in(\Nm_{\lambda^N}^N)^{<\om}$,
 and such that
 \[p\sforces{\lambda0}{}\varphi(\vec{\tau}) \]
 iff, letting $m=\base(\tauvec)$
 and $\vareps=\delta_m^N$ and $\pbar=p\rest\CC_m^N$, then
 \[ N|\vareps^{+N}\sats \pbar\sforces{\CC_m^N}{}\Mtilde_{\vareps}\sats\varphi(\tauvec);
 \]
recall $(\Mtilde_{\vareps})^{N|\vareps^{+N}}$
was introduced in Definition \ref{dfn:local_Mtilde_delta}. 
\end{dfn}

The mtr-suitability
and $\Gammag$-stability of $\Pg$ easily yield the following lemma:

\begin{lem}\label{lem:mSigma_0,lambda_forcing_rel_def}
\label{lem:Sigma_0_lambda_relation_rDelta_2}
Let
$N$ be a relevant generic $\Sigma_{\Pg}$-iterate.
Then:
\begin{enumerate}
\item\label{item:mSigma_0,lambda_forcing_rel_def} $\Big(\sforces{\lambda^N0}{}\Big)^N$ is $\rDelta_2^{N|\lambda^N}$,
uniformly in $N$. \tu{(}In fact,
it is simpler than this,
because we only need to consult $N|\vareps^{+N}$,
where $\vareps=\delta_{\base(\tauvec)}^N$.\tu{)}
 \item The  $\mSigma_0^{\Mtilde_\lambda}$ forcing theorem holds
\tu{(}with respect to $\Big(\sforces{\lambda^N0}{}\Big)^N$\tu{)}.
\item If $N$ is an $\RR$-genericity iterate, as witnessed by $g$, then letting $\lambda=\lambda^N=\om_1^V$, we have
$(\Mtilde_{\lambda}^N)_g=\M_{\omega_1}$.
`\end{enumerate}
\end{lem}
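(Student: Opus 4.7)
The plan is to dispatch the three parts in order, with the real content concentrated in part~(3), which requires converting the purely internal definition of $T(x,\delta)^{N[x]}$ (in terms of pre-$\varphi(x)$-witnesses living as proper segments of $N|\delta^{+N}$) into the external assertion $t=\Th_{\Sigma_1}^{\SS_{\alphag}}(\{x\})$. Parts~(1) and~(2) should be straightforward consequences of Steel's small-forcing lemma (Lemma~\ref{lem:forcing}) and the remark in Definition~\ref{dfn:basic_CC-forcing} that $\widetilde{T}$ is $\rDelta_2^{N|\lambda^N}$.

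For part~(1), I would unpack Definition~\ref{dfn:mSigma_0_forcing_relation_Mtilde_lambda}. Given $(p,\varphi(\vec{\tau}))$ with $m=\base(\vec{\tau})$ and $\vareps=\delta_m^N$, the relation $p\sforces{\lambda^N 0}{}\varphi(\vec{\tau})$ refers only to forcing with the small poset $\CC_m^N\in N|\vareps^{+N}$ over $N|\vareps^{+N}$, where $\vareps$ is a strong cutpoint of $N$. Using Lemma~\ref{lem:forcing}, the ordinary $\Sigma_0$ forcing relation (in the set-theoretic language) for $\CC_m^N$ over $N|\vareps^{+N}$ is $\Delta_1^{N|\vareps^{+N}}$. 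The only atomic complication is the $\dot T$-predicate of $\Mtilde_\vareps^{N|\vareps^{+N}}$, but (by the local version of Definition~\ref{dfn:local_Mtilde_delta}) its forcing relation $p\sforces{\CC_m^N}{}\dot T(\dot x,\dot t)$ unfolds to a statement quantifying over strong cutpoints $\xi\in[\vareps,\vareps^{+N})$ and proper segments $P\pins N|\vareps^{+N}$ asking whether $P[G]$ is a pre-$\varphi(\dot x_G)$-witness, which is exactly the $\rDelta_2^{N|\vareps^{+N}}$ definition of $\widetilde{T}$ noted in Definition~\ref{dfn:basic_CC-forcing}. Combining these, the whole $\mSigma_0^{\Mtilde_\vareps}$ forcing relation is $\rDelta_2^{N|\vareps^{+N}}$, giving $\rDelta_2^{N|\lambda^N}$ uniformly in $N$.

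For part~(2), the $\mSigma_0^{\Mtilde_\lambda}$ forcing theorem reduces, by the same reduction, to the ordinary $\Sigma_0$ forcing theorem of Lemma~\ref{lem:forcing} applied to $\CC_m^N$ over $N|\vareps^{+N}$, once one checks the recursive clauses for $\dot T$ are evaluated correctly — but this is immediate since the interpretation of $\dot T$ in the extension is by definition governed precisely by its forcing relation.

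For part~(3), assume $N$ is an $\RR$-genericity $\Sigma_{\Pg}$-iterate witnessed by $g$, so $\lambda^N=\om_1^V$ and $\RR^V=\bigcup_{\alpha<\om_1^V}\RR\cap N[g\rest\alpha]$. Then $\widetilde{\HC}_g=\bigcup_{\alpha<\om_1^V}\HC^{N[g\rest\alpha]}=\HC^V$ (using Remark~\ref{rem:get_RR-genericity_generic} and standard arguments), so the universes of $(\Mtilde_\lambda^N)_g$ and $\M_{\om_1}$ coincide. It remains to identify the $T$-predicates. Fix $x\in\HC^V$ and let $\delta\in\Delta^N\cup\{0\}$ be least with $x\in(N|\delta^{+N})[g\cap\CC_\delta]$; by construction $(\widetilde{T})_g(x)=T(x,\delta)^{N[x]}$, i.e., the set of $\Sigma_1$ formulas $\varphi$ of $\Ll_{L(\RR)}$ such that some $P\pins N|\delta^{+N}$ yields a pre-$\varphi(x)$-witness when reorganized. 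By Lemma~\ref{item:iterate_pres_mtr-suitability_stability} applied to the non-dropping iterate $N$ of the mtr-suitable $\Gammag$-stable $\Pg$, together with MTR preservation (Lemma~\ref{lem:MTR_preservation}), $N$ is itself mtr-suitable and $\Gammag$-stable; in particular $N$ is $\delta$-exact, so $N|\delta^{+N}=\Lp_{\Gammag}(N|\delta)$. Hence the pre-$\varphi(x)$-witnesses $P\pins N|\delta^{+N}$ are exactly the $\Gammag$-iterable pre-$\varphi(x)$-witnesses above (the relevant code of) $x$, and by Fact~\ref{fact:mouse_witnesses} (applied at $\alpha=\alphag$, which is admissible and starts an S-gap, hence not of the excluded form $\gamma+\om$ with $\gamma$ ending a strong S-gap), this is equivalent to $\SS_{\alphag}\sats\varphi(x)$, i.e.\ $\varphi\in\Th_{\Sigma_1}^{\SS_{\alphag}}(\{x\})$. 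Thus $(\widetilde{T})_g=T$, completing part~(3).

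The main obstacle is part~(3): one must match the \emph{internal} collection of pre-$\varphi(x)$-witnesses sitting as proper segments of $N|\delta^{+N}$ with the \emph{external} collection of pre-$\varphi(x)$-witnesses iterable in $\SS_{\alphag}$. This matching is precisely the content of $\delta$-exactness combined with $\Gammag$-stability of $N$, both of which are preserved under non-dropping $\Sigma_{\Pg}$-iteration by MTR preservation; consequently the whole argument reduces to Fact~\ref{fact:mouse_witnesses}.
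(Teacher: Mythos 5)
The paper gives no proof of this lemma beyond the one-line remark that mtr-suitability and $\Gammag$-stability of $\Pg$ "easily yield" it, and your argument faithfully unpacks exactly that hint: parts (1)--(2) are reduced to Steel's small-forcing lemma and the $\rDelta_2$-definability of $\widetilde T$, while part (3) is reduced, via MTR preservation, to $\delta$-exactness of $N$ together with the mouse-witness equivalence (Fact~\ref{fact:mouse_witnesses}). This is the intended route; the one step you pass over rather quickly is the translation between $(P|\xi,G)$-premice iterable in $\SS_{\alphag}$ and segments of $N|\delta^{+N}$ (an $\xg$-premouse) via the forcing/S-construction, but that is a routine application of Lemma~\ref{lem:forcing} together with the fact that $\Lp_{\Gammag}(N|\xi)=N|\delta^{+N}$ for every strong cutpoint $\xi\in[\delta,\delta^{+N})$.
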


\begin{dfn}\label{dfn:strong_mSigma_1_forcing}
Let $N,\lambda$ be as above. 
 The \emph{strong $\mSigma_1^{\Mtilde_\lambda}$
 forcing relation} $\forces_{\lambda1}^{\mathrm{s}}$ (\emph{of $N$}) is the relation over $N|\lambda$
 where for $p\in\CC$ and $\mSigma_1$ formulas $\varphi$ (in the language for $\M_{\om_1}$)
 and $\vec{\tau}\in(\Nm_\lambda)^{<\om}$,
 letting $\varphi$ be ``$\exists\vec{x}\psi(\vec{\tau},\vec{x})$'' where $\psi$ is $\mSigma_0$, we have
 \[ p\sforces{\lambda1}{\mathrm{s}}\varphi(\vec{\tau})\iff
 \exists\vec{\sigma}\in\Nm_\lambda\ \Big[p\sforces{\lambda0}{}\psi(\vec{\tau},\vec{\sigma})\Big].\qedhere\]
\end{dfn}

\begin{lem}\label{lem:strong_Sigma_1_forcing_definability_and_+1_Woodin_locality}
Let
$N$ be a relevant generic  $\Sigma_{\Pg}$-iterate and $\lambda=\lambda^{N}$.
Then:
\begin{enumerate}
 \item \label{item:strong_mSigma_1^Mtilde_lambda_is_rSigma_2_def}
The strong $\mSigma_1^{\Mtilde_\lambda}$ forcing
relation of $N$ is $\rSigma_2^{N|\lambda}$, uniformly in $N$.
\item\label{item:strong_mSigma_1^Mtilde_lambda_forcing_theorem} The strong $\mSigma_1^{\Mtilde^N_\lambda}$ forcing theorem holds.
\item \label{item:strong_mSigma_1^Mtilde_lambda_witnessed_locally}
If $\tauvec\in(\Nm_\lambda^N)^{<\om}$, $m=\base(\tauvec)$,  
 $\varphi(\yvec)$ is ``$\exists\xvec\psi(\yvec,\xvec)$'' where $\psi$ is $\mSigma_0$,
$p\in\CC^N$ and $\pbar=p\rest\CC_m^N$,
then
\[ p\sforces{\lambda1}{\mathrm{s}}\varphi(\tauvec)\iff\exists\sigmavec\in(N|\delta_{m+1}^{+N})^{\CC_{m+1}^N}
\ \Big[
\pbar\sforces{\lambda0}{}\psi(\tauvec,\sigmavec)\Big].\]
\end{enumerate}
\end{lem}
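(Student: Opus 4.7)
The three items factor cleanly. The plan for part (1) is to observe that the definition of $\sforces{\lambda1}{\mathrm{s}}$ expresses it as an existential quantifier over $\sigmavec \in \Nm_\lambda^N$ applied to $\sforces{\lambda0}{}$; since $\Nm_\lambda^N \subseteq N|\lambda$ is $\Sigma_0$-definable over $N|\lambda$ and $\sforces{\lambda0}{}$ is $\rDelta_2^{N|\lambda}$ uniformly by Lemma \ref{lem:mSigma_0,lambda_forcing_rel_def}(1), the resulting relation is $\rSigma_2^{N|\lambda}$ uniformly in $N$, as required. For part (2), the forward direction will follow from the $\mSigma_0$-forcing theorem (Lemma \ref{lem:mSigma_0,lambda_forcing_rel_def}(2)) applied to the witnessing $\psi(\tauvec,\sigmavec)$; the converse will use that any $\vec x \in \HC^{N[G]}$ lies already in $N[G\rest\CC_\delta^N]$ for some $\delta\in\Delta^N$ and hence has a bounded-support $\Nm_\lambda^N$-name, to which one again applies Lemma \ref{lem:mSigma_0,lambda_forcing_rel_def}(2).

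The real work is in part (3). The direction $\Leftarrow$ will be immediate since $p$ extends $\pbar$. For $\Rightarrow$, I am given $\sigmavec^{*} \in \Nm_\lambda^N$ with $m^{*} := \max(m,\base(\sigmavec^{*})) > m+1$ and $p \sforces{\lambda0}{} \psi(\tauvec,\sigmavec^{*})$, and I plan an inductive reduction: at each step, starting from a witness of base $m'$ (with $m' > m+1$) forced by some $q \leq p$ with $q\rest\CC_m^N = \pbar$, I aim to produce a witness of base $m'-1$ forced by $q\rest\CC_{m'-1}^N$. After $m^{*} - m - 1$ steps this will yield a witness of base $m+1$ forced by $\pbar$, as required. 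The reduction step exploits two structural features of $N|\vareps_{m'}^{+N}$ (where $\vareps_{m'} = \delta_{m'}^N$): (a) the top factor $\CC_{(\delta_{m'-1}^N,\delta_{m'}^N]} \iso \Coll(\om,\delta_{m'}^N)$ is almost-homogeneous over $N|\vareps_{m'}^{+N}$, so the $\mSigma_0$-assertion $\exists \vec x\,\psi(\tauvec,\vec x)$ (with $\tauvec$ a $\CC_m^N$-name) has forcing value decided independently of conditions in this top factor; and (b) $\delta_{m'-1}^N$ is Woodin in $N|\vareps_{m'}^{+N}$, because $N$ is mtr-suitable (Definition \ref{dfn:mtr-suitable}), so its extender algebra $\BB_{\delta_{m'-1}^N}^{N|\vareps_{m'}^{+N}}$ absorbs into $\Coll(\om,\delta_{m'-1}^N) = \CC_{(\delta_{m'-2}^N,\delta_{m'-1}^N]}$. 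The standard Woodin-absorption argument combining (a) and (b) will then turn a $\CC_{m'}$-name witness into a $\CC_{m'-1}$-name witness.

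The hard part will be ensuring this absorption respects the $\dot T$-predicate, since $\psi$ is $\mSigma_0$ in the language with $\dot T$. When we replace a witness $\vec x$ (interpreted against $\Mtilde_{\vareps_{m'}}$, with $T$ computed from $N|\vareps_{m'}^{+N}$) by $\vec x'$ at the lower level (interpreted against $\Mtilde_{\vareps_{m'-1}}$, with $T$ computed from $N|\vareps_{m'-1}^{+N}$), the truth value of $\psi(\tauvec,\vec x')$ must not change. I expect the required absoluteness to follow from the $\Gammag$-fullness built into mtr-suitability: $N|\vareps_{m'-1}^{+N} = \Lp_{\Gammag}(N|\delta_{m'-1}^N)$, so for any hereditarily countable $\vec x' \in N[G\rest\CC_{m'-1}^N]$ the full $\Sigma_1^{\SS_{\alphag}}$-theory of $\vec x'$ is already witnessed by mice inside $N|\vareps_{m'-1}^{+N}$ (using Fact \ref{fact:mouse_witnesses}), so the value $T(\vec x')$ coincides whether computed locally at $\vareps_{m'-1}$ or at $\vareps_{m'}$. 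Once this absoluteness is established and the semantic reduction is in place, a final application of Lemma \ref{lem:forcing} to the set forcing $\CC_{m+1}^N$ over $N|\vareps_{m+1}^{+N}$ will synthesize the desired syntactic $\sigmavec \in (N|\delta_{m+1}^{+N})^{\CC_{m+1}^N}$.
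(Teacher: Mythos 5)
Parts (1) and (2) of your proposal are fine and match the paper: both are quick consequences of Lemma \ref{lem:mSigma_0,lambda_forcing_rel_def}, and the route you sketch (witness extraction via bounded-support names for the converse of (2)) is the natural one.

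Part (3) has a genuine gap. Your inductive step asks to turn a $\CC_{m'}^N$-name witness into a $\CC_{m'-1}^N$-name witness \emph{in the same model $N$}, using ``standard Woodin-absorption.'' But the absorption argument you invoke doesn't do that. The Woodinness of $\delta_{m'-1}^N$ in $N|\vareps_{m'}^{+N}$, together with the fact that $\Coll(\om,\delta_{m'-1}^N)$ absorbs the extender algebra at $\delta_{m'-1}^N$, lets you make a given set in a $\Coll(\om,\delta_{m'}^N)$-extension generic for the extender algebra over an \emph{iterate} of $N$ --- not over $N$ itself. So the output of the absorption step is a witness name over a different model, not the $\CC_{m'-1}^N$-name in $N$ that your induction requires. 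Homogeneity of the top collapse factor tells you the forcing value of the sentence $\exists\vec x\,\psi$ is decided by $\pbar$, but that's a statement about the sentence, not about where its witnesses can be taken to live; there is no free reduction of witness support from homogeneity alone. The issue you flag about the $\dot T$-predicate is real but secondary; the primary missing ingredient is the mechanism for pulling a witness across levels inside $N$.

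The paper supplies exactly that ingredient, and it is not an absorption argument. After reducing to $N=\Pg$ and replacing $p$ by $\pbar$ via homogeneity, one forms $R=R^{\Pg}_{\vec\vareps}$ via the $L[\es]$-$\mathscr{P}$-construction (Definition \ref{dfn:R^N_epsvec} and Remark \ref{rem:R_construction_basic_props}) with $\vec\vareps=(\delta_0^{\Pg},\ldots,\delta_m^{\Pg},\delta_k^{\Pg})$. This gives a non-dropping $\Sigma_{\Pg}$-iterate $R$ whose iteration map $j$ fixes $\Pg|\delta_m^{+\Pg}$ (hence fixes $\pbar,\tauvec$) and sends $\delta_{m+1}^{\Pg}$ to $\delta_k^{\Pg}$, and Remark \ref{rem:R_construction_basic_props} provides the matched generics $(g,h)$ under which $R[g]$ and $\Pg[h]$ have the same universe. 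This matching is what lets the witness at level $k$ over $\Pg$ reappear as a $\CC_{m+1}^R$-name witness over $R$, and the full elementarity of $j\rest(\Pg|\delta_{m+1}^{+\Pg})$ then pulls that back to a $\CC_{m+1}^{\Pg}$-name. Your proposal has no analogue of the iterate $R$ nor of the pull-back step, and without it the inductive reduction does not close.
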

\begin{proof}
 Parts \ref{item:strong_mSigma_1^Mtilde_lambda_is_rSigma_2_def}
 and \ref{item:strong_mSigma_1^Mtilde_lambda_forcing_theorem}
 are immediate corollaries
 of Lemmas
\ref{lem:mSigma_0,lambda_forcing_rel_def} and \ref{lem:Sigma_0_lambda_relation_rDelta_2}.

Part \ref{item:strong_mSigma_1^Mtilde_lambda_witnessed_locally}:
It is easy enough to see that we may assume $N=\Pg$.
For the non-trivial direction,
suppose $p\sforces{\lambda1}{\mathrm{s}}\varphi(\tauvec)$,
so there is some $k\in(m,\om)$
such that
\begin{equation}\label{eqn:exists_sigmavec_in_Pg|delta_k^+} \exists\sigmavec\in(\Pg|\delta_k^{+\Pg})^{\CC^{\Pg}_k}\ \Big[p\sforces{\lambda0}{}\psi(\tauvec,\sigmavec)\Big]. \end{equation}
It is easy enough, using homogeneity of the forcing,
to see that line (\ref{eqn:exists_sigmavec_in_Pg|delta_k^+}) still holds after replacing $p$ with $\pbar$ (with the same $k$, but maybe changing the witness $\sigmavec$).
So suppose $k>m+1$. Let $\vec{\vareps}=(\delta_0^N,\ldots,\delta_m^{\Pg},\delta_k^{\Pg})$
and $R=R^{\Pg}_{\vec{\vareps}}$
and $j:\Pg\to R$ be the iteration map. Then $j\rest\delta_m^{+\Pg}=\id$,
so $j(\pbar,\tauvec)=(\pbar,\tauvec)$, but $j(\delta_{m+1}^{\Pg})=\delta_{m+1}^R=\delta_k^{\Pg}$,
and 
\[ \ell:\Pg|\delta_{m+1}^{+\Pg}\to R|\delta_{m+1}^{+R} \]
is fully elementary,
where $\ell=j\rest(\Pg|\delta_{m+1}^{+\Pg})$.
But for every $q\in\CC_{m+1}^R$
with $q\leq\pbar$,
there is $(g,h)$ such that $g$ is $(R,\CC_{m+1}^R)$-generic
with $q\in g$,
 $h$ is $(\Pg,\CC_k^{\Pg})$-generic, and such that $R[g]$ and $\Pg[h]$ have the same universe.
Since $R$ is also mtr-suitable,
it follows that 
\[ R\sats\exists\sigmavec\in (R|\delta_{m+1}^{+})^{\CC_{m+1}^R}\ \Big[\pbar\sforces{\lambda 0}{}\psi(\tauvec,\sigmavec)\Big] \]
(using that
line (\ref{eqn:exists_sigmavec_in_Pg|delta_k^+})
holds with $\pbar$ replacing $p$), and since this statement is in fact elementary over $R|\delta_{m+1}^{+R}$, therefore
it pulls back under $j$ to give
\[ \Pg\sats\exists\sigmavec\in(\Pg|\delta_{m+1}^{+})^{\CC_{m+1}^{\Pg}}\ \Big[\pbar\sforces{\lambda 0}{}\psi(\tauvec,\sigmavec)\Big], \]
as desired.
\end{proof}

\begin{dfn}
 Let $N$ be a premouse
 and $\delta_0<\delta_1<\ldots<\delta_{2k-1}<\OR^N$, with each $\delta_i$
a Woodin cardinal in $N$.
 Let $\vec{\delta}=\{\delta_0,\ldots,\delta_{2k-1}\}$.
 Let $\varphi$ be a formula and $x\in N$.
 We write
 \[  \all^{\mathrm{gen}}_{\vec{\delta}}t\ \varphi(t,x) \]
 for the formula  ``$\CC_{\delta_0}$ forces that for every (Turing) degree $s_0$, $\CC_{\delta_1}$
 forces that there is degree $t_0$
 such that $\ldots$ $\CC_{\delta_{2k-2}}$
 forces that for every degree $s_{k-1}$,
 $\CC_{2k-1}$ forces that there is a degree $t_{k-1}$
 such that $\big[(\all i<k\ (s_i\leq_T t_i))$
 and $\varphi(t,x)$, where $t=(t_0,\ldots,t_{k-1})\big]$''.
 
 Similarly write 
 \[ \exists^{\mathrm{gen}}_{\vec{\delta}}t\ \varphi(t,x) \]
 for the natural dual formula; that is,
 ``$\CC_{\delta_0}$ forces the there is a degree $s_0$ such that $\CC_{\delta_1}$ forces that for all degrees $t_0$  \ldots (through $s_{k-1},t_{k-1}$) such that
  $\big[\text{if }(\all i<k\ (s_i\leq_T t_i))$ then $\varphi(t,x)\big]$''.

 We also need the following variant.
 Let $\delta<\OR^N$.
 We write
 \[ \all^{\mathrm{gen}}_{\geq\delta;k} t\ \varphi(t,x) \]
 for the formula ``For all Woodin cardinals
 $\delta_0\geq\delta$,
 $\CC_{\delta_0}$ forces that for every degree $s_0$, there is a Woodin cardinal $\delta_1>\delta_0$ such that $\CC_{\delta_1}$ forces that there is a degree $t_0$
 such that \ldots for all Woodin cardinals
 $\delta_{2k-2}>\delta_{2k-3}$,
 $\CC_{\delta_{2k-2}}$
 forces that for every degree $s_{k-1}$, there is a Woodin cardinal $\delta_{2k-1}$ such that $\CC_{\delta_{2k-1}}$ forces that
 there is a degree $t_{k-1}$ such that
 $\big[(\all i<k\ (s_i\leq_T t_i))$
 and $\varphi(t,x)\big]$''.
 
 And finally define
 \[ \exists^{\mathrm{gen}}_{\geq\delta;k}\ \varphi(t,x) \]
 by analogy with the preceding definitions.
\end{dfn}

The following lemma and its later variants are useful in showing
that our method of locally defining the Martin measure
$\mu$ over segments of  $N[G]$ for  $\RR$-genericity iterates $N$ of $\Pg$ works correctly.
Its proof elaborates on the method used for Lemma \ref{lem:strong_Sigma_1_forcing_definability_and_+1_Woodin_locality}.

\begin{lem}\label{lem:gen-all_independence}
Let $N$ be a relevant generic $
\Sigma_{\Pg}$-iterate  and $\lambda=\lambda^N$.
Let $\varphi$ be $\mSigma_{1}$
and $x\in\Nm_\lambda^N$.
Let $d=\base(x)$.
Then for all $k<\om$, all $\vec{\delta},\vec{\varepsilon}\in[\{\delta_i^{N}\bigm|i\in[d+1,\om)\}]^{2k}$
and all $\theta<\lambda$, $N|\lambda$ satisfies that $\CC_d$ forces that the following three statements are equivalent:
\begin{enumerate}[label=\tu{(}\roman*\tu{)}]
\item\label{item:deltavec} $\all^{\mathrm{gen}}_{\vec{\delta}}s\ \forces_{\CC_{\mathrm{tail}}}\Mtilde_\lambda\sats\varphi(x,s)$,
\item\label{item:epsvec}
$\all^{\mathrm{gen}}_{\vec{\vareps}}s\ \forces_{\CC_{\mathrm{tail}}}\Mtilde_\lambda\sats\varphi(x,s)$,
\item\label{item:above_theta}
$\all^{\mathrm{gen}}_{\geq\theta;k}s\ \forces_{\CC_{\mathrm{tail}}}\Mtilde_\lambda\sats\varphi(x,s)$.\qedhere
\end{enumerate}
\end{lem}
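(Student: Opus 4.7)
The plan is to reduce the lemma to a single intrinsic statement about a ``canonical'' Woodin sequence, so that both (i) and (ii) become instances thereof; (iii) will then follow essentially from (i) $\iff$ (ii) applied uniformly to the choices made in (iii). First I would reduce to $N = \Pg$: for a relevant generic $\Sigma_{\Pg}$-iterate $N$, the iteration map $i_{\Pg N} : \Pg \to N$ is elementary and carries the data of the lemma (the parameter $x$, the Woodin subsets $\vec{\delta},\vec{\varepsilon}$, and the threshold $\theta$) from preimages in $\Pg$, so it suffices to prove the statement over $\Pg|\lambda^{\Pg}$.

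For (i) $\iff$ (ii) the central tool will be the $L[\es]$-P-construction of Definition \ref{dfn:R^N_epsvec}. I would form $R_\delta = R^{\Pg}_{\vec{\alpha}_\delta}$ with $\vec{\alpha}_\delta = (\delta_0^{\Pg},\ldots,\delta_d^{\Pg},\delta_0,\ldots,\delta_{2k-1})$, where $\delta_0<\cdots<\delta_{2k-1}$ enumerate $\vec{\delta}$, and analogously $R_\varepsilon$ using $\vec{\varepsilon}$. By Remark \ref{rem:R_construction_basic_props}, each $R_\cdot$ is a relevant generic $\Sigma_{\Pg}$-iterate; the iteration map $i_{\Pg R_\cdot}$ is the identity on $\Pg|\delta_d^{+\Pg}$ (so $x$ and $\CC_d$ are unchanged, using $\base(x)=d$); and in $R_\cdot$ the $2k$ Woodins immediately above position $d$ are precisely $\vec{\delta}$ (respectively $\vec{\varepsilon}$). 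Most importantly, for any $\CC$-generic $G$ over $\Pg$ there are matched generics $G_\delta, G_\varepsilon$ over $R_\delta, R_\varepsilon$ producing the same set of reals and hence the same $\Mtilde_\lambda$-interpretation, so the truth of ``$\Mtilde_\lambda \sats \varphi(x,s)$'' is invariant among the three models for matched parameters.

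The crux is then to show that statement (i) over $\Pg$ with $\vec{\delta}$ is equivalent to the same statement over $R_\delta$ with its first $2k$ Woodins above $d$ (which are literally $\vec{\delta}$), and symmetrically for (ii) and $R_\varepsilon$. Here Lemma \ref{lem:strong_Sigma_1_forcing_definability_and_+1_Woodin_locality} would be essential: since $\varphi$ is $\mSigma_1$ and its truth in $\Mtilde_\lambda$ is strongly witnessed $\mSigma_0$-locally at one further Woodin, the quantifier step at each $\CC_{\delta_i}$ can be relocated to any later Woodin without altering the truth of the subsequent forced clause. Once each side is reduced to its $R_\cdot$-canonical form, elementarity of $i_{\Pg R_\delta}$ and $i_{\Pg R_\varepsilon}$ applied to the assertion ``the first $2k$ Woodins above $d$ witness $\all^{\mathrm{gen}} s \ldots \Mtilde_\lambda \sats \varphi(x,s)$'' (which by Lemma \ref{lem:mSigma_0,lambda_forcing_rel_def} is $\rDelta_2^{N|\lambda}$ in the relevant parameters) realises both (i) and (ii) as pullbacks of the same statement in $\Pg$, hence as equivalent.

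Finally, (ii) $\iff$ (iii) is largely formal. For (iii) $\Rightarrow$ (ii) one specialises the Woodin choices in (iii) to $\vec{\varepsilon}$. For (ii) $\Rightarrow$ (iii), at each stage of the nested forcings in (iii) the chosen Woodin lies in $\{\delta_i^{\Pg} : i > d\}$ and exceeds $\theta$, so applying (i) $\iff$ (ii) to the resulting sequence $\vec{\delta}'$ yields (i) for $\vec{\delta}'$ from (ii), which is exactly the content of (iii) along that choice of branch. The main obstacle will be the bookkeeping in the crux step: the forcing $\CC_{\delta_i}$ collapses all Woodins of the ambient premouse up to index $\delta_i$, and so literally differs between $\Pg$, $R_\delta$ and $R_\varepsilon$, so one must track precisely how matched generics split through the $\CC_\cdot$-factorisations at each nesting level and verify the locality relocation at each step.
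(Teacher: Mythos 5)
Your treatment of (i) $\iff$ (ii) is essentially the paper's argument: reduce to $N=\Pg$, invoke the $L[\es]$-$\mathscr{P}$-construction to build a pseudo-iterate whose consecutive Woodins above position $d$ are the given tuple, use elementarity of $i_{\Pg R_\cdot}$ together with the matched-generics observation of Remark~\ref{rem:R_construction_basic_props}, and appeal to Lemma~\ref{lem:strong_Sigma_1_forcing_definability_and_+1_Woodin_locality} so that the relevant forcing statement is $\rSigma_2^{N|\lambda}$ and hence moved correctly by the iteration maps. Your version uses two $R$-constructions ($R_\delta$ and $R_\varepsilon$) where the paper normalises $\vec\delta$ to the initial $2k$ Woodins by an implicit first application and then builds only $R=R^{\Pg}_{\vec\varepsilon}$; this is merely different bookkeeping of the same idea, and is fine.

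The gap is in your handling of clause (iii). You declare (ii) $\iff$ (iii) to be ``largely formal'', and for (iii) $\Rightarrow$ (ii) you say to ``specialise the Woodin choices in (iii) to $\vec\varepsilon$''. But the odd-indexed Woodins $\delta_1,\delta_3,\ldots$ in the $\all^{\mathrm{gen}}_{\geq\theta;k}$ quantifier are existentially quantified, i.e.\ chosen by the witnessing player, so you cannot specialise them; the witnessing $t_i$ is only guaranteed to live in the $\CC_{\delta_{2i+1}}$-extension for whatever $\delta_{2i+1}$ the strategy selects, which need not embed into the $\CC_{\varepsilon_{2i+1}}$-extension if $\delta_{2i+1}>\varepsilon_{2i+1}$. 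Likewise for (ii) $\Rightarrow$ (iii), the tuple $\vec\delta'$ you want to apply (i) $\iff$ (ii) to is not fixed in advance but is built during the game, with the existentially chosen Woodins depending on earlier degrees, and the degrees in turn needing to be matched across models at every stage. The paper's proof of this equivalence is not a formal corollary but a step-by-step construction: it builds a chain of pseudo-iterates $R_0=R_{\{\varepsilon_0,\varepsilon_1\}}$, $R_1=R_{\{\varepsilon_0,\varepsilon_1,\varepsilon_2,\varepsilon_3\}}$, $\ldots$ adaptively as the game in (iii) unfolds, transporting each degree $s_i,t_i$ between $\Pg$-extensions and $R_j$-extensions via matched generics; the converse direction is handled by the contrapositive (using the homogeneity of $\CC_{\mathrm{tail}}$ to flip $\neg\forces\varphi$ into $\forces\neg\varphi$) with the quantifier roles inverted. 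That per-stage matching is the genuine content of (i) $\iff$ (iii), and your proposal as written leaves it out.
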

\begin{rem}\label{rem:formalize_mu-forcing_statements}
In the statement of the lemma,  we identify $(N|\lambda)[g_d]$,
for $g_d$ being $(N|\lambda,\CC_d)$-generic,
with the $(N|\delta_d^N,g_d)$-premouse
whose extender sequence is induced by $\es^N\rest(\delta_d^N,\lambda)$.
This determines the meaning
of $\all^{\mathrm{gen}}$  interpreted in $(N|\lambda)[g_d]$.

By Lemma \ref{lem:strong_Sigma_1_forcing_definability_and_+1_Woodin_locality}(\ref{item:strong_mSigma_1^Mtilde_lambda_witnessed_locally}) (or more literally, a relativization thereof), the statement in part \ref{item:deltavec}
of the lemma 
can be expressed
 as saying
that
\[ (N|\delta^{+N})[\widetilde{g}_d]
\sats\all^{\mathrm{gen}}_{\vec{\delta}}s\ \sforces{\CC_{\delta}}{}\ \Mtilde_{\delta}\sats\varphi(x,s),\]
where (a) either $\deltavec=\emptyset$ and $\delta=\delta_{d+1}^N$, or $\deltavec\neq\emptyset$ and $m<\om$ is such that $\delta_m^N=\max(\vec{\delta})$ and $\delta=\delta_{m+1}^N$, (b) $\widetilde{g}_d$ is the standard name for the $\CC_d$-generic filter, (c)
 $(N|\delta^{+N})[\widetilde{g}_d]$
is the natural name for the
 $(N|\delta_d^N,\widetilde{g}_d)$-premouse $Q$ whose extender sequence in the interval $(\delta_d^{N},\delta^{+N})$ is induced by $\es^N\rest(\delta_d^N,\delta^{+N})$,
 and (d) $(\Mtilde_\delta)^{Q}$
 was defined  in  Definition \ref{dfn:local_Mtilde_delta}.  The statement in part \ref{item:above_theta} is formalized similarly, except that
there, the $2k$ quantifiers
corresponding to Woodin cardinals are unbounded over $N|\lambda^N$,
so it is of higher complexity.
(Of course, one would more naively
formalize these statements with the clause ``$\sforces{\CC_{\mathrm{tail}}}{}\Mtilde_\lambda\sats\varphi(x,s)$''
using an unbounded existential quantifier over $N|\lambda$,
but by Lemma \ref{lem:strong_Sigma_1_forcing_definability_and_+1_Woodin_locality},
we can equivalently restrict it in advance in the manner just mentioned.)
\end{rem}
\begin{proof}
Assume $x=\emptyset$ for simplicity; the other case is just an easy relativization thereof.
So $d=-1$ and $\CC_d$ is the trivial forcing.
We  will first prove \ref{item:deltavec} $\Rightarrow$ \ref{item:epsvec}.

Suppose \ref{item:deltavec} holds,
which, as described in Remark \ref{rem:formalize_mu-forcing_statements}, means that
 \begin{equation}\label{eqn:assumption_delta}(N|\delta^{+N})
\sats\all^{\mathrm{gen}}_{\vec{\delta}}s\ \sforces{\CC_{\delta}}{}\Mtilde_{\delta}\sats\varphi(s),\end{equation}
where $\delta$, etc, are as described there. Letting $\varepsilon$ be defined from $\vec{\vareps}$ as $\delta$ is from $\vec{\delta}$, we must
see that
 \begin{equation}\label{eqn:goal_vareps} (N|\vareps^{+N})
\sats\all^{\mathrm{gen}}_{\vec{\vareps}}s\ \sforces{\CC_{\vareps}}{}\Mtilde_{\vareps}\sats\varphi(s).\end{equation}

Since
\[ i_{\Pg N}\rest(\Pg|\lambda^{\Pg}):\Pg|\lambda^{\Pg}\to N|\lambda^N \]
is $\rSigma_2$-elementary (as even if $\OR^{\Pg}=\lambda^{\Pg}$, we have $\rho_1^{\Pg|\lambda^{\Pg}}=\lambda^{\Pg}$),
we may assume $N=\Pg$.

Now suppose line (\ref{eqn:goal_vareps}) fails.
Then
easily $k>0$ and  we may 
 assume  $\delta_i=\delta_{i}^{N}$ for all $i<2k$.
Let $R=R^N_{\vec{\vareps}}$.
We have the iteration map $j:\Pg\to R$,
with
\[j(\delta_m)=j(\delta_m^{\Pg})=\delta_m^{R}=\vareps_m \]
for all $m<2k$, so $j(\deltavec)=\vec{\vareps}$;
similarly $j(\delta)=\vareps$.
Lifting line (\ref{eqn:assumption_delta}) with $j$
therefore gives
\[ (R|\vareps^{+R})\sats\all^{\mathrm{gen}}_{\vec{\vareps}}s\ \sforces{\CC_\vareps}{}\Mtilde_{\vareps}\sats\varphi(s). \]
But because we have  generics $g_i,h_i$ as mentioned in Remark \ref{rem:R_construction_basic_props} (corresponding to $\Pg,R$), it follows that
\[(\Pg|\vareps^{+\Pg})\models\all^{\mathrm{gen}}_{\vec{\vareps}}s\ \forces_{\CC_{\vareps}} \Mtilde_\vareps\models\varphi(s),\]
contradicting the choice of the counterexample.

If \ref{item:deltavec}
fails it is likewise. (In the more general case that $x\neq\emptyset$,
we can take $N$ to be $\delta_{d}^{+N}$-sound, where $d=\base(x)$,
and then we have an iteration map $j:N\to R$ with $\delta_d^{+N}<\crit(j)$, which therefore extends canonically to $j^+:N[g]\to R[g]$,
where $g$ is $(N,\CC_d)$-generic.
So $j^+(x_g)=x_g$, and the foregoing argument easily generalizes.)

We now show \ref{item:deltavec} $\iff$ \ref{item:above_theta}. Suppose for illustration
that $k=2$ and $x=\emptyset$, so again $d=-1$ and $\CC_d$ is trivial. We may therefore again assume that $N=\Pg$;
this is because the statement in part \ref{item:above_theta}
is expressible with integer quantifiers
over $\rSigma_2(\{\xi\})$,
where $\xi$ is any Woodin cardinal of $N$ such that $\xi>\theta$,
and hence sufficiently preserved between $\Pg$ and $N$. We may also assume $\delta_i=\delta_i^N$ for $i<2k$.

Suppose \ref{item:deltavec}
holds under these assumptions. Let $\vareps_0=\delta_{m_0}^N\in\Delta^N$, where $m_0<\om$.
Let $g_0$ be $(N,\CC_{\vareps_0})$-generic.
Let $s_0\in\Dd^{N[g_0]}$.
Let $\vareps_1\in\Delta^N$ with $\vareps_1>\vareps_0$.
Let $R_0=R_{\{\vareps_0,\vareps_1\}}$.
Let $h_0$ be $(R_0,\CC_{\vareps_0}^{R_0})$-generic and such that $\HC^{R_0[h_0]}=\HC^{N[g_0]}$. So $s_0\in R_0[h_0]$.
We have $i_{NR_0}:N\to R_0$ with $i_{NR_0}(\delta_0)=\vareps_0$ and $i_{NR_0}(\delta_1)=\vareps_1$. Let $h_1$ be $(R_0,\CC_{\vareps_1}^{R_0})$-generic, extending $h_0$, and $g_1$ be $(N,\CC_{\vareps_1}^N)$-generic, extending $g_0$, with
$\HC^{R_0[h_1]}=\HC^{N[g_1]}$.
Let $t_0\in\Dd^{R_0[h_1]}$
witness the existential statement at $\vareps_1$ in $R_0[h_1]$, with respect to $s_0$. So also $t_0\in N[g_1]$. Now let $\vareps_2\in\Delta^N$ with $\vareps_2>\vareps_1$. Let $g_2$ be $(N,\CC_{\vareps_2})$-generic, extending $g_1$,
and let $s_1\in\Dd^{N[g_2]}$. Let $\vareps_3\in\Delta^N$ with $\vareps_3>\vareps_2$. We can now continue much as in the first round, further iterating $R_0$ to $R_1=R_{\{\vareps_0,\vareps_1,\vareps_2,\vareps_3\}}$, noting that the tree from $R_0$ to $R_1$ is above $\vareps_1^{+R_0}$,
hence can be extended to $R_0[h_1]$,
and in particular to the parameters produced so far, including $t_0$. The remaining
details are similar to those for 
the equivalence of \ref{item:deltavec}  with \ref{item:epsvec}.

If instead \ref{item:deltavec}
fails; that is, ($N$ satisfies)
\[ \exists^{\mathrm{gen}}_{\vec{\delta}} s\neg\forces_{\CC_{\tail}}\Mtilde_\lambda\sats\varphi(s), \]
then by homogeneity of $\CC_{\tail}$,
\[ \exists^{\mathrm{gen}}_{\vec{\delta}}s\forces_{\CC_{\tail}}\Mtilde_\lambda\sats\neg\varphi(s), \]
and then a very similar calculation
(with quantifiers
inverted)
shows
\[ \exists^{\mathrm{gen}}_{\geq\theta;k}s\ \forces_{\CC_{\tail}}\ \Mtilde_\lambda\sats\neg\varphi(s),\]
which clearly implies
\[ \neg\all^{\mathrm{gen}}_{\geq\theta;k}s\ \forces_{\CC_{\tail}}\ \Mtilde_\lambda\sats\varphi(s).\qedhere\]
\end{proof}

\begin{lem}\label{lem:N|lambda_compute_mu}
 Let $N$ be a relevant generic $\Sigma_{\Pg}$-iterate. Let $G$ be $(N,\CC^N)$-generic. Let $\varphi$ be $\mSigma_1$ and $\tau\in\Nm_\lambda^N$. Let $k<\om$. Let $d=\base(\tau)$ and $\vec{\delta}\in[\Delta^N\cut(d+1)]^{2k}$. Then
\[(\M_\lambda)_G\models\all^*_k s\ [\varphi(\tau_G,s)]\ \ 
\iff\ \  N[G\rest(d+1)]\models\all^{\mathrm{gen}}_{\vec{\delta}}s\ \forces_{\CC_{\mathrm{tail}}}\Mtilde_\lambda\models\varphi(\tau,s).\]
Therefore if $N$ is an $\RR$-genericity
iterate of $\Pg$, as witnessed by $G$, 
and sufficient Turing determinacy holds in $V$,
then
\[ \M_{\om_1}\models\exists^*_ks\ [\varphi(\tau_G,s)]\ \ \iff\ \  N[G\rest(d+1)]\models\all^{\mathrm{gen}}_{\vec{\delta}}s\ \forces_{\CC_{\mathrm{tail}}}\Mtilde_\lambda\models\varphi(\tau,s).\]
\end{lem}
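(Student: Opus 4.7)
The plan is to exploit two structural features of the setup: the independence of the right-hand side from the specific choice of $\vec{\delta}$ provided by Lemma \ref{lem:gen-all_independence}, and the homogeneity of each collapse factor $\Coll(\om,\delta_n^N)$ of the product $\CC^N$. For clarity I will sketch the case $d=0$, so that $\tau$ has trivial bounded support and $G\rest(d+1)$ is trivial; the general case will be a routine relativization using the standard extension of an iteration map past a bounded-support generic. Write $G=\langle g_n\rangle_{n<\om}$ for the product decomposition, each $g_n$ being $(N[G\rest n],\Coll(\om,\delta_n^N))$-generic. The crucial preliminary point is that every real $r\in(\M_\lambda)_G$ first appears in some $N[G\rest(m+1)]$, after which every later generic factor computes $r$ via its collapse surjection onto $\delta_m^N$.

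For the $(\Leftarrow)$ direction, assume the RHS. First I reformulate it via Lemma \ref{lem:gen-all_independence}\ref{item:deltavec}$\iff$\ref{item:above_theta} as $\all^{\mathrm{gen}}_{\geq\theta;k}$ for a freely chosen $\theta<\lambda$, and then inductively build witnesses $s_0,\ldots,s_{k-1}\in(\M_\lambda)_G$ using the factors of $G$ themselves as the generics for the nested $\CC_{\delta_i}$-forcings appearing on the RHS. Given $r_0$, I pick $m_0$ minimal with $r_0\in N[G\rest(m_0+1)]$, set $\theta=\delta_{m_0}^N$, and take $\delta_0=\delta_{m_0+1}^N$; the factor $g_{m_0+1}$ is then $\Coll(\om,\delta_0)$-generic, so the RHS instantiates inside $N[G\rest(m_0+2)]$ to yield a Woodin $\delta_1>\delta_0$ for which $\Coll(\om,\delta_1)$ forces the existence of $t_0\geq_T [r_0]$ satisfying the remaining nested assertion. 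Choosing $m_1$ with $\delta_{m_1}^N=\delta_1$ and absorbing the intermediate factors $g_{m_0+2},\ldots,g_{m_1-1}$ into the $\Coll(\om,\delta_1)$-extension, a witness $t_0$ is found in $N[G\rest(m_1+1)]$, and I set $s_0:=t_0$. Repeating for each later $r_i$, with the next threshold chosen above the support of $r_i$, produces $\vec s \in(\M_\lambda)_G$ with $s_i\geq_T r_i$; Lemma \ref{lem:strong_Sigma_1_forcing_definability_and_+1_Woodin_locality} then lifts the forced assertion to give $(\M_\lambda)_G\sats\varphi(\tau_G,\vec s)$.

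For $(\Rightarrow)$ I argue the contrapositive. If the RHS fails, then successive application of the homogeneity of each $\Coll(\om,\delta_i)$ to the nested forcing quantifiers in $\all^{\mathrm{gen}}_{\vec{\delta}}$ delivers
\[ N\sats \exists^{\mathrm{gen}}_{\vec{\delta}} s\ \forces_{\CC_{\tail}}\Mtilde_\lambda\sats\neg\varphi(\tau,s). \]
The dual of the inductive construction from the previous paragraph then produces $r_0\in(\M_\lambda)_G$ such that for every $s_0\geq_T r_0$ there is an $r_1$, and so on, with $\neg\varphi(\tau_G,\vec s)$ holding at the end of the game. This witnesses $(\M_\lambda)_G\sats\exists^*_k s\ \neg\varphi(\tau_G,s)$, contradicting the LHS. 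The second equivalence then follows from the first: when $N$ is an $\RR$-genericity iterate witnessed by $G$, we have $(\M_\lambda)_G=\M_{\om_1}$, and sufficient Turing determinacy in $V$ transfers to $\M_{\om_1}$ for the Turing-invariant definable relation $s\mapsto((\Mtilde_\lambda)_G\sats\varphi(\tau_G,s))$, giving $\all^*_k\iff\exists^*_k$ there.

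The main obstacle will be the interleaving step: verifying that the intermediate $g_n$-factors of $G$ between two successively chosen Woodins can indeed be absorbed into the next chosen $\Coll(\om,\delta_{i+1})$-generic without disturbing the truth of the forced statement. This hinges on each such intermediate factor being a small forcing relative to the next chosen Woodin, together with the local reformulation of $\mSigma_1^{\Mtilde_\lambda}$-forcing from Lemma \ref{lem:strong_Sigma_1_forcing_definability_and_+1_Woodin_locality}\ref{item:strong_mSigma_1^Mtilde_lambda_witnessed_locally}, which shifts witnesses from the global $\Mtilde_\lambda$-name down to structures $\Mtilde_\delta$ local to a single Woodin — precisely the setting where the absorption and homogeneity arguments apply cleanly.
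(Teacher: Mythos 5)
Your proof is correct and follows essentially the same route as the paper's own: pass via Lemma \ref{lem:gen-all_independence} to the $\theta$-independent form, exploit that every element of $(\Mtilde_\lambda)_G$ lies in $N[G\rest m]$ for some finite $m$ to read off the $\all^*_k$-game over $\M_{\om_1}$ from the nested-forcing game, and argue the converse by contraposition plus homogeneity of the collapse factors to push the negation inward (turning $\neg\forces\varphi$ into $\forces\neg\varphi$), with Turing determinacy supplying the final $\all^*_k\iff\exists^*_k$ step. Your write-up is more explicit than the paper's (which dismisses the interleaving-of-$G$-factors step as ``easily follows''), and the absorption issue you flag is indeed the only point requiring a genuine check, but it is routine given the product structure and Lemma \ref{lem:strong_Sigma_1_forcing_definability_and_+1_Woodin_locality}\ref{item:strong_mSigma_1^Mtilde_lambda_witnessed_locally}. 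One very small simplification: varying $\theta$ with $r_0$ is unnecessary, since taking $\theta=0$ already lets you pick $\delta_0$ as any Woodin above where $r_0$ appears.
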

\begin{proof}
For simplicity assume $\tau=\emptyset$, so $d=0$.
Suppose \[N\models\all^{\mathrm{gen}}_{\vec{\delta}}s\ \forces_{\CC_{\mathrm{tail}}}\Mtilde_\lambda\models\varphi(s).\]
By Lemma \ref{lem:gen-all_independence},
 then  
 \[ N\models\all^{\mathrm{gen}}_{\geq 0;k}s\ \forces_{\CC_{\mathrm{tail}}}\Mtilde_\lambda\models\varphi(s).\]
But then since every element of $(\Mtilde_\lambda)_G$ appears in $N[g]$ for some proper segment $g$ of $G$, it easily follows that $(\Mtilde_\lambda)_G\models\all^*_ks\ [\varphi(s)]$.

Conversely, suppose
\[N\models\exists^{\mathrm{gen}}_{\vec{\delta}}s\ \neg\forces_{\CC_{\mathrm{tail}}}\Mtilde_\lambda\models\varphi(s).\]
Then by homogeneity of the forcing $\CC_{\mathrm{tail}}$,
\[N\models\exists^{\mathrm{gen}}_{\vec{\delta}}s\ \forces_{\CC_{\mathrm{tail}}}\Mtilde_\lambda\models\neg\varphi(s),\]
which gives
$(\Mtilde_\lambda)_G\models\exists^*_ks\ [\neg\varphi(s)]$
much as in the previous case,
and hence
$(\Mtilde_\lambda)_G\models\neg\all^*_ks\ \varphi(s)$.
\end{proof}

\begin{dfn}
 For $k\in[1,\om)$,
 the \emph{$\mu_k\Sigma_1^{\Mtilde_\lambda}$ forcing relation $\forces_{\lambda\mu_k 1}$} \tu{(}of a relevant generic $\Sigma_{\Pg}$-iterate $N$\tu{)} is the standard forcing relation for
 $\mu_k\Sigma_1$ formulas, interpreted over $\Mtilde_\lambda$. 
Likewise for \emph{$\muSigma_1$}, and  other such pointclasses. Let $k<\om$. The \emph{strong $\mu_k\Sigma_1^{\Mtilde_\lambda}$ forcing
relation $\forces^{\mathrm{s}}_{\lambda\mu_k1}$ \tu{(}of $N$\tu{)}} is the relation
where given $p\in\CC^N$ and $\vec{\tau}\in(\Nm_\lambda)^{<\om}$
and $d=\base(\vec{\tau})$
and an $\mSigma_1$ formula 
\[\varphi(s,\vec{x})=\exists w\ [\psi(w,s,\vec{x})] \]
where $\psi$ is $\mSigma_0$ (in the language of $\M_{\om_1}$), then letting
$i$ be such that
$\delta^N_{i}=\min(\Delta^N\cut(d+1))$ and $\vec{\delta}=(\delta_{i}^N,\ldots,\delta_{i+2k-1}^N)$
and $\vareps=\delta_{i+2k}^N$, we have
\[ \begin{array}{rl}&p\forces^{\mathrm{s}}_{\lambda\mu_k1}\all^*_k s\ \big[\Mtilde_\lambda\sats\varphi(s,\vec{\tau})\big]\\\iff&  (p\rest\CC_d)\forces_{\CC_d}\all^{\mathrm{gen}}_{\vec{\delta}} s\ \forces_{\CC_{\vareps}}\exists w\in\HC\Big[\forces_{\CC_{\tail}}\Mtilde_\lambda\sats\psi(\check{w},\check{s},\vec{\tau})\Big].\end{array}
\]
Note only $p\rest\CC_d$ is relevant; the rest of $p$ is ignored.
The  \emph{strong $\muSigma_1^{\Mtilde_\lambda}$ forcing
relation $\forces^{\mathrm{s}}_{\lambda\mu1}$ \tu{(}of $N$\tu{)}} is the relation
where given $p\in\CC^N$ and $\vec{\tau}\in(\Nm_\lambda)^{<\om}$
and an $\mSigma_1$ formula $\varphi$,
\[ \begin{array}{rl}&p\forces^{\mathrm{s}}_{\lambda\mu1}\all^*s\ \big[\Mtilde_\lambda\sats\varphi(s,\vec{\tau})\big]\\ \iff& \exists k<\om\ \Big[p\forces^{\mathrm{s}}_{\lambda\mu_k1}\all^*_ks\ \big[\Mtilde_\lambda\sats\varphi(s,\vec{\tau})\big]\Big].\end{array}\]

For pointclasses of the form $\mSigma_{n+2}$,
the \emph{$\mSigma_{n+2}^{\Mtilde_\lambda}$ strong forcing relation $\forces^{\mathrm{s}}_{\lambda,n+2}$
\tu{(}of $N$\tu{)}} is the relation
where for $\mSigma_{n+2}$ formulas
\[ \varphi(\vec{x})\iff\exists r,t\ [T_{n+1}(r,t)\wedge\psi(\vec{x},r,t)] \]
with $\psi$ being $\Sigma_0$, and for $\vec{\tau}\in(\Nm_\lambda)^{<\om}$ and $p\in\CC^N$ we have
\[ p\forces^{\mathrm{s}}_{\lambda,n+2}\varphi(\vec{\tau})\iff\exists \dot{r},\dot{t}\in\Nm_\lambda\ \Big[p\forces T_{n+1}(\dot{r},\dot{t})\wedge\psi(\vec{\tau},\dot{r},\dot{t})\Big].\]
Recall the strong $\mSigma_1$ forcing relation was introduced in Definition \ref{dfn:strong_mSigma_1_forcing}.

Let $k<\om$. The \emph{strong $\mu_k\Sigma_2^{\Mtilde_\lambda}$ forcing
relation $\forces^{\mathrm{s}}_{\lambda\mu_k2}$ \tu{(}of $N$\tu{)}} is the relation
where given $p\in\CC^N$ and $\vec{\tau}\in(\Nm_\lambda)^{<\om}$
and $d=\base(\vec{\tau})$
and an $\mSigma_2$ formula 
\[\varphi(s,\vec{x})=\exists r,t\ [T_1(r,t)\wedge \psi(r,t,s,\vec{x})] \]
where $\psi$ is $\mSigma_0$ (in the language of $\M_{\om_1}$), then letting $i$ be such that
$\delta^N_{i}=\min(\Delta^N\cut(d+1))$ and $\vec{\delta}=(\delta_{i}^N,\ldots,\delta_{i+2k-1}^N)$
and $\vareps=\delta_{i+2k}^N$, we have
\[ \begin{array}{rl}&p\forces^{\mathrm{s}}_{\lambda\mu_k2}\all^*_k s\ \big[\Mtilde_\lambda\sats\varphi(s,\vec{\tau})\big]\\\iff&  (p\rest\CC_d)\forces_{\CC_d}\all^{\mathrm{gen}}_{\vec{\delta}} s\ \forces_{\CC_{\vareps}}\exists r,t\in\HC\Big[\forces_{\CC_{\tail}}\Mtilde_\lambda\sats T_1(\check{r},\check{t})\wedge\psi(\check{r},\check{t},\check{s},\vec{\tau})\Big].\end{array}
\]
The  \emph{strong $\muSigma_2^{\Mtilde_\lambda}$ forcing
relation $\forces^{\mathrm{s}}_{\lambda\mu2}$ \tu{(}of $N$\tu{)}} is derived from the strong $\mu_k\Sigma_2$ forcing relations like for $\muSigma_1^{\Mtilde_\lambda}$.
\end{dfn}

\begin{rem}\label{rem:example_graduated_Qs}
 Suppose $\OR^{\Pg}=\lambda+\lambda$,
 where $\lambda=\lambda^{\Pg}$.
 Let $N$ be an $\RR$-genericity iterate
 of $\Pg$, as witnessed by $G$.
 We will have $(\M^N)_G=\M_{\beta^*}$,
 which therefore has height $\om_1+\om_1$.
 Consider the statement $\varphi(s)$,
 which says ``$\om_1+\alpha$ exists for every ordinal $\alpha<\om_1$ coded 
 by a real in
 some degree $\leq s$''.
 Then $\M_{\beta^*}\sats\all^*_1s\ \varphi(s)$, but $\M_\alpha\sats\neg\all^*_1s\ \varphi(s)$
 for all $\alpha<\beta^*$.
 And $N$ satisfies the statement $\psi$ expressing that this is forced
 of its $\M^N$ (expressed using $\all^{\gen}$), and $N$ has no proper segment of height $\geq\lambda$ satisfying $\psi$.
 This formula $\psi$ asserts that for 
all sufficiently large pairs $
 \{\vareps_0<\vareps_1\}\in[\Delta^N]^2$,
 \begin{equation}\label{eqn:psi_d0d1}\all^{\mathrm{gen}}_{\{\vareps_0,\vareps_1\}}s\ [\M^N\sats\varphi(s)].\end{equation}
Letting $\psi_{\vareps_0,\vareps_1}$
be the statement in (\ref{eqn:psi_d0d1})
(in parameters $\vareps_0,\vareps_1$),
note that
there \emph{is} a proper segment of $N$
 which satisfies $\psi_{\vareps_0,\vareps_1}$,
 namely, $N|(\lambda^N+\vareps_0^{+N})$
 (the generic reals at $\vareps_1$ aren't really relevant here). We will see that this picture
 is a prototype for the general case of least segments of $\M_{\beta^*}$ satisfying
 some formula of form $\all^*s\varrho(s)$.
\end{rem}
\begin{lem}\label{lem:lambda_witness_appears_immediately}\footnote{***This lemma
basically appears as the Claim in proof of 6.44.}
 Let  $\vec{\tau}\in(\Nm_\lambda)^{<\om}$  and $d\in\{0\}\cup\Delta^N$ with $d\geq\base(\vec{\tau})$. Let $k<\om$ and $\vec{\delta}\in[\Delta_{>d}]^{2k}$ and $\eps\in\Delta$ with $\eps>\max(d,\max(\vec{\delta}))$.
 Let $\psi$ be $\mSigma_0$. Then
 \[ \begin{array}{rl} N\sats\ \forces_{\CC_d}\text{``}&\all^{\mathrm{gen}}_{\vec{\delta}}s\ \Big[\forces_{\CC_\tail}\M_\lambda\sats\exists w\ [\psi(w,\vec{\tau},s)]\Big]\\
&\implies
 \all^{\mathrm{gen}}_{\vec{\delta}}s\ \forces_{\CC_{\vareps}}\exists w\in\HC\Big[\forces_{\CC_\tail}\M_\lambda\sats\psi(\check{w},\vec{\tau},s)]\Big]\text{''}.\end{array}\] 
\end{lem}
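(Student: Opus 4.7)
The plan is to reduce the statement to the locality property of the strong $\mSigma_1^{\Mtilde_\lambda}$-forcing relation established in Lemma \ref{lem:strong_Sigma_1_forcing_definability_and_+1_Woodin_locality}(\ref{item:strong_mSigma_1^Mtilde_lambda_witnessed_locally}), by unwinding the $\all^{\gen}_{\vec{\delta}}s$ quantifier one play at a time and then locating the existential witness at a bounded stage of the forcing.

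First I would fix an arbitrary $(N,\CC_d)$-generic $g$ and an arbitrary legal play $(h_0,s_0,h_1,h_2,s_1,\ldots,h_{2k-1},s_{k-1})$ of the alternating game coded by $\all^{\gen}_{\vec{\delta}}s$: each $h_i$ is $\CC_{\delta_i}$-generic over the previous stage, each $s_j$ is a Turing degree in the corresponding intermediate extension, and I let $h=\bigcup_{i<2k}h_i$ and $s=(s_0,\ldots,s_{k-1})$. By the left-hand hypothesis, in $N[g][h]$ we have
\[ \forces_{\CC_\tail}\Mtilde_\lambda\sats\exists w\ \psi(w,\vec{\tau}_g,s). \]
Equivalently, viewed globally over $N$ with a canonical name $\dot{s}$ for $s$, the product $\CC_d\times\CC_{\delta_0}\times\cdots\times\CC_{\delta_{2k-1}}\times\CC_\tail$ forces the $\mSigma_1^{\Mtilde_\lambda}$-statement $\exists w\ \psi(w,\vec{\tau},\dot{s})$ (along the branch corresponding to the chosen play), and the parameter block $(\vec{\tau},\dot{s})$ has base at most $\delta_{2k-1}$.

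Next I would invoke the locality result Lemma \ref{lem:strong_Sigma_1_forcing_definability_and_+1_Woodin_locality}(\ref{item:strong_mSigma_1^Mtilde_lambda_witnessed_locally}) in this reformulation, to produce a $\CC_{\vareps'}$-name $\sigma\in N|\vareps'^{+N}$ --- where $\vareps'$ is the least Woodin of $N$ strictly above $\delta_{2k-1}$ --- such that $\forces_{\CC_{\tail'}}\Mtilde_\lambda\sats\psi(\sigma,\vec{\tau},\dot{s})$. Since $\vareps\geq\vareps'$, the forcing $\CC_\vareps$ absorbs $\CC_{\vareps'}$, so after forcing with $\CC_\vareps$ the name $\sigma$ evaluates to some $w$, and because $\vareps^{+N}$ is countable in $V$ this $w$ lies in $\HC$ of the ambient extension. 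The $\mSigma_0^{\Mtilde_\lambda}$-forcing theorem from Lemma \ref{lem:mSigma_0,lambda_forcing_rel_def} then gives $\forces_{\CC_\tail}\Mtilde_\lambda\sats\psi(\check{w},\vec{\tau},\check{s})$. Since the play of the $\all^{\gen}_{\vec{\delta}}s$ game and the generic $g$ were arbitrary, rewrapping the quantifiers yields the conclusion.

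The main obstacle is verifying the application of the locality lemma in the intermediate setting: Lemma \ref{lem:strong_Sigma_1_forcing_definability_and_+1_Woodin_locality}(\ref{item:strong_mSigma_1^Mtilde_lambda_witnessed_locally}) is stated for $N$ itself, and its proof proceeds via the P-construction $R^N_{\vec{\vareps}}$ (with $\vec{\vareps}=(\delta_0^N,\ldots,\delta_{m}^N,\vareps')$ for appropriate $m$) and the induced iteration map $j:N\to R$, exploiting the mtr-suitability and $\Gammag$-stability of both models together with the homogeneity of the relevant collapse forcings. Since the $\all^{\gen}$-play merely interposes a small generic $g\conc h$ of size ${<\lambda^N}$ below $\vareps'$, $N|\lambda^N$ and $R|\lambda^R$ are reinterpreted without disruption as extender-sequence parts of $(N|\delta_{2k-1}^N,g\conc h)$- and $(R|\delta_{2k-1}^R,g\conc h)$-premice, and the P-construction and iteration map transfer with the same agreement on generics (as in Remark \ref{rem:R_construction_basic_props}). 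Once this transfer is granted, the argument is essentially the one used in the proof of Lemma \ref{lem:strong_Sigma_1_forcing_definability_and_+1_Woodin_locality}(\ref{item:strong_mSigma_1^Mtilde_lambda_witnessed_locally}) and the bookkeeping over the $\all^{\gen}$-tree is routine.
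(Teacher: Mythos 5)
Your proposal is essentially correct and reaches the same conclusion, but it takes a somewhat different route from the paper. The paper's proof does not cite Lemma \ref{lem:strong_Sigma_1_forcing_definability_and_+1_Woodin_locality}(\ref{item:strong_mSigma_1^Mtilde_lambda_witnessed_locally}); instead it directly repeats the P-construction argument. Concretely, the paper fixes $g$ that is $(N,\CC_{\max(\vec{\delta})})$-generic, fixes an arbitrary $s\in(\Dd^k)^{N[g]}$ with $\forces_{\CC_\tail}\Mtilde_\lambda\sats\exists w\,\psi(w,s)$, and establishes the strictly stronger claim $N[g]\sats\ \forces_{\CC_\vareps}\exists w\in\HC[\ldots]$, which does not even mention the game. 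It does this by picking an unbounded $\delta>\vareps$ where a witness is forced, reducing to $p=\emptyset$ by homogeneity, forming $R=R^N_{(\vareps\cap\Delta)\cup\{\delta\}}$, and using $i^+_{NR}:N[g]\to R[g]$ with $i_{NR}(\vareps)=\delta$ to pull the witness from $\delta$ back down to $\vareps$. You instead invoke the already-proved locality lemma, which performs the same $R^N_{\vec{\vareps}}$ maneuver internally, and get the witnessing name at the next Woodin $\vareps'\leq\vareps$ directly. Both routes turn on the same P-construction machinery, so your proposal buys a slightly cleaner factorization at the cost of some care with the parameter.

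Two points in your write-up deserve attention. First, your ``main obstacle'' paragraph is more cautious than necessary: you do not actually need to transfer Lemma \ref{lem:strong_Sigma_1_forcing_definability_and_+1_Woodin_locality}(\ref{item:strong_mSigma_1^Mtilde_lambda_witnessed_locally}) to the intermediate model $N[g\conc h]$. Once you have named $s$ by a canonical $\dot{s}\in\Nm^N_\lambda$ with $\base(\dot s)\leq\max(\vec{\delta})$, the tuple $(\vec{\tau},\dot{s})$ lies in $(\Nm^N_\lambda)^{<\om}$ and the lemma applies to $N$ itself, so the intermediate transfer is not where the content lies. Second, you gloss over a genuinely needed step: to invoke part (\ref{item:strong_mSigma_1^Mtilde_lambda_witnessed_locally}) you must first have $p\sforces{\lambda1}{\mathrm{s}}\exists w\,\psi(w,\vec{\tau},\dot{s})$ for some $p$ in the generic. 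That is not literally what the hypothesis gives; you must pass through the strong $\mSigma_1^{\Mtilde_\lambda}$ forcing theorem, i.e.\ Lemma \ref{lem:strong_Sigma_1_forcing_definability_and_+1_Woodin_locality}(\ref{item:strong_mSigma_1^Mtilde_lambda_forcing_theorem}), and then note that $\pbar=p\rest\CC_m^N$ lands in the $\CC_m^N$-generic $g\conc h$ so that the resulting name $\sigma$ does what you want below $\pbar\in g\conc h$. Also, your justification of $w\in\HC$ should be internal: $\sigma$ lives in $N|\delta_{m+1}^{+N}$ and $\delta_{m+1}^N\leq\vareps$ is collapsed to $\om$ by $\CC_\vareps$, so $\sigma$ evaluates to a hereditarily countable set in $N[G_\vareps]$; ``countable in $V$'' is not the relevant observation.
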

\begin{proof}
 For simplicity assume $\vec{\tau}=\emptyset$ and $d=0$ (otherwise relativize
 everything above $N|d$).
 Suppose
 \[ N\sats\all^{\mathrm{gen}}_{\vec{\delta}}s\ \Big[\forces_{\CC_\tail}\Mtilde_\lambda\sats\exists w\ [\psi(w,s)]\Big].\]
 Let $g$ be $(N,\CC_{\max(\vec{\delta})})$-generic.
 Let $s\in(\Dd^k)^{N[g]}$
 be such that
 \[ \forces_{\CC_\tail}\Mtilde_\lambda\sats\exists w\ [\psi(w,s)].\]
 We claim that 
 \begin{equation}\label{eqn:N[g]_sats_eps_suffices} N[g]\sats\ \forces_{\CC_\vareps}\exists w\in\HC\ \Big[\forces_{\CC_\tail}\Mtilde_\lambda\sats\psi(\check{w},s)\Big] \end{equation}
 (which clearly suffices; so we are actually  proving a stronger fact than advertised by the lemma).
 
Toward this claim, let $\delta\in\Delta$ with $\delta>\vareps$
 and $p\in\CC_\delta$ be such that
 \[ p\forces_{\CC_\delta}\exists w\in\HC\ \Big[\forces_{\CC_\tail}\Mtilde_\lambda\sats\psi(\check{w},\check{s})\Big].\]
 Then first note that we may assume $p=\emptyset$, by homogeneity of $\CC_\delta$ and $\CC_{\tail}$, and since $s\in N[g]$
 and $\Mtilde_\lambda$ is symmetric.
 
 Now let $\vec{\vareps}=\eps\cap\Delta$
 and let $R=R^N_{\vec{\vareps}\cup\{\delta\}}$.
 So $\crit(i_{NR})>\max(\vec{\delta})$,
 so $i_{NR}$ extends to $i_{NR}^+:N[g]\to R[g]$.
We have $i_{NR}^+(\vareps)=\delta$, and as in the proof of Lemma \ref{lem:gen-all_independence},
 it follows that line (\ref{eqn:N[g]_sats_eps_suffices}) holds, as desired.
\end{proof}

In the following lemma,
note $\rho_1^{N|\lambda}=\lambda$, since $N|\lambda$ has no largest cardinal
and by condensation.
\begin{lem}\label{lem:strong_muSigma_1_lambda}
We have:
\begin{enumerate}
 \item\label{item:strong_mu_k_Sigma_1} The strong $\mu_k\Sigma_1^{\Mtilde_\lambda}$
 forcing relation of $N$ is  $\rSigma_2^{N|\lambda}$, uniformly in $k<\om$.
\item The strong $\muSigma_1^{\Mtilde_\lambda}$ forcing relation of $N$ is  $\rSigma_2^{N|\lambda}$.
\item\label{item:strong_mu_Sigma_1-forcing_theorem} The strong $\mu_k\Sigma_1^{\Mtilde_\lambda}$ and strong $\muSigma_1^{\Mtilde_\lambda}$
forcing theorems hold.
Moreover, letting $G$ be $(N,\CC^N)$-generic,
 $\vec{\tau}\in\Nm_\lambda$
$d=\base(\vec{\tau})$, and $\varphi$ be $\mSigma_1$, if $(\Mtilde_\lambda)_G\sats\all^*_k s\ \varphi(s,\vec{\tau}_G)$
then there is $p\in G\cap\CC_d$
such that \[ p\forces^{\mathrm{s}}_{\lambda\mu_k1}\all^*_ks\ \big[\Mtilde_\lambda\sats\varphi(s,\vec{\tau})\big].\]
\end{enumerate}
\end{lem}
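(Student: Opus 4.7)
The strategy is to unpack the definition of $\forces^{\mathrm{s}}_{\lambda\mu_k1}$ and track the complexity of each layer, then to combine Lemmas \ref{lem:mSigma_0,lambda_forcing_rel_def}, \ref{lem:N|lambda_compute_mu}, and \ref{lem:lambda_witness_appears_immediately} to get the forcing theorem. For part (1), recall that the inner $\mSigma_0^{\Mtilde_\lambda}$ forcing relation ``$\forces_{\CC_{\tail}}\Mtilde_\lambda\sats\psi(\check{w},\check{s},\vec{\tau})$'' is $\rDelta_2^{N|\lambda}$ by Lemma \ref{lem:mSigma_0,lambda_forcing_rel_def}, and in fact its value is determined by the parameters and a suitable segment $N|\vareps^{+N}$. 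Wrapping this matrix in the bounded existential $\exists w\in\HC$ keeps us $\rSigma_2$, and then the quantifier ``$\forces_{\CC_\vareps}$'' is a bounded universal (quantifying over conditions in $\CC_\vareps\in N|\vareps^{+N}$), which is absorbed into $\rSigma_2^{N|\lambda}$. The alternating ``$\all^{\gen}_{\vec{\delta}}s$'' is a finite alternation of $\forces_{\CC_{\delta_i}}$ and $\forall/\exists$ over Turing degrees (elements of $\HC^N$); all such quantifiers are bounded by elements of $N|\lambda$, so they too are absorbed. Finally the outermost ``$(p\rest\CC_d)\forces_{\CC_d}$'' is another bounded universal. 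All told the entire expression is $\rSigma_2^{N|\lambda}$, uniformly in $k$ (as only the tuples $\vec{\delta}$ and $\vareps$ grow with $k$, while the formula scheme is fixed). Part (2) then follows immediately, since $\forces^{\mathrm{s}}_{\lambda\mu1}$ is obtained from the uniform family in (1) by prepending $\exists k<\om$, a bounded integer existential.

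For the forcing theorem in (3), the soundness direction is a straight unfolding. If $p\in G$ and $p\forces^{\mathrm{s}}_{\lambda\mu_k 1}\all^*_ks\ [\Mtilde_\lambda\sats\varphi(s,\vec{\tau})]$, then by the definition of the relation and the $\mSigma_0$ forcing theorem from Lemma \ref{lem:mSigma_0,lambda_forcing_rel_def}, the model $N[G\rest(d+1)]$ satisfies $\all^{\gen}_{\vec{\delta}}s\ [\forces_{\CC_{\tail}}\Mtilde_\lambda\sats\varphi(s,\vec{\tau})]$; Lemma \ref{lem:N|lambda_compute_mu} then translates this directly to $(\Mtilde_\lambda)_G\sats\all^*_ks\ \varphi(s,\vec{\tau}_G)$, as required.

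The essential content lies in the ``moreover'' completeness clause. Assume $(\Mtilde_\lambda)_G\sats\all^*_ks\ \varphi(s,\vec{\tau}_G)$, where $\varphi(s,\vec{x})$ has the form $\exists w\,\psi(w,\vec{x},s)$ with $\psi$ being $\mSigma_0$. First, Lemma \ref{lem:N|lambda_compute_mu} (in the existential form) yields
\[ N[G\rest(d+1)]\sats\all^{\gen}_{\vec{\delta}}s\ \Big[\forces_{\CC_{\tail}}\Mtilde_\lambda\sats\exists w\,\psi(w,\vec{\tau},s)\Big]. \]
A priori the witness $w$ could be a name sitting arbitrarily deep along the Prikry tail; here Lemma \ref{lem:lambda_witness_appears_immediately} is exactly what is needed to pull $w$ down to $\HC$ at the single Woodin $\vareps=\delta^N_{i+2k}$, yielding
\[ N[G\rest(d+1)]\sats\all^{\gen}_{\vec{\delta}}s\ \forces_{\CC_\vareps}\exists w\in\HC\ \Big[\forces_{\CC_{\tail}}\Mtilde_\lambda\sats\psi(\check w,\check s,\vec{\tau})\Big]. \]
This is precisely the interior of the definition of $\forces^{\mathrm{s}}_{\lambda\mu_k1}$. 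A standard density argument in $\CC_d$ then produces $p\in G\cap\CC_d$ forcing the displayed statement over $\CC_d$, delivering $p\forces^{\mathrm{s}}_{\lambda\mu_k1}\all^*_ks\ [\Mtilde_\lambda\sats\varphi(s,\vec{\tau})]$.

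The one genuine wrinkle is the appeal to Lemma \ref{lem:lambda_witness_appears_immediately}: it is precisely the mtr-suitability and $\Gammag$-stability of $N$ (via the elementary embedding/P-construction techniques behind that lemma) that allow us to strip the outer tail forcing off the existential and realize the witness already at the next Woodin above the degree $s$. Modulo this pullback of witnesses, the strong $\muSigma_1^{\Mtilde_\lambda}$ case is immediate from the strong $\mu_k\Sigma_1^{\Mtilde_\lambda}$ case by an additional integer quantifier, so all three parts fall out together.
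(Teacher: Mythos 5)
Your part (3) argument is exactly what the paper means by ``straightforward consequences of part 1 and previous lemmas'': soundness via Lemma \ref{lem:N|lambda_compute_mu} and the $\mSigma_0^{\Mtilde_\lambda}$ forcing theorem, completeness by first applying Lemma \ref{lem:N|lambda_compute_mu} and then Lemma \ref{lem:lambda_witness_appears_immediately} to pull the witness $w$ down to $\HC$ at $\vareps$, with a final density argument in $\CC_d$. You have correctly identified the two lemmas doing the real work there, and your completeness clause is the intended argument. Part (2) from part (1) is also right.

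For part (1), however, your route diverges from the paper's, and the place where it diverges is exactly where your argument is least secure. The paper does not argue by ``$\rDelta_2^{N|\lambda}$ matrix plus bounded quantifier absorption.'' Instead it explicitly rewrites the entire statement in the $\rSigma_2$ template: it invokes Lemma \ref{lem:lambda_witness_appears_immediately} (together, implicitly, with the tuple-independence coming from Lemma \ref{lem:gen-all_independence}) to assert that $p\forces^{\mathrm{s}}_{\lambda\mu_k1}(\ldots)$ holds iff \emph{there is $\xi<\lambda$ such that, letting $t=\Th_1^{N|\lambda}(\xi)$, $t$ certifies} a bounded statement about $\Sigma_1/\Pi_1$ facts concerning ordinals below $\xi$. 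Crucially, it also replaces the $\mSigma_0$ formula $\psi$ --- which refers to the $\dot{T}$ predicate --- by a local formula $\psi'$ that unwinds $\dot{T}$ via mouse-witness segments of $N|\vareps^{+N}$. The $\rSigma_2$-ness is then immediate from the shape ``$\exists r,t\,[T_1(r,t)\wedge(\Sigma_1\text{ about }r,t)]$''; one never has to argue that bounded quantification over a $\rDelta_2$ matrix stays inside $\rSigma_2$.

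Your version leans on the slogan ``all the quantifiers are bounded by elements of $N|\lambda$, so they are absorbed into $\rSigma_2$.'' This is where the care must go. Closure of $\rSigma_2$ under bounded quantification is not purely syntactic in the $\rSigma_n$ hierarchy: the bounded universal quantifiers (the $\forces_{\CC_d}$ and $\forces_{\CC_\vareps}$ prefixes, and the universal degree quantifiers inside $\all^{\gen}$) have to be pulled across the $T_1$-existential, which requires collection-style arguments. You do make the observation that rescues the argument --- that the $\forces_{\CC_\tail}$ matrix, once its names have base $\le\vareps$, is determined by $N|\vareps^{+N}$ --- but you then revert to treating it as a $\rDelta_2$ black box rather than exploiting that local determination to produce the $\rSigma_2$ witness $t=\Th_1^{N|\lambda}(\xi)$ directly. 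Also note that the paper uses Lemma \ref{lem:lambda_witness_appears_immediately} already in part (1), not only in part (3): it is needed to pass from the literal defining formula of $\forces^{\mathrm{s}}_{\lambda\mu_k1}$ (which fixes the particular tuple $\vec{\delta}$ of Woodins) to a version quantifying over arbitrary tuples, which is what makes the bound $\xi$ a clean existential. So your proof is correct in spirit for all three parts, but the $\rSigma_2$ bound in part (1) is delivered more directly (and more robustly) by the paper's explicit $\Th_1^{N|\lambda}(\xi)$ packaging than by the absorption argument you sketch.
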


\begin{proof}
Part \ref{item:strong_mu_k_Sigma_1}: Let $p\in\CC^N$ and $\vec{\tau}\in(\Nm_\lambda)^{<\om}$ and $\varphi$ be $\mSigma_1$, and write
\[ \varphi(s,\vec{x})\iff\exists w\ \psi(w,s,\vec{x}) \]
where $\psi$ is $\mSigma_0$
and $\lh(s)=k$.
Then note that by Lemma \ref{lem:lambda_witness_appears_immediately}, \[ N\sats\text{``}p\forces^{\mathrm{s}}_{\lambda\mu_k1}\Mtilde_\lambda\sats\all^*_k s\ \varphi(s,\vec{\tau})\text{''} \]
 iff, letting $d=\base(\vec{\tau})$, there is $\xi<\lambda$ such that letting $t=\Th_1^{N|\lambda}(\xi)$,
 there are  $\vareps_i,\vareps_+<\xi$
 for $i\leq 2k$ 
 such that, according to $t$,
each $\vareps_i$ is  Woodin, and letting $\vareps=\vareps_{2k}$, we have $d<\vareps_0<\ldots<\vareps_{2k-1}<\vareps<\vareps_+=\vareps^{+}$ and
\[p\rest\CC_d\forces_{\CC_d}\ \all^{\mathrm{gen}}_{\vec{\vareps}}s\ \forces_{\CC_\vareps}\exists w\in\HC\ [\psi'(\check{w},\check{s},\vec{\tau})], \]
  where $\vec{\vareps}=\{\eps_0,\ldots,\eps_{2k-1}\}$,
 and $\psi'(x,y,\vec{z})$ asserts
 that $\psi(x,y,\vec{z})$ holds when
 we interpret $\dot{T}$ (in the $\M_{\om_1}$ language) with the set of pairs $(r,u)$
 in the transitive closure of $\{w,s,\vec{\tau}\}$ such that $r\in\RR$ and $\varphi\in u$
 iff thesore is a $\varphi(r)$-witness
 which is a segment of the premouse given by
 translating $N|\vareps^{+N}$ to a premouse over $(N|\vareps,h)$,
 where $h$ is the generic through $\CC_{\vareps}$. Note  also that these are all either $\Sigma_1$ or $\Pi_1$ assertions about ordinals $<\xi$,
 hence determined by $t$.
 
 The remaining parts are straightforward consequences of part \ref{item:strong_mu_k_Sigma_1} and previous lemmas.
\end{proof}

\begin{rem}
 It seems that the (non-strong) $\muSigma_1^{\Mtilde_\lambda}$ forcing
 relation need not be $\rSigma_2^{N|\lambda}$,
 since the forcing might split into
 an infinite maximal antichain $\left<p_k\right>_{k<\om}$ below $p$,
 with each $p_k$ forcing $(\all^*_ks\ \varphi(s))\wedge\all j<k[\neg\all^*_js\ \varphi(s)]$.
\end{rem}

\begin{lem}\label{lem:name_for_1-theory}
 Suppose $\rho_2^{N|\lambda}=\lambda$.
 Let $\tau\in\Nm_\lambda$ and $d=\base(\tau)$.
 Let $t\in\Nm_\lambda$ be defined by
 
 \[ t=\{(p,(\varphi,\tau))\bigm|p\in\CC_d\text{ and } \varphi\text{ is }\muSigma_1\text{ and } p\forces^{\mathrm{s}}_{\lambda\mu1}\varphi(\tau)\}.\]
 Then:
 \begin{enumerate}
  \item\label{item:t_is_CC_d-name}  $t\in\Nm_\lambda$ is a $\CC_d$-name, and
  \item\label{item:theory_name}$N\sats\ \forces_{\CC} t=\Th_{\muSigma_1}^{\Mtilde_\lambda}(\{\tau\})$.
  \item\label{item:t_uniformly_computed_from_Th} $t$ is \tu{(}simply\tu{)} computed
  from $\Th_2^{N|\lambda}(d)$,
  uniformly in $\tau,d$.
  \end{enumerate}
\end{lem}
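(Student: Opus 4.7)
The plan is to extract $t$ directly from the strong $\muSigma_1^{\Mtilde_\lambda}$ forcing relation of $N$ as analyzed in Lemma \ref{lem:strong_muSigma_1_lambda}, and then use the hypothesis $\rho_2^{N|\lambda}=\lambda$ to see that the resulting set is actually an element of $N|\lambda$.

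First I would verify parts \ref{item:t_is_CC_d-name} and \ref{item:t_uniformly_computed_from_Th} together. By Lemma \ref{lem:strong_muSigma_1_lambda}, the relation $p\forces^{\mathrm{s}}_{\lambda\mu 1}\varphi(\tau)$ is $\rSigma_2^{N|\lambda}$, uniformly in the parameters $(p,\varphi,\tau)$. Since $p$ ranges only over $\CC_d\in N|d^{+N}$, $\varphi$ ranges over natural numbers, and $\tau$ has $\base(\tau)\leq d$ so $\tau\in N|d^{+N}$, the whole defining condition of $t$ is $\rSigma_2^{N|\lambda}$ in parameters of rank below some fixed $\xi<\lambda$, and hence $t$ is a bounded subset of $N|\lambda$ defined $\rSigma_2^{N|\lambda}$ from $\{\tau,d\}$. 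Since $\rho_2^{N|\lambda}=\lambda$, every such bounded $\rSigma_2^{N|\lambda}$-definable set is in $N|\lambda$, so $t\in N|\lambda$; moreover the whole definition is uniform, so $t$ is simply computed from $\Th_{\rSigma_2}^{N|\lambda}(d\cup\{\tau\})$ in the sense required by \ref{item:t_uniformly_computed_from_Th}. By construction $t\subseteq\CC_d\times(\om\times\{\tau\})$, so $t\in\Nm_\lambda$ is indeed a $\CC_d$-name with $\base(t)\leq d$.

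For part \ref{item:theory_name} I would let $G\sub\CC^N$ be $N$-generic and set $G_d=G\cap\CC_d$. Because $t$ has base $\leq d$, we have $t_G=t_{G_d}$, and by the definition of $t$ and of name evaluation,
\[ t_G\;=\;\{(\varphi,\tau_G)\mid\exists p\in G_d\;\;p\forces^{\mathrm{s}}_{\lambda\mu 1}\varphi(\tau)\}. \]
By the strong $\muSigma_1^{\Mtilde_\lambda}$ forcing theorem, part \ref{item:strong_mu_Sigma_1-forcing_theorem} of Lemma \ref{lem:strong_muSigma_1_lambda}, the right-hand side is exactly $\{(\varphi,\tau_G)\mid(\Mtilde_\lambda)_G\sats\varphi(\tau_G)\}=\Th_{\muSigma_1}^{(\Mtilde_\lambda)_G}(\{\tau_G\})$, using one direction of the forcing theorem (any $\mu\Sigma_1$ truth is strongly forced by some $p\in G_d$, via the ``moreover'' clause) and the other direction (any strongly forced formula is true in the extension). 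Since $G$ was arbitrary, this is exactly what is asserted.

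There isn't a genuine obstacle here: the whole content is already packaged inside Lemma \ref{lem:strong_muSigma_1_lambda}, and the role of the hypothesis $\rho_2^{N|\lambda}=\lambda$ is just to pass from ``the strong forcing relation is $\rSigma_2^{N|\lambda}$-definable'' to ``the bounded section $t$ of it is an element of $N|\lambda$''. The only mild point to watch is the convention for $\Th_2^{N|\lambda}(d)$ — namely that we really do mean the $\rSigma_2$-theory of $N|\lambda$ in parameters from $N|d$ together with $\tau$, so that the uniform $\rSigma_2$-definition of the forcing relation can be read off from it.
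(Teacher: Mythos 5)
Your proof is correct and follows essentially the same route as the paper's: invoke Lemma \ref{lem:strong_muSigma_1_lambda} to get that the strong $\muSigma_1^{\Mtilde_\lambda}$ forcing relation is $\rSigma_2^{N|\lambda}$, observe that $t$ is a bounded section of this relation (with all parameters in $N|d^{+N}$), use $\rho_2^{N|\lambda}=\lambda$ to conclude $t\in N|\lambda$ (hence $t\in\Nm_\lambda$ is a $\CC_d$-name), and appeal to the strong $\muSigma_1$ forcing theorem of Lemma \ref{lem:strong_muSigma_1_lambda} part \ref{item:strong_mu_Sigma_1-forcing_theorem} for the evaluation in part \ref{item:theory_name}. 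The paper's proof is terser but identical in substance; your extra care about bounding the parameters and about the reading of $\Th_2^{N|\lambda}(d)$ in part \ref{item:t_uniformly_computed_from_Th} just makes explicit what the paper leaves implicit.
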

\begin{proof}
 Part \ref{item:t_uniformly_computed_from_Th} is 
  because the strong $\muSigma_1$ forcing relation
 is $\rSigma_2^{N|\lambda}$
 (by Lemma \ref{lem:strong_muSigma_1_lambda}).  Since $\rho_2^{N|\lambda}=\lambda$, therefore $t\in N|\lambda$,
 so note $t$ is a $\CC_d$-name, so $t\in\Nm_\lambda$,
 giving part \ref{item:t_is_CC_d-name}. Part \ref{item:theory_name} is by the version of the strong $\muSigma_1$ forcing theorem given in Lemma \ref{lem:strong_muSigma_1_lambda} part \ref{item:strong_mu_Sigma_1-forcing_theorem}.\end{proof}

As a corollary we easily get:
\begin{lem}
 Suppose $\rho_2^{N|\lambda}=\lambda$.
 Then the strong $\mSigma_2$ forcing relation $\forces^{\mathrm{s}}_{\lambda 2}$ is $\rSigma_3^{N|\lambda}$, and the strong $\mSigma_2$ forcing theorem holds.
\end{lem}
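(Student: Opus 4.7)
The plan is to reduce questions about strong $\mSigma_2$ forcing to questions about strong $\muSigma_1$ forcing plus $\mSigma_0$ forcing, exploiting the canonical name for the $\muSigma_1$-theory provided by Lemma \ref{lem:name_for_1-theory}. Under the hypothesis $\rho_2^{N|\lambda}=\lambda$, that lemma supplies, for each $\dot{r}\in\Nm_\lambda$ with $d=\base(\dot{r})$, a canonical $\CC_d$-name $t_{\dot{r}}\in\Nm_\lambda$ which is forced by $\CC^N$ to equal $\Th_{\muSigma_1}^{\Mtilde_\lambda}(\{\dot{r}\})$, and which is uniformly $\rSigma_2^{N|\lambda}$-definable in $\dot{r},d$ (in fact, recursively computed from $\Th_2^{N|\lambda}(d)$).

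First I would observe that for $\dot{r},\dot{t}\in\Nm_\lambda$ and $p\in\CC^N$, one has $p\forces T_1(\dot{r},\dot{t})$ iff $p$ forces $\dot{t}$ and $t_{\dot{r}}$ to be equal as $\CC^N$-names. When $\rho_1^{\Mtilde_\lambda}$ is forced to be $\omega_1$, this is immediate from the definition of $T_1$; when $\rho_1^{\Mtilde_\lambda}<\omega_1$ in the extension, one absorbs the additional ordinal parameter into $\dot{r}$ and applies the obvious relativized version of Lemma \ref{lem:name_for_1-theory}. Checking $p\forces\dot{t}=t_{\dot{r}}$ is then itself $\rSigma_2^{N|\lambda}$: the name $t_{\dot{r}}$ is $\rSigma_2^{N|\lambda}$-definable, and recursive $\CC^N$-name equality reduces to the $\rDelta_2^{N|\lambda}$ $\mSigma_0$ forcing relation of Lemma \ref{lem:mSigma_0,lambda_forcing_rel_def}(\ref{item:mSigma_0,lambda_forcing_rel_def}).

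This reduction rewrites $p\forces^{\mathrm{s}}_{\lambda 2}\varphi(\vec{\tau})$, for $\varphi(\vec{x})\equiv\exists r,t\,[T_1(r,t)\wedge\psi(\vec{x},r,t)]$ with $\psi$ being $\Sigma_0$, equivalently as
\[\exists\dot{r}\in\Nm_\lambda\ \big[\,p\forces\psi(\vec{\tau},\dot{r},t_{\dot{r}})\,\big].\]
The outer quantifier is unbounded over $N|\lambda$ and contributes one $\Sigma_1^{N|\lambda}$ layer; the body invokes $t_{\dot{r}}$ through its $\rSigma_2^{N|\lambda}$ definition together with the $\rDelta_2^{N|\lambda}$ $\mSigma_0$ forcing relation. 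Totaling these, the resulting definition is $\rSigma_3^{N|\lambda}$, as desired.

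For the strong $\mSigma_2$ forcing theorem, the reverse direction is routine: if $p\forces^{\mathrm{s}}_{\lambda 2}\varphi(\vec{\tau})$ with $p\in G$ and witness $\dot{r}$, then $r_0=\dot{r}_G$ and $t_0=(t_{\dot{r}})_G$ satisfy $T_1(r_0,t_0)$ in $(\Mtilde_\lambda)_G$ and $\psi(\vec{\tau}_G,r_0,t_0)$ by the $\mSigma_0$ forcing theorem. For the forward direction, suppose $(\Mtilde_\lambda)_G\models\varphi(\vec{\tau}_G)$, witnessed by some $r_0,t_0$. Pick $\dot{r}\in\Nm_\lambda$ naming $r_0$; uniqueness of theories and Lemma \ref{lem:strong_muSigma_1_lambda}(\ref{item:strong_mu_Sigma_1-forcing_theorem}) give $t_0=\Th_{\muSigma_1}^{(\Mtilde_\lambda)_G}(\{r_0\})=(t_{\dot{r}})_G$, so the $\mSigma_0$ forcing theorem yields $p\in G$ with $p\forces\psi(\vec{\tau},\dot{r},t_{\dot{r}})$, which is exactly $p\forces^{\mathrm{s}}_{\lambda 2}\varphi(\vec{\tau})$. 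The main obstacle is the bookkeeping of complexity: one must verify that the case split in the definition of $T_1$ (depending on whether $\rho_1^{\Mtilde_\lambda}$ is forced to equal $\omega_1$) does not push the overall complexity above $\rSigma_3^{N|\lambda}$, but uniqueness of the $\muSigma_1$-theory makes the reduction through $t_{\dot{r}}$ uniform across both cases.
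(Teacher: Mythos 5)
Your argument uses the same device as the paper---reducing $\forces^{\mathrm{s}}_{\lambda 2}$ through the canonical name $t_{\dot r}$ supplied by Lemma \ref{lem:name_for_1-theory}---and the forcing-theorem direction is correct. However, the complexity accounting contains an error that, taken literally, would yield the wrong bound. You claim that ``checking $p\forces\dot t=t_{\dot r}$ is itself $\rSigma_2^{N|\lambda}$'' and that the outer existential contributes ``one $\Sigma_1$ layer,'' then assert the total is $\rSigma_3$. But an unbounded existential over a $\rSigma_2$ matrix is still $\rSigma_2$; if your accounting of the body were right the relation would be $\rSigma_2^{N|\lambda}$, not $\rSigma_3^{N|\lambda}$. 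The extra level is genuinely present: what Lemma \ref{lem:name_for_1-theory}(\ref{item:t_uniformly_computed_from_Th}) gives is that $t_{\dot r}$ is simply computed from the \emph{entire} theory $\Th_2^{N|\lambda}(d)$, not from any single $\rSigma_2$ fact. The set $t_{\dot r}$ has a $\rSigma_2$-defining condition for membership, but expressing the graph ``$w=t_{\dot r}$'' requires both inclusions and hence both $\rSigma_2$ and $\rPi_2$ information; equivalently, one needs to existentially quantify over the theory $t$ with $T_2^{N|\lambda}(r,t)$, which is exactly the defining form of $\rSigma_3^{N|\lambda}$. So the statement $\exists\dot r\,\exists t\,[T_2^{N|\lambda}(\cdot,t)\wedge\ldots]$ with the remainder $\Sigma_1$ in $t$ is $\rSigma_3^{N|\lambda}$; the existential over the master-code theory is where the jump happens, and your write-up obscures this.

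A smaller point: the remark about a ``case split in the definition of $T_1$ depending on whether $\rho_1^{\Mtilde_\lambda}$ is forced to equal $\omega_1$'' is not needed. The formula being forced is interpreted in $(\Mtilde_\lambda)_G$, which is a model of ordinal height $\omega_1$; for such a model $\rho_1$ is always $\omega_1$, so only the second clause of Definition \ref{dfn:mSigma_n} for $T_1$ is ever relevant, namely $T_1(r,t)\iff t=\Th_{\muSigma_1}(\{r\})$. The case split in that definition applies to $\M_\gamma$ for $\gamma>\omega_1$, not to the base structure.
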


In the following lemma, recall that
if $\varrho$ is an $\mSigma_0$ formula then
\[\varphi(s,x)\iff\text{``}\exists r,t\ [T_1(r,t)\wedge\varrho(r,t,s,x)]\text{''}\] is $\mSigma_2$.
\begin{lem}\label{lem:localize_forcing_varphi_mSigma_2} Suppose $\rho_2^{N|\lambda}=\lambda$.
Let $\tau\in\Nm_\lambda$
and
$d=\base(\tau)$.
Let $\varrho$ be $\mSigma_0$. Let $0<k<\om$.
Let $\vec{s}$ be a $\CC^N_{2k-2}$-name,
$s_{2k-1}$ be a $\CC^N_{2k-1}$-name, and
$s$ the name for $\vec{s}\conc(s_{2k-1})$.
Let $m\in[2k-1,\om)$,
 $p\in\CC^N_{m}$
and $\sigma,t\in\Nm_\lambda$ with $\base(\sigma),\base(t)\leq m$ and \[  N|\lambda\sats p\forces_{\CC^N_m}\Big[s\in\Dd^k\wedge\forces_{\CC_{\mathrm{tail}}}\Mtilde_\lambda^N\sats T_1(\sigma,t)\wedge\varrho(\sigma,t,s,\tau)\Big].\]
Let $\bar{p}=p\rest\CC^N_{2k-2}$ and
 $\QQ=\Coll(\om,\delta_{2k-1}^N)$. Then
\[ \begin{array}{rcl}N|\lambda&\sats&\bar{p}\forces_{\CC_{2k-2}^N}\exists s',\tau'\ \Big[\emptyset\forces_{\QQ}\\&&(\vec{s},s')\in\Dd^k\wedge\forces_{\CC_{\mathrm{tail}}}\Mtilde^{N}_\lambda\sats T_1(\sigma',t')\wedge\varrho(\sigma',t',(\vec{s},s'),\tau)\Big].\end{array}\]
\end{lem}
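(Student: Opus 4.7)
The plan is to exploit the hypothesis $\rho_2^{N|\lambda}=\lambda$ (which gives canonical theory names via Lemma \ref{lem:name_for_1-theory}) to reduce matters to an essentially $\mSigma_0$-level forcing statement, then combine homogeneity of the collapse forcings $\CC^N_{[2k,m]}$ and $\QQ$ with a $\mathscr{P}$-construction argument modeled on the proofs of Lemma \ref{lem:gen-all_independence} and Lemma \ref{lem:lambda_witness_appears_immediately} to push witness existence down from level $\CC^N_m$ to level $\CC^N_{2k-1}=\CC^N_{2k-2}\times\QQ$. I interpret the existentials $\exists s',\tau'$ in the conclusion as being over names $\sigma',t'$; the theory name $t'$ will be chosen canonically from $\sigma'$, and $s'$ is the degree component arising from the new $\QQ$-generic.

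First I will eliminate the theory name $t$. Since $\rho_2^{N|\lambda}=\lambda$, Lemma \ref{lem:name_for_1-theory} provides, uniformly in $\sigma\in\Nm_\lambda^N$, a canonical theory name $t_\sigma\in\Nm_\lambda^N$ with $\base(t_\sigma)\le\base(\sigma)$ and $\forces_{\CC^N}T_1(\sigma,t_\sigma)$, and $t_\sigma$ is simply computed from $\Th_2^{N|\lambda}(\base(\sigma))$. By the functionality of $T_1$ in its second argument, the hypothesis forces $t=t_\sigma$, so the hypothesis rewrites as
\[ p\forces_{\CC_m^N}\bigl[\forces_{\CC_\tail}\Mtilde_\lambda^N\sats\varrho(\sigma,t_\sigma,s,\tau)\bigr].\]
Once $\sigma'$ is constructed, setting $t':=t_{\sigma'}$ via the same canonical recipe will automatically verify the $T_1$-clause in the conclusion; thus only $\sigma'$ need be produced.

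Second, to bring the support of the witness from level $m$ down to level $2k-1$, apply a $\mathscr{P}$-construction argument. Let $\vec{\vareps}=\{\delta_0^N,\ldots,\delta_{2k-2}^N,\delta_{2k-1}^N,\delta_{m+1}^N\}$ and form $R=R^N_{\vec{\vareps}}$ as in Definition \ref{dfn:R^N_epsvec}; by Remark \ref{rem:R_construction_basic_props}, the iteration map $j:N\to R$ agrees with the identity below $\delta_{2k-1}^N$, sends $\delta_{2k}^N$ to $\delta_{m+1}^N$, and generic extensions of $R$ and $N$ at compatible collapse forcings yield the same $\HC$. Transferring the hypothesis along $j$ and exploiting the compatibility of $R$-generics with $N$-generics (exactly as in the proof of Lemma \ref{lem:gen-all_independence}), one sees that in $R$ the supports between levels $2k$ and $m$ collapse: for every $\CC_{2k-1}^R$-generic $G$ containing $j(\bar{p})\conc j(p_{2k-1})$, there is a real $w=\sigma^*(G)\in\HC^{R[G]}$, uniformly and locally definable over $R|\lambda^R$, such that $\forces_{\CC_\tail}\Mtilde_\lambda^R\sats\varrho(w,t_w,s,\tau)$. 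Translating back via the $R\leftrightarrow N$ generic equivalence yields a $\CC_{2k-1}^N$-name $\sigma^*\in\Nm_\lambda^N$ with $\base(\sigma^*)\le 2k-1$ such that $\bar{p}\conc p_{2k-1}\forces_{\CC_{2k-1}^N}[\forces_{\CC_\tail}\Mtilde_\lambda^N\sats\varrho(\sigma^*,t_{\sigma^*},s,\tau)]$.

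Third, absorb $p_{2k-1}$ into the existential using homogeneity of $\QQ=\Coll(\om,\delta_{2k-1}^N)$. In $N[G_{2k-2}]$, homogeneity provides a uniformly definable assignment $h\mapsto\pi_h$ of automorphisms of $\QQ$ with $p_{2k-1}\in\pi_h[h]$ for each $\QQ$-generic $h$. Define the $\QQ$-name $\sigma'$ by $(\sigma')^h=(\sigma^*)^{G_{2k-2}\cup\pi_h[h]}$ and $(s')^h=(s_{2k-1})^{G_{2k-2}\cup\pi_h[h]}$, and set $t':=t_{\sigma'}$ via the canonical construction of Step 1 performed inside $N[G_{2k-2}]$. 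Transferring the previous step's conclusion via $\pi_h$ yields $\emptyset\forces_\QQ$ that $(\vec{s},s')\in\Dd^k$ and $\forces_{\CC_\tail}\Mtilde_\lambda^N\sats T_1(\sigma',t')\wedge\varrho(\sigma',t',(\vec{s},s'),\tau)$, as required. The main technical obstacle is Step 2: the $\mathscr{P}$-construction-based support compression, where one must carefully verify that $\sigma^*$ is in fact a $\CC_{2k-1}^N$-name in $N|\lambda$ (not just a thing defined generically over $R$) using the locality and uniform definability afforded by Lemma \ref{lem:name_for_1-theory} together with the $\rSigma_2^{R|\lambda^R}$-definability of the strong forcing relations from Lemma \ref{lem:strong_muSigma_1_lambda}. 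The canonicity of $\sigma\mapsto t_\sigma$ is precisely what prevents the complexity from climbing above $\mSigma_0$ during the transfer and lets $t'$ fall out automatically once $\sigma'$ is fixed.
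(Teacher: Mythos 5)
Your overall architecture is right and matches the paper's terse pointer: the proof is a $\mSigma_2$-level adaptation of Lemma \ref{lem:lambda_witness_appears_immediately}, with Lemma \ref{lem:name_for_1-theory} supplying the canonical theory name $t_\sigma$ so that the $T_1$-clause is handled automatically, a $\mathscr{P}$-construction $R=R^N_{\vec\vareps}$ to compress the support of the witness from level $m$ down to level $2k-1$, and homogeneity of the collapse to absorb the condition. Step~1 is correct, and Step~3 is fine (if expressed somewhat unusually).

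The problem is the indexing in Step~2. The conclusion asks for the witness to become a $\Coll(\om,\delta_{2k-1}^N)$-name over $N[G_{2k-2}]$, so the iteration map $j=i_{NR}$ must fix $\CC_{2k-2}^N$ (hence $\crit(j)>\delta_{2k-2}^{+N}$) and send $\delta_{2k-1}^N$ up to (at least) $\delta_m^N$; this forces $\vec\vareps=\{\delta_0^N,\ldots,\delta_{2k-2}^N,\delta_m^N\}$, so that $j(\delta_{2k-1}^N)=\delta_m^N$ and $\CC_{2k-1}^R$-generic extensions of $R$ produce the same $\HC$ as $\CC_m^N$-generic extensions of $N$ (à la Remark \ref{rem:R_construction_basic_props}), which is exactly where the shifted witness lives. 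Your choice $\vec\vareps=\{\delta_0^N,\ldots,\delta_{2k-1}^N,\delta_{m+1}^N\}$ instead gives $\crit(j)>\delta_{2k-1}^{+N}$ and $j(\delta_{2k}^N)=\delta_{m+1}^N$, so the compression only reaches level $2k$: with that $R$ one has $\CC_{2k-1}^R=\CC_{2k-1}^N$, hence $\HC^{R[G]}=\HC^{N[G]}$ for $\CC_{2k-1}^R$-generic $G$, and this $\HC$ does not contain the interpretations of names of base between $2k$ and $m$, so your claimed $\sigma^*(G)\in\HC^{R[G]}$ is simply not available there. (Your own phrasing ``the supports between levels $2k$ and $m$ collapse'' already signals the off-by-one: you then need to relate this to $\CC_{2k-1}^R$, but the collapse has landed at level $2k$.) Finally, the pullback to $N$ uses that $i_{NR}\rest(N|\lambda)$ is $\rSigma_2$-elementary under $\rho_2^{N|\lambda}=\lambda$ (equivalently, $i_{NR}``T_2^{N|\lambda}\subseteq T_2^{R|\lambda}$, with degree-$2$ ultrapowers when $N=N|\lambda$); your appeal to the $\rSigma_2$-definability of the forcing relations from Lemma \ref{lem:strong_muSigma_1_lambda} is pointing at a cousin of the needed fact rather than the fact itself.
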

\begin{proof} This is
like the proof of  Lemma \ref{lem:lambda_witness_appears_immediately},
but using Lemma \ref{lem:name_for_1-theory}
and that  iteration maps $i_{NR}:N\to R$
fix $\lambda$ and satisfy
$i_{NR}``T_2^{N|\lambda}\sub T_2^{R|\lambda}$.
(if $N=N|\lambda$,
we take the iteration maps
to be formed using degree $2$ ultrapowers),
and also using the slight adaptation of Lemma \ref{lem:name_for_1-theory}
to generic extensions of $N$
(and iterates $R$ thereof)
of form $N[g]$,
where $g$ is $(N,\Coll(\om,\delta))$-generic for some $\delta<\lambda$.
\end{proof}

\begin{cor}\label{cor:lambda_witness_appears_immediately_2}
 \label{lem:lambda_witness_appears_immediately_2}
Suppose $\rho_2^{N|\lambda}=\lambda$.
 Let  $\vec{\tau}\in(\Nm_\lambda)^{<\om}$  and $d\in\{0\}\cup\Delta^N$ with $d\geq\base(\vec{\tau})$. Let $k<\om$ and $\vec{\delta}\in[\Delta_{>d}]^{2k}$.
 Let $\psi$ be $\mSigma_0$. Then
  $N$ satisfies
 \[ \begin{array}{rl}  \forces_{\CC_d}\text{``}&\all^{\mathrm{gen}}_{\vec{\delta}}s\ \Big[\forces_{\CC_\tail}\M_\lambda\sats\exists r,t\ [T_1(r,t)\wedge\psi(r,t,\vec{\tau},s)]\Big]\\
&\implies
 \all^{\mathrm{gen}}_{\vec{\delta}}s\ \exists r,t \in\HC\Big[\forces_{\CC_\tail}\M_\lambda\sats T_1(\check{r},\check{t})\wedge\psi(\check{r},\check{t},\vec{\tau},s)]\Big]\text{''}.\end{array}\] 
\end{cor}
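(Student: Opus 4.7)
The plan is to adapt the proof strategy of Lemma \ref{lem:lambda_witness_appears_immediately} to the $\mSigma_2$ context, invoking the strong $\mSigma_2^{\Mtilde_\lambda}$ forcing theorem, available by the hypothesis $\rho_2^{N|\lambda}=\lambda$ (via the paragraphs following Lemma \ref{lem:name_for_1-theory}), in place of the strong $\mSigma_1^{\Mtilde_\lambda}$ forcing theorem used there. First I would fix an $(N,\CC_d)$-generic and unfold the $\all^{\gen}_{\vec{\delta}}$-quantifier by successively choosing $\CC^N_{\delta_i}$-generics and degree parameters, reaching an $(N,\CC^N_{\max(\vec{\delta})})$-generic $g$ and a tuple $s\in(\Dd^k)^{N[g]}$ with $\forces_{\CC_\tail}\Mtilde_\lambda\sats\exists r,t\,[T_1(r,t)\wedge\psi(r,t,\vec{\tau},s)]$. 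Setting $\eps=\min\Delta^N_{>\max(\vec{\delta})}$, it will suffice (in analogy with line (\ref{eqn:N[g]_sats_eps_suffices})) to prove the localized form
\[
N[g]\sats\forces_{\CC^N_\eps}\exists r,t\in\HC\,[\forces_{\CC_\tail}\Mtilde_\lambda\sats T_1(\check{r},\check{t})\wedge\psi(\check{r},\check{t},\vec{\tau},s)].
\]

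Next, by the strong $\mSigma_2^{\Mtilde_\lambda}$ forcing theorem I would obtain a Woodin $\delta>\eps$ of $N$, a condition $p\in\CC^N_\delta$, and names $\sigma,t^*\in\Nm_\lambda^N$ with $\max(\base(\sigma),\base(t^*))\leq\delta$, such that $p\forces_{\CC_\tail}\Mtilde_\lambda\sats T_1(\sigma,t^*)\wedge\psi(\sigma,t^*,\vec{\tau},s)$. By homogeneity of $\CC^N_{(\max(\vec{\delta}),\delta]}$ and $\CC_\tail$ together with the symmetry of $\Mtilde_\lambda$, the condition $p$ can be taken trivial. Moreover, by Lemma \ref{lem:name_for_1-theory}, $t^*$ may be canonicalized as the unique $\CC_{\base(\sigma)}$-name for $\Th_{\muSigma_1}^{\Mtilde_\lambda}(\{\sigma\})$, which is uniformly computed from $\Th_2^{N|\lambda}$ and independent of the $\CC_\tail$-generic. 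Thus the problem reduces to producing, at the $\CC^N_\eps$-level over $N[g]$, a concrete $r\in\HC$ (from which the theory $t=t_r\in\HC$ is then automatic via Lemma \ref{lem:name_for_1-theory}) such that $\forces_{\CC_\tail}\Mtilde_\lambda\sats T_1(\check{r},\check{t})\wedge\psi(\check{r},\check{t},\vec{\tau},s)$.

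The final step is the iteration argument of Lemma \ref{lem:lambda_witness_appears_immediately}: form $R=R^N_{\vec{\eps}\cup\{\delta\}}$ with $\vec{\eps}=\eps\cap\Delta^N$ as in Definition \ref{dfn:R^N_epsvec}. By Remark \ref{rem:R_construction_basic_props}, $R$ is a generic non-dropping $\Sigma_{\Pg}$-iterate via an iteration map $i_{NR}:N\to R$ with $\crit(i_{NR})>\max(\vec{\delta})$ and $i_{NR}(\eps)=\delta$, extending canonically to $i_{NR}^+:N[g]\to R[g]$. The remark also provides matching $\CC_\delta$-generics over $R|\delta^{+R}$ and $N|\delta^{+N}$ whose extensions yield the same universe, identifying the $\HC$ of the $\CC^N_\delta$-extension of $N[g]$ with that of the $\CC^R_\delta$-extension of $R[g]$; hence $N\sats\forces_{\CC^N_\delta}[\cdots]$ iff $R\sats\forces_{\CC^R_\delta}[\cdots]$. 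Then $i_{NR}$-elementarity, using $i_{NR}(\eps)=\delta$, pulls the $R$-side statement back to $N\sats\forces_{\CC^N_\eps}[\cdots]$, completing the proof. The main obstacle will be ensuring that $T_1$ is preserved through both the matching and the iteration: this relies on $N$ and $R$ being $\Gammag$-stable and mtr-suitable, so that their canonical $\muSigma_1$-theory computations commute with $i_{NR}$ and with the matching identifications via Lemma \ref{lem:name_for_1-theory}\ref{item:t_uniformly_computed_from_Th}.
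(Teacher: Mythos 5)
Your target is weaker than what the corollary actually claims, and that gap is not cosmetic. The corollary's conclusion reads
\[
\all^{\mathrm{gen}}_{\vec{\delta}}s\ \exists r,t\in\HC\ \Big[\forces_{\CC_\tail}\Mtilde_\lambda\sats T_1(\check{r},\check{t})\wedge\psi(\check{r},\check{t},\vec{\tau},s)\Big],
\]
with no forcing quantifier in front of the existential: the witnesses $r,t$ must already lie in $N[g]$, where $g$ is the generic reached after the $\all^{\mathrm{gen}}_{\vec{\delta}}$-round (i.e.\ after $\Coll(\om,\delta_{2k-1}^N)$ has been forced). Your "localized form" inserts $\forces_{\CC^N_\eps}$ with $\eps=\min\Delta^N_{>\max(\vec{\delta})}=\delta_{2k}^N$, which only places the witnesses one Woodin later. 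That was the correct target for the $\mSigma_1$ analogue, Lemma \ref{lem:lambda_witness_appears_immediately}, whose conclusion explicitly keeps the $\forces_{\CC_{\vareps}}$; this corollary deliberately drops it (and correspondingly it is stated as a corollary of Lemma \ref{lem:localize_forcing_varphi_mSigma_2}, not of the $\mSigma_1$ lemma).

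The shortfall cannot be patched simply by being more careful with the same iteration argument. Taking $R=R^N_{\vec{\eps}\cup\{\delta\}}$ with $\vec{\eps}=\eps\cap\Delta^N$ gives, by Remark \ref{rem:R_construction_basic_props}, an iteration with $\crit(i_{NR})>\max(\vec{\delta})=\delta_{2k-1}^N$; since the map fixes everything below its critical point, this can bring a witness down to $\eps=\delta_{2k}^N$ but no further. To land the witnesses in $N[g]$ one must iterate with critical point above $\delta_{2k-2}^N$ but below $\delta_{2k-1}^N$, so that $\delta_{2k-1}^N$ moves up to $\delta$; but then the last existential degree $t_{k-1}$ of the tuple, chosen at $\delta_{2k-1}^N$, is disturbed and must itself be modified. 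That simultaneous replacement of the last degree and of the $T_1$-witnesses is precisely the content of Lemma \ref{lem:localize_forcing_varphi_mSigma_2}, which you never invoke. Your transplant of the Lemma \ref{lem:lambda_witness_appears_immediately} argument stops one collapse short of the stated conclusion.
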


\begin{lem}\label{lem:gen-all_independence_2}
Suppose $\rho_2^{N|\lambda}=\lambda$.
Let $\tau\in\Nm_\lambda^N$
and $d=\base(\tau)$.
Let $\varphi$ be $\mSigma_{2}$.
Then for all $k<\om$ and all $\vec{\delta},\vec{\varepsilon}\in[\Delta_{>d}]^{2k}$,
and all $\theta\in(d,\lambda)$, $N|\lambda$ satisfies that $\CC_d$ forces that the following three statements are equivalent:
\begin{enumerate}[label=\tu{(}\roman*\tu{)}]
\item\label{item:deltavec_2} $\all^{\mathrm{gen}}_{\vec{\delta}}s\ \forces_{\CC_{\mathrm{tail}}}\Mtilde_\lambda\sats\varphi(x,s)$,
\item\label{item:epsvec_2}
$\all^{\mathrm{gen}}_{\vec{\vareps}}s\ \forces_{\CC_{\mathrm{tail}}}\Mtilde_\lambda\sats\varphi(x,s)$,
\item\label{item:above_theta_2}
$\all^{\mathrm{gen}}_{\geq\theta;k}s\ \forces_{\CC_{\mathrm{tail}}}\Mtilde_\lambda\sats\varphi(x,s)$.\qedhere
\end{enumerate}
\end{lem}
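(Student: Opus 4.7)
The plan is to mimic the proof of Lemma \ref{lem:gen-all_independence} but replacing each of the three main ingredients by their $\mSigma_2$-analogues established just above: Lemma \ref{lem:strong_muSigma_1_lambda} and Lemma \ref{lem:name_for_1-theory} (to see that the $T_1$ predicate inside $\Mtilde_\lambda$ is uniformly named by an $N|\lambda$-definable element of $\Nm_\lambda$), Corollary \ref{cor:lambda_witness_appears_immediately_2} (which takes the role played by Lemma \ref{lem:lambda_witness_appears_immediately} in the $\mSigma_1$ case), and Lemma \ref{lem:localize_forcing_varphi_mSigma_2} (to transfer witnesses across $\CC_{2k-1}$ without paying the cost of $\CC_{\mathrm{tail}}$). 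For notational ease I will again take $\tau=\emptyset$, $d=-1$; the general case is a routine relativization above $N|\delta_d^{+N}$, using that the iteration maps we invoke have critical point strictly above $\delta_d^N$ and so extend canonically to $N[g]$ for $g$ being $(N,\CC_d)$-generic.

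First I would reduce to the case $N=\Pg$. The point is that under the hypothesis $\rho_2^{N|\lambda}=\lambda$ (and hence $\rho_2^{\Pg|\lambda^{\Pg}}=\lambda^{\Pg}$ by MTR preservation), the canonical iteration map $i_{\Pg N}\rest(\Pg|\lambda^{\Pg}):\Pg|\lambda^{\Pg}\to N|\lambda^N$ is $\rSigma_3$-elementary, and the three statements \ref{item:deltavec_2}--\ref{item:above_theta_2} can each be written in $\rSigma_3^{N|\lambda}$ form with parameters $\deltavec,\vec{\vareps},\theta$ in the range of $i_{\Pg N}$; this is because the rewriting used in the proof of Lemma \ref{lem:strong_muSigma_1_lambda} together with Lemma \ref{lem:name_for_1-theory}\ref{item:t_uniformly_computed_from_Th} expresses ``$\forces_{\CC_{\tail}}\Mtilde_\lambda\sats\varphi$'' for $\varphi\in\mSigma_2$ as a $\rSigma_3^{N|\lambda}$ statement (one reads off from $\Th_2^{N|\lambda}$ the canonical name for the $\muSigma_1$-theory of the relevant parameters, and then adds one layer of $\exists r,t$).

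Now I prove \ref{item:deltavec_2} $\Rightarrow$ \ref{item:epsvec_2} (the other direction being symmetric). WLOG assume $\deltavec=(\delta_0^N,\dots,\delta_{2k-1}^N)$ and set $\delta=\delta_{2k}^N$, $\vareps=\delta_{2k}^R$ where $R=R^N_{\vec{\vareps}}$. Let $j:N\to R$ be the iteration map, so $j(\deltavec)=\vec{\vareps}$ and $j(\delta)=\vareps$. Writing $\varphi$ as $\exists r,t\,[T_1(r,t)\wedge\psi(r,t,s)]$ with $\psi$ being $\mSigma_0$, the assumption together with Corollary \ref{cor:lambda_witness_appears_immediately_2} yields
\[
N|\delta^{+N}\sats \all^{\mathrm{gen}}_{\deltavec}s\ \exists r,t\in\HC\ \forces_{\CC_\delta}\forces_{\CC_{\tail}}\Mtilde_\lambda\sats T_1(\check r,\check t)\wedge\psi(\check r,\check t,\check s).
\]
Because $j\rest N|\delta^{+N}$ is fully elementary and $\crit(j\rest N|\delta^{+N})$ is above $\max(\deltavec)$, this lifts to the analogous statement over $R|\vareps^{+R}$ at $(\vec{\vareps},\vareps)$. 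By Remark \ref{rem:R_construction_basic_props} we have simultaneous $\CC_{\vareps_i}$-generics $g_i,h_i$ over $R|\vareps_i^{+R}$ and $N|\vareps_i^{+N}$ giving the same universe for every $i\leq 2k$, and (since $T_1$-names are uniformly definable from the $\rSigma_2^{|\lambda}$-theory by Lemma \ref{lem:name_for_1-theory}) the $T_1$-predicate of $\Mtilde_\vareps$ computed in the two extensions matches. Hence the lifted statement pulls back from $R|\vareps^{+R}$ to $N|\vareps^{+N}$, giving \ref{item:epsvec_2}.

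The equivalence \ref{item:deltavec_2} $\iff$ \ref{item:above_theta_2} goes by the corresponding variant. After reducing to $N=\Pg$, pick any $\deltavec\in[\Delta_{>\theta}]^{2k}$; assuming \ref{item:deltavec_2}, I iteratively feed generic $s_i$'s and extender-algebra/iteration inflations as in the proof of Lemma \ref{lem:gen-all_independence}, at each step climbing to new Woodins $\vareps_i>\theta$ and using the version of the ``equal-universe'' trick above to transport the $T_1$-witnesses supplied at the $\deltavec$-stage into the $\vec{\vareps}$-stage; the witnesses are kept in $\HC$ throughout thanks to Corollary \ref{cor:lambda_witness_appears_immediately_2}, so no forcing in the tail is required to manifest them, and Lemma \ref{lem:localize_forcing_varphi_mSigma_2} justifies stripping the last $\CC$-coordinate. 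The reverse direction uses homogeneity of the tail forcing, exactly as in Lemma \ref{lem:gen-all_independence}, now combined with the $\mSigma_2$-version of strong forcing in Lemma \ref{lem:strong_muSigma_1_lambda}.

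The main obstacle I expect is keeping track of the preservation of the $T_1$ predicate across the switch between $N$ and $R=R^N_{\vec{\vareps}}$: one needs that the $\muSigma_1$-theory of the parameters in $\Mtilde_\lambda$ computed after $\CC_\delta$ over $N$ agrees with that computed after $\CC_\vareps$ over $R$ under the matched generics. This is precisely what Lemma \ref{lem:name_for_1-theory}\ref{item:t_uniformly_computed_from_Th} delivers (via $\Th_2^{N|\lambda}$), but one must verify that the iteration map $j:N\to R$ preserves the relevant $\Th_2^{\cdot|\lambda}$; this is where the hypothesis $\rho_2^{N|\lambda}=\lambda$ is used decisively.
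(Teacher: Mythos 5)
Your plan is correct and matches the paper's approach — the paper's own proof is exactly the one-line note that this is proved like Lemma \ref{lem:gen-all_independence} but using Lemma \ref{lem:localize_forcing_varphi_mSigma_2}, and you have fleshed out what that adaptation requires. Your identification of the role of the hypothesis $\rho_2^{N|\lambda}=\lambda$ (so that the $T_1$-names land in $N|\lambda$ via Lemma \ref{lem:name_for_1-theory}, and the reduction to $N=\Pg$ uses $\rSigma_3$-elementarity of $i_{\Pg N}\rest(\Pg|\lambda^{\Pg})$, which is available because $\rho_2^{\Pg|\lambda^{\Pg}}=\lambda^{\Pg}$) is the right reading of why the extra hypothesis appears here and not in Lemma \ref{lem:gen-all_independence}.
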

\begin{proof}Like   Lemma \ref{lem:gen-all_independence},
making use of
 Lemma \ref{lem:localize_forcing_varphi_mSigma_2}.\end{proof}

\begin{lem}\label{lem:N|lambda_compute_mu_2}
Suppose $\rho_2^{N|\lambda}=\lambda$. Let $G$ be $(N,\CC^N)$-generic. Let $\varphi$ be $\mSigma_2$ and $\tau\in\Nm_\lambda$. Let $k<\om$. Let $d=\base(\tau)$ and $\vec{\delta}\in[\Delta^N_{>d}]^{2k}$. Then
\[(\M_\lambda)_G\models\all^*_k s\ \varphi(\tau_G,s)\ \ 
\iff\ \  N[G\rest (d+1)]\models\all^{\mathrm{gen}}_{\vec{\delta}}s\ \forces_{\CC_{\mathrm{tail}}}\M_\lambda\models\varphi(\tau,s).\]
Therefore if $N$ is an $\RR$-genericity
iterate of $\Pg$, as witnessed by $G$, 
and sufficient Turing determinacy holds in $V$,
then
\[ \M_{\om_1}\models\exists^*_ks\ \varphi(\tau_G,s)\ \ \iff\ \  N[G\rest (d+1)]\models\all^{\mathrm{gen}}_{\vec{\delta}}s\ \forces_{\CC_{\mathrm{tail}}}\M_\lambda\models\varphi(\tau,s).\]
\end{lem}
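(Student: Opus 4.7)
My plan is to mimic the proof of Lemma \ref{lem:N|lambda_compute_mu} verbatim, but one level up in the $\mSigma$-hierarchy. The two new ingredients available under the hypothesis $\rho_2^{N|\lambda}=\lambda$ are Lemma \ref{lem:gen-all_independence_2} (the $\mSigma_2$ analogue of \ref{lem:gen-all_independence}), which will let me pass freely between the bounded generic quantifier using $\vec\delta$ and an unbounded ``above $\theta$'' version for $\mSigma_2$ formulas; and Corollary \ref{cor:lambda_witness_appears_immediately_2}, which will give me $T_1$-witnesses already in $\HC$ after one additional generic Woodin, rather than pushed out into the tail.

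For the forward direction, I would assume the right-hand side and apply Lemma \ref{lem:gen-all_independence_2} to rewrite it as
\[ N[G\rest(d+1)]\models \all^{\mathrm{gen}}_{\geq\theta;k} s\ \forces_{\CC_{\mathrm{tail}}}(\Mtilde_\lambda)^N\models\varphi(\tau,s) \]
for every $\theta\in(d,\lambda)$. Given a tuple $\vec x\in(\Mtilde_\lambda)_G$ of degrees, I pick $\theta<\lambda$ large enough that $\vec x\in N[G\rest\theta]$, choose Woodins $\delta'_0<\ldots<\delta'_{2k-1}<\lambda$ of $N$ above $\theta$, and extract from the assumption generic degrees $s\in\Dd^k$ with $s_i\geq_T x_i$ living in $N[G\rest\delta'_{2k-1}]$, together with the forced statement $\forces_{\CC_{\mathrm{tail}}}(\Mtilde_\lambda)^N\models\varphi(\tau,s)$. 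Writing $\varphi(\tau,s)$ as $\exists r,t\,[T_1(r,t)\wedge\varrho(r,t,\tau,s)]$ with $\varrho\in\mSigma_0$, Corollary \ref{cor:lambda_witness_appears_immediately_2} supplies $r,t\in\HC^{N[G\rest\delta'_{2k-1}]}$ with $\forces_{\CC_{\mathrm{tail}}}(\Mtilde_\lambda)^N\models T_1(\check r,\check t)\wedge\varrho(\check r,\check t,\tau,s)$. These $r,t$ then lie in $(\Mtilde_\lambda)_G$, and the correct interpretation of $T_1$ inside $\Mtilde_\lambda$ (discussed below) will yield $(\Mtilde_\lambda)_G\models T_1(r,t)\wedge\varrho(r,t,\tau_G,s)$, hence $\varphi(\tau_G,s)$, giving $(\Mtilde_\lambda)_G\models\all^*_k s\ \varphi(\tau_G,s)$.

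The converse goes by homogeneity: failure of the right-hand side gives $N[G\rest(d+1)]\models\exists^{\mathrm{gen}}_{\vec\delta} s\ \lnot\forces_{\CC_{\mathrm{tail}}}(\Mtilde_\lambda)^N\models\varphi(\tau,s)$, and the homogeneity of $\CC_{\mathrm{tail}}$ strengthens this to $\forces_{\CC_{\mathrm{tail}}}(\Mtilde_\lambda)^N\models\neg\varphi(\tau,s)$ for the same $s$; dualizing the previous paragraph then yields $(\Mtilde_\lambda)_G\models\exists^*_k s\ \neg\varphi(\tau_G,s)$, i.e., $(\Mtilde_\lambda)_G\not\models\all^*_k s\ \varphi(\tau_G,s)$. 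The ``therefore'' clause in the $\RR$-genericity case is then immediate, since $(\Mtilde_\lambda)_G=\M_{\omega_1}$ and sufficient Turing determinacy in $V$ lets one interchange $\all^*_k$ and $\exists^*_k$ on the $\bfmSigma_2^{\M_{\omega_1}}$ relations involved. The main obstacle I expect is not the overall shape of the argument (dictated by the $\mSigma_1$ case) but rather verifying that the $\dot T$-predicate inside the name $\Mtilde_\lambda$ really interprets in $(\Mtilde_\lambda)_G$ as the fine-structural $T_1$-predicate of $\M_{\omega_1}$ from Definition \ref{dfn:mSigma_n}. This is what promotes the forced statement $T_1(\check r,\check t)$ into the statement $T_1(r,t)$ inside $(\Mtilde_\lambda)_G$, and it rests on Lemma \ref{lem:name_for_1-theory}(\ref{item:t_uniformly_computed_from_Th}) together with the mouse-witness description of $T$ used in the construction of $\Mtilde_\lambda$, in combination with the $\mSigma_1$ case already established in Lemma \ref{lem:N|lambda_compute_mu}.
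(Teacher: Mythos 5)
Your approach matches the paper's, which proves this lemma with literally the remark "By the obvious adaptation of the proof of Lemma \ref{lem:N|lambda_compute_mu}." Your substitution of Lemma \ref{lem:gen-all_independence_2} for Lemma \ref{lem:gen-all_independence}, the passage to the unbounded quantifier $\all^{\mathrm{gen}}_{\geq\theta;k}$, and the homogeneity argument for the converse are exactly the intended adaptation, and your use of Corollary \ref{cor:lambda_witness_appears_immediately_2} to pull the $T_1$-witnesses $r,t$ into $\HC$ before the tail is the right supporting ingredient (Lemma \ref{lem:localize_forcing_varphi_mSigma_2} is also in play there).

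One imprecision is worth flagging: in your final paragraph you write that "the $\dot T$-predicate inside the name $\Mtilde_\lambda$ really interprets in $(\Mtilde_\lambda)_G$ as the fine-structural $T_1$-predicate of $\M_{\omega_1}$." Read literally this conflates two different objects. The constant $\dot T$ of the $\Ll$-language interprets as $T^{\M_{\omega_1}}$, the set of pairs $(x,\Th_{\Sigma_1}^{\SS_{\alphag}}(\{x\}))$ encoded via mouse witnesses; that is what the name $\widetilde T$ inside $\Mtilde_\lambda$ is built to give. The relation $T_1$ of Definition \ref{dfn:mSigma_n}, by contrast, is $T_1(r,t)\iff t=\Th_{\muSigma_1}^{\M_{\omega_1}}(\{r\})$, a derived fine-structural relation whose definition uses $\dot T$ but is not identical to it. The genuine question you need — that forcing $T_1(\check r,\check t)$ over $N|\lambda$ correctly predicts $T_1(r,t)$ in $(\Mtilde_\lambda)_G$ — is the forcing theorem for the $\mSigma_1$ and $\muSigma_1$ relations together with Lemma \ref{lem:name_for_1-theory}, which is what you then cite; so the substance is right even though the phrasing suggests a category slip.
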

\begin{proof}By the obvious adaptation of the proof of Lemma \ref{lem:N|lambda_compute_mu}.\end{proof}

\begin{lem}\label{lem:strong_muSigma_1_lambda_2}
Suppose $\rho_2^{N|\lambda}=\lambda$.
Then:
\begin{enumerate}
 \item\label{item:strong_mu_k_Sigma_1_2} The strong $\mu_k\Sigma_2^{\Mtilde_\lambda}$
 forcing relation of $N$ is  $\rSigma_3^{N|\lambda}$, uniformly in $k<\om$.
\item The strong $\muSigma_2^{\Mtilde_\lambda}$ forcing relation of $N$ is  $\rSigma_3^{N|\lambda}$.
\item The strong $\mu_k\Sigma_2^{\Mtilde_\lambda}$ and strong $\muSigma_2^{\Mtilde_\lambda}$
forcing theorems hold.
In particular, letting $G$ be $(N,\CC^N)$-generic,
 $\vec{\tau}\in\Nm_\lambda$
and $d=\base(\vec{\tau})$, and $\varphi$ be $\mSigma_2$, if $(\Mtilde_\lambda)_G\sats\all^*_k s\ \varphi(s,\vec{\tau}_G)$
then there is $p\in G\cap\CC_d$
such that \[ p\forces^{\mathrm{s}}_{\lambda\mu_k2}\all^*_ks\ \big[\Mtilde_\lambda\sats\varphi(s,\vec{\tau})\big].\]
\end{enumerate}
\end{lem}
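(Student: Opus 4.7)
The plan is to imitate the proof of Lemma~\ref{lem:strong_muSigma_1_lambda} one fine-structural level higher, with the hypothesis $\rho_2^{N|\lambda}=\lambda$ playing the role that $\rho_1^{N|\lambda}=\lambda$ played there, and with the corresponding second-level tools substituted for the first-level ones. In particular, Corollary~\ref{cor:lambda_witness_appears_immediately_2} replaces Lemma~\ref{lem:lambda_witness_appears_immediately}, Lemma~\ref{lem:gen-all_independence_2} replaces Lemma~\ref{lem:gen-all_independence}, Lemma~\ref{lem:N|lambda_compute_mu_2} replaces Lemma~\ref{lem:N|lambda_compute_mu}, and Lemma~\ref{lem:name_for_1-theory} supplies the internal $\CC_d$-name for the $\muSigma_1^{\Mtilde_\lambda}$-theory needed in order to interpret the predicate $T_1$ appearing in $\mSigma_2$ formulas.

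For part~1, fix $\varphi(s,\vec{x})\iff\exists r,t\,[T_1(r,t)\wedge\psi(r,t,s,\vec{x})]$ with $\psi$ $\mSigma_0$ and $\lh(s)=k$, together with $\vec{\tau}\in(\Nm_\lambda)^{<\om}$, $d=\base(\vec{\tau})$ and $p\in\CC^N$. I will show that $N\sats$``$p\forces^{\mathrm{s}}_{\lambda\mu_k 2}\all^*_k s\,[\Mtilde_\lambda\sats\varphi(s,\vec{\tau})]$'' is equivalent to the existence of $\xi<\lambda$ such that, letting $u=\Th_2^{N|\lambda}(\xi)$ (an element of $N|\lambda$ since $\rho_2^{N|\lambda}=\lambda$), the theory $u$ locates Woodin cardinals $d<\vareps_0<\ldots<\vareps_{2k-1}<\vareps<\xi$ (Woodinness and its standard verification being $\rPi_1$, hence determined by $u$), and inside $N|\lambda$ one has
\[ (p\rest\CC_d)\forces_{\CC_d}\all^{\mathrm{gen}}_{\vec{\vareps}} s\ \exists r\in\HC\ \exists t\in\HC\ \big[t=\dot{t}_{r}[\Gtilde]\ \wedge\ \forces_{\CC_\tail}\psi'(\check{r},\check{t},\check{s},\vec{\tau})\big], \]
where $\dot{t}_r\in\Nm_\lambda$ is the name provided by Lemma~\ref{lem:name_for_1-theory} (applied with $\tau=\check{r}$), forced to equal $\Th_{\muSigma_1}^{\Mtilde_\lambda}(\{\check{r}\})$, and $\psi'$ is the natural translation of $\psi$ from the proof of Lemma~\ref{lem:strong_muSigma_1_lambda} interpreting $\dot{T}$ via locally-available mouse witnesses. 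By Lemma~\ref{lem:name_for_1-theory}(\ref{item:t_uniformly_computed_from_Th}) the identification of $\dot{t}_r$ is uniformly computable from $u$ at the $\rSigma_2$ level, and the remaining $\psi'$-clause is $\rSigma_1$ as before; with the outer $\exists\xi<\lambda$ the whole statement is $\rSigma_3^{N|\lambda}$. Part~2 then follows by prepending $\exists k<\omega$.

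For part~3, suppose $G$ is $(N,\CC^N)$-generic and $(\Mtilde_\lambda)_G\sats\all^*_k s\ \varphi(s,\vec{\tau}_G)$. By Lemma~\ref{lem:N|lambda_compute_mu_2}, this is equivalent to $N[G\rest(d+1)]\sats\all^{\mathrm{gen}}_{\vec{\delta}} s\ \forces_{\CC_\tail}\Mtilde_\lambda\sats\varphi(\tau,s)$ for the first $2k$ Woodins $\vec{\delta}$ above $d$, i.e.~for the explicit $\vec{\delta}$ figuring in the definition of $\forces^{\mathrm{s}}_{\lambda\mu_k 2}$. By Corollary~\ref{cor:lambda_witness_appears_immediately_2}, the pair $(r,t)$ witnessing $\exists r,t[T_1(r,t)\wedge\psi]$ can already be found in $\HC$ at the $\CC_\vareps$-stage (with $\vareps=\delta_{i+2k}^N$), and after identifying the correct value of $t$ by means of Lemma~\ref{lem:name_for_1-theory}, this is exactly the condition defining $\forces^{\mathrm{s}}_{\lambda\mu_k 2}$. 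The usual forcing theorem for $\CC_d$ inside $N$ then yields some $p\in G\cap\CC_d$ with $p\forces^{\mathrm{s}}_{\lambda\mu_k 2}\all^*_k s\,[\Mtilde_\lambda\sats\varphi(s,\vec{\tau})]$, as required; the $\muSigma_2$-variant is immediate.

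The main obstacle is the careful handling of the inner $T_1(r,t)$ clause: one must use Lemma~\ref{lem:name_for_1-theory} to localize the check ``$t$ is the $\muSigma_1$-theory of $r$ in $\Mtilde_\lambda$'' into an $\rSigma_2^{N|\lambda}$ condition on the parameter $u=\Th_2^{N|\lambda}(\xi)$, so that the outer $\exists\xi$ keeps the complexity at $\rSigma_3$ rather than one level higher. Beyond this, the verification is bookkeeping across the factorizations $\CC_d,\CC_\vareps,\CC_\tail$, and proceeds exactly in parallel with the one-level-lower proof already written out.
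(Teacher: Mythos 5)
Your proposal is correct and follows essentially the same approach as the paper's (very brief) proof: imitate Lemma~\ref{lem:strong_muSigma_1_lambda} one level up, trading $\Th_1^{N|\lambda}(\xi)$ for $\Th_2^{N|\lambda}(\xi)$, replacing the single witness $w$ by the pair $(r,t)$, and using Lemma~\ref{lem:name_for_1-theory} to certify that $t$ names the $\muSigma_1^{\Mtilde_\lambda}$-theory of $r$; Lemmas~\ref{lem:gen-all_independence_2} and \ref{lem:N|lambda_compute_mu_2} then supply the forcing-theorem half. The only cosmetic slip is that your displayed formula drops the $\forces_{\CC_\vareps}$ that should sit between $\all^{\mathrm{gen}}_{\vec{\vareps}}s$ and $\exists r,t\in\HC$ (the witness pair lives in the $\CC_\vareps$-extension, not the $\CC_{\max\vec{\vareps}}$-extension), but since you already introduce the extra Woodin $\vareps$ among the located ordinals and invoke Corollary~\ref{cor:lambda_witness_appears_immediately_2} for exactly this localization, the intent is clear and the argument goes through.
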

\begin{proof}Similar to Lemma \ref{lem:strong_muSigma_1_lambda},
using Lemmas \ref{lem:gen-all_independence_2},
\ref{lem:N|lambda_compute_mu_2} 
and \ref{lem:name_for_1-theory} and their proofs.
The witness $w$ from
Lemma \ref{lem:strong_muSigma_1_lambda} is replaced wtih $r,t$, and $\psi'(w,s,\vec{\tau})$
replaced with $T_1'(r,t)\wedge\psi'(r,t,s,\vec{\tau})$, and $T_1'$ asserts that $t$ is computed via the lemmas just mentioned, and their proofs.
\end{proof}

\begin{lem}\label{lem:name_for_2-theory}
 Suppose $\rho_3^{N|\lambda}=\lambda$.
 Let $\tau\in\Nm_\lambda$ and $d=\base(\tau)$.
 Let $t\in\Nm_\lambda$ be defined by
  \[ t=\{(p,(\varphi,\tau))\bigm|p\in\CC_d\text{ and } \varphi\text{ is }\muSigma_2\text{ and } p\forces^{\mathrm{s}}_{\lambda\mu2}\varphi(\tau)\}.\]
 Then
  $t\in\Nm_\lambda$ is a $\CC_d$-name, 
$N\sats\ \forces_{\CC} t=\Th_{\muSigma_2}^{\Mtilde_\lambda}(\{\tau\})$,
and $t$ is \tu{(}simply\tu{)} computed
  from $\Th_3^{N|\lambda}(d)$,
  uniformly in $\tau,d$.
\end{lem}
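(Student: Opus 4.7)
The plan is to mirror the proof of Lemma \ref{lem:name_for_1-theory} one level higher, substituting the $\muSigma_2$ analogues of the forcing-theoretic tools that were established in Lemma \ref{lem:strong_muSigma_1_lambda_2} for the $\muSigma_1$ tools used there. Concretely, I will first invoke Lemma \ref{lem:strong_muSigma_1_lambda_2} to get that the strong $\muSigma_2^{\Mtilde_\lambda}$ forcing relation $\forces^{\mathrm{s}}_{\lambda\mu 2}$ of $N$ is $\rSigma_3^{N|\lambda}$, uniformly in the parameters. Reading off the definition of $t$, this immediately yields that $t$ is $\rSigma_3^{N|\lambda}(\{\tau,d\})$, and more precisely is simply (recursively) computable from $\Th_3^{N|\lambda}(d)$, uniformly in $\tau$ and $d$; this gives the ``uniformly computed'' clause.

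Next I would use the hypothesis $\rho_3^{N|\lambda}=\lambda$: since $\Th_3^{N|\lambda}(d)\in N|\lambda$ for every $d<\lambda$, and $t$ is simply computed from this theory, we get $t\in N|\lambda$. Inspecting the definition, every pair put into $t$ has first coordinate in $\CC_d$, so $t$ is literally a $\CC_d$-name; combined with $\base(t)\leq d<\lambda$, this gives $t\in\Nm_\lambda$.

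Finally, for the identification $\forces_{\CC} t=\Th_{\muSigma_2}^{\Mtilde_\lambda}(\{\tau\})$, I would appeal to the strong $\muSigma_2$ forcing theorem established in Lemma \ref{lem:strong_muSigma_1_lambda_2}: for any $(N,\CC^N)$-generic $G$ and any $\muSigma_2$ formula $\varphi$, we have $(\Mtilde_\lambda)_G\sats\varphi(\tau_G)$ iff some $p\in G\cap\CC_d$ satisfies $p\forces^{\mathrm{s}}_{\lambda\mu 2}\varphi(\tau)$, iff $(\varphi,\tau)\in t_G$. Since everything is a direct transfer of the Lemma \ref{lem:name_for_1-theory} argument, there is no real obstacle; the only technical point worth checking is that the uniform $\rSigma_3^{N|\lambda}$ definition of $\forces^{\mathrm{s}}_{\lambda\mu 2}$ really does let the computation of $t$ be absorbed into $\Th_3^{N|\lambda}(d)$ with $d$ the only relevant parameter, which is immediate from the uniformity clause in Lemma \ref{lem:strong_muSigma_1_lambda_2} together with the fact that the additional parameter $\tau$ lies in $N|\delta_d^{+N}\sub N|\lambda$.
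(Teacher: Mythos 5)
Your proposal is correct and matches the paper's own argument, which simply says "Like the proof of Lemma \ref{lem:name_for_1-theory}" — i.e.\ transfer the three-part argument one level higher, using Lemma \ref{lem:strong_muSigma_1_lambda_2} in place of Lemma \ref{lem:strong_muSigma_1_lambda} and $\rho_3^{N|\lambda}=\lambda$ in place of $\rho_2^{N|\lambda}=\lambda$. You have spelled out exactly that transfer, so there is nothing to add.
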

\begin{proof}Like the proof of Lemma \ref{lem:name_for_1-theory}.
\end{proof}

As a corollary we easily get:
\begin{lem}
 Suppose $\rho_3^{N|\lambda}=\lambda$.
 Then the strong $\mSigma_3$ forcing relation $\forces^{\mathrm{s}}_{\lambda 3}$ is $\rSigma_4^{N|\lambda}$, and the strong $\mSigma_3$ forcing theorem holds.
\end{lem}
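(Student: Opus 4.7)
My plan is to adapt directly the argument sketched for the analogous corollary one level lower (the $\mSigma_2$/$\rSigma_3$ version stated just after Lemma \ref{lem:name_for_1-theory}), now using Lemma \ref{lem:name_for_2-theory} in place of Lemma \ref{lem:name_for_1-theory}. Recall that an $\mSigma_3$ formula $\varphi(\vec{x})$ has the canonical form
\[ \varphi(\vec{x})\iff\exists r,t\ [T_2(r,t)\wedge\psi(\vec{x},r,t)]\]
with $\psi$ an $\mSigma_0$ formula in the language of $\Mtilde_\lambda$, and $\forces^{\mathrm{s}}_{\lambda 3}\varphi(\vec{\tau})$ is by definition the assertion that there exist $\dot{r},\dot{t}\in\Nm_\lambda$ with $p\forces T_2(\dot{r},\dot{t})\wedge\psi(\vec{\tau},\dot{r},\dot{t})$.

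First I will verify the complexity bound. Given a candidate $\dot{r}\in\Nm_\lambda$ with $\base(\dot{r})=d$, Lemma \ref{lem:name_for_2-theory} (applied to $\dot{r}$) produces, in an $\rSigma_3^{N|\lambda}$-uniform manner from $\Th_3^{N|\lambda}(d)$, the canonical name $\dot{t}_{\dot{r}}\in\Nm_\lambda$ for $\Th_{\muSigma_2}^{\Mtilde_\lambda}(\{\dot{r}\})$; crucially, $\emptyset\forces T_2(\dot{r},\dot{t}_{\dot{r}})$. It therefore suffices to existentially quantify over $\dot{r}$ alone, the natural reformulation being
\[ p\forces^{\mathrm{s}}_{\lambda 3}\varphi(\vec{\tau})\iff \exists\dot{r}\in\Nm_\lambda\ \Big[p\sforces{\lambda 0}{}\psi(\vec{\tau},\dot{r},\dot{t}_{\dot{r}})\Big].\]
The outer existential quantifier over $\Nm_\lambda$ is $\Sigma_1^{N|\lambda}$; the clause ``$\dot{t}=\dot{t}_{\dot{r}}$'' is $\rDelta_3^{N|\lambda}$ by Lemma \ref{lem:name_for_2-theory}; and the $\mSigma_0^{\Mtilde_\lambda}$ forcing relation is $\rDelta_2^{N|\lambda}$ by Lemma \ref{lem:mSigma_0,lambda_forcing_rel_def}(\ref{item:mSigma_0,lambda_forcing_rel_def}). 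Combining, $\forces^{\mathrm{s}}_{\lambda 3}$ is $\rSigma_4^{N|\lambda}$, as asserted.

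Next I will prove the strong forcing theorem. For the forward direction, suppose $(\Mtilde_\lambda)_G\sats\varphi(\vec{\tau}_G)$, witnessed by some $r,t\in(\Mtilde_\lambda)_G$ with $T_2(r,t)$ and $\psi(\vec{\tau}_G,r,t)$. Pick any $\dot{r}\in\Nm_\lambda$ with $\dot{r}_G=r$, and let $\dot{t}=\dot{t}_{\dot{r}}$ be the canonical name supplied by Lemma \ref{lem:name_for_2-theory}; then $\dot{t}_G=\Th_{\muSigma_2}^{(\Mtilde_\lambda)_G}(\{r\})=t$. By the $\mSigma_0$ forcing theorem (Lemma \ref{lem:mSigma_0,lambda_forcing_rel_def}) there is $p\in G$ with $p\sforces{\lambda 0}{}\psi(\vec{\tau},\dot{r},\dot{t})$; combined with $\emptyset\forces T_2(\dot{r},\dot{t})$, this gives $p\forces^{\mathrm{s}}_{\lambda 3}\varphi(\vec{\tau})$. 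For the converse, if $p\in G$ witnesses the strong forcing, the $\mSigma_0$ forcing theorem yields $\psi(\vec{\tau}_G,\dot{r}_G,\dot{t}_G)$ in $(\Mtilde_\lambda)_G$, and $T_2(\dot{r}_G,\dot{t}_G)$ holds by genericity, so $\varphi(\vec{\tau}_G)$ holds.

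I expect no substantive obstacle: the whole content lives in Lemma \ref{lem:name_for_2-theory} (which requires $\rho_3^{N|\lambda}=\lambda$ to place the $\muSigma_2$-theory name inside $\Nm_\lambda$), plus the already-established $\mSigma_0$ forcing calculus. The only mild care needed is to check that $\dot{t}_{\dot{r}}$ depends on $\dot{r}$ in a sufficiently uniform, $\rSigma_3^{N|\lambda}$ fashion so that the overall prefix does not exceed $\rSigma_4$; this is precisely what part (\ref{item:t_uniformly_computed_from_Th}) of Lemma \ref{lem:name_for_1-theory} (and its level-$2$ analogue in Lemma \ref{lem:name_for_2-theory}) was crafted to give.
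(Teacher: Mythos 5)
Your approach matches what the paper intends: the paper states this as an immediate corollary of Lemma \ref{lem:name_for_2-theory}, exactly parallel to how the $\mSigma_2$/$\rSigma_3$ version follows from Lemma \ref{lem:name_for_1-theory}, and you have filled in that argument in the natural way. The substitution of the canonical name $\dot{t}_{\dot{r}}$ for the existentially quantified $\dot{t}$, and the two-way verification of the forcing theorem using the $\mSigma_0$ forcing theorem together with $\emptyset\forces T_2(\dot{r},\dot{t}_{\dot{r}})$, is the right proof.

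One intermediate claim in your complexity count is off, and in fact internally inconsistent with your conclusion: you say ``the clause `$\dot{t}=\dot{t}_{\dot{r}}$' is $\rDelta_3^{N|\lambda}$,'' but two sentences earlier you correctly note that $\dot{t}_{\dot{r}}$ is produced from $\Th_3^{N|\lambda}(d)$. A relation computed simply from the $\rSigma_3$-theory lives at level $4$ of the $\rSigma$-hierarchy (it uses $T_3$ as the oracle), so the graph of $\dot{r}\mapsto\dot{t}_{\dot{r}}$ is $\rDelta_4^{N|\lambda}$, not $\rDelta_3^{N|\lambda}$. If it were really $\rDelta_3$, then a $\Sigma_1$ prefix over $\rDelta_3\wedge\rDelta_2$ would give only $\rSigma_3^{N|\lambda}$, contradicting the lemma's assertion of $\rSigma_4$. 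With the correction to $\rDelta_4$, the existential quantifier over $\dot{r}$ yields exactly $\rSigma_4^{N|\lambda}$, as desired. The rest of your argument, including the forcing-theorem equivalence of your reformulation with the original definition, is sound.
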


\begin{rem}\label{rem:strong_mu_Sigma_n_forcing_at_lambda}
One can now easily generalize the loop of lemmas
through the entire definability hierarchy,
getting that strong $\muSigma_{n}^{\Mtilde_\lambda^N}$ forcing is $\rSigma_{n+1}^{N|\lambda}$, etc, assuming that $\rho_{n}^{N|\lambda}=\lambda$.
\end{rem}

The preceding lemmas suggest that an $\RR$-genericity iterate might compute the $\mu$-hierarchy level-by-level via consulting its extender sequence to define $\mu$
at the right pace (otherwise, if we proceeded naively, the computation of $\mu$ would be slowed down to the same  pace as  the usual $L(\RR)$ hierarchy has). We will execute this,
and generalize (a version of) the lemmas above throughout all proper segments of $N$, and then
up to the degree $n_0$ at $N$ itself.
As foreshadowed by Remark \ref{rem:example_graduated_Qs},
for arbitrary segments $Q$ strictly between $N|\lambda$ and $N$ (and for example when $N=\Pg$, the ordinal height of $Q$ could then be shifted by the relevant iteration maps $i_{NR}$), we will only
 be able to prove slightly weaker versions of some of the Lemmas (for example \ref{lem:gen-all_independence}
and \ref{lem:N|lambda_compute_mu}), which only gives eventual agreement (for large enough tuples of Woodins). (By Remark \ref{rem:example_graduated_Qs},
the stronger version which holds at $\lambda$ cannot hold for all segments in general.)

To assist the analysis, we begin by defining a  system
of names for elements of a symmetric submodel
$\M_G$
of $N[G]$. The intention
is that if $N$ is an $\RR$-genericity
iterate of $\Pg$, as witnessed by $G$,
then $\M_G=\M_{\beta^*}$. 

\begin{dfn}\label{dfn:Nm^N}
We define  an increasing hierarchy
$\left<\Nm_\xi\bigm|\xi\in[\lambda,\OR^N]\right>$;
the elements of $\Nm_\xi$ are the \emph{level $\xi$ construction names}. We also define the \emph{support}
$\supp(\tau)$ of each $\tau\in\Nm_\xi$,
with $\supp(\tau)\in\{-1\}\cup\om$,
and the 
\emph{location} $\loc(\tau)$
of each $\tau\in\Nm_\xi$,
with $\loc(\tau)\in[\xi]^{<\om}$.

For $\xi=\lambda$, it only remains to define
$\loc(\tau)$; we set $\loc(\tau)=\emptyset$.

Suppose $\xi\in[\lambda,\OR^N)$
is a limit ordinal and we have defined
$\Nm_\xi$ and
 $\supp\rest\Nm_\xi$ and $\loc\rest\Nm_\xi$.
The names in $\Nm_{\xi+\om}$  will represent
objects output by  $\mu^{N,G}$-rud functions
applied to $(\Mtilde_\xi)_G\cup\{(\Mtilde_\xi)_G\}$,
where $\mu^{N,G}$ is  Martin measure relativized to the Turing degrees in $\widetilde{\HC}^G$
(that is, with all quantifiers ranging
over these degrees), and $\Mtilde_\xi$
is a certain $\CC^N$-name (proper class in $N$ if $\xi=\OR^N$); we have already defined $\Mtilde_\lambda$. 

It might not be immediately clear that
all of the notions introduced below are  well-defined (in particular, $\tau_G$).
This will be clarified by the end of \S\ref{sec:through_gap}. Also see Remark \ref{rem:will_formalize_name_definitions}.
But it should be clear that the formal classes $\Nm_\xi$
and the functions $\supp$ and $\loc$ are well-defined.

Fix a recursive enumeration
$\left<f_i\right>_{i<\om}$ for (schemes for)
$\mu$-rud functions of arity $\geq 2$ (here ``$\mu$'' is just a symbol).
Let $a_i+2$ be the arity of $f_i$,
so $a_i\geq 0$.
In the construction name $(\eta,i,\pi)$ below, the $i$ indicates the function (scheme) $f_i$ to be applied, and $\eta,\pi$ determine
the inputs to $f_i$.
For limits $\eta\in[\lambda,\OR^N)$ define
\[ \Nm_{\eta+\om}=\Nm_\lambda\cup\Big\{(\eta,i,\pi)\Bigm| (i<\om)\wedge(\pi:a_i\to\Nm_\eta)\Big\}.\]
Define $\supp\rest\Nm_{\eta+\om}$ extending
$\supp\rest\Nm_{\eta}$ by setting 
\[ \supp(\eta,i,\pi)=\max(\rg(\supp\circ\pi)).\] 
Let $\loc\rest\Nm_{\eta+\om}$
extend $\loc\rest\Nm_\eta$,
where
for $\tau=(\eta,i,\pi)\in\Nm_{\eta+\om}\cut\Nm_\eta$,
\[ \loc(\eta,i,\pi)=\{\eta\}\cup\bigcup_{j<a_i}\loc(\pi(j)).\]

For limits $\xi\in(\lambda,\OR^N]$, we set
 $\Nm_\xi=\bigcup_{\gamma<\xi}\Nm_\gamma$
(which recursively determines $\supp\rest\Nm_\xi$ and $\loc\rest\Nm_\xi$).

 We next define
the interpretation $\tau_G$
of $\tau\in\Nm_\xi$ for $(N,\CC^N)$-generics $G$, recursively in $\xi$.
(Recall we have already defined $\tau_G$ for $\tau\in\Nm_\lambda$,
as the the conventional interpretation.)
Let $\xi\in[\lambda,\OR^N)$.
Define
\[\xi_G=\{\sigma_G\bigm|\sigma\in\Nm_{\xi}\} \]
(so $\lambda_G=\widetilde{\HC}_G$) and
\[ \lambda'_G=\Big\{\sigma_G\Bigm|\sigma\in\Nm_\lambda\wedge(\Mtilde_\lambda)_G\sats T(\sigma_G)\Big\}.\]
Let $\tau=(\xi,i,\pi)\in\Nm_{\xi+\om}$.
Then $f_i$ is a scheme for a $\mu$-rud function;
let $f_i^{\mu^{N,G}}$ be the resulting $\mu^{N,G}$-rud function, and
(temporarily for intuition,
to be formalized in Definition \ref{dfn:formal_Nm^N})
define
\[ \tau_G= f_i^{\mu^{N,G}}(\xi_G,\lambda'_G,\pi(0)_G,\ldots,\pi(a_i-1)_G).\]

For $\xi\in(\lambda,\OR^N]$, 
define $\widetilde{\M}_\xi$ to be the
natural $\CC^N$-name for the transitive structure
\[ \big(\{\sigma_G\bigm|\sigma\in\Nm_\xi\},(\M_\lambda)_G\big)\]
(a structure in the $\M(\RR)$-language).
(This is a conventional name, not a
construction name. If $\xi=\OR^N$ then this name is a proper class of $N$.)

Given a strong cutpoint $\gamma<\lambda$
of $N$, given
 $g$ which is $(N,\Coll(\om,\gamma))$-generic, and given $\xi\in[\lambda,\OR^N]$,
 define $\Nm_\xi^{N[g]}$ with respect to $N[g]$
just as $\Nm_\xi$ is defined over $N$.
\end{dfn}

\begin{rem}\label{rem:will_formalize_name_definitions}
Note that $\Nm_\xi$, $\xi\mapsto\Nm_\xi$,
and the functions $\supp$ and $\loc$
are well-defined, and independent of
the interpretation $\tau_G$ and the names $\Mtilde_\xi$ etc introduced above.
But we are yet to see that we actually
have $\tau_G\in N[G]$, and therefore yet to see
that $\Mtilde_\xi$ and other notions introduced above are really well-defined.
The reader will easily observe that
we do not make any formal use of $\tau_G$, $\Mtilde_\xi$, etc, until after we have formalized these notions; they just provide intuitive motivation for the formal notions to be introduced.

We will in fact show that if $\tau\in\Nm_\xi$
then $\tau_G\in(N|\xi)[G]$.
Also, for limits $\xi\in(\lambda,\OR^N]$, let $\forces_{\xi0}$ be the 
 $\Sigma_0^{\Mtilde_\xi}$ forcing relation
 (over names in $\Nm_\xi$, for truth
 over $\Mtilde_\xi$). 
 We will  show that $\forces_{\xi0}$
 is $\Delta_1^{N|\xi}(\{\lambda\})$,
 uniformly in such $\xi$, and
 in fact, for each limit $\gamma\in[\lambda,\xi)$ and $n<\om$,
the $\mSigma_n^{\Mtilde_\gamma}$-forcing relation $\forces_{\gamma n}$ (with respect to the relevant Turing degrees) is definable over $N|\gamma$,
uniformly in $(\gamma,n)$,
 and so by the forcing theorem for such formulas, we get $\tau_G\in(N|\xi)[G]$
 for $\tau\in\Nm_\xi$,
 and the evaluation map $\tau\mapsto\tau_G$ (with domain $\Nm_\xi$)
 is $\Delta_1^{(N|\xi)[G]}(\{N|\lambda,\widetilde{\HC}_G\})$.
We prove the definability of
$\forces_{\gamma n}$ and $\forces_{\xi0}$
inductively in $\xi$ (where again $\gamma<\xi$ and $n<\om$). For $\xi$ a limit of limits, it follows immediately by induction,
so suppose it holds at a limit $\xi\in[\lambda,\OR^N)$; we want to establish the
definability of $\forces_{\xi n}$ for each $n<\om$, and through the next lemma, hence the $\Delta_1^{N|(\xi+\om)}(\{\lambda\})$-definability of $\forces_{\xi+\om,0}$.
By inductive hypothesis, $\Mtilde_\xi$ is well-defined, and $\tau_G\in(N|\xi)[G]$ for all $\tau\in\Nm_\xi$.

In the end, we will be able to replace
the talk of $\mu^{N,G}$ in the definition of $\tau_G$ above with
a formal definition
which we will end up showing computes
$\mu^{N,G}$ correctly
 over the relevant segments $(N|\xi)[G]$ of $N[G]$.
This will yield a well-defined (and level-by-level definable) $\Mtilde_\xi$ etc,
and we will then see (at least in the relevant circumtsances) that it yields the objects defined above.
\end{rem}

\begin{dfn}\label{dfn:names_for_ordinals_and_M}
Given $\alpha<\xi$,
let 
 $\sigma_\alpha\in\Nm_{\xi}^N$
 be the canonical name (in $\Nm_{\xi}^N$)
for $\alpha$. That is, if $\alpha<\lambda^N$
then $\sigma_\alpha=\check{\alpha}$ as usual.
If $\alpha\in[\lambda^N,\xi)\cap\Lim$
then $\sigma_\alpha=(\alpha,i_{\mathrm{Ord}},\emptyset)$
where $i_{\mathrm{Ord}}$ is the index for the natural $\mu$-recursive function scheme $f$ such that that $f(A,B)=A\cap\OR$
whenever $A$ is a rudimentarily closed transitive set.
If $\alpha=\beta+n+1$ where $\beta\in[\lambda^N,\xi)\cap\Lim$ and $n<\om$ then $\sigma_\alpha=(\beta,i_{\mathrm{Ord}+n+1},\emptyset)$ where $i_{\mathrm{Ord}+n+1}$ is chosen similarly.
 (We may just write ``$\alpha$''
 in forcing statements where formally it should be ``$\sigma_\alpha$''.)
 
 Similarly
 if $\alpha\in[\lambda^N,\xi)\cap\Lim$
let $m_\alpha\in\Nm_{\xi}$
be the canonical name for $(\Mtilde_\alpha)_G$.
(This definition is made formally, independently
of our earlier introduction of $\Mtilde_\alpha$.
That is, $m_\alpha=(\alpha,i_{\M},\emptyset)$
where $i_{\M}$ indexes the natural $\mu$-recursive function scheme $f$ such that $f(A,B)=(A,B)$ if $B\notin A$, and $f(A,B)=A$ otherwise.)
\end{dfn}

The following lemma is by standard fine structure:
\begin{lem}\label{lem:Sigma_0_to_Sigma_omega_algo}
There is  a recursive function
 $(\varphi,\vec{i})\mapsto\psi_{\varphi,\vec{i}}$ 
 sending pairs $(\varphi,\vec{i})$
 consisting of:
\begin{enumerate}[label=--]
  \item  an $\mSigma_0$ formula
 $\varphi=\varphi(\vec{x})$ in the $\M(\RR)$ language
with free variables $\vec{x}$,
\item a tuple $\vec{i}=(i_0,\ldots,i_{k-1})\in\om^k$, where $k=\lh(\vec{x})$
\tu{(}representing the tuple $\vec{f}=(f_{i_0},\ldots,f_{i_{k-1}})$ of $\mu$-recursive function schemata, and recall that $f_i$ has arity $a_i+2$\tu{)},
\end{enumerate}
to formulas $\psi_{\varphi,\vec{i}}$ in the $\M(\RR)$ language, such that for all limits $\gamma\geq\om_1$
such that Turing determinacy
holds in $\M_{\gamma+\om}$,
and all
\[ \vec{y}=\vec{y}_0\conc\ldots\conc\vec{y}_{k-1}\in(\M_\gamma)^{<\om}\]
with $\lh(\vec{y}_{j})=a_{i_j}$ for each $j<k$, writing
$(m,t)=(\M_\gamma,T^{\M_\lambda})$, we have
\[ \M_{\gamma+\om}\sats\varphi\big(f_{i_0}(m,t,\vec{y}_0),\ldots,
f_{i_{k-1}}(m,t,\vec{y}_{k-1})\big)\iff\M_\gamma\sats\psi_{\varphi,\vec{i}}(\vec{y}).\]

This recursive function also analogously reduces 
$\Sigma_0^{(\M_{\xi+\om})_G}$
to $\mSigma_\om^{(\M_\xi)_G}$, assuming
\begin{enumerate}[label=\tu{(}\roman*\tu{)}]
 \item\label{item:no_new_reals}
$\HC^{(\M_{\xi+\om})_G}=\widetilde{\HC}_G$, and
\item\label{item:Turing_det_holds} Turing determinacy holds
in $(\M_{\xi+\om})_G$.
\end{enumerate}
\end{lem}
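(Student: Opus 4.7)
The proof is by iterated application of Lemma \ref{lem:mu-rud_implies_mu-simple}, so essentially all the real work is done by that lemma; what remains is bookkeeping. The plan is to show, by induction on $k$, that we can recursively peel off one $f_{i_j}$ at a time.

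For the base case $k = 1$, given an $\mSigma_0$ (equivalently $\Ll^\mu_0$) formula $\varphi(\dot{x})$ and a scheme $f = f_{i_0}$, I would appeal directly to Lemma \ref{lem:mu-rud_implies_mu-simple}, which supplies a recursively-computed $\Ll^\mu_0$ formula $\varphi_f(\dot{m}, \dot{t}, \dot{\vec{y}}_0)$ such that $\varphi(f(m, t, \vec{y}_0)) \iff \varphi_f(m, t, \vec{y}_0)$. Set $\psi_{\varphi, (i_0)}$ to be this formula. Evaluated over $\M_\gamma$, the variable $\dot{m}$ is the universe itself (so bounded quantifiers $\exists \dot{z} \in \dot{m}$ become unbounded quantifiers over $\M_\gamma$, which at $\Ll^\mu_0$-complexity are $\mSigma_\omega^{\M_\gamma}$), and $\dot{t}$ is interpreted via the constant $\dot{T}$ of $\Ll$ (or the predicate $\dot{T}'$ when $\gamma = \omega_1$). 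Hence $\psi_{\varphi,(i_0)}$ is $\Ll^\mu_0$ in the $\M(\RR)$-language with free variables $\vec{y}_0$, which is $\mSigma_\omega^{\M_\gamma}$ as required.

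For the induction step, given $\varphi(\vec{x})$ with $\lh(\vec{x}) = k > 1$ and $\vec{i} = (i_0, \ldots, i_{k-1})$, I would isolate the last variable $x_{k-1}$ and apply Lemma \ref{lem:mu-rud_implies_mu-simple} to the unary formula $\varphi_{k-1}(x_{k-1}) \equiv \varphi(x_0, \ldots, x_{k-1})$ regarded in the free variable $x_{k-1}$ (with $x_0, \ldots, x_{k-2}$ as additional parameters). This recursively produces an $\Ll^\mu_0$ formula $\varphi'(x_0, \ldots, x_{k-2}, \dot{m}, \dot{t}, \vec{y}_{k-1})$ equivalent to $\varphi_{k-1}(f_{i_{k-1}}(m, t, \vec{y}_{k-1}))$. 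Since $\Ll^\mu_0$ is closed under the operations used here, $\varphi'$ is again $\Ll^\mu_0$ in its first $k-1$ arguments, and I can then invoke the inductive hypothesis on $\varphi'$ with the truncated tuple $(i_0, \ldots, i_{k-2})$. The resulting $\psi_{\varphi, \vec{i}}$ is the output; effective construction of $\varphi_f$ in Lemma \ref{lem:mu-rud_implies_mu-simple}, together with the trivial recursion in $k$, ensures that $(\varphi, \vec{i}) \mapsto \psi_{\varphi, \vec{i}}$ is recursive. The hypothesis that Turing determinacy holds in $\M_{\gamma + \omega}$ is not used in the construction of $\psi_{\varphi, \vec{i}}$ itself, but enters when verifying correctness, since the $\all^*$ quantifiers introduced by Lemma \ref{lem:mu-rud_implies_mu-simple} (coming from the treatment of the scheme $f_\mu(x) = x \cap \mu$) must be interpreted coherently; the determinacy assumption guarantees $\all^*$ and $\exists^*$ agree, so that the Boolean manipulations carried out during the translation preserve truth.

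For the ``analogous'' statement in the generic case, the same recursive translation $\psi_{\varphi, \vec{i}}$ is used, reinterpreted over $(\M_\xi)_G$; correctness is established in parallel. The two hypotheses \ref{item:no_new_reals} and \ref{item:Turing_det_holds} play precisely the roles one would expect: \ref{item:no_new_reals} ensures that the relevant Martin measure in $(\M_{\xi+\omega})_G$ is the one obtained from the Turing degrees of $\widetilde{\HC}_G$, which is what $f_\mu$ computes against in the intended semantics of the name system, and \ref{item:Turing_det_holds} ensures the $\all^*/\exists^*$-interchangeability needed to propagate the translation through Boolean connectives and bounded quantifiers. The main obstacle, modest as it is, will be the careful tracking during the induction step of which parameters from $m, t$ survive into the remaining arguments of $\varphi'$, so that Lemma \ref{lem:mu-rud_implies_mu-simple} can legitimately be applied at each stage with the inputs in the right order; this is purely syntactic bookkeeping and presents no genuine conceptual difficulty.
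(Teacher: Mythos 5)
The paper offers no explicit proof here, prefacing the lemma only with "The following lemma is by standard fine structure," and your argument is essentially the standard one: iterate Lemma \ref{lem:mu-rud_implies_mu-simple} (the $\mu$-analogue of Jensen's simplicity of rud$_A$ functions) to peel off one $f_{i_j}$ at a time, then reinterpret the remaining bounded quantifiers over the slot holding $\M_\gamma$ as unbounded quantifiers over $\M_\gamma$ itself, and reinterpret the $t$-slot via the language constant $\dot T$. So the approach matches what the paper intends.

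One small imprecision: you say $\mSigma_0$ is "equivalently $\Ll^\mu_0$," but by Definition \ref{dfn:mSigma_n} an $\mSigma_0$ formula is a $\Sigma_0$ formula of $\Ll$ (no $\all^*$), whereas $\Ll^\mu_0$ allows $\all^*$ as long as ordinary quantifiers are bounded, so $\mSigma_0\subsetneq\Ll^\mu_0$. What you mean, and what is correct, is that an $\mSigma_0$ formula is \emph{in particular} $\Ll^\mu_0$, which is all the application of Lemma \ref{lem:mu-rud_implies_mu-simple} needs; the output then genuinely lives in $\Ll^\mu_0$ and so contributes $\all^*$ quantifiers. Your diagnosis of where the Turing-determinacy hypothesis enters is also right in spirit: the $\mu$-simplicity clause for $f_\mu(x)=x\cap\mu$ expresses $z\in\mu$ as a cone quantifier, and the definition of $\mu$ (Definition \ref{dfn:mu}) and the $\Ll^\mu$ cone quantifier (Definition \ref{dfn:language}) use dual quantifier blocks; determinacy in $\M_{\gamma+\om}$ is precisely what makes $\all^*_k$ and $\exists^*_k$ agree on the sets of tuples of degrees the translation actually produces, so that the $\mu$-simplicity identity is not vacuously or falsely asserted.
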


\begin{dfn}\label{dfn:Sigma_0_to_Sigma_om_algo}
Let  $(\varphi,\vec{i})\mapsto\psi_{\varphi,\vec{i}}$ be the natural algorithm
witnessing Lemma \ref{lem:Sigma_0_to_Sigma_omega_algo}.

Let $I_{33}$ be the least integer
which indexes the $\mu$-recursive
function scheme $f$ such that $f(x,y,z)=z$.

For $i<\om$ let $\pad(i)$
be the natural $i'<\om$
such that $a_{i'}=a_i+1$
and
\[ f_{i'}(x,y,z_0,\ldots,z_{a_i})=f_i(z_{a_i},y,z_0,\ldots,z_{a_{i-1}}).\qedhere\]
\end{dfn}

Thus, it suffices
to show that the $\mSigma_n^{\Mtilde_\xi}$ forcing relation $\forces_{\xi n}$
is definable over $N|\xi$,
uniformly in $n$,
and that \ref{item:no_new_reals} and \ref{item:Turing_det_holds} hold.

The definability of $\forces_{\xi n}$
is verified by induction on complexity of $\mSigma_\om$ formulas. Here we do not (yet)
proceed precisely level-by-level comparing the $\mSigma_n$ and $\rSigma_n$
hierarchies (of $(\Mtilde_\xi)_G$
and $N|\xi$ respectively), because
we are not yet prepared to
show how the fine structure matches up 
between the two sides (this will come later). But we do proceed by induction on the complexity
of formulas of the $\M(\RR)$ language. We will in fact show that for various recursive classes $\Gamma$ of formulas,  of bounded complexity
(in the $\M(\RR)$ language), the forcing relation $\forces^{N[g]}_{\xi\Gamma}$, asserting
\[ p\forces_{\CC_{\mathrm{tail}}}\Mtilde_\xi\sats\varphi(\vec{\tau}),\]
for forcing over $N[g]$, for $p\in\CC_{\mathrm{tail}}$,  formulas $\varphi$ in $\Gamma$ and $\vec{\tau}\in(\Nm_{\xi}^{N[g]})^{<\om}$, is definable over $N[g]$, uniformly in $\gamma,g$, for $\gamma<\lambda$ and
 $g$ being $(N,\Coll(\om,\gamma))$-generic.  
All the induction steps excluding
the $\mu$-quantifier are standard, so we ignore these.
So suppose we have
appropriately
defined $\forces_{\xi \Gamma}$ for some class of formulas $\Gamma$;
so 
$\forces_{\xi\Gamma}$
is first order over $N|\xi$,
and  $\forces^{N[g]}_{\xi\Gamma}$ defined
over $N[g]$ in the same manner.
We will explain how to define $\forces_{\xi,\all^*_\mu\Gamma}$
in terms of this
(that is, for formulas $\psi(\vec{x})$ of
form $\all^*s\varphi(s,\vec{x})$,
where $\varphi(s,\vec{x})$
is in $\Gamma$).
Let $\varphi$ be in $\Gamma$. So we have already defined the relation
\[ p\forces_{\CC^N}\big[\Mtilde_\xi\sats\varphi(\vec{\tau},s)\big], \]
(where $p$ varies over $\CC^N$ and $s,\vec{\tau}$ over $(\Nm_\xi)^{<\om}$).
We show that the relation
\[ p\forces_{\CC^N}\big[\Mtilde_\xi\sats\all^* s\ \varphi(\vec{\tau},s)\big] \]
is also appropriately definable; likewise with respect to $N[g]$. The process will be uniform in $\varphi$, leading to the desired definition of $\forces_{\xi\all^*_\mu\Gamma}$.

\begin{rem}\label{rem:prelude_to_psi_0}
The formal definition of the 
$\mSigma_0$ forcing relation
is only given later in Definition \ref{dfn:psi_0}, through the formula $\psi_0$, and the proof of its correctness in Lemma \ref{lem:Sigma_P-iterate_good}.
Formally, one can skip to
Definition \ref{dfn:language_real+generic-pm}
at this point.
But the
following ``calculations''
are provide a sketch of key ideas which will motivate further the formulation of $\psi_0$, before spelling it out.
We give these ``calculations'' assuming that we have a definition that works up to a given point.
So where we write, for example, ``$\forces_{\CC}\Mtilde_\xi\sats\varphi(\vec{\tau})$'' below, where $\xi\in[\lambda^N,\OR^N]$ and  $\varphi$ is $\mSigma_0$, we have not yet really specified what this means, but we 
will later fill it in using the formula (in parameter $\lambda^N$) $\psi_0(\lambda^N,\cdot,\cdot,\cdot)$, introduced in Definition \ref{dfn:psi_0};
the formula $\psi_0$ is $\rSigma_1$,
and (in the right context, using parameter $\lambda^N$)
it will define an $\rDelta_1(\{\lambda^N\})$ relation,
and in fact $\rDelta_1^{N|\xi}(\{\lambda^N\})$ for each limit $\xi\in(\lambda^N,\OR^N]$.\end{rem}

The following lemma is the analogue of Lemma \ref{lem:gen-all_independence} and its variants,
but a key difference is that we now only get agreement above some lower bound $m$:

\begin{lem}\label{lem:xi_gen-all_independence}
\tu{(}We have $\xi\in[\lambda^N,\OR^N)\cap\Lim$ and the formula class $\Gamma$.\tu{)} Let
$\vec{\tau}\in(\Nm_\xi)^{<\om}$.
Then there is $m<\om$ such that $\base(\vec{\tau})\leq\delta_m^N$
and for all
$\varphi\in~\Gamma$,
all $k<\om$, all $\vec{\delta},\vec{\varepsilon}\in[\Delta^N_{\geq m}]^{2k}$ and
all $\theta\in\Delta_{\geq m}^N$, $N$ satisfies that $\CC_d$ forces the following three statements are equivalent:
\begin{enumerate}[label=\tu{(}\roman*\tu{)}]
\item\label{item:xi_deltavec} $\all^{\mathrm{gen}}_{\vec{\delta}}s\ \forces_{\CC_{\mathrm{tail}}}\Mtilde_\xi\sats\varphi(\vec{\tau},s)$,
\item\label{item:xi_epsvec}
$\all^{\mathrm{gen}}_{\vec{\vareps}}s\ \forces_{\CC_{\mathrm{tail}}}\Mtilde_\xi\sats\varphi(\vec{\tau},s)$,
\item\label{item:xi_above_theta}
$\all^{\mathrm{gen}}_{\geq\theta;k}s\ \forces_{\CC_{\mathrm{tail}}}\Mtilde_\xi\sats\varphi(\vec{\tau},s)$.
\end{enumerate}
\end{lem}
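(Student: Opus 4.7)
The plan is to adapt the proof of Lemma \ref{lem:gen-all_independence} to the present higher-level setting, where names $\vec{\tau} \in (\Nm_\xi)^{<\om}$ carry ordinal data in $\loc(\vec{\tau}) \subseteq \xi$ that sits above $\lambda$, in addition to the $\CC$-name base captured by $\base(\vec{\tau}) \leq \delta_{m_0}^N$ (where $m_0 < \om$ is minimal with this property). I would take $m = m_0 + 1$ and argue that this choice makes the three statements equivalent uniformly in $\varphi \in \Gamma$, $k$, $\vec{\delta}, \vec{\varepsilon} \in [\Delta^N_{\geq m}]^{2k}$, and $\theta \in \Delta^N_{\geq m}$. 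Uniformity across $\varphi \in \Gamma$ comes for free, since by inductive hypothesis $\forces_{\xi\Gamma}$ is already appropriately definable; only the $\vec{\tau}$-dependence needs to be absorbed by $m$.

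For \ref{item:xi_deltavec} $\iff$ \ref{item:xi_epsvec}, the pattern follows Lemma \ref{lem:gen-all_independence}. First reduce to $N = \Pg$ via the $\rSigma_2$-elementarity of $i_{\Pg N}\!\rest\!(\Pg|\lambda)$. Assume \ref{item:xi_deltavec} holds and suppose \ref{item:xi_epsvec} fails. Form $R = R^{\Pg}_{\vec{\vareps}}$ as in Remark \ref{rem:R_construction_basic_props}, giving the iteration map $j : \Pg \to R$ with $j(\delta_i^{\Pg}) = \vareps_i$. Crucially, the choice $m \geq m_0 + 1$ forces $\crit(j) > \delta_{m_0}^{\Pg} \geq \base(\vec{\tau})$. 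Lift \ref{item:xi_deltavec} through $j$ to obtain the analogous statement in $R$ for $\Mtilde_{j(\xi)}^R$ with parameter $j(\vec{\tau})$ and Woodins $\vec{\vareps}$, and then use the matching pairs of generics $g_i, h_i$ over $\Pg$ and $R$ (whose universes agree at HC by Remark \ref{rem:R_construction_basic_props}) to conclude that the same statement holds over $\Pg$ for $\Mtilde_\xi^{\Pg}$, contradicting the failure of \ref{item:xi_epsvec}. The symmetric direction and the equivalence \ref{item:xi_deltavec} $\iff$ \ref{item:xi_above_theta} are obtained via the same machinery, the latter using a staircase-of-iterates construction as in the second half of \ref{lem:gen-all_independence}, producing at each round a Woodin chosen arbitrarily high in $\Delta^N$ above $\theta$; the dual direction uses homogeneity of $\CC_\tail$ to flip quantifiers.

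The hard part will be the translation between $\vec{\tau} \in \Nm_\xi^{\Pg}$ and $j(\vec{\tau}) \in \Nm_{j(\xi)}^R$, since $j$ in general moves the ordinals in $\loc(\vec{\tau})$ and the index $\xi$ itself. The key observation is that the combinatorial part of $\vec{\tau}$ — the indices of $\mu$-rud schemata, which live in $V_\omega$ — is fixed pointwise by $j$, so $j(\vec{\tau})$ has the same syntactic construction-name shape as $\vec{\tau}$, only at the ordinal levels prescribed by $j\!\rest\!\loc(\vec{\tau})$. To push the argument through, we must know that the interpretation of $j(\vec{\tau})$ over $(\Mtilde_{j(\xi)}^R)_h$ and of $\vec{\tau}$ over $(\Mtilde_\xi^{\Pg})_g$ yield the same value modulo the generic-matching between $g$ and $h$. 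This will follow from the local correctness of the $\mu$-computation established at the base in Lemma \ref{lem:N|lambda_compute_mu}, combined with the recursive definition of interpretation in Definition \ref{dfn:Nm^N} and the structural correspondence between $R$ and $\Pg$ above the P-construction interval from Remark \ref{rem:R_construction_basic_props}. Tracing through this correspondence level-by-level, keeping track of how $j$ acts on each construction name in the recursion, is the main technical content of the proof.
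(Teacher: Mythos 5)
Your plan to take $m = m_0 + 1$ (minimal with $\base(\vec{\tau}) \leq \delta_{m_0}^N$) and argue directly that this choice witnesses the equivalences would fail. The obstruction, which you correctly identify as the ``hard part'' but do not resolve, is fatal to the direct approach: the iteration map $j = i_{\Pg R}$ to $R = R^{\Pg}_{\vec{\theta}\cup\vec{\vareps}}$ does fix the coarse/name part of $\vec{\tau}$ (since $\crit(j) > \delta_{m_0}^{+\Pg}$), but it need \emph{not} fix $\xi$ or $\loc(\vec{\tau})$, which live in $[\lambda^N,\OR^N)$. When $j(\xi) > \xi$, the lifted statement tells you about the forcing relation at level $j(\xi)$ over $R$, i.e.\ about truth in $(\Mtilde_{j(\xi)}^R)_H$ --- a strictly larger model than $(\Mtilde_\xi^{\Pg})_G$ under the generic matching --- and since $\varphi$ is not absolute between levels, you cannot pull this back to the desired conclusion at level $\xi$ over $\Pg$. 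Your appeal to ``interpretation of $j(\vec{\tau})$ and $\vec{\tau}$ yielding the same value'' is aimed at the wrong target: the values of the names may well coincide, but the \emph{level} at which $\varphi$ is evaluated has shifted, and that is what breaks the transfer. No amount of careful bookkeeping of the recursion in Definition \ref{dfn:Nm^N} fixes this, because it is a genuine feature, not a matter of unwinding the definitions.

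The paper handles this by turning the failure into the engine of the proof. It argues by contradiction: suppose \emph{no} $m$ witnesses the equivalence. Then in particular $m = d+1$ fails, giving a counterexample $(\varphi, k, \vec{\delta}, \vec{\vareps})$, and the key Claim (that either $i_{\Pg R}(\xi) > \xi$ or $i_{\Pg R}(\desc(\tau)_0) >_{\mathrm{lex}} \desc(\tau)_0$) shows that $j$ must strictly increase some associated ordinal --- for if $j$ fixed both $\xi$ and the location data, the lifting-and-generic-matching argument you describe \emph{would} go through and yield a contradiction with the counterexample. Since by hypothesis every $m$ fails, one then re-runs the construction above $\max(\vec{\vareps})$, transfers the resulting counterexample tree onto $R$ (using that $\Pg$ and $R$ agree above $\max(\vec{\vareps})$), and concatenates, producing a correct normal tree $\Tt = \Tt_0 \conc \Tt_1 \conc \cdots$ along whose unique cofinal branch the ordinal identified by the Claim is pushed up infinitely often. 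Illfoundedness of $M^\Tt_b$ is the contradiction. This dichotomy-and-concatenate structure, and in particular the realization that the existence quantifier over $m$ in the statement is load-bearing (some $m$ works, but not because any specific one is guaranteed to), is what your proposal is missing.
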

\begin{proof}
 Since $i_{\Pg N}:\Pg\to N$ is $\rSigma_1$-elementary,
 and considering Remark \ref{rem:prelude_to_psi_0}, we may assume that $N=\Pg$, which is fully sound.
 
 We may assume by induction that the lemma holds when we replace $\xi$ with a limit $\xi'\in[\lambda,\xi)$ (note that Lemma \ref{lem:gen-all_independence} and its variants already established it when $\xi$ is replaced with $\lambda$ itself).
 Suppose the lemma fails at $\xi$;
 recall $\xi<\OR^{\Pg}$.

 \ref{item:xi_deltavec} $\iff$ \ref{item:xi_epsvec} at $\xi$ (above some $m$):
Suppose otherwise. For simplicity assume that $\lh(\vec{\tau})=1$, and just write $\tau$ instead of $\vec{\tau}$. 
Let $\delta^\Pg_d=\base(\tau)$.
We will define a correct above-$\delta_d^{\Pg}$ normal tree $\Tt$
on $\Pg$, of form $\Tt=\Tt_0\conc\Tt_1\conc\ldots$, with each $\Tt_n$ based on an interval of finitely many Woodins, with a unique cofinal branch $b$,
and such that $M^\Tt_b$ is illfounded, a contradiction.

Since $m=d+1$ does not witness the equivalence,
we can find a counterexample consisting
a
formula $\varphi$,
some  $k<\om$,
and tuples $\vec{\delta},\vec{\varepsilon}\in[\Delta_{\geq m}^\Pg]^{2k}$, and note we may assume $\vec{\delta}=\{\delta_{m}^\Pg,\delta_{m+1}^\Pg,\ldots,\delta_{m+2k-1}^\Pg\}$, 
so $\delta_i\leq\vareps_i$ for each $i<2k$.
Let $\vec{\theta}=\{\delta_0^\Pg,\ldots,\delta_d^\Pg\}$.
 Let $R=R^\Pg_{\vec{\theta}\cup\vec{\vareps}}$ and $i_{\Pg R}:\Pg\to R$ the iteration map. Note  $\delta_d^{+\Pg}<\crit(i_{\Pg R})$ and
 $i_{\Pg R}(\delta_i)=\vareps_i$
 for all $i<2k$.
 
Since $\delta_d^{+\Pg}<\crit(i_{\Pg R})$, we have $i_{\Pg R}(\desc(\tau)_1)=\desc(\tau)_1$. 

\begin{clm}\label{clm:i(xi)>xi_or_etc}
Either
 $i_{\Pg R}(\xi)>\xi$
 or $i_{\Pg R}(\desc(\tau)_0)>_{\mathrm{lex}}\desc(\tau)_0$.
\end{clm}
\begin{proof}Otherwise,
note that $i_{\Pg R}(\xi)=\xi$
 and $i_{\Pg R}(\tau)=\tau\in(\Nm_\xi)^R$. Consider corresponding generic extensions $R[H]=\Pg[G]$ of
$R$ and $\Pg$ respectively,
constructed as  in the proof of Lemma \ref{lem:gen-all_independence}
and taken with
 $H\rest\delta_d^R=G\rest\delta_d^\Pg$.
This gives $(\Mtilde_\xi^R)_G=
(\Mtilde_\xi^\Pg)_H$ and $\tau_G=\tau_H$ (this uses that $\base(\tau)=\delta_d^\Pg$ and $R|\delta_d^{+R}=\Pg|\delta_d^{+\Pg}$ and $H\rest\delta_d^R=G\rest\delta_d^\Pg$). But
this contradicts the disagreement between $R$ and $\Pg$ at $i_{\Pg R}(\vec{\delta})=\vec{\varepsilon}$ (which holds by the choice of $\varphi,k,\vec{\delta},\vec{\varepsilon}$ and the elementarity of $i_{\Pg R}$).
\end{proof}

Now let $\Tt_0$ be the tree from $R_0=\Pg$ to $R_1=R$. By the contradictory hypothesis, we  can repeat the process but starting with $m$ such that $\delta_m^\Pg>\max(\vec{\varepsilon})$, producing
 a tree $\Tt_1'$ on $\Pg$,
 with last model $R'$. 
 Let $\Tt_1$ be the equivalent tree on $R_1=R$ (which exists because $\Pg,R_1$ are equivalent above $\max(\vec{\varepsilon})$).
 Proceed in this manner,
 defining $\Tt_n$ for all $n<\om$. Each $\Tt_n$ is based on an interval of only finitely many Woodins,
 and does not drop on its main branch, 
 and the interval for $\Tt_n$ below that for $\Tt_{n+1}$. The concatenation $\Tt=\Tt_0\conc\Tt_1\conc\ldots$ is a correct normal tree, and there is a unique $\Tt$-cofinal branch $b$.
 But  by Claim \ref{clm:i(xi)>xi_or_etc},
 $M^\Tt_b$ is illfounded,
 a contradiction.
 
 \ref{item:xi_deltavec} $\Leftrightarrow$ \ref{item:xi_above_theta} at $\xi$ (above some $m$): This is established by combining
 the method of the previous part 
 with the proof of Lemma \ref{lem:gen-all_independence}.
 We leave the straightforward execution to the reader.
\end{proof}

\begin{lem}
\tu{(}We have $\xi\in[\lambda,\OR^N)\cap\Lim$ and the formula class $\Gamma$.\tu{)}
~Let $\vec{\tau}\in(\Nm_\xi)^{<\om}$.
Let $m$ witness Lemma \ref{lem:xi_gen-all_independence} for $\vec{\tau}$
\tu{(}in particular with $\base(\vec{\tau})\leq\delta_m^N$\tu{)}.
Let $\varphi\in\Gamma$.
Let $\theta\in\Delta_{\geq m}^N$.
Let $g_d$ be $(N,\CC_d^N)$-generic.
Then:
\begin{enumerate}
\item\label{item:individual_k_equivalent_statements} For each $k<\om$,
$(N|\xi)[g_d]$ satisfies that the following statements are equivalent:
\begin{enumerate}[label=\tu{(}\alph*\tu{)}] \item\label{item:forced_all^*_ks} $\forces_{\CC_{\mathrm{tail}}}\all_k^*s\ \Big[\Mtilde_\xi\sats\varphi(\vec{\tau},s)\Big]$,
\item\label{item:game_xi_above_theta}
$\all^{\mathrm{gen}}_{\geq\theta;k}s\ \forces_{\CC_{\mathrm{tail}}}\Mtilde_\xi\sats\varphi(\vec{\tau},s)$.
\item\label{item:exists_betavec_>=_m} $\exists\vec{\beta}\in[\Delta_{\geq m}]^{2k}\all^{\mathrm{gen}}_{\vec{\beta}}s\ \forces_{\CC_{\mathrm{tail}}}\Mtilde_\xi\sats\varphi(\vec{\tau},s)$
\item\label{item:all_betavec_>=_m}  $\all\vec{\beta}\in[\Delta_{\geq m}]^{2k}\all^{\mathrm{gen}}_{\vec{\beta}}s\ \forces_{\CC_{\mathrm{tail}}}\Mtilde_\xi\sats\varphi(\vec{\tau},s)$
\item\label{item:all_ell_exists_betavec_>=_ell} $\all\ell<\om\exists\vec{\beta}\in[\Delta_{\geq \ell}]^{2k}\all^{\mathrm{gen}}_{\vec{\beta}}s\ \forces_{\CC_{\mathrm{tail}}}\Mtilde_\xi\sats\varphi(\vec{\tau},s)$
\end{enumerate}
\item\label{item:forcing_mu_measure_one_many_first_order} Either:
\begin{enumerate}[label=\tu{(}\alph*\tu{)}]
\item We have:
\begin{enumerate}
\item For some $k<\om$,
$(N|\xi)[g_d]\sats\ \forces_{\CC_{\mathrm{tail}}}\all_k^*s\ \Big[\Mtilde_\xi\sats\varphi(\vec{\tau},s)\Big]$,
\item\label{item:N|xi_sats_psi} $(N|\xi)[g_d]\sats\psi(\vec{\tau})$
\end{enumerate}
or
\item We have:
\begin{enumerate}\item For all $k<\om$,
$(N|\xi)[g_d]\sats\ \forces_{\CC_{\mathrm{tail}}}\exists^*_ks\ \Big[\Mtilde_\xi\sats\neg\varphi(\vec{\tau},s)\Big]$,
\item\label{item:N|xi_sats_neg_psi} $(N|\xi)[g_d]\sats\neg\psi(\vec{\tau})$,
\end{enumerate}
\end{enumerate}
where
\[\psi(\vec{\tau})\ \iff\ \exists k<\om\all\ell<\om\exists\vec{\beta}\in[\Delta_{\geq\ell}]^{2k}\all^{\mathrm{gen}}_{\vec{\beta}}s\ \forces_{\CC_{\mathrm{tail}}}\Mtilde_\xi\sats\varphi(\vec{\tau},s).\]
Moreover, ``$\psi(\vec{\tau})$'' is a first order assertion over $(N|\xi)[g_d]$ \tu{(}as is implicit in the notation in \ref{item:N|xi_sats_psi} and
\ref{item:N|xi_sats_neg_psi}\tu{)}.
\end{enumerate}
\end{lem}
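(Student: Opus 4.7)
The plan is to deduce part \ref{item:individual_k_equivalent_statements} essentially from Lemma \ref{lem:xi_gen-all_independence} together with a short symmetry argument that connects the syntactic forcing statement ``$\forces_{\CC_{\tail}}\all^*_k s\ [\Mtilde_\xi\sats\varphi(\vec{\tau},s)]$'' to the game/generic formulation, and then derive part \ref{item:forcing_mu_measure_one_many_first_order} from part \ref{item:individual_k_equivalent_statements} using Turing determinacy in $\Mtilde_\xi$ together with the collapse of the $\ell$-quantifier in the definition of $\psi$. All names live in $(\Nm^N_\xi)^{<\om}$ and all definability assertions are to be checked against the inductive hypothesis on $\gamma<\xi$ described just before Lemma \ref{lem:xi_gen-all_independence}.

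First I would argue the equivalence of \ref{item:forced_all^*_ks}--\ref{item:all_ell_exists_betavec_>=_ell}. The equivalences among \ref{item:game_xi_above_theta}, \ref{item:exists_betavec_>=_m}, \ref{item:all_betavec_>=_m} and \ref{item:all_ell_exists_betavec_>=_ell} are immediate from Lemma \ref{lem:xi_gen-all_independence} applied to $\varphi$ in the parameter $\vec{\tau}$: by that lemma, above the threshold $m$ the quantifier choice $\vec{\delta}\in[\Delta^N_{\geq m}]^{2k}$ is irrelevant, all such choices are equivalent, and they in turn are equivalent to the ``$\geq\theta;k$'' formulation for any $\theta\in\Delta^N_{\geq m}$ (pushing $\theta$ upward only shrinks the allowed tuples, which by independence does not affect truth). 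This yields \ref{item:game_xi_above_theta} $\iff$ \ref{item:exists_betavec_>=_m} $\iff$ \ref{item:all_betavec_>=_m}, and \ref{item:all_ell_exists_betavec_>=_ell} follows because for any $\ell$ we may pass to $\max(m,\ell)$ and use the same equivalence.

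Next I would handle \ref{item:forced_all^*_ks} $\iff$ \ref{item:game_xi_above_theta}. The key point is that $\Mtilde_\xi$ is symmetric in the tail generics and the $\all^*_k$ quantifier is interpreted over degrees in $\widetilde{\HC}$. For $\Rightarrow$: suppose \ref{item:forced_all^*_ks}. Fix $\vec{\delta}\in[\Delta^N_{\geq m}]^{2k}$. Given any $\CC_{\delta_{2i}}$-generic choice of $s_i$, the remaining tail forcing still forces $\all^*_ks\ \varphi$, so in particular there is a tail extension providing a degree $t_i\geq_T s_i$ verified for the truncated formula; iterating this gives the ``$\all^{\gen}$'' game. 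For $\Leftarrow$: suppose \ref{item:game_xi_above_theta} but \ref{item:forced_all^*_ks} fails, so some condition in $\CC_{\tail}$ forces $\exists^*_ks\ [\Mtilde_\xi\sats\neg\varphi(\vec{\tau},s)]$. By homogeneity of the tail collapse and the symmetry of $\Mtilde_\xi$, this is forced by the empty condition, and then following the $\exists^{\gen}_{\vec{\delta}}$ strategy with $\vec{\delta}$ any tuple above $\max(m,\theta)$ produces a tail-generic extension in which $\varphi$ fails at a winning $s$, contradicting \ref{item:game_xi_above_theta}. (This is the step I expect will be the main obstacle: it rests on the fact, inductive at $\xi$, that the $\mSigma_0^{\Mtilde_\xi}$-forcing relation is symmetric in the tail Collapse, which will ultimately be read off from the formula $\psi_0$ used to define $\Mtilde_\xi$; we invoke the inductive definability hypothesis here.)

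Finally I would prove part \ref{item:forcing_mu_measure_one_many_first_order}. By part \ref{item:individual_k_equivalent_statements}, $\psi(\vec{\tau})$ is equivalent to
\[\exists k<\om\ \exists\vec{\beta}\in[\Delta^N_{\geq m}]^{2k}\ \all^{\gen}_{\vec{\beta}} s\ \forces_{\CC_{\tail}}\Mtilde_\xi\sats\varphi(\vec{\tau},s),\]
which uses only the first-order forcing relation at $\xi$ (definable by inductive hypothesis on $\gamma<\xi$) and quantification over the integer $k$ and the set $\Delta^N$ of Woodin cardinals of $N$ below $\xi$, all of which is first-order over $(N|\xi)[g_d]$. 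Consequently ``$\psi(\vec{\tau})$'' and its negation are each first-order statements there. For the dichotomy itself, Turing determinacy holding in $\Mtilde_\xi$ (part of the standing hypothesis on $\M_\gamma$ with $\gamma\le\beta^*+n^*$, and inherited to $\Mtilde_\xi$ via the symmetric interpretation) gives that for each $k$, either $\forces_{\CC_{\tail}}\all^*_ks\ [\Mtilde_\xi\sats\varphi]$ or $\forces_{\CC_{\tail}}\exists^*_ks\ [\Mtilde_\xi\sats\neg\varphi]$, and these alternatives are mutually exclusive. Applying part \ref{item:individual_k_equivalent_statements} to the former and the dual of part \ref{item:individual_k_equivalent_statements} (proved symmetrically by replacing $\varphi$ with $\neg\varphi$ and swapping $\all^{\gen}$ with $\exists^{\gen}$) to the latter yields the two alternatives of part \ref{item:forcing_mu_measure_one_many_first_order}. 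The existence of some $k$ witnessing the first alternative, or the failure of all such $k$, is exactly $\psi(\vec{\tau})$ versus $\neg\psi(\vec{\tau})$, completing the proof.
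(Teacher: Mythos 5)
The equivalences in part (1), and the dichotomy in part (2), you handle in essentially the way the paper does: the first four are immediate from Lemma~\ref{lem:xi_gen-all_independence}, the bridge to (a) is by the kind of symmetry/homogeneity argument used for Lemma~\ref{lem:N|lambda_compute_mu}, and Turing determinacy (via the inductive niceness of $N|\xi$) yields the dichotomy. That part is fine.

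There is, however, a genuine gap in your argument that $\psi(\vec{\tau})$ is first-order over $(N|\xi)[g_d]$. You replace $\psi(\vec{\tau})$ by $\exists k<\om\ \exists\vec{\beta}\in[\Delta_{\geq m}^N]^{2k}\ \all^{\gen}_{\vec{\beta}}s\ \forces_{\CC_{\tail}}\Mtilde_\xi\sats\varphi(\vec{\tau},s)$ and then assert that, since the forcing relation and the set $\Delta^N$ are first-order definable, the whole thing is first-order. But the quantifier $\all^{\gen}_{\vec{\beta}}s$, once unrolled into collapse-conditions and degree-names, is a string of roughly $4k$ alternating quantifier blocks, whose length grows with $k$. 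Since $k$ sits under an outer $\exists k<\om$, the ``formula'' you wrote down is a schema of formulas of unbounded quantifier complexity, not a single first-order formula over $(N|\xi)[g_d]$. This is exactly the subtlety the lemma flags, and it cannot be waved away by noting that each ingredient is separately definable.

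The missing step is to bound the quantifier complexity uniformly in $k$. Concretely: fix $n$ with $\forces^{N[g]}_{\xi\Gamma}$ uniformly $\rSigma_n^{N[g]}$, so that the innermost clause
``$p_{2k-1}\forces_{\CC_{\beta_{2k-1}}}\big[(\all i<k\ (t_i\leq_T s_i))\wedge\forces_{\CC_{\tail}}\Mtilde_\xi\sats\varphi(\vec{\tau},s)\big]$'' is of fixed complexity, and then express the entire $4k$-fold alternation by a single $\rSigma_1$ condition on a pair $(q,t)$ with $T_n(q,t)$ holding: the game-statement is translated into an assertion that certain formulas belong to a $\Sigma_n$-theory $t$ of a suitable level $(N|\alpha)[g_d]$, with all the erstwhile-nested quantifiers now bounded by that level. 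This yields a genuinely first-order description of $\psi(\vec{\tau})$ and is how the paper closes the argument; without it, part (2) of the lemma is not established.
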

\begin{proof}
Part \ref{item:individual_k_equivalent_statements}:  Note
here that because $k$ is fixed, the statements are directly first order over $(N|\xi)[g_d]$; note that the statement in \ref{item:forced_all^*_ks},
written in expanded form, is just
\[\forces_{\CC_{\mathrm{tail}}}\ \all^{\Dd}t_0\exists^{\Dd}s_0\ldots\all^{\Dd}t_{k-1}\exists^{\Dd}s_{k-1}\ \Big[\big(\all i<k\ (t_i\leq_T s_i)\big)\wedge\Mtilde_\xi\sats\varphi(\vec{\tau},s)\Big]. \]
The equivalence of \ref{item:forced_all^*_ks}--\ref{item:all_ell_exists_betavec_>=_ell} is an straightforward consequence
of Lemma \ref{lem:xi_gen-all_independence}.

Part \ref{item:forcing_mu_measure_one_many_first_order}:
By part \ref{item:individual_k_equivalent_statements},
it is clear that either:
\begin{enumerate}[label=\tu{(}\alph*\tu{)}]
\item\label{item:some_k_option} We have:
\begin{enumerate}[label=\roman*.]
\item For some $k<\om$,
$(N|\xi)[g_d]\sats\ \forces_{\CC_{\mathrm{tail}}}\all_k^*s\ \Big[\Mtilde_\xi\sats\varphi(\vec{\tau},s)\Big]$,
\item\label{item:N|xi_sats_psi'} For some $k<\om$,
$(N|\xi)[g_d]$ satisfies the statement in \ref{item:all_ell_exists_betavec_>=_ell},\end{enumerate}
or
\item We have:
\begin{enumerate}[label=\roman*.]\item For all $k<\om$,
$N[g_d]\sats\ \forces_{\CC_{\mathrm{tail}}}\exists^*_ks\ \Big[\Mtilde_\xi\sats\neg\varphi(\vec{\tau},s)\Big]$,
\item For all $k<\om$,
$(N|\xi)[g_d]$
satisfies the negation of the statement in \ref{item:all_ell_exists_betavec_>=_ell}.
\end{enumerate}
\end{enumerate}
So the only issue
is the claim that $\psi(\vec{\tau})$
is a first order assertion
over $(N|\xi)[g_d]$. (This is not superficially immediate, because of the nesting of the ``$\all^{\mathrm{gen}}_{\vec{\beta}}s$'' quantifier (of length $2k$) within ``$\exists k<\om$''.) To see it is indeed first order, fix $n\in(0,\om)$
such that $\forces_{\xi}^{N[g]}\varphi(\cdot)$
is uniformly $\rSigma_n^{N[g]}$ definable (uniformly in $g$, that is, for $g$ being $(N,\Coll(\om,\gamma))$-generic for some $\gamma<\lambda^N$), and note that
\ref{item:some_k_option}\ref{item:N|xi_sats_psi'} holds iff
\[ (N|\xi)[g_d]\sats\exists k<\om\all\ell<\om\exists\vec{\beta}\in[\Delta_{\geq\ell}]^{2k}\exists q,t\ [T_n(q,t)\wedge\varrho(q,t,\vec{\beta},\vec{\tau})],\]
where $\varrho$ is the $\rSigma_1$ formula asserting that there is $\alpha\in\OR$ such that $q=(\alpha+1,(\vec{\beta},\vec{\tau}))$
and  writing $\vec{\beta}=(\beta_0,\ldots,\beta_{2k-1})$, $P=N[g]$
and $\Dd_i$ for the set of $\CC_{\beta_i}$-names in $P|\beta_i^{+P}$ for Turing degrees, we have
\[ \text{``}\alpha=\beta_{2k-1}^+\text{''}\in t \]
and
\[\begin{array}{lcl} \all ^{\CC_{\beta_0}}p_0\ \all^{\Dd_0} t_0\ \exists^{\CC_{\beta_1}}p_1\leq p_0\ \exists^{\Dd_1} s_0 \\
   \ldots\\
   \all^{\CC_{\beta_{2k-2}}}p_{2k-2}\leq p_{2k-1}\ \all^{\Dd_{2k-2}} t_{k-1}\ \exists^{\CC_{\beta_{2k-1}}} p_{2k-1}\leq p_{2k-2}\ \exists^{\Dd_{2k-1}} s_{k-1}\\\ \  \text{``}p_{2k-1}\forces_{\CC_{\beta_{2k-1}}}
   \Big[\big(\all i<k(t_i\leq_T s_i)\big)\wedge\forces_{\CC_{\mathrm{tail}}}\Mtilde_\xi\sats\varphi(\vec{\tau},s)\Big]\text{''}\in t,\end{array} \]
where $s$ denotes $(s_0,\ldots,s_{k-1})$. (Here $\varrho$ is indeed $\rSigma_1$, because the quantifiers are all bounded by
$(N|\alpha)[g_d]$.)
\end{proof}

We now proceed toward the formal definition of the forcing relation.

\begin{dfn}\label{dfn:language_real+generic-pm}
 The language of \emph{real+generic-premice} has predicate symbols $\widetilde{\es},\es$, and constant symbols $\dot{z},\dot{g}$.
 Given an $\om$-small  real premouse $N=L[\es,z]$
 with $\omega$ Woodin cardinals,
 and given $g$ which is $(N,\CC^N_n)$-generic for some $n<\om$,
 we define an associated real+generic-premouse as the structure
 $\widehat{N[g]}=(N[g],\widehat{\es}_0,\es_0,z_0,g_0)=(N[g],\es^N,\es^{N[g]},z,g)$,
 where $\es^{N[g]}$ is the canonical
 extension of $\es^N\rest(\delta_n^N,\OR^N)$ to  $N[g]$ (and its initial segments). Thus, $\widehat{N[g]}$
 can automatically determine $\es^N,z,g$ (and hence $N$) as well as its own extender sequence $\es^{N[g]}$.
 We allow $n=-1$ and $g=\emptyset$, in which case
 $N$ and $\widehat{N[g]}$
 are trivially equivalent.\end{dfn}
 
 \begin{dfn}\label{dfn:rename}
 Let $N$ be an $\om$-small premouse
  with $\om$ Woodin cardinals and $\lambda=\lambda^N$.
 Let
 $n<\om$
  and $g$ be $(N,\CC_n^N)$-generic.

  For $\sigma\in\Nm^N$, $\rename_{\rightarrow}(\sigma,g)$ denotes
 the natural translation
 of $\sigma$ to a name $\sigma'\in\Nm^{N[g]}$.
 That is, define $\sigma'$ recursively
 in the rank of $\sigma$, as follows:
 \begin{enumerate}[label=--]
  \item  if $\sigma\in\Nm_{\lambda}^N$,
  then $\sigma'=\sigma_g$
  where 
$\sigma_g$ denotes the usual ``partial evaluation'' of $\sigma$ via $g$
(in particular, $\sigma_g\in\Nm_\lambda^{N[g]}$
and  $(\sigma_g)_{G'}=\sigma_G$
whenever $G'$ is $(N[g],\CC^{N[g]})$-generic and $G=g\conc G'$), and
  \item  if $\sigma\notin\Nm_{\lambda}^N$,
  so $\sigma$ is of form $(\eta,i,\pi)$,
  then $\sigma'=(\eta,i,\pi')$
 where $\dom(\pi')=\dom(\pi)$
 and $\pi'(k)=\pi(k)'$ for $k\in\dom(\pi)$.
\end{enumerate}
It will follow that $(\sigma')_{G'}=\sigma_G$ in when $G,G'$ are as above.

We also make a similar definition in reverse.
Given $\sigma\in\Nm^{N[g]}$, $ \rename_{\leftarrow}(\sigma,g)$ denotes the natural translation of $\sigma$ to a name $\sigma''\in\Nm^N$:
proceed recursively as above,
but for $\sigma\in\Nm_\lambda^{N[g]}$,
$\sigma''$ is the $N$-least $\sigma''\in\Nm_{\lambda}^N$
such that $(\sigma'')_g=\sigma$ (this again denotes ``partial evaluation'').
(Note that
for every $\sigma\in\Nm_\lambda^{N[g]}$
there is $\tau\in\Nm_\lambda^N$
such that $\tau_g=\sigma$.)

If $\sigmavec=(\sigma_0,\ldots,\sigma_{k-1})\in(\Nm^N)^{<\om}$
then $\rename_\rightarrow(\sigmavec,g)$
denotes \[(\rename_\rightarrow(\sigma_0,g),\ldots,\rename_\rightarrow(\sigma_{k-1},g)).\]
And if $\pi:X\to\Nm^N$
where $X$ is finite
then $\rename_\rightarrow(\pi,g)$
denotes the function $\pi':X\to\Nm^{N[g]}$
where $\pi'(x)=\rename_\rightarrow(\pi(x),g)$. Likewise for $\rename_\leftarrow$.

Let $\rnm_\leftarrow(\lambda,y,x)$
denote the natural formula,
in free variables $\lambda,y,x$, and which is $\rSigma_1$
in the language of passive real+generic-premice,
such that whenever $N,n,g$ are as above
with $\lambda^N<\OR^N$,
$\pi'\in N$ and $\pi\in N[g]$,
then
\[ \pi'=\rename_\leftarrow(\pi,g)\iff\widehat{N[g]}\sats \rnm_\leftarrow(\lambda^N,\pi',\pi).\]

Let $\rnm^2_\leftarrow(y,x)$
denote the natural formula,
in free variables $y,x$,
and which is $\rSigma_2$ in the language of passive real+generic premice,
such that whenever $N,n,g$ are as above
with $\lambda^N=\OR^N$,
$\pi'\in N$ and $\pi\in N[g]$,
then
\[ \pi'=\rename_\leftarrow(\pi,g)\iff
\widehat{N[g]}\sats\rnm^2_{\leftarrow}(\pi',\pi).\qedhere\]
 \end{dfn}

\begin{dfn}\label{dfn:forcing_relation}
The following function $C$ (for \emph{complexity}) gives a (not very impressive) upper bound on the complexity of the forcing relation for a given formula $\varphi$ of the $\M(\RR)$-language.
Let $C:\om\to\om$
be defined recursively as follows:
\begin{enumerate}[label=--]
 \item if $\varphi$
is an $\mSigma_0$ formula
then $C(\varphi)=1$,
\item $C(\neg\varphi)=C(\varphi)+1$,
\item $C(\varphi\wedge\psi)=\max(C(\varphi),C(\psi))$,
\item $C(\exists x\varphi)=C(\varphi)+3$,
\item $C(\all^*_\mu s\varphi)=C(\varphi)+3$.
\end{enumerate}

We next define a recursive function
$F:\om\cross\om\to\om$ ($F$ for \emph{forcing})
such that for formulas $\sigma(\vec{u})$ of the $\M(\RR)$ language (in free variables $\vec{u}$)
and $\rSigma_1$ formulas $\psi(\lambda,p,\varphi,\pi)$
of the passive real-premouse
language (in free variables $\lambda,p,\varphi,\pi)$, 
$F(\psi,\sigma)$
is a formula $\varrho_{\psi\sigma}(\lambda,p,\pi)$ of the real+generic-premouse language (in free variables $\lambda,p,\pi$).
In the case of interest,
for appropriate $\om$-small premice $N$ with $\om$ Woodin cardinals
and $P$ such that $N|\lambda^N\pins P\ins N$,
$\psi^P(\lambda^N,\cdot,\cdot,\cdot)$ will define the $\mSigma_0$ forcing relation; so
for $p\in\CC^N$, $\mSigma_0$ formulas $\varphi=\varphi(\vec{v})$ and $\pi:X\to\Nm^P$
where $X$ is a finite set of variables with $\vec{v}\sub X$, this will mean that
\[ \Big(p\forces\Mtilde^P\sats\varphi(\pi)\Big)\iff P\sats\psi(\lambda^N,p,\varphi,\pi)\]
(the notation $\varphi(\pi)$ just
means that each free variable $u\in\vec{v}$ is interpreted by $\pi(u)$). We then want $\varrho_{\psi\sigma}$
to be a formula such that if in fact $N|\lambda^N\pins P\pins N$, then
for all $n<\om$
and all $g$ which are $(N,\CC_n^N)$-generic,
all   $p\in\CC^{P[g]}$
and $\pi:X\to\Nm^{P[g]}$
where $X$ is any finite set of variables with $\vec{u}\sub X$,
  we will have
\[ \Big(p\forces_{\CC^{P[g]}}\Mtilde^{P[g]}\sats\sigma(\pi)\Big)\iff \widehat{P[g]}\sats\varrho_{\psi\sigma}(\lambda^N,p,\pi).\]
Note however that in general the formula $\psi$
can be any $\rSigma_1$ formula
of the passive real-premouse language
in the specified free variables.
After having defined $F$,
we will use it to help us write
 a specific formula $\psi$
 having the desired properties.

 We will also
 need to define a variant
 to deal with the case that $P=N|\lambda^N$ (and hence $\lambda^N\notin P$);
 we will do this later.
 
Now for $(\psi,\sigma)\in\om\cross\om$,
we define $\varrho_{\psi\sigma}=F(\psi,\sigma)$ recursively in the length of $\sigma$ as follows.
 \begin{enumerate}
 \item\label{item:forcing_mSigma_0_varphi} if $\sigma$ is $\mSigma_0$ then $\varrho_{\psi\sigma}(\lambda,p,\pi)=$
\[\text{`}\exists q\in\dot{g}\ \exists\pi'\ \Big[ \rnm_{\leftarrow}(\lambda,\pi',\pi)\wedge L[\widehat{\es},\dot{z}]\sats\psi(\lambda,q\cup p,\sigma,\pi')\Big]\text{'},\] using
the symbols
$\widehat{\es},\dot{z},\dot{g}$
of the language of real+generic-premice),
 \item\label{item:forcing_neg_varphi} if $\neg\sigma$ is not $\mSigma_0$ then $\varrho_{\psi,\neg\sigma}(\lambda,p,\pi)=
\text{`}\all q\leq_{\lambda} p\ [\neg\varrho_{\psi\sigma}(\lambda,q,\pi)]\text{'}$,
\item\label{item:forcing_varphi_0_wedge_varphi_1} if $\varphi_0\wedge\varphi_1$ is not $\mSigma_0$ then $\varrho_{\psi,\sigma_0\wedge\sigma_1}(\lambda,p,\pi)=\text{`}\varrho_{\psi\sigma_0}(\lambda,p,\pi)\wedge\varrho_{\psi\sigma_1}(\lambda,p,\pi)\text{'}$,
 \item\label{item:forcing_exists_x_varphi} $\varrho_{\psi,\exists u\sigma}(\lambda,p,\pi)=\text{`}\all q\leq_\lambda p\ \exists r\leq_\lambda q\ \exists\vartheta\in\Nm_{\OR}\ \big[\varrho_{\psi\sigma}(\lambda,r,\pi_{u\mapsto\vartheta})\big]\text{'}$,
 where $\pi_{u\mapsto\vartheta}$
 denotes the map $\pi'$ with domain $\dom(\pi)\cup\{u\}$
 such that $\pi'\rest (\dom(\pi)\cut\{u\})\sub\pi$
 and $\pi'(u)=\vartheta$, and
\item\label{item:all^*_mu_s_forcing_relation} $\varrho_{\psi,\all^*_\mu 
u\sigma}(\lambda,p,\pi)=$
\[\begin{split} \text{`}&\text{Let }d=\base(\pi)\text{.
 Then }\\
& p\rest\CC_d\forces_{\CC_d}\exists^\om k\all^\om m\exists\vec{\beta}\in[\Delta_{\geq m}]^{2k}\exists q,t\Big[T_{C(\varphi)}(q,t)\wedge\theta(\lambda,\pi,q,t,k,\vec{\beta})\Big]\text{',}\end{split}\]
 where $\theta$ is the $\rSigma_1$ formula asserting
 \begin{enumerate}[label=]
 \item `$k>0$ and there is $\alpha\in\OR$ such that $q=(\alpha+1,(\lambda,\vec{\beta},\pi))$
and letting $\vec{\beta}=(\beta_0,\ldots,\beta_{2k-1})$
and letting $\Dd_i$ be the set of $\CC_{\beta_i}$-names in $L[\es,\dot{z},\dot{g}]|\beta_i^{+L[\es,\dot{z},\dot{g}]}$ for Turing degrees, we have
\[ \text{the formula ``}\alpha=\beta_{2k-1}^+\text{''}\in t \]
and
\[\begin{array}{lcl} \all ^{\CC_{\beta_0}}p_0\ \all^{\Dd_0} t_0\ \exists^{\CC_{\beta_1}}p_1\leq_\lambda p_0\ \exists^{\Dd_1} s_0 \\
   \ldots\\
   \all^{\CC_{\beta_{2k-2}}}p_{2k-2}\leq_\lambda p_{2k-3}\ \all^{\Dd_{2k-2}} t_{k-1}\ \exists^{\CC_{\beta_{2k-1}}} p_{2k-1}\leq_\lambda p_{2k-2}\ \exists^{\Dd_{2k-1}} s_{k-1}\\\ \  \text{the formula ``}p_{2k-1}\forces_{\CC_{\beta_{2k-1}}}
   \Big[\big(\all i<k(t_i\leq_T s_i)\big)\wedge\varrho_{\psi\sigma}(\lambda,\emptyset,\pi_{u\mapsto s})\Big]\text{''}\in t\text{'},\end{array} \]
where $s$ denotes $(s_0,\ldots,s_{k-1})$. (Here $\varrho$ is indeed $\rSigma_1$, because the quantifiers are all bounded by
$(L[\es,\dot{z},\dot{g}]|\alpha)$ (using the language of real+generic-premice).)\qedhere
\end{enumerate}
\end{enumerate}
\end{dfn}
Notice that the function $C$
was used in clause \ref{item:all^*_mu_s_forcing_relation}
of Definition \ref{dfn:forcing_relation} 
(in ``$T_{C(\varphi)}(q,t)$'').

\begin{dfn}
 We also define a recursive function $F^*:\om\to\om$, and write $\varrho^*_\varphi=F(\varphi)$,
 with the intention that if $P=N|\lambda^N\pins N$ then
 \[ \Big(p\forces_{\CC^{P[g]}}\Mtilde^{P[g]}\sats\varphi(\pi)\Big)\iff \widehat{P[g]}\sats\varrho^*_\varphi(p,\pi).\]
 Note that since we are now considering defining forcing over $N|\lambda^N$, it is a proper class forcing over this model.

 Clauses \ref{item:forcing_neg_varphi}--\ref{item:forcing_exists_x_varphi} of the definition of $F$ are replicated for $F^*$, except that we drop the parameter $\lambda$ (and of course $\leq_\lambda$ is replaced by the definition of the ordering)
 .
 Clause \ref{item:all^*_mu_s_forcing_relation}
 is basically as before,
 though we can also replace ``$\all^\om m\exists\vec{\beta}\in[\Delta_{\geq m}]^{2k}$'' with just ``$\exists\vec{\beta}\in[\Delta]^{2k}$''. We leave it to the reader to modify the function $C$ if needed. The main difference
 is for $\mSigma_0$ formulas $\varphi$:
 instead of using clause \ref{item:forcing_mSigma_0_varphi}, we define $\varrho^*_\varphi(p,\pi)$ using
 Definition \ref{dfn:mSigma_0_forcing_relation_Mtilde_lambda} and
 the proof of Lemma \ref{lem:mSigma_0,lambda_forcing_rel_def}
 (and its easy adaptation to intermediate generic extensions $N[g]$).
\end{dfn}

Using the functions $F,F^*$
for the main conversion of
formulas into forcing statements,
we now specify the formula $\psi$
we will actually use;
this will define the $\mSigma_0$ forcing relation (in the appropriate context).

In the following definition
we specify an $\rSigma_1$ formula $\psi_0(\lambda,p,\varphi,\pi)$ of the real-premouse language. The intention
is that if $N$ is an appropriate  $\om$-small premouse
with $\om$ Woodins and
 $\lambda^N<\OR^N$,
$p\in\CC^N$, $\varphi(\vec{u})$
is an $\mSigma_0$ formula,
$\pi:X\to\Nm^N$ where $X$ is finite and $\vec{u}\sub X$, then
\[ \Big(p\forces_{\CC^N}\Mtilde^N\sats\varphi(\pi)\Big)
 \iff
 N\sats\psi_0(\lambda^N,p,\varphi,\pi).
\]
As is standard for a level-by-level definition of a forcing relation, we look for a sequence of relations $\forces_\eta$ which handle, in this case, forcing truth over $\Mtilde_\eta$.
We have already defined the $\mSigma_0$ part of $\forces_{\lambda}$ (and sketched the definition of the full $\forces_{\lambda}$).
As is also standard, we use the algorithm of Definition \ref{dfn:Sigma_0_to_Sigma_om_algo}
to reduce the $\mSigma_0$ part of $\forces_{\eta+\om}$ to $\forces_\eta$. The key step, which
makes use of our special circumstances,
is the extension of the $\mSigma_0$ part of $\forces_\eta$ to the full $\forces_\eta$;
this employs the formulas $\varrho_{\psi\varphi}$ and $\varrho^*_\varphi$. (Things are also a little specialized here because we are restricting to names in $\Nm^N$ and the model $\Mtilde^N$.) A small organizational subtlety also arises in that our definition of $(\psi,\varphi)\mapsto\varrho_{\psi\varphi}$ refers to $\psi$,
but of course we don't know what the $\psi$ of interest is until we have written it; in the end this does not matter.

\begin{dfn}\label{dfn:psi_0}
Let $\psi_0(\lambda,p_0,\varphi_0,\sigma_0)$ 
denote the natural $\Sigma_1$ formula of the passive real-premouse
language asserting
``there are $\CC,\eta^*,I,I'$ and a sequence  $\left<\forces^\varphi_\eta\right>_{(\eta,\varphi)\in I'}$ 
such that:
\begin{enumerate}
\item $\lambda,\eta^*\in\Lim$, $\lambda\leq\eta^*$
and $L[\es,\dot{z}]|\eta^*\sats$``$\lambda$ is a limit of Woodin cardinals'',\footnote{The requirement that $L[\es,\dot{z}]|\eta^*\sats$``$\lambda$ is a limit of Woodin cardinals''
is included more for the reader's orientation,
and to let us make clear sense
of things like $\CC^{L[\es,\dot{z}]|\lambda}$ and $\Nm_{\eta^*}^{L[\es,\dot{z}]}$;
in the end we will only interpret
$\psi_0$ over an $\om$-small premouse $N$
with $\om$ Woodins, and with $\lambda=\lambda^N$,
so we will in fact have $N\sats$``$\lambda$ is a limit of Woodins'', not only $N|\eta^*$. But here
we do not want $\psi_0$ to assert ``$L[\es,\dot{z}]\sats$``$\lambda$ is a limit of Woodins'',
because we want $\psi_0$ to be $\Sigma_1$.}
\item $\CC=\CC^{L[\es,\dot{z}]|\lambda}$,
\item $I=[\lambda,\eta^*]\cap\Lim$
and $I'\sub I\cross\om$,
\item if $\eta\in I$
and $\varphi$ is an $\mSigma_0$ formula
then $(\eta,\varphi)\in I'$,
\item if $\eta\in[\lambda,\eta^*)\cap\Lim$ then $\{\eta\}\cross\om\sub I'$,
\item there are only finitely many non-$\mSigma_0$ formulas $\varphi$ such that $(\eta^*,\varphi)\in I'$,\footnote{We make this arestriction
so that $\psi_0$ can actually be written in $\rSigma_1$ form.}
\item $(\eta^*,\varphi')\in I'$
for each $\varphi$ with $(\eta^*,\varphi)\in I'$
and each subformula $\varphi'$ of $\varphi$,
\item $\forces^\varphi_\eta\ \sub \CC\cross(\Nm^{L[\es,\dot{z}]|\eta})^{<\om}$,
and we write $p\forces_\eta\varphi(\pi)$
for $(p,\pi)\in\ \forces^\varphi_\eta$,
\item\label{item:forcing_relation_at_lambda_def}
if $(\lambda,\varphi)\in I'$
and $\pi\in(\Nm^{L[\es,\dot{z}]|\lambda})^{<\om}$
then
\[ \big(p\forces_\lambda\varphi(\pi)\big)
\iff L[\es,\dot{z}]|\lambda\sats\varrho^*_\varphi(p,\pi),\]
 \item\label{psi_0_next_mSigma_0} if $\eta+\om\in I$
 and $\varphi$ is $\mSigma_0$
 and $n<\om$ and $\sigma:n\to\Nm^{L[\es,\dot{z}]|(\eta+\om)}$ then:
 \begin{enumerate}
  \item\label{item:if_names_always_at_eta}  if there is $\left<i_k,\pi_k\right>_{k<n}$ such that for each $k<n$, we have
 \[ \sigma(k)=(\eta,i_k,\pi_k), \]
then writing $\vec{i}=(i_0,\ldots,i_{n-1})$,
we have
 \[ \Big(p\forces_{\eta+\om}\varphi(\sigma)\Big) \iff
 \Big(p\forces_{\eta}\psi_{\varphi,\vec{i}}(\pi_0\conc\ldots\conc\pi_{n-1})\Big) \]
 (recall $\psi_{\varphi,\vec{i}}$
 was specified in Definition \ref{dfn:Sigma_0_to_Sigma_om_algo}), 
\item\label{item:if_names_not_always_at_eta} letting $\sigma':n\to\Nm^{L[\es,\dot{z}]|(\eta+\om)}$
be such that for all $k<n$,
\begin{enumerate}
\item if $\sigma(k)$ has form $(\eta,i,\pi)$
for some $i,\pi$, then $\sigma'(k)=\sigma(k)$,
\item if $\sigma(k)$ has form $(\gamma,i,\pi)$
for some $\gamma,i,\pi$ with $\gamma<\eta$,
then\[ \sigma'(k)=(\eta,\pad(i),\pi\conc\left<m_{\gamma}\right>) \]
(cf.~Definitions \ref{dfn:Sigma_0_to_Sigma_om_algo} and \ref{dfn:names_for_ordinals_and_M}), and
\item if $\sigma(k)\in\Nm^{L[\es,\dot{z}]|\lambda}$
then $\sigma'(k)=(\eta,I_{33},\left<\sigma(k)\right>)$ (cf.~Definition \ref{dfn:Sigma_0_to_Sigma_om_algo}),
\end{enumerate}
we have\footnote{Note that if  the hypothesis of clause \ref{item:if_names_always_at_eta}
holds then $\sigma'=\sigma$,
so that clause \ref{item:if_names_not_always_at_eta} holds trivially.}
 \[ \Big(p\forces_{\eta+\om}\varphi(\sigma)\Big) \iff \Big(p\forces_{\eta+\om}\varphi(\sigma')\Big).\]
 \end{enumerate}
 \item if $\eta\in(\lambda,\eta^*]$
 is a limit of limits then for each $\mSigma_0$ formula $\varphi$,
 we have \[{\forces^\varphi_\eta}=
 \Big(\bigcup_{\gamma<\eta}{\forces^\varphi_\gamma}\Big),\]
 \item if $\eta\in(\lambda,\eta^*]$ and $\left<\forces^\varphi_\eta\right>_{\varphi\in \mSigma_0}$
 is $\Sigma_1$-definable over 
 $L[\es,\dot{z}]|\eta$ in the parameter $\lambda$,
 and $\psi$ is the least $\Sigma_1$ formula such that
 \[ \left<\forces^\varphi_\eta\right>_{\varphi\in \mSigma_0} =\Big\{(p,\varphi,\pi)\in (L[\es,\dot{z}]|\eta)\Bigm|(L[\es,\dot{z}]|\eta)\sats\psi(\lambda,p,\varphi,\pi)\Big\},\]
 and if $(\eta,\varphi)\in I'$
 and $\sigma:n\to\Nm^{L[\es,\dot{z}]|\eta}$ where $n<\om$ and all free variables of  $\varphi$ are ${<n}$,
 then
 \[ \Big(p\forces_\eta\varphi(\sigma)\Big)\iff L[\es,\dot{z}]|\eta\sats\varrho_{\psi\varphi}(\lambda,p,\sigma),\]
 \item\label{item:psi_0_conclusion} $\varphi_0$ is an $\mSigma_0$ formula
and there is $n<\om$ such that all free variables of $\varphi_0$ are ${<n}$ and $\sigma_0:n\to\Nm^{L[\es,\dot{z}]|(\eta^*+\om)}$,\footnote{It might be that $\eta^*+\om=\OR^N$,
but note that this statement
is still naturally $\rSigma_1$,
as $\pi_0$ is required to have finite domain
and $\Nm^N\sub N$.}
and letting $\sigma_0'$
 be defined from $\sigma_0$ like in clause \ref{item:if_names_not_always_at_eta} (but with $\eta^*$ replacing  $\eta$ there)
 and $\vec{i}'_0$ be defined
 from $\sigma_0'$ like  $\vec{i}$
  in clause \ref{item:if_names_always_at_eta},
  and also $\left<\pi'_j\right>_{j<n}$ from $\sigma_0'$ 
  like $\left<\pi_j\right>_{j<n}$
  in clause \ref{item:if_names_always_at_eta},
 then $(\eta^*,\psi_{\varphi_0,\vec{i}'_0})\in I'$
 and
 $p_0\forces_{\eta^*}\psi_{\varphi_0,\vec{i}'_0}(\pi_0'\conc\ldots\conc\pi'_{n-1})$\text{''}.\qedhere
\end{enumerate}
\end{dfn}

\begin{dfn}\label{dfn:psi_0^neg}
 Let $\psi_0^{\neg}$ be the $\Sigma_1$ formula of the passive real-premouse
language obtained from $\psi_0$
by replacing, in condition \ref{item:psi_0_conclusion},
the very last clause
\[\text{``}p_0\forces_{\eta^*}\psi_{\varphi_0,\vec{i}'_0}(\pi_0'\conc\ldots\conc\pi'_{n-1})\text{''}\]
with its negation, i.e.
\[\text{``}p_0\not\forces_{\eta^*}\psi_{\varphi_0,\vec{i}'_0}(\pi_0'\conc\ldots\conc\pi'_{n-1})\text{''}.\qedhere\]
\end{dfn}

\begin{lem}
 Let $N$ be an $\om$-small premouse with $\om$ Woodins.
 \begin{enumerate}
  \item\label{item:agreement_of_witnesses}
Suppose \tu{(}i\tu{)} $N\sats\psi_0(\lambda^N,p_0,\varphi_0,\sigma_0)$ or 
\tu{(}ii\tu{)} $N\sats\psi_0^{\neg}(\lambda^N,p_0,\varphi_0,\sigma_0)$, as witnessed by
 \[ \Big(\CC_0,\eta^*_0,I_0,I'_0,\left<\forces^{\varphi}_{0\eta}\right>_{(\eta,\varphi)\in I'_0}\Big),\]
 and also \tu{(}iii\tu{)} $N\sats\psi_0(\lambda^N,p_0,\varphi_0,\sigma_0)$ or \tu{(}iv\tu{)} $N\sats\psi_0^{\neg}(\lambda^N,p_0,\varphi_0,\sigma_0)$, as witnessed by
 \[ \Big(\CC_1,\eta^*_1,I_1,I'_1,\left<\forces^{\varphi}_{1\eta}\right>_{(\eta,\varphi)\in I'_1}\Big).\]
 \tu{(}So either \tu{(}i\tu{)} holds,
 with two witnessing tuples,
 or \tu{(}ii\tu{)} holds, with two witnessing tuples,
 or both \tu{(}i\tu{)} and \tu{(}ii\tu{)}
 hold, and we have witnesses for each.\tu{)}
Let $\eta^*=\min(\eta^*_0,\eta^*_1)$.
 Then:
 \begin{enumerate}[label=\tu{(}\alph*\tu{)}]
  \item the two witnessing tuples have no disagreements within their common domains; that is, 
$\CC_0=\CC_1$ and for all $(\eta,\varphi)\in I'_0\cap I'_1$,
 all $p\in\CC_0$
 and all $\pi\in(\Nm^{N|\eta^*})^{<\om}$,
 \[ \big(p\forces_{0\eta}\varphi(\pi)\big)
\iff \big(p\forces_{1\eta}\varphi(\pi)\big) \]
\tu{(}using the notation of Definition \ref{dfn:psi_0}\tu{)},
 \item for each $\eta\in(\lambda,\eta^*_1]\cap\Lim$, $\left<\forces^\varphi_{1\eta}\right>_{
 \varphi\in\mSigma_0}$ is $\Sigma_1$-definable
 over $N|\eta$ in the parameter $\lambda^N$,
 as witnessed by $\psi_0$; that is, 
 \[ \left<\forces^\varphi_{1\eta}\right>_{\varphi\in\mSigma_0}=\Big\{(p,\varphi,\pi)\in(N|\eta)\ \Big|\ (N|\eta)\sats\psi_0(\lambda^N,p,\varphi,\pi)\Big\}.
 \]
\end{enumerate} 
\item\label{item:mSigma_0_forcing_Delta_1}For all $p_0\in\CC$, $\mSigma_0$ formulas $\varphi_0$, and $\sigma_0\in(\Nm^N)^{<\om}$,
we have 
\[N\sats\Big(\psi_0(\lambda^N,p_0,\varphi_0,\sigma_0)
 \Leftrightarrow\neg\psi^{\neg}_0(\lambda^N,p_0,\varphi_0,\sigma_0)\Big).
\]
\end{enumerate}
\end{lem}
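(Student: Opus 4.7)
The plan is to prove part \ref{item:agreement_of_witnesses} by induction along the common initial segment of the two witnessing tuples, and then derive part \ref{item:mSigma_0_forcing_Delta_1} from part \ref{item:agreement_of_witnesses} together with a direct construction of a witness. Observe first that $\CC_0 = \CC_1 = \CC^{N|\lambda^N}$ is immediate since both are specified in clause 2 of Definition \ref{dfn:psi_0} in the same way.

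For clause (a), I would induct on $\eta \in [\lambda^N, \eta^*] \cap \Lim$, showing at each stage that $\forces^{\varphi}_{0\eta}$ and $\forces^{\varphi}_{1\eta}$ agree for every $\mSigma_0$ formula $\varphi$ (with the agreement on non-$\mSigma_0$ formulas then being automatic from the clause that produces the general forcing from the $\mSigma_0$ one via $\varrho_{\psi\varphi}$). The base case $\eta = \lambda^N$ is immediate from clause \ref{item:forcing_relation_at_lambda_def}, since there the relation is uniformly specified by $\varrho^*_\varphi$ independently of the rest of the witness. At $\eta + \om$, clause \ref{psi_0_next_mSigma_0} reduces $\mSigma_0$ forcing at $\eta + \om$ to $\mSigma_0$ forcing at $\eta$ via the syntactic name-normalization of subclause \ref{item:if_names_not_always_at_eta} followed by the recursive translation $\psi_{\varphi, \vec{i}}$ from Lemma \ref{lem:Sigma_0_to_Sigma_omega_algo}; both operations are identical on both sides, so the inductive hypothesis gives agreement. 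Limits of limits are handled by unions. Clause (b) then falls out of the induction: the canonical least $\Sigma_1$ formula defining $\langle \forces^{\varphi}_{1\eta}\rangle_{\varphi\in\mSigma_0}$ over $N|\eta$ in parameter $\lambda^N$ is, by design of the clauses of Definition \ref{dfn:psi_0}, precisely $\psi_0$ itself, and this is what the next successor step of the induction explicitly requires.

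For part \ref{item:mSigma_0_forcing_Delta_1}, one direction is immediate from clause (a): if both $\psi_0$ and $\psi_0^{\neg}$ held for the same $(\lambda^N, p_0, \varphi_0, \sigma_0)$, take witnesses and apply (a) with $\eta^* = \min(\eta^*_0, \eta^*_1)$; then the two forcing relations $\forces_{0\eta^*}$ and $\forces_{1\eta^*}$ agree on the relevant statement, contradicting the opposing conclusions in clause \ref{item:psi_0_conclusion}. For the other direction, I would construct a witness from scratch: fix $\eta^* \in \Lim$ large enough that $\sigma_0$ lies in $(\Nm^{N|(\eta^*+\om)})^{<\om}$, define $\forces_{\lambda^N}^{\varphi}$ for $\mSigma_0$ formulas by $\varrho^*_\varphi$, and propagate upward using the clauses of $\psi_0$ — at successor steps via the $\psi_{\varphi,\vec{i}}$ translation, at limits of limits by union — extending at each stage to the needed non-$\mSigma_0$ formulas via $\varrho_{\psi_0,\varphi}$. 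Once this tuple is in hand, either $p_0$ forces the relevant translated formula at $\eta^*$, giving $\psi_0$, or it does not, giving $\psi_0^{\neg}$.

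The hard part will be verifying inductively, in this construction, that at each $\eta$ the $\mSigma_0$ component of $\forces_\eta$ is in fact $\Sigma_1^{N|\eta}(\{\lambda^N\})$ with defining formula exactly $\psi_0$ — a circular-looking requirement that is resolved by the specific engineering of the clauses of Definition \ref{dfn:psi_0}, in which every reference to the forcing relation at stage $\eta$ is converted, via $\varrho_{\psi_0,\varphi}$, to a first-order assertion about $N|\eta$ that refers downward to levels already handled by the induction (cf.\ Remark \ref{rem:will_formalize_name_definitions}). Once this definability invariant is tracked through the recursion, the constructed tuple satisfies every clause of $\psi_0$ and the witness exists, closing the equivalence.
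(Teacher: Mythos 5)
Your proposal follows the same strategy the paper intends: induction on $\eta$ for part \ref{item:agreement_of_witnesses}, and for part \ref{item:mSigma_0_forcing_Delta_1} a combination of uniqueness (via part \ref{item:agreement_of_witnesses}) and an explicit construction of a witnessing tuple, organized by the level $\eta^*$ up to which the tuple is built. The paper's own proof is essentially a one-line pointer to this plan, and your elaboration fills it in correctly.

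One small imprecision worth flagging: in your last paragraph you frame the invariant to be tracked as the $\mSigma_0$ component of $\forces_\eta$ being $\Sigma_1^{N|\eta}(\{\lambda^N\})$ ``with defining formula exactly $\psi_0$.'' Clause 11 of Definition \ref{dfn:psi_0} actually demands the \emph{least} $\Sigma_1$ defining formula $\psi$, which need not literally be $\psi_0$; but this does not matter, because the only use of $\psi$ is inside the substitution clause $\varrho_{\psi\sigma}$ of Definition \ref{dfn:forcing_relation}, and there $\psi$ appears only in the form ``$L[\widehat{\es},\dot{z}]\sats\psi(\ldots)$'' evaluated over the ground model $N|\eta$. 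Hence any two $\Sigma_1$ formulas defining the same $\mSigma_0$ forcing relation over $N|\eta$ produce the same relation through $\varrho_{\cdot\,\sigma}$, and your invariant can safely be weakened to ``$\psi_0$ is \emph{a} $\Sigma_1$ defining formula.'' This is precisely what part \ref{item:agreement_of_witnesses}(b) records, so the circularity you worry about resolves as you describe.

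A second point, which the paper also suppresses, is the case in the ``not both'' direction where the two witnesses carry different $\eta^*$'s. Your passage from ``apply (a)'' to ``contradiction'' is most direct when $\eta^*_0=\eta^*_1$; when they differ, one must either truncate the longer witness (taking care to trim $I'$ at the new top level so that clause 6 of Definition \ref{dfn:psi_0} still holds) or use clause 10 to verify that the translation of $\sigma_0$ to the higher level reflects down correctly. This is exactly the content of the ``induction on the rank of $\sigma_0$'' the paper alludes to. Your sketch is consistent with this; just be aware this is where the actual bookkeeping lives.
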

\begin{proof}
Part \ref{item:agreement_of_witnesses} is a straightforward induction on $\eta$,
and part \ref{item:mSigma_0_forcing_Delta_1} likewise,
on the rank of $\sigma_0$. (Note that $\psi^{\neg}_0$ is only intuitively asserting that
$p_0$ fails to force $\varphi_0$, not that it forces $\neg\varphi_0$.)
\end{proof}

\begin{rem}
 Note that $\left<\forces^\varphi_{1\lambda^N}\right>_{\varphi\in\mSigma_0}$ is also $\rSigma_2$-definable over $N|\lambda^N$ (without parameters);
 this follows from clause \ref{item:forcing_relation_at_lambda_def}
 of $\psi_0$.
\end{rem}

\begin{dfn}\label{dfn:internal_mSigma_0_forcing_relation}
 Let $N$ be an $\om$-small premouse
 with $\om$ Woodins.
 Then for $p\in\CC^N$,
 $\varphi\in\mSigma_0$
 and $\pi\in(\Nm^N)^{<\om}$ we write
 \[ \Big(p\sforces{\CC}{N,\mathrm{int}}\Mtilde\sats\varphi(\pi)\Big)\iff N\sats\psi_0(\lambda^N,p,\varphi,\pi);\]
 the \emph{int} stands for \emph{internal}.
 We may drop the ``$N$'' from ``$\sforces{\CC}{N,\mathrm{int}}$''
 if $N$ is clear from context.
\end{dfn}

\begin{rem}
Recall that by \cite{jensen_fs}, for any $A\sub V$,
$A$-rud functions can be expressed as a composition of rud functions and the function $x\mapsto x\cap A$.
We consider $\mu$-rud function schemes using finitely many variables taken from an infinite sequence $\left<x_n\right>_{n<\om}$. Let $F'_1(x_0,x_1),\ldots,F'_{15}(x_0,x_1)$ be the list of $\mu$-rud function schemes
given in \cite[between 1.7 and 1.8]{schindler2010fine} (but with input variables $x_0,x_1$; recall that $F'_i$ has arity 2 for each $i$,
and
$F'_{15}(x_0,x_1)$ is (symbolically) $x_0\mapsto x_0\cap\mu$). We add another scheme $F'_0$ to this list, defined $F'_{0}(x_0,x_1)=x_0$.
Let $F_i(x_0,x_1,x_2,x_3)=F'_i(x_0,x_1)$,
so $F_i$ formally has more input variables, but they are ignored.
Say a  scheme $f$ is \emph{$0$-good}
if
\[ f\in\{F_0(x_0,x_1,x_2,x_3),\ldots,F_{15}(x_0,x_1,x_2,x_3)\}.\]
For $n<\om$, say scheme
is \emph{$(n+1)$-good}
if it has form
\[ f(x_0,\ldots,x_{2^{n+3}-1})=F_i(g(x_0,\ldots,x_{2^{n+2}-1}),h(x_{2^{n+2}},\ldots,x_{2^{n+3}-1})) \]
for some $i<16$ and $n$-good schemes $g,h$ (the notation means we substitue $x_{2^{n+1}+i}$ for $x_i$ in the original $h$). For $n<\om$, say a  scheme $f$ is \emph{$n$-nice}
iff $f=g\com h$, where $g$ is $n$-good
and
\[ h(x_0,\ldots,x_{k-1})=(x_{\pi(0)},\ldots,x_{\pi(2^{n+2}-1)}) \]
where $k\in[1,2^{n+2}]$ and $\pi:2^{n+2}\to k$.\footnote{When $n=0$,
we have allowed $k\in[1,4]$ instead
of just $k\in[1,2]$, as
letting $\mathscr{F}$ be the
set of $0$-nice schemes
with $4$ input variables,
and
given a limit $\eta$,
letting $\mathscr{T}$ be the 
set of elements of $\Nm^N$
of form $\tau=(\eta,i,\pi)$
with $f_i\in\mathscr{F}$,
and letting $u$ be the universe of $\Mtilde_\eta^{N,G}$
and $t=T^{\Mtilde_\eta^{N,G}}$,
we will have \[ s^{\mu^{N,G}}(u\cup\{u,t\}))=\Big\{\tau_G\bigm|\tau\in\mathscr{T}\}=\Big\{f(u,t,x,y)\bigm|f\in\mathscr{F}\wedge x,y\in u\Big\},\]
whereas the will not hold with ``$4$ input variables''
replaced by ``$2$ input variables'',
since in the definition of $\tau_G$,
we  put (for convenience) $u$ and $t$ as the inputs
to the first two variables.}

Say $f$ is \emph{good} (nice) if it is $n$-good ($n$-nice) for some $n<\om$.
Let $\mathscr{N}_n$ ($\mathscr{G}_n$)
be the set of $n$-nice ($n$-good) schemes.
Note that $\mathscr{N}_n$ and $\mathscr{G}_n$
are finite, and all schemes
in $(k+1)$ variables $(x_0,\ldots,x_k)$, where $k<\om$, are equivalent to some nice one.

Given $A\sub V$ and a  scheme $f$, let $f^A$ be the resulting $A$-rud function. Let $s^A(u)=\bigcup_{i<16}(F^A_i)``u^4$. We have $u\sub s^A(u)$ since
$F_0(x_0,\ldots,x_3)=x_0$. If $u$ is transitive then so is $s^A(u)$ (cf.~\cite{schindler2010fine}).
And essentially by \cite{jensen_fs} (see also \cite{schindler2010fine}),
the $A$-rud closure of any set $u$
is just $\bigcup_{n<\om}(s^A)^n(u)$.
Note that for $n<\om$, 
\[ (s^A)^{n+1}(u)=\bigcup_{f\in\mathscr{G}_n^A}f``u^{2^{n+2}}=\bigcup_{f\in\mathscr{N}_n^A}f``u^{a_f}=\bigcup_{f\in\mathscr{N}_n^A\wedge a_f\geq 2}f``u^{a_f}\]
where $\mathscr{G}^A_n=\{f^A\bigm|f\in\mathscr{G}_n\}$ and $\mathscr{N}^A_n=\{f^A\bigm|f\in\mathscr{N}_n\}$ and $a_f$ denotes the arity of $f\in\mathscr{N}_n^A$. We have   $(s^A)^n(u)\sub (s^A)^{n+1}(u)$,
and if $u$ is transitive then so is $(s^A)^n(u)$.

Now we may from now on assume that $\left<f_i\right>_{i<\om}$ enumerates (recursively) just the set of nice schemes $f$
with $a_f\geq 2$. There are only finitely many such schemes which are $n$-nice for a given $n$.\end{rem} 

Using these notions we refine the hierarchy of $\Nm^N$:

\begin{dfn}
For $\xi\in[\lambda,\OR^N)\cap\Lim$
and $n<\om$ 
let $\Nm^N_{\xi+n+1}$ be the set $\tau\in\Nm^N_{\xi+\om}$ such that $\tau\in\Nm_\xi^N$ or $\tau=(\xi,i,\pi)$ for some $i$ such that $f_i$ is $k$-nice for some $k\leq n$.
So $\Nm^N_{\xi+\om}=\bigcup_{n<\om}\Nm^N_{\xi+n}$.\end{dfn}

We can now formalize the intuitively
introduced notions
of Definition \ref{dfn:Nm^N}:

\begin{dfn}\label{dfn:formal_Nm^N}
Let $N$ be an $\om$-small premouse with $\om$ Woodins.
Let $G$ be $(N,\CC^N)$-generic.
Working in $N[G]$, we define
an inner model $\Mtilde^{N,G}$,
as follows. 
For $\sigma,\tau\in\Nm^N$
define
\[ \sigma\approx\tau\iff\exists p\in G\ \Big[p\sforces{\CC}{\mathrm{int}}\Mtilde\sats\sigma=\tau\Big], \]
and note that $\approx$ is a $\Delta_1^N(\{\lambda^N\})$-definable proper class equivalence relation of $N[G]$, uniformly in $N,G$.
However, each equivalence class
is itself a proper class of $N[G]$.
Also for $\sigma,\tau\in\Nm^N$ define
\[ \sigma\ \dot{\in}\ \tau\iff\exists p\in G\ \Big[p
\sforces{\CC}{\mathrm{int}}\Mtilde\sats\sigma\in\tau\Big].\]
Note that  $\dot{\in}$ is a $\Delta_1^{N[G]}(\{\lambda^N\})$-definable relation
on $\Nm^N$, which respects $\approx$.
Let $t^N\in\Nm_{\lambda^N+1}^N$
be $t^N=(\lambda^N,i,\pi)$,
where $i<\om$ is such that $f_i(x_0,x_1,x_2,x_3)=F_0(x_1,x_0)=x_1$
(note this is $0$-nice) and $\pi(0)=\pi(1)=\check{\emptyset}$.

For $\eta\in[\lambda^N,\OR^N]$ let $\approx_\eta$, $\dot{\in}_\eta$ be the restrictions of $\approx$, $\dot{\in}$ to $\Nm^N_\eta$ respectively.
Let $[t]_\eta$ be the $\approx_\eta$-equivalence class
of $t\in\Nm_\eta^N$ and
 $\mathscr{U}'_\eta=\{[t]_\eta\bigm|t\in\Nm_\eta^N\}$.
 For $\pi\in(\Nm_\eta^N)^{<\om}$,
 we will also later use the notation
$[\pi]_\eta=([\pi(0)],\ldots,[\pi(k-1)]_\eta)$ where $k=\lh(\pi)$.
Let $\mathscr{E}'_\eta$
be the relation induced on $\mathscr{U}'_\eta$ by $\dot{\in}_\eta$. Let $e_\eta'$ be the equality relation on $\mathscr{U}'_\eta$. If $\eta>\lambda^N$
let $\mathscr{M}'_\eta$ denote the structure \[ (\mathscr{U}'_\eta,[t^N]_\eta,\mathscr{E}_\eta',e_\eta') \]
with signature that of $\M_\gamma$ (with universe $\mathscr{U}'_\eta$ and binary relations $\mathscr{E}'_\eta$ and $e_\eta'$ and constant $[t^N]'$).
If $\eta=\lambda^N$ let $T=\widetilde{T}^{N,G}$
and noting that $\mathscr{U}'_{\lambda^N}=\widetilde{HC}^{N,G}$, let $\mathscr{M}'_{\lambda^N}$ denote the structure
\[ (\mathscr{U}'_{\lambda^N},\widetilde{T}^{N,G},\mathscr{E}'_{\lambda^N},e_{\lambda^N}'),\]
with signature that of $\M_{\omega_1}$.

If $\mathscr{M}'_\eta$ is extensional
and wellfounded,
then let
\[ \mathscr{M}_\eta=(\mathscr{U}_\eta,t_\eta,\mathscr{E}_\eta,e_\eta) 
\]
 denote
its transitive collapse and $\pi_\eta:\mathscr{M}_\eta\to\mathscr{M}'_\eta$
the uncollapse map
(so if $\eta>\lambda^N$ then $\pi_\eta(t_\eta)=[t^N]_\eta)$;
otherwise easily $\M_\eta=\M'_\eta$
and $\pi_\eta=\id$, so in this case $t_\eta=\widetilde{T}^{N,G}$).

We write $(\mathscr{M}')^{N,G}_\eta=\mathscr{M}'_\eta$, etc.
In the case that $\eta=\OR^N$ we may drop the
subscript ``$\eta$'' from this notation,
writing $(\mathscr{M}')^{N,G}=(\mathscr{M}')^{N,G}_{\OR^N}$, etc.
\end{dfn}

  \begin{dfn}
  Let $N$ be an $\om$-small premouse with $\om$ Woodins.
We say that $N$ is \emph{$\mu$-homogeneous}
iff for all $\xi\in\Lim\cap[\lambda^N,\OR^N)$ and all $\pi\in(\Nm^N_\xi)^{<\om}$
and all $\mSigma_\om$ formulas $\varphi$,
there are $d<m<\om$ such that for all $k<\om$ and all $\vec{\delta},\vec{\vareps}\in[\Delta^N_{\geq m}]^{2k}$, $N|\xi$ satisfies that $\CC_d^N$ forces
\[
\all^*_{\vec{\delta}}s\ \sforces{\CC_{\mathrm{tail}}}{\mathrm{int}}\varphi(s,\pi)\iff
\all^*_{\vec{\vareps}}s\ \sforces{\CC_{\mathrm{tail}}}{\mathrm{int}}\varphi(s,\pi)\iff
\all^*_{>\delta_m^N}s\ \sforces{\CC_{\mathrm{tail}}}{\mathrm{int}}\varphi(s,\pi),\]
and \emph{$\mu$-determined}
iff for all $(\xi,\pi,\varphi)$ as above,
there are $d<m<\om$ such that for all $(k,\vec{\delta})$ as above, 
$N|\xi$ satisfies that $\CC_d^N$ forces
\begin{equation}\label{eqn:mu-det_equiv}\all^*_{\vec{\delta}}s\ \sforces{\CC_{\mathrm{tail}}}{\mathrm{int}}\varphi(s,\pi)\iff \exists^*_{\vec{\delta}}s\ \sforces{\CC_{\mathrm{tail}}}{\mathrm{int}}\varphi(s,\pi), \end{equation}
and \emph{$\mu$-nice} if $\mu$-homogeneous
and $\mu$-determined.

For $n<\om$, say that $N$ is \emph{$(\mu,n)$-nice}
iff $N$ is $\mu$-nice, 
$N$ is $5(n+1)$-sound, $\lambda^N\leq\rho_{5(n+1)}^N$,
and $\mu$-homogeneity
and $\mu$-determinacy
hold with respect to all $\mSigma_n$ formulas $\varphi$ and all $\pi\in(\Nm^N)^{<\om}$.
  \end{dfn}

Note that if $N$ is $\mu$-nice,
then the stronger version of \emph{$\mu$-determined} holds, which results when we replace line (\ref{eqn:mu-det_equiv})
with the equivalence 
\[\begin{split}&\all^*_{\vec{\delta}}s\ \sforces{\CC_{\mathrm{tail}}}{\mathrm{int}}\varphi(s,\pi)\ \ \ \ \iff \exists^*_{\vec{\delta}}s\ \sforces{\CC_{\mathrm{tail}}}{\mathrm{int}}\varphi(s,\pi)\\
\iff &\all^*_{>\delta_m^N}s\ \sforces{\CC_{\mathrm{tail}}}{\mathrm{int}}\varphi(s,\pi)\iff\exists^*_{>\delta_m^N}s\
\sforces{\CC_{\mathrm{tail}}}{\mathrm{int}}\varphi(s,\pi)\end{split}\]
Likewise for $(\mu,n)$-niceness.
Note that $(\mu,n)$-niceness is expressed by a first-order sentence.

  \begin{dfn}\label{dfn:Lhat(R)^M}
Given $\gamma\in[\alphagap,\beta^*]\cap\Lim$
and $\M=\M_\gamma$,
$\widehat{L}(\RR)^{\M}$ denotes the corresponding level $\J_{f(\gamma)}$ of $L(\RR)$
 (cf.~Definition \ref{dfn:between_hierarchies}).
 This model can also be defined
 in the codes inside $\M$. If $\M\sats$``$\Theta$ exists'' then $\widehat{L}(\RR)^{\M}=L(\RR)^\M$. Otherwise
  $\widehat{L}(\RR)^{\M}$ is the union of transitive models which
 satisfy ``There is no largest ordinal and $V=\J(\RR)$''\footnote{And recall
 that in our indexing of the $\J$-hierarchy,
 $\SS_\delta$ only has limit ordinal height when $\delta$ is a limit. Because
 $\M\sats$``$\Theta$ does not exist'',
 $f(\gamma)$ is a limit of limits.}, are coded by sets of reals in $\M$,  contain an isomorphic copy of $\RR\cap\M$
 as their own set of reals, have
 a least initial segment $\bar{\M}$ which satisfies
 $T^\M$, and whose ordinal height $\xi$ is $<\OR^{\bar{\M}}+\om\cdot\OR^{\M}$ (and hence $\xi$ is wellfounded).
 
 Given a model $\M$ with  the same signature and similar first-order properties to those of $\M_\gamma$
(including Turing determinacy, $T^{\M}\sub\HC^{\M}$,  $\M$ is built by constructing relative to the iterated Martin measure of ${\M}$ over $(\HC^{\M},T^{\M})$, and $T^{\M}$ is a consistent $\Sigma_1$ theory in the $L(\RR)$ language),
 we define $\widehat{L}(\RR)^{\M}$ analogously
in the codes over ${\M}$, if possible. Note then that if there is
 $\alpha\in\OR$ such that $T_{\M}$ encodes exactly $\Th_{\Sigma_1}^{\Ss_{\bar{\alpha}}(\RR^{\M})}(\RR^\M)$, then $\xi=\OR(\widehat{L}(\RR)^{\M})$ is wellfounded, and in fact $\xi\leq\alpha+\om\cdot\beta$ where $\OR^{\M}=\omega_1^{\M}+\beta$.
\end{dfn}

Note that if $\M$ is as in Definition \ref{dfn:Lhat(R)^M}
 and $\widehat{L}(\RR)^{\M}$ is well-defined
 then $\Th_{\Sigma_1}^{\widehat{L}(\RR)^{\M}}(\RR^{\M})$ is $\mSigma_1^{\M}$,
 and in case $\M=\M_\gamma$,
 we have $\Th_{\Sigma_1}^{\widehat{L}(\RR)^{\M}}(\RR)=\Th_{\Sigma_1}^{\J_{\alphag}}(\RR)=T^{\M_\gamma}$,
 since $f(\gamma)\leq f(\beta^*)\leq\betagap$.

\begin{lem}\label{lem:Sigma_P-iterate_good}
 Let $N$ be a generic $\Sigma_{\Pg}$-iterate. Let $G$, possibly
 appearing in some generic extension of $V$, be $(N,\CC^N)$-generic.
 Write $\mathscr{M}'_\eta=(\mathscr{M}')^{N,G}_\eta$, etc. Let $\xi_0,\xi_1,\eta\in[\lambda^N,\OR^N]$ with $\xi_0\leq\xi_1\leq\eta\in\Lim$.
Then:
\begin{enumerate}
 \item\label{item:ext_and_wf} $\mathscr{M}'_{\xi_0}$ is extensional
 and wellfounded,
 \item $\mathscr{U}_{\xi_0}\sub\mathscr{U}_{\xi_1}$
 and $t_{\xi_0}=t_{\xi_1}$, so
so $\mathscr{M}_{\xi_1}$
 is an end-extension of $\mathscr{M}_{\xi_0}$,
 \item $\mathscr{U}_\eta$
 is rudimentarily closed and amenable,
 \item\label{item:HC_stable} $\HC^{\M_\eta}=\mathscr{U}_{\lambda^N}$,
 \item $\M_{\xi_0}\sats$``Turing determinacy holds'',
 \item $\OR(\mathscr{M}_\eta)=\eta$,
 \item if $\xi=\xi_0<\eta$
 then $\OR(\mathscr{M}_{\xi+n+1})\leq\xi+5n+5$
 for each $n<\om$,
 \item if $\eta>\lambda^N$
 then letting
  $s=\left<\Nm_\gamma,\approx_\gamma,\dot{\in}_\gamma,\mathscr{M}'_\gamma,\mathscr{M}_\gamma,\pi_\gamma\right>_{\gamma<\eta}$, we have:
 \begin{enumerate}[label=\tu{(}\alph*\tu{)}]
 \item $s\sub N[G]|\eta$
 and $s$ is
 $\Sigma_1^{N[G]|\eta}(\{N|\lambda^N,G\})$-definable, uniformly in $\eta$,
 \item $\M_\eta\sub(N|\eta)[G]$ and $\M_\eta$ is $\Sigma_1^{N[G]|\eta}(\{N|\lambda^N,G\})$-definable, uniformly in $\eta$,
 \end{enumerate}
   \item\label{item:mu-nice_at_eta} $N|\eta$ is $\mu$-nice,
\item \tu{(}$\mSigma_0$ forcing theorem\tu{)} for all $k<\om$, all $\mSigma_0$ formulas $\varphi(x_0,\ldots,x_{k-1})$ and all $\pi\in(\Nm_\eta)^{k}$, recalling $[\pi]_\eta$ denotes $([\pi(0)]_\eta,\ldots,[\pi(k-1)]_\eta)$,  \[\Big(\M_\eta\sats\varphi([\pi]_\eta)\Big)\iff\exists p\in G\ \Big[N|\eta\sats p\sforces{\CC}{\mathrm{int}}\Mtilde\sats\varphi(\pi)\Big].\]

 \item\label{item:forcing_theorem_mSigma_om_over_M_xi} \tu{(}$\mSigma_\om$ forcing theorem\tu{)}  Suppose $\xi_0<\eta$.
 Let $\varphi$ be an $\mSigma_\om$ formula
 of arity
 $k<\om$ and
  $\pi\in(\Nm_{\xi_0})^k$.
  Then
  \[ \Big(\M_{\xi_0}\sats\varphi([\pi]_{\xi_0})\Big)\iff\exists p\in G\ \Big[N|\xi_0\sats p\sforces{\CC}{\mathrm{int}}\Mtilde\sats\varphi(\pi)\Big].\]

 \item\label{item:tau_interpreted_correctly} Suppose $\xi=\xi_0<\xi_1$ and $\xi\in\Lim$. Let $\tau\in\Nm^N$
 be of form $\tau=(\xi,i,\pi)$
 and $k<\om$ be such that
 $\pi$ is $(2+k)$-ary.
 Then
 \[ [\tau]_{\xi_1}=f_i^{\mu^{N,G}}(\M_{\xi},T^{\M_\xi},[\pi(0)]_{\xi},\ldots,[\pi(k-1)]_\xi);\]
moreover, 
there is $X\in\M_{\xi+\om}\sub N[G]|(\xi+\om)$ such that all computations of $\mu^{N,G}$-measure relevant to computing $f_i^{\mu^{N,G}}(\M_\xi,T^\M_\xi,X_0,\ldots,X_{k-1})$
for any $X_0,\ldots,X_{k-1}\in\M_\xi$
are witnessed by measure one trees in $X$,
\item\label{item:no_new_Sigma_1_truths} $\Th_{\Sigma_1}^{\widehat{L}(\RR)^{\M_\eta}}(\RR)=T^{\M_\eta}=T^{\M_{\lambda^N}}$.
 \end{enumerate}
 \end{lem}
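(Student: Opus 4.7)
I would prove all fourteen clauses simultaneously by induction on $\eta \in \Lim \cap [\lambda^N, \OR^N]$. The base case $\eta = \lambda^N$ is essentially packaged in earlier results: extensionality and wellfoundedness of $\mathscr{M}'_{\lambda^N}$ and the $\mSigma_0$ forcing theorem there come from Lemma~\ref{lem:mSigma_0,lambda_forcing_rel_def} and Lemma~\ref{lem:Sigma_0_lambda_relation_rDelta_2}; $\mu$-niceness at $\lambda^N$ is exactly what Lemmas~\ref{lem:gen-all_independence} and~\ref{lem:N|lambda_compute_mu} (together with their $\rho_n$-variants, Remark~\ref{rem:strong_mu_Sigma_n_forcing_at_lambda}) say; the equality $T^{\M_{\lambda^N}} = \Th_{\Sigma_1}^{\widehat L(\RR)^{\M_{\lambda^N}}}(\RR)$ is built into the definition of $\Mtilde_\lambda$ and the mtr-suitability of $N$. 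Limit-of-limits stages are handled by forming unions clause-by-clause.

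The central work lies in the successor step $\xi \leadsto \xi + \omega$. First I would show that the ``internal'' forcing relation given by $\psi_0$ is well-defined and, at level $\xi$, faithfully computes the $\mSigma_0$-truth of $\Mtilde_\xi^{N,G}$. Granting the IH that the tuple $\bigl(\CC, \xi, I_{\leq\xi}, I'_{\leq\xi}, \langle \forces^{\varphi}_{\eta}\rangle\bigr)$ witnesses this up to $\xi$, the uniqueness-of-witness clauses in $\psi_0$ and $\psi_0^{\neg}$ pin this down. To pass from the $\mSigma_0$ relation at $\xi$ to the full $\mSigma_\omega$ relation (clause~\ref{item:forcing_theorem_mSigma_om_over_M_xi}), I would run the induction on formula complexity following Definition~\ref{dfn:forcing_relation}; the Boolean and first-order quantifier cases are routine, and the only genuinely new case is the $\all^*_\mu$-clause. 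There I would invoke Lemma~\ref{lem:xi_gen-all_independence} at $\xi$ — available by IH since $\mu$-niceness at $\xi$ is part of clause~\ref{item:mu-nice_at_eta} of the IH — to show that the forced truth of $\all^*_\mu s\,\varphi(s,\pi)$ over $\Mtilde_\xi$ is captured by the uniform first-order statement written in the $\varrho_{\psi\sigma}$ definition, via forcing a generic-$\all$ statement over any sufficiently late block of $2k$ Woodin cardinals. The fact that this threshold depends on $\pi$ is absorbed by the bounded existential search for $(k, m, \vec{\beta})$ inside $\varrho_{\psi\sigma}$. Clause~\ref{item:mSigma_0_forcing_Delta_1}-like agreement between $\psi_0$ and its negation, combined with the reduction of Lemma~\ref{lem:Sigma_0_to_Sigma_omega_algo}, then yields both clauses \ref{item:forcing_theorem_mSigma_om_over_M_xi} and \ref{item:mSigma_0_forcing_Delta_1}-at-$(\xi+\omega)$ in one package.

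Having the forcing theorem at $\xi + \omega$, I would read off the remaining clauses as follows. Clauses~\ref{item:HC_stable} and~\ref{item:no_new_Sigma_1_truths} hold because every $\tau \in \Nm_{\xi+\omega}$ interprets to a $\mu^{N,G}$-rud combination of $\Mtilde_\xi^{N,G}$ and its $T$-predicate (by clause~\ref{item:tau_interpreted_correctly}, which itself follows from clauses \ref{psi_0_next_mSigma_0}(a),(b) of $\psi_0$): such combinations cannot produce new reals (they live in $(N|(\xi+\omega))[G]$ by the forcing theorem) and cannot change $T$ since $T$ refers only to reals. Clauses~\ref{item:ext_and_wf}--\ref{item:HC_stable} for $\mathscr{M}_{\xi+\omega}$ follow from the $\mSigma_0$ forcing theorem at $\xi + \omega$ and the usual amenability/rudimentary closure arguments adapted to the $\mu$-rud setting of~\cite{schindler2010fine}; the ordinal height bound (clause~7) is a standard count of the finitely many $n$-nice schemes. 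The $\Sigma_1$-definability of $s$ over $N[G]|\eta$ (clause~8) is just the assertion that the whole construction of $\Mtilde^{N,G}_\gamma$ for $\gamma < \eta$ was done by $\Sigma_1$ recursion using $\psi_0$, which is what we have just verified.

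The main obstacle is propagating $\mu$-niceness from $\xi$ to $\xi + \omega$. A priori, $\all^*_\mu s\,\varphi(s,\pi)$ over $\Mtilde_{\xi+\omega}$ involves formulas $\varphi$ of unbounded complexity over $\Mtilde_\xi$, whereas $\mu$-niceness at $\xi$ only gives stabilization for formulas in the hierarchy at $\xi$. I would handle this by first applying Lemma~\ref{lem:Sigma_0_to_Sigma_omega_algo} to reduce such $\varphi$ to an $\mSigma_\omega^{\Mtilde_\xi}$ formula in parameters that are still in $\Nm_\xi^N$, and then invoking clause~\ref{item:mu-nice_at_eta} at $\xi$ (IH) together with the adaptation of the argument in the proof of Lemma~\ref{lem:xi_gen-all_independence} — forming the correct normal tree $\Tt = \Tt_0 \conc \Tt_1 \conc \ldots$ on $N$ through the $R^N_{\vec\varepsilon}$-constructions and using that its unique cofinal branch must yield a wellfounded model, forcing stabilization on a tail of $\vec\beta$. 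This is modular once one sees that the relevant disagreement between $\deltavec$ and $\vareps$-tuples would, as in Claim~\ref{clm:i(xi)>xi_or_etc}, produce either a strict shift of $\xi$ or of $\desc(\tau)_0$ under the composed iteration map, contradicting wellfoundedness. The bookkeeping is routine but must be done with care to cover both $\mu$-homogeneity and $\mu$-determinacy and for names $\pi$ landing anywhere in $\Nm_{\xi+\omega}$.
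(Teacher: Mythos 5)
Your overall inductive scaffold (base at $\lambda^N$, union at limits of limits, forcing-theorem-first at successors, then reading off the structural clauses) matches the spirit of the paper, but there is a genuine gap in how you handle $\mu$-niceness and clause~\ref{item:no_new_Sigma_1_truths}.

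First, the structure: the paper does \emph{not} propagate $\mu$-niceness inductively through the successor step $\xi\leadsto\xi+\omega$. Instead it sets $\eta_\infty$ to be the largest limit ordinal $\leq\OR^N$ with $N|\eta_\infty$ $\mu$-nice, carries out the induction only in the range $\eta\in[\lambda^N,\eta_\infty]\cap\Lim$ (so $\mu$-niceness of $N|\eta$ is simply available throughout the induction), and then at the end argues separately, by contradiction, that $\eta_\infty=\OR^N$. This matters because the $\mu$-determinacy half of $\mu$-niceness is \emph{not} established by the normal-tree/illfoundedness argument of Lemma~\ref{lem:xi_gen-all_independence} and Claim~\ref{clm:i(xi)>xi_or_etc}, as your proposal asserts. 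That argument establishes $\mu$-\emph{homogeneity}: if two Woodin-tuples $\vec\delta,\vec\varepsilon$ above a threshold gave different forced values, you could thread an illfounded branch through the $R^N_{\vec\varepsilon}$-iterates. But $\mu$-determinacy is the assertion that the forced $\forall^{\mathrm{gen}}$- and $\exists^{\mathrm{gen}}$-versions agree, i.e.\ a Turing-determinacy statement about the abstract structure $(\Mtilde_\xi)_G$, and there is no discrepancy between iterates to exploit. The paper handles this by observing that $\mu$-homogeneity of $N$ holds unconditionally (from properties of $\Pg$), so only $\mu$-determinacy could fail at $\eta_\infty+\omega$; and a failure of $\mu$-determinacy is reflected, via a countable hull $H\preccurlyeq_{\ell+1}N|\eta_\infty$ and an $\RR$-genericity iterate of $H$ sitting below $\alphagap$, into a failure of Turing determinacy inside $\SS_{\alphagap}$, a contradiction. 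You need this reflection-into-$\SS_{\alphagap}$ argument, and your plan does not contain it.

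Second, and closely related, your treatment of clause~\ref{item:no_new_Sigma_1_truths} (``cannot change $T$ since $T$ refers only to reals'') is not an argument. That $T^{\M_\eta}=T^{\M_{\lambda^N}}$ is by definition of the $\M$-hierarchy, so the content of the clause is $\Th_{\Sigma_1}^{\widehat L(\RR)^{\M_\eta}}(\RR)=T^{\M_{\lambda^N}}$: as one climbs the $\M$-hierarchy, the reconstructed $\widehat L(\RR)^{\M_\eta}$ (Definition~\ref{dfn:Lhat(R)^M}) must not certify any $\Sigma_1$ truth not already in $\Th_{\Sigma_1}^{\J_{\alphag}}(\RR)$. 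This is substantive. The paper's proof is the longest piece of the whole lemma: assuming a new $\Sigma_1$ fact $\varphi_1$ appeared, it takes a countable hull $H$ of $N|\xi$, observes $H\pins\Lp_{\Gammag}(\Pg|\om_1^{\Pg})$ so $\Sigma_H\in\J_{\alphagap}$, builds the tree family $\mathscr{T}$ and the theories $T_{\geq\Uu}$, finds a stabilizing $\Uu_0$ with $T_{\geq\Uu_0}=\Th_{\Sigma_1}^{\J_{\alpha_0}}(\RR)$ for some $\alpha_0<\alphagap$ (the simple-closure / comparison-of-ranks argument), passes to an $\RR$-genericity iterate $H'$ with $\OR^{H'}<\alphagap$ and $(\Mtilde^{H'})_{G'}\in\J_{\alphagap}$, and extracts a witness to $\J_{\alphagap}\models\varphi_1$ — contradiction. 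Your proposal does not contain this idea, and it is exactly the same reflection mechanism used to close off $\eta_\infty$. In short: the key missing idea in your plan is the reflection of putative $\mu$-definability/$\mu$-determinacy failures into $\SS_{\alphagap}$ via countable hulls of $N$ and $\RR$-genericity iteration; the pure iteration-tree argument you propose covers $\mu$-homogeneity only.
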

 
  \begin{proof}
Let $\eta_\infty$ be largest limit ordinal $\leq\OR^N$
such that $N|\eta_\infty$ is $\mu$-nice.
We first show that the lemma holds for all
 $\eta\in[\lambda^N,\eta_\infty]\cap\Lim$, by induction on $\eta$. If $\eta=\lambda^N$ it is clear,
  so suppose $\eta>\lambda^N$. If $\eta$ is a limit of limits then everything follows
  easily by induction, so suppose $\eta=\eta'+\om$ where $\lambda^N\leq\eta'\in\Lim$,
  and parts \ref{item:ext_and_wf}--\ref{item:tau_interpreted_correctly} hold at $\eta'$;
  we verify them at $\eta$.
  Let $\xi=\eta'$.

  Part \ref{item:forcing_theorem_mSigma_om_over_M_xi}: Note that we interpret truth of $\mSigma_\om$
  formulas over $\M_\xi$ via $\widetilde{\HC}^{N,G}=\HC\cap\M_\xi$. Recall also that ``$\all^*_\mu s$''
  is defined ``$\exists k<\om\all^*_k s$'',
  and ``$\all^*_k s$'' is a first order quantifier (interpreted with $\HC\cap\M_\xi$), and ``$\exists^*_\mu s$''
   is analogous, making the interpretation of $\mSigma_\om$ formulas unambiguous (that is, we don't require that we have $\mu^{N,G}$-measure one sets in any particular model to witness the truth of the quantifier). 
  But now by the $\mSigma_0$ forcing theorem
  at $\xi$, a straightforward
  induction on the complexity of $\varphi$,
  using the $\mu$-niceness of $N|\eta$
  together with calculations like those earlier in this section,
and the definability of $\sforces{\CC}{\mathrm{int}}$ (which ensures that $N$-genericity is enough),
  establishes the $\mSigma_\om$ forcing
  theorem at $\xi$, i.e.~part  \ref{item:forcing_theorem_mSigma_om_over_M_xi}.
    
We now consider the reaining parts. Let $\varphi$ be $\mSigma_\om$
and $\pi\in(\Nm_\xi)^{<\om}$
and let
\[ x=\{n<\om\bigm|\M_\xi\sats\varphi(n,[\pi]_\xi)\}.\]
Then by the homogeneity of $\CC$,
note that $x\in\widetilde{HC}^{N,G}$
(in fact $x\in N[G\rest d]$
where $d=\supp(\pi)$).\footnote{***Note that $\sforces{\CC}{\mathrm{int}}$
also works in $N[G\rest d]$.}
Also, $\mu^{N,G}$-Turing determinacy
holds with respect to all sets of tuples of degrees which are $\mSigma_\om$-definable over $\M_\xi$, by the niceness of $N$.

Now for all $\sigma,\tau\in\Nm_{\xi+\om}$ of form
$\tau=(\xi,i,\pi)$ and $\sigma=(\xi,i',\pi')$,
the following are equivalent:
\begin{enumerate}[label=\tu{(}\roman*\tu{)}]
 \item \label{item:tau_approx_sigma}
 $\tau\approx\sigma$
\item\label{item:tau=sigma_forced} $\exists p\in G\ \Big[p\sforces{
\CC}{\mathrm{int}}\Mtilde\sats\tau=\sigma\Big]$ \item\label{item:translated_tau=sigma_forced}
$\exists p\in G\ \Big[p\sforces{\CC}{\mathrm{int}}\psi_{u=v,(i,i')}(\pi,\pi')\Big]$
\item \label{item:translated_tau=sigma_holds} $\M_\xi\sats\psi_{u=v,(i,i')}([\pi]_\xi,[\pi']_\xi)$
\item\label{item:equality_holds} $f_i^{\mu^{N,G}}(\M_\xi,T^{\M_\xi},[\pi]_\xi)=f_{i'}^{\mu^{N,G}}(\M_\xi,T^{\M_\xi},[\pi']_{\xi})$.
\end{enumerate}
Here \ref{item:tau_approx_sigma} $\Leftrightarrow$ \ref{item:tau=sigma_forced}
by definition of $\approx$ (\ref{dfn:formal_Nm^N}),
\ref{item:tau=sigma_forced}
$\Leftrightarrow$ \ref{item:translated_tau=sigma_forced}
by definition of the $\mSigma_0$ forcing relation $\sforces{\CC}{\mathrm{int}}$
\ref{dfn:psi_0} clause
\ref{psi_0_next_mSigma_0},
\ref{item:translated_tau=sigma_forced}
$\Leftrightarrow$ \ref{item:translated_tau=sigma_holds}
by induction with part \ref{item:forcing_theorem_mSigma_om_over_M_xi},and \ref{item:translated_tau=sigma_holds}
$\Leftrightarrow$ \ref{item:equality_holds}
because the algorithm $(\varphi,\vec{i})\mapsto\psi_{\varphi,\vec{i}}$
correctly translates $\mSigma_0^{\M_\xi}$
to $\mSigma_\om^{\J(\M_\xi)}$ (with $\mu$ interpreted as $\mu^{N,G}$ in both cases).

The analogous equivalence holds for $\dot{\in}$. Moreover, if $\sigma,\tau\in\Nm_{\xi+\om}$
are arbitrary, with $\sigma=(\gamma,i,\pi)$
and $\tau=(\gamma',i',\pi')$,
then we similarly have \[\sigma\approx\tau\iff  f_i^{\mu^{N,G}}(\M_{\gamma},T^{\M_\gamma},[\pi]_\gamma)=f_{i'}^{\mu^{N,G}}(\M_{\gamma'},T^{\M_{\gamma'}},[\pi']_{\gamma'}),\]
and the analogous equivalence for $\dot{\in}$. 

Parts \ref{item:ext_and_wf}--\ref{item:tau_interpreted_correctly} of the lemma at $\eta=\xi+\om$ now  easily follow
(making use of the comments above regarding $\widetilde{HC}^{N,G}$ and $\mu^{N,G}$-Turing determinacy). We leave the remaining details of these parts to the reader.

Part \ref{item:no_new_Sigma_1_truths}:
Since the previous parts hold at $\eta$,
the statement under question is well-defined.
Suppose it fails at $\eta$,
and for simplicity
it fails with respect to some
 $x=\emptyset$
(as opposed to the more general $x\in\RR^{\M_\eta}$, which
involves a straightforward relativization,
and which we leave to the reader) and some
$\Sigma_1$ formula $\varphi_1$ of the $L(\RR)$-language. In particular,
$\J_{\alphagap}\not\sats\varphi_1$.
We will use the failure to show
that in fact $\SS_{\alphagap}\sats\varphi_1$, by
showing we have something like a $\varphi_1$-witness with a strategy in $\SS_{\alphagap}$, and using this to verify that $\SS_{\alphagap}\sats\varphi_1$
in a similar manner as from a $\varphi_1$-witness.
The failure is an $\mSigma_1^{\M_\eta}$ fact (a witness to the $\mSigma_1^{\M_\eta}$ assertion is just a set of reals which encodes a sequence of models
of the right form).
Let $\varphi=\text{``}\exists z \varphi_0(z)\text{''}$,
with $\varphi_0(z)$ being $\mSigma_0$,
asserting the failure (in the mentioned  manner, and as witnessed by $\varphi_1$). Fix $\tau\in\Nm_\eta^N$,
with $\tau=(\xi,i,\sigma)$,
such that $\M_\eta\sats\varphi_0(\tau_G)$.
Then $\M_\xi\sats\psi_{\varphi_0,i}(\sigma)$,
so we can fix $p\in G$
such that 
\begin{equation}\label{eqn:p_forces_psi_varphi_0} N|\xi\sats p\sforces{\CC}{\mathrm{int}}\Mtilde\sats\psi_{\varphi_0,i}(\sigma).\end{equation}
Let $m<n<\om$ be such that
$\psi_{\varphi_0,i}$ is $\mSigma_m$,
and 
$p'\sforces{\CC}{\mathrm{int}}\Mtilde\sats\varphi'(\sigma')$
is an $\rSigma_n$ relation of $(p',\varphi',\sigma')$ when restricted to $\mSigma_0$ formulas $\varphi'$ or subformulas of $\psi_{\varphi_0,i}$ (with arbitrary $p'\in\CC$
and $\sigma'\in\Nm$;
an inspection of the definition
of $\sforces{\CC}{\mathrm{int}}$
easily reveals that there is such an $n$).

Taking $\ell<\om$ sufficiently large (say $\ell=5(m+1)+n$)
and $H=\cHull_{\ell+1}^{N|\xi}(\omega_1^N)$
and  $\pi:H\to N|\xi$ the uncollapse,
then $H$ is sound with $\rho_{\ell+1}=\omega_1^N=\omega_1^{\Pg}=\omega_1^H<\lambda^H\leq\rho_{\ell}^H$
(noting $\lambda^N\leq\rho_\om^{N|\xi}$), and we may assume $p,\sigma\in\rg(\pi)$.
It follows that $H\pins N$, so $H\pins \Lp_{\Gammag}(\Pg|\omega_1^{\Pg})$ where $\Pg$ is an $x$-mouse. Therefore $\Sigma_H\in\J_{\alphagap}$. By line (\ref{eqn:p_forces_psi_varphi_0}),
\begin{equation}\label{eqn:pbar_forces_psi_varphi_0} H\sats \bar{p}\sforces{\CC}{\mathrm{int}}\Mtilde\sats\psi_{\varphi_0,i}(\bar{\sigma}),\end{equation}
where $\pi(\pbar,\sigmabar)=(p,\sigma)$
(also $\pbar\in\CC^{H}$
and $\sigmabar\in\Nm^H$). Since $N|\xi$
satisfies the full lemma, note that $H$ does also (everything is of bounded complexity).
Moreover, $H$ is $(\mu,m)$-nice,
as $N|\xi$ is $(\mu,m)$-nice,
as $N|\eta$ is nice.

Let $\mathscr{T}$
be the set of all non-trivial $\ell$-maximal
trees $\Uu$  on $H$, via $\Sigma_H$, which are based on $H|\delta_n^H$ for some $n<\om$, have successor length
and are such that $b^\Uu$ does not drop.
Given $\Uu,\Vv\in\mathscr{T}$,
write $\Uu\leq\Vv$
iff $\Vv=\Uu\conc\Vv'$
for some $\Vv'$ on $M^\Uu_\infty$
which is above $\delta_{n}^{M^\Uu_\infty}$
where $n$ is least such that $\Uu$ is based on $H|\delta_n^H$.
Let $\Uu\in\mathscr{T}$
and $H'=M^\Uu_\infty$. Let $\mathscr{G}_\Uu$ be the set of
all $g$ such that for some $k<\om$,
$g$ is  $(H',\Coll(\om,\delta_k^{H'}))$-generic. Let $g\in\mathscr{G}_{\Uu}$.
Then $T_{\Uu,g}$ denotes the set of all pairs $(\psi,x)$ such that $\psi$ is $\Sigma_1$
in the $L(\RR)$-language and $x\in\RR\cap H'[g]$ and there is a pre-$\psi(x)$-witness
$R$ such that $R\pins H'[g]$,
where $H'[g]$ is considered as an
$(H'|\delta_k^{H'},g)$-premouse.
Let $T_{\Uu}=\bigcup_{g\in\mathscr{G}_\Uu}T_{\Uu,g}$. Let $T_{\geq\Uu}=\bigcup_{\Vv\geq\Uu}
T_{\Vv}$ (note that $\Vv\geq\Uu$ implies $\Vv\in\mathscr{T}$). Note that
for each $\Uu\in\mathscr{T}$,
there is $\alpha<\alphagap$
such that $\J_\alpha\models T_{\geq\Uu}$,
and letting $\alpha_{\Uu}$ be the least such, for all $\Uu,\Vv\in\mathscr{T}$, we have
\[ \Uu\leq\Vv\implies T_{\geq\Uu}\supseteq T_{\geq\Vv}\implies \alpha_{\Uu}\geq\alpha_{\Vv}. \]
So (with an application of DC)
we can fix $\Uu_0\in\mathscr{T}$
such that for all $\Vv\in\mathscr{T}$
with $\Uu_0\leq\Vv$, we have  $\alpha_{\Vv}=\alpha_{\Uu_0}$. Let $H_0=M^{\Uu_0}_\infty$ and let $j_0$ be such that $\Uu_0$ is based on $H|\delta_{j_0}^H$.
Let $\alpha_0=\alpha_{\Uu_0}$.
Note that $\alpha_0$ starts an S-gap.
Let $\beta_0<\alphagap$ be the end of that S-gap.

We claim that $T_{\geq\Uu_0}=\Th_{\Sigma_1}^{\J_{\alpha_0}}(\RR)$. For certainly
$T_{\geq\Uu_0}\sub\Th_{\Sigma_1}^{\J_{\alpha_0}}$. But by the minimality of $\alpha_0$, $T_{\geq\Uu_0}$ is also cofinal in $\Th_{\Sigma_1}^{\J_{\alpha_0}}(\RR)$ with respect to  the standard prewellorder of $\Sigma_1$ truth, but then the usual
``comparison of ranks'' argument
shows that equality holds.
(That is, by properties
of $\Pg$ and elementarity,
$T_{\geq\Uu_0}$
is ``simply closed''.
We leave the precise formulation of this to the reader, but it should mean
essentially that it is closed under
straightforward logical deduction.
Suppose for example
that $\alpha_0$ is a limit of limits, and
 $\J_{\alpha_0}\sats\psi(y)$ for some $\Sigma_1$ formula $\psi$
and $y\in\RR$. Let $\gamma$
be least such that $\J_\gamma\sats\psi(y)$,
so $\gamma<\alpha_0$ (as $\alpha_0$ is a limit of limits).
Let $\psi'(y')\in T_{\geq\Uu_0}$ be such
that the least $\gamma'$ such that $\J_{\gamma'}\sats\psi'(y')$, has $\gamma'>\gamma$.
Then because $T_{\geq\Uu_0}$
is simply closed,
either
\begin{enumerate}[label=(\roman*)]
 \item\label{item:option_i}
 $\text{``}\exists\beta\in\OR \Big[\SS_\beta\sats\psi'(y')\wedge\psi(y)\Big]\text{''}\in T_{\geq\Uu_0}$
\item\label{item:option_ii}  $\text{``}\exists\beta\in\OR\ \Big[\SS_\beta\sats\psi'(y')\wedge\neg\psi(y)\Big]\text{''}\in T_{\geq\Uu_0}$.
\end{enumerate}
But
since $\gamma<\gamma'$
and $\J_{\alpha_0}\sats T_{\geq\Uu_0}$,
\ref{item:option_ii} does not hold, so \ref{item:option_i} holds.
But then again by simple closure,
$\psi(y)\in T_{\geq\Uu_0}$. The case that
$\alpha_0$ is a successor-limit
is likewise but with the $\Ss$-hierarchy.)

Working in a generic extension of $V$, let $H'$ be an $\RR$-genericity iterate
of $H_0$, formed with a tree above $\delta_{j_0}^{H_0}$.
Then $H'$ satisfies the full lemma,
by the elementarity of
the iteration map
$j:H\to H'$.
Let $G'$ be a generic
witnessing that $H'$ is an $\RR$-genericity iterate, and one which meets
all dense 
subsets $D\sub\CC^{H'}$ which are $\bfrSigma_{\ell}^{H'}$-definable (not just those in $H'$), with $j(\pbar)\in G'$;
a construction like that for Remark \ref{rem:get_RR-genericity_generic}
works for this, since these
$D$ are amenable to $H'|\lambda^{H'}$.
So $\mathscr{N}=\Mtilde^{H',G'}$ is well-defined,  $\lambda^{H'}=\omega_1$,
and note that
$\Mtilde_{\omega_1}^{H',G'}=(\HC,T)$
where $T=\Th_{\Sigma_1}^{\J_{\alpha_0}}$.
Let $\beta_0$ ens the S-gap starting at $\alpha_0$, so $\beta_0<\alphagap$.
The fact that $H'$ is $\mu$-nice
and satisfies part \ref{item:no_new_Sigma_1_truths}
easily gives that $\mathscr{N}=\M^{[\alpha_0,\beta_0]}_\gamma$
where $\gamma=\OR^{\mathscr{N}}$,
and $\widehat{L}(\RR)^{\mathscr{N}}=\SS_\beta$
for some $\beta\in[\alpha_0,\beta_0]$,
and since $\OR^{\mathscr{N}}\leq\beta$,
therefore $\mathscr{N}\in\J_{\alphagap}$.
Now
\[ H'\sats j(\pbar)\sforces{\CC}{\mathrm{int}}\Mtilde\sats\psi_{\varphi_0,i}(j(\sigmabar)_{G'}),\]
by line (\ref{eqn:pbar_forces_psi_varphi_0}),
and $H'$ is $(\mu,m)$-nice,
and an inspection of $\sforces{\CC}{\mathrm{int}}$ (and our choice of $G'$) therefore gives that
\[ \Mtilde^{H',G'}\sats\psi_{\varphi_0,i}(j(\sigmabar))\]
and (again using that $H'$ is $(\mu,m)$-nice, and that $\J^\mu(H')\in\J_{\alphagap}\models$``Turing determinacy'')
\[ \J^\mu(H')\sats\varphi_0(f_i(j(\sigmabar)_{G'}))\]
(it's not relevant here whether $\OR^{\mathscr{N}}<\beta_0^*$ or not,
where $\beta_0^*$ is the ``end of S-gap''
in the $\M^{[\alpha_0,\beta_0]}$-hierarchy). So
\[ \J^\mu(H')\sats\exists z\varphi_0(z),\]
and recall that this says there is a sequence of sets of reals coding models witnessing that $L(\RR)\sats\varphi_1$.
But then since $\J^\mu(H')\in\J_{\alphagap}$,
we get $\J_{\alphagap}\models\varphi_1$,
a contradiction, completing the proof of part \ref{item:no_new_Sigma_1_truths}.

This completes the induction
up to $\eta_\infty$.
Now suppose $\eta_\infty<\OR^N$, i.e.~$N$ is not $\mu$-nice. The $\mu$-homogeneity of $N$ follows directly from properties of $\Pg$,
so it is $\mu$-determinacy which fails for $N|(\eta_\infty+\om)$. By homogeneity of $\CC$ it follows that we can fix
$(\varphi,\sigma)$ and $d<\om$ such that ($*$) $d\geq\supp(\pi)$
and for all $m\in[d,\om)$
there are $k<\om$ and $\vec{\delta}\in[\Delta_{\geq m}^N]^{2k}$ 
such that $N|\eta_0$ satisfies that $\CC_d^N$ forces
\[\Big(\all^*_{\vec{\delta}}s\ \sforces{\CC_{\mathrm{tail}}}{\mathrm{int}}\Mtilde\sats\varphi(s,\sigma)\Big)\wedge \Big(\all^*_{\vec{\delta}}s\ \sforces{\CC_{\mathrm{tail}}}{\mathrm{int}}\Mtilde\sats\neg\varphi(s,\sigma)\Big). \]
But now taking $\ell<\om$ large enough
and $H=\cHull_{\ell+1}^{N|\eta_\infty}(\emptyset)$, we can argue much like in the proof of part \ref{item:no_new_Sigma_1_truths} to obtain a failure of Turing determinacy inside $\SS_{\alphagap}$, a contradiction.
\end{proof}

\begin{dfn}
Let $N$ be an $\om$-small premouse
with $\om$ Woodins and $\lambda^N<\OR^N$.
We say that $N$ is \emph{$\Mtilde$-good}
iff
$N$ is $\mu$-nice and the conclusion of Lemma \ref{lem:Sigma_P-iterate_good} holds for $N$.
\end{dfn}

Note that there is an $\rPi_1$ formula $\psi$
such that if $\lambda^N<\OR^N$
then $N$ is $\Mtilde$-good iff $N\sats\psi(\lambda^N)$.

\begin{lem}\label{lem:OR^N_leq_beta^*}
Work in a generic extension of $V$.
 Let $N$ be an $\RR$-genericity $\Sigma_{\Pg}$-iterate, as witnessed by $G$.
 Then $\OR^N\leq\beta^*$ and $\Mtilde^{N,G}=\M_{\OR^N}$. 
\end{lem}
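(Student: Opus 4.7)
The plan is to prove both conclusions by a single induction on limit ordinals $\eta \in [\lambda^N,\OR^N]$, establishing that $\Mtilde_\eta^{N,G}=\M_\eta$; the bound $\OR^N\leq\beta^*$ then drops out by contradicting the definition of $(\beta^*,n^*)$ using the $\Mtilde$-goodness already given by Lemma \ref{lem:Sigma_P-iterate_good}.

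For the base case $\eta=\lambda^N$: since $N$ is an $\RR$-genericity iterate with witness $G$, we have $\lambda^N=\om_1^V$ and $\widetilde{\RR}^{N,G}=\RR^V$, hence $\widetilde{\HC}^{N,G}=\HC^V$. By Lemma \ref{lem:Sigma_P-iterate_good}\ref{item:no_new_Sigma_1_truths} applied to $N|\lambda^N$ (and using Fact \ref{fact:mouse_witnesses} to identify mouse-witnesses inside proper segments of $N|\lambda^N$ with $\Sigma_1^{\SS_{\alphag}}$-truth), the predicate $\widetilde{T}^{N,G}$ encodes exactly $\{(x,\Th_{\Sigma_1}^{\SS_{\alphag}}(\{x\}))\mid x\in\HC\}$. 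Therefore $\Mtilde_{\lambda^N}^{N,G}=(\HC,T)=\M_{\om_1}$.

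For the inductive step at $\eta=\xi+\om$, assuming $\Mtilde_\xi^{N,G}=\M_\xi$: the key point is that the ``ambient'' Martin measure $\mu^{N,G}$ used to interpret the construction names in $\Mtilde_{\xi+\om}^{N,G}$ agrees with the true $V$-Martin measure $\mu$ on all sets that matter, namely the $\mSigma_\om^{\M_\xi}$-definable subsets of $\Dd^{<\om}$. This agreement holds because the degrees of $\widetilde{\HC}^{N,G}$ and of $V$ coincide and because Turing determinacy holds in $\M_\xi$ (Lemma \ref{lem:Sigma_P-iterate_good}). Combined with part \ref{item:tau_interpreted_correctly} of that lemma, which guarantees that the witnesses for $\mu^{N,G}$-measurability used to evaluate each construction name $(\xi,i,\pi)$ live in $\Mtilde_{\xi+\om}^{N,G}$, one concludes that $\Mtilde_{\xi+\om}^{N,G}$ and $\M_{\xi+\om}$ arise from applying the \emph{same} $\mu$-rud functions to the same base $\M_\xi\cup\{\M_\xi\}$, so they are equal. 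Continuity gives the limit-of-limits case immediately.

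Now for $\OR^N\leq\beta^*$: since every proper segment $N|\eta$ is $\Mtilde$-good (Lemma \ref{lem:Sigma_P-iterate_good}\ref{item:mu-nice_at_eta}), and since by the induction just completed $\Mtilde_\eta^{N,G}=\M_\eta$, the model $\M_\eta$ satisfies full Turing determinacy for all $\eta<\OR^N$; in fact so does $\M_{\OR^N}=\Mtilde^{N,G}$, so $\gamma+n+1$-Turing completeness holds for every $\gamma+n+1\leq\OR^N$. Moreover, Lemma \ref{lem:Sigma_P-iterate_good}\ref{item:no_new_Sigma_1_truths} and the mouse-witness correspondence imply that every $\OD^{\mu,\gamma+n+1}(x)$ real at any level $\gamma<\OR^N$ is already $\OD^{<\alphag}(x)=\OD^\alpha(x)$ (the argument runs exactly as at the end of the proof of part \ref{item:no_new_Sigma_1_truths}, producing a countable $(\mu,m)$-nice hull and an $\RR$-genericity iterate whose $\Mtilde$-reduct lies in $\J_{\alphag}$). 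Both failure clauses in Definition \ref{dfn:beta*} are therefore excluded for every $\gamma+n+1<\OR^N$, forcing $\OR^N\leq\beta^*$.

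The main obstacle I expect is the match-up in the inductive step: one must verify rigorously that the formal ``internal'' construction of $\Mtilde_\eta^{N,G}$ via the name system $\Nm^N$ and the $\psi_0$-forcing relation (Definitions \ref{dfn:formal_Nm^N}, \ref{dfn:psi_0}) produces the \emph{same} object as the external $\M$-construction of $V$, uniformly at each stage. The combinatorial content of this match-up is precisely the ``$\mu^{N,G}=\mu$ on definable sets'' claim, which in turn rests on the $\mu$-homogeneity and $\mu$-determinacy packaged into $\mu$-niceness, delivered by Lemma \ref{lem:Sigma_P-iterate_good}; once that is in hand, the identification $\Mtilde_\eta^{N,G}=\M_\eta$ and the bound $\OR^N\leq\beta^*$ both follow with little further work.
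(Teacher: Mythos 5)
Your identification $\Mtilde^{N,G}_\eta=\M_\eta$ follows the paper's intent: the paper also defers the bulk of this to Lemma \ref{lem:Sigma_P-iterate_good} and only notes the base case at $\lambda^N$ directly. Your induction through $\eta$, grounded on $\mu$-niceness and part \ref{item:tau_interpreted_correctly} of that lemma, is the right explanation of \emph{why} the earlier lemma does the work.

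There is, however, a genuine gap in your argument for $\OR^N\leq\beta^*$. You claim that ``every $\OD^{\mu,\gamma+n+1}(x)$ real at any level $\gamma<\OR^N$ is already $\OD^{<\alphag}(x)$'' follows by running ``exactly'' the argument in the proof of Lemma \ref{lem:Sigma_P-iterate_good}\ref{item:no_new_Sigma_1_truths}. But that argument is specifically tailored to $\Sigma_1$ truths of $\widehat{L}(\RR)^{\M_\eta}$: one assumes a new $\Sigma_1$ fact, extracts a concrete witness (a set of reals coding a wellfounded model), pulls it into a countable hull, iterates, and derives a contradiction. An arbitrary $\muSigma_n^{\M_\gamma}(\{x\})$-definable real does not come with such a witness, and the uniqueness of $y$ as the real satisfying a given $\muSigma_n$ formula need not be preserved by passage to a hull and then to an $\RR$-genericity iterate of that hull. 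So ``exactly the same argument'' does not apply, and you have not supplied what would replace it.

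The mechanism that actually works is more direct and of a different flavor: since $\gamma<\OR^N$, the segment $N|\gamma$ is a \emph{set} in $N$, so for $g\sub\CC^N_{k+1}$ with $x\in N[g]$, the set $(N|\gamma)[g]$ lies in $N[g]$. The level-by-level definability of the $\CC^N$-forcing relation over $N|\gamma$ (from Lemma \ref{lem:Sigma_P-iterate_good} and the $\mSigma_0$-through-$\muSigma_n$ forcing analysis of \S\ref{subsec:the_generic_M(R*)}) together with the homogeneity of $\CC_{\mathrm{tail}}$ shows that the $\mSigma_{n+1}^{\M_\gamma}(\{x\})$-definable real $y$ is already computable from $(N|\gamma)[g]$ and $x$, hence $y\in N[g]$. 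Now pick $x$ Turing-equivalent to $(g,N|\delta_{k+1}^N)$: then $N[g]\cap\RR\sub\Lp_{\Gammag}(\langle N|\delta_{k+1}^N,g\rangle)$, so $y\in\OD^{<\alphag}(x)$. Applying this with $\gamma=\beta^*$ under the hypothesis $\beta^*<\OR^N$ (together with the $f$-correspondence of Remark \ref{rem:between_hierarchies}, which provides a cone of $x$ admitting a new real at level $\beta^*$) yields the contradiction. Your proposal omits the homogeneity-plus-lower-part-closure step entirely and substitutes a reflection argument that does not cover the required claim; the two are not interchangeable, and this is the piece you would need to supply.
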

\begin{proof}
 The previous lemma applies,
 so $\Mtilde^{N,G}$ is well-defined etc;
 That $\Mtilde_{\lambda^N}^{N,G}=\M_{\om_1}$ is clear by nature of $\Pg,G$. 
 So suppose $\beta^*<\OR^N$.
 By Remark  \ref{rem:between_hierarchies},
 we have $f(\beta^*)\leq\betag< f(\beta^*)+\om^2$, 
where $f$ is the function as there. 
It follows that there is $n<\om$
such that for a cone  of reals $x$,
there is a real $y$
which is $\mSigma_{n+1}^{\M_{\beta^*}}(\{x\})$ but $y\notin\OD_{\alphagap}(x)$.
Let $x_0$ be at the base of such a cone $C$,
and let $g=G\rest\CC_k^N$
with $k$ large enough that $x_0\in N[g]$.
Let $g'=G\rest\CC_{k+1}^N$.
Let $x$ be a real equivalent to $(g',N|\delta_{k+1}^N)$. Then $x_0\leq_T x$.
So let $y$ witness that $x\in C$.
Then $y\notin N[g']$.
But by homogeneity of $\CC_{\mathrm{tail}}$ and the $\mu$-definability
of $y$ over $\M_{\beta^*}$ from $x$,
we get that $y$ is definable
from $x$ over $(N|\beta^*)[g']$,
so $x\in N[g']$, a contradiciton.
\end{proof}

\subsection{The generic premouse}
In this section we assume that $\SS_{\alphagap}$ is admissible.

Having shown that we can realize
some initial segment of $\M_{\beta^*}$
as a derived model of $\RR$-genericity iterates $N$ of $\Pg$,
we want to 
arrange (by choosing $N$ appropriately) that that initial segment
is in fact the full $\M_{\beta^*}$,
and to arrange  fine structural
correspondence between $N$ and $\M_{\beta^*}$. To achieve this,
we will arrange
that $N$ is generic over $\M_{\beta^*}$,
and more generally, that $N|\gamma$
is generic over $\M_\gamma$.

We will force over $\M_{\beta^*}$ with Turing Prikry forcing $\PP$, for
forcing an $\om$-small  premouse containing $\om$ Woodins, which is $\Gammagap$-exact below the supremum $\lambda$ of its Woodins (cf.~Definition \ref{dfn:mtr}),
where the forcing conditions are pairs $(p,\Xvec)$ in which
$\Xvec$ is a countable Boolean
combination of uniformly $\bfmSigma_{n^*}^{\M_{\beta^*}}$
sets (have to define $n^*$; see Definition \ref{dfn:PP} for details). We now proceed toward defining the forcing.
Definitions \ref{dfn:suitable}
and \ref{dfn:short_tree} are standard (see, for instance, \cite{HOD_as_core_model}).

\begin{dfn}\label{dfn:suitable}
Let $X\in\HC$ be
transitive and let $P$ be an $\om$-small $X$-premouse.

For $k<\om$, we say 
that $P$ is \dfnemph{$k$-suitable-like} iff
$P$ has exactly $k$ Woodins $\delta_0<\ldots<\delta_{k-1}$ strictly above
$\rank(X)$, and letting $\delta_{-1}=\rank(X)$, every set in $P$
has cardinality $\leq\delta_{k-1}$, and $P\sats\ZF^-$. We say that $P$ is 
\dfnemph{$\om$-suitable-like}
iff $P$ has $\om$ Woodins and $\OR^P=\lambda^P$.

Recall Definition \ref{dfn:mtr}.
We say that $P$ is \dfnemph{bounded}
iff $P$ is $\delta$-bounded for all $\delta<\OR^P$,
and \dfnemph{full} iff $P$ is $\delta$-full
for all strong cutpoints $\delta<\OR^P$.

Let $k\leq\om$. We say that $P$ is \dfnemph{$k$-suitable} iff $P$ is $k$-suitable-like, bounded and full. We say that $P$ is 
\dfnemph{suitable}
iff $P$ is $k$-suitable for some $k\leq\om$.

Note that the
definitions above all use the fixed pointclass
$\Gammagap$ implicitly.
If we want to make the same definitions
with some other pointclass $\Gamma$,
then we add ``$\Gamma$-'' as a prefix,
as in ``$\Gamma$-full'', etc.
\end{dfn}

\begin{rem}
If $P$ is almost mtr-suitable
then $P|\lambda^P$ is $\om$-suitable.

If $P$ is $k$-suitable then no $R\pins P$ is
$k$-suitable. For suppose not, and $k<\om$.
Easily $k>0$.
Note that $\delta_{k-1}^R\notin\{\delta_0^P,\ldots,\delta_{k-1}^P\}$,
hence is not Woodin in $P$.
Let $S\pins P$ be least such that 
$R\ins S$ and $\rho_\om^S\leq\delta_{k-1}^R$, and note that $\delta_{k-1}^R$ is a strong cutpoint of $S$,
which contradicts
the $\delta_{k-1}^R$-boundedness of $P$. If $k=\om$ it is similar.
\end{rem}

\begin{dfn}\label{dfn:short_tree}
Let $P$ be a full $k$-suitable-like premouse,
where $k\leq\om$.

Let $\Tt$ be a $0$-maximal (equivalently, $\om$-maximal) iteration tree on $P$.
We say $\Tt$ is \dfnemph{short} iff for every limit $\lambda\leq\lh(\Tt)$ there
is\[Q\pins\Lp_{\Gammagap}(M(\Tt\rest\lambda))\] such that
$Q$ is a Q-structure for $M(\Tt\rest\lambda)$,
and if $\lambda<\lh(\Tt)$ then $Q\ins M^\Tt_\lambda$.
We say $\Tt$ is  \dfnemph{maximal} iff $\Tt$ has limit length and  every proper segment of $\Tt$ is short, but $\Tt$ is not short.

The \dfnemph{short-tree strategy}  $\Psi_{\Gamma P}$ for $P$ is the putative partial $0$-maximal
iteration strategy $\Psi$ such that, given a countable limit length, short tree $\Tt$ on 
$P$, $\Psi(\Tt)$ is the $\Tt$-cofinal branch $b$ such that $Q(\Tt,b)$ exists and
$Q(\Tt,b)\pins\Lp_\Gamma(M(\Tt))$, if such $b$ exists, and $\Psi(\Tt)$ is
undefined otherwise. (Standard arguments show that there is at most one such
$b$.)

We say that $P$ is \dfnemph{short-tree-iterable} iff whenever $\Tt$ is a
countable limit length, short tree on $P$ via $\Psi_{\Gamma P}$, then 
$\Psi_{\Gamma P}(\Tt)$ is
defined, and every putative tree via $\Psi_{\Gamma P}$ is an iteration tree.

Suppose $P$ is short-tree iterable and let $\Psi=\Psi_{\Gamma P}$. We say that
$\Psi$ is \dfnemph{fullness preserving} iff for every successor length tree $\Tt$ via 
$\Psi$, if $b^\Tt$ does not drop then $M^\Tt_\infty$ is full.
We say that $\Psi$, and also $P$, are \dfnemph{stable}\footnote{This
is clearly related to the notion
of $\Gamma$-stability
from Definition \ref{dfn:descent}
(which continues after Remark \ref{rem:soundness_for_later_ones}).
The present definitions can also
be relativized to a pointclass
$\Gamma$ with the prefix ``$\Gamma$-'',
and thus, we are presently introducing
a new definition of \emph{$\Gamma$-stable}.
There is no formal ambiguity
between the current notion
and that of \ref{dfn:descent},
because in \ref{dfn:descent},
triples $(N,n,\eta)$
are $\Gamma$-stable, not premice $N$.
However, we also used the
terminology \emph{$\Gamma$-stable}
informally for premice $N$,
omitting explicit specification of 
 $(n,\eta)$. But there is still
 no ambiguity, because in \ref{dfn:descent},
$N$ was a projecting structure with $\rho_{n+1}^N\leq\eta<\rho_n^N$,
whereas here $P$ is not projecting.} iff 
$P$ is $k$-suitable (hence bounded) and for every successor length tree $\Tt$ via $\Psi$,
\begin{enumerate}[label=(\roman*)]
\item\label{item:stable_no_drop} if $b^\Tt$ does not drop then $M^\Tt_\infty$ is $k$-suitable,
and 
\item\label{item:stable_drop} if $b^\Tt$ drops, $R\pins S\ins M^\Tt_\infty$,
 $\nu(\Tt)\leq\OR^R$,
 $R$ is a strong cutpoint of $S$, 
 and there is $n<\om$ such that
 $\rho_{n+1}^S\leq\OR^R<\rho_n^S$,
 then $S\pins\Lp_{\Gammagap}(R)$.
\end{enumerate}

The preceding definitions also relativize in the obvious way to trees and
iteration strategies
above some $\delta<\OR^P$, or acting on some interval $[\delta,\gamma)$ where $\delta<\gamma\leq\OR^P$.
In this way we define
\dfnemph{stable above $\delta$}, etc.
\end{dfn}

\begin{rem}
Suppose $P$ is a stable short-tree-iterable  $k$-suitable premouse.
Then a stronger variant of
 clause \ref{item:stable_drop} of the definition
 of stability  holds:
 suppose $\Tt,R,S,n$ have the same properties
 as there, except that $R$ is only assumed to be a cutpoint of $S$, not a strong cutpoint. Then we claim there is an above-$\OR^R$, $(n,\om_1+1)$-strategy
 for $S$ in $\SS_{\alphagap}$.
For suppose otherwise,
and let $S$ be the least such segment of $M^\Tt_\infty$, again as witnessed by $n$.
Then by \ref{item:stable_drop},
$R$ is not a strong cutpoint of $S$,
so there is $E\in\es_+^S$
such that $\kappa=\crit(E)=\OR^R$.
We have that $S|\kappa^{+S}$ is passive,
since $\kappa$ is a cutpoint of $S$.
By minimality of $S$
every proper segment of 
$S|\kappa^{+S}$ is above-$\kappa$
iterable in $\SS_{\alphagap}$.
It follows (using admissibility
to collect together iteration strategies)
that $\kappa^{+S}$ is not a cutpoint
of $S$ (otherwise $\kappa^{+S}$
is a strong cutpoint,
so by \ref{item:stable_drop},
$S$ is above-$\kappa^{+S}$-iterable
in $\SS_{\alphagap}$,
but then using admissibility
to collect together strategies,
$S$ is above-$\kappa$-iterable
in $\SS_{\alphagap}$, contradiction).
Let $E\in\es_+^S$ be least such that
$\crit(E)=\kappa<\kappa^{+S}<\lh(E)$.
Let $\Tt'$ be the ($0$-maximal)
tree determined by $\Tt'\conc\left<E\right>$.
Let $S'=M^{\Tt'}_\infty=\Ult_n(S,E)$
(recall $b^\Tt$ drops,
so $\deg^{\Tt'}_\infty=n$ even if $M^\Tt_\infty=S$)
and $R'=S|\kappa^{+S}$.
Then $b^{\Tt'}$ drops
and the hypotheses of \ref{item:stable_drop}
apply to $(\Tt',S',R',n)$,
and $\nu(\Tt')=\kappa^{+S}$,
so $S'\pins\Lp_{\Gammagap}(R')$,
so $S'$ is above-$\kappa^{+S}$,
$(n,\om_1+1)$-iterable in $\SS_{\alphagap}$,
but then this induces an above-$\kappa$,
$(n,\om_1+1)$-strategy for $S$ by copying,
which is therefore in $\SS_{\alphagap}$,
a contradiction.
\end{rem}

\begin{rem}
 One might expect that \ref{item:stable_no_drop} in the definition of stability would imply \ref{item:stable_drop}, considering common arguments involving an analysis of fatal drops in an iteration tree.  But let us
 point out a situation where this does not seem to work. Suppose there are $\rho<\delta<\zeta<\OR^P$
 such that $\delta$ is a limit cardinal of $P|\zeta$ and a strong cutpoint of $P|\zeta$, but $\rho=\rho_\om^{P|\zeta}$. Suppose
 there are cofinally many $\alpha<\delta$
 such that $P|\alpha$ is active with a $P$-total extender. Suppose $\Tt$ is a successor length tree on $P$ which is based on the interval $[\rho,\delta]$. So $\Tt$ drops in model, and can be considered as a tree on $P|\zeta$. By $k$-suitability, we know that $P|\zeta\pins\Lp_\Gamma(P|\delta)$. But suppose
 that $b^\Tt$ does not drop further in model below the image of $P|\zeta$, so $M^\Tt_\infty$ is an ultrapower of $P|\zeta$, and is $\delta'$-sound
 where $\delta'=i^\Tt_{0\infty}(\delta)$.  Then it does not seem obvious that $M^\Tt_\infty\ins\Lp_\Gamma(M^\Tt_\infty|\delta')$.
\end{rem}

\begin{dfn}
Let $P$ be a full  $k$-suitable-like premouse, where $k\leq\om$.
Let $N$ be a premouse. We say that $N$ is a \dfnemph{pseudo-iterate} of $P$ iff
there is an iteration tree $\Tt$ via $\Psi_{\Gammagap P}$ such that either:
\begin{enumerate}[label=--]
 \item  $\Tt$ has successor length and $N=M^\Tt_\infty$, or
 \item 
 $\Tt$ has limit length, is maximal
 and $N=\Lp_\Gamma(M(\Tt))$.
 \end{enumerate}
 Note that $\Tt$ is uniquely determined by $P,N$.

 A pseudo-iterate
$N$ of $P$ is
\dfnemph{pseudo-non-dropping} iff, letting $\Tt$ be as above, if $\Tt$ has successor length then $b^\Tt$ does 
not drop. 
\end{dfn}

\begin{lem}\label{lem:pseudo_comp_gen_it}
Let $0<k<\om$ and let
$\mathscr{P}=\left\{P_n\right\}_{n<\om}$ be a  countable set of $k$-suitable-premice.
Suppose that
\[ P_n|\delta_{k-2}^{P_n}=P_m|\delta_{k-2}^{P_m} \]
for each $m,n$; let $\delta_{k-2}$ be the common $\delta_{k-2}^{P_m}$.
Suppose that for each $n$, $P_n$ is above-$\delta_{k-2}$ short-tree-iterable and
stable above $\delta_{k-2}$.
Let $\Psi_{n}$ be the above-$\delta_{k-2}$ short-tree-strategy for $P_n$. Let $x\in\RR$. Then there is a $k$-suitable premouse $P$ which is an
above-$\delta_{k-2}$
pseudo-non-dropping pseudo-iterate of  $P_n$, and $x$ is $P$-generic for
$\BB_{\delta_{k-1}^P}^P$.
\end{lem}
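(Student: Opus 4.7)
\textbf{Proof proposal for Lemma \ref{lem:pseudo_comp_gen_it}.}

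The plan is to run a simultaneous comparison of the premice $\{P_n\}_{n<\om}$ above $\delta_{k-2}$, interleaved with a genericity iteration of each resulting model at its top Woodin $\delta_{k-1}$ for the $\delta_{k-1}$-generator extender algebra $\BB_{\delta_{k-1}}$, in order to make the single fixed real $x$ generic. All trees $\Tt_n$ will be $0$-maximal on $P_n$, above $\delta_{k-2}$, and will be built by strategies compatible with $\Psi_n$ in the short-tree sense. At each successor stage we either apply a least-disagreement comparison step across the whole family of current models, or, if no disagreement is available and $x$ fails to satisfy an extender algebra axiom of the current top model $M^{\Tt_n}_\xi$, we feed in an extender witnessing failure of that axiom (on the $P_n$-side where it is available, and adjust the other trees by copying/lifting arguments in the usual way; since all $P_m$ agree below $\delta_{k-2}$ and the trees live above $\delta_{k-2}$, the extenders in play are ``parallel'' across the family).

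At limit stages $\lambda$, I use short-tree iterability plus stability as follows. For each $n$, look at $\Tt_n\rest\lambda$. If it is short then $\Psi_n$ returns a cofinal branch $b_n$ whose Q-structure is a segment of $\Lp_{\Gammag}(M(\Tt_n\rest\lambda))$; continue $\Tt_n$ with $b_n$. If $\Tt_n\rest\lambda$ is maximal, replace $M^{\Tt_n}_\lambda$ with $\Lp_{\Gammag}(M(\Tt_n\rest\lambda))$ as the next model on the $n$-th coordinate (and retain it as the final pseudo-iterate on that coordinate, ending the $n$-th process). Stability of $\Psi_n$ guarantees that in the short case the next model is $k$-suitable, so fullness and boundedness are preserved. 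Crucially, the $\Gammag$-iterability witnesses sitting inside $\Lp_{\Gammag}$ at each limit are available in $\SS_{\alphag}$; by the arguments already used for mtr-suitability (e.g.\ in the proof of Lemma~\ref{lem:mtr-suitable_mouse} and Lemma~\ref{lem:MTR_preservation}), the Q-structures picked by $\Psi_n$ are the correct ones.

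Termination is the standard countable length/countability-of-$x$ argument: the genericity iteration at $\delta_{k-1}$ of each side must close off in countably many steps because each non-trivial step is forced by some extender algebra axiom failing for $x$, of which there are only countably many (reading $x$ as a real), and the comparison steps between the finitely-disagreeing tops can contribute only finitely many extenders per level. Thus each $\Tt_n$ has countable length; by K\"onig's lemma and the standard comparison argument the trees all halt at a common countable stage, and moreover all main branches are non-dropping (any drop would produce, via stability clause~(\ref{item:stable_drop}) of Definition~\ref{dfn:short_tree}, a proper segment of the dropped side sitting inside $\Lp_{\Gammag}$ of the drop point, contradicting the usual ``no-drop'' half of the comparison argument applied on top of the genericity iteration). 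The resulting final model $P$ on each coordinate is then an above-$\delta_{k-2}$ pseudo-non-dropping pseudo-iterate of $P_n$. Stability together with Lemma~\ref{item:iterate_pres_mtr-suitability_stability}\ref{item:stability_non-dropping} gives that $P$ is again $k$-suitable, and the genericity-iteration design ensures $x$ is $(P,\BB^P_{\delta_{k-1}^P})$-generic.

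The main obstacle I expect is the maximal-tree case combined with the genericity iteration: if some $\Tt_n\rest\lambda$ is maximal at a stage at which the genericity iteration is not yet complete (so further extenders must be fed in past $\lambda$), one must verify that the pseudo-iterate $\Lp_{\Gammag}(M(\Tt_n\rest\lambda))$ is itself a $k$-suitable premouse over which the genericity-iteration can continue and still terminate countably. This is where fullness preservation and the fact that $\Gammag$ is closed under the relevant operations get used heavily: one argues that the new top $\delta_{k-1}$ (namely $\delta(\Tt_n\rest\lambda)$) is still Woodin in the lower-part closure, still a strong cutpoint, and that the extender algebra there agrees with the limit of the extender algebras on the way up, so genericity axioms satisfied before $\lambda$ remain satisfied after and the remaining axioms can still be forced with extenders above $\lambda$.
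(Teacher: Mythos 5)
Your high-level plan (simultaneous comparison of $\{P_n\}$ above $\delta_{k-2}$, folded in with genericity iteration at the top Woodin for $x$, steered by the short-tree strategies and stability) is the paper's approach, but you gloss over exactly the subtlety the paper singles out: comparison extenders can cause temporary drops and create new inaccessibles below $\lh(E)$, ``clouding what should be meant by genericity iteration.'' Your extender-selection rule --- do a least-disagreement comparison step, and only if there is no disagreement look for a failing extender-algebra axiom --- is not the paper's rule and does not obviously dodge this problem. The paper first computes a cutoff $\xi_\eta$ (the least disagreement level, or the top of the smallest $R_m$ if some $R_{n_0}\ins R_n$ for all $n$), and then asks whether $x$ satisfies every extender-algebra axiom coming from an extender $E$ indexed $\leq\xi_\eta$ that lies on \emph{all} the $R_m$-sequences simultaneously and has $\nu(E)$ inaccessible in $R_m||\xi_\eta$. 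If some such axiom fails, the \emph{same} $E$ is applied on every tree; comparison steps happen only when all such axioms hold. The simultaneous-membership and inaccessibility constraints are load-bearing.

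This is why your phrase ``feed in an extender witnessing failure of that axiom (on the $P_n$-side where it is available, and adjust the other trees by copying/lifting arguments...)'' is a genuine gap. There is no copying or lifting in the construction: genericity extenders must already be on all the models' sequences, as forced by prior comparison steps. Trying to push a one-sided genericity extender through copy maps runs into the problem that those maps exist only along non-dropping branches, and whether branches have dropped at that stage is precisely what you need to control. Similarly, your no-drop argument is a gesture rather than a proof: the paper's Claim \ref{clm:termination_by_a_and_c} shows in detail that if $b^{\Tt_{n_0}}$ drops then all $R_n$ coincide, the last extender used was a genericity extender $E$ with $\nu(E)$ inaccessible in $M^{\Tt_n}_\beta||\xi_\beta$, and this forces case (b) at $\beta$ and a contradiction with the definition of $\xi_\beta$; only after that is secured does stability clause \ref{item:stable_drop} eliminate drops on the other branches. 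You invoke \ref{item:stable_drop} but skip the antecedent drop analysis. Finally, your ``finitely many extenders per level'' and ``countably many axioms'' termination heuristics don't correspond to anything in the actual construction; termination is the standard countable-length argument, and the work is in showing the interleaved genericity steps don't spoil it.
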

\begin{proof}
For simplicity, assume $k=1$.
We compare the $P_n$'s, folding in genericity iteration. Such constructions
have frequently been used in this context without much explanation (cf.~for example \cite{HOD_as_core_model}), but they need a little more care than might initially be apparent, 
because extenders $E$ used for comparison can cause
drops (maybe temporarily), and create new inaccessibles below $\lh(E)$, clouding what should be meant by ``genericity iteration''. A related kind of construction
was given in detail in \cite{backcomp},
but that construction also
involves other details
which are irrelevant here.
Constructions closer to our present situation 
were also given in detail in \cite{odle_v2}. But for the reader's convenience
we repeat the relevant ideas here.

We
use
the
following algorithm for extender selection.\footnote{If the $P_n$ are iterable
for stacks of normal pseudo-trees, one could instead compare first, and then do
a genericity iteration.}

We define padded normal trees $\Tt_n$ on $P_n$, with $\Tt_n$ via
$\Psi_n$. We only allow $\eta=\pred^{\Tt_n}(\gamma+1)$  if
$E^{\Tt_n}_\eta\neq\emptyset$. Suppose for some $\eta$, we have defined
$\Tt_n\rest\eta+1$ for all $n$. Let $R_n=M^{\Tt_n}_\eta$. 

If (a) there is $n_0<\om$
such that $R_{n_0}\ins R_n$ for all $n<\om$,
then let $\xi_\eta=\OR^{R_{n_0}}$.

If (b) otherwise,
let $\xi_\eta$ be the least $\xi$ such that for
some $m,n$, we have $R_m|\xi\neq R_n|\xi$.

Now we proceed with both cases. Suppose first that (c) $x\sats\varphi$ for
all extender algebra axioms $\varphi$ induced by extenders $E$ such that for
some $\gamma\leq\xi_\eta$,
\begin{enumerate}[label=-]
\item if $k>1$ then $\delta_{k-2}^{P_m}<\lh(E)$ for all $m$,
 \item $E=\es_\gamma(R_m)$ for all $m$, and
 \item $\nu(E)$ is inaccessible in $R_m||\xi_\eta$.
\end{enumerate}
Then if (a) holds we stop the process,
and if (b) holds we set $E^{\Tt_m}_\eta=\es_{\xi_\eta}^{R_m}$
for each $m$.

Now suppose that (c) fails. Let $E$
witness this with $\lh(E)$ minimal. Then we set $E^{\Tt_m}_\eta=E$ for each $m$.

If we reach a limit stage $\eta<\om_1$ such that (d) for some $n$,
$E^{\Tt_n}_\gamma\neq\emptyset$ for cofinally many $\gamma<\eta$, and $\Tt_n$
is maximal, then we terminate the process. At other limit stages $\eta<\om_1$ we continue
using our strategies.

It is easy to see that for each $m,n$ and $\eta,\gamma$, if $\eta<\gamma$ and
$E^{\Tt_m}_\eta\neq\emptyset\neq E^{\Tt_n}_\gamma$, then
$\lh(E^{\Tt_m}_\eta)\leq\nu(E^{\Tt_n}_\gamma)$, and in particular, each tree is
normal.

\begin{clm}\label{clm:termination_by_a_and_c}
If the process terminates due to the conjunction of \tu{(}a\tu{)} and \tu{(}c\tu{)} above, then for all $m,n$,
$b^{\Tt_m}$ does not drop, and $M^{\Tt_m}_\infty=M^{\Tt_n}_\infty$.\end{clm}

\begin{proof} We first show that
$b^{\Tt_{n_0}}$ does not drop,
where $n_0$ is as in (a). Suppose otherwise.
Then $R_{n_0}$ is not sound,
so $R=R_{n_0}=R_n$ for all $n<\om$,
so  for each $n$,
$b^{\Tt_n}$ drops and $\core_\om(R)\pins
M^{\Tt_n}_{\beta}$ for some $\beta$. Choosing $\beta_n$ to be the largest
such $\beta$ (recall that $\Tt_n$ is padded), $E^{\Tt_n}_{\beta_n}\neq\emptyset$
and $E^{\Tt_n}_{\beta_n}\in\es_+(\core_\om(R))$. Since $R_n=R$ for all $n$, it
follows that $E^{\Tt_n}_{\beta_n}$ and $\beta_n$ are independent of $n$. Write
$\beta=\beta_n$ and $E=E^{\Tt_n}_{\beta_n}$. Therefore $E$ was chosen at stage
$\beta$ due to a bad extender algebra axiom; i.e., at stage $\beta$, (c) above
failed. So $\nu=\nu(E)$ is
inaccessible in $M^{\Tt_n}_\beta||\xi_\beta$.

Now there is $\delta$ such that for each $n$, we have $\beta=\pred^{\Tt_n}(\delta+1)$ and
\[ i^{*\Tt_n}_{\delta+1,\eta}=\pi:\core_\om(R)\to R\]
is the core embedding, with $\delta$ and $F=E^{\Tt_n}_\delta$ both independent
of $n$, and $\crit(\pi)=\crit(F)<\nu$, and in each tree, $F$
triggers the drop in model to $\core_\om(R)=M^{*\Tt_n}_{\delta+1}$. Since
$\nu$ is a
cardinal of $M^{\Tt_n}_\beta||\xi_\beta$, but $F$ triggers the drop in model,
therefore $\nu$ is not a cardinal of $M^{\Tt_n}_\beta$, so
$\xi_\beta<\OR(M^{\Tt_n}_\beta)$, and so case (b) held at stage $\beta$. But also
since $\nu$ is a cardinal of $M^{\Tt_n}_\beta|\xi_\beta$, we have
$M^{\Tt_n}_\beta|\xi_\beta\ins\core_\om(R)$. Therefore $M^{\Tt_n}_\beta|\xi_\beta$ is
independent of $n$. This contradicts the choice of $\xi_\beta$.

So $b^{\Tt_{n_0}}$ does not drop.
So by stability of $\Psi_{n_0}$, $M^{\Tt_{n_0}}_\infty$ is $k$-suitable.
So if $b^{\Tt_n}$ does not drop,
then by stability of $\Psi_n$,
$M^{\Tt_n}_\infty=M^{\Tt_{n_0}}_\infty$.
Suppose that $b^{\Tt_n}$ drops,
so $M^{\Tt_{n_0}}_\infty\pins M^{\Tt_n}_\infty$. Let $\delta=\delta_{k-1}^{M^{\Tt_{n_0}}_\infty}$,
the largest (Woodin) cardinal of $M^{\Tt_{n_0}}_\infty$. We have
$M^{\Tt_{n_0}}_\infty=\Lp_{\Gammagap}(M^{\Tt_{n_0}}_\infty|\delta)$.
Note that $\nu(\Tt_n)\leq\delta$.
Since $b^{\Tt_n}$ drops, there is  $(S,d)$ such that
$M^{\Tt_{n_0}}_\infty\ins S\ins M^{\Tt_n}_\infty$
and $\rho_{d+1}^S\leq\delta<\rho_d^S$,
but then taking $(S,d)$ least such,
$\delta$
is Woodin in $S$, and hence a strong cutpoint of $S$. So by condition \ref{item:stable_drop}
of stability (for $\Psi_n$), $S\pins\Lp_{\Gammagap}(S|\delta)$,
contradicting that $M^{\Tt_{n_0}}_\infty=\Lp_{\Gammagap}(S|\delta)$.
\end{proof}

So if the process terminates via the conjunction of (a) and (c), then setting $P=M^{\Tt_{n_0}}_\infty$, we are done.

Suppose the process terminates via (d),
as witnessed by $n_0$ (so $\Tt_{n_0}$ is non-padded cofinally often below the limit stage $\eta$, and $\Tt_{n_0}$ is maximal).
If $\Tt_n$ is also non-padded cofinally below $\eta$, then we are done, so suppose otherwise.
We claim that $M^{\Tt_n}_\infty=\Lp_{\Gammagap}(M(\Tt_{n_0}))$.
For let us first observe that
$\Lp_{\Gammagap}(M(\Tt_{n_0}))\ins M^{\Tt_n}_\infty$.
By stability, $M^{\Tt_n}_\infty\npins\Lp_{\Gammagap}(M(\Tt_{n_0}))$. But then
if $\Lp_{\Gammagap}(M(\Tt_{n_0}))\nins M^{\Tt_n}_\infty$ then the comparison
of $\Lp_{\Gammagap}(M(\Tt_{n_0}))$ with $M^{\Tt_n}_\infty$ is non-trivial,
and noting that the comparison is above $\delta(\Tt_{n_0})$
and short tree strategies
are enough to perform it,
it succeeds, which is impossible.
So $\Lp_{\Gammagap}(M(\Tt_{n_0}))\ins M^{\Tt_n}_\infty$. Now suppose
that $\Lp_{\Gammagap}(M(\Tt_{n_0}))\pins M^{\Tt_n}_\infty$. Then by stability for $\Psi_n$, $b^{\Tt_n}$ drops.
But now one reaches a  contradiction like at the end of the proof of Claim \ref{clm:termination_by_a_and_c}.
So $M^{\Tt_n}_\infty=\Lp_{\Gammagap}(M(\Tt_{n_0}))$, which is sound,
so $b^{\Tt_n}$ does not drop, which suffices.

These are the only two possible
kinds of termination.
But the usual arguments show that the process must terminate in countably many steps, so we are done.
\end{proof}

We will often want to apply the preceding lemma assuming that some $P_n$ is the 
$k$-suitable segment of a non-dropping $\Sigma_\Pg$-iterate of $\Pg$. In this case, let 
$\Uu$
be the resulting successor length normal tree on $\Pg$. By $\Gammagap$-stability of $\Pg$ (in the sense of Definition \ref{dfn:descent}), $b^\Uu$ does not drop, and $P=M^\Uu_\infty|i^{\Uu}((\delta_{k-1}^\Pg)^{+\Pg})$.

The following definition is the obvious adaptation of the analogue in \cite{HOD_as_core_model}.

\begin{dfn}[$Q_s$]\label{dfn:Q_s}
Given $n<\om$ and $s\in\Dd^n$ of sufficiently rapid growth, we will define an
$n$-suitable premouse $Q_s$ (over $x^{\Pg}$). 
Moreover, for
each $m<n$, $Q_{s\rest m}$ will also
be defined and will be the $m$-suitable segment of $Q_{s}$.

Define
$Q_\emptyset=\Lp_{\Gammagap}(x^{\Pg})$.
Now fix $s\in\Dd^n$ where $n\geq 1$. Let $m+1<\lh(s)$ be such that $Q_{s\rest m}$ is defined.
Then $Q_{s\rest m+1}$ is
defined iff there is an $(m+1)$-suitable- premouse $N$ such that $N$ has $m$-suitable
segment $Q_{s\rest
m}$, $N$ is coded by some real $z\in s(m)$,  is short-tree-iterable
above $\delta_{m-1}^N$ and stable above $\delta_{m-1}^N$. In this case, we let $Q_{s\rest m+1}$ be the
premouse $P$ ouput by the pseudo-comparison/genericity iteration
process of the proof of \ref{lem:pseudo_comp_gen_it}, for making some
(equivalently, all) reals $x\in s(m)$ generic.\footnote{Given Turing equivalent
reals $x_1,x_2$, and an active premouse $R$, it is easy to show that $F^R$
induces an extender algebra axiom false for $x_1$ iff $F^R$ induces one false
for $x_2$. So the iteration trees produced by the proof of
\ref{lem:pseudo_comp_gen_it} are independent of $x\in s(m)$.}
\end{dfn}

\begin{lem}\label{lem:Q_s_definability}
The function $s\mapsto Q_{s}$, and its domain $D$, are both $\Delta_1^{\M_{\om_1}}(\{x^{\Pg}\})$.\end{lem}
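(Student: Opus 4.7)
The plan is to exhibit matching $\Sigma_1$ and $\Pi_1$ definitions of the graph of $s\mapsto Q_s$ (together with its domain $D$) over $\M_{\om_1}$ from the parameter $x^{\Pg}$, exploiting that the predicate $T$ of $\M_{\om_1}$ encodes $\Sigma_1^{\SS_{\alphag}}$ truth, and hence the operator $\Lp_{\Gammag}$.

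First I will verify that each ingredient used in Definition \ref{dfn:Q_s} is itself $\Delta_1^{\M_{\om_1}}(\{x^{\Pg}\})$. The base $Q_\emptyset=\Lp_{\Gammag}(x^{\Pg})$ is directly read off $T$ via Fact \ref{fact:mouse_witnesses}: membership in $\Lp_{\Gammag}(y)$ for transitive $y\in\HC$ is expressible by the existence of a sound $y$-mouse $N$ projecting to $y$ whose above-$y$, $(\om,\om_1{+}1)$-strategy lies in $\SS_{\alphag}$, and the latter is a $\Sigma_1^{\SS_{\alphag}}$ fact about $N$, so  $\Sigma_1^{\M_{\om_1}}$; by directedness of the $\Lp_{\Gammag}$-stack the relation is actually $\Delta_1$. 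From this, the predicates ``$k$-suitable-like'', ``bounded'', ``full'', hence ``$k$-suitable'', are all $\Delta_1^{\M_{\om_1}}(\{x^{\Pg}\})$, being quantifications over proper segments together with membership in $\Lp_{\Gammag}$ of appropriate cutpoint segments. The short-tree strategy $\Psi_{\Gammag N}$ is determined by Q-structures drawn from $\Lp_{\Gammag}(M(\Tt))$, so by the same route the relations ``$N$ is above-$\delta$ short-tree-iterable'' and ``$N$ is above-$\delta$ stable'' are $\Pi_1$ over $\M_{\om_1}$ with parameter $N$; their negations are witnessed by a countable short tree together with an offending Q-structure (or stability failure) in $\Lp_{\Gammag}$, hence are $\Sigma_1$. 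So they are $\Delta_1$.

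Second, the pseudo-comparison/genericity iteration of Lemma \ref{lem:pseudo_comp_gen_it} is a concrete recursive procedure on countable padded trees whose extender selection algorithm, together with the choice of branches at limits, is first-order in $\HC$ once $\Psi_{\Gammag P_n}\rest\HC$ is available. Since this short-tree strategy is $\Delta_1^{\M_{\om_1}}$-definable from the premouse uniformly (by the previous paragraph), the entire output tuple $(\Tt_n,M^{\Tt_n}_\infty)_{n<\om}$, and hence the final premouse $P$ used to define $Q_{s\rest m+1}$ from $Q_{s\rest m}$, are $\Delta_1^{\M_{\om_1}}(\{x^{\Pg}\})$ in the inputs. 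The clause ``there exists a suitable $N\ldots$ coded by a real $z\in s(m)$'' is a $\Sigma_1$ assertion about $s(m)$ in $\HC$, and its failure (which precisely characterizes when $Q_{s\rest m+1}$ is undefined) is also expressible as a $\Pi_1$ assertion using the $\Delta_1$ definability of suitability, short-tree-iterability and stability.

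Putting these pieces together, one proves by induction on $m\leq\lh(s)$ that ``$Q_{s\rest m}$ is defined and equals $N$'' is uniformly $\Delta_1^{\M_{\om_1}}(\{x^{\Pg}\})$ in $(s\rest m,N)$: at stage $m+1$, one asserts the existence of the required $N$ witness, forms the induced pseudo-comparison to compute $Q_{s\rest m+1}$, and checks $(m{+}1)$-suitability of the output. The dual $\Pi_1$ definition is obtained by asserting that for all witnesses $N$, the pseudo-comparison produces the same answer (uniqueness follows from Lemma \ref{lem:pseudo_comp_gen_it} together with the fact that $Q$-values agree on Turing-equivalent reals). The main obstacle is thus a bookkeeping one: making sure that the iterability and stability hypotheses that feed into Lemma \ref{lem:pseudo_comp_gen_it} are expressible at the $\Sigma_1$ level in $\M_{\om_1}$, which is exactly what the predicate $T$ was designed to deliver.
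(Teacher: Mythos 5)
Your proposal is correct and follows essentially the same route as the paper's proof: break the clauses of Definition \ref{dfn:Q_s} into pieces whose complexity over $\SS_{\alphag}$ is at most a Boolean combination of $\Sigma_1$ and $\Pi_1$, observe that any such assertion about an element of $\HC$ becomes $\Delta_1^{\M_{\om_1}}$ via the predicate $T$, and then handle $\lh(s)>1$ recursively. The paper is somewhat more precise about the $\SS_{\alphag}$-complexity of each ingredient (``$k$-suitable-like'' is $\Pi^1_1$, ``full and short-tree iterable'' is $\Pi_1^{\SS_{\alphag}}$, ``bounded'' is $\Sigma_1^{\SS_{\alphag}}$, ``stable'' is $\Pi_1^{\SS_{\alphag}}\wedge\Sigma_1^{\SS_{\alphag}}$, and the pseudo-comparison output is arithmetic over Boolean-combination-$\Sigma_1^{\SS_{\alphag}}(\{y\})$ data), but the translation to $\Delta_1^{\M_{\om_1}}$ and the inductive handling of the length of $s$ are the same; your passing attribution of the $\Delta_1$-ness of the $\Lp_{\Gammag}$-membership relation to ``directedness of the stack'' is a slight misdescription---the $\Delta_1$-ness comes simply from $T$ being a total function encoding $\Sigma_1$ truth---but this does not affect the argument.
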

\begin{proof}
The assertion ``$N$ is a $k$-suitable-like premouse''
is $\Pi^1_1$ (of variable $N$). The assertion ``$N$ is full
and short-tree iterable'' is $\Pi_1^{\J_{\alphagap}}$. The assertion
``$N$ is bounded''
is $\Sigma_1^{\J_{\alphagap}}$.
The assertion ``the short-tree strategy is stable'' is $\Pi_1^{\J_{\alphagap}}\wedge\Sigma_1^{\J_{\alphagap}}$. (With
$\Pi_1^{\J_{\alphagap}}$ we can assert that the short-tree strategy is fullness preserving; with $\Sigma_1^{\J_{\alphagap}}$ we can  handle the boundedness aspect of property
\ref{item:stable_no_drop},  and handle property \ref{item:stable_drop}.) 
So $\{s\in D\bigm|\lh(s)=1\}$ is $\Delta_1^{\M_{\omega_1}}(\{x^{\Pg}\})$. For the domain where $\lh(s)>1$, one must proceed recursively, computing first $Q_{s\rest(\lh(s)-1)}$. The assertion ``$P$ is the result of the pseudo-comparison/genericity iteration of all (the relevant) $N$ (as described in \ref{dfn:Q_s})'' is uniformly arithmetic over Boolean-combination-$\Sigma_1^{\J_{\alphagap}}(\{y\})$ for any real $y$ coding $P$ and all reals $\leq_T s$. (With $\Sigma_1^{\J_{\alphagap}}$ we can express that the trees use the correct strategy, and with $\Sigma_1^{\J_{\alphagap}}\wedge\Pi_1^{\J_{\alphagap}}$ we can express that $P$ is full, and bounded at its top Woodin.)  Since $\M_{\om_1}$ has the predicate $T$ at its disposal, it easily follows that $D$ and the graph of  $s\mapsto Q_s$
are $\Delta_1^{\M_{\om_1}}(\{x^{\Pg}\})$.
\end{proof}

\begin{dfn}
Let $s\in\Dd^{<\om}$ and $Y\sub\Dd^{<\om}$. We say $Y$ is a
\emph{tree} iff $Y$ is closed under initial segment.

If $Y$ is a tree, we say that $Y$ is \dfnemph{$\mu$-cone-splitting with stem $s$} iff $s\in Y$ and
 for each $t\in Y$,
 either
(i) $t\pins s$, or
 (ii) $s\ins t$ and for a $\mu$-cone of $x\in\Dd$  we have $t\conc\left<x\right>\in Y$.

If $Y$ is a tree, we say that $Y$ is \dfnemph{$\mu$-cofinally-splitting
with stem $s$} iff likewise,
except that we replace ``a $\mu$-cone of''
with ``Turing cofinally many''.
\end{dfn}
\begin{dfn}[Prikry forcing, $\PP$, $\PP^\gamma$, $\PP^-$]\label{dfn:PP} Let
$\PP^-_n$  be
the set of all
tuples $s\in\Dd^n$ such that $Q_s$ is defined.
Let $\PP^-=\bigcup_{n<\om}\PP^-_n$. For $s\in\PP^-$, let $\PP^-_{s,n}$ be the
set of
all $t\in\Dd^n$ such that $s\conc t\in\PP^-$. Let
$\PP^-_s=\bigcup_{n<\om}\PP^-_{s,n}$.

We will define the partial order $\PP=(P,<)$, where $P$ is the set of
\dfnemph{(Prikry) conditions}. For each $\gamma$ and
$n<\om$ such that
$\gamma+n\leq\beta^*+n^*$ we also define a
sub-order $\PP^{\gamma+n}\sub\PP$, and set
$\PP=\PP^{\beta^*+n^*}$.

A \dfnemph{potential condition} is 
either
\begin{enumerate}[label=\tu{(}\roman*\tu{)}]
 \item a pair of
form $(s,\alpha)$, where $s\in\PP^-$ and $\alpha<\om_1$, or
\item\label{item:mu-cone-splitting_tree} a $\mu$-cone-splitting tree $Y\sub(\PP^-)^{<\om}$.
\end{enumerate}

We may identify a potential condition $Y$ as in \ref{item:mu-cone-splitting_tree} above with the pair $(s,\vec{X})$ where 
$\vec{X}=\left<X_n\right>_{n<\om}$ and
\begin{enumerate}[label=--]
\item $Y$ has stem $s$,
\item for each $n<\om$,
and $Y\cap\Dd^{\lh(s)+n}=s\conc X_n=\{s\conc t\bigm|t\in X_n\}$.
\end{enumerate}

Let $P_0$ be the set of potential
conditions.

The ordering $<_0$ restricted to potential conditions of form $(s,\vec{X})\in P_0$ is as usual (cf.~\cite{HOD_as_core_model});
that is, $(s,\vec{X})\leq(t,\vec{Y})$
iff  $s\conc\vec{X}\sub t\conc\vec{Y}$
(note this implies $t=s\rest\lh(t)$).
For those of form $(s,\alpha)$,
declare $(s,\alpha)$ to be $<_0$-equivalent to the
potential condition
$(s,\Xvec)$, where $\Xvec=\left<X_{s,\alpha,n}\right>_{n<\om}$, where $X_{s,\alpha,n}$ is the set of tuples $t\in\PP^-_{s,n}$ such that if $n>0$
then the least measurable cardinal of $Q_{s\conc t}$ is ${>\alpha}$.

We will have $P\sub P_0$ and define the ordering $<$ to be ${<_0}\rest P$.

Fix a limit $\gamma$. We set $\PP^\gamma$ to be the sub-ordering whose set of
conditions is $\M_\gamma\cap P_0$. In particular, the conditions in $\PP^{\om_1}$
are just those of form $(s,\alpha)\in P_0$ with $\alpha<\om_1$.

Let $1\leq n<\om$ and $q=(s,\vec{X})\in P_0$. We put $q\in\PP^{\gamma+n}$ iff
there are sequences $\left<q_i\right>_{i<\om}$
and $\left<q'_i\right>_{i<\om}$
of potential  conditions,
each with stem $s$,
and such that each $q_i$ is $\bfmuSigma_n^{\M_\gamma}$-definable,
each $q'_i$ is $\bfmuPi_n^{\M_\gamma}$-definable,
and $q=\bigcap_{i<\om}q_i\cap\bigcap_{i<\om}q'_i$.

We finally define $\PP=\PP^{\beta^*+n^*}$.
\end{dfn}

By the next lemma, which follows easily from Turing completeness, we can obtain
potential conditions
easily from sequences of measure one sets:
\begin{lem}
Let $(s,\vec{Y})$ be such that $s\in\PP^-$ and $\vec{Y}=\left<Y_n\right>_{n<\om}$, where  $Y_n\in\mu_n$ for each $n$.
Then there is $(s,\vec{X})\in\PP$
such that $X_n\sub Y_n$ for each $n$,
and $\vec{X}$ is simply definable from $(s,\Yvec)$. In fact, let $X_n$ be the set of all $t\in\PP_{s,n}^-$ such that for each $i\leq\lh(t)$ and each $m<\om$,
there are $\mu_m$-measure one many $u\in\PP_{s\conc (t\rest i),m}^-$ such that $s\conc (t\rest i)\conc u\in s\conc Y_{i+m}$\tu{]}.
Then $(s,\vec{X})$ is as advertised.
\end{lem}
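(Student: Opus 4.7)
The plan is to verify three items in turn: (a) $X_n\subseteq Y_n$; (b) the tree $Y^*$ associated to $(s,\vec X)$ (with $Y^*\cap\Dd^{\lh(s)+n}=s\conc X_n$) is $\mu$-cone-splitting with stem $s$; and (c) $(s,\vec X)\in\PP^{\beta^*+n^*}=\PP$ and is simply definable from $(s,\vec Y)$. For (a), I will apply the defining clause of $X_n$ with $i=n$ and $m=0$. Unwinding Definition \ref{dfn:mu}, $\mu_0$ is the principal filter on $\Dd^0=\{\emptyset\}$ concentrating on $\emptyset$, and $\PP^-_{s\conc t,0}=\{\emptyset\}$ for any $t\in\PP^-_{s,n}$; so the clause collapses to the single requirement $t\in Y_n$.

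For (b), the only nontrivial point is to produce, for each $t\in X_n$, a Martin cone of $x\in\Dd$ on which $t\conc\langle x\rangle\in X_{n+1}$. The clauses of $t\conc\langle x\rangle\in X_{n+1}$ indexed by $i\leq n$ are inherited verbatim from $t\in X_n$, so only the clause at $i=n+1$ must be verified: for every $k<\om$, the set
\[ Z_{k,x}:=\{u\in\PP^-_{s\conc t\conc\langle x\rangle,k}: t\conc\langle x\rangle\conc u\in Y_{n+1+k}\} \]
lies in $\mu_k$. I apply the $i=n$ clause of $t\in X_n$ with parameter $m=k+1$: the set $Z_k:=\{v\in\PP^-_{s\conc t,k+1}: t\conc v\in Y_{n+k+1}\}$ lies in $\mu_{k+1}$. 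Peeling off the leading $\exists^\Dd s_0\,\all^\Dd t_0$ in the definition of $\mu_{k+1}$, there is a real $y_k$ such that for every degree $x\geq_T y_k$ the section $(Z_k)_x=Z_{k,x}$ lies in $\mu_k$. By Turing completeness, a recursive join of $\{y_k\}_{k<\om}$ bounds a single Martin cone of $x$ on which this holds uniformly in $k$; intersecting with the Martin cone of $x$ on which $Q_{s\conc t\conc\langle x\rangle}$ is defined (which exists by the stability of the short-tree strategies feeding into the construction of $Q_\cdot$ in Definition \ref{dfn:Q_s}) yields the desired cone.

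For (c), the explicit definition of $X_n$ applies the $\mu_m$-measure-one quantifier (a $\muSigma_1$-flavoured quantifier, cf.~\S\ref{subsec:mu-fs}) to arithmetic-in-$\vec Y$ conditions, combined with the $\Delta_1^{\M_{\om_1}}(\{x^{\Pg}\})$ definability of $s\mapsto Q_s$ from Lemma \ref{lem:Q_s_definability}; so $\vec X$ lies at a fixed finite level of the $\muSigma$-hierarchy over $\M_{\beta^*}$ uniformly in $(s,\vec Y)$, placing $(s,\vec X)$ in $\PP^{\beta^*+n^*}$. I also use homogeneity of the construction to identify the pair $(s,\vec X)$ with its $<_0$-equivalent potential condition in $P_0$.

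The main obstacle is (b): the uniformization of the cone of $x$ over $k<\om$. Turing completeness of the Martin measure is essential, since a priori the parameters $y_k$ produced by unpacking $\mu_{k+1}$ vary with $k$, and only the closure of the Martin filter under countable intersections produces a single cone witnessing $\mu$-cone-splitting. This is precisely the role signalled by the lemma's opening remark that the result follows easily from Turing completeness.
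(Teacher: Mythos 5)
Your proof is correct and carries out precisely the argument the paper has in mind; the paper gives no proof for this lemma beyond the remark that it "follows easily from Turing completeness," and your three-part verification — $X_n\subseteq Y_n$ via the $(i,m)=(n,0)$ clause, the cone-splitting via peeling one degree-quantifier off $Z_k\in\mu_{k+1}$ and then using $\AC_{\om,\RR}$/Turing completeness to bound the resulting $y_k$'s by a single cone, and the definability transfer for placement in $\PP$ — is exactly the expected one. Two small points: the cone of $x$ with $Q_{s\conc t\conc\langle x\rangle}$ defined comes from the mere existence of suitable premice (ultimately from $\Pg$) rather than from stability of the short-tree strategies per se, and you should also note (same argument, simpler) that $\emptyset\in X_0$, so that $s$ itself lies in the resulting tree as the stem requires.
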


\begin{lem}
Let $s\in\PP^-$ and $\gamma\in\Lim$ and $\gamma+n+1<\beta^*+n^*$
and $\left<q_n\right>_{n<\om}\sub\PP^{\gamma+n+1}$, be such that
for each $n$, $q_n\sub\Dd^{<\om}$ \tu{(}as opposed to $q_n=(s,\alpha)$ with $\alpha\in\OR$\tu{)}
and  $s=\stem(q_n)$.
Then $q=(\bigcap_{n<\om}q_n)\in\PP^{\gamma+n+1}$, and $\stem(q)=s$.
\end{lem}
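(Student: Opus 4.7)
To avoid the clash in variable names, let us fix $n<\om$ with $\gamma+n+1<\beta^*+n^*$ and consider a sequence $\left<q_k\right>_{k<\om}\sub\PP^{\gamma+n+1}$, each $q_k$ a $\mu$-cone-splitting tree with stem $s$. The plan has two parts: first verify that $q=\bigcap_{k<\om}q_k$ is still a potential condition (i.e., a $\mu$-cone-splitting tree with stem $s$), and then verify that the definability bound placing it in $\PP^{\gamma+n+1}$ is preserved under the countable intersection.

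For the first part, identify each $q_k$ with the associated sequence $\vec{X}_k=\left<X_{k,m}\right>_{m<\om}$. Then $q$ corresponds to $\vec{X}=\left<\bigcap_{k<\om}X_{k,m}\right>_{m<\om}$, still with stem $s$. I need to check that at each node $s\conc t\in q$, the set of $\mu$-cones above $s\conc t$ present in $q$ is $\mu_{m}$-measure one (for each $m<\om$). At each such node this reduces to a countable intersection of $\mu_m$-measure one sets. Since $\gamma+n+1<\beta^*+n^*$, the choice of $\beta^*,n^*$ in Definition \ref{dfn:beta*} ensures $\gamma+n+1$-Turing completeness; combined with $\AC_{\om,\RR}$ (assumed throughout this section), the countable intersection of $\bfmuSigma^{\M_\gamma}_{n+1}$ sets in $\mu_m$ lies in $\mu_m$. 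So $q$ is a $\mu$-cone-splitting tree with stem $s$.

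For the second part, unpack the definition of $\PP^{\gamma+n+1}$: each $q_k$ has the form
\[ q_k=\bigcap_{i<\om}q_{k,i}\,\cap\,\bigcap_{i<\om}q'_{k,i},\]
where each $q_{k,i}$ is a $\bfmuSigma_{n+1}^{\M_\gamma}$-definable potential condition with stem $s$, and each $q'_{k,i}$ is $\bfmuPi_{n+1}^{\M_\gamma}$-definable with stem $s$. Taking the intersection over $k<\om$ gives
\[ q=\bigcap_{(k,i)\in\om\times\om}q_{k,i}\,\cap\,\bigcap_{(k,i)\in\om\times\om}q'_{k,i}.\]
Reindexing via a bijection $\om\times\om\iso\om$ exhibits $q$ itself as a countable intersection of $\bfmuSigma_{n+1}^{\M_\gamma}$-definable potential conditions (with stem $s$) intersected with a countable intersection of $\bfmuPi_{n+1}^{\M_\gamma}$-definable ones, which is exactly the form required to be a member of $\PP^{\gamma+n+1}$. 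Thus $q\in\PP^{\gamma+n+1}$ with $\stem(q)=s$, as claimed.

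The only delicate point is ensuring the countable intersection of $\mu_m$-measure one sets remains $\mu_m$-measure one at each level; this is where the hypothesis $\gamma+n+1<\beta^*+n^*$ is used, together with $\AC_{\om,\RR}$. Everything else is bookkeeping in the definability hierarchy.
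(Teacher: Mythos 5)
The paper treats this lemma as routine and does not supply a proof, so there is no argument to compare against; I will simply assess whether your proof works, and it does. Your two-stage structure — first check $q\in P_0$ (i.e.\ that the intersection is still a $\mu$-cone-splitting tree with stem $s$), then check that the decomposition bound from the definition of $\PP^{\gamma+n+1}$ is preserved under countable intersection via a bijection $\om\times\om\iso\om$ — is exactly what is required, and the reindexing step is correct and complete.

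One small remark on the first stage: you appeal to $\gamma+n+1$-Turing completeness (from $\gamma+n+1<\beta^*+n^*$) together with $\AC_{\om,\RR}$ to see that the node sets of $q$ still contain cones. This is more than necessary. The $\mu$-cone-splitting condition is a statement about $\mu=\mu_1$ at each node (one-step extensions), not about $\mu_m$ for varying $m$, and what it asserts is that each node set \emph{contains} a cone, not that it is a $\bfmuSigma$-definable set of measure one. So all you need is: a countable intersection of sets each containing a cone contains a cone, which follows from $\AC_{\om,\RR}$ alone (choose a base for each witnessing cone, take a Turing upper bound). The Turing-completeness bookkeeping is not wrong, but it imports machinery the step does not use; the role of the hypothesis $\gamma+n+1<\beta^*+n^*$ is mainly to place $\PP^{\gamma+n+1}$ inside the well-defined range of the definition. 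Also, when you pass to the decompositions $q_k=\bigcap_i q_{k,i}\cap\bigcap_i q'_{k,i}$, you implicitly choose a decomposition for each $q_k$; this is an application of countable choice (available here via $\DC$ from $\AD$), which it would be clean to flag.
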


\begin{dfn}
Say that a set $D\sub\PP$
is \dfnemph{cone-strongly predense} iff
\[ \all s\in\PP^-\ \ex k<\om\ \exists^*_k t\ \ex q\in D\ [\stem(q)=s\conc t],\]
and \dfnemph{cofinal-strongly predense} iff
likewise, but with $\exists^*_k$ weakened
to $\all^*_k$.

Working in a generic extension of $V$, a filter $G\sub\PP$ is
\dfnemph{sufficiently generic} iff it meets
all cone-strongly predense sets
$D\in\Ss_{\om^2}(\M_{\beta^*})$.
\end{dfn}

The proofs to come
don't actully need this much genericity;
one could restrict to sets $D$
definable within a certain
$\mu$-definability class over $\M_{\beta^*}$. But it will suffice
for our purposes to consider sufficient genericity as defined.

\begin{lem}\label{lem:suitable_implies_strongly_predense}
We have:
\begin{enumerate}
\item Every cone-strongly predense set if cofinal-strongly predense.
\item Every cofinal-strongly predense set is predense.
\end{enumerate}
\end{lem}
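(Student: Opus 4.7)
The plan splits naturally by part.

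For part (1), I will establish the general fact that for any formula $\psi(t)$, $(\exists^*_k t\,\psi(t)) \Rightarrow (\all^*_k t\,\psi(t))$; applying this to $\psi(t) \equiv \exists q \in D\,[\stem(q) = s \conc t]$ for each $s \in \PP^-$ yields (1) immediately. The implication itself follows from a strategy-stealing argument: given an $\exists^*_k$ witness $\sigma$ providing $r_i = \sigma(s_0,\ldots,s_{i-1})$ such that any play $(s_0,\ldots,s_{k-1})$ with each $s_i \geq_T r_i$ satisfies $\psi$, one wins the $\all^*_k$ game by replying to each opponent move $r_i'$ with $s_i := r_i' \oplus \sigma(s_0,\ldots,s_{i-1})$. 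This response is legal ($s_i \geq_T r_i'$) and simultaneously guarantees $s_i \geq_T r_i^\sigma$ for all $i$, so $\sigma$'s winning condition yields $\psi(\langle s_0,\ldots,s_{k-1}\rangle)$. No determinacy hypothesis is needed.

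For part (2), let $D$ be cofinal-strongly predense and let $p = (s, \vec X) \in \PP$ with $s := \stem(p)$. Fix $k$ witnessing cofinal-strong predensity at $s$, giving $\all^*_k t\,[\exists q \in D\,(\stem(q) = s \conc t)]$. The $\mu$-cone-splitting property of the tree underlying $p$ yields, iteratively level by level, cone bases $d_{\langle t_0,\ldots,t_{i-1}\rangle}$ for $i < k$ such that whenever $t_i \geq_T d_{\langle t_0,\ldots,t_{i-1}\rangle}$ for all $i$, the tuple $s \conc t$ lies in that tree; this is exactly $\exists^*_k t\,[s \conc t \in \mathrm{tree}(p)]$. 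I then thread the two statements together by playing the $\all^*_k$ game for the $D$-statement while feeding in these cone bases as opponent moves: at stage $i$, first extract $d_i := d_{\langle t_0,\ldots,t_{i-1}\rangle}$ from the cone-splitting structure of $p$ at the node reached so far, then set $r_i := d_i$ and use the $\all^*_k$ witness to produce a legal response $t_i \geq_T r_i$ continuing the play toward a satisfying outcome. After $k$ rounds the resulting $t = (t_0,\ldots,t_{k-1})$ satisfies both $s \conc t \in \mathrm{tree}(p)$ (by construction) and $\exists q \in D\,[\stem(q) = s \conc t]$ (by the $\all^*_k$ witness).

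Pick such a $q$ and construct a common refinement $r \leq p, q$ in $\PP$: let $r$ have stem $s \conc t$ and measure-one tails formed by intersecting the tail trees of $p$ at the node $s \conc t$ with those of $q$. Since both $p$ and $q$ lie in $\PP = \PP^{\beta^* + n^*}$, each is a countable intersection of $\bfmuSigma_{n^*}^{\M_{\beta^*}}$- and $\bfmuPi_{n^*}^{\M_{\beta^*}}$-definable potential conditions over $\M_{\beta^*}$; passing to tails at $s \conc t$ (with $t \in \HC \subseteq \M_{\beta^*}$ absorbed as a parameter) and intersecting preserves this form, so $r$ is of the same shape. The $\mu$-cone-splitting property of the intersection at each level follows from the closure of the Martin measure $\mu_k$ under finite intersections, which holds in $\M_{\beta^*}$ by the Turing determinacy available there. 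Thus $r \in \PP$ with $r \leq p, q$, witnessing compatibility.

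The main technical obstacle will be verifying that the common refinement truly lies in $\PP$, not merely in $P_0$. Two points need care: (a) $\mu$-cone-splitting is preserved under intersection, reducing to intersection-closure of $\mu$-measure-one sets which follows from Turing completeness inside $\M_{\beta^*}$; and (b) the intersection of two countable intersections of $\bfmuSigma_{n^*}$- and $\bfmuPi_{n^*}$-definable conditions, after re-rooting at the longer stem $s \conc t$, is again of that form. Both are essentially bookkeeping given the fine structure developed earlier, but the definability classes need to be tracked carefully to confirm closure under the required operations.
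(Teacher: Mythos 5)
The paper states this lemma without proof, so there is no author argument to compare against; I evaluate your proposal on its own merits. Your approach is correct and is the standard one for Prikry-style forcings of this kind.

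For part (1), the strategy-stealing argument that $\exists^*_k \Rightarrow \all^*_k$ is correct: given the $\exists^*_k$-witness producing $r_i$ from $(s_0,\ldots,s_{i-1})$, respond in the $\all^*_k$-game to $r_i'$ with $s_i := r_i' \oplus r_i$, which simultaneously keeps the play legal and stays inside the $\exists^*_k$-witness's winning region. No determinacy is needed here, as you correctly note.

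For part (2), the thread of extracting $\exists^*_k t\,[s\conc t \in \mathrm{tree}(p)]$ from $\mu$-cone-splitting and combining it with the $\all^*_k$ hypothesis to get a single $t$ satisfying both is exactly right, and the refinement construction (re-root $p$ at $s\conc t$, intersect with $q$) is the right one. Two small remarks. First, closure of $\mu_1$ under finite intersections, which you invoke to see that the intersection tree is $\mu$-cone-splitting, is automatic from the filter property of $\mu_1$ (the intersection of two cones is a cone) — it does not require Turing determinacy; you'd need TD only to know $\mu_1$ is an ultrafilter, which is not what this step uses. Second, for the bookkeeping you flag in the last paragraph, note that $s\conc t$ lies in every one of the countably many $\bfmuSigma_{n^*}$- and $\bfmuPi_{n^*}$-definable potential conditions whose intersection is $p$, so each can be re-rooted at $s\conc t$; the re-rooted versions are $\bfmuSigma_{n^*}$- resp.\ $\bfmuPi_{n^*}$-definable with the extra $\HC$-parameter $t$, which stays boldface and hence inside the allowed class, and concatenating these with the analogous data for $q$ exhibits $r$ as an element of $\PP^{\beta^*+n^*}$. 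So the bookkeeping does go through and the proof is complete.
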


\begin{dfn}\label{dfn:N} Work in $\M_{\beta^*}$. Let $\widetilde{N}$
denote the natural class $\PP$-name for the
the potential premouse of height $\beta^*$ such that for sufficiently generic $G\sub\PP$,
$\widetilde{N}_G=\J_{\zeta}[Q]$
where $Q$ is the stack of all $Q_{\stem(q)}$ for $q\in G$,
and $\beta^*=\om_1+\zeta$.
For $\gamma\in\Lim\inter[\om_1,\beta^*]$ we
write
$\Ntilde^{\M_\gamma}$ or $\Ntilde|\gamma$ for the natural name for $\Ntilde_G|\gamma$.\end{dfn}

Sufficient genericity implies that $\om_1=\OR^Q$ above.  We will show  that $\Ntilde_G$ is a
premouse and $\Ntilde_G\sats$``$\lambda$ is a cardinal'' where $\lambda=\om_1$. We
won't actually deal with the generic extension $\M_{\beta^*}[G]$ beyond
$\Ntilde_G$.

\begin{dfn}For $s\in\PP^-$ with $s\neq\emptyset$ let $\delta^{Q_s}$ denote the
largest (Woodin) cardinal of $Q_s$;
and let $\delta^{Q_\emptyset}=\om$.
Let $\PP^-_{\Pg}$ denote the set of all $s\in\PP^-$ such that $\lh(s)>0$
and $s_{\lh(s)-1}\geq_T\Pg$.
For $s\in\PP^-_{\Pg}$, we define
a premouse $R_s$  extending $Q_s$,
as follows. Form the
$Q_s$-genericity iteration  $\Tt$ of $\Pg$
at $\delta_0^{\Pg}$,
after first iterating the least measurable of $\Pg$ out to $\delta^{Q_s}$.
Because $\Pg$ is extender algebra
generic over
 $Q_s$ and since $Q_s=\Lp_{\Gammag}(Q_s|\delta^{Q_s})$ and $\Pg$ is stable, we get $i^\Tt_{0\infty}(\delta_0^{\Pg})=\OR^{Q_s}$.
 Let $N=M^\Tt_\infty$.
Then $N[Q_s]$ translates to a premouse $R$
 over $(N|\delta_0^N,Q_s)$.
We define $R_s$ as the premouse extending $Q_s$, which is given
by the P-construction of $R$ above $Q_s$ (see below).\end{dfn}

\begin{rem} In this situation, we have
\begin{enumerate}[label=(\roman*)]
 \item\label{item:R_s_is_premouse_extending_Q_s}  $R_s$ is indeed a premouse extending $Q_s$,
 \item\label{item:Q_s_card_seg_of_R_s} $Q_s$ is
a cardinal segment of $R_s$, and
\item\label{item:R_s_it_above_Q_s} $R_s$ is iterable above $Q_s$, via a tail
$\Sigma_{R_s}$ of $\Sigma_{\Pg}$.
\end{enumerate}
For \ref{item:R_s_is_premouse_extending_Q_s} and \ref{item:Q_s_card_seg_of_R_s}: 
We have $N|\delta_0^{+N}=\Lp_\Gammagap(N|\delta_0^N)$ and $\delta_0^N=\OR^{Q_s}$, so \[
    N[Q_s]|(\OR^{Q_s})^{+N[Q_s]}
=\Lp_\Gammagap(N|\delta_0^N,Q_s)\]
and
\[R_s|(\OR^{Q_s})^{+R_s}=\Lp_\Gammagap(Q_s).\] So if \ref{item:R_s_is_premouse_extending_Q_s} fails or  \ref{item:Q_s_card_seg_of_R_s}fails, it is straightforward to see that \footnote{Regarding the soundness of proper segments of $R_s$, there is a slight subtlety
for segments of $N,R_s$ of height $\geq\OR^{Q_s}$ which project $\leq\OR^{Q_s}$. We already know that every proper segment of $R$ is sound (as a premouse over $(N|\delta_0^N,Q_s)$). But the language of this structure has a symbol for this coarse object
(and also symbols for each of its elements).
Given that $\pow(\delta^{Q_s})\cap R_s\sub Q_s$, which gives that $\rho_\om^{R_s|\alpha}\geq\OR^{Q_s}$ for each $\alpha\in(\OR^{Q_s},\OR^{R_s})$,
one also needs to see that the element $\OR^{Q_s}$ gets in to the relevant fine structural hulls. These hulls will be formed
using all elements of $\OR^{Q_s}$,
and since $\delta^{Q_s}$ is the largest cardinal of $Q_s$, there would therefore only be a problem 
for a hull of the form $\Hull_1^{R_s|\alpha}(\OR^{Q_s})$, where $R_s|\alpha$ is passive.
But this does not arise: we are interested in the case that
 also $\rho_1^{R_s|\alpha}=\OR^{Q_s}$. But then $\rho_1^{N|\alpha}=\delta_0^N=\OR^{Q_s}$,
 and $N|\alpha$ is passive (as $R_s|\alpha$ is passive). But then $p_1^N\not\sub\delta_0^N$, because $N|\delta_0^N\preccurlyeq_1 N$, by condensation. Hence,
 the relevant hull here is $\Hull_1^{R_s}(\OR^{Q_s}\cup p_1^N)$, and note that
 $\OR^{Q_s}$ is in this hull.}
\[ \pow(\delta^{Q_s})\cap\Lp_\Gammagap(Q_s)\not\sub Q_s.\]
 But
\[ \pow(\delta^{Q_s})\cap Q_s=\pow(\delta^{Q_s})\cap\Lp_\Gammagap(Q_s|\delta^{Q_s})=\pow(\delta^{Q_s})\cap L(T,Q_s|\delta^{Q_s}), \]
where $T$ is a $\Gammagap$ tree of a very good scale on a $\Gammagap$-universal set.
And $Q_s$ is definable over its universe from $Q_s|\delta^{Q_s}$ (this can be seen by using the Jensen stack above $\delta^{Q_s}$, since this is a regular uncountable cardinal in $Q_s$ (see \cite{V=HODX_pub} for more details)),
so $Q_s\in L(T,Q_s|\delta^{Q_s})$, so
$L(T,Q_s|\delta^{Q_s})=L(T,Q_s)$, and so
\[ \pow(\delta^{Q_s})\cap\Lp_\Gammagap(Q_s)=\pow(\delta^{Q_s})\cap L(T,Q_s)=\pow(\delta^{Q_s})\cap Q_s,
\]
contradiction.

Part \ref{item:R_s_it_above_Q_s} is routine.

We will only iterate $R_s$ above $Q_s$. 
Let $n=\deg(\Pg)$.
Then since $N$ is $\delta_0^N$-sound
and basically by the usual fine structural correspondence
of P-construction etc,
$\rho_{n+1}^{R_s}\leq\OR^{Q_s}<\lambda^{R_s}\leq\rho_n^{R_s}$,
 $R_s$ is $\OR^{Q_s}$-sound,
 and $p_{n+1}^{R_s}\cut\OR^{Q_s}=p_{n+1}^N$.
Note also that $\Th_{n+1}^{\Pg}(p_{n+1}^{\Pg})$ can be recovered
from $R'[\Pg|\delta_0^{\Pg}]$ for any above-$\OR^{Q_s}$,
non-dropping, $n$-maximal iterate $R'$ of $R_s$
 (first compute
$N|\delta_0^N$, and then translate
$\es^{R'}\rest[\delta_0^N,\OR^{R'})$
above $\delta_0^N$ to yield a premouse
$N'$ extending $N$,
and note that $N'$ is an iterate of $\Pg$).
And
whenever $g$
is $(R',\Coll(\om,\OR^{Q_s}))$-generic, there is 
a real $z\notin\Lp_\Gamma((Q_s,g))$
which is $\rSigma_{n+1}^{R'[g]}$-definable
in the parameter $(p',Q_s,g)$,
where $p'=i_{R_sR'}(p_{n+1}^{N})$
(consider the definable surjection
from $\OR^{Q_s}$ to $(\OR^{Q_s})^{+Q_s}$
which comes from the $\OR^{Q_s}$-soundness
of $R_s$).
\end{rem}

\begin{dfn} For a filter $G\sub\PP$, let $e^G$ denote
$\bigcup_{p\in G}\stem(p)$.

Working in a generic extension of $V$. A \dfnemph{sufficiently Prikry generic
iterate of $\Pg$}
is a generic $\Sigma_{\Pg}$-iterate $R$ (hence, an iterate of $\Pg$), such that $R|\lambda^R=\Ntilde_G|\om_1$ for some
sufficiently generic filter $G\sub\PP$.
Let $s\in\PP^-_{\Pg}$. 
A \dfnemph{sufficiently Prikry generic
iterate of $R_s$}
is likewise, except that it is a generic $\Sigma_{R_s}$-iterate $R$ (hence, an above-$Q_s$ iterate of $R_s$).
A \dfnemph{sufficiently Prikry generic iterate} is some such (of either  $\Pg$ or  some $R_s$).\end{dfn}

The following lemma is an immediate consequence
of Remark  \ref{rem:get_RR-genericity_generic}:
\begin{lem}\label{lem:suff_gen_implies_RR-gen}
 In any generic extension of $V$,
 every sufficiently Prikry generic iterate   is an $\RR$-genericity iterate.
\end{lem}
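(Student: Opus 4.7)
The plan is to directly verify the hypothesis of the second bullet of Remark \ref{rem:get_RR-genericity_generic} and then apply its gluing construction. Let $R$ be a sufficiently Prikry generic iterate in a generic extension $V[G']$, witnessed by a sufficiently generic $G\subseteq\PP$. Set $s=e^G\in\Dd^\om$, so $R|\lambda^R=\Ntilde_G|\om_1=\bigcup_{n<\om}Q_{s\rest n}$; in particular $\lambda^R=\om_1^V$ and the tree producing $R$ from $\Pg$ (or some $R_{s'}$) has length $\om_1^V+1$, as demanded by the remark.

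The first key step will be: for each $x\in\RR^V$ there is some $n<\om$ with $x\leq_Ts(n)$. To see this, let $D_x\sub\PP$ be the set of conditions $p$ whose stem $t=\stem(p)$ satisfies $t(\lh(t)-1)\geq_T x$. For any $t'\in\PP^-$, by Turing cofinality, $\mu$-cofinally many degrees $d$ satisfy both $d\geq_T x$ and $t'\conc\langle d\rangle\in\PP^-$, and any condition extending such a stem lies in $D_x$. Thus $D_x$ is cone-strongly predense, and since $D_x\in\Ss_{\om^2}(\M_{\beta^*})$, sufficient genericity of $G$ supplies $n_x<\om$ with $x\leq_T s(n_x)$.

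The second step will be to convert this degree domination into Levy collapse genericity. By Definition \ref{dfn:Q_s}, the pseudo-comparison/genericity iteration from $Q_{s\rest n}$ to $Q_{s\rest n+1}$ is precisely the one making every real of Turing degree $s(n)$ extender-algebra generic over $Q_{s\rest n+1}$ at $\delta_n^{Q_{s\rest n+1}}=\delta_n^R$, and absorption of $\BB_{\delta_n}$ into $\Coll(\om,\delta_n)$ then yields collapse-genericity containing any such real. Because $R$ and $Q_{s\rest n+1}$ agree on $R|\delta_n^{+R}=\Lp_{\Gammagap}(R|\delta_n^R)$ by $\Gammagap$-fullness, and every dense subset of $\Coll(\om,\delta_n^R)$ in $R$ lies in this common initial segment, a $(Q_{s\rest n+1},\Coll(\om,\delta_n))$-generic is automatically $(R,\Coll(\om,\delta_n^R))$-generic. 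Combining the two steps, for every $x\in\RR^V$ there is an $(R,\Coll(\om,\delta_{n_x}^R))$-generic $h_x\in V[G']$ with $x\in R[h_x]$.

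Finally, I would enumerate $\RR^V=\{x_m\}_{m<\om}$ in $V[G']$, which is legitimate because meeting the continuum-many relevant cone-strongly predense sets of $\PP$ in the ambient extension collapses $|\RR^V|^V$ to $\aleph_0$. A routine book-keeping--- sending $x_m$ to an index $k_m\geq n_{x_m}$ chosen pairwise distinct, and absorbing $h_{x_m}$ into a $\Coll(\om,\delta_{k_m}^R)$-generic by universality of Levy collapse--- then yields a sequence $\langle x_n,g_n\rangle_{n<\om}\in V[G']$ with each $g_n$ being $(R,\Coll(\om,\delta_n^R))$-generic, $x_n\in R[g_n]$, and $\{x_n\}_{n<\om}=\RR^V$. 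Remark \ref{rem:get_RR-genericity_generic} then supplies a single $(R,\CC^R)$-generic $g\in V[G']$ witnessing that $R$ is an $\RR$-genericity iterate. The main obstacle is the implicit step of justifying the countable enumeration of $\RR^V$ inside $V[G']$; once that follows from the strength of sufficient genericity, the rest is just assembling the pieces.
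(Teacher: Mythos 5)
Your proposal is correct and carries out exactly the verification that the paper's one-line ``immediate consequence of Remark~\ref{rem:get_RR-genericity_generic}'' is implicitly relying on: you check the second bullet of the Remark for the sufficiently Prikry generic iterate $R$. The identification of the dense sets $D_x$, the transfer of genericity from $Q_{s\rest n+1}$ to $R$ via $R|\delta_n^{+R}=\Lp_{\Gammag}(R|\delta_n^R)=Q_{s\rest n+1}$, and the final bookkeeping are all as intended; the one point you flag as an obstacle is in fact unproblematic, and the cleanest way to see it is not ``collapsing $|\RR^V|$'' but rather that meeting every $D_x$ gives $\RR^V=\bigcup_{n<\om}\{x\in\RR^V\mid x\leq_T e^G(n)\}$, a countable union of sets each of which carries a canonical enumeration (relative to a fixed representative of the degree $e^G(n)\in V$), so the sequence $\left<x_m\right>_{m<\om}$ already exists in $V[G']$ without any appeal to the cardinality of the family of predense sets being met.
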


\begin{lem}\label{lem:suff_gen_it}Let $s\in\PP^-_{\Pg}$ and let
$Y$ be a $\mu$-cofinally-splitting tree with stem $s$. Then for
sufficiently large $\lambda$, in $V^{\Coll(\om,V_\lambda)}$, there is a
sufficiently Prikry generic iterate $N$ of $R_s$,  as witnessed by a sufficiently
generic filter $G\sub\PP$
such that $e^G\sub Y$. If $Y$ is in fact a $\mu$-cone-splitting tree, then we can take $G$ to meet all cofinal-strongly predense sets. Likewise with $s=\emptyset$ and $\Pg$ replacing $R_s$.\end{lem}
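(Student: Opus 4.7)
The plan is to build $G$ by a standard fusion construction in a large generic collapse extension. Fix $\lambda$ large enough that $\Ss_{\om^2}(\M_{\beta^*})$, together with $Y$, $\PP$, $\Pg$, $R_s$, and everything else needed, has size $<\lambda$. Work in $V^{\Coll(\om,V_\lambda)}$ and fix enumerations $\langle D_n\rangle_{n<\om}$ of the cone-strongly predense subsets of $\PP$ in $\Ss_{\om^2}(\M_{\beta^*})$ (in the cone-splitting case we instead enumerate the cofinal-strongly predense such sets), $\langle x_n\rangle_{n<\om}$ of $\RR^V$, and $\langle y_n\rangle_{n<\om}$ enumerating the reals of $V$ together with tokens for the ``real-genericity'' requirements from Remark \ref{rem:get_RR-genericity_generic}. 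I will recursively construct conditions $p_n=(s_n,\vec{X}_n)\in\PP$ with $s_0=s$ and with all $s_n$ lying in $Y$, ensuring that $p_{n+1}\leq p_n$ in $\PP$, that $s_n\conc\vec{X}_n\sub Y$, that $p_n$ decides membership in $D_n$ via a stem extension, and that $y_n$ has been made generic over the corresponding $R_{s_n}$-iterate at the appropriate Woodin.

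The key step is the extension at stage $n$. Given $p_n=(s_n,\vec{X}_n)$ with $s_n\in Y$ and $s_n\conc\vec{X}_n\sub Y$, use the strong predensity of $D_n$ to fix $k<\om$ and either a $\mu_k$-measure-one set $A\sub\Dd^k$ (cone case) or a $\mu_k$-cofinal set $A\sub\Dd^k$ (cofinal case) such that each $t\in A$ admits a $q\in D_n$ with $\stem(q)=s_n\conc t$. Now compare with $Y$: in the cofinal-splitting case, $s_n\conc\vec{X}_n$ contains a $\mu_k$-measure-one layer at length $k$ above $s_n$, and measure-one sets meet Turing-cofinal sets, so choose $t\in A\cap Y$; in the cone-splitting case, $Y$ above $s_n$ contains a $\mu_k$-measure-one layer at length $k$, and since $A$ is $\mu_k$-cofinal, again $A\cap Y\neq\emptyset$. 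Either way we obtain $t\in A\cap Y$ and $q=(s_n\conc t,\vec{Y})\in D_n$. Now intersect $\vec{Y}$ with the restriction of $\vec{X}_n$ above $s_n\conc t$, further thin so as to remain inside $Y$, so as to force $y_n$ generic over $R_{s_n\conc t}$ at its least Woodin (using the extender-algebra genericity iteration, which is definable over the appropriate $R_s$ and produces a tree of length ${<\om_1}$, hence gives a measure-one sub-tree of allowed stem-extensions), and finally thin once more to ensure that the resulting tree is $\mu$-cone-splitting inside $Y$. This yields $p_{n+1}\leq p_n$ with $s_{n+1}=s_n\conc t$ still in $Y$; the resulting filter $G$ generated by $\{p_n\}$ is sufficiently generic, with $e^G\sub Y$ and (in the cone case) meeting every cofinal-strongly predense set in $\Ss_{\om^2}(\M_{\beta^*})$.

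Having built $G$, I read off the iteration tree. By Definition \ref{dfn:Q_s} and Lemma \ref{lem:pseudo_comp_gen_it}, the premice $Q_{s_n}$ are connected by pseudo-iterates arising from pseudo-comparison/genericity iteration, and since each $s_n\in\PP_{\Pg}^-$, each $R_{s_n}$ is an above-$Q_{s_n}$ iterate of $R_{s_{n-1}}$ via $\Sigma_{R_{s_{n-1}}}$ (and hence via the tail of $\Sigma_{R_s}$); concatenating these and taking the direct limit produces $N=\bigcup_n R_{s_n}$, a generic $\Sigma_{R_s}$-iterate with $N|\lambda^N=\Ntilde_G|\om_1$ and $\lambda^N=\om_1^V$. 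The real-genericity requirements built into the enumeration $\langle y_n\rangle$ ensure, via Remark \ref{rem:get_RR-genericity_generic}, that $N$ is in fact an $\RR$-genericity $\Sigma_{R_s}$-iterate, so Lemma \ref{lem:suff_gen_implies_RR-gen} applies and $N$ is sufficiently Prikry generic. The case $s=\emptyset$ with $\Pg$ replacing $R_s$ is identical, omitting the initial linear iteration of the least measurable of $\Pg$.

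The main technical obstacle is verifying at each fusion step that the thinning operations (combining the set $A$ coming from $D_n$, the existing tree in $p_n$, the tree $Y$, and the stems demanded by the real-genericity iteration of $R_{s_{n+1}}$) preserve the $\mu$-cone-splitting property of the resulting potential condition, so that the output really is a condition of $\PP$. This uses Turing completeness for Boolean combinations of $\bfmuSigma^{\M_{\beta^*}}_{n^*}$-definable sets, which is available by the definition of $(\beta^*,n^*)$ (Definition \ref{dfn:beta*}) and the closure properties of $\bfmuSigma_{n^*}$, together with the observation that the extender-algebra genericity iteration of $R_{s_n}$ for a single real $y_n$ imposes only a single $\bfmuSigma^{\M_{\beta^*}}_{n^*}$ restriction at level $k$.
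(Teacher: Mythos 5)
The proposal has a genuine gap in the default case ($Y$ only $\mu$-cofinally-splitting, not $\mu$-cone-splitting). You propose to maintain genuine conditions $p_n=(s_n,\vec{X}_n)\in\PP$ with the invariant $s_n\conc\vec{X}_n\sub Y$, and to ``thin so as to remain inside $Y$'' at each stage. But this is impossible in general: by Definition \ref{dfn:PP}, every $\PP$-condition of tree form is a $\mu$-cone-splitting tree, meaning each layer above the stem is $\mu_1$-measure one; if some layer of $Y$ above $s_n$ is Turing-cofinal but \emph{not} measure one, then no measure-one set can be contained in it, so no $\PP$-condition's tree can sit inside $Y$ at that level. ``Thinning to remain inside $Y$'' is not an operation that preserves the cone-splitting property of the tree part; this is not a definability issue that Turing completeness can repair, but a set-theoretic obstruction. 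Your closing paragraph flags the worry correctly but resolves it with an appeal that does not address the actual problem.

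The paper avoids this by decoupling the two roles you have merged. It maintains, at stage $n$, an auxiliary $\mu$-cofinal tree $Y_n\sub Y$ (with stem $s_n$) together with a genuine condition $q_n\in D_n$ with stem $s_{n+1}\supseteq s_n$, setting $Y_{n+1}=Y_n\cap q_n$. The tree $Y_n$ need not be a $\PP$-condition and is never required to be one; it only controls where stems may land. The conditions $q_n$ need not be contained in $Y$; what makes $e^G\sub Y$ is simply that each $s_n\in Y_n\sub Y$. At the extension step, the paper finds the $k$-tuple $t=(d_0,\ldots,d_{k-1})$ inside $A\cap Y_n$ by picking degrees one coordinate at a time, interleaving coding of the relevant iterate segments into the $d_i$; since $A$ is measure one and $Y_n$ is cofinal, this proceeds coordinate by coordinate. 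Your ``intersect $\vec{Y}$ with $\vec{X}_n$, then thin into $Y$'' should instead be: update the auxiliary tree to $Y_{n+1}=Y_n\cap q_n$, and keep the conditions separate.

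Two smaller points. First, you explicitly interleave requirements to make each real of $V$ generic; this is redundant, since for each $x\in\RR^V$ the set $D_x=\{q\in\PP:\exists n\ [x\leq_T\stem(q)(n)]\}$ is cone-strongly predense and lies in $\Ss_{\om^2}(\M_{\beta^*})$, so meeting the enumerated $D_n$'s already yields the $\RR$-genericity (this is what Lemma \ref{lem:suff_gen_implies_RR-gen} records). Second, in the paper's construction the choice of each new stem coordinate $d_i$ is tied to a real coding the relevant iterate segment of $M_n$; that coding is what guarantees the iteration trees $\Tt_n$ fit together coherently and that the $Q_{s_{n+1}}$'s are the correct pseudo-iterates. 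Your ``take the direct limit $\bigcup_n R_{s_n}$'' glosses over this; the iterates must actually be tracked so that the concatenated tree is via $\Sigma_{R_s}$.
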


\begin{proof}Work in $V[G]$ where $G$ is $(V,\Coll(\om,V_\lambda))$-generic, for sufficiently large $\lambda$.
Enumerate the cone-strongly
predense sets of $\Ss_{\om^2}(\M_{\beta^*})$ (or any countable-in-$V[G]$ collection of such) as
$\left<D_n\right>_{n<\om}$.
Recursively define $s_n,Y_n,M_n,\Tt_n\in V$ such
that:
\begin{enumerate}
 \item $s_0=s$, $Y_0=Y$, $M_0=R_s$,
 \item $s_n\in\PP^-$ and $Y_n$ is a $\mu$-cofinal tree with $\stem(Y_n)=s_n$,
 \item $M_n$ is a countable non-dropping $\Sigma_{R_s}$-iterate  with
$Q_{s_{n}}=M_{n}|((\delta_{\lh(s_n)-1}^{M_n})^+)^{M_n}$,
 \item there is $q\in D_n$ such that $s_n\pins\stem(q)=s_{n+1}$ and $Y_{n+1}\sub
Y_n\inter q$,
 \item $\Tt_n$ is a countable non-dropping
tree on $M_n$ with last model $M_{n+1}$,
and $\Tt_0\conc\ldots\conc\Tt_n$ is via $\Sigma_{R_s}$,
\item $\Tt_n$ is based on the
interval $(\delta_{\lh(s_n)-1}^{M_n},\delta_{\lh(s_{n+1})-1}^{M_n})$ (here if
$n=0$ and $\lh(s)=0$ the interval should be $(0,\delta_{\lh(s_1)-1}^{M_0})$).
\end{enumerate}

Suppose we have $s_i$,
$Y_i,M_i,\Tt_i$ for $i\leq n$. Fix $k<\om$ and $A\in\mu_k$ witnessing that $D_n$ is
cone-strongly predense, with respect to $s_n$.
We may assume $k>0$; for
 illustration assume $k=2$. Let $x_0\in\RR$ code
$M_n|((\delta_{\lh(s_n)}^{M_n})^+)^{M_n}$. Let $c_0\in\Dd$ be the base of a cone of degrees $d$ such that $\{e\in\Dd\bigm|(d,e)\in A\}\in\mu$.
Let $d_0\in\Dd$ be such that $d_0\geq_Tx_0\oplus c_0$
and $s_n\conc\left<d_0\right>\in Y_n$.
Let $M'$ be the $\Sigma_{\Pg}$-iterate
given by iterating $\Pg$ out
to $Q_{s_n\conc\left<d_0\right>}$,
and $\Tt'$ such that $\Tt_0\conc\ldots\conc\Tt_{n-1}\conc\Tt'$
is the corresponding tree.
Let $x_1\in\RR$
code $M^{\Tt'}_\infty|(\delta_{\lh(s_n)+1})^{+M^{\Tt'}_\infty}$.
Let $c_1\in\Dd$ be a base of a cone
of $e\in\Dd$ such that $(d_0,e)\in A$.
Let $d_1\geq c_1\oplus x_1$ be such that $s_n\conc\left<d_0,d_1\right>\in Y_n$. Let $M_{n+1}$ be the $\Sigma_{\Pg}$-iterate given by iterating $\Pg$ out to $Q_{s_n\conc\left<d_0,d_1\right>}$,
and $\Tt_{n+1}$  such that $\Tt_0\conc\ldots\conc\Tt_{n+1}$ is the corresponding tree. Since $(d_0,d_1)\in A$ we can fix $q\in D_n$ with $\stem(q)=s_n\conc\left<d_0,d_1\right>$.
Let $Y_{n+1}=Y_n\cap q$ and $s_{n+1}=s_n\conc\left<d_0,d_1\right>$. This determines $s_{n+1},Y_{n+1},M_{n+1},\Tt_{n+1}$ as required.
The case that $k\neq 2$ is similar.

The variant meeting all cofinal-strongly predense sets, assuming $Y$ is a measure one tree,
is similar and left to the reader.\end{proof}

We now describe canonical names for elements of $\Ntilde_G$:

\begin{dfn}[$\Ntilde$-names, $\Nhat|\gamma$, $f^Q$]
An \dfnemph{$\Ntilde$-name} (or just \dfnemph{name}) is an element of
$\om^{<\om}\cross(\Lim\inter\beta^*)^{<\om}$. For $\gamma\in\Lim\cap[\om_1,\beta^*]$, we write  $\Nhat|\gamma$ or
$\Nhat^{\M_\gamma}$ to denote the class of $\Ntilde$-names in
$\om^{<\om}\cross\gamma^{<\om}$,
and $\Nhat=\Nhat|\beta^*$.

For potential premice $Q$ of height $\geq\gamma$, let $f^Q$ be the standard function
interpreting, in $Q$, names in $\Nhat|\gamma$. (The first component of a name
determines some sequence of $\es^Q$-rud functions,
and the second component
determines segments of $Q$ at which to
interpret them.) We may write ``$\xvec\in\Nhat|\gamma$'' for
``$\xvec\in(\Nhat|\gamma)^{<\om}$''. Given $\xvec\in\Nhat|\gamma$ of length $n$,
let $f^Q(\xvec)=(f^Q(x_0),\ldots,f^Q(x_{n-1}))$. For $G$
sufficiently generic and $\xvec\in\Nhat$, let $\xvec_G=
f^{\Ntilde_G}(\xvec)$ (assuming this is well-defined (i.e. assuming a large enough initial segment of $\Ntilde_G$ is a premouse; we will show that it is)).
\end{dfn}
\begin{dfn}\label{dfn:ordinal_names_PP}
 For $\alpha<\beta^*$, let $o_\alpha\in\Nhat|\max(\omega_1,\alpha+\om)$
 be the natural name for $\alpha$.
 Similarly, for $\vec{\alpha}\in\OR^{<\om}$
 or $\vec{\alpha}\in[\OR]^{<\om}$, let
 let $o_{\vec{\alpha}}\in\Nhat$ be the natural
 name for $\vec{\alpha}$.
 We might also abuse notation and just write
  ``$\alpha$'' or ``$\vec{\alpha}$''
 in forcing statements instead of ``$o_\alpha$'' or ``$o_{\vec{\alpha}}$''.
\end{dfn}

\begin{dfn}\label{dfn:external_PP_forcing_relation}
Let $\gamma\in\Lim\cap[\omega_1,\beta^*]$.
 The \emph{external $\PP$ forcing relation
 $\sforces{\PP}{\mathrm{ext},\Ntilde|\gamma}$
 at $\gamma$} is the relation  of tuples $(p,\varphi,\vec{x})$ such that $p\in\PP$,
 $\varphi$ is an $\rSigma_\om$ formula
 in the language of passive premice,
 $\vec{x}\in(\Nhat|\gamma)^{<\om}$, and where
 \[ q\sforces{\PP}{\mathrm{ext},\Ntilde|\gamma}\varphi(\vec{x}) \] iff $\Coll(\om,\M_{\beta^*})$ forces
 over $V$
 that
 for every sufficiently generic filter $G\sub\PP$ with $q\in G$,
 letting $N=\Ntilde_G$, we have
 $N|\gamma\sats\varphi(f^N(\vec{x}))$.
 And the \emph{external $\PP$ forcing relation $\sforces{\PP}{\mathrm{ext},\Ntilde}$}
 is just $\sforces{\PP}{\mathrm{ext},\Ntilde|\beta^*}$.
\end{dfn}

\begin{rem}
 In the case of $\gamma=\beta^*$, we will only
 be interested in $\varphi$ of certain limited complexity. We will not be interested in truth in the
wider universe $\M_\gamma[G]$, and so we may
just write ``$\gamma$'' instead of ``$\Ntilde|\gamma$'' with the same meaning.
\end{rem}

\begin{dfn}
We say that $(\gamma,n)$ is  \emph{low} iff
 $\gamma\in\Lim\cap[\omega_1,\beta^*]$,
 $n<\om$, and either:
\begin{enumerate}[label=--]
\item $\om_1\leq\gamma<\beta^*$, or
\item $\om_1=\gamma=\beta^*$ and $n\leq 1$, or
\item $\om_1<\gamma=\beta^*$ and $n=0$.
\end{enumerate}
 If $\varphi$ is a formula in the language
 of passive premice, we say that $(\gamma,\varphi)$
 is \emph{low}
 iff $\varphi$ is $\Sigma_n$,
 where $(\gamma,n)$ is low.\footnote{Note
 that this is $\Sigma_n$, not $\rSigma_n$.
 This suffices for the present section,
but in  \S\ref{subsec:fine}
 we will need a variant using $\rSigma_n$.}
\end{dfn}

\begin{dfn}\label{dfn:prior_condition_deciding}\footnote{The notions defined here will be slightly refined in Definition \ref{dfn:stem_forcing_relations}.
Note that here we use the $\Sigma_n$ hierarchy,
whereas in \ref{dfn:stem_forcing_relations} we use the $\rSigma_n$ hierarchy.}
Given $(\gamma,\varphi)$ low,
 $\xvec\in(\Nhat|\gamma)^{<\om}$ and $s\in\PP^-$,
we will define below the
\dfnemph{condition $q=q^{\gamma}_{s,\varphi(\xvec)}$ deciding $\varphi(\xvec)$
for $\Ntilde|\gamma$ at $s$}.
 We will obtain that $q\in\PP^{\gamma+\om}$, $\stem(q)=s$, and  
either $q\sforces{\PP}{\mathrm{ext},\gamma}\varphi(\xvec)$ or
$q\sforces{\PP}{\mathrm{ext},\gamma}\neg\varphi(\xvec)$.

We
will also $\mu$-define over $\M_\gamma$, for low $(\gamma,\varphi)$,
the \dfnemph{$\varphi$ stem forcing relation $\sforces{-}{\gamma}\varphi$ at $\gamma$}, in such a manner that
\begin{equation}\label{eqn:PP_Sigma_0_forcing_thm} \begin{split}\Big(s\sforces{-}{\gamma}\varphi(\vec{x})\Big)&\iff \Big(q^\gamma_{s,\varphi(\vec{x})}\sforces{\PP}{\mathrm{ext},\gamma}\varphi(\vec{x})\Big)\\&\iff\Big(\exists p\in\PP\ \Big[\stem(p)=s\wedge p\sforces{\PP}{\mathrm{ext},\gamma}\varphi(\vec{x})\Big]\Big),\end{split}\end{equation}
for $s\in\PP^-$ and $\vec{x}\in(\Nhat|\gamma)^{<\om}$. The \dfnemph{$\Sigma_0^{\Ntilde|\gamma}$ stem forcing relation $\sforces{-}{\gamma,0}$ at $\gamma$} is the resulting relation of three variables $(\varphi,s,\vec{x})$, where $\varphi$ is any $\Sigma_0$ formula;
likewise the \dfnemph{$\Sigma_1^{\Ntilde|\omega_1}$ stem forcing relation $\sforces{-}{\omega_1,1}$ at $\omega_1$}.
These things will be defined by simultaneous recursion
on $\gamma$, with a sub-recursion on $\varphi$.\footnote{Actually
the same definitions make sense
more generally,
without the full restriction that $(\gamma,\varphi)$ be of low complexity, but we will define the relevant notions for this generalization later.}

We will also have that:
\begin{enumerate}[label=\tu{(}$\dagger$\arabic*\tu{)}]
\item \label{item:dagger1} the map $(s,\varphi,\vec{x})\mapsto q^{\omega_1}_{s,\varphi(\vec{x})}$, where $\varphi$ is restricted to $\rSigma_1$,
is $\mDelta_1^{\M_{\omega_1}}(\{\xg\})$,
\item \label{item:dagger2} $\sforces{-}{\omega_1,1}$ is $\mDelta_1^{\M_{\omega_1}}(\{\xg\})$.
 \item\label{item:dagger3}   for $\delta\in\Lim\cap(\om_1,\beta^*]$,
the map
\[ (\gamma,s,\varphi,\vec{x})\mapsto q^\gamma_{s,\varphi(\vec{x})} \]
is $\mDelta_1^{\M_{\delta}}(\{\xg\})$,
where the first coordinate $\gamma$, is restricted to $\Lim\cap[\om_1,\delta)$; this definability is moreover uniform in
$\delta$,
\item \label{item:dagger4} $\sforces{-}{\delta,0}$ is $\mDelta_1^{\M_\delta}(\{\xg\})$, uniformly in  $\delta\in\Lim\cap(\omega_1,\beta^*]$.
\end{enumerate}

 First consider the case that $\gamma=\om_1$
 and $\varphi$ is $\rSigma_1$.
 Let $\vec{x}\in\Nhat|\om_1$ and $s\in\PP^-$.
 Set $q^{\om_1}_{s,\varphi(\vec{x})}=(s,\alpha)$ where $\alpha$ is least such that
 $\vec{x}\sub\max(\OR^{Q_s},\alpha)$.
 Note that  $q=q^{\om_1}_{s,\varphi(\vec{x})}\in\M_{\om_1}$, and
 $(s,\varphi,\vec{x})\mapsto q^{\om_1}_{s,\varphi(\vec{x})}$
 is $\mDelta_1^{\M_{\om_1}}(\{\xg\})$.
Set
 \[ s\sforces{-}{\omega_1}\varphi(\vec{x}) \]
to hold iff there is  $d\in\PP^-_{s,1}$ such that:
 \begin{enumerate}[label=--]
 \item $\alpha<\kappa$,
 where $\kappa$ is the least measurable of
 $Q=Q_{s\conc\left<d\right>}$ with $\kappa\in(\delta_{\lh(s)-1}^{Q},\delta_{\lh(s)}^Q)$, and
 \item
 $Q\sats\varphi(f^{Q}(\vec{x}))$\end{enumerate}
(equivalently, for all $d\in\PP^-_{s,1}$
such that $\alpha<\kappa$ where $\kappa$ is the least measurable of $Q=Q_{s\conc\left<d\right>}$,
the same conclusion holds).  Note that
line (\ref{eqn:PP_Sigma_0_forcing_thm}) holds
for $(\omega_1,\varphi)$.

Given $q^\gamma_{s,\varphi(\vec{x})}$,
define $q^\gamma_{s,\neg\varphi(\vec{x})}=q^\gamma_{s,\varphi(\vec{x})}$ and
\[ s\sforces{-}{\gamma}\neg\varphi(\vec{x})\iff \neg\Big(s\sforces{-}{\gamma}\varphi(\vec{x})\Big).\]

Given $q^\gamma_{s,\varphi(\vec{x})}$,
and $q^\gamma_{s,\psi(\vec{y})}$,
define
$q^\gamma_{s,\varphi(\vec{x})\wedge\psi(\vec{y})}=q^\gamma_{s,\varphi(\vec{x})}\cap q^\gamma_{s,\psi(\vec{y})}$
and
\[s\sforces{-}{\gamma}\big(\varphi(\vec{x})\wedge\psi(\vec{y})\big)\iff \Big(s\sforces{-}{\gamma}\varphi(\vec{x})\Big)\wedge\Big( s\sforces{-}{\gamma}\psi(\vec{y})\Big).\]

Now suppose we have defined $q^\gamma_{s,\varphi(y,\vec{x})}$
for all $y,\vec{x}$; we define $q^{\gamma}_{s,\psi(\vec{x})}$ where $\psi(\vec{x})=\text{``}\exists v\varphi(v,\vec{x})\text{''}$. We set
\[ s\sforces{-}{\gamma}\exists v\varphi(v,\vec{x})\]
true iff
\begin{equation}\label{eqn:exists_k_and_measure_one_many}\exists k<\om\ \all^*_k u\ \exists y\in(\Nhat|\gamma)\ \Big[s\conc u\sforces{-}{\gamma}\varphi(y,\vec{x})\Big]. \end{equation}
If line (\ref{eqn:exists_k_and_measure_one_many}) holds then we put $s\conc t\in q=q^\gamma_{s,\psi(\vec{x})}$ iff 
\[  \all i\leq\lh(t)\ \exists k<\om\ \all^*_ku\ 
 \exists y\in(\Nhat|\gamma)\ \Big[s\conc(t\rest i)\conc u\sforces{-}{\gamma}\varphi(y,\vec{x})\Big].\]
Note that $q\in\PP$ and $\stem(q)=s$.
Note also that
$s\conc t\in q$ iff
\[  \exists k\in[\lh(t),\om)\ \all^*_ku\ 
\all i\leq\lh(t)\ \exists y\in(\Nhat|\gamma)\ \Big[s\conc(t\rest i)\conc (u\rest(k-i))\sforces{-}{\gamma}\varphi(y,\vec{x})\Big].\]

On the other hand,  if line (\ref{eqn:exists_k_and_measure_one_many})
fails, then we put $s\conc t\in q=q^\gamma_{s,\psi(\vec{x})}$ iff
\[ \all i\leq\lh(t)\ \all k<\om\ \exists^*_ku\ \all y\in(\widehat{N}|\gamma)\ \Big[s\conc(t\rest i)\conc u\sforces{-}{\gamma}\neg\varphi(y,\vec{x})\Big].\]
Again $q\in\PP$ and $\stem(q)=s$.

Now suppose $\omega_1\leq\gamma<\beta^*$ and we have defined $\gamma^\gamma_{s,\varphi(\vec{x})}$ for all $s,\varphi,\vec{x}$,
and also $\sforces{-}{\gamma}$.
We must define $q^{\gamma+\om}_{s,\varphi(\vec{x})}$ for $\Sigma_0$ formulas $\varphi$ and $\vec{x}\in(\widehat{N}|(\gamma+\om))^{<\om}$, and also the relation $\sforces{-}{\gamma+\om,0}$.
We do this by translating
$\varphi(\vec{x})$
down to some $\Sigma_\om$ statement over $\widetilde{N}|\gamma$
about names in $\widehat{N}|\gamma$. That is,
fix the natural algorithm \[ (\varphi,\vec{i})\mapsto(\psi'_{\varphi,\vec{i}},\vec{j}_{\varphi,\vec{i}}),\]
much like
the algorithm  of Definition \ref{dfn:Sigma_0_to_Sigma_om_algo},
such that for all $\Sigma_0$ formulas $\varphi$
and
\[ \vec{x}=((\vec{i}_0,\vec{\xi}_0),\ldots,(\vec{i}_{k-1},\vec{\xi}_{k-1}))\in(\widehat{N}|(\gamma+\om))^{<\om} \]
(so $\vec{i}_\ell\in\om^{<\om}$
and $\vec{\xi}_\ell\in(\gamma+\om)^{<\om}$ for each $\ell<k$),
then letting $\vec{i}=(\vec{i}_0,\ldots,\vec{i}_{k-1})$,
then we have
\[ \vec{y}=((\vec{j}_0,\vec{\zeta}_0),\ldots,(\vec{j}_{k-1},\vec{\zeta}_{k-1}))\in(\widehat{N}|\gamma)^{<\om}, \]
where
$(\vec{j}_0,\ldots,\vec{j}_{k-1})=\vec{j}_{\varphi,\vec{i}}$,
and
$\vec{\zeta}_{\ell}=\vec{\xi}_\ell\cut\{\gamma\}$
for $\ell<k$,
and the truth of $\varphi(\vec{x})$
will be uniformly equivalent to that of $\psi'_{\varphi,\vec{i}}(\vec{y})$.
We then define $q^{\gamma+\om}_{s,\varphi(\vec{x})}=q^{\gamma}_{s,\psi'(\vec{y})}$
and set
\[ s\sforces{-}{\gamma+\om,0}\varphi(\vec{x})\iff s\sforces{-}{\gamma}\psi'(\vec{y}),\]
where $\psi'=\psi'_{\varphi,\vec{i}}$.

This completes the recursive definitions.
It is now straightforward to verify that
$q^\gamma_{s,\varphi(\vec{x})}\in\M_{\gamma+\om}$, and that the (uniform) definability claimed in \ref{item:dagger1}--\ref{item:dagger4}  above holds; note though that this makes crucial use
of the two special features of the $\M$-hierarchy (that it starts with $\M_{\om_1}=(\HC,T^{\M_{\om_1}})$ and constructs using $\mu$).

Let $(\gamma,n)$ again be low. Let $\alpha<\om_1$ and $\xvec\in\Nhat|\gamma$. We  define
the \textbf{condition
$q^{\gamma,n}_{s,\alpha,\xvec}$ deciding
$\Th_{n}^{\Ntilde|\gamma}(\alpha\un\{\xvec\})$ at $s$} as the meet
of $(s,\alpha)$\footnote{Recall
this notation from Definition \ref{dfn:PP}; $(s,\alpha)$ is a condition in $\PP^{\om_1}$.}with all conditions $q^{\gamma}_{s,\varphi(\betavec,\xvec)}$, for $\varphi$ being
$\Sigma_{n}$ and $\betavec\in\alpha^{<\om}$. Note that
$q^{\gamma,n}_{s,\alpha,\xvec}\in\PP^{\gamma+\om}$
and the map $(s,\gamma,n,\alpha,\vec{x})\mapsto q^{\gamma,n}_{s,\alpha,\vec{x}}$ is $\mDelta_1^{\M_{\gamma+\om}}$,
uniformly in $\gamma$. The \dfnemph{measure one
$\Sigma_n$-type $t=t^{\Ntilde|\gamma}_{n,s}(\alpha,\vec{x})$
of $\alpha\cup\{\vec{x}\}$ at $s$} is just the set of all
$\Sigma_n$ formulas $\varphi(\vec{\beta},\vec{v})$
in parameters $\vec{\beta}$ and variables $\vec{v}$
such that  $\vec{\beta}\in\alpha^{<\om}$ and $s\sforces{\PP^-}{\gamma}\varphi(\vec{\beta},\vec{x})$.
\end{dfn}

\begin{lem}\label{lem:rSigma_0_forcing_theorem_for_PP}
 Work in a generic extension of $V$.
 Let $G\sub\PP$ be a sufficiently generic filter
 and $N=\Ntilde_G$. Then:
 \begin{enumerate}
  \item\label{item:N_is_a_premouse} $N$ is an $\om$-small premouse  with $\om$ Woodins
  and $\lambda^N=\omega_1$ and $\OR^N=\beta^*$,
  \item\label{item:x_G=f^N(x)} $x_G=f^N(x)$ for each $x\in\widehat{N}$, 
  \item\label{item:G_contains_eventual_q_ss}
  for each low $(\gamma,\varphi)$ and
   each $\vec{x}\in(\widehat{N}|\gamma)^{<\om}$, we have:
  \begin{enumerate}[label=\tu{(}\alph*\tu{)}]
\item\label{item:each_varphi(x)_decided_by_some_p_in_G}  There is $s\in\PP^-$ such that
  $q^{\gamma}_{s,\varphi(\vec{x})}\in G$.
  \item\label{item:s_ins_t_forcing_relationship}  Suppose $s\in\PP^-$ and $q^{\gamma}_{s,\varphi(\vec{x})}\in G$.
 Let $p\in G$ with $s\ins t=\stem(p)$.
 Then $q^{\gamma}_{t,\varphi(\vec{x})}\in G$,
 and moreover,  $s\sforces{-}{\gamma}\varphi(\vec{x})$
 iff $t\sforces{-}{\gamma}\varphi(\vec{x})$.
  \item\label{item:PP_Sigma_0_forcing_theorem}
 The following are equivalent:
  \begin{enumerate}
\item $N|\gamma\sats\varphi(\vec{x}_G)$,
\item There is $p\in G$ such that
$\stem(p)\sforces{-}{\gamma}\varphi(\vec{x})$,
\item For each $s\in\PP^-$,
if $q^\gamma_{s,\varphi,\vec{x}}\in G$
then $s\sforces{-}{\gamma}\varphi(\vec{x})$.
\end{enumerate}  
\item\label{item:int_ext_forcing_relations_agree} In $V$, for all $s\in\PP^-$, the following are equivalent:
\begin{enumerate}
\item $\M_{\gamma}\sats s\sforces{-}{\gamma}\varphi(\vec{x})$
\item $q^{\gamma}_{s,\varphi(\vec{x})}\sforces{\PP}{\mathrm{ext},\gamma}\varphi(\vec{x})$
\item $\Coll(\om,\M_{\beta^*})$ forces that there is a sufficiently generic filter $H\sub\PP$
such that $q^\gamma_{s,\varphi(\vec{x})}\in H$
and $\Ntilde_H\sats\varphi(\vec{x}_H)$.
\end{enumerate}
 \end{enumerate}
 \item\label{item:condition_deciding_theory} for each low $(\gamma,n)$, each $\vec{x}\in(\Nhat|\gamma)^{<\om}$, we have:
 \begin{enumerate}[label=\tu{(}\alph*\tu{)}]
   \item\label{item:each_ctbl_theory_decided_by_some_p} There are $s\in\PP^-$ and $\alpha<\omega_1$ such that
  $q^{\gamma,n}_{s,\alpha,\vec{x}}\in G$.
\item\label{item:each_ctbl_theory_at_s_implies} Suppose $s\in\PP^-$, $\alpha<\omega_1$ and $q^{\gamma,n}_{s,\alpha,\vec{x}}\in G$. Then:
\begin{enumerate}[label=\tu{(}\roman*\tu{)}]
\item\label{item:at_extension_t} Let $p\in G$ with $s\ins t=\stem(p)$.
Then $q^{\gamma,n}_{t,\alpha,\vec{x}}\in G$.
\item\label{item:condition_deciding_theory_sets_it_to_t-theory_is_mu-OD} $\Type_{\Sigma_n}^{N|\gamma}(\alpha\cup\{\vec{x}_G\})=t^{\gamma,n}_{s,\alpha,\vec{x}}\in  N|\lambda^N$.
\end{enumerate}
 \end{enumerate}
 \end{enumerate}
\end{lem}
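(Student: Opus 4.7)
The plan is to prove the five numbered items by a coordinated induction whose outer layer is the recursion on $(\gamma,n)$ built into Definition \ref{dfn:prior_condition_deciding}, and whose inner layer is a sub-induction on the formula $\varphi$ that tracks the recursive construction of $q^\gamma_{s,\varphi(\vec{x})}$ and $\sforces{-}{\gamma}$. Parts (1) and (2) are proved first and separately; parts (3) and (4) are then proved together.

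First I would handle part (1) together with part (2). By the defining property of the operator $s\mapsto Q_s$ in Definition \ref{dfn:Q_s}, the family $\{Q_{\stem(p)}\mid p\in G\}$ is linearly ordered by $\ins$: if $s\ins t$ then $Q_s$ is the suitable segment of $Q_t$. Sufficient genericity implies that the stems $\stem(p)$ for $p\in G$ have unbounded length in $\om$ (the sets $\{q\in\PP\mid\lh(\stem(q))\geq n\}$ are cone-strongly predense in $\Ss_{\om^2}(\M_{\beta^*})$ by Lemma \ref{lem:suff_gen_it}), so the union $Q^G=\bigcup_{p\in G}Q_{\stem(p)}$ is an $\om$-suitable premouse of ordinal height $\om_1$ with $\om$ Woodin cardinals and $\lambda^{Q^G}=\om_1$. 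The class name $\Ntilde$ was set up so that $\Ntilde_G=\J_\zeta[Q^G]$ where $\om_1+\zeta=\beta^*$, and its predicate $\es$ is the one coming from the $\M_{\beta^*}$-side rudimentary construction; so $\Ntilde_G$ is a premouse as required, and the $\es^{\Ntilde_G}$-rud evaluation $f^N$ applied to a name $x\in\Nhat$ returns exactly $x_G$. That gives parts (1) and (2).

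Next, for parts (3)--(4), the core inductive step is to prove (3c), the $\Sigma_0$ (resp.\ $\Sigma_1$ at the base) forcing theorem, simultaneously with the fact that $q^\gamma_{s,\varphi(\vec{x})}$ decides $\varphi(\vec{x})$ externally, in the sense of Definition \ref{dfn:external_PP_forcing_relation}, in accordance with $\sforces{-}{\gamma}$. The base case $\gamma=\om_1$ with $\varphi$ being $\rSigma_1$ is immediate from the definition of $\sforces{-}{\om_1,1}$ via the $Q_s$ function, using the Turing-cofinality of large enough $d\in\PP^-_{s,1}$ and the $\mDelta_1^{\M_{\om_1}}(\{\xg\})$-definability of $s\mapsto Q_s$ from Lemma \ref{lem:Q_s_definability}. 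Negation and conjunction are handled by the clause $q^\gamma_{s,\neg\varphi}=q^\gamma_{s,\varphi}$ and by intersection. The crucial case is the existential quantifier: given $q^\gamma_{s,\varphi(y,\vec{x})}$ for all $y,\vec{x}$, consider the dichotomy of line (\ref{eqn:exists_k_and_measure_one_many}). On the positive side, I show $q^\gamma_{s,\exists v\varphi(v,\vec{x})}\sforces{\PP}{\mathrm{ext},\gamma}\exists v\varphi(v,\vec{x})$ by combining the inductive hypothesis with the fact that any sufficiently Prikry-generic filter $H\ni q^\gamma_{s,\exists v\varphi(v,\vec{x})}$ must, by Lemma \ref{lem:suff_gen_it} applied to the $\mu$-cone-splitting subtree of $q^\gamma_{s,\exists v\varphi(v,\vec{x})}$, realize some extension $\stem(r)=s\conc u$ for which the inner quantifier ``$\exists y\in\Nhat|\gamma$'' is witnessed; on the negative side, Turing completeness in $\M_{\beta^*}$ forces the symmetric failure. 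The step from $\gamma$ to $\gamma+\om$ is reduced to an $\mSigma_\om^{\Ntilde|\gamma}$ assertion via the translation algorithm $(\varphi,\vec{i})\mapsto\psi'_{\varphi,\vec{i}}$, exactly as in the construction of $q^{\gamma+\om}_{s,\varphi(\vec{x})}$.

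Part (3a) and part (4a)---the genericity statements---then follow because the map $(s,\gamma,\varphi,\vec{x})\mapsto q^\gamma_{s,\varphi(\vec{x})}$ and its analogue for theories fall within the uniform definability classes stated as $(\dagger_1)$--$(\dagger_4)$ in Definition \ref{dfn:prior_condition_deciding}. Hence for fixed $(\gamma,\varphi,\vec{x})$, the set $D=\{q^\gamma_{s,\varphi(\vec{x})}\mid s\in\PP^-\}$ lies in $\M_{\gamma+\om}\subseteq\Ss_{\om^2}(\M_{\beta^*})$ and is cone-strongly predense (any condition can be refined by intersecting with the relevant $q^\gamma_{s,\varphi(\vec{x})}$). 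Sufficient genericity then picks up an element of $D$. Part (3b), the coherence across stem extensions, is immediate from the recursive definitions once (3c) is in place, and part (3d) drops out by comparing the $V$-side computation of $\sforces{-}{\gamma}$ with the $\Coll(\om,\M_{\beta^*})$-side absoluteness delivered by Lemma \ref{lem:suff_gen_it} (which produces, externally, a sufficiently generic $H$ through any given $q^\gamma_{s,\varphi(\vec{x})}$). Part (4b) is a straightforward bookkeeping corollary: $q^{\gamma,n}_{s,\alpha,\vec{x}}$ is the meet of $(s,\alpha)$ with countably many conditions already in $\M_{\gamma+\om}\subseteq\M_{\beta^*}$, and the theory it identifies is the interpretation over $N|\gamma$ by part (3c), while the fact that $t^{\gamma,n}_{s,\alpha,\vec{x}}\in N|\lambda^N$ follows from the $\mDelta_1^{\M_{\gamma+\om}}$-uniformity and absoluteness of the theory's construction, together with the fact that $N|\om_1$ absorbs the $\HC$-side objects.

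The main obstacle is the existential-quantifier step of the inductive proof of (3c), i.e., establishing the Prikry property for $\PP$ at level $(\gamma,\varphi)$. The delicate point is that the dichotomy driving the construction of $q^\gamma_{s,\exists v\varphi(v,\vec{x})}$ must be genuinely a dichotomy---either positively forced on a $\mu$-cone-splitting subtree, or negatively forced on one---and this requires Turing determinacy for the relevant $\bfmSigma^{\M_\gamma}$-definable subsets of $\Dd^{<\om}$, which is exactly what the $(\mu,n)$-niceness furnished by Lemma \ref{lem:Sigma_P-iterate_good} provides, but must be invoked at the correct definability level to stay inside the low-complexity constraint when $\gamma=\beta^*$. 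Keeping the $\mu$-definability classes of $q^\gamma_{s,\varphi(\vec{x})}$ in lock-step with the $\Sigma_n$/$\mu\Sigma_n$-hierarchy of $\M_\gamma$, so that the candidate witnesses produced by the inductive hypothesis really do land in $\Nhat|\gamma$ rather than escape upward, is the principal technical burden.
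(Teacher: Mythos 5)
Your proposal inverts the logical order of the proof in a way that creates a genuine gap. You claim to handle part~(1) first, asserting that sufficient genericity plus the set‑up of $\Ntilde$ yields immediately that ``$\Ntilde_G$ is a premouse as required.'' This is not so. Sufficient genericity gives only that $Q^G=\bigcup_{p\in G}Q_{\stem(p)}$ is an $\om$‑suitable premouse of height $\om_1$, and that $\Ntilde_G=\J_\zeta[Q^G]$ is therefore a \emph{potential} premouse. It is not automatic that the $\J$‑hierarchy above $Q^G$ is actually a premouse, nor that $\om_1$ remains a cardinal of $\Ntilde_G$: a priori some level $N|\gamma$ with $\gamma\in[\om_1,\beta^*)$ could have $\rho_{m+1}^{N|\gamma}<\om_1$, which would wreck the conclusion $\lambda^N=\om_1$. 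Ruling this out is exactly what part~\ref{item:condition_deciding_theory}\ref{item:each_ctbl_theory_at_s_implies}\ref{item:condition_deciding_theory_sets_it_to_t-theory_is_mu-OD} accomplishes: the $\Sigma_n$‑type of any $\alpha<\om_1$ together with parameters in $\Nhat|\gamma$ is already in $N|\lambda^N$, so no level above $\om_1$ can project below it. Thus part~(1) must be proved \emph{last}, as a corollary of part~\ref{item:condition_deciding_theory}; parts~\ref{item:x_G=f^N(x)}, \ref{item:G_contains_eventual_q_ss}, and \ref{item:condition_deciding_theory} are formulated so as not to presuppose that $N$ is a full premouse, only a potential one. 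Your inductive work on the forcing theorem (the existential‑quantifier step, the translation from $\gamma$ to $\gamma+\om$, the use of the definability bounds $(\dagger_1)$--$(\dagger_4)$) is broadly in line with the intended argument, but it cannot rest on a prior unjustified claim that $N$ is already a genuine premouse.

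A secondary issue: your justification that $t^{\gamma,n}_{s,\alpha,\vec{x}}\in N|\lambda^N$ (``follows from $\mDelta_1^{\M_{\gamma+\om}}$‑uniformity and absoluteness, together with the fact that $N|\om_1$ absorbs the $\HC$‑side objects'') is too vague and misses the operative mechanism. The relevant point is that for $(\gamma,n)$ low with $\gamma<\beta^*$, the type $t^{\gamma,n}_{s,\alpha,\vec{x}}$ is $\OD^{\gamma,n'}_\mu(Q_s)$ as a subset of $\alpha$ for some $n'<\om$, hence by the \emph{minimality} of $(\beta^*,n^*)$ and the \emph{fullness} of the mice $Q_r$, this subset already appears in $Q_r$ for suitable $r\supseteq s$ with $\alpha<\OR^{Q_r}$, and hence in $N|\lambda^N$. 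It is precisely the definition of $\beta^*$ (Definition~\ref{dfn:beta*}) that closes this loop, and an argument that does not invoke it cannot be complete. You should make this appeal explicit.
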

\begin{proof}
Sufficient genericity immediately
gives that $N|\omega_1$
is an $\om$-small premouse with $\om$ Woodins
cofinal in its ordinals. Therefore
$N$ is a potential premouse.
Part \ref{item:x_G=f^N(x)} is directly by definition. Part \ref{item:G_contains_eventual_q_ss}\ref{item:each_varphi(x)_decided_by_some_p_in_G}
holds because $G$ meets the cone-strongly pre-dense set
 \[ \{q^\gamma_{s,\varphi(\vec{x})}\bigm|s\in\PP^-\},\]
 and part 
Part \ref{item:G_contains_eventual_q_ss}\ref{item:s_ins_t_forcing_relationship}
is straightforward.

We now prove part \ref{item:G_contains_eventual_q_ss}\ref{item:PP_Sigma_0_forcing_theorem},
by induction on $\gamma\in\Lim\cap[\omega_1,\beta^*]$, with a subinduction on $\varphi$.
Suppose $\gamma=\om_1$ and $\varphi$ is $\rSigma_1$. Because there is $s\in\PP^-$ such that $q^{\omega_1}_{s,\varphi(\vec{x})}\in G$, and
because $N|\xi\preccurlyeq_1 N|\omega_1$
for every $N|\omega_1$-cardinal $\xi$,
part \ref{item:PP_Sigma_0_forcing_theorem} is easily seen to hold with respect to $\om_1,\varphi,s$. Now suppose either that $\gamma\in[\omega_1,\beta^*)$, and we have already dealt with $\rSigma_0$ formulas $\varphi$ (and $\rSigma_1$ if $\gamma=\omega_1$). The propagation of the induction through $\wedge$ and $\neg$ is clear. Now suppose $\gamma\in[\omega_1,\beta^*)$ and the inductive hypothesis holds for $(\gamma,\psi)$
with respect to all $(\vec{x},y)\in(\Nhat|\gamma)^{<\om}$,
where $\psi$ has free variables within $(\vec{u},v)$,
and $\varphi(\vec{u})$ is the formula $\exists v \psi(\vec{u},v)$. Let $s\in\PP^-$ and suppose $q^\gamma_{s,\varphi(\vec{x})}\in G$
and $s\sforces{-}{\gamma}\varphi(\vec{x})$.
Then by definition, we can fix $k<\om$
such that for all $\ell\in[k,\om)$ and all $t\in\Dd^\ell$ with $s\conc t\in q^\gamma_{s,\varphi(\vec{x})}$,
there is $y\in\Nhat|\gamma$ such that
$s\conc t\sforces{-}{\gamma}\psi(\vec{x},y)$.
Now let
$C$ be the set of all conditions of the form
\begin{enumerate}[label=--]
 \item 
$q^\gamma_{r,\psi(\vec{x},y)}$, where $r\in\PP^-$ and $y\in\Nhat|\gamma$ and $r\sforces{-}{\gamma}\psi(\vec{x},y)$,
or
\item $q^\gamma_{r,\neg\exists v\psi(\vec{x},v)}$, where $r\in\PP^-$ and $r\sforces{-}{\gamma}\neg\exists v\psi(\vec{x},v)$.
\end{enumerate}
Note that $C\in\M_{\beta^*}$
and $C$ is cone-strongly-predense.
Therefore there is $p\in C\cap G$.
If $p=q^\gamma_{r,\psi(\vec{x},y)}$
where $r\sforces{-}{\gamma}\psi(\vec{x},y)$,
then by induction, we have $N\sats\psi(\vec{x}_G,y_G)$, so $N\sats\varphi(\vec{x}_G)$, as desired. So suppose $p=q^\gamma_{r,\neg\exists v\psi(\vec{x},v)}$ where $r\sforces{-}{\gamma}\neg\exists v\psi(\vec{x},v)$. By part \ref{item:s_ins_t_forcing_relationship}, we may assume $r=s\conc t$ where $\lh(t)\geq k$,
and therefore (since $q^\gamma_{s,\varphi(\vec{x})}\in G$) $r\in q^\gamma_{s,\varphi(\vec{x})}$. But then by the remarks above we can fix $y\in\Nhat|\gamma$ such that $r\sforces{-}{\gamma}\psi(\vec{x},y)$, which easily contradicts
the fact that $r\sforces{-}{\gamma}\neg\exists v\psi(\vec{x},v)$. This completes the induction through formulas at level $\gamma$.
Given this, if $\gamma<\beta^*$,
then part \ref{item:PP_Sigma_0_forcing_theorem} holds for $\rSigma_0$ formulas at level $\gamma+\om$ by induction
and the correctness of the algorithm
$(\varphi,\vec{i})\mapsto(\psi'_{\varphi,\vec{i}},\vec{j}_{\varphi,\vec{i}})$
used in Definition \ref{dfn:prior_condition_deciding}. And finally, if $\gamma$ is a limit of limits,
then part \ref{item:PP_Sigma_0_forcing_theorem}
follows immediately at $\gamma$ for $\rSigma_0$ formulas $\varphi$ by induction.

Part \ref{item:G_contains_eventual_q_ss}\ref{item:int_ext_forcing_relations_agree}
is now an
immediate corollary of what we have established.

Part \ref{item:condition_deciding_theory}\ref{item:each_ctbl_theory_decided_by_some_p}
is like \ref{item:G_contains_eventual_q_ss}\ref{item:each_varphi(x)_decided_by_some_p_in_G}, and \ref{item:condition_deciding_theory}
\ref{item:each_ctbl_theory_at_s_implies}\ref{item:at_extension_t}
is  straightforward.

Part \ref{item:condition_deciding_theory}\ref{item:each_ctbl_theory_at_s_implies}\ref{item:condition_deciding_theory_sets_it_to_t-theory_is_mu-OD}: The fact
that $\Type^{N|\gamma}_{\Sigma_n}(\alpha\cup\{\vec{x}_G\})=t^{\gamma,n}_{s,\alpha,\vec{x}}$
follows immediately from part \ref{item:G_contains_eventual_q_ss},
so we just need to see that this type is in
$N|\lambda^N$. If $n=0$ it is automatically true. And if $(\gamma,n)=(\omega_1,1)$,
it is just because $\rho_1^{N|\lambda^N}=\lambda^N$, since $\lambda^N$ is a limit of $N$-cardinals. Now suppose otherwise. Then $\gamma<\beta^*$,
and 
note that there is $n'<\om$
such that $t^{\gamma,n}_{s,\alpha,\vec{x}}$
is an $\OD^{\gamma,n'}_{\mu}(Q_s)$ subset
of $\alpha$,
and so by the minimality
of $\beta^*$, and fullness
of the $Q_r$'s, we get $t^{\gamma,n}_{s,\alpha,\vec{x}}\in Q_r$ if $\alpha<\OR^{Q_r}$,
so $t^{\gamma,n}_{s,\alpha,\vec{x}}\in N|\lambda^N$.

 Part \ref{item:N_is_a_premouse}: If $N$ fails to be a premouse, or $\omega_1$
fails to be an $N$-cardinal, 
 then there is $\gamma\in[\om_1,\beta^*)$
 and $m<\om$ such that $N|\gamma$ is an $m$-sound premouse and $\rho=\rho_{m+1}^N<\om_1\leq\rho_m^N$. But this contradicts
 part \ref{item:condition_deciding_theory}.
\end{proof}

Putting everything together, we can now establish
that $\M_{\beta^*}$ is ``the'' derived model
of sufficiently Prikry generic iterates,
at least in a naive\footnote{The more refined analysis in the \S\ref{subsec:fine} will lead to a less naive version.} sense:

\begin{lem}
Work in a generic extension of $V$.
Let either:
\begin{enumerate}[label=--]
 \item $s\in\PP^-_{\Pg}$
 and $R$ be a sufficiently
Prikry generic iterate of $R_s$, or
\item $R$ be a sufficiently Prikry generic iterate of $\Pg$,
\end{enumerate}
as witnessed by $G\sub\PP$. 
Let $H$ witness that $R$ is an $\RR$-genericity iterate \tu{(}see Lemma \ref{lem:suff_gen_implies_RR-gen}\tu{)}.
Then
 $(\Ntilde^{\M_{\beta^*}})_G=R$ and $\M_{\beta^*}=(\Mtilde^R)_H$.
\end{lem}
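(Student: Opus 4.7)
The plan is to prove the two assertions in tandem, with the second following from the first together with Lemmas \ref{lem:OR^N_leq_beta^*} and \ref{lem:Sigma_P-iterate_good}, once we know $\OR^R=\beta^*$.

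First I would verify the agreement below $\omega_1$: that $R|\omega_1 = (\Ntilde^{\M_{\beta^*}})_G|\omega_1$. By the definition of \emph{sufficiently Prikry generic iterate}, we have $R|\lambda^R = \Ntilde_G|\omega_1$ with $\lambda^R = \omega_1^V$. Concretely, as the stems $s_n = \stem(p_n)$ of a descending sequence through $G$ grow, the associated premice $Q_{s_n}$ stack to $R|\omega_1$ (in the $\Pg$-case), respectively extend $Q_s = R_s|\OR^{Q_s}$ up to $R|\omega_1$ (in the $R_s$-case). Sufficient genericity gives a cofinal sequence of stems (via Lemma~\ref{lem:suff_gen_it} and the cone-strongly predense sets computing the $Q_s$'s), so the stack equals $R|\omega_1$ on the nose and also equals $\Ntilde_G|\omega_1$ by Definition~\ref{dfn:N}.

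Next I would handle the structure on the interval $(\omega_1,\OR^R]$. Here the key is the correspondence between the $L[\es]$-$\mathscr{P}$-construction (Definition~\ref{dfn:R^N_epsvec}, Remark~\ref{rem:R_construction_basic_props}) and the $\PP$-name $\Ntilde$: the extender sequence of each $R_s$ above $Q_s$ is precisely the P-construction of (the relevant iterate of) $\Pg$ above $Q_s$, and $\Ntilde$ is designed so that $\Ntilde_G|\gamma$ for limit $\gamma\in(\omega_1,\beta^*]$ interprets canonical names $\Nhat|\gamma$ that mirror these P-constructed levels. Proceeding by induction on $\gamma\in\Lim\cap[\omega_1,\beta^*]$, I would use the $\Sigma_0^{\Ntilde|\gamma}$ forcing theorem (Lemma~\ref{lem:rSigma_0_forcing_theorem_for_PP} parts~\ref{item:N_is_a_premouse}--\ref{item:G_contains_eventual_q_ss}) to read off $\Ntilde_G|\gamma$ from $\Sigma_0$-stable forcing statements, and match each evaluation $f^{\Ntilde_G}(x)$ for $x\in\Nhat|\gamma$ with the corresponding segment of $R$ via the elementarity of the iteration map $i_{\Pg R}$ and the fact (Lemma~\ref{lem:Sigma_P-iterate_good}(\ref{item:forcing_theorem_mSigma_om_over_M_xi})) that the $\Mtilde^{R,H}$-hierarchy computed in $R[H]$ produces the same levels $\M_\gamma$. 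Hence $\Ntilde_G|\gamma$ and $R|\gamma$ are the same potential premouse at every $\gamma\leq\min(\OR^R,\beta^*)$.

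To conclude, I would show $\OR^R=\beta^*$. By Lemma~\ref{lem:rSigma_0_forcing_theorem_for_PP}(\ref{item:N_is_a_premouse}), $\OR^{\Ntilde_G}=\beta^*$, so $\OR^R\geq\beta^*$; by Lemma~\ref{lem:OR^N_leq_beta^*} applied to the $\RR$-genericity witness $H$, $\OR^R\leq\beta^*$. Therefore $R=\Ntilde_G$ as premice, giving the first equation, and then Lemma~\ref{lem:OR^N_leq_beta^*} delivers $(\Mtilde^R)_H=\Mtilde^{R,H}=\M_{\OR^R}=\M_{\beta^*}$, which is the second.

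The main obstacle is the inductive step above $\omega_1$: verifying that the extender-sequence bookkeeping for $\Ntilde_G$ (as dictated by the names in $\Nhat$ and Definition~\ref{dfn:N}) agrees level-by-level with that of $R$ as produced by iterating $\Pg$ (or $R_s$). Concretely, one must show that at each $\gamma$ the forcing-theoretic description of the next extender in $\Ntilde_G$ coincides with the extender in $R$ at that index; this will follow from the fine-structural properties of P-construction recorded in Remark~\ref{rem:R_construction_basic_props} together with the uniformity of the translation between names $\Nhat|\gamma$ and names $\Nm_\gamma$, but it is the only step where the passage between the ``$\PP$-side'' and the ``$R$-side'' is non-obvious, because everything else is essentially bookkeeping on already-established correspondences.
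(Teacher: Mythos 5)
Your proposal identifies the right target (show $\OR^R=\beta^*$ and then invoke Lemma~\ref{lem:OR^N_leq_beta^*}), but there is a genuine gap at the step ``$\OR^{\Ntilde_G}=\beta^*$, so $\OR^R\geq\beta^*$.'' That ``so'' does not follow from what you have set up. Your level-by-level induction establishes $\Ntilde_G|\gamma = R|\gamma$ for $\gamma\leq\min(\OR^R,\beta^*)$, but if $\OR^R<\beta^*$ this just tells you that $R$ is a proper initial segment of $\Ntilde_G$, and nothing you have said rules that out. The paper's proof closes exactly this hole: if $\OR^R<\beta^*$, then $R\pins \Ntilde_G$, but $R$ is a non-trivial iterate of $\Pg$ (or $R_s$) and therefore not sound, and it projects strictly below $\lambda^R$; a proper initial segment of the premouse $\Ntilde_G$ must be sound, contradicting Lemma~\ref{lem:rSigma_0_forcing_theorem_for_PP}(\ref{item:N_is_a_premouse}). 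You need this non-soundness argument (or a substitute) --- without it the inequality $\OR^R\geq\beta^*$ is simply unsupported.

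A second point: the level-by-level induction above $\omega_1$ is doing more work than is needed and distracts from the actual content. Because $\Pg$ is $\omega$-small and all its Woodins are strong cutpoints, $\Pg$ (and hence any iterate $R$) has no extenders indexed above $\lambda$; since $\Ntilde_G=\J_\zeta[Q]$ is by definition a $\J$-hierarchy over $Q=\Ntilde_G|\omega_1$, both $R$ and $\Ntilde_G$ are simply the $\J$-hierarchy over the common segment $R|\lambda^R=\Ntilde_G|\omega_1$, so the agreement above $\lambda$ is automatic once you know it holds at $\lambda$. There is no extender bookkeeping to reconcile, which is why the paper never runs an induction. Finally, a small technical point you elide: Lemma~\ref{lem:OR^N_leq_beta^*} is stated for $\Sigma_{\Pg}$-iterates, so in the $R_s$-case one must first pass to the corresponding $\Sigma_{\Pg}$-iterate $P'$, observing that $R$ and $P'$ are equivalent modulo a small generic, before applying the lemma.
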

\begin{proof}
Let $N=\Ntilde_G$.
We have $\OR^N=\beta^*$.
By construction, $N|\lambda^N=R|\lambda^R$.
By Lemma \ref{lem:OR^N_leq_beta^*} (in the case that $R$ is an iterate of $R_s$,
 apply the lemma to the $\Sigma_{\Pg}$-iterate $P'$ (of $\Pg$) which corresponds to $R$; in particular, $R$ and $P'$ are equivalent modulo a small generic), it therefore suffices to see that $\OR^R=\beta^*$.
Similarly by Lemma \ref{lem:OR^N_leq_beta^*}, $\OR^R\leq\beta^*$.
 But if $\OR^R<\beta^*$, then $R\pins N$,
but since $R$ projects to $Q_s$
and is not sound,
$N$ is not a premouse, contradicting Lemma \ref{lem:rSigma_0_forcing_theorem_for_PP}.
\end{proof}

\subsection{Fine correspondence}\label{subsec:fine}

Having established the model correspondence
between $\M_{\beta^*}$ and 
sufficiently Prikry generic iterates $N$ of $\Pg$ etc,
we now want to refine our understanding
of this correspondence,  analyzing the definability hierarchies over such models.
We will demonstrate quantifer-by-quantifier (in the appropriate sense) correspondence, up to the level at which $\Pg$ projects and $\M_{\beta^*}$
computes a new $\OD^\mu(x)$-real for some $x\in\RR$, deducing that these correspond.

Roughly, we want to give
a (reasonably) optimal definition over $\M_{\beta^*}$
of the $\rSigma_n^{\Ntilde}$ forcing relation
(for the relevant values of $n$),
and a likewise definition over premice $N$
of the $\mSigma_{n'}^{\Mtilde^N}$ forcing
relation (for the relevant values of $n'$,
and assuming $N$ satisfies the appropriate first order properties),
and verify the corresponding forcing theorems.
We will first formally define the relevant
putative forcing relations (over the relevant models),
and then later observe that these definitions
yield the actual (external) forcing relation,
i.e.~that the corresponding forcing theorem holds.
The putative forcing relations will
be named for their intended intuitive/external meaning,
but defined purely in terms of features of the model over which they are being defined.

\begin{dfn}\label{dfn:tau(beta),q(beta)_if_omega_1<=beta}
Let $\gamma\in\Lim\cap(\omega_1,\beta^*]$.
 Let $n>0$ and suppose $\omega_1<\rho_n^{\M_\gamma}$.
Let $\beta\in(\omega_1,\rho_n^{\M_\gamma})$.
Let \[ C=\cHull_{\mSigma_n}^{\M_\gamma}(\beta\cup\HC\cup\pvec_n^{\M_\gamma}) \]
and $\pi:C\to\M_\gamma$ the uncollapse and
 $\vec{p}=\pi^{-1}(\pvec_n^{\M_\gamma})$.
Then $\vec{p}^{\Ntilde|\gamma}_n(\beta)$
denotes $o_{\vec{p}}$ (Definition \ref{dfn:ordinal_names_PP}), and $\tau^{\Ntilde|\gamma}_n(\beta)$ denotes
the natural name in $\Nhat|(\OR^C+\om)$ for
$\Th_{\rSigma_n}^{\Ntilde|\OR^C}(o_\beta\cup\{o_{\vec{p}}\})$.
\end{dfn}

Note that $\tau_n^{\Ntilde|\gamma}(\beta)$
is the ``natural name for'' $\Th_{\rSigma_n}^{\Ntilde|\OR^C}(o_\beta\cup\{o_{\vec{p}}\})$,
as opposed to $\Th_{\rSigma_n}^{\Ntilde|\gamma}(o_\beta\cup\{o_{\pvec_n^{\M_\gamma}}\})$,
and $\tau_n^{\Ntilde|\gamma}(\beta)\in\Nhat|(\OR^C+\om)$,
as opposed to just $\tau_n^{\Ntilde|\gamma}(\beta)\in\Nhat|(\gamma+\om)$.
(Note that $\beta,\vec{p}\sub\OR^C$,
and so there is indeed such a ``natural name'' in $\Nhat|(\OR^C+\om)$.)

\begin{dfn}\label{dfn:stem_forcing_relations}
 Let $\gamma\in\Lim\cap[\om_1,\beta^*]$.
 Recall that the $\Sigma_0^{\Ntilde|\gamma}$ stem forcing relation $\sforces{\PP^-}{\gamma,0}$ at $\gamma$
  was specified in Definition \ref{dfn:prior_condition_deciding},
  as was $\sforces{\PP^-}{\omega_1,1}$
  (for $\Sigma_1^{\Ntilde|\omega_1}$ stem forcing).

 Now let $(\gamma,n)$ be such that
$(\om_1,1)\leq(\gamma,n)\leq(\beta^*,n^*)$.
We define the \dfnemph{witnessed $\rSigma_{n+1}^{\Ntilde|\gamma}$ stem forcing relation} $\sforces{\PP^-}{\gamma\mathrm{w},n+1}$ of $\M_\gamma$, recursively in $n$.\footnote{Note that we now deal with the $\rSigma$ hierarchy, not $\Sigma$. However, we supress this from the forcing notation; we will not need the forcing notation from Definition \ref{dfn:prior_condition_deciding} other than that for $\Sigma_0$ and $\Sigma_1$, but anyway, $\rSigma_0=\Sigma_0$ and $\rSigma_1=\Sigma_1$,
even syntactically.}

If $\gamma>\omega_1$, then the \dfnemph{witnessed $\rSigma_{1}^{\Ntilde|\gamma}$ stem forcing relation}
$\sforces{\PP^-}{\gamma\mathrm{w}1}$
of $\M_\gamma$
is the relation of tuples $(s,\varphi,\vec{x})$
such that $s\in\PP^-$, $\varphi(\vec{v})$
is an $\rSigma_{1}$ formula of the passive premouse language, of form
\[ \varphi(\vec{v})\iff\exists y\ \psi(y,\vec{v})\] where $\psi$ is $\Sigma_0$, and $\vec{x}\in(\Nhat|\gamma)^{<\om}$, and where we define
\[ s\sforces{\PP^-}{\gamma\mathrm{w}1}\varphi(\vec{x})\ \iff\ \exists \sigma\in(\Nhat|\gamma)\ \Big[s\sforces{\PP^-}{\gamma0}\psi(\sigma,\vec{x})\Big].\]

And the \dfnemph{witnessed $\rSigma_2^{\Ntilde|\omega_1}$ stem forcing relation} $\sforces{\PP^-}{\omega_1\mathrm{w},2}$ is
the relation of  $(s,\varphi,\vec{x})$
such that $s\in\PP^-$, $\varphi(\vec{v})$ is $\rSigma_2$ in the passive premouse language (with free variables all among $\vec{v}$),
$\vec{x}\in(\Nhat|\omega_1)^{<\om}$,
and there is $z\in\PP_{s,1}^-$
such that letting $Q=Q_{s\conc\left<z\right>}$,
and $\kappa$ be the least measurable cardinal of $Q$ with $\kappa>\OR^{Q_s}$,
then there is an $\mSigma_1$ min-term
$u$ and $\theta<\kappa$ and $\vec{\alpha}\in\theta^{<\om}$
such that
$f^Q(\vec{x})=u^Q(\vec{\alpha})$
and $\Th_{\Sigma_1}^{Q}(\theta)$
codes a putative witness to $(\varphi(\vec{v})
,(u,\vec{\alpha}))$.

Suppose we have defined $\sforces{\PP^-}{\gamma\mathrm{w},n+1}$,
where $n\geq 0$,
and $n\geq 1$ if $\gamma=\omega_1$.
Then the \dfnemph{$\mu$-witnessed $\rSigma_{n+1}^{\Ntilde|\gamma}$ stem forcing relation} $\sforces{\PP^-}{\gamma\mu,n+1}$ of $\M_\gamma$ is
the relation of $(s,\varphi,\vec{x})$ as before, but now
\[ s\sforces{\PP^-}{\gamma\mu,n+1}\varphi(\vec{x})\iff\exists k<\om\all^*_k t\ \Big[s\conc t\sforces{\PP^-}{\gamma\mathrm{w},n+1}\varphi(\vec{x})\Big]\]
(in case it escapes the reader's visual attention, the distinction
 between the two forcing notions is 
 denoted by the differing superscripts ``$\mathrm{w}$''
 and ``$\mu$'').
 
Suppose now that $(\gamma,n)<(\beta^*,n^*)$. Then given $s\in\PP^-$ and $\vec{x}\in(\Nhat|\gamma)^{<\om}$, we use
$\sforces{\PP^-}{\gamma\mathrm{w},n+1}$ just like in  \ref{dfn:prior_condition_deciding} to define
the \dfnemph{condition $r=r^{\gamma}_{s,\varphi(\xvec)}$\footnote{We use different
notation here to help distinguish from
the similar notion in Definition \ref{dfn:prior_condition_deciding}.} deciding $\varphi(\xvec)$
for $\Ntilde|\gamma$ at $s$}.
We will observe later that $r\in\PP^{\gamma+n+1}\sub\PP$, $\stem(r)=s$ and
either $r\sforces{\PP}{\gamma}\varphi(\xvec)$ or
$r\sforces{\PP}{\gamma}\neg\varphi(\xvec)$).
Also as before, given also
  $\alpha<\om_1$, this determines
the \textbf{condition
$r^{\gamma,n+1}_{s,\alpha,\xvec}$ deciding
$\Th_{n+1}^{\Ntilde|\gamma}(\alpha\un\{\xvec\})$ at $s$} as the intersection
of all conditions $r^{\gamma}_{s,\varphi(\betavec,\xvec)}$, for $\varphi$ being
$\rSigma_{n+1}$ and $\betavec\in\alpha^{<\om}$. This gives
$r^{\gamma,n+1}_{s,\alpha,\xvec}\in\PP^{\gamma+n+1}$,
and also determines the \dfnemph{measure one
$\rSigma_{n+1}$-type $t=t^{\Ntilde|\gamma}_{s,n+1}(\alpha,\vec{x})$
of $\alpha\cup\{\vec{x}\}$ at $s$},
much as before. 
If $t\in Q=Q_{s\conc\left<z\right>}$
for all $z\in\PP^-_{s,1}$ such that $\alpha\leq\kappa$, where $\kappa$ is the least measurable of $Q$, let
$\tau=\tau^{\Ntilde|\gamma}_{s,n+1}(\alpha,\vec{x})$ denote the natural name in $\widehat{N}|\omega_1$ for
$t$; that is, just let $\tau$ be lexicographically least in $\Nhat|\omega_1$ such that $f^Q(\tau)=t$.
If $t\notin Q$ for such $Q$ then let $\tau\in\Nhat|\omega_1$ be the natural name for $\emptyset$. (We will show later that in fact,
$t\in Q$, by arguing like in the proof of  Lemma \ref{lem:rSigma_0_forcing_theorem_for_PP}\ref{item:each_ctbl_theory_decided_by_some_p}. We will  
 observe later that
\[ r^{\gamma,n+1}_{s,\alpha,\vec{x}}\sforces{\PP}{\gamma}\Type_{\rSigma_{n+1}}(o_\alpha\cup\{\vec{x}\})=\tau.\]
Let $t^{\Ntilde|\gamma}_{n+1,s}(\alpha)=t^{\Ntilde|\gamma}_{n+1,s}(\alpha,\pvec^{\M_\gamma}_{n+1})$,
unless $n=0$ (so $\gamma>\omega_1$) and $p_1^{\M_\gamma}=\emptyset$, in which case
let $t^{\Ntilde|\gamma}_{1,s}(\alpha)=t^{\Ntilde|\gamma}_{1,s}(\alpha,\{\omega_1\})$.
Define $\tau^{\Ntilde|\gamma}_{n+1,s}(\alpha)$ analogously
(so $\tau^{\Ntilde|\gamma}_{n+1,s}(\alpha)\in(\Nhat|\om_1)$). Let $C$ be the premouse
determined by $t^{\Ntilde|\gamma}_{n+1,s}(\alpha)$. Then $\vec{p}^{\Ntilde|\gamma}_{n+1,s}(\alpha)$ denotes $o_{\vec{p}}$,
where $\vec{p}$ is
 the transitive collapse
of $\pvec_{n+1}^{\M_\gamma}$ in $C$, or the transitive collapse of $\{\omega_1\}$ if $n=0$ and $p_1^{\M_\gamma}=\emptyset$.

The \dfnemph{witnessed $\rSigma_{n+2}^{\Ntilde|\gamma}$ stem forcing relation} $\sforces{\PP^-}{\gamma\mathrm{w},n+2}$
is the relation of $(s,\varphi,\vec{x})$
such that $s\in\PP^-$, $\varphi(\vec{v})$
is an $\rSigma_{n+2}$ formula of the passive premouse language, and where we define
\[ s\sforces{\PP^-}{\gamma\mathrm{w},n+2}\varphi(\vec{x}) \] iff                             
there are  $\beta<\rho_{n+1}^{\M_\gamma}$  and $\vec{\alpha}\in\beta^{<\om}$
and an $\rSigma_{n+1}$ min-term $u$
and $\vec{p},\vec{q},\tau$
such that: 
\begin{enumerate}[label=(\roman*)]
\item if $\omega_1<\rho_{n+1}^{\M_\gamma}$ then $\omega_1<\beta$,
 \item 
 if  $\rho_1^{\M_\gamma}=\omega_1$ and $p_1^{\M_\gamma}=\emptyset$ then $\vec{p}=\{\om_1\}$,
\item if $\rho_1^{\M_\gamma}>\omega_1$ or $p_1^{\M_\gamma}\neq\emptyset$ then $\pvec=\pvec_{n+1}^{\M_\gamma}$,
 \item \label{item:s_stem-forces_x=u(alpha,p)}
$s\sforces{\PP^-}{\gamma\mathrm{w},n+1}\text{``}\vec{x}=u(o_{\vec{\alpha}},o_{\pvec})\text{''}$, and
\item\label{item:s_stem-forces_tau_is_putative_witness} $s\sforces{\PP^-}{\gamma0}$``$\tau\text{  codes a putative witness to }(\varphi(\vec{v}),(u,(o_{\vec{\alpha}},o_{\vec{q}})))$'', where
\item $\tau=\tau^{\Ntilde|\gamma}_{n+1}(\beta)$
 and $\vec{q}=\vec{p}^{\Ntilde|\gamma}_{n+1}(\beta)$ 
 (recall that if $\om_1<\beta<\rho_{n+1}^{\M_\gamma}$,
 these names were defined via Definition \ref{dfn:tau(beta),q(beta)_if_omega_1<=beta},
 and if $\beta<\om_1=\rho_{n+1}^{\M_\gamma}$, they were defined in the previous paragraph).\qedhere
 \end{enumerate}
\end{dfn} 
\begin{dfn}\label{}
Let $G\sub\PP$ be a sufficiently generic filter (which might appear in some generic extension of $V$).
Given $(\gamma,n)\in\Lim\cross\om$ such that either:
\begin{enumerate}[label=--]
 \item  $\gamma=\omega_1$ and $n=1$, or
\item $\omega_1<\gamma\leq\beta^*$ and $n=0$,
\end{enumerate}
 we say that the \dfnemph{$\sforces{\PP^-}{\gamma n}$-stem forcing theorem holds for $G$}
 iff for all $\rSigma_n$ formulas $\varphi$
 in the language of passive premice
and all $\vec{x}\in(\Nhat|\gamma)^{<\om}$,
we have
\[ \Big((\Ntilde_G|\gamma)\Big)\sats\varphi(\vec{x}_G)\iff\exists s\in\PP^-\ \Big[\Big(s\sforces{\PP^-}{\gamma n}\varphi(\vec{x})\Big)\wedge r^\gamma_{s,\varphi(\vec{x})}\in G\Big].\]

Given $\gamma\in\Lim\cap(\omega_1,\beta^*]$,
we say that the \dfnemph{$\sforces{\PP^-}{\gamma\mathrm{w}1}$-stem forcing theorem holds for $G$}
iff 
for all $\rSigma_{1}$ formulas $\varphi$
in the language of passive premice
of form $\exists y\ \psi(y,\vec{v})$,
where $\psi$ is $\Sigma_0$, and
all $\vec{x}\in(\Nhat|\gamma)^{<\om}$,
we have  $(\Ntilde_G|\gamma)\sats\varphi(\vec{x}_G)$
iff
there is $s\in\PP^-$ such that
$s\sforces{\PP^-}{\gamma\mathrm{w}1}\varphi(\vec{x})$,
as witnessed by $\sigma\in\Nhat|\gamma$,
with $r^\gamma_{s,\psi(\sigma,\vec{x})}\in G$.

We say the \dfnemph{$\sforces{\PP^-}{\om_1\mathrm{w}2}$-stem forcing theorem holds for $G$} iff for all $\rSigma_2$ formulas $\varphi$
in the language of passive premice
and all $\vec{x}\in(\Nhat|\om_1)^{<\om}$,
we have $(\Ntilde_G|\gamma)\sats\varphi(\vec{x}_G)$
iff
there is $s\in\PP^-$
such that $s\sforces{\PP^-}{\om_1\mathrm{w}2}\varphi(\vec{x})$, as witnessed by $\theta$, where $(s,\theta)\in G$.

Given $(\gamma,n)$
 such that $(\omega_1,1)\leq(\gamma,n)<(\beta^*,n^*)$, we say that the \dfnemph{$\sforces{\PP^-}{\gamma\mathrm{w},n+2}$-stem forcing theorem holds for $G$}
iff 
for all $\rSigma_{n+2}$ formulas $\varphi$
in the language of passive premice
and
all $\vec{x}\in(\Nhat|\gamma)^{<\om}$,
we have  $(\Ntilde_G|\gamma)\sats\varphi(\vec{x}_G)$
iff there is $s\in\PP^-$
such that $s\sforces{\PP^-}{\gamma\mathrm{w},n+2}\varphi(\vec{x})$, as witnessed by $\beta,\vec{\alpha},u,\vec{p},\vec{q},\tau$,
with
\begin{enumerate}[label=--]
 \item 
$r^\gamma_{s,\psi_1(\vec{x},u,o_{\vec{\alpha}},o_{\vec{p}})}\in G$
and $r^\gamma_{s,\psi_2(\tau,\varphi,u,o_{\vec{\alpha}},o_{\vec{q}})}\in G$,
where  $\psi_1(\ldots)$ and $\psi_2(\ldots)$
are the statements respectively forced in \ref{item:s_stem-forces_x=u(alpha,p)} and \ref{item:s_stem-forces_tau_is_putative_witness} of
the definition of $\sforces{\PP^-}{\gamma\mathrm{w},n+2}$ (in \ref{dfn:stem_forcing_relations}),
and
\item if $\rho_{n+1}^{\M_\gamma}=\om_1$ then
$r^{\gamma,n+1}_{s,\beta,\pvec}\in G$.
\end{enumerate}

For these stem forcing relations $\sforces{\cdot}{\cdot}$, we say that the \dfnemph{$\sforces{\cdot}{\cdot}$-stem forcing theorem holds}
iff for all sufficiently large $\alpha$,
$\Coll(\om,V_\alpha)$ forces that for all sufficiently generic filters $G\sub\PP$,
the $\sforces{\cdot}{\cdot}$-stem forcing theorem holds for $G$.
\end{dfn}

\begin{dfn}
 $o_\alpha$
denotes the natural name in $\Nhat$ for $\alpha$,
and $O_\alpha$ denotes the natural name in $\Mhat$
for $\alpha$.\footnote{***This definition should maybe be moved.}

Notation:\footnote{***This notation might not have been fully integrated.} $\Ntilde|\gamma$ is the name for the model of height $\gamma$, and $\Nhat|\gamma$ is the collection of canonical names for its elements (indexed below $\gamma$). Similarly,
$\Mtilde_\gamma$ is the name for the model of height $\gamma$, and $\Mhat_\gamma$ the collection
of canonical names for its elements (again indexed below $\gamma$).
\end{dfn}

\begin{rem}
 Note that although we have only claimed to define
 conditions in $\PP=\PP^{\beta^*+n^*}$,  we have
 also defined both the witnessed and $\mu$-witnessed $\rSigma_{n^*+1}$  stem forcing relations.
\end{rem}

\begin{dfn}\label{dfn:tau^Mtilde(beta),q(beta)_if_omega_1<=beta}
***Given the appropriate assumptions on $N$: Let  $n>0$ with $\lambda^N<\rho_n^{N}$.
Let $\beta\in(\lambda^N,\rho_n^N)$.
Let \[ C=\cHull_{\rSigma_n}^{N}(\beta\cup\pvec_n^{N}) \]
and $\pi:C\to N$ the uncollapse and
 $\vec{p}=\pi^{-1}(\pvec_n^{N})$.
Then $\vec{p}^{\Mtilde^N}_n(\beta)$
denotes $O_{\vec{p}}$, and $\tau^{\Mtilde^N}_n(\beta)$ denotes
the natural name in $\Mhat_{\OR^C+\om}$ for
\[ \Th_{\muSigma_n}^{\Mtilde_{\OR^C}}(O_\beta\cup\widetilde{\HC}\cup\{O_{\vec{p}}\}) \]
(using the natural $\mu$-definition to write the formula of the $\M(\RR)$ language used in specifying the name).
\end{dfn}

\begin{dfn}\label{dfn:CC_forcing_relations}
  The \dfnemph{$\mSigma_0^{\Mtilde_{\lambda^N}}$
 forcing relation $\sforces{\CC}{\lambda^N0}$ of $N|\lambda^N$} is the relation $\sforces{\lambda0}{}$  (in the sense of $N$) specified in Definition \ref{dfn:mSigma_0_forcing_relation_Mtilde_lambda}.
  
 Let $\gamma\in\Lim\cap(\lambda^N,\OR^N]$.
 The \dfnemph{$\mSigma_0^{\Mtilde_\gamma}$
 forcing relation $\sforces{\CC}{\gamma0}$ of $N|\gamma$}
  is just
 $\sforces{\CC}{N|\gamma,\mathrm{int}}$ (see Definition \ref{dfn:internal_mSigma_0_forcing_relation}); so
for $p\in\CC^N$, $\mSigma_0$ formulas $\varphi$ and $\vec{x}\in(\Mhat_\gamma)^{<\om}$, we have
 \[ p\sforces{\CC}{\gamma0}\varphi(\vec{x})\iff\psi_0(\lambda^N,p,\varphi,\vec{x}).\]

Now let $(\gamma,n)$ be such that
$(\lambda^N,0)\leq(\gamma,n)\leq(\OR^N,n_0)$.
We define the \dfnemph{witnessed $\mSigma_{n+1}^{\Mtilde_\gamma}$ forcing relation
$\sforces{\CC}{\gamma\mathrm{w},n+1}$} of $N|\gamma$.

 The \dfnemph{witnessed $\mSigma_1^{\Mtilde_\gamma}$ forcing relation} $\sforces{\CC}{\gamma\mathrm{w}1}$
 is the relation of tuples $(p,\varphi,\vec{x})$ such that $p\in\CC$, $\varphi$ is $\mSigma_1$ of form
 \[ \varphi(\vec{u})\ \iff\ \exists y\ \psi(y,\vec{u}) \]
 where $\psi$ is $\mSigma_0$, and $\vec{x}\in(\Mhat_\gamma)^{<\om}$, and where we define
 \[p\sforces{\CC}{\gamma\mathrm{w}1}\varphi(\vec{x})\iff \exists\sigma\in\Mhat_\gamma\ \Big[p\sforces{\CC}{\gamma0}\psi(\sigma,\vec{x})\Big].\]
(This relation is $\rSigma_1^{N|\gamma}(\{\lambda^N\})$
if $\gamma>\lambda^N$,
and is $\rSigma_2^{N|\lambda^N}$
if $\gamma=\lambda^N$. Note that even if $\OR^N=\lambda^N$,
we have $\rho_1^{N|\lambda^N}=\lambda^N$.)

For $m<\om$,
the \dfnemph{$m$-good $\muSigma_{1}^{\Mtilde_\gamma}$ forcing relation} $\sforces{\CC,\geq m}{\gamma\mu,1}$ is the relation
of $(p,\varphi,\vec{x})$
such that $p\in\CC$, $\varphi$
is $\muSigma_{1}$ of form
\[ \varphi(v)\iff\exists k<\om\all^*_ks\ \psi(s,v) \]
where $\psi$ is $\mSigma_{1}$,
and $\vec{x}\in(\Mhat_\gamma)^{<\om}$,
and where we define
\[ p\sforces{\CC,\geq m}{\gamma\mu,1}\varphi(\vec{x}) \]
iff letting $d=\max(\supp(p),\supp(\vec{x}))$ and $m'=\max(m,d+1)$, either
\begin{enumerate}[label=(\roman*)]
 \item\label{item:gamma>lambda^N_m-good_mu_Sigma_1_forcing} $\gamma>\lambda^N$
 and there are $k<\om$, $\vec{\delta}\in[\Delta^N_{\geq m'}]^{2k}$, $\beta\in[\lambda^N,\gamma)\cap\Lim$ and $i<\om$ with
$\max(\loc(\vec{x}))\leq\beta$
and such that
\[ p\sforces{\CC_d}{}\text{``}\all^{\mathrm{gen}}_{\vec{\delta}}s\  \sforces{\CC_{\mathrm{tail}}}{\gamma0}\Mtilde_{\beta+i}\sats\psi(s,\vec{x})\text{''}, \]
 or
\item\label{item:gamma=lambda^N_m-good_mu_Sigma_1_forcing} $\gamma=\lambda^N$ and
\[ \psi(s,\vec{x})\iff \exists y\ \psi'(s,\vec{x},y) \]
where $\psi'$ is $\mSigma_0$ and there are $k<\om$, $\vec{\delta}'\in[\Delta^N_{\geq m'}]^{2k+1}$
such that, letting $\vec{\delta}=\vec{\delta}'\rest 2k$
and $\delta_{2k}=\delta_i^N$,
\[ p\sforces{\CC_d}{}\text{``}\all^{\mathrm{gen}}_{\vec{\delta}}s\ \sforces{\CC_{i}}{}\exists y\in\HC\ \sforces{\CC_{\mathrm{tail}}}{\lambda^N0}\Mtilde_{\lambda^N}\sats\psi'(s,\vec{x},y)\text{''}. \]
\end{enumerate}
(Condition \ref{item:gamma>lambda^N_m-good_mu_Sigma_1_forcing} is $\rSigma_{1}^{N|\gamma}(\{\lambda^N\})$,
and condition \ref{item:gamma=lambda^N_m-good_mu_Sigma_1_forcing}
is $\rSigma_2^{N|\lambda^N}$.)

Given the $m$-good
$\muSigma_{n+1}^{\Mtilde_\gamma}$ forcing relations $\sforces{\CC,\geq m}{\gamma\mu,n+1}$,
we then define the \dfnemph{(stable) $\muSigma_{n+1}^{\Mtilde_\gamma}$ forcing relation} $\sforces{\CC}{\gamma\mu,n+1}$ (thus, without the parameter $m$) as \[ p\sforces{\CC}{\gamma\mu,n+1}\varphi(\vec{x})\iff\all m<\om\  \Big[p\sforces{\CC,\geq m}{\gamma\mu,n+1}\varphi(\vec{x})\Big].\]

Suppose $n>0$.
If $\gamma>\lambda^N$,
let $n'=n$, and if $\gamma=\lambda^N$
let $n'=n+1$. Suppose $\rho_{n'}^{N|\gamma}=\lambda^N$.
Let $\varrho\in\Mhat_{\lambda^N}$
and $d=\supp(\varrho)$.
Then $\tau^{\Mtilde_\gamma}_n(\varrho)$ denotes the $\CC_d$-name
$\tau\in N|\lambda^N$ such that
\[ \sforces{\CC_d}{}\tau=\Big\{\varphi\Bigm| \ (\varphi\text{ is }\muSigma_n)\wedge\Big(\emptyset\sforces{\CC_{\mathrm{tail}}}{\gamma\mu,n}\ \varphi(\varrho,\pvec)\Big)\Big\}\]
where $\pvec=\pvec_n^{N|\gamma}\cut\{\lambda^N\}$ (note this uses the (stable) $\muSigma_{n}^{\Mtilde_\gamma}$
forcing relation, as defined in $\N[g]$, using $\CC_{\mathrm{tail}}$ there). We will see later
that this is indeed a name
in $N|\lambda^N$, and in fact can be computed from
\[ \Th_{\rSigma_{n'}}^{N|\gamma}(\{\varrho,\lambda^N,\pvec_n^{N|\gamma}\}\cup(\delta_d^N)^{+N}),\]
which is in $N$ since $\rho_{n'}^{N|\gamma}=\lambda^N$.

If $n>0$, the \dfnemph{witnessed $\mSigma_{n+1}^{\Mtilde_\gamma}$ forcing relation}
$\sforces{\CC}{\gamma\mathrm{w},n+1}$
is the relation of tuples $(p,\varphi,\vec{x})$
such that $p\in\CC^N$, $\varphi(\vec{v})$ is $\mSigma_{n+1}$, $\vec{x}\in(\Mhat_\gamma)^{<\om}$, and where we define
\[ p\sforces{\CC}{\gamma\mathrm{w},n+1}\varphi(\vec{x})\]
iff  either
\begin{enumerate}[label=\tu{(}\roman*\tu{)}]
 \item\label{item:rho_n^N|gamma>lambda^N_etc}   $\rho_n^{N|\gamma}>\lambda^{N}$\footnote{This is not a typo.
We will show later that $\rho_n^{N|\gamma}=\rho_n^{\M_\gamma}$, but since we are presently looking at things from the persepctive of $N$, we write it as ``$\rho_n^{N|\gamma}>\lambda^N$''.}and there are $\beta\in(\lambda^N,\rho_n^{N|\gamma})$ 
and $\vec{\alpha}\in[\beta]^{<\om}$
and $\varrho\in\Mhat_{\lambda^N}$
and an $\mSigma_n$ min-term $u$ such that
\begin{enumerate}[label=(\alph*)]
\item\label{item:clause_b}
$p\sforces{\CC}{\gamma\mathrm{w}n}\text{``}\vec{x}=u(\varrho,O_{\vec{\alpha}},O_{\pvec})\text{''}$, and
 \item\label{item:clause_a} $p\sforces{\CC}{\gamma0}\text{``}\tau$ codes a putative witness to $(\varphi(\vec{v}),(u,(\varrho,O_{\vec{\alpha}},O_{\vec{q}})))$'',
\end{enumerate}
where $\pvec=\pvec_n^{N|\gamma}$
and $\tau=\tau^{\Mtilde_\gamma}_n(\beta)$
and $\vec{q}=\vec{p}^{\Mtilde_\gamma}(\beta)$,
or
\item\label{item:gamma>rho_n_witnessed_mSigma_n+1_forcing_rel} $\gamma>\rho_n^{N|\gamma}=\lambda^N$
and there are $\varrho\in\Mhat_{\lambda^N}$
and an $\mSigma_n$ min-term $u$ such that
\begin{enumerate}[label=(\alph*)]
\item\label{item:clause_b'}
$p\sforces{\CC}{\gamma\mathrm{w}n}\text{``}\vec{x}=u(\varrho,O_{\pvec})\text{''}$, and
 \item\label{item:clause_a'} $p\sforces{\CC}{\gamma0}\text{``}\tau\text{ codes a putative witness to } (\varphi(\vec{v}),(u,(\varrho,O_{\pvec})))$'',
\end{enumerate}
where $\pvec=\pvec_n^{N|\gamma}\cut\{\lambda^N\}$ and $\tau=\tau^{\Mtilde_\gamma}_n(\varrho)$, or
\item $\gamma=\rho_{n+1}^{N|\gamma}=\lambda^N$
and there are $\varrho\in\Mhat_{\lambda^N}$
and an $\mSigma_n$ min-term $u$ such that
\begin{enumerate}[label=(\alph*)]
\item\label{item:clause_b''} 
$p\sforces{\CC}{\gamma\mathrm{w}n}\text{``}\vec{x}=u(\varrho)\text{''}$, and
 \item\label{item:clause_a''} $p\sforces{\CC}{\gamma0}\text{``}\tau\text{ codes a putative witness to } (\varphi(\vec{v}),(u,\varrho))$'',
\end{enumerate}
where $\tau=\tau^{\Mtilde_\gamma}_n(\varrho)$ (in this last case
we could have taken $\varrho$
to be a name for a real coding $\vec{x}$ directly in some simple way, instead of using a min-term $u$).
\end{enumerate}
 
If $n>0$ and $m<\om$, the \dfnemph{$m$-good $\muSigma_{n+1}^{\Mtilde_\gamma}$ forcing relation} $\sforces{\CC,\geq m}{\gamma\mu,n+1}$ is the relation
of $(p,\varphi,\vec{x})$
such that $p\in\CC$, $\varphi$
is $\muSigma_{n+1}$ of form
\[ \varphi(\vec{v})\iff\exists k<\om\all^*_kv_0\ \psi(v_0,\vec{v}) \]
where $\psi$ is $\mSigma_{n+1}$,
and $\vec{x}\in(\Mhat_\gamma)^{<\om}$,
and where we define
\[ p\sforces{\CC,\geq m}{\gamma\mu,n+1}\varphi(\vec{x}) \]
iff letting $d=\max(\supp(p),\supp(\vec{x}))$
and $m'=\max(m,d+1)$, either
\begin{enumerate}[label=(\roman*)]
 \item\label{item:rho_n>lambda_m-good_muSigma_n+1_forcing_rel} $\rho_n^{N|\gamma}>\lambda$
 and there are $k<\om$, $\vec{\delta}\in[\Delta^N_{\geq m'}]^{2k}$, $\beta\in(\lambda^N,\rho_n^{N|\gamma})$, $\vec{\alpha}\in\beta^{<\om}$, $\varrho\in\Mhat_{\lambda^N}$,
and an $\mSigma_n$ min-term $u$ such that
\begin{enumerate}[label=(\alph*)]
 \item\label{item:rho_n>lambda_m-good_muSigma_n+1_forcing_rel_clause_a}  $p\sforces{\CC}{\gamma\mathrm{w}n}$``$\vec{x}=u(\varrho,O_{\vec{\alpha}},O_{\vec{p}})$'', and
 \item\label{item:rho_n>lambda_m-good_muSigma_n+1_forcing_rel_clause_b}
$p\sforces{\CC_d}{}$``$\all^{\mathrm{gen}}_{\vec{\delta}}s\ \sforces{\CC_{\mathrm{tail}}}{\gamma0}\Big[\tau\text{ codes a putative witness to }(\psi(v_0,\vec{v}),(u',(s,(\varrho,O_{\vec{\alpha}},O_{\vec{q}}))))\Big]$'',
\end{enumerate}
where $\pvec=\pvec_n^{N|\gamma}$
and $u'$ is the $\mSigma_n$ min-term given by $u'(a,b)=(a,u(b))$ 
and $\tau=\tau_n^{\Mtilde_\gamma}(\beta)$ and $\vec{q}=\vec{p}^{\Mtilde_\gamma}(\beta)$, or
\item $\gamma>\rho_n^{N|\gamma}=\lambda^N$
and there are $k<\om$,
$\vec{\delta}'\in[\Delta^N_{\geq m'}]^{2k+1}$, $\varrho\in\Mhat_{\lambda^N}$
and an $\mSigma_n$ min-term $u$ such that,
letting $\vec{\delta}=\vec{\delta}'\rest 2k$ and $\delta_{2k}=\delta_i^N$,
\begin{enumerate}[label=(\alph*)]
\item\label{item:clause_b'''}
$p\sforces{\CC}{\gamma\mathrm{w}n}\text{``}\vec{x}=u(\varrho,O_{\pvec})\text{''}$, and
 \item\label{item:clause_a'''}
 \[ \begin{split}p\sforces{\CC_d}{}\text{``}&\all^{\mathrm{gen}}_{\vec{\delta}}s\
 \sforces{\CC_i}{}\exists y\in\HC\ \sforces{\CC_{\mathrm{tail}}}{\gamma0}\\&\Big[\tau(\varrho,s,y)\text{ codes a putative witness to } (\psi(v_0,\vec{v}),(u',((\varrho,s,y),O_{\pvec})))\Big]\text{''},\end{split}\]
\end{enumerate}
where $\pvec=\pvec_n^{N|\gamma}\cut\{\lambda^N\}$ and $\tau(\varrho,s,y)=(\tau^{\Mtilde_\gamma}_n(\varrho,s,y))^{\CC_{\max(\vec{\delta})}}$ (that is,  the latter name
gets computed in $(N|\gamma)[g]$,
where $g$ is $(N,\CC\rest(i+1))$-generic),
and $u'$ is the min-term given by
\[u'((a,b,c),d)=(u(a,d),b) \] (so
 $u'((\varrho,s,y),\pvec)=(u(\varrho,\pvec),s)$, yielding $(\vec{x},s)$ where desired), or
\item $\gamma=\rho_{n+1}^{N|\gamma}=\lambda^N$
and there are $k<\om$,
$\vec{\delta}'\in[\Delta^N_{\geq m'}]^{2k+1}$, $\varrho\in\Mhat_{\lambda^N}$
and an $\mSigma_n$ min-term $u$ such that,
letting $\vec{\delta}=\vec{\delta}'\rest 2k$ and $\delta_{2k}=\delta_i^N$,
\begin{enumerate}[label=(\alph*)]
\item\label{item:clause_b''''}
$p\sforces{\CC}{\gamma\mathrm{w}n}\text{``}\vec{x}=u(\varrho)\text{''}$, and
 \item\label{item:clause_a''''} \[\begin{split}p\sforces{\CC_d}{}\text{``}&\all^{\mathrm{gen}}_{\vec{\delta}}s\
 \sforces{\CC_i}{}\exists y\in\HC\ \sforces{\CC_{\mathrm{tail}}}{\gamma0}\\&\Big[\tau(\varrho,s,y)\text{ codes a putative witness to } (\psi(v_0,\vec{v}),(u',((\varrho,s,y))))\Big]\text{''}\end{split}\],
\end{enumerate}
where $\tau(\varrho,s,y)=(\tau^{\Mtilde_\gamma}_n(\varrho,s,y))^{\CC_{\max(\vec{\delta})}}$ 
and $u'$ is the min-term given by
\[u'((a,b,c))=(u(a),b).\qedhere\]
\end{enumerate}
\end{dfn}
We also consider the following variant
of the $\muSigma_{n+1}$ forcing relation:
\begin{dfn}\label{dfn:proj_forcing_rel}
Let $n<\om$
be such that
\begin{enumerate}[label=(\roman*)]
\item\label{item:gamma>lambda} $\lambda^N<\gamma$ and $\rho_{n+1}^{N|\gamma}\leq\lambda^N$ (so if $\rho_{n+1}^{N|\gamma}<\lambda^N$
then $\gamma=\OR^N$ and $n=n_0$), or
\item\label{item:gamma=lambda} $\lambda^N=\gamma$ and $\rho_{n+2}^{N|\gamma}\leq\lambda^N=\rho_{n+1}^{N|\gamma}$ (so if $\rho_{n+2}^{N|\gamma}<\lambda^N$ then $n_0=n+1$ and $\lambda^N=\OR^N$)
\end{enumerate}
Let $m<\om$. Then the \dfnemph{$m$-good $\muSigma_{n+1}^{\Mtilde_\gamma}$ projecting forcing relation} $\sforces{\CC,\geq m}{\gamma\mu,n+1,\mathrm{proj}}$ is the relation
of $(p,\varphi,\vec{x})$
such that 
$p\in\CC$, $\varphi$
is $\muSigma_{n+1}$ and $\vec{x}\in(\Mhat_\gamma)^{<\om}$,
and where we define
\[ p\sforces{\CC,\geq m}{\gamma\mu,n+1,\mathrm{proj}}\varphi(\vec{x}) \]
iff there are $m'<\om$ and  $\sigma\in\Mhat_{\lambda^N}$ and an $\mSigma_{n+1}$ min-term $r$ such that
$\max(\supp(p),\supp(\vec{x}),\supp(\sigma),m)<m'$, and letting
\begin{enumerate}[label=(\roman*)]
 \item 
$\vec{p}=\pvec_{n+1}^{N|\gamma}\cut\{\lambda^N\}$,\footnote{Clearly it wouldn't matter if we left $\lambda^N\in\vec{p}$ if it happened to be in $\vec{p}^{N|\gamma}_{n+1}$,
and it also wouldn't affect the definability,
but it is more natural to remove it, since we will later verify that in the context of interest,
$\vec{p}=\vec{p}^{\M_\gamma}_{n+1}$.}
if $\lambda^N<\gamma$, and
\item $\vec{p}=\pvec_{n+2}^{N|\gamma}$, if $\lambda^N=\gamma$,
\end{enumerate}
then letting $\vec{\xi}=\loc(\vec{x})$,
\begin{enumerate}[label=--]
\item $p\sforces{\CC}{\gamma\mathrm{w},n+1}\text{``}O_{\vec{\xi}}=r(\sigma,\vec{p})\text{''}$ and
\item 
 $p\sforces{\CC,\geq m'}{\gamma\mu,n+1}\varphi(\vec{x})$.\qedhere
 \end{enumerate}
\end{dfn}
\begin{dfn}\label{dfn:CC_forcing_theorems}
For the $\CC$-forcing relations  $\sforces{
\cdot}{\cdot}$ of $N|\gamma$
 introduced
in Definition \ref{dfn:CC_forcing_relations} we say that the \dfnemph{forcing theorem holds for $\sforces{\cdot}{\cdot}$}
iff for all sufficiently large ordinals $\alpha$, in $V^{\Coll(\om,\alpha)}$,
for all formulas $\varphi$
of the relevant complexity,
all $\vec{x}\in(\Mhat_\gamma)^{<\om}$
and all sufficiently generic filters $G\sub\CC^N$, we have
\[ \Big((\Mtilde_G)_\gamma\sats\varphi(\vec{x}_G)\Big)\ \iff\ \exists p\in G\ \Big[p\sforces{\cdot}{\cdot}\varphi(\vec{x})\Big].\qedhere\]
\end{dfn}

We can now proceed to the central facts regarding the fine structural correspondence between $\M_{\beta^*}$
and sufficiently Prikry generic iterates of $\Pg$.
We break the presentation of the facts into two cases --
the first case for $\M_\gamma$ and $N|\gamma$ where $\gamma>\omega_1$, and the second case when $\gamma=\omega_1$. The two cases are very similar to one another, with the main difference being just
that  when $\gamma>\omega_1$,
$\mSigma_{n+1}^{\M_\gamma}$ corresponds to $\rSigma_{n+1}^{N|\gamma}$,
but when $\gamma=\omega_1$,
$\mSigma_{n+1}^{\M_{\omega_1}}$ corresponds to $\rSigma_{n+2}^{N|\gamma}$.
Presenting both cases
simultaneously would make everything less readable,
so we split into the two cases, even though this leads to some repetition.

\begin{lem}\label{lem:fs_correspondence}
Let $N$ be a sufficiently Prikry generic iterate of $\Pg$.

Suppose $\lambda^{\Pg}<\OR^{P_0}$,
so $\om_1<\beta^*$.  Let $(\om_1+
\om,0)\leq(\gamma,n)\leq(\beta^*,\min(n^*,n_0))$.\footnote{We will show in Lemma \ref{lem:n^*=n_0} that
(as $\omega_1<\beta^*$),
 actually $n^*=n_0$.} Then
we have:
\begin{enumerate}
  \item \label{item:rho_n=rho_n}
  $\rho_{n}^{\M_\gamma}=\rho_{n}^{N|\gamma}\geq\lambda^N=\om_1$.
  \item\label{item:p_n=p_n} Either:
  \begin{enumerate}[label=--]
  \item $p_{n}^{\M_\gamma}=p_{n}^{N|\gamma}$, or
  \item $n=1$, $\rho_1^{\M_\gamma}=\om_1=\lambda^N=\rho_1^{N|\gamma}$, $p_1^{\M_\gamma}=\emptyset$,
  $p_1^{N|\gamma}=\{\lambda^N\}$ and $\lambda^N$ is the largest cardinal of $N|\gamma$,
  \end{enumerate}
  \item\label{item:1-CC-forcing} Regarding forcing $\Mtilde_\gamma$
  over $N|\gamma$ with $\CC^N$:
  \begin{enumerate}[label=\tu{(}\alph*\tu{)}]
     \item\label{item:1-Delta_0^M} the $\mSigma_0^{\Mtilde_\gamma}$ forcing
    relation $\sforces{\CC}{\gamma0}$ is $\rDelta_1^{N|\gamma}(\{\lambda^N\})$,
   \item\label{item:1-witnessed_1^N} the witnessed $\mSigma_{n+1}^{\Mtilde_\gamma}$ forcing relation $\sforces{\CC}{\gamma\mathrm{w},n+1}$ is:
   \begin{enumerate}[label=--]
    \item $\rSigma_1^{N|\gamma}(\{\lambda^N\})$, if $n=0$,
    \item $\rSigma_{n+1}^{N|\gamma}(\pvec_n^{N|\gamma})$, if $n>0$,
   \end{enumerate}
   \item\label{item:1-mu_1^N}
  the $\muSigma_{n+1}^{\Mtilde_\gamma}$ forcing
   relation $\sforces{\CC}{\gamma\mu,n+1}$  is:
   \begin{enumerate}[label=--]
     \item $\all^\om \rSigma_{1}^{N|\gamma}(\{\lambda^N\})$,
   if $n=0$,
   \item $\all^\om \rSigma_{n+1}^{N|\gamma}(\pvec_n^{N|\gamma})$, if $n>0$,
   \end{enumerate}
  \item\label{item:mu_Sigma_n+1_projecting_forcing_rel_def} if $\rho_{n+1}^{N|\gamma}\leq\lambda^N$, then for each $m<\om$, the $m$-good $\muSigma_{n+1}^{\Mtilde_\gamma}$ projecting forcing
   relation $\sforces{\CC,\geq m}{\gamma\mu,n+1,\mathrm{proj}}$ is
   $\rSigma_{n+1}^{N|\gamma}(\{\pvec_{n+1}^{N|\gamma},m\})$,\footnote{This relation was introduced in \ref{dfn:proj_forcing_rel}. Note that for this relation, the parameter is $(\pvec_{n+1}^{N|\gamma},m)$,
 as opposed to just $\pvec_n^{N|\gamma}$ or $\{\lambda^N\}$.
Of course it follows
that the relation is also $\rSigma_{n+1}^{N|\gamma}(\{\pvec_{n+1}^{N|\gamma}\})$,
i.e.~we can dispense with the trivial parameter $m$. However, including the $m$ allows us to assert the uniformity of definitions in part \ref{item:forcing_rels_unif_def}.}
  \end{enumerate}
  \item\label{item:1-PP-forcing} Regarding forcing $\Ntilde|\gamma$ over $\M_\gamma$ with $\PP$:
  \begin{enumerate}[label=\tu{(}\alph*\tu{)}]
     \item\label{item:1-Delta_0^N} the  $\rSigma_0^{\Ntilde|\gamma}$ stem forcing
    relation $\sforces{\PP^-}{\gamma0}$ is $\mDelta_1^{\M_\gamma}(\{\xg\})$,
 
   \item\label{item:witnessed_rSigma_n+1_forcing_rel_def} 
     the witnessed $\rSigma_{n+1}^{\Ntilde|\gamma}$ stem forcing
   relation
   $\sforces{\PP^-}{\gamma\mathrm{w},n+1}$
   is $\mSigma_{n+1}^{\M_\gamma}(\{\xg,\pvec_n^{\M_\gamma}\})$,
   \item\label{item:mu-witnessed_forcing_rel_def} 
   the $\mu$-witnessed  $\rSigma_{n+1}^{\Ntilde|\gamma}$ stem forcing
   relation
   $\sforces{\PP^-}{\gamma\mu,n+1}$
   is $\muSigma^{\M_\gamma}_{n+1}(\{\xg,\pvec_n^{\M_\gamma}\})$.
  \end{enumerate}
  \item\label{item:forcing_rels_unif_def}  The forcing/stem-forcing relations are moreover uniformly definable as follows:
  \begin{enumerate}[label=\tu{(}\alph*\tu{)}]
  \item\label{item:mSigma_0_forcing_rel_unif_def} $\sforces{\CC}{\gamma0}$
  is 
 $\rDelta_1^{N|\gamma}(\{\lambda^N\})$ uniformly in limits $\gamma\in(\lambda^N,\beta^*]$;
  that is,
  there are $\rSigma_1$ formulas
  $\psi,\psi'$ such that
  for each such $\gamma$ and all $p,\varphi,\vec{x}\in N|\gamma$,
  \[ \Big(p\sforces{\CC}{\gamma0}\varphi(\vec{x})\Big)\iff \Big(N|\gamma\sats\psi(p,\varphi,\vec{x})\Big)\iff \Big(N|\gamma\sats\neg\psi'(p,\varphi,\vec{x})\Big), \]
  \item\label{item:rSigma_0_stem_forcing_rel_unif_def}  $\sforces{\PP^-}{\gamma0}$
  is likewise $\mDelta_1^{\M_\gamma}(\{x_0\})$
  uniformly in limits $\gamma\in(\omega_1,\beta^*]$,
  \item\label{item:mSigma_n+1,muSigma_n+1_forcing_rel_unif_def_lambda<rho_n} the 
   forcing relations mentioned
   in  \ref{item:1-CC-forcing}\ref{item:1-witnessed_1^N},\ref{item:1-mu_1^N},\ref{item:mu_Sigma_n+1_projecting_forcing_rel_def}
 are defined in the stated manner
uniformly in pairs $(\gamma,n)\in\Lim\cross\omega$ satisfying
\begin{equation}\label{eqn:range_for_gamma,n} (\lambda^N+\om,0)\leq(\gamma,n)\leq(\beta^*,\min(n^*,n_0))
\text{ and }\lambda^{N}<\rho_n^{N|\gamma},\end{equation}
   meaning that there is a recursive
   function $n\mapsto\psi_n$
   such that $\psi_n$ is a formula
   of the stated complexity
   and for each $(\gamma,n)$ as in line 
   \tu{(}\ref{eqn:range_for_gamma,n}\tu{)},
 $\psi_n(\vec{p},\cdot,\cdot,\cdot)$ defines the stated
   forcing relation over $N|\gamma$
   when $\pvec$ is the stated parameter
   \tu{(}note that the use of the otherwise trivial parameter $m$
   in \ref{item:1-CC-forcing}
\ref{item:mu_Sigma_n+1_projecting_forcing_rel_def} is needed here\tu{)},
   \item\label{item:mSigma_n+1,muSigma_n+1_forcing_rel_unif_def_lambda=rho_n} part \ref{item:mSigma_n+1,muSigma_n+1_forcing_rel_unif_def_lambda<rho_n}
   still holds after  replacing ``$\lambda^N<\rho_n^{N|\gamma}$''
   with ``$\lambda^N=\rho_n^{N|\gamma}$'' \tu{(}but now the witnessing formulas $\psi_n$  are different\tu{)},
   \item parts \ref{item:mSigma_n+1,muSigma_n+1_forcing_rel_unif_def_lambda<rho_n}
   and \ref{item:mSigma_n+1,muSigma_n+1_forcing_rel_unif_def_lambda=rho_n}
   still hold
   after replacing `` \ref{item:1-CC-forcing}\ref{item:1-witnessed_1^N},\ref{item:1-mu_1^N},\ref{item:mu_Sigma_n+1_projecting_forcing_rel_def}'' with
   ``\ref{item:1-PP-forcing}\ref{item:witnessed_rSigma_n+1_forcing_rel_def},\ref{item:mu-witnessed_forcing_rel_def}''
and ``$\lambda^N$'' with ``$\omega_1$''
and ``$N|\gamma$'' with ``$\M_\gamma$''.
   \end{enumerate}
      \item\label{item:CC_PP_forcing_theorems} We have:
      \begin{enumerate}[label=\tu{(}\alph*\tu{)}]
      \item the  forcing theorem holds for each of the  forcing relations mentioned in parts \ref{item:1-CC-forcing}\ref{item:1-Delta_0^M}--\ref{item:1-mu_1^N},
      \item\label{item:projecting_forcing_relation_forcing_theorem} if $\rho_{n+1}^{N|\gamma}\leq\lambda^N$, then
there is $m_0<\om$, which depends on $\gamma$ but not on $n$, such that for all $m\in[m_0,\om)$,
the forcing theorem holds for the $m$-good $\muSigma_{n+1}^{\Mtilde_\gamma}$ projecting forcing
   relation $\sforces{\CC,\geq m}{\gamma\mu,n+1,\mathrm{proj}}$ \tu{(}mentioned in part \ref{item:1-CC-forcing}\ref{item:mu_Sigma_n+1_projecting_forcing_rel_def}\tu{)}; moreover, if $\gamma=\beta^*$ then $m_0=0$ suffices,
   \item the stem forcing theorem holds for the forcing relations mentioned in parts \ref{item:1-PP-forcing}\ref{item:1-Delta_0^N},\ref{item:witnessed_rSigma_n+1_forcing_rel_def}.
   \end{enumerate}
  \item\label{item:Hull_n+1_ordinals_match} Let $X\sub\gamma$. Then:
  \begin{enumerate}[label=\tu{(}\alph*\tu{)}]
   \item \label{item:Hull_rho_p_match_n=0}
If $n=0$ then
  \begin{enumerate}[label=\tu{(}\roman*\tu{)}]
\item\label{item:Hull_match_n=0} $\gamma\cap\Hull_{\mSigma_{1}}^{\M_\gamma}(\HC\cup X)=
   \gamma\cap\Hull_{\rSigma_{1}}^{N|\gamma}(\lambda^N\cup X\cup\{\lambda^N\})$,
\item\label{item:rho_match_n=0}   $\rho_{1}^{\M_\gamma}=\max(\rho_{1}^{N},\lambda^N)$, and
\item\label{item:p_match_n=0} either:
  \begin{enumerate}[label=--]
   \item $p_1^{\M_\gamma}=p_1^{N|\gamma}$, or
   \item $\rho_1^{\M_\gamma}=\om_1=\lambda^N\geq\rho_1^{N|\gamma}$, $p_1^{\M_\gamma}=\emptyset$, $p_1^{N|\gamma}=\{\lambda^N\}$ and $\lambda^N$ is the largest cardinal of $N|\gamma$.
  \end{enumerate}
  \end{enumerate}
\item  If $n>0$ then
  \begin{enumerate}[label=\tu{(}\roman*\tu{)}]
\item $\gamma\cap\Hull_{\mSigma_{n+1}}^{\M_\gamma}(\HC\cup X\cup \pvec_n^{\M_\gamma})=
   \gamma\cap\Hull_{\rSigma_{n+1}}^{N|\gamma}(\lambda^N\cup X\cup \pvec_n^{N|\gamma})$,
\item  $\rho_{n+1}^{\M_\gamma}=\max(\rho_{n+1}^{N|\gamma},\lambda^N)$, and
\item $p_{n+1}^{\M_\gamma}=p_{n+1}^{N|\gamma}$.
\end{enumerate}
\end{enumerate}
  \end{enumerate}
  
Now suppose instead that $\gamma=\omega_1=\lambda^N$, and
let $n<\om$ be such that $(\omega_1,n)\leq(\beta^*,\min(\{n^*,n_0\})$.\footnote{We will show in Lemma \ref{lem:n^*=n_0}
that if $\omega_1<\beta^*$ then $n^*=n_0$,
and if $\omega_1=\beta^*$ then $n^*+1=n_0$.} Then we have:
\begin{enumerate}[resume*]
  \item \label{item:rho_n=rho_n_om_1}
  $\gamma=\omega_1=\rho_{n}^{\M_{\gamma}}=\rho_{n+1}^{N|\gamma}=\lambda^N$ \tu{(}therefore, if $\omega_1=\beta^*$
  then $n<n_0$\tu{)}.
  \item\label{item:p_n=p_n_om_1}  $p_{n}^{\M_\gamma}=p_{n+1}^{N|\gamma}=\emptyset$,
  \item\label{item:1-CC-forcing_om_1} Regarding forcing $\Mtilde_{\gamma}$
  over $N|\gamma$ with $\CC^N$:
  \begin{enumerate}[label=\tu{(}\alph*\tu{)}]
     \item\label{item:1-Delta_0^M_om_1} the $\mSigma_1^{\Mtilde_\gamma}$ forcing
    relation $\sforces{\CC}{\gamma1}$ is $\rDelta_2^{N|\gamma}$,
   \item\label{item:1-witnessed_1^N_om_1} the witnessed $\mSigma_{n+1}^{\Mtilde_\gamma}$ forcing relation $\sforces{\CC}{\gamma\mathrm{w},n+1}$ is
$\rSigma_{n+2}^{N|\gamma}$,
   \item\label{item:1-mu_1^N_om_1}
  the $\muSigma_{n+1}^{\Mtilde_\gamma}$ forcing
   relation $\sforces{\CC}{\gamma\mu,n+1}$  is
$\all^\om \rSigma_{n+2}^{N|\gamma}$,
  \item\label{item:mu_Sigma_n+1_projecting_forcing_rel_def_om_1}  for each $m<\om$, the $m$-good $\muSigma_{n+1}^{\Mtilde_\gamma}$ projecting forcing
   relation $\sforces{\CC,\geq m}{\gamma\mu,n+1,\mathrm{proj}}$ is
   $\rSigma_{n+2}^{N|\gamma}(\{m\})$,
  \end{enumerate}
  \item\label{item:1-PP-forcing_om_1} Regarding forcing $\Ntilde|\gamma$ over $\M_\gamma$ with $\PP$:
  \begin{enumerate}[label=\tu{(}\alph*\tu{)}]
     \item\label{item:1-Delta_0^N_om_1} the  $\rSigma_1^{\Ntilde|\gamma}$ stem forcing
    relation $\sforces{\PP^-}{\gamma1}$ is $\mDelta_1^{\M_\gamma}(\{\xg\})$,
 
   \item\label{item:witnessed_rSigma_n+1_forcing_rel_def_om_1} 
     the witnessed $\rSigma_{n+2}^{\Ntilde|\gamma}$ stem forcing
   relation
   $\sforces{\PP^-}{\gamma\mathrm{w},n+2}$
   is $\mSigma_{n+1}^{\M_\gamma}(\{\xg\})$,
   \item\label{item:mu-witnessed_forcing_rel_def_om_1} 
   the $\mu$-witnessed  $\rSigma_{n+2}^{\Ntilde|\gamma}$ stem forcing
   relation
   $\sforces{\PP^-}{\gamma\mu,n+2}$
   is $\muSigma^{\M_\gamma}_{n+1}(\{\xg\})$.
  \end{enumerate}
  \item\label{item:forcing_rels_unif_def_om_1}  The forcing/stem-forcing relations are moreover uniformly definable as follows:
  \begin{enumerate}[label=\tu{(}\alph*\tu{)}]
  \item\label{item:mSigma_n+1,muSigma_n+1_forcing_rel_unif_def_lambda<rho_n_om_1} the 
   forcing relations mentioned
   in  \ref{item:1-CC-forcing_om_1}\ref{item:1-witnessed_1^N_om_1},\ref{item:1-mu_1^N_om_1},\ref{item:mu_Sigma_n+1_projecting_forcing_rel_def_om_1}
 are defined in the stated manner
uniformly in $n<\om$ satisfying
\begin{equation}\label{eqn:range_for_gamma,n_om_1} (\omega_1,0)\leq(\omega_1,n)\leq(\beta^*,\min(n^*,n_0)),
\end{equation}
   meaning that there is a recursive
   function $n\mapsto\psi_n$
   such that $\psi_n$ is a formula
   of the stated complexity
   and for each $n$ as in line 
   \tu{(}\ref{eqn:range_for_gamma,n_om_1}\tu{)},
 $\psi_n(x,\cdot,\cdot,\cdot)$ defines the stated
   forcing relation over $N|\gamma$
   when $x$ is the stated parameter,
   \item part \ref{item:mSigma_n+1,muSigma_n+1_forcing_rel_unif_def_lambda<rho_n_om_1}
   still holds
   after replacing `` \ref{item:1-CC-forcing_om_1}\ref{item:1-witnessed_1^N_om_1},\ref{item:1-mu_1^N_om_1},\ref{item:mu_Sigma_n+1_projecting_forcing_rel_def_om_1}'' with
   ``\ref{item:1-PP-forcing_om_1}\ref{item:witnessed_rSigma_n+1_forcing_rel_def_om_1},\ref{item:mu-witnessed_forcing_rel_def_om_1}''
and ``$N|\gamma$'' with ``$\M_\gamma$''.
   \end{enumerate}
      \item\label{item:CC_PP_forcing_theorems_om_1} The  forcing theorem holds for each of the  forcing relations mentioned in parts \ref{item:1-CC-forcing_om_1}\ref{item:1-Delta_0^M_om_1}--\ref{item:mu_Sigma_n+1_projecting_forcing_rel_def_om_1},\footnote{We have no need for an analogue of the $m_0$
      from part \ref{item:1-CC-forcing}\ref{item:mu_Sigma_n+1_projecting_forcing_rel_def} here;
 in other words, $m_0=0$ works. This is because $\lambda^N$ is fixed by the relevant iteration maps.}
   and the stem forcing theorem holds for the forcing relations mentioned in parts \ref{item:1-PP-forcing_om_1}\ref{item:1-Delta_0^N_om_1},\ref{item:witnessed_rSigma_n+1_forcing_rel_def_om_1}.
  \end{enumerate}
\end{lem}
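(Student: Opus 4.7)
The plan is to prove all parts of the lemma by simultaneous induction on $(\gamma,n)$ under the lexicographic order, with the case $\gamma = \omega_1$ handled in parallel as a ``base case variant'' because of the shift in degree arising from the fact that $\M_{\omega_1}=(\HC,T)$ is the starting point of the $\M$-hierarchy. At each stage $(\gamma,n)$ one establishes, in a fixed order: (i) the $\rDelta_1^{N|\gamma}(\{\lambda^N\})$-definability and forcing theorem for $\sforces{\CC}{\gamma 0}$ and dually for $\sforces{\PP^-}{\gamma 0}$; (ii) the definability and forcing theorem for $\sforces{\CC}{\gamma\mathrm{w},n+1}$ and $\sforces{\PP^-}{\gamma\mathrm{w},n+1}$; (iii) the definability and forcing theorem for the $\mu$-versions; (iv) the projecta/parameter correspondence, i.e.~parts \ref{item:rho_n=rho_n},\ref{item:p_n=p_n} and the hull correspondence part \ref{item:Hull_n+1_ordinals_match}; and finally (v) the projecting forcing relation $\sforces{\CC,\geq m}{\gamma\mu,n+1,\mathrm{proj}}$ in the case that $\rho_{n+1}^{N|\gamma}\leq\lambda^N$.

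At the very base, the facts needed are already established in the previous subsection: Lemma \ref{lem:Sigma_P-iterate_good} yields the $\rDelta_1^{N|\gamma}(\{\lambda^N\})$-definability and $\mSigma_0$-forcing theorem for $\sforces{\CC}{\gamma 0}$, while Lemma \ref{lem:rSigma_0_forcing_theorem_for_PP} yields the dual $\mDelta_1^{\M_\gamma}(\{\xg\})$-definability and $\Sigma_0$-stem-forcing theorem for $\sforces{\PP^-}{\gamma 0}$. The inductive step at $(\gamma,n+1)$ proceeds uniformly for the two sides: the definability of the witnessed forcing relation is read off directly from its formal definition (Definitions \ref{dfn:stem_forcing_relations}, \ref{dfn:CC_forcing_relations}), using the inductive definability of $\sforces{\cdot}{\gamma\mathrm{w},n}$ and $\sforces{\cdot}{\gamma 0}$ together with the coding of putative witnesses via $\tau^{\Mtilde_\gamma}_n(\beta)$ and $\tau^{\Ntilde|\gamma}_n(\beta)$. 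The forcing theorem at level $n+1$ is then a standard argument: soundness follows by examining the form of a putative witness and applying the $\mSigma_n$/$\rSigma_n$ forcing theorem to each of its clauses; completeness uses that in both $\M_\gamma$ and $N|\gamma$, nice $\mSigma_{n+1}$/$\rSigma_{n+1}$ Skolem functions (Definition \ref{dfn:h_n+1^M} and its premouse analogue) produce witnesses of exactly the coded form.

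The key move, and the main technical obstacle, is propagating through the $\mu$-quantifier while simultaneously verifying that the $m$-good $\muSigma_{n+1}^{\Mtilde_\gamma}$ forcing relation is well-behaved. Here one uses the $\mu$-niceness of $N$ (Lemma \ref{lem:Sigma_P-iterate_good}\ref{item:mu-nice_at_eta}), the locality lemmas \ref{lem:xi_gen-all_independence} and its variants to convert ``$\all^*_k s$'' into $\all^{\mathrm{gen}}_{\vec{\delta}} s$-style statements local to $N|\gamma$, and Lemma \ref{lem:lambda_witness_appears_immediately} and its analogues to ensure that witnesses appear in an initial segment of the forcing. The appearance of the lower bound $m_0$ in part \ref{item:CC_PP_forcing_theorems}\ref{item:projecting_forcing_relation_forcing_theorem} reflects the fact that, for $\gamma < \beta^*$, the relevant iteration maps $i_{N R}$ may shift the Woodin interval where the witness appears; at $\gamma=\beta^*$ this is unnecessary because $\lambda^N$ and the full Woodin sequence are fixed by the relevant embeddings. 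The projecta/parameter correspondence follows once both forcing theorems are in hand: the defining property of $\rho_{n+1}^{N|\gamma}$ (respectively $\rho_{n+1}^{\M_\gamma}$) can be translated across, and the standard minimization argument matches $p_{n+1}$ on both sides, with the one harmless exception involving $\{\lambda^N\}$ entering $p_1^{N|\gamma}$ when $\gamma>\omega_1$ and $\lambda^N$ is the largest cardinal of $N|\gamma$. The hull correspondence part \ref{item:Hull_n+1_ordinals_match} is then a routine consequence.

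The $\gamma=\omega_1$ case requires only a notational shift: because $\mSigma_0^{\Mtilde_{\omega_1}}$ already encodes a $\Sigma_\omega$ predicate (the $T$-predicate), one finds that $\mSigma_n^{\M_{\omega_1}}$ matches up with $\rSigma_{n+1}^{N|\omega_1}$ rather than $\rSigma_n^{N|\omega_1}$, and the index shift is absorbed by setting $\sforces{\CC}{\omega_1,1}=\sforces{\lambda 0}{}$ and starting the witnessed-hierarchy recursion at level $2$; apart from this degree shift the argument proceeds identically, using the $\rSigma_2^{N|\lambda^N}$-definability of $\sforces{\lambda 1}{\mathrm{s}}$ (Lemma \ref{lem:strong_Sigma_1_forcing_definability_and_+1_Woodin_locality}). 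Throughout, the uniformity claims in parts \ref{item:forcing_rels_unif_def} and \ref{item:forcing_rels_unif_def_om_1} are ensured simply by observing that at every stage the defining formula $\psi_n$ depends on $n$ only through a recursive formula-level manipulation, independent of $\gamma$ within the stated range.
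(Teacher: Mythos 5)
Your high-level organizational plan matches the paper's: induction on $(\gamma,n)$ (the paper presents it as stages $n=0$ and $n>0$, with the $\gamma=\omega_1$ case dispatched separately as ``similar but simpler''), definability parts read off from Definitions \ref{dfn:stem_forcing_relations} and \ref{dfn:CC_forcing_relations} plus the $\Sigma_0$ lemmas \ref{lem:Sigma_P-iterate_good} and \ref{lem:rSigma_0_forcing_theorem_for_PP}, and the hull/projecta correspondence extracted once the two forcing theorems are in hand. So far, so good.

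The gap is in your treatment of the $\mu$-quantifier step. You present it as a routine combination of $\mu$-niceness, Lemma \ref{lem:xi_gen-all_independence}, and Lemma \ref{lem:lambda_witness_appears_immediately}. But Lemma \ref{lem:xi_gen-all_independence} only yields the equivalence of the $\all^{\mathrm{gen}}_{\vec{\delta}}$, $\all^{\mathrm{gen}}_{\vec{\vareps}}$, and $\all^{\mathrm{gen}}_{\geq\theta;k}$ forms above some $m$, and it refers to the internal $\mSigma_0$-forcing relation $\sforces{\CC_{\tail}}{\mathrm{int}}$ over proper segments of $N$. What is actually needed (and is proved in the paper as a fresh, substantial Claim) is a strictly stronger statement: those equivalences must also hold for the \emph{witnessed} $\mSigma_1$ forcing relation $\sforces{\CC_{\tail}}{\gamma\mathrm{w}1}$ at the top, and --- crucially for the projecting forcing relation --- must be further equivalent to clauses of the form ``$\exists\gamma'\ \all^{\mathrm{gen}}_{\vec{\delta}}s\ \sforces{\CC_{\tail}}{\gamma 0}\Mtilde_{\gamma'}\sats\varphi(\vec{x},s)$'' in which the witness $\gamma'$ is \emph{bounded below $\gamma$}. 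This bounded-witness equivalence has no analogue among the lemmas you invoke. Proving it requires (a) a coordinated genericity-iteration argument using the $L[\es]$-$\mathscr{P}$-constructions $R^{\Pg}_{\vec\theta\cup\vec\varepsilon}$ with aligned generics $g_i,g'_i,h_i$, and (b) at the top stage $\gamma=\OR^N$ when $\rho_1^N=\lambda^N$ (the case relevant to $\beta^*$), a genuine game played with iteration trees $\Uu_0,\Uu_1,\ldots$ on $N$ in which one must show that the final map fixes $\vec{x}$. That last point is itself a nontrivial subclaim resting on $p_1^{\M_{\beta^*}}=p_1^N$ (established via the hull correspondence at the previous degree) and the symmetry of the derived model $\Mtilde^{N,G}$; without it, the ``iterate into the background model'' step has no reason to preserve the name $\vec{x}$, and your argument would simply not close. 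A similar omission, milder, occurs in your completeness argument for the witnessed $\mSigma_{n+1}$ forcing theorem at $n>0$: you say ``nice Skolem functions produce witnesses of exactly the coded form,'' but what the paper actually needs is the $(\gamma,n-1)$ hull correspondence together with condensation (Lemma \ref{lem:Msubstructure}) to identify $C^{\M_\gamma}_\beta=\M_\xi$, $C^{N|\gamma}_\beta=N|\xi$, and to check that $\tau^{\Mtilde_\gamma}_n(\beta)$ and $\tau^{\Ntilde|\gamma}_n(\beta)$ name the same theories in both hierarchies. These two missing ideas are the technical heart of the lemma.
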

\begin{proof}
We give the direct proof of parts \ref{item:rho_n=rho_n}--\ref{item:Hull_n+1_ordinals_match}.
Given what was already discussed in \S\ref{subsec:the_generic_M(R*)},
the remaining details for parts \ref{item:rho_n=rho_n_om_1}--\ref{item:CC_PP_forcing_theorems_om_1}
are similar but simpler,
so we will omit further discussion of these.

The $\mSigma_0^{\Mtilde_\gamma}$
forcing relation $\sforces{\CC}{\gamma0}$ of $N|\gamma$
is by definition uniformly definable,
via $\psi_0(\lambda^N,\cdot,\cdot,\cdot)$
and $\psi_0^{\neg}(\lambda^N,\cdot,\cdot,\cdot)$
(Definitions \ref{dfn:psi_0} and \ref{dfn:psi_0^neg} respectively),
and the corresponding forcing theorem
was established in Lemma \ref{lem:Sigma_P-iterate_good}.
This gives parts 
 \ref{item:1-CC-forcing}\ref{item:1-Delta_0^M} and
\ref{item:forcing_rels_unif_def}\ref{item:mSigma_0_forcing_rel_unif_def},
and the corresponding piece of part \ref{item:CC_PP_forcing_theorems}.
The (uniform) definability
of the $\rSigma_0^{\Ntilde|\gamma}$ stem forcing relation $\sforces{\PP^-}{\gamma0}$
(parts \ref{item:1-PP-forcing}\ref{item:1-Delta_0^N} and \ref{item:forcing_rels_unif_def}\ref{item:rSigma_0_stem_forcing_rel_unif_def})
is by properties \ref{item:dagger3} and \ref{item:dagger4} of Definition
\ref{dfn:prior_condition_deciding},
and the corresponding stem forcing theorem is by Lemma \ref{lem:rSigma_0_forcing_theorem_for_PP},
particularly its part \ref{item:G_contains_eventual_q_ss}.

Regarding the (uniform) definability
when $n>0$,
the appropriate (uniform) definability of the remaining forcing relations 
follows straightforwardly from the definitions (see in particular \ref{dfn:stem_forcing_relations} and \ref{dfn:CC_forcing_relations}), by proceeding via induction on $n$,
 and we leave this verification to the reader.\footnote{Note that
 we are referring here to the
 formal definitions we gave;
 we have not yet verified that
 those definitions yield something
 useful, i.e.~that the relevant forcing theorems hold.} (One point maybe worth highlighting here is that in Definition \ref{dfn:CC_forcing_relations},
 in order to define the witnessed $\mSigma_{n+1}$ forcing relation $\sforces{\CC}{\gamma\mathrm{w},n+1}$
 when $n>0$ and $\gamma>\lambda^N$
 and $\rho_n^{N|\gamma}=\lambda^N$,
 we defined and referred to the names $\tau_n^{\Mtilde_\gamma}(\varrho)$,
 for $\varrho\in\Mhat_{\lambda^N}$.
 In order to see that $\sforces{\CC}{\gamma\mathrm{w},n+1}$ is appropriately definable, one wants to see that $\tau_n^{\Mtilde_\gamma}(\varrho)$
 is easily computed from
 \begin{equation}\label{eqn:rSigma_n_theory_in_N|gamma_which_computes_name}\Th_{\rSigma_n}^{N|\gamma}(\{\varrho,\lambda^N,\pvec_n^{N|\gamma}\}\cup(\delta_d^N)^{+N}),\end{equation} where $d=\supp(\varrho)$, and uniformly so. But by induction, 
 the stable $\muSigma_n$ forcing relation $\sforces{\CC}{\gamma\mu n}$ 
is $\all^\om\rSigma_n(\pvec)$
where $\pvec=\pvec_{n-1}^{N|\gamma}$,
if $n>1$, or $\all^\om\rSigma_1^{N|\gamma}(\{\lambda^N\})$,
if $n=1$. So in order to check the truth of a forcing statement of form $p\sforces{\CC}{\gamma\mu n}\varphi(\vec{x})$,
one just has to check that the corresponding $\om$-sequence
of statements all belong
to the appropriate $\rSigma_n$ theory. Combining this with some basic forcing calculations,
it follows that the name $\tau_n^{\Mtilde_\gamma}(\varrho)$
is indeed easily and uniformly enough computable from the theory indicated
in line (\ref{eqn:rSigma_n_theory_in_N|gamma_which_computes_name}).

 This completes the proof of parts \ref{item:1-CC-forcing},
 \ref{item:1-PP-forcing}
 and \ref{item:forcing_rels_unif_def}.

We now consider the rest of parts \ref{item:rho_n=rho_n}--\ref{item:Hull_n+1_ordinals_match},
proceeding by induction on $n$, assuming that if $\gamma=\beta^*$
then $n\leq\min(n^*,n_0)$.\footnote{We show in Lemma \ref{lem:n^*=n_0} that (as $\omega_1<\beta^*$) actually $n^*=n_0$.}We break into two stages: $n=0$ and $n>0$.

\begin{stage*} $n=0$.
 
Since $n=0$, parts \ref{item:rho_n=rho_n}
and \ref{item:p_n=p_n} are trivial.
The witnessed $\mSigma_1^{\Mtilde_\gamma}$ and witnessed $\rSigma_1^{\Ntilde|\gamma}$
forcing theorems (of \ref{item:1-CC-forcing}\ref{item:1-witnessed_1^N} and \ref{item:1-PP-forcing}\ref{item:witnessed_rSigma_n+1_forcing_rel_def}) are immediate consequences of the 
$\mSigma_0^{\Mtilde_\gamma}$ and $\rSigma_0^{\Ntilde|\gamma}$ forcing theorems.

\begin{clm}
Part \ref{item:Hull_n+1_ordinals_match} holds.
\end{clm}
\begin{proof}
 We have
\[\gamma\cap\Hull_{\mSigma_{1}}^{\M_\gamma}(\HC\cup X)\sub
   \gamma\cap\Hull_{\rSigma_{1}}^{N|\gamma}(\lambda^N\cup X\cup\{\lambda^N\})
  \] 
  because if $\xi<\gamma$,
  $t$ is an $\mSigma_1$ min-term,
  $\vec{\alpha}\in X^{<\om}$,
   $z\in\HC$ and
  \[ \M_\gamma\sats\xi=t(z,\vec{\alpha}),\]
then by the witnessed $\mSigma_1^{\Mtilde_\gamma}$ forcing theorem for $N|\gamma$
there is some $p\in\CC^N$
and $\dot{z}\in\Mhat_{\lambda^N}$
such that
\[ p\sforces{\CC^N}{\gamma\mathrm{w}1}O_\xi=t(\dot{z},O_{\vec{\alpha}}),\]
and since $\sforces{\CC^N}{\gamma\mathrm{w}1}$ is $\rSigma_1^{N|\gamma}(\{\lambda^N\})$
it follows that
\[ \xi\in\Hull_{\rSigma_1}^{N|\gamma}((\lambda^N+1)\cup X).\]

The converse is similar, except
that it is important that we use the stem forcing relation, instead of a standard forcing relation,
so that we can use some $s\in\PP^-\sub\HC$
instead of $p\in\PP$ (along with parameters from $X$) to define a given ordinal $\xi$;
we also get $\lambda^N=\omega_1$
in the hull on the left automatically,
considering the $\M(\RR)$ language.

This establishes \ref{item:Hull_n+1_ordinals_match}\ref{item:Hull_rho_p_match_n=0}\ref{item:Hull_match_n=0}.
The rest of part \ref{item:Hull_n+1_ordinals_match} is an easy consequence. (If $p_1^{N|\gamma}=\{\lambda^N\}$ then $p_1^{\M_\gamma}=\emptyset$ because $\om_1=\lambda^N\in\Hull_1^{\M_\gamma}(\emptyset)$, because the $\M(\RR)$ language has a symbol for $\omega_1$. If $p_1^{\M_\gamma}=\emptyset$ but $\rho_1^{\M_\gamma}=\omega_1=\rho_1^{N|\gamma}$
 then $p_1^{N|\gamma}=\{\lambda^N\}$
 because $\Hull_1^{N|\gamma}(\lambda^N)=N|\lambda^N$.)
\end{proof}

We next consider the forcing theorem for  the $\muSigma_1^{\Mtilde_\gamma}$ forcing relation $\sforces{\CC}{\gamma\mu,1}$,
which except at the very last stage of induction follows easily from the following claim.

\begin{clm}\label{clm:muSigma_1_forcing_theorem_<_OR^N}
Suppose that either $\gamma<\OR^N$
or $0<n_0$.
Let $\vec{x}\in(\Nhat|\gamma)^{<\om}$, $d=\supp(\vec{x})$.
Then there is $m>d$ such that
for all $\mSigma_1$ formulas $\varphi$,
all $k<\om$,
all $\vec{\delta},\vec{\vareps}\in[\Delta_{\geq m}^N]^{2k}$,
and all $\theta\in\Delta_{\geq m}^N$,
 $N$ satisfies
that $\CC_d$ forces 
the following
statements
are equivalent:
\begin{enumerate}[label=\tu{(}\roman*\tu{)}]
\item\label{item:deltavec_unbounded_witness}$\all^{\mathrm{gen}}_{\vec{\delta}}s\ \sforces{\CC_{\mathrm{tail}}}{\gamma\mathrm{w}1}\varphi(\vec{x},s)$
\item\label{item:deltavec_bound_witness} $\exists\gamma'\in\OR\ \all^{\mathrm{gen}}_{\vec{\delta}}s\ \sforces{\CC_{\tail}}{\gamma0}\Mtilde_{\gamma'}\sats\varphi(\vec{x},s)$
\item\label{item:epsvec_unbounded_witness} $\all^{\mathrm{gen}}_{\vec{\vareps}}s\ \sforces{\CC_{\mathrm{tail}}}{\gamma\mathrm{w}1}\varphi(\vec{x},s)$
\item\label{item:>=all-gen>=theta} $\all^{\mathrm{gen}}_{\geq\theta;k}s\ \sforces{\CC_{\mathrm{tail}}}{\gamma\mathrm{w}1}\varphi(\vec{x},s)$.
\item\label{item:each_Woodin_bounded_witness} $\all \ell<\om\ \exists\vec{\beta}\in[\Delta_{\geq \ell}]^{2k}\ \exists\gamma'\in\OR\ \all^{\mathrm{gen}}_{\vec{\beta}}s\ \sforces{\CC_{\tail}}{\gamma0}\ \Mtilde_{\gamma'}\sats\varphi(\vec{x},s)$.
\end{enumerate}
\end{clm}

Note here that in clauses \ref{item:deltavec_bound_witness} and \ref{item:each_Woodin_bounded_witness}
above,
although $\gamma\in\Lim$, $\gamma'$ might be a successor. And $\Mtilde_{\gamma'}$
is just the natural name in $\Mhat_\gamma$ for the $\gamma'$th
level of the hierarchy of the generic $\Mtilde_G$.

\begin{proof} We consider three cases.

 \begin{case}\label{case:gamma<OR^N_muSigma_1_forcing_thm}
$\gamma<\OR^N$.

Let $\left<\Tt_n\right>_{n<\om}$
be the standard decomposition of $\Tt$,
the tree leading from $\Pg$ to $N$
(see Definition \ref{dfn:standard_decomposition}).
Let $N_0=\Pg$ and
 $N_{n+1}=M^{\Tt_n}_\infty$
and $j_{n,\infty}:N_{n}\to N$
be the iteration map. Fix $n$ such that $\gamma,\vec{x}\in\rg(j_{n\infty})$.
Note then that it suffices
to prove the corresponding equivalence for $N_{n}$, $\gamma'=j_{n\infty}^{-1}(\gamma)$ and $\vec{x}'=j_{n\infty}^{-1}(\vec{x})$,
 since the equivalence is preserved by $j_{n\infty}$. (The statement
that the equivalence holds is not quite first order over $N_{n}|\gamma'$,
(because
 of the unboundedness of the quantifiers in \ref{item:>=all-gen>=theta}),
but if $m$ witnesses it for $N_{n}$,
then for each $k$, it just says
that a certain statement $\psi_k$ holds
(of the relevant parameters),
each of which are preserved by $j_{n\infty}$.) Now for notational simplicity,
let us just assume that $n=0$,
 so $N_n=\Pg$;
the other case is just a relativization
above $N_{n+1}|\delta_n^{N_{n+1}}$,
using the $\delta_n^{N_{n+1}}$-soundness of $N_{n+1}$ (take $m\geq n+1$ in this case).

Using the fact that a given ordinal is eventually fixed under the relevant iteration maps,\footnote{***Could add a general lemma on this.} fix $m> d$ such that for all $k<\om$ and
all $\vec{\delta}\in[\Delta_{>m}^{\Pg}]^{2k}$, 
letting  $j:\Pg\to R^{\Pg}_{\vec{\theta}\cup\vec{\delta}}$
be the iteration map, where $\vec{\theta}=(\delta_0^{\Pg},\ldots,\delta_m^{\Pg})$, then $j(\gamma)=\gamma$
and $j(\loc(\vec{x}))=\loc(\vec{x})$,
and therefore $j(\vec{x})=\vec{x}$.
With this $m$, the equivalence of 
\ref{item:deltavec_unbounded_witness},
\ref{item:epsvec_unbounded_witness}
and \ref{item:>=all-gen>=theta}
is as in the proof of Lemma \ref{lem:xi_gen-all_independence}.
 With this, it is easy to see that
 once we have
 shown that \ref{item:deltavec_unbounded_witness}
 implies \ref{item:deltavec_bound_witness},
 the rest easily follows.
 
 So let us show \ref{item:deltavec_unbounded_witness}
 $\Rightarrow$ \ref{item:deltavec_bound_witness}.
  For purposes of illustration suppose $k=2$;
  the general case just involves more notation.
  We may then assume that $\vec{\delta}=(\delta_{m+1}^N,\delta_{m+2}^N,\delta_{m+3}^N,\delta_{m+4}^N)$,
  since if \ref{item:deltavec_bound_witness} holds
  for this $\vec{\delta}$,
  then we can use an iteration map like $j$
  above to deduce it for the remaining tuples in $[\Delta_{\geq m}^N]^{2k}$.
  Fix $g$ which is $(\Pg,\CC_d^{\Pg})$-generic,
  and suppose 
  that in $\Pg[g]$,
  \ref{item:deltavec_unbounded_witness}
 holds but
 \ref{item:deltavec_bound_witness}
 fails. 
 Then 
 \begin{equation}\label{eqn:no_bounded_witness} \Pg[g]\sats\all\gamma'\in\OR\ \exists^{\mathrm{gen}}_{\vec{\delta}}s\ \sforces{\CC}{\gamma0}\Mtilde_{\gamma'}\sats\neg\varphi(\vec{x},s),\end{equation}
where we have used the homogeneity of $\CC$
and that $\supp(\vec{x})=d$.
Let $\vec{\vareps}=(\delta_{m+1}^N,\delta_{m+3}^N,\delta_{m+4}^N,\delta_{m+6}^N)$
and $\vec{\nu}=(\delta_{m+2}^N,\delta_{m+3}^N,\delta_{m+5}^N,\delta_{m+6}^N)$.
Let $R=R^{\Pg}_{\vec{\theta}\cup\vec{\vareps}}$
and $R'=R^{\Pg}_{\vec{\theta}\cup\vec{\nu}}$.
Let $h_i$, for $i\in\{1,2,3,4,5,6\}$,
be $(N,\CC_{m+i}^N)$-generic,
with $g\sub h_1\sub\ldots\sub h_6$.
Let $g_i$, for $i\in\{1,2,3,4\}$,
be $(R,\CC_{m+i}^R)$-generic,
with $g\sub g_1\sub\ldots\sub g_4$,
and such that  $\HC^{R[g_1]}=\HC^{N[h_1]}$,
$\HC^{R[g_2]}=\HC^{N[h_3]}$,
$\HC^{R[g_3]}=\HC^{N[h_4]}$,
and $\HC^{R[g_4]}=\HC^{N[h_6]}$.
Let $g'_i$, for $i\in\{1,2,3,4\}$,
be $(R',\CC_{m+i}^{R'})$-generic,
with $g\sub g'_1\sub\ldots\sub g'_4$,
and such that $\HC^{R'[g'_1]}=\HC^{N[h_2]}$, $\HC^{R'[g'_2]}=\HC^{N[h_3]}$,
$\HC^{R'[g'_3]}=\HC^{N[h_5]}$,
and $\HC^{R'[g'_4]}=\HC^{N[h_6]}$.

Let $j:\Pg\to R$ and $j':\Pg\to R'$ be the iteration maps,
which extend to the generic extensions given by $g$,
and $j(\gamma)=\gamma=j'(\gamma)$
and $j(\vec{x})=\vec{x}=j'(\vec{x})$.
So (by \ref{item:deltavec_unbounded_witness}
in $\Pg[g]$)
\begin{equation}\label{eqn:R'[g]_sats_all^gen_s} R'[g]\sats\all^{\mathrm{gen}}_{\vec{\nu}}s\ \sforces{\CC}{\gamma\mathrm{w}1}\varphi(\vec{x},s),\end{equation}
but (by line (\ref{eqn:no_bounded_witness}) for each $\gamma'<\gamma$,
\begin{equation}\label{eqn:each_gamma'_R[g]_sats_exists^gen_s} R[g]\sats\exists^{\mathrm{gen}}_{\vec{\vareps}}s\ \sforces{\CC}{\gamma0}\Mtilde_{\gamma'}\sats\neg\varphi(\vec{x},s).\end{equation}
So for each $\gamma'<\gamma$,
fix a Turing degree $x_{\gamma'}\in\HC^{R[g_1]}$ witnessing
the first (existential) degree quantifier
in the ``$\exists^{\mathrm{gen}}_{\vec{\vareps}}s$'' quantifier in line (\ref{eqn:each_gamma'_R[g]_sats_exists^gen_s}) in $R[g_1]$. Note that $\HC^{R[g_1]}=\HC^{N[h_1]}$ is countable in $N[h_2]$,
and since $\HC^{N[h_2]}=\HC^{R'[g'_1]}$,
we can therefore fix a Turing degree $x_\infty\in\HC^{R'[g'_1]}$ such that
$x_\infty\geq_T x_{\gamma'}$ for all $\gamma'<\gamma$. Applying the first (universal) degree quantifier in the ``$\all^{\mathrm{gen}}_{\vec{\nu}}s$''
quantifier in line (\ref{eqn:R'[g]_sats_all^gen_s})
in $R'[g'_1]$ to $x_\infty$,
let $s_0$ be a Turing degree
in $\HC^{R'[g'_2]}$
such that $x_\infty\leq_T s_0$
and $s_0$ satisfies the first existential degree quantifier of ``$\all^{\mathrm{gen}}_{\vec{\nu}}s$''
in $R'[g'_2]$.
Since $\HC^{R'[g'_2]}=\HC^{R[g_2]}$
and $x_{\gamma'}\leq_T x_\infty\leq_T s_0$ for each $\gamma'<\gamma$,
the first universal degree quantifier of ``$\exists^{\mathrm{gen}}_{\vec{\vareps}}s$'' applies in $R[g_2]$ to $s_0$,
with respect to $\gamma'$.
Therefore, for each $\gamma'<\gamma$
we can fix a Turing degree $y_{\gamma'}\in\HC^{R[g_3]}$ witnessing the second existential degree quantifier
in ``$\exists^{\mathrm{gen}}_{\vec{\vareps}}s$''
in $R[g_3]$
(with respect to $\gamma'$),
then pick
$y_\infty\in\HC^{R'[g'_3]}$
such that $y_{\gamma'}\leq_T y_\infty$
for each $\gamma'$,
apply the next universal degree quantifier to $y_\infty$
in $R'[g'_3]$,
and hence find a Turing degree
$s_1\in R'[g'_4]$
with $y_\infty\leq_T s_1$
and such that
\[ R'[g'_4]\sats\ \sforces{\CC}{\gamma\mathrm{w}1}\varphi(\vec{x},s)\]
where $s=(s_0,s_1)$.
By homogeneity of $\CC$, and since all the names here have small enough support, it follows that there is $\gamma'<\gamma$
such that
\[ R'[g'_4]\sats\ \sforces{\CC}{\gamma 0}\Mtilde_{\gamma'}\sats\varphi(\vec{x},s).\]
On the other hand, our choice of $(s_0,s_1)$ with respect to the ``$\exists^{\mathrm{gen}}_{\vec{\vareps}}$'' quantifier with respect to $\gamma'$ guarantees that 
\[ R[g_4]\sats\ \sforces{\CC}{\gamma0}\Mtilde_{\gamma'}\sats\neg\varphi(\vec{x},s).\]
But since $R,R'$ are both given by corresponding P-constructions above $\delta_{m+6}^N$,
note that $g_4$ and $g'_4$ can be extended
to $G$ and $G'$ respectively, such that $(\Mtilde^R)_G=(\Mtilde^{R'})_{G'}$,
and this gives a contradiction.
Other values of $k$ are similar. 
This proves the claim in case $\gamma<\OR^N$.\end{case}

\begin{case} $\gamma=\OR^N$ but $\lambda^N<\rho_1^N$.

We may substitute $N_n$ for $\Pg$
where $\vec{x}\in\rg(j_{n\infty})$
(notation as at the start of the proof in Case \ref{case:gamma<OR^N_muSigma_1_forcing_thm}).
For  $j_{n\infty}:N_n\to N$
is a near $1$-embedding, and:
\begin{enumerate}[label=--]
\item \ref{item:deltavec_bound_witness}
is $\rSigma_1$,
\item \ref{item:each_Woodin_bounded_witness}
is $\all^\om\rSigma_1$, and
 \item 
 \ref{item:deltavec_unbounded_witness},
 \ref{item:epsvec_unbounded_witness}
 and \ref{item:>=all-gen>=theta}
are $\rSigma_2$ (in the relevant parameters); for example \ref{item:deltavec_unbounded_witness}
is expressed over $N_n$ by a simple assertion about $\Th_{\rSigma_1}^{N_n}(\max(\vec{\delta})^{+N_n}\cup\{p\})$
for the appropriate $p$,
and \ref{item:>=all-gen>=theta}
is a simple assertion about the theory
$\Th_{\rSigma_1}^{N_n}(\lambda^{N_n}\cup\{p\})$ for the appropriate $p$,
which by the case hypothesis
is in $N_n$, and mapped correctly
by $j_{n\infty}$,
\end{enumerate}
and using these facts it is easy to
see that $j_{n\infty}$ preserves the truth of the claim.

The proof that the claim holds for $N_n$ is just like that in the previous case,
taking $m\geq n$, but also using the considerations just mentioned to see that
the iteration maps $i:N_n\to R^{N_n}_{\vec{\theta}}$ (for $\vec{\delta}\in[\Delta^{N_n}]^{<\om}$) preserve the truth of the relevant forcing statements. 
\end{case}

\begin{case}\label{case:gamma=OR^N_and_lambda^N=rho_1^N}$\gamma=\OR^N$ and $\lambda^N=\rho_1^N$.
 
The equivalence
 of \ref{item:deltavec_unbounded_witness}, \ref{item:deltavec_bound_witness}, \ref{item:epsvec_unbounded_witness},  and \ref{item:each_Woodin_bounded_witness} is just as in the previous case.
However, it seems the argument
 used there does not suffice
 to show the equivalence of these with \ref{item:>=all-gen>=theta}:  since $\lambda^N=\rho_1^N$,
 this clause is no longer $\rSigma_2$, and it seems to be too complex to be obviously preserved by the relevant iteration maps. But a different method is available under the case hypothesis.
 Note that by part \ref{item:Hull_n+1_ordinals_match}
 with $\gamma=\OR^N$ and $n=0$
 (for $\Sigma_1$ hulls),
 parts \ref{item:rho_n=rho_n} and \ref{item:p_n=p_n} hold for $n=1$.
 Let $t$ be an $\mSigma_1$ min-term
 and let $z\in\HC$ with $t^{\M_{\beta^*}}(p_1^{\M_{\beta^*}},z)=\vec{\xi}$
 where $\vec{\xi}=\loc(\vec{x})$.
 Fix $G\sub\CC$ witnessing that $N$ is an
 $\RR$-genericity iterate of $\Pg$.
 So $\M_{\beta^*}=(\Mtilde^N)_G$.
 Let $m\in[d,\om)$ with $z\in N[g]$
 where $g=G\rest m$.
 Then by homogeneity of $\CC$,
 \[ N[g]\sats\ \sforces{\CC}{\beta^*\mathrm{w}1}O_{\xi}=t^{\Mtilde}(O_p,z) \]
 where $p=p_1^{\M_{\beta^*}}$.
 
 \begin{sclm}\label{sclm:N_to_R_suff_Prik_it_fixes_xvec}
 Let $R$ be a $\Sigma_{\Pg N}$-iterate of $N$ which is itself a sufficiently Prikry generic iterate of $\Pg$,
 and such that $\delta_m^N<\crit(i_{NR})$. Then $i_{NR}(\vec{x})=\vec{x}$.\end{sclm}
 \begin{proof}All that we have established so far
 for $N$ also applies also to $R$, and in particular parts \ref{item:rho_n=rho_n} and \ref{item:p_n=p_n} hold for $R$, with $n=1$. Therefore $p_1^R=i_{NR}(p_1^N)=p_1^N$ (for certainly also $\lambda^R=i_{NR}(\lambda^N)=\lambda^N$),
 and so $i_{NR}(p)=p$.
 But letting $i_{NR}^+:N[g]\to R[g]$ be the canonical extension, $i_{NR}^+(z)=z$, and so 
 \[ R[g]\sats\ \sforces{\CC_{\tail}}{\beta^*\mathrm{w}1}O_{i_{NR}(\xi)}=t^{\Mtilde}(O_p,z), \]
 and since $\M_{\beta^*}=(\Mtilde^N)_G$
 and $\M_{\beta^*}=(\Mtilde^R)_{G'}$
 with some $G'$ with $g=G'\rest m$,
 it follows that $i_{NR}(\vec{\xi})=\vec{\xi}$, and since $m\geq d$,
 therefore $i_{NR}(\vec{x})=\vec{x}$.\end{proof}
 
 Now let us show that 
 \ref{item:deltavec_unbounded_witness} implies \ref{item:>=all-gen>=theta}.
 Suppose $N[g]$ satisfies
 \ref{item:deltavec_unbounded_witness}.
For illustration suppose $k=2$
and $\vec{\delta}=(\delta_{m}^N,\delta_{m+1}^N,\delta_{m+2}^N,\delta_{m+3}^N)$.
Let $\theta\in\Delta_{\geq m}^N$.
We must show that
\begin{equation}\label{eqn:to_show} N[g]\sats\all^{\mathrm{gen}}_{\geq\theta;k}s\ \sforces{\CC}{\beta^*\mathrm{w}1}\varphi(\vec{x},s).\end{equation}
So let $n_0\geq m$ with $\vareps_0=\delta_{n_0}^N\geq\theta$;
we let $\vareps_0$ be the ``first Woodin played'' by the $\all$-player in the
game corresponding
to the statement in (\ref{eqn:to_show}).
Note then we may assume $n_0>m$.
We may assume $g_0=G\rest(n_0+1)$ is the generic played. Let $z_0\in N[g_0]$
be the Turing degree played.

We now find the first response
for the $\exists$-player.
Let $R_0=R^N_{\vec{\theta}\cup\{\vareps_0\}}$, where $\vec{\theta}=(\delta_0^N,\ldots,\delta_{m-1}^N)$.
So $i_{NR_0}\rest\delta_{m-1}^N=\id$
and $i_{NR_0}(\delta_{m}^N)=\delta_{m}^{R_0}=\vareps_0$.
Let $g'_0$ be $(R_0,\CC_{m}^{R_0})$-generic with $g\sub g'_0$
and $\HC^{R[g'_0]}=\HC^{N[g_0]}$.
Let $\Uu_0$ be the successor length tree on $N$ given by iterating $N|\delta_{m}^N$ out to $R_0|\delta_{m}^{R_0}$,
with $\delta(\Uu_0)=\delta_{m}^{R_0}$
(so $M^{\Uu_0}_\infty|\delta_{m}^{M^{\Uu_0}_\infty}=R_0|\delta_{m}^{R_0}$, but it need not be true that $M^{\Uu_0}_\infty=R_0$).
Let $n_1<\om$ be such that $\vareps_0<\delta_{n_1}^N$ and
\begin{equation}\label{eqn:M^U_0_infty|delta_m+1_in_HC} M^{\Uu_0}_\infty|\delta_{m+1}^{M^{\Uu_0}_\infty}\in\HC^{N[G\rest n_1]}.\end{equation}
Set $\vareps_1=\delta^N_{n_1+1}$. Let $R_1=R^N_{\vec{\theta}\cup\{\vareps_0,\vareps_1\}}$.
Then $\delta_{m+1}^{R_1}=\vareps_1$
and $R_1|\vareps_1$ is a correct iterate of $M^{\Uu_0}_\infty|\delta_{m+1}^{M^{\Uu_0}_\infty}$
(the proof of this fact uses line (\ref{eqn:M^U_0_infty|delta_m+1_in_HC})). Let $\Uu_1$ be the correct successor length tree on $M^{\Uu_0}_\infty$
iterating $M^{\Uu_0}_\infty|\delta_{m+1}^{M^{\Uu_0}_\infty}$ out to $R_1|\vareps_1$, with $\delta(\Uu_1)=\vareps_1$. Let $g_1=G\rest\vareps_1$
and $g'_1$ be $(R_1,\CC_{m+1}^{R_1})$-generic with $g'_0\sub g'_1$
and such that $\HC^{N[g_1]}=\HC^{R_1[g'_1]}$.
Let $s_0$ be a Turing degree
in $\HC^{R_1[g'_1]}$ with $s_0\geq_T z_0$
witnessing the first existential degree
quantifier of ``$\all^{\mathrm{gen}}_{i^{\Uu_0,\Uu_1}(\vec{\delta})}s$''
 in $M^{\Uu_0,\Uu_1}_\infty[g'_1]$ (this holds there as $i^{\Uu_0,\Uu_1}$ is $\rSigma_2$-elementary). Set $(\vareps_1,g_1,s_0)$
 to be the first move by the $\exists$-player.

 Let $\vareps_2\in\Delta^N$ with $\vareps_2>\vareps_1$; the $\all$-player will play $\vareps_2$. Note that
 we may assume $\delta_{n_2}^N<\vareps_2$
 where $n_2>n_1+1$ is such that $M^{\Uu_0,\Uu_1}_\infty|\delta_{m+2}^{M^{\Uu_0,\Uu_1}_\infty}\in\HC^{N[G\rest n_2]}$.
Let $g_2=G\rest\vareps_2$;
we may assume the $\all$-player plays $g_2$.
 Let the $\all$-player play Turing degree $z_1\in N[g_2]$.
 
 Let $R_2=R^N_{\vec{\theta}\cup\{\vareps_0,\vareps_1,\vareps_2\}}$.
  Let $g'_2$
be $(R_2,\CC_{m+2}^{R_2})$-generic
with $g'_1\sub g'_2$
and $\HC^{R_2[g'_2]}=\HC^{N[g_2]}$.
Now $R_2|\vareps_2$ is a correct
iterate of $M^{\Uu_0,\Uu_1}_\infty|\delta_{m+2}^{M^{\Uu_0,\Uu_1}_\infty}$;
let $\Uu_2$ be the correct successor length tree
on $M^{\Uu_0,\Uu_1}_\infty$
iterating $M^{\Uu_0,\Uu_1}_\infty|\delta_{m+2}^{M^{\Uu_0,\Uu_1}_\infty}$
out to $R_2|\vareps_2$.
Let $n_3<\om$ be such that
$\vareps_2<\delta_{n_3}^N$
and $M^{\Uu_0,\Uu_1,\Uu_2}_\infty|\delta_{m+3}^{M^{\Uu_0,\Uu_1,\Uu_2}_\infty}\in\HC^{N[G\rest n_3]}$.
Let $\vareps_3=\delta_{n_3+1}^N$.
Let $R_3=R^N_{\vec{\theta}\cup\{\vareps_0,\ldots,\vareps_3\}}$.
As before, let $\Uu_3$
be the correct successor length tree on 
 $M^{\Uu_0,\Uu_1,\Uu_2}_\infty$
iterating $M^{\Uu_0,\Uu_1,\Uu_2}_\infty|\delta_{m+3}^{M^{\Uu_0,\Uu_1,\Uu_2}_\infty}$ out to $R_3|\delta_{m+3}^{R_3}$,
with $\delta(\Uu_3)=\delta_{m+3}^{R_3}$.
Let $g_3=G\rest\vareps_3$,
and let $g_3'$ be $(R_3,\CC_{m+3}^{R_3})$-generic,
with $\HC^{R_3[g'_3]}=\HC^{N[g_3]}$,
and $s_1\in\HC^{R_3[g'_3]}$
witnessing the second existential degree quantifier of ``$\all^{\mathrm{gen}}_{\vec{\vareps}}s$''
in $M^{(\Uu_0,\ldots,\Uu_3)}_\infty[g'_3]$ with respect to $s_0,z_1$.
So
\begin{equation}\label{eqn:M^U<4_sats_forced_exists} M^{(\Uu_0,\ldots,\Uu_3)}_\infty[g'_3]\sats\ \sforces{\CC}{\OR\mathrm{w}1}\varphi(\vec{x}',s) \end{equation}
where $s=(s_0,s_1)$ and $\vec{x}'=i^{(\Uu_0,\ldots,\Uu_3)}_{0\infty}(\vec{x})$.

Now continuing further, but without having to consider further Turing degrees, find trees $\Uu_4,\Uu_5,\ldots$, with $\Uu_{k+1}$ on $M^{(\Uu_0,\ldots,\Uu_k)}_\infty$,
and integers $n_4,n_5,\ldots$,
with $n_3<n_4<n_5<\ldots$,
and such that for $k\geq 3$, we have $M^{(\Uu_0,\ldots,\Uu_k)}_\infty|\delta_{m+k+1}^{M^{(\Uu_0,\ldots,\Uu_k)}_\infty}\in\HC^{N[G\rest n_{k+1}]}$, and setting $\vareps_{k+1}=\delta_{n_{k+1}+1}^N$
and $R_{k+1}=R^N_{\vec{\theta}\cup\{\vareps_0,\ldots,\vareps_{k+1}\}}$,
then $M^{(\Uu_0,\ldots,\Uu_{k+1})}_\infty|\delta_{m+k+1}^{M^{(\Uu_0,\ldots,\Uu_{k+1})}_\infty}=R_{k+1}|\delta_{m+k+1}^{R_{k+1}}$.

Let $\Uu=(\Uu_0,\Uu_1,\ldots)$.
Now we did not (at least not explicitly) arrange that $M^\Uu_\infty$ is sufficiently Prikry generic, but in any case we can iterate it further to some $R$ which is sufficiently Prikry generic,
and then
by Subclaim \ref{sclm:N_to_R_suff_Prik_it_fixes_xvec}, $\OR^R=\beta^*$ and $i_{NR}(\vec{\xi})=\vec{\xi}$, and therefore $\OR^{M^\Uu_\infty}=\beta^*$ and $i^\Uu_{0\infty}(\vec{\xi})=\vec{\xi}$,
and therefore $i^\Uu_{0\infty}(\vec{x})=\vec{x}$.
So by line (\ref{eqn:M^U<4_sats_forced_exists}), we have
\[M^\Uu_\infty[g'_3]\sats\ \sforces{\CC}{\OR\mathrm{w}1}\varphi(\vec{x},s).\]

Let $G'\sub\CC$ be $(M^\Uu_\infty,\CC)$-generic with $g'_3\sub G'$
and such that for each $k\geq 3$,
$\HC^{M^\Uu_\infty[G'\rest(k+1)]}=\HC^{N[G\rest\vareps_{k+1}]}$.
Then $G'$ witnesses that $M^\Uu_\infty$
is an $\RR$-genericity iterate.
So by Lemma \ref{lem:Sigma_P-iterate_good}
(and since $\OR^{M^\Uu_\infty}=\beta^*$),
 $(\Mtilde^{M^\Uu_\infty})_{G'}=\M_{\beta^*}=(\Mtilde^N)_G$.
But therefore $\M_{\beta^*}\sats\varphi(\vec{x}_{G'})$
and $\vec{x}_{G'}=\vec{x}_G$,
so by homogeneity of $\CC$,
\[ N[g_3]\sats\ \sforces{\CC}{\OR\mathrm{w}1}\varphi(\vec{x},s),\] as desired, completing
the proof that \ref{item:deltavec_unbounded_witness} implies \ref{item:>=all-gen>=theta}.

For the converse, i.e.~that
\ref{item:>=all-gen>=theta} implies \ref{item:deltavec_unbounded_witness}, use a very similar argument, flipping the roles of the $\all$- and $\exists$-players (but again only applying
iteration maps to clause \ref{item:deltavec_unbounded_witness}).\qedhere
\end{case}
\end{proof}

We now complete the proof of the forcing theorem for  the $\muSigma_1^{\Mtilde_\gamma}$ forcing relation $\sforces{\CC}{\gamma\mu,1}$,
by adapting the previous claim
to the very last stage of induction 
(in the case that $\rho_1^{\Pg}=\om$):
 
\begin{clm}\label{clm:muSigma_1_forcing_theorem_=_OR^N}
Suppose  $\gamma=\OR^N$.
Suppose $0=n_0$; that is, $\rho_1^{\Pg}=\om$.
Let $\vec{x}\in(\Nhat|\gamma)^{<\om}$
and $d=\supp(\vec{x})$.
Then there is $m>d$ such that
for all $\mSigma_1$ formulas $\varphi$,
all $k<\om$,
all $\vec{\delta},\vec{\vareps}\in[\Delta_{\geq m}^N]^{2k}$,
and all $\theta\in\Delta_{\geq m}^N$,
 $N$ satisfies
that $\CC_d$ forces 
the following
statements
are equivalent:
\begin{enumerate}[label=\tu{(}\roman*\tu{)}]
\item\label{item:deltavec_bound_witness_end} $\exists\gamma'\in\OR\ \all^{\mathrm{gen}}_{\vec{\delta}}s\ \sforces{\CC_{\tail}}{\gamma0}\Mtilde_{\gamma'}\sats\varphi(\vec{x},s)$
\item\label{item:epsvec_unbounded_witness_end}$\exists\gamma'\in\OR\ \all^{\mathrm{gen}}_{\vec{\vareps}}s\ \sforces{\CC_{\tail}}{\gamma0}\Mtilde_{\gamma'}\sats\varphi(\vec{x},s)$
\item\label{item:>=all-gen>=theta_end} $\all^{\mathrm{gen}}_{\geq\theta;k}s\ \sforces{\CC_{\mathrm{tail}}}{\gamma\mathrm{w}1}\varphi(\vec{x},s)$
\item\label{item:each_Woodin_bounded_witness_end} $\all \ell<\om\ \exists\vec{\beta}\in[\Delta_{\geq \ell}]^{2k}\ \exists\gamma'\in\OR\ \all^{\mathrm{gen}}_{\vec{\beta}}s\ \sforces{\CC_{\tail}}{\gamma0}\ \Mtilde_{\gamma'}\sats\varphi(\vec{x},s)$.
\end{enumerate}
\end{clm}
\begin{proof}
 The equivalence of 
 \ref{item:deltavec_bound_witness_end}, 
\ref{item:epsvec_unbounded_witness_end} and \ref{item:each_Woodin_bounded_witness_end}
follows as before, using the $\delta_i^{N_i}$-soundness of $N_i$
(where $N_i$ is as before).
The proof that \ref{item:deltavec_bound_witness_end}
$\Rightarrow$
\ref{item:>=all-gen>=theta_end}
is just like the proof that 
Claim \ref{clm:muSigma_1_forcing_theorem_<_OR^N}
\ref{item:deltavec_unbounded_witness}
$\Rightarrow$
Claim  \ref{clm:muSigma_1_forcing_theorem_<_OR^N}.\ref{item:>=all-gen>=theta}
in Case \ref{case:gamma=OR^N_and_lambda^N=rho_1^N}
of Claim  \ref{clm:muSigma_1_forcing_theorem_<_OR^N}'s proof.
So we just need to see that (above some $m$, $\CC_d$ forces that) \ref{item:>=all-gen>=theta_end} $\Rightarrow$ \ref{item:deltavec_bound_witness_end}.
But this can be shown by using
the kind of argument given in the proof
that Claim \ref{clm:muSigma_1_forcing_theorem_<_OR^N}\ref{item:deltavec_unbounded_witness} $\Rightarrow$ Claim \ref{clm:muSigma_1_forcing_theorem_<_OR^N}\ref{item:deltavec_bound_witness}
in Case \ref{case:gamma<OR^N_muSigma_1_forcing_thm} of Claim  \ref{clm:muSigma_1_forcing_theorem_<_OR^N}'s proof,
but executed in the manner of
the proof that Claim \ref{clm:muSigma_1_forcing_theorem_<_OR^N}\ref{item:>=all-gen>=theta} $\Rightarrow$ Claim \ref{clm:muSigma_1_forcing_theorem_<_OR^N}\ref{item:deltavec_unbounded_witness}
in Case \ref{case:gamma=OR^N_and_lambda^N=rho_1^N}
of Claim  \ref{clm:muSigma_1_forcing_theorem_<_OR^N}'s proof.
\end{proof}

The next claim is the last piece in stage $n=0$:
\begin{clm}
Part \ref{item:CC_PP_forcing_theorems}\ref{item:projecting_forcing_relation_forcing_theorem} holds.\end{clm}
\begin{proof}Assume
$\rho_1^{N|\gamma}\leq\lambda^N$.

Suppose first that $\gamma<\OR^N$. Then it suffices
to prove that if $\gamma'\in\Lim\cap(\lambda^{\Pg},\OR^{\Pg})$, then there is $m_0<\om$
which works for $\Pg|\gamma'$,
i.e.~that whenever $p\in\CC^{\Pg}$
and $\varphi$ is $\mSigma_{n+1}$
and $\vec{x}\in(\Nhat^{\Pg}|\gamma')^{<\om}$
and
\begin{equation}\label{eqn:m_0_proj_forces} \Pg|\gamma'\sats p\sforces{\CC,\geq m_0}{\gamma'\mu,n+1,\mathrm{proj}}\varphi(\vec{x}),\end{equation}
then for all $m\in[m_0,\om)$,
\begin{equation}\label{eqn:m_proj_forces} \Pg|\gamma'\sats p\sforces{\CC,\geq m}{\gamma'\mu,n+1,\mathrm{proj}}\varphi(\vec{x}).\end{equation}
But for this, it suffices to take $m_0$ large enough
that $\gamma$ is stabilized, in the sense that for all $k<\om$
and all $\vec{\delta}\in[\Delta_{\geq m_0}^{\Pg}]^{2k}$,
we have $i_{\Pg R}(\gamma)=\gamma$
where $R=R^{\Pg}_{\vec{\theta}\cup\vec{\delta}}$
where $\vec{\theta}=(\delta_0^{\Pg},\ldots,\delta_{m_0-1}^{\Pg})$.
For with such $m_0$,
we can use the usual arguments
involving such models (and using soundness)
to propagate
the truth of (\ref{eqn:m_0_proj_forces})
to that of (\ref{eqn:m_proj_forces})
for each $m\geq m_0$,
using an argument like in the proof
of Subclaim \ref{sclm:N_to_R_suff_Prik_it_fixes_xvec} to see that $j(\vec{x})=\vec{x}$ for the iteration maps that arise in this propagation.

Now suppose that $\gamma=\OR^N=\beta^*$.
We work directly with $N$,
not $\Pg$.
In this case we must see that $m_0=0$ suffices. Propagate the
truth 
much as in the foregoing arguments
which directly involved $N$.
We
automatically get that $\gamma=\beta^*$ is ``stable'',
i.e. $\OR^R=\gamma$ for the relevant iterates $R$ of $N$,
and considering the definition
of the projecting forcing relation,
the proof of Subclaim \ref{sclm:N_to_R_suff_Prik_it_fixes_xvec} again gives that $j(\vec{x})=\vec{x}$ for the corresponding iteration maps $j:N\to R$. So arguments like before work;
we leave the details to the reader.
\end{proof}

This completes the inductive stage for $n=0$.\end{stage*}

\begin{stage*} $n>0$.
 
 We must verify
 parts \ref{item:rho_n=rho_n},
 \ref{item:p_n=p_n},
 \ref{item:CC_PP_forcing_theorems}
 and
 \ref{item:Hull_n+1_ordinals_match}.
  
Parts \ref{item:rho_n=rho_n} and \ref{item:p_n=p_n}: Since $n\leq n_0$ (by assumption),
 we have $\rho_n^{N|\gamma}\geq\lambda^N$. So these
 two parts are by part \ref{item:Hull_n+1_ordinals_match}
 for $n-1$. 

 Write
$\rho=\rho_n^{N|\gamma}=\rho_n^{\M_\gamma}$. 

\begin{clm}
The forcing theorem
for the witnessed $\mSigma_{n+1}$
forcing relation \tu{(}of part  \ref{item:1-CC-forcing}\ref{item:1-witnessed_1^N}\tu{)}, and the stem-forcing theorem for the witnessed $\rSigma_{n+1}$
stem forcing relation \tu{(}of part
 and \ref{item:1-PP-forcing}\ref{item:witnessed_rSigma_n+1_forcing_rel_def}\tu{)} both hold.
 \end{clm}
 \begin{proof}
We consider two cases, corresponding to the value of $\rho$.

\begin{case} $\rho>\om_1$.

By parts \ref{item:rho_n=rho_n} and \ref{item:p_n=p_n} with $(\gamma,n)$,
the case hypothesis (which implies $\omega_1<\rho_1^{\M_\gamma}=\rho_1^{N|\gamma}$) implies $\pvec_n^{\M_\gamma}=\pvec_n^{N|\gamma}$.
Now given $\beta\in(\lambda^N,\rho)$, let
\begin{enumerate}
\item $H^{\M_\gamma}_\beta=\Hull_{\mSigma_n}^{\M_\gamma}(\beta\cup\HC\cup\{\pvec_n^{\M_\gamma}\})$,
\item $C^{\M_\gamma}_\beta$ be the transitive collapse of $H^{\M_\gamma}_\beta$
\item  $\pi^{\M_\gamma}_\beta:C^{\M_\gamma}_\beta\to H^{\M_\gamma}_\beta$ be the uncollapse map,
\item 
$t^{\M_\gamma}_\beta=\Th_{\mSigma_{n}}(\beta\cup\HC\cup\{\pvec_n^{\M_\gamma}\})$, 
and\item\label{item:t^M_gamma,mu_beta} $t^{\M_\gamma,\mu}_\beta=\Th_{\muSigma_{n}}(\beta\cup\HC\cup\{\pvec_n^{\M_\gamma}\})$.
\end{enumerate}
Also let
\begin{enumerate}[resume*]
\item $H^{N|\gamma}_\beta=\Hull_{\rSigma_n}^{N|\gamma}(\beta\cup\{\pvec_n^{N|\gamma}\})$,
\item $C^{N|\gamma}_\beta$ be the transitive collapse of $H^{N|\gamma}_\beta$,
\item $\pi^{N|\gamma}_\beta:C^{N|\gamma}_\beta\to H^{N|\gamma}_\beta$ be the uncollapse map, and
\item $t^{N|\gamma}_\beta=\Th_{\rSigma_n}^{N|\gamma}(\beta\cup\{\pvec_n^{N|\gamma}\})$.
\end{enumerate}

By part \ref{item:Hull_n+1_ordinals_match} with $(\gamma,n-1)$ (hence, the latter regards $\mSigma_n$ and $\rSigma_n$ hulls), we have
\[ \OR\cap H^{\M_\gamma}_\beta=\OR\cap H^{N|\gamma}_\beta \]
(note that since $\lambda^N<\rho_1^N$, we have $p_1^{\M_\gamma}= p_1^{N|\gamma}$),
so we have (and define $\xi$ as)
\[ \xi=\OR\cap C^{\M_\gamma}_\beta=\OR\cap C^{N|\gamma}_\beta,\]
and
$\pi^{\M_\gamma}_\beta\rest\xi=\pi^{N|\gamma}_\beta\rest\xi$,
and hence (and define $\pvec$ as)
\[ \pvec=(\pi^{N|\gamma}_\beta)^{-1}(\pvec_n^{N|\gamma})=(\pi^{\M_\gamma}_\beta)^{-1}(\pvec_n^{\M_\gamma}).\]
By condensation, it also follows that
\[ C^{\M_\gamma}_\beta=\M_\xi \text{ and }C^{N|\gamma}_\beta=N|\xi.\]

Therefore, recalling the names $\pvec^{\Mtilde_\gamma}_n(\beta)$ and $\tau^{\Mtilde_\gamma}_n(\beta)$
from  \ref{dfn:tau^Mtilde(beta),q(beta)_if_omega_1<=beta}, we get
\begin{enumerate}[label=--]
\item $\pvec^{\Mtilde_\gamma}_n(\beta)=O_{\pvec}$,
and
\item $\tau^{\Mtilde_\gamma}_n(\beta)\in \Mhat_{\xi+\om}$ and
$\tau^{\Mtilde_\gamma}_n(\beta)$
is the natural name for 
\[ \Th_{\muSigma_n}^{\Mtilde_\xi}(O_\beta\cup\widetilde{\HC}\cup\{O_{\pvec}\}),\]
\end{enumerate}
and note here that $\Th_{\muSigma_n}^{\M_\xi}(\beta\cup\HC\cup\{\pvec\})$
is just $t^{\M_\gamma,\mu}_\beta(\pvec_n^{\M_\gamma}/\pvec)$
(that is, the theory obtained from $t^{\M_\gamma,\mu}_\beta$ (defined in clause \ref{item:t^M_gamma,mu_beta} above),  by substituting  $\pvec$ for $\pvec_n^{\M_\gamma}$).
Similarly, recalling the names $\pvec^{\Ntilde|\gamma}_n(\beta)$
and $\tau_n^{\Ntilde|\gamma}(\beta)$ from 
\ref{dfn:tau(beta),q(beta)_if_omega_1<=beta}, we get
\begin{enumerate}[label=--]
\item $\pvec^{\Ntilde|\gamma}_n(\beta)=o_{\vec{p}}$,
and
\item $\tau_n^{\Ntilde|\gamma}(\beta)\in\Nhat|(\xi+\om)$ and $\tau_n^{\Ntilde|\gamma}(\beta)$ is the natural name for
$\Th_{\rSigma_n}^{\Ntilde|\xi}(o_\beta\cup\{o_{\pvec}\})$,
\end{enumerate}
and $\Th_{\rSigma_n}^{N|\xi}(\beta\cup\{\pvec\})$
is just $t^{N|\gamma}_\beta(\pvec_n^{N|\gamma}/\pvec)$.

With the above considerations in mind,
it is straightforward to see that
the forcing theorem and stem forcing theorem corresponding to the relations $\sforces{\cdot}{\cdot}$ in  \ref{item:1-CC-forcing}\ref{item:1-witnessed_1^N} and \ref{item:1-PP-forcing}\ref{item:witnessed_rSigma_n+1_forcing_rel_def}
hold, and we leave the details to the reader. (One key point is that
the statements required to be forced/stem-forced 
in these definitions are only $\mSigma_n$/$\rSigma_n$ respectively,
and hence we already know that the corresponding forcing/stem-forcing theorem holds. For example
with $\sforces{\CC}{\gamma\mathrm{w},n+1}$, these statements are written
in clauses \ref{item:clause_b} and \ref{item:clause_a} of part \ref{item:rho_n^N|gamma>lambda^N_etc}
of the definition of $\sforces{\CC}{\gamma\mathrm{w},n+1}$ in  \ref{dfn:CC_forcing_relations}. The
main complexity in  this relation arises through the reference to $\tau_n^{\Mtilde_\gamma}(\beta)$ and $\pvec_n^{\Mtilde_\gamma}(\beta)$.)
\end{case}

\begin{case}$\rho=\om_1$.
 
In this case the forcing theorem for $\sforces{\CC}{\gamma\mathrm{w},n+1}$
is proven overall similarly to in the previous case, but now instead of the variable $\beta$ ranging over ordinals ${<\rho_n^{N|\gamma}}$ and corresponding name $\tau_n^{\Mtilde_\gamma}(\beta)$ for theories,
we have the variable $\varrho$
ranging over elements of $\Mhat_{\lambda^N}$ and the name $\tau_n^{\Mtilde_\gamma}(\varrho)$; see clause \ref{item:gamma>rho_n_witnessed_mSigma_n+1_forcing_rel} in the definition of
$\sforces{\CC}{\gamma\mathrm{w},n+1}$ in  \ref{dfn:CC_forcing_relations}.
(More precisely,
the name $\varrho$
in case $\rho=\om_1$
is analogous to the pair $(\beta,\varrho)$ in case $\rho>\om_1$,
in that the roles of $\beta$ and $\varrho$ in case $\rho>\omega_1$
is covered by just $\varrho$ in case $\rho=\om_1$.)
By the forcing theorem for the $\muSigma_n$ forcing relation $\sforces{\CC}{\gamma\mu  n}$
and  the homogeneity of $\CC$,
we get
\[ (\tau^{\Mtilde_\gamma}_n(\varrho))_G=\Th_{\muSigma_n}^{(\Mtilde_\gamma)_G}(\varrho_G\cup\{\pvec\}), \]
where $\pvec=\pvec^{N|\gamma}\cut\{\lambda^N\}$.
Using this equality, it is not difficult to verify the
forcing theorem for $\sforces{\CC}{\gamma\mathrm{w},n+1}$.

The stem-forcing theorem
for $\sforces{\PP^-}{\gamma\mathrm{w},n+1}$ is straightforward; the key fact is that
for each $\vec{x}\in(\Nhat|\gamma)^{<\om}$
and each $\alpha<\omega_1$,
there is some $s\in\PP^-$
such that the condition $r^{\gamma,n}_{s,\alpha,\vec{x}}\in G$, which decides 
the theory $\Th_{\rSigma_n}^{\Ntilde|\gamma}(o_\alpha\cup\{\vec{x}\})$.
(Letting $Q$ be as in the definition of $\tau^{\Ntilde|\gamma}_{s,n}(\alpha,\vec{x})$ (within \ref{dfn:stem_forcing_relations}),
we have that the theory $t$ decided
by $r^{\gamma,n}_{s,\beta,\vec{x}}$
is in $Q$, by the minimality of $(\beta^*,n^*)$ and $\muSigma_n^{N|\gamma}(\{\xg,\vec{x},\beta\})$-definability
of $t$.)
\end{case}

This completes the proof of the claim.
\end{proof}

Part \ref{item:Hull_n+1_ordinals_match} for $n>0$ is an easy consequence of these two facts, analogously to when $n=0$.

It only remains to verify the following claim:

\begin{clm}
We have:
\begin{enumerate}
 \item 
The forcing
theorem for $\sforces{\CC}{\gamma\mu,n+1}$ \tu{(}mentioned in part \ref{item:1-CC-forcing}\ref{item:1-mu_1^N}\tu{)} holds,
\item Part \ref{item:CC_PP_forcing_theorems}\ref{item:projecting_forcing_relation_forcing_theorem} holds.
\end{enumerate}
\end{clm}

\begin{case}
 $\rho>\omega_1$.
 
 In this case the proofs of these are totally analogous to those when $n=0$, so we omit further discussion of this case, aside from one small remark. In \ref{dfn:CC_forcing_relations},
 in the definition of the $m$-good $\muSigma_{n+1}$ forcing relation $\sforces{\CC,\geq m}{\gamma\mu,n+1}$,
  in
clause \ref{item:rho_n>lambda_m-good_muSigma_n+1_forcing_rel},
although we demand
 $\supp(\vec{x})\leq d$,
 we make no such demand on $\supp(\varrho)$. Thus
 (like when $n=0$),
iteration maps with critical point $>\delta_d^N$ will fix the
 ``coarse'' part $\sigma$ of $\vec{x}$
 (which is in $\Mhat_{\lambda^N}$),
 and (also like when $n=0$) using this, one finds
many iteration maps  $j$ such that $j(\vec{x})=\vec{x}$.
Because we did not restrict $\supp(\varrho)$, however,
we might have $j(\varrho)\neq\varrho$.
But this does not matter; 
similarly (analogous to when $n=0$)
we might have $j(\beta)>\beta$,
but $j$ will preserve the forcing statements \ref{item:rho_n>lambda_m-good_muSigma_n+1_forcing_rel_clause_a}
and \ref{item:rho_n>lambda_m-good_muSigma_n+1_forcing_rel_clause_b}
in the definition (and fix $\sigma$ and certain ordinal parameters). Thus,
the arguments from the $n=0$ case are readily adapted.
\end{case}

\begin{case} $\rho=\omega_1$.

 In this case the proofs are a slight variant of those in the previous case.
 The key difference is that the role of the variable $\beta<\rho$
 (from the previous case, and the choice of which depends on the tuple $\vec{\delta}$ of Woodin cardinals) is replaced
 by the ``extra Woodin'' $\delta_{2k}$
 at the top of the tuple $\vec{\delta}'\in[\Delta_{\geq m}^N]^{2k+1}$. Thus, if $\vec{\delta}=(\delta_0,\ldots,\delta_{2k-1})$
 and $\delta_{2k}=\delta_i^N$,
 and
 \[ N[g]\sats\all^{\mathrm{gen}}_{\vec{\delta}}s\ \sforces{\CC_i^N}{}\exists y\in\HC\ \sforces{\CC_{\tail}}{}\varphi(y,s) \]
 where $\varphi$
 is a statement of the relevant form,
 then in the genericity iteration arguments, after iterating $\delta_0,\ldots,\delta_{2k-1}$ ``into'' the background model $N$ via $L[\es]$-constructions as before,
 and producing some tuple $s\in\Dd^k$
 in a small generic extension $N[g]$,
 the top Woodin $\delta_{2k}$
 can then be iterated ``into'' $N$
 higher up,
 thus finding some $i'<\om$ such that \[ N[g,s]\sats\ \sforces{\CC_{i'}^N}{}\exists y\in\HC\ \sforces{\CC_{\tail}}{}\varphi(y,s).\]
 With this kind of modification, the arguments from before are readily adapted. We leave the remaining details to the reader.
\end{case}

This completes the  inductive stage $n>0$,
and hence the proof of parts
\ref{item:rho_n=rho_n}--\ref{item:Hull_n+1_ordinals_match} (for $(\gamma,n)$ as there). \end{stage*}

As mentioned at the outset,
parts \ref{item:rho_n=rho_n_om_1}--\ref{item:CC_PP_forcing_theorems_om_1}
(for $n$ as there)
are established through similar but simpler proofs than those above, and
we already gave parts of this argument in \S\ref{subsec:the_generic_M(R*)}. We leave the remaining details to the reader.
\end{proof}

\begin{lem}\label{lem:n^*=n_0} We have:
\begin{enumerate}
 \item\label{item:n^*=n_0} If $\omega_1<\beta^*$ \tu{(}equivalently, $\lambda^{\Pg}<\OR^{\Pg}$
 \tu{)} then $n^*=n_0$.
 \item\label{item:n^*+1=n_0} If $\omega_1=\beta^*$ \tu{(}equivalently, $\lambda^{\Pg}=\OR^{\Pg}$\tu{)} then $n^*+1=n_0$.
\end{enumerate}
\end{lem}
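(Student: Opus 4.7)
The approach is to identify $n^*$ fine-structurally, via the correspondence between $\M_{\beta^*}$ and a sufficiently Prikry generic iterate $N$ of $\Pg$ established in Lemma \ref{lem:fs_correspondence}. Fix such $N$, appearing in some generic extension of $V$; by Lemma \ref{lem:OR^N_leq_beta^*}, $\OR^N = \beta^*$, and by preservation under the non-dropping $n_0$-maximal iteration map $i_{\Pg N}$, $N$ is mtr-suitable of degree $n_0 = \deg(\Pg)$, giving $\rho_{n_0+1}^N \leq \delta_0^N < \lambda^N \leq \rho_{n_0}^N$. The remark after Definition \ref{dfn:beta*} already yields $\rho_{n^*+1}^{\M_{\beta^*}} = \omega_1$; the content of the present lemma is the precise matching of this level with $n_0$.

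For part (\ref{item:n^*=n_0}), assume $\omega_1 < \beta^*$, so $\lambda^N = \omega_1 < \OR^N$. I will first show $n^* \leq n_0$. If $n^* < n_0$ this is immediate, so assume $n^* \geq n_0$; then Lemma \ref{lem:fs_correspondence}(\ref{item:Hull_n+1_ordinals_match}) applies at $(\gamma,n)=(\beta^*,n_0)$ and gives
\[
\rho_{n_0+1}^{\M_{\beta^*}} = \max(\rho_{n_0+1}^N, \lambda^N) = \omega_1.
\]
A standard diagonalization (using the $\muSigma_{n_0+1}^{\M_{\beta^*}}$-definable surjection from $\RR$ onto the reals of $\M_{\beta^*}$) produces a real in $\OD^{\mu,\beta^*+n_0+1}(x)\setminus\OD^{\alphag}(x)$, forcing $n^* \leq n_0$. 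For the reverse inequality, fix $n < n_0$ and a real $y \in \OD^{\mu,\beta^*+n+1}(x)$; I must show $y \in \OD^{\alphag}(x)$. The forcing theorem in Lemma \ref{lem:fs_correspondence}(\ref{item:CC_PP_forcing_theorems}) supplies a name $\dot{y}$ and condition $p \in G \cap \CC^N$ with $p\sforces{\CC}{\beta^*\mu,n+1}\dot{y}=\check{y}$, and the forcing relation is $\all^\omega\rSigma_{n+1}^N(\pvec_n^N)$-definable. Since $i_{\Pg N}$ is a near $n_0$-embedding, this pulls back to an $\rSigma_{n+1}^{\Pg}(\pvec_n^{\Pg})$ description of $y$ in terms of $\Pg$ and the generic data. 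Now, because $n+1 \leq n_0$ and $\Pg$ is mtr-suitable and $\Gammag$-stable, an argument parallel to the proof of Lemma \ref{lem:Sigma_P-iterate_good}(\ref{item:no_new_Sigma_1_truths}) — take a sufficiently elementary countable hull $H$ of $\Pg$, note $H \pins \Lp_{\Gammag}(\Pg|\omega_1^{\Pg})$ so that $\Sigma_H \in \SS_{\alphag}$, iterate $H$ inside $\SS_{\alphag}$ to a small $\RR$-genericity iterate, and read off $y$ from its derived model in the sense of \S\ref{subsec:the_generic_M(R*)} — places $y \in \OD^{\alphag}(x)$. Turing completeness at the corresponding levels follows similarly, by running the argument on $\muSigma_{n+1}$-definable sequences of measure-one sets.

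For part (\ref{item:n^*+1=n_0}), assume $\omega_1 = \beta^*$, so $\lambda^{\Pg} = \OR^{\Pg}$ and $N = N|\lambda^N$. The correspondence is now shifted by one level (Lemma \ref{lem:fs_correspondence}(\ref{item:rho_n=rho_n_om_1})--(\ref{item:CC_PP_forcing_theorems_om_1})): $\mSigma_{n+1}^{\M_{\omega_1}}$ matches $\rSigma_{n+2}^{N|\omega_1}$. The same two-sided blueprint proves $n^*+1 = n_0$. The failure of $(n_0{+}1)$-soundness of $N$ (that is, $\rho_{n_0+1}^N < \lambda^N$) produces a new real at $\mSigma_{n_0} = \mSigma_{n^*+1}$ over $\M_{\omega_1}$, giving $n^*+1 \leq n_0$; and for each $n+1 \leq n_0-1$, the shifted forcing-theoretic pullback to $\Pg$, together with the $\SS_{\alphag}$-witness construction above, places all $\muSigma_{n+1}^{\M_{\omega_1}}$-definable reals in $\OD^{\alphag}(x)$, giving $n^*+1 \geq n_0$.

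The main obstacle is the ``forward'' (lower bound) direction: converting the purely forcing-theoretic description of a $\muSigma$-definable real $y$ over $N$ into an $\SS_{\alphag}$-effective construction of $y$. The three pieces needed are (i) that the pullback of $\dot{y}$ along $i_{\Pg N}$ lands inside a sufficiently small hull of $\Pg$, (ii) that $\Pg$'s short-tree strategy is $\Gammag$-guided and hence $\Sigma_1^{\SS_{\alphag}}$, and (iii) that the derived-model construction of \S\ref{subsec:the_generic_M(R*)} applied to such small hulls yields models lying in $\SS_{\alphag}$. A second, more delicate subtlety is the edge case $\rho_{n_0}^N = \lambda^N$ (which mtr-suitability alone does not exclude): if this occurs, one must verify separately that no new $\OD^\mu$ real appears at level $n_0$ of $\M_{\beta^*}$, which follows from the mtr-suitability condition $p_{n_0+1}^N\cap\lambda=\emptyset$ combined with fullness ($\Lp_{\Gammag}(N|\delta)\pins N$ at every strong cutpoint), ensuring that $\rSigma_{n_0}^N$-definable subsets of $\lambda^N$ bounded below Woodins are already in $N$ and hence coded in $\OD^{\alphag}$.
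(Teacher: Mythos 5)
Your proposal follows the same broad architecture as the paper (two inequalities, the fine-structural correspondence of Lemma \ref{lem:fs_correspondence}, and a hull-and-iterate-inside-$\SS_{\alphag}$ argument for the lower bound), but the upper bound direction has a genuine gap that the paper avoids by a more concrete device.

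For $n^*\leq n_0$ you assume $n^*\geq n_0$, deduce $\rho_{n_0+1}^{\M_{\beta^*}}=\om_1$, and then appeal to ``a standard diagonalization (using the $\muSigma_{n_0+1}^{\M_{\beta^*}}$-definable surjection from $\RR$ onto the reals of $\M_{\beta^*}$).'' This cannot be right as written: $\HC\sub\M_{\beta^*}$, so $\RR\cap\M_{\beta^*}=\RR$ and there is nothing to diagonalize against. Even after charitably reinterpreting (say, as a surjection of $\om$ onto $\OD^{\alphag}(x)=\OD^{\mu,\beta^*+n_0}(x)$), the fact $\rho_{n_0+1}^{\M_{\beta^*}}=\om_1$ only produces a new $\bfmSigma_{n_0+1}$-definable subset of $\HC$; extracting a new \emph{real} at that exact level (rather than at some $\muSigma_{n_0+k}$) needs a uniformization/scale argument in the $\M$-hierarchy that you do not supply and that is not available for free. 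The paper instead exhibits a concrete new real: the theory $t=\Th_{\rSigma_{n+1}}^{\Pg}(\pvec_{n+1}^{\Pg})$ with $n=\min(n^*,n_0)$. It first proves a transfer claim --- $\Pg\sats\varphi(\pvec_{n+1}^{\Pg})$ iff $\M_{\beta^*}\sats\emptyset\sforces{\PP^-}{\beta^*\mu,n+1}\varphi(\vec{q}_{n+1}^{\M_{\beta^*}})$ --- using the stem-forcing theorem together with the fact that every sufficiently Prikry generic iterate $N$ has $\OR^N=\beta^*$, $\pvec_{n+1}^N=\vec q_{n+1}^{\M_{\beta^*}}$, and a degree-$n_0$ elementary $i_{\Pg N}$. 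This makes $t$ visibly $\muSigma_{n+1}^{\M_{\beta^*}}(\{\vec q,\xg\})$-definable, and since $t$ codes $\Pg$ past $\Lp_{\Gammag}(\xg)$, it is not $\OD^{\alphag}(\xg)$; the minimality of $(\beta^*,n^*)$ then forces $n=n^*\leq n_0$. (This is also where the parameter bookkeeping $q_1^{\M_{\beta^*}}$ vs.\ $p_1^{N}=\{\lambda^N\}$ matters, which you do not address; the ``edge case'' you flag at the end is a different and less relevant subtlety.)

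For $n_0\leq n^*$ your blueprint --- pull a $\muSigma_{n+1}$-definition of $y$ back through $i_{\Pg N}$, take a countable hull which is $\Lp_{\Gammag}$-captured, iterate it to an $\RR$-genericity iterate inside a generic extension with $\OR$ below $\alphag$, and read $y$ off its derived model --- is the same strategy as the paper's, which implements it via the $m$-good $\muSigma_{n^*+1}$ \emph{projecting} forcing relation, an $(n^*+1)$-self-solid parameter $\vec r$, the core $C[G\rest i]\pins N[G\rest i]$, and a careful choice of the $\RR$-genericity iterate $H'$ so that the $\CC$-forcing theorems survive down to the appropriate level. You correctly identify the three ingredients needed and that this is the hard direction, but your account is too compressed to check; the paper's proof is substantially longer precisely because it must control the complexity of the winning-strategy quantifiers once $\rho_{\ell+1}^{H'}\leq\lambda^{H'}$.
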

\begin{proof}
Part \ref{item:n^*=n_0}: 
Suppose $\omega_1<\beta^*$.
Let $n=\min(n^*,n_0)$. If $\rho_1^{\M_{\beta^*}}=\om_1$ and $p_1^{\M_{\beta^*}}=\emptyset$ then let $q_1^{\M_{\beta^*}}=\{\omega_1\}$, and otherwise let $q_1^{\M_{\beta^*}}=\emptyset$.
For $k>1$ let $q_k^{\M_{\beta^*}}=p_k^{\M_{\beta^*}}$. 
Let $\vec{q}_{n+1}^{\M_{\beta^*}}=(q_{n+1}^{\M_{\beta^*}},\ldots,q_1^{\M_{\beta^*}})$. 

\begin{clm}For all $\rSigma_{n+1}$ formulas $\varphi$, we  have
 \[ \Pg\sats\varphi(\pvec_{n+1}^{\Pg})\iff\M_{\beta^*}\sats\emptyset\sforces{\PP^-}{\beta^*\mu,n+1}\varphi(\vec{q}_{n+1}^{\M_{\beta^*}}).\]
\end{clm}
\begin{proof}
Let $\vec{p}=\vec{q}^{\M_{\beta^*}}_{n+1}$.
Suppose $\M_{\beta^*}\sats\emptyset\sforces{\PP^-}{\beta^*\mu,n+1}\varphi(\vec{p})$.
So there is $k<\om$ and a $\mu$-cofinal set $X\sub\Dd^k$
such that $s\sforces{\PP^-}{\beta^*\mathrm{w},n+1}\varphi(\vec{p})$ for all $s\in X$. 
As in the proof of Lemma \ref{lem:suff_gen_it},
we can find a sufficiently Prikry generic iterate $N$ of $\Pg$
such that $q^{\beta^*}_{s,\varphi(\vec{p})}\in G_{N|\lambda^N}$.
So by the forcing theorem for $\sforces{\PP^-}{\beta^*\mathrm{w},n+1}$, we have $N\sats\varphi(\vec{p})$.
But $\OR^N=\beta^*$ and $\pvec_{n+1}^N=\vec{p}$,
and since $i_{\Pg N}:\Pg\to N$ is degree $n_0$ iteration map, we get $\Pg\sats\varphi(\pvec_{n+1}^{\Pg})$, as desired.

Conversely suppose $\M_{\beta^*}\not\sats\emptyset\sforces{\PP^-}{\beta^*\mu,n+1}\varphi(\vec{p})$. Then
we can fix $\left<X_k\right>_{k<\om}$ such that each $X_k\in\mu^k$,
and for each $k$ and $s\in X_k$, $s\not\sforces{\PP^-}{\beta^*\mathrm{w},n+1}\varphi(\vec{p})$.
We can then find a sufficiently Prikry generic iterate $N$,
as witnessed by $G$ going through all $X_k$,
and it follows that $N\sats\neg\varphi(\vec{p})$,
and like before, it follows that $\Pg\sats\neg\varphi(\vec{p}_{n+1}^{\Pg})$.
\end{proof}

By the claim,  $t=\Th_{\rSigma_{n+1}}^{\Pg}(\pvec_{n+1}^{\Pg})$ is $\muSigma_{n+1}^{\M_{\beta^*}}(\{\vec{p},\xg\})$,
and therefore $t\in\OD^{\beta^*,n+1}(\xg)$.
But then by choice of $(\beta^*,n^*)$, 
we have $n^*\leq n_0$. So $n=n^*$,
and since $\rho_{n^*+1}^{\M_{\beta^*}}=\om_1$,
we have $\rho_{n^*+1}^{\Pg}\leq\lambda^{\Pg}$.
Therefore we can refer to the projecting forcing relation for $\muSigma_{n^*+1}$, which we do in the next claim:

\begin{clm}
Let $N$ be a sufficiently Prikry generic $\Sigma_{\Pg}$-iterate
and $G\sub\CC^N$ witness that $N$ is an $\RR$-genericity iterate.
Let $x\in\RR$ and $i<\om$ be such that $x\in N[G\rest i]$.
Let $\varphi$ be a $\muSigma_{n^*+1}$ formula and let $\vec{q}\in(\beta^*)^{<\om}$. Then
\[ \M_{\beta^*}\sats\varphi(\vec{q},x)\iff N[G\rest i]\sats\emptyset\sforces{\CC_{\tail},\geq 0}{\OR\mu,n^*+1,\mathrm{proj}}\varphi(\vec{q},x).\]
\end{clm}
\begin{proof}
This is just by the corresponding forcing theorem;
see part \ref{item:projecting_forcing_relation_forcing_theorem}
of Lemma \ref{lem:fs_correspondence}.
\end{proof}

Now let $x_0,y_0\in\RR$ be such that
$y_0\in\OD^{\beta^*,n^*+1}_{\mu}(x_0)\cut\OD^{<\alphagap}(x_0)$. Fix a $\muSigma_{n^*+1}$ formula $\varphi_0$
and $\vec{q}\in(\beta^*)^{<\om}$ such that for all $m<\om$,
\[ m\in y_0\iff \M_{\beta^*}\sats\varphi_0(\vec{q},x_0,m).\]
Then with $G,i$ as above, for all $m<\om$,
\[ m\in y_0\iff N[G\rest i]\sats\emptyset\sforces{\CC_{\tail},\geq 0}{\OR\mu,n^*+1,\mathrm{proj}}\varphi_0(\vec{q},x_0,m). \]
Recall that $\sforces{\CC,\geq 0}{\bar{\beta}\mu,n^*+1,\mathrm{proj}}$
is $\rSigma_{n^*+1}^{N[G\rest i]}(\{\pvec_{n^*+1}^{N},G\rest i\})$.

Now suppose $n^*<n_0$. Let $\vec{r}$ be $(n^*+1)$-self-solid for $N[G\rest k]$ (see 
\cite[Lemma 3.1]{extmax} and
\cite[Definition 2.2, Lemma 2.3]{V=HODX_pub})
with $x_0,\vec{q},\vec{p}_{n^*+1}^{N}\in H$ where
\[ H[G\rest i]=\Hull_{n^*+1}^{N[G\rest i]}(\{\vec{r},G\rest i\}\cup\delta_i^N).\]
Let $C[G\rest i]$ be the transitive collapse of $H[G\rest i]$.
Then $C[G\rest i]$ is sound and since $\rho_{n^*+1}^N=\lambda^N$
and by condensation, $C[G\rest i]\pins N[G\rest i]$,
and note that $C$ (the natural ground) is sound
and $C\pins N$.
Letting $\pi:C[G\rest i]\to N[G\rest i]$ be the uncollapse,
\[ m\in y_0\iff C[G\rest i]\sats\emptyset\sforces{\CC,\geq 0}{\OR\mu,n^*+1,\mathrm{proj}}\varphi_0(\pi^{-1}(\vec{q}),x_0,m).\]

We have $C[G\rest i]\pins\Lp_{\Gammagap}((N|\delta_i^N,G))$, so $C[G\rest i]$ is above-$\delta_i^N$
iterable in $\SS_{\alphagap}$. Now arguing as in the proof of Lemma \ref{lem:Sigma_P-iterate_good}
part \ref{item:no_new_Sigma_1_truths},
in some generic extension of $V$,
we can iterate $C[G\rest i]$ (hence, above $\delta_i^N$)
with an $n^*$-maximal tree,
to form an $\RR$-genericity iterate $\bar{N}$
as witnessed by $H\sub\CC^{\bar{N}}$,
such that $\OR^{\bar{N}}<\alphagap$,
and $(\Mtilde^{\bar{N}})_G\in\J_{\alphagap}$.

Now define $\mathscr{T},\Uu_0,H_0,j_0,\alpha_0,\beta_0$
as in the proof of Lemma \ref{lem:Sigma_P-iterate_good} part \ref{item:no_new_Sigma_1_truths} (but starting with base mouse $C[G\rest i]$,
iterating above $\delta_i^N$). 
As there, $\beta_0<\alphagap$.
Let $\M_{\gamma}^{[\alpha_0,\beta_0]}$ be the levels
of the $\M$-hierarchy associated to the S-gap $[\alpha_0,\beta_0]$. Let $\beta_0^*$ be the end of that hierarchy
(analogous to $\beta^*$). So $\beta_0^*<\alphagap$.

Suppose $H'$ is also as in the proof of that lemma,
and  $G'$ be $(\J(H'),\CC^{H'})$-generic
(note we demand genericity not just over $H'$, but over $\J(H')$). Then as before, $\beta'=\OR^{H'}\leq\beta_0^*$
and
$(\Mtilde^{H'})_{G'}=\M_{\beta'}^{[\alpha_0,\beta_0]}$
So $\M_{\beta'}^{[\alpha_0,\beta_0]}\in \J_{\alphagap}$. 
Directly by first order properties (and Lemma \ref{lem:fs_correspondence})
it is straightforward to see that the forcing theorem holds for $(H',G')$, for the $\CC^{H'}$-forcing relations, for the $\mSigma_0$-elementary
forcing relation $\sforces{\CC^{H'}}{\beta'0}$,
and up to and including
$\sforces{\CC^{H'},\geq 0}{\beta'\mu,\ell+1}$
with $\lambda^{H'}<\rho_{\ell+1}^{H'}$ (this ensures
that we easily express having a winning strategy for the game associated to the quantifier $\all^{\mathrm{gen}}_{\geq\theta}s$). When $\rho_{\ell+1}^{H'}\leq\lambda^{H'}$ this is maybe not quite so clear (since the quantifiers
involved in expressing having such a winning strategy
are unbounded over $\lambda^{H'}$).
But with a little more care in selecting  $H'$ we can arrange that, letting $j:C[G\rest i]\to H'$ be the iteration map and $\vec{r}=j(\pi^{-1}(\vec{q}))$, we have:
\begin{enumerate}
 \item if
$\rho_{n^*}^{H_0}=\lambda^{H_0}$
then the forcing theorems  also
hold for the forcing relations
through $\sforces{\CC}{\beta'\mu,n^*}$, and
\item writing
\[ \varphi_0(\dot{\vec{q}},\dot{x},\dot{m})\iff\all^*_\mu s\ \psi_0(\dot{\vec{q}},\dot{x},\dot{m},s) \]
where $\psi_0$ is $\mSigma_{n^*+1}$,
then for all $m<\om$,
the following are equivalent
for all $k<\om$ and all $\vec{\delta}\in[\Delta^{H'}]^{2k}$:
\begin{enumerate}[label=\tu{(}\roman*\tu{)}]
\item\label{item:m_in_y_0} $m\in y_0$ 
\item $H'\sats\all^{\mathrm{gen}}_{\vec{\delta}}s\ \sforces{\CC_{\tail}}{\beta',\mathrm{w},n^*+1}\psi_0(\vec{r},x_0,m,s)$, 
\item\label{item:M_beta'_sats_phi(m)} $\M_{\beta'}^{[\alpha_0,\beta_0]}\sats\varphi_0(\vec{r},x_0,m)$.
\end{enumerate}
\end{enumerate}
Note that this yields a contradiction,
as the equivalence between \ref{item:m_in_y_0}
and \ref{item:M_beta'_sats_phi(m)} shows that $y_0\in\OD_{<\alphagap}(x_0)$.

To obtain $H'$ with these properties, we will enumerate in advance (preimages of) all possible violations, and eliminate them one by one during the $\RR$-genericity iteration, by iterating at tuples of Woodins into certain measure one sets, like in the arguments in the proof of Lemma \ref{lem:fs_correspondence}. let $\left<(\gamma_n,\ell_n,\varphi_n,\vec{a}_n,t_n,\nu_n,z_n)\right>_{n<\om}$ be a (generic) enumeration of all tuples $(\gamma,\ell,\varphi,\vec{a},t,\nu,z)$ such that:
\begin{enumerate}[label=--]
 \item 
$\gamma\leq\beta_0^*$,
\item $\ell\leq n^*$,
$\rho_{\ell+1}^{H_0}\leq\lambda^{H_0}$ and $\rho_{\ell+1}^{\M_{\gamma}^{[\alpha_0,\beta_0]}}=\om_1$,
\item $\varphi$ is a $\muSigma_{\ell+1}$ formula,
\item $\vec{a}\in(\M_{\gamma}^{[\alpha_0,\beta_0]})^{<\om}$,
\item $t$ is an $\rSigma_{n^*+1}$ min-term,
\item $\nu<\om_1$,
\item $z\in\RR$,  and
\item if $\ell=n^*$ then $\varphi=\varphi_0$ and
 $z=x_0$ and $\nu<\delta_i^{C[G\rest i]}$
 and for some $m<\om$, we have \[(\pi^{-1}(\vec{q}),m)=t^{C[G\rest i]}(\pvec_{n^*+1}^{C[G\rest i]},\nu,G\rest i),\]
 \end{enumerate}
 enumerated
 with infinitely many repetitions of each  tuple.
 Build the $\RR$-genericity iteration $\Tt_0\conc\Tt_1\conc\ldots$ of $H_0$, together
 with models $H_{n+1}$, $j_n<\om$ and generics $G_n$, as follows: recall that $\Uu_0$ was based on $C[G\rest i]|\delta_{j_0}^{C[G\rest i]}$,
 and $H_0=M^{\Uu_0}_\infty$. Let $G_0$ be $(H_0,\CC_{j_0}^{H_0})$-generic, extending $G\rest i$.
 Given $H_n$ and $G_n\sub\CC_{j_n}^{H_n}$,
 let $(\gamma,\ell,\varphi,\vec{a},t,\nu,z)=(\gamma_n,\ell_n,\varphi_n,\vec{a}_n,t_n,\nu_n,z_n)$.
 If $z\in H_n[G_n]$ and $\nu<\delta_{j_n}^{H_n}$
 and
 \begin{enumerate}[label=\tu{(}\roman*\tu{)}]\item\label{item:M_gamma_true} $\M_{\gamma}^{[\alpha_0,\beta_0]}\sats\ \varphi(\vec{a},z)$
 \end{enumerate}
 iff
 \begin{enumerate}[resume*]
  \item\label{item:H_n[G_n]_false} $\neg\Big(H_n[G_n]\sats\  \sforces{\CC_{\tail}}{\OR^{H_n}\mu,\ell+1}\varphi(\vec{r},z)\Big)$
 \end{enumerate}
    where
    \begin{enumerate}[label=--]
\item    $\vec{r}=t^{H_n}(\pvec_{n^*+1}^{H_n},\nu,G\rest i)$ if this is defined and is in $(\Mhat^{H_n[G_n]})^{<\om}$, and
\item $\vec{r}=o_0$ (the name for $0$) otherwise,
\end{enumerate}
then
\begin{enumerate}[label=--]
 \item if \ref{item:M_gamma_true} and \ref{item:H_n[G_n]_false} are both false,
 in particular
 \[H_n[G_n]\sats\ \sforces{\CC,> j_n}{\OR^{H_n}\mu,\ell+1}\varphi(\vec{r},z),\]
 then let $k<\om$ and $\vec{\delta}\in[\Delta_{\geq j_n}^{H_n}]^{2k}$
 witness this and iterate $H_n$ at $\vec{\delta}$
 so as to produce a witness for some
 $s\in\Dd^k$ which lies in the measure
 one set witnessing that $\M_\gamma^{[\alpha_0,\beta_0]}\sats\neg\varphi(\vec{a},z)$, and 
 \item if \ref{item:M_gamma_true} and \ref{item:H_n[G_n]_false} are both true,
 then proceed analogously but as in the proof
of  Lemma \ref{lem:fs_correspondence},
Case \ref{case:gamma=OR^N_and_lambda^N=rho_1^N},  Claim \ref{clm:muSigma_1_forcing_theorem_=_OR^N},\ref{item:>=all-gen>=theta_end} $\Rightarrow$ \ref{item:deltavec_bound_witness_end},
and its adaptation to the $n>0$ stage.
\end{enumerate}
Let $\Tt_n$ be the resulting tree, $H_{n+1}=M^{\Tt_n}_\infty$, $j_{n+1}$ be least such that $\Tt_n$ is based on $H_n|\delta_{j_{n+1}}^{H_n}$, and $G_{n+1}$ be $(H_{n+1},\CC_{j_{n+1}}^{H_{n+1}})$-generic
with $G_n\sub G_{n+1}$.

If instead $z\notin H_n[G_n]$, then let $\Tt_n$ be the $z$-genericity iteration at $\delta_{j_n+1}^{H_n}$, and $j_{n+1}=j_n+1$, and $G_{n+1}$ be $(H_{n+1},\CC_{j_{n+1}}^{H_{n+1}})$-generic with $G_n\sub G_{n+1}$ and $z\in H_{n+1}[G_{n+1}]$.

Otherwise let $\Tt_n$ be trivial and $H_{n+1}=H_n$ and $j_{n+1}=j_n$ and $G_{n+1}=G_n$.

This completes the construction. It is straightforward enough to see that it meets the requirements, producing the contradiction mentioned earlier.

Part \ref{item:n^*+1=n_0}: This is just a slight variant of the previous part, so we leave it to the reader.
\end{proof}

To be added in later installment: further analysis of relationship between $N$, $\mathscr{M}_{\beta^*}$ and $\mathbb{S}_{\betagap}$,
application to Rudominer-Steel conjecture,
and material on projective-like cases.

\section*{Acknowledgements}
Gef\"ordert durch die Deutsche Forschungsgemeinschaft (DFG) im Rahmen der Exzellenzstrategie des Bundes und der L\"ander EXC 2044--390685587, Mathematik M\"unster: Dynamik--Geometrie--Struktur.
Gef\"ordert durch die Deutsche Forschungsgemeinschaft (DFG) -- Projektnummer 445387776.
Funded by the Deutsche Forschungsgemeinschaft (DFG, German Research Foundation) -- project number 445387776.

\bibliographystyle{plain}
\bibliography{../bibliography/bibliography}
\end{document}